\documentclass[twoside]{aomart}
\usepackage[T1]{fontenc}
\usepackage[utf8]{inputenc}
\usepackage{lmodern,amsmath,amssymb,mathrsfs,booktabs,needspace,array}
\usepackage{xcolor,microtype,geometry,indentfirst}
\usepackage{tikz}
\usetikzlibrary{arrows.meta,calc,positioning}
\definecolor{ContentsBlue}{RGB}{20,65,165}
\definecolor{CitationGreen}{RGB}{0,115,45}
\definecolor{ReferenceRed}{RGB}{180,25,35}
\hypersetup{colorlinks=true,linkcolor=ReferenceRed,citecolor=CitationGreen,
  urlcolor=ContentsBlue,linktoc=all,
  pdftitle={Pythagorean pairs in boxes of prime powers}}
\usepackage[capitalise,nameinlink,noabbrev]{cleveref}
\copyrightnote{}
\doinumber{}
\fancypagestyle{firstpage}{%
  \fancyhf{}%
  \fancyfoot[LE,RO]{\footnotesize\thepage}%
  }
\newtheoremstyle{paperplain}{.65\baselineskip}{.65\baselineskip}
  {\itshape}{0pt}{\normalfont\bfseries}{.}{.5em}{}
\newtheoremstyle{paperdefinition}{.65\baselineskip}{.65\baselineskip}
  {\normalfont}{0pt}{\normalfont\bfseries}{.}{.5em}{}
\theoremstyle{paperplain}
\newtheorem{theorem}{Theorem}[section]
\newtheorem{proposition}[theorem]{Proposition}
\newtheorem{lemma}[theorem]{Lemma}
\newtheorem{corollary}[theorem]{Corollary}
\theoremstyle{paperdefinition}

\makeatletter
\@namedef{subjclassname@2020}{2020 Mathematics Subject Classification}
\renewcommand\section{\@startsection{section}{1}{\z@}
  {1.2\baselineskip plus .3\baselineskip minus .2\baselineskip}
  {.6\baselineskip}{\normalfont\large\bfseries\raggedright}}
\renewcommand\subsection{\@startsection{subsection}{2}{\z@}
  {.9\baselineskip plus .2\baselineskip minus .1\baselineskip}
  {-.5em}{\normalfont\bfseries}}
\renewcommand\subsubsection{\@startsection{subsubsection}{3}{\z@}
  {.7\baselineskip}{-.5em}
  {\normalfont\itshape}}
\renewenvironment{proof}[1][\proofname]{\par\pushQED{\qed}%
  \normalfont\topsep6\p@\@plus3\p@\relax\trivlist
  \item[\hskip\labelsep\itshape #1\@addpunct{.}]\ignorespaces
}{\popQED\endtrivlist\@endpefalse}
\makeatother
\crefname{theorem}{Theorem}{Theorems}
\crefname{lemma}{Lemma}{Lemmas}
\crefname{proposition}{Proposition}{Propositions}
\crefname{corollary}{Corollary}{Corollaries}
\crefname{section}{Section}{Sections}
\crefname{subsection}{Section}{Sections}
\crefname{equation}{Equation}{Equations}
\Crefname{theorem}{Theorem}{Theorems}
\Crefname{lemma}{Lemma}{Lemmas}
\Crefname{proposition}{Proposition}{Propositions}
\Crefname{corollary}{Corollary}{Corollaries}
\Crefname{section}{Section}{Sections}
\Crefname{subsection}{Section}{Sections}
\Crefname{equation}{Equation}{Equations}
\newcommand{\E}{\mathbb E}
\newcommand{\Z}{\mathbb Z}
\newcommand{\N}{\mathbb N}
\newcommand{\Q}{\mathbb Q}
\newcommand{\R}{\mathbb R}
\newcommand{\C}{\mathbb C}
\newcommand{\T}{\mathbb T}
\newcommand{\Pp}{\mathbb P}
\newcommand{\M}{\mathcal M}
\newcommand{\TV}{d_{\mathrm{TV}}}
\newcommand{\Exp}{\operatorname{Exp}}

\numberwithin{equation}{section}
\allowdisplaybreaks[1]
\title[Pythagorean pairs in boxes of prime powers]{Pythagorean pairs in boxes of prime powers}

\newcommand*{\AuthorName}{Rongzhong Xiao}
\newcommand*{\AuthorAffiliation}{School of Mathematical Sciences, University of Science and Technology of China, Hefei, Anhui, 230026, PR China}
\newcommand*{\AuthorEmail}{xiaorz@mail.ustc.edu.cn}
\ifx\AuthorName\empty\else\author{\AuthorName}\fi
\ifx\AuthorAffiliation\empty\else\address{\AuthorAffiliation}\fi
\ifx\AuthorEmail\empty\else\email{\AuthorEmail}\fi

\date{}
\keywords{Pythagorean pairs, Multiplicative functions, Gowers norms, Inverse theorems.}
\subjclass[2020]{Primary 11B30; Secondary 11N37, 05D10}
\thanks{This research is supported by the National Natural Science Foundation of China (No. 12426201), and by the Postdoctoral Fellowship Program of CPSF under Grant Number GZB20260746.}
\begin{document}
\begin{abstract}
Let $p_i$ be the $i$th prime, and let $f(N)$ be the largest cardinality
of a subset of \[
\left\{\prod_{i=1}^N p_i^{a_i}:0\le a_i<N,\ a_i\in\Z\right\}
\]
containing no two distinct legs of an integer right triangle.
We prove
\[
 f(N)
 \ll N^N (\log\log N)^{-1/1296}.
\]
\end{abstract}
\maketitle
\begingroup
\color{ContentsBlue}
\hypersetup{linkcolor=ContentsBlue}
\tableofcontents
\par\removelastskip
\endgroup
\section{Introduction}
\label{sec:introduction}

\subsection{History and motivations}

The Pythagorean equation $x^2+y^2=z^2$ is a central nonlinear question
in partition regularity.  Erd\H{o}s and
Graham \cite[Section~3, Problem~6]{GrahamFavorite} asked whether every
finite colouring of $\N$ contains a monochromatic solution.  

The finite two-colour
problem was first approached computationally: Cooper and Overstreet
\cite{CooperOverstreet} found a two-colouring of
$\{1,\ldots,7664\}$ with no monochromatic triple by a SAT computation.
Heule, Kullmann, and Marek \cite[Theorem~1]{HeuleKullmannMarek} then gave
a certified cube-and-conquer proof (with a formally checkable certificate
of roughly 200 terabytes) that $\{1,\ldots,7824\}$ admits such a colouring,
whereas every two-colouring of $\{1,\ldots,7825\}$ contains a
monochromatic Pythagorean triple.  Two independent 2026 preprints give
purely theoretic proofs of the same two-colour theorem: Leng \cite{Leng2026} uses an
analytic and ergodic argument, while Wesley \cite{Wesley2026} gives an
elementary argument based on linear algebra and counting.  Thus the
finite two-colour problem is settled, whereas the corresponding question
for three or more colours remains open.

A second line of progress uses multiplicative harmonic analysis.
Frantzikinakis and Host \cite[Corollary~2.8 and Problem~3]{FrantzikinakisHost}
developed higher-order Fourier tools for bounded multiplicative functions.
They proved that the following holds: Suppose that $a$, $b$, $c$, and $a+b$ are squares of positive integers. For the equation $ax^2+by^2=cz^2$,
every
finite colouring of $\N$ admits a solution $x,y,z\in\N$ with
$x\ne y$ and $x,y$ of the same colour. They also isolated the
two-leg Pythagorean question, where $x,y$ have the same colour and
$x^2+y^2=z^2$ for some $z$.  Frantzikinakis, Klurman, and Moreira
\cite[Theorems~1.1--1.2]{FKM} subsequently proved that every finite
colouring of $\N$ contains such distinct same-coloured legs and established
the analogous statements for the other two pairs of entries.  Their
density theorem gives the same conclusion inside every set of positive
multiplicative density.\footnote{Ordinary natural density is insufficient: the
odd integers have density $1/2$, but the sum of two odd squares is
$2\pmod 4$ and hence is not a square.  Multiplicative F\o lner sequences
retain the local information lost by ordinary density.}

In a sequel, Frantzikinakis, Klurman, and Moreira
\cite[Theorems~1.2--1.4 and 1.9--1.10]{FKMgeneralized} extended this
framework to generalized Pythagorean equations
$ax^2+by^2=cz^2$.  They proved unconditional partition and density
regularity for the relevant pairs when $ac$ or $bc$ is a square; in the
complementary $(a+b)c$-square case they obtained corresponding results
under an Elliott-type correlation conjecture, together with an unconditional
theorem for pretentious multiplicative level sets.  These results place the
Pythagorean pair theorem within a broader theory of homogeneous quadratic
forms.

Recent work broadens this multiplicative picture.  Azevedo and Moreira
\cite{AzevedoMoreira} showed that finitely many completely multiplicative
functions can be simultaneously close to $1$ on the three entries of a
Pythagorean triple, and treated square coefficients satisfying Rado's
condition.  Frantzikinakis and Mountakis \cite{FrantzikinakisMountakis} proved a
corresponding recurrence theorem for generalized triples
$ax^2+by^2=cz^2$ in pretentious systems. Leng, McNamara, and Mountakis \cite{LengMcNamaraMountakis}
obtained almost-Pythagorean quadruples $x^2+y^2w^2=z^2$ in
multiplicatively dense sets, with $w$ allowed to lie in a second dense set.

Here, our aim is quantitative and finite.  We replace an arbitrary
multiplicative F\o lner sequence by the explicit prime-power boxes below
and obtain a decay rate for the largest subset containing no two distinct
legs of an integer right triangle.
\subsection{Prime boxes and the main result}

For a positive integer $N$, let $p_1<\cdots<p_N$ be the first $N$
primes and define
\begin{equation*}
 \Phi_N=\left\{\prod_{i=1}^N p_i^{a_i}:
              a_i\in\{0,1,\ldots,N-1\}\right\}.
\end{equation*}
Unique factorization gives $|\Phi_N|=N^N$. These sets are products of finite geometric progressions whose exponent
vectors form the integer box $\{0,\ldots,N-1\}^N$.

The boxes form a multiplicative F\o lner sequence: multiplication by
any fixed positive integer changes only a boundary proportion tending to
zero.  For a multiplicative F\o lner sequence $\Psi=(\Psi_K)$, define
$$\overline d_\Psi(A)=\limsup_{K\to\infty}|A\cap\Psi_K|/|\Psi_K|.$$
The density theorem of Frantzikinakis, Klurman and Moreira \cite[Theorem~1.2(1), with $a=b=c=1$]{FKM} states that
positive multiplicative density forces distinct $x,y$ with
$x^2+y^2$ a square.

We study the following finite extremal formulation of the two-leg
density problem, with both the prime support and the exponent range
specified by $N$. Call
$A\subseteq\Phi_N$ admissible if there are no distinct $a,b\in A$
and $c\in\N$ with $a^2+b^2=c^2$, and put
\begin{equation*}
 f(N)=\max\{|A|:A\subseteq\Phi_N\text{ is admissible}\}.
\end{equation*}
The integer $c$ is not required to belong to $A$ or to $\Phi_N$. The fact that the sum of two odd squares is $2\pmod 4$ implies that $f(N)\ge N^{N-1}$.
The theorem below bounds the density of every
admissible subset of a sufficiently large box, with the threshold
uniform over the choice of that subset.

\begin{theorem}
\label{thm:main}
There are absolute constants $C_{\rm main},C_1,C_2,N_0\ge1$
with the following properties. For $0<\delta<1/10$ and every integer $N\ge1$,
\begin{equation}
 N\ge\exp\left(\exp\!\left(C_{\rm main}\delta^{-1295}\log^2(2/\delta)\right)\right)
 \quad\Longrightarrow\quad f(N)\le2\delta N^N.
 \label{eq:main-threshold}
\end{equation}
For every integer $N\ge N_0$,
\begin{equation}
 \frac{f(N)}{N^N}
 \le C_1\left(\frac{(\log\log\log N)^2}{\log\log N}\right)^{1/1295}
 \le C_2(\log\log N)^{-1/1296}.
 \label{eq:main-rate}
\end{equation}
For each $0<b<1/1295$, there are $C_b,N_b>0$ such that
\begin{equation}
 f(N)\le C_b N^N(\log\log N)^{-b}
 \qquad\text{for every integer }N\ge N_b.
 \label{eq:main-other-powers}
\end{equation}
\end{theorem}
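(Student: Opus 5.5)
We follow the Frantzikinakis--Klurman--Moreira strategy, made quantitative inside the box $\Phi_N$. Pythagorean legs are parametrized as $x=k(m^2-n^2)$, $y=k\cdot 2mn$ with $m>n\ge1$. Given an admissible $A\subseteq\Phi_N$ of density $\delta':=|A|/N^N>2\delta$, we count weighted configurations
\[
 \Lambda(A)=\sum_{m,n}w(m,n)\sum_{k}1_A\bigl(k(m-n)(m+n)\bigr)\,1_A\bigl(k\,2mn\bigr),
\]
with $k$ ranging over $\Phi_N$ and $w$ a weight on pairs. We take $(m,n)$ to be of the form $m=P^{-1}Q$, $n$ built from small primes, or more precisely we average over $m,n\le M$ with $M$ a slowly growing function of $N$. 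Writing $x=k(m-n)(m+n)$ and using multiplicative shifts, the translation $k\mapsto kd$ moves $\Phi_N$ only by a boundary proportion $O(\log M/N)$ for $d\le M^2$, since each exponent shifts by at most $O(\log M)$. Hence $\Lambda(A)$ equals, up to an error $o(1)$, a correlation average of the indicator $1_A$ against itself along the multiplicative pattern $(m^2-n^2)\cdot$ versus $2mn\cdot$. Admissibility forces $\Lambda(A)$ to be supported on the diagonal $m^2-n^2=2mn$, which is impossible over the integers. So $\Lambda(A)=0$, and the whole task is a quantitative lower bound $\Lambda(A)\gg \mathrm{poly}(\delta)$.

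For the lower bound we expand $1_A$ on the group $\Z^N$ of exponent vectors (a multiplicative Fourier/character decomposition). We decompose $1_A=f_{\rm str}+f_{\rm unif}$ via an energy-increment argument. Here $f_{\rm str}$ is a combination of a bounded number of multiplicative characters that are \emph{pretentious}, i.e. depend only on exponents of a few small primes and finite-order Dirichlet-type twists. The remainder $f_{\rm unif}$ has small Gowers-type norm along the averages in $m,n$.

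\begin{itemize}
\item The uniform part. We control its contribution by a quantitative version of the FKM/Frantzikinakis--Host concentration estimates. After the change of variables, the average of $f(k(m-n)(m+n))\overline{g}(k2mn)$ over $m,n\le M$ is bounded by a $U^2$/$U^3$ norm in the additive variables $m,n$ of the multiplicative functions evaluated on linear forms. An inverse theorem (with polynomial bounds, as in the recent quantitative inverse theorems) then gives the loss $\delta^{O(1)}$, which in the end produces the exponent $1295$.
\item The structured part. Here we must show positivity. As in FKM, this is done by choosing $m,n$ in a subset where all relevant pretentious characters take value close to $1$ on $m,n,m\pm n$, an Azevedo--Moreira-type simultaneous approximation. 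This gives a main term $\ge \delta'^2/2$ times the density of good $(m,n)$, and the local obstruction (odd legs) is handled because the box includes powers of $2$ and the character twist at $2$ is absorbed.
\end{itemize}

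Finally we balance parameters. The regularity step needs $M\ge\exp(\delta^{-O(1)})$ iterations/scales, and the boundary error $\log M/N$ together with the number of good primes needed force $\log\log N\gg \delta^{-1295}\log^2(2/\delta)$. This is \eqref{eq:main-threshold}. Inverting the threshold with $\delta$ chosen as $C((\log\log\log N)^2/\log\log N)^{1/1295}$ gives \eqref{eq:main-rate} and then \eqref{eq:main-other-powers}, since $(\log\log\log N)^{2/1295}$ is absorbed by any smaller power loss.

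The main obstacle is the uniform part: obtaining a polynomial-in-$\delta$ inverse theorem for the relevant Gowers norm of multiplicative functions restricted to the box, uniformly in $N$, with explicit exponent tracking. We would first try to reduce to $U^2$ via a Cauchy--Schwarz along $k$ and then invoke a quantitative Hal\'asz-type bound adapted to $\Phi_N$.
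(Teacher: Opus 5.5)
The gap is a genuine one: your structured/uniform dichotomy does not cover all of the spectrum of $1_A$. Expand $1_A$ on the exponent group $(\Z/N\Z)^N$; its characters are exactly the restrictions of elements of $\mathcal M$. Up to boundary terms your count becomes $\sum_\xi|\widehat F(\xi)|^2\,\Re\sum_r\mu(r)g_\xi(r)$, where $\mu$ is the law of $R(m,n)=(m^2-n^2)/(2mn)$ under your weight. The weights $|\widehat F(\xi)|^2$ sum to $\alpha$, but nothing puts this mass on boundedly many characters. Moreover, the size of $\widehat F(\xi)$ has nothing to do with whether $g_\xi$ is pretentious.

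Suppose membership in $A$ depends only on the exponent of $2$. Then the whole spectrum sits on characters with $g(p)=1$ for every odd $p$. If the condition on that exponent is a slowly drifting periodic one, every coefficient is tiny, so these characters fall outside your bounded structured part. Yet they are maximally pretentious and not Gowers-uniform in $(m,n)$. For them $\sum_r\mu(r)g(r)=\E\,g(2)^{v_2(R(m,n))}$, which for unrestricted $(m,n)$ has real part about $-2/5$ at $g(2)=i$, and their total contribution can be negative and comparable to $\alpha^2$.

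You also cannot move all pretentious characters into the structured part. Since $v_2(R(m,n))\neq0$ for every pair, no choice of $(m,n)$ makes $g(2)^{v_2(R(m,n))}$ close to $1$ for all values of $g(2)$. Your remark that the twist at $2$ is absorbed is exactly the missing argument.

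What is needed is one measure $\mu_\delta$, independent of $A$, with $\Re\sum_r\mu_\delta(r)g(r)\ge-\delta$ for every $g\in\mathcal M$; \Cref{main:finite-measure} provides this. Then \Cref{tr:finite-transfer} gives $\alpha^2-\delta\alpha\le2\log H_\delta/(N\log2)$ at once. For pretentious $g$ the paper uses local devices absent from your plan:
\begin{itemize}
\item $W_0$ forces $v_p(R)=2J$ at all $p\le B$, with a single $J$ of triangular law and unit part $\equiv1\pmod{p^3}$. The small-prime factor is then a Fej\'er sum $\ge-1/T_0$, whatever the values $g(p)$.
\item Weil's bound gives $|K_p|\le2/\sqrt p$ for nonprincipal local characters at $p>B$, and principal local factors lie in $[0,1]$.
\item The real weight has Mellin transform $(\sin(\tau/2)/(\tau/2))^4\ge0$.
\item A general $g$ is compared with its finite character model via Tur\'an--Kubilius, at cost $b_\delta\sqrt{E_g(Z_j)-E_g(Z_{j-1})}$. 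This is made small on average over $k\asymp\delta^{-1295}\log(2/\delta)$ fixed scales, which is what forces the double-exponential heights in the threshold.
\end{itemize}

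Your treatment of the non-pretentious part also needs a different tool. The forms $m-n,m+n,m,n$ satisfy $(m+n)^2+(m-n)^2=2m^2+2n^2$, so their average is not controlled by $U^2$ norms of general bounded functions. A Cauchy--Schwarz in $k$ does not lower the complexity in $(m,n)$: after the expansion above, the $k$-average has already become the spectral weights. A Hal\'asz-type bound controls only linear correlations.

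The paper instead bounds each kernel by $\|g\|_{U^3[C_0X]}$ (\Cref{tr:four-forms}, \Cref{tr:uniform-estimate}). It then needs a quantitative $U^3$ inverse theorem for completely multiplicative functions, \Cref{thm:mult_inverse}. That theorem is built from Leng's quadratic inverse theorem, a Bourgain--Sarnak--Ziegler reduction to scalar quadratic phases, and Hal\'asz--Montgomery--Tenenbaum. The exponent $1295=3\cdot431+2$ comes from that theorem and the number of scales, not from an unspecified $\delta^{O(1)}$ loss. Your final inversion of the threshold into the rates is fine.
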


In particular, $b=1/1296$ is an admissible numerical exponent.
The first inequality in \eqref{eq:main-rate} records the more precise
logarithmic correction. The constants in the theorem are absolute
where stated, but their numerical values are not evaluated.

\subsection{Notation}
\label{intro:notation}

For $x\ge1$, write $[x]=\{1,\ldots,\lfloor x\rfloor\}$,
$e(t)=e^{2\pi it}$, and $\T=\R/\Z$.  The distance to the nearest
integer is $\|t\|_{\R/\Z}$.  A finite-set average is normalized:
\begin{equation*}
 \E_{x\in S}F(x)=|S|^{-1}\sum_{x\in S}F(x).
\end{equation*}
We write $\mathbf1_S$ for an indicator and $A\ll B$ when
$A\le CB$ with the stated dependence of $C$; all logarithms are
natural.  The iterated logarithms are
$\log_2x=\log\log x$, $\log_3x=\log\log\log x$, and
$\Exp_2(x)=\exp(\exp x)$.

Let $\mathcal M$ be the set of all completely multiplicative functions
$g:\N\to\C$ with $|g(n)|=1$.  Their unique extension to
$\Q_{>0}$ is
\begin{equation}
 g(a/b)=g(a)\overline{g(b)}.
 \label{eq:rational-extension}
\end{equation}
For $r\in\Q_{>0}$, $v_p(r)$ denotes its (possibly negative) prime
valuation.  For a primitive Dirichlet character $\chi$,  denote its conductor
by $q_\chi$.  For real $X\ge2$, set
\begin{equation*}
 \mathbb D_X(g,\chi n^{it})^2
 =\sum_{\substack{p\le X\\p\ \mathrm{prime}}}\frac{1-\Re(g(p)\overline{\chi(p)}p^{-it})}{p}.
\end{equation*}

\section{Proof outline}
\label{sec:framework}

We explain the proof of \Cref{thm:main} before developing its
ingredients. Fix $0<\delta<1/10$. The construction must work
simultaneously for every admissible set and every multiplicative
character. Its quantitative objective is a finite measure whose
support has bounded rational height. We describe this objective
first, then the arithmetic averages, the analytic comparison, and
the choice of scales. 

\subsection{Fourier transfer}

A positive rational $r$ is called a Pythagorean ratio if $1+r^2$
is the square of a rational number. Write $r=u/v$ in the lowest terms,
with $u,v\in\N$, and define its height by
$
 \operatorname{ht}(r)=\max(u,v).
$
For a finitely supported probability measure $\mu$ on these ratios,
we seek the uniform lower bound
\begin{equation}
 \Re\sum_r\mu(r)g(r)\ge-\delta
 \qquad\text{for every }g\in\mathcal M.
 \label{fw:measure-objective}
\end{equation}

This estimate controls $f(N)$ through translation of exponent
vectors. A ratio $r$ acts on exponent
vectors by translation through
$d(r)=(v_{p_1}(r),\ldots,v_{p_N}(r))$. If $\operatorname{ht}(r)\le H$
and $N\ge H$, every prime in $r$ occurs among the first $N$ primes,
and
\begin{equation*}
 \sum_{i=1}^N|v_{p_i}(r)|\le\frac{2\log H}{\log2}.
\end{equation*}
The proportion of exponent vectors for which translation leaves
the box is at most this last quantity divided by $N$.
On the other vectors, two points of an admissible set cannot differ
by $d(r)$: their corresponding integers would be distinct legs of
an integer right triangle.

Finite Fourier analysis on $(\Z/N\Z)^N$ supplies the other side of
the estimate. If an admissible set has density $\alpha$, its zero
Fourier coefficient contributes $\alpha^2$. The other frequencies
contribute at least $-\delta\alpha$ by
\eqref{fw:measure-objective}. Thus \Cref{tr:finite-transfer} proves
\begin{equation*}
 \alpha^2-\delta\alpha
 \le\frac{2\log H}{N\log2}.
\end{equation*}
For $N\ge2\log H/(\delta^2\log2)$, the right side is at most
$\delta^2$, and the positive root of the resulting quadratic is
less than $2\delta$. The remaining task is therefore to construct
$\mu$ satisfying \eqref{fw:measure-objective} with a suitable $H$.

\subsection{Local averages}

Euclid's identity
\begin{equation*}
 (m^2-n^2)^2+(2mn)^2=(m^2+n^2)^2
 \qquad(m>n>0)
\end{equation*}
produces the ratios
\begin{equation*}
 R(m,n)=\frac{m^2-n^2}{2mn}.
\end{equation*}
The measure will be an average of weighted distributions of these
ratios. The two weights have different purposes. A fixed nonnegative
function $w$ of two real variables controls the factor $r^{it}$.
A nonnegative periodic function $W_0$ controls values at small
primes. The function $w$ has integral one and compact support in
$\{(s,t):s>t>0,\ s+t<C_0-1\}$, where $C_0\ge2$ is a fixed integer.
The period $D_0$ of $W_0$ depends only on $\delta$, and the mean of
$W_0$ modulo $D_0$ is one. Define
\begin{equation*}
 \begin{aligned}
 Z_X&=X^{-2}\sum_{m,n\in\Z}w(m/X,n/X)W_0(m,n),\\
 K_X(g)&=\frac{X^{-2}}{Z_X}
       \sum_{m>n>0}w(m/X,n/X)W_0(m,n)g(R(m,n)).
 \end{aligned}
\end{equation*}
These expressions are used only when $Z_X>0$. The normalization
estimate in \eqref{ar:normalization} gives $Z_X\ge1/2$ for every
sufficiently large $X$. Consequently $K_X(1)=1$, so each $K_X$
is the transform of a probability measure.

The construction of $W_0$ in \Cref{ar:section} uses
\begin{equation*}
 B=(200/\delta)^2,\qquad T_0=\lceil100/\delta\rceil,
 \qquad D_0=\prod_{\substack{p\le B\\p\ \mathrm{prime}}}p^{2T_0+5}.
\end{equation*}
At each prime $p\le B$, a congruence condition fixes the valuation
of $R$ to be $2j$ and constrains its unit part to be one modulo
$p^3$. The same integer $j$ is used at all these primes. Its
probabilities are proportional to $T_0+1-|j|$ for
$0<|j|\le T_0$. To describe the relevant identity, let $z$ be
any complex number of modulus one. Then
\begin{equation*}
 \frac1{T_0}\left\{
       \frac1{T_0+1}\left|\sum_{j=0}^{T_0}z^j\right|^2-1
       \right\}\ge-\frac1{T_0}.
\end{equation*}
In the application, $\displaystyle z=\prod_{\substack{p\le B\\p\ \mathrm{prime}}}z_p^2$, where the corrected
prime phases $z_p$ are defined in \eqref{ar:prime-phases}.
That definition separates the factor $r^{it}$ and the local unit
characters as in \eqref{ar:character-factorization}.
The identity explains the use of one common integer at all small
primes. The conditional unit distribution also eliminates every
local character that is nontrivial on the prescribed unit subgroup.

For the remaining primes, the local principal-character factors
are nonnegative, while a nonprincipal factor at a prime $p>B$
has modulus at most $2/\sqrt p$. The real weight is chosen so that
\begin{equation*}
 \int_{s>t>0}w(s,t)R(s,t)^{i\tau}\,ds\,dt
 =\left(\frac{\sin(\tau/2)}{\tau/2}\right)^4\ge0,
\end{equation*}
with value one at $\tau=0$.
\Cref{ar:matching} compares the finite integer average with the
product of these local and real averages, including all unit-character
data. It yields \Cref{ar:model-positivity}: there is an absolute
constant $C$ and an integer $U$, depending only on $\delta$ and
satisfying $2\le U\ll\delta^{-3}$, such that every
$M_1\ge\exp(C\delta^{-3})$ satisfies
\begin{equation}
 \Re K_X(\widetilde g)\ge-\delta/50
 \quad\text{if}\quad y\ge M_1,\quad X\ge(M_1y)^U,
 \label{fw:model-bound}
\end{equation}
whenever $\widetilde g\in\mathcal M$ agrees with
$\chi(p)p^{it}$ at all primes $p>y$, with $q_\chi,|t|\le M_1$.
The values at primes at most $y$ remain arbitrary.

\subsection{The inverse theorem}

For $g\in\mathcal M$, complete multiplicativity factors the kernel
integrand into four functions of linear forms:
\begin{equation}
 g(R(m,n))=\overline{g(2)}\,
 g(m-n)g(m+n)\overline{g(m)}\,\overline{g(n)}.
 \label{fw:four-forms}
\end{equation}
Three applications of Cauchy--Schwarz control this average by the
$U^3$ norm. A finite Fourier expansion of $W_0$ and a subdivision
of the support of $w$ give \Cref{tr:uniform-estimate}. In particular,
there exist $\epsilon_\delta>0$ and $X_\delta\ge1$ with
$\log(2X_\delta)\ll\delta^{-3}$ such that, for every integer
$X\ge X_\delta$,
\begin{equation}
 \log(2/\epsilon_\delta)\ll\delta^{-3},\qquad
 \|g\|_{U^3[C_0X]}<\epsilon_\delta
 \ \Longrightarrow\ |K_X(g)|\le\delta/50.
 \label{fw:uniform-alternative}
\end{equation}
Thus only functions with a larger uniformity norm require a
comparison with the models in \eqref{fw:model-bound}.

The quantitative inverse theorem proved in \Cref{inv:section} has
the following form. Put $L=2+\log(2/\epsilon)$. If
$$\|g\|_{U^3[Z]}\ge\epsilon\quad\text{and}\quad 
Z\ge\Exp_2(CL^{431}),$$ then some primitive character $\chi$ and
real number $t$ satisfy
\begin{equation}
 q_\chi,|t|\le\exp(CL^{431}),\qquad
 \mathbb D_Z(g,\chi n^{it})^2\le CL^{431}.
 \label{fw:inverse-output}
\end{equation}
The proof is divided into two parts. In \Cref{sec:scalar}, a
correlation with a scalar quadratic phase is converted into
rational approximations to its coefficients. After these
coefficients are controlled, a mean-value theorem for multiplicative
functions gives the character and real parameter. In
\Cref{inv:section}, the $U^3$ inverse theorem first supplies a
quadratic nilsequence. The group and orbit are constructed explicitly.
Further correlations reduce this nilsequence to the scalar case.

The power $431$ is obtained from two bounds in the initial inverse
theorem and the subsequent scalar reduction. If $d$ is the dimension
of the constructed group, $M$ is the Lipschitz bound, and $\eta$ is
the correlation, then
\begin{equation*}
 d\ll L^{43},\qquad
 \log(10dM/\eta)\ll L^{87},\qquad
 (d+1)^8\log(10dM/\eta)\ll L^{431}.
\end{equation*}
The interval thresholds are checked in
\Cref{thm:mult_inverse}; they have the same numerical power.

\subsection{Concentration}

Fix ranges $q_\chi\le Q$ and $|t|\le T_1$ that include all
parameters produced by \eqref{fw:inverse-output} at
$\epsilon=\epsilon_\delta$. For each $g\in\mathcal M$ and real
$z\ge2$, define
\begin{equation*}
 E_g(z)=\min_{\substack{\chi\text{ primitive},\ q_\chi\le Q\\|t|\le T_1}}
       \mathbb D_z(g,\chi n^{it})^2.
\end{equation*}
This is a nonnegative, nondecreasing function of $z$. A minimizing
pair exists because the character set is finite and the parameter
interval is compact. If $(\chi,t)$ minimizes the sum at $z$ and
$2\le y\le z$, its contribution from primes in $(y,z]$ is at most
$E_g(z)-E_g(y)$; see \Cref{main:minimized-energy}.

For $y\ge Q$, define $\widetilde g(p)=g(p)$ for $p\le y$ and
$\widetilde g(p)=\chi(p)p^{it}$ for $p>y$. Both choices have modulus
one. Their completely multiplicative extension is therefore in
$\mathcal M$. The quotient $h=g/\widetilde g$ is one at every
prime at most $y$.
\Cref{con:section} applies the Tur\'an--Kubilius inequality to
the additive function whose value at a prime is the argument of
$h(p)$. It proves that the values of $h$ are close, in mean square,
to a single complex number of modulus one on all four relevant
linear forms. In \eqref{fw:four-forms}, that number occurs twice
and its conjugate occurs twice, so their product is one.

The congruence weight is invariant under multiplying both
coordinates by a unit modulo $D_0$. This invariance gives the
comparison coefficient $b_\delta$ with
\begin{equation}
 |K_X(g)-K_X(\widetilde g)|
 \le b_\delta\sqrt{E_g(C_0X)-E_g(y)},
 \qquad b_\delta^2\ll\log(2/\delta),
 \label{fw:concentration}
\end{equation}
Here the pair $(\chi,t)$ minimizes the sum at $z=C_0X$,
and the scale assumptions are those in \Cref{con:comparison}.
The estimate follows by averaging over the weighted residue classes.

\subsection{Scales and height}

Set $\ell=\log(2/\delta)$. Substitution of
\eqref{fw:uniform-alternative} into \eqref{fw:inverse-output}
allows a bound $C_1\ll\delta^{-1293}$ for the minimized prime sum,
because $1293=3\cdot431$. Choose a sufficiently large initial
parameter $M_1$ with
\begin{equation}
 \log\log M_1\ll\delta^{-1293}.
\label{fw:initial-scale}
\end{equation}
Take $M_1$ to be an integer dominating $Q,T_1,D_0$, the uniformity
threshold $X_\delta$, and the inverse-theorem threshold at
$\epsilon_\delta$. The verification that these requirements are
consistent with \eqref{fw:initial-scale} is given in
\eqref{main:initial-parameter}--\eqref{main:scale-sequence}.
Starting from $Z_0=M_1^{a_0}$, with a fixed integer $a_0\ge2$,
define integer scales by
\begin{equation}
 X_j=\left\lceil(M_1Z_{j-1})^U\right\rceil,\qquad
 Z_j=C_0X_j\qquad(1\le j\le k).
 \label{fw:scales}
\end{equation}
The scales are chosen independently of $g$. If $E_g(Z_j)>C_1$,
the inverse theorem implies the small-kernel estimate
\eqref{fw:uniform-alternative}. If $E_g(Z_j)\le C_1$, use
\eqref{fw:model-bound} and \eqref{fw:concentration}. The result
in the second case is
\begin{equation*}
 \Re K_{X_j}(g)\ge-\delta/50
         -b_\delta\sqrt{E_g(Z_j)-E_g(Z_{j-1})}.
\end{equation*}

To sum the errors, put $F_j=\min(C_1,E_g(Z_j))$. This sequence
is nondecreasing and lies in $[0,C_1]$. Cauchy--Schwarz gives
\begin{equation*}
 \frac1k\sum_{j=1}^k\sqrt{F_j-F_{j-1}}
 \le\left(\frac{F_k-F_0}{k}\right)^{1/2}
 \le\sqrt{C_1/k}.
\end{equation*}
The two cases at each scale are combined in
\Cref{main:capped-average}. Taking $k$ to be a sufficiently large
constant times $b_\delta^2C_1\delta^{-2}$, rounded up, yields
\begin{equation*}
 k\ll\delta^{-1295}\ell,
 \qquad
 \Re\left(\frac1k\sum_{j=1}^kK_{X_j}(g)\right)\ge-\delta.
\end{equation*}
The average of the corresponding probability measures is the
required measure $\mu_\delta$.

Every ratio in its support has height at most $Z_k^2$. On taking
logarithms in \eqref{fw:scales}, one obtains a linear recurrence
for $\log Z_j$, with multiplier $U$. Its solution is written out
in \eqref{main:solved-recurrence}. Since
$\log U\ll\ell$, it gives
\begin{equation*}
 \log\log\operatorname{ht}(r)
 \le k\log U+O(\delta^{-1293})
 \ll\delta^{-1295}\ell^2
 \qquad(r\in\operatorname{supp}\mu_\delta).
\end{equation*}
The two factors of $\ell$ have separate sources: one comes from
$b_\delta^2$, and the other comes from $\log U$.
\Cref{main:finite-measure} verifies all thresholds in this
calculation. The finite Fourier estimate then proves
\eqref{eq:main-threshold}, and its elementary inversion proves
\eqref{eq:main-rate} and \eqref{eq:main-other-powers}.

\subsection{Relations between the results}
\label{framework:result-graph}

The four panels below summarize the principal proof dependencies of the
results in the paper. An arrow \(A\to B\) means that the proof of \(B\)
uses \(A\), together with the hypotheses and other inputs required in
that proof. It does not assert that \(A\) alone implies \(B\), or that
the two statements are equivalent. Directed paths record successive
uses of results.

Repeated boxes represent the same result and connect the panels.
Their pale shading distinguishes these repeated nodes. The double
dashed boxes mark \Cref{sc:halasz-external} and \Cref{sc:LSS-5.2} as external inputs; other external inputs
and elementary estimates are cited in the proofs rather than drawn
as additional nodes. An edge can also record an estimate established
within the indicated result's proof, such as the prime or
normalization estimates. Crossing arrows have no junction unless
they meet a result box.

\begingroup
\raggedbottom
\newcommand{\resultgraphbox}[2]{\Cref{#1}}
\tikzset{
  result graph node/.style={
    draw=black!65, rounded corners=2pt, line width=.45pt,
    align=center, text width=2.65cm, minimum height=.75cm,
    inner xsep=3pt, inner ysep=4pt,
    font=\fontsize{9}{10.6}\selectfont, fill=white,
    execute at begin node={\hyphenpenalty=10000\exhyphenpenalty=10000}},
  result graph shared/.style={fill=blue!4},
  result graph external/.style={
    draw=black, double, double distance=.7pt,
    densely dashed, fill=black!5},
  result graph arrow/.style={
    -{Stealth[length=4.5pt,width=3.5pt]}, draw=black!70,
    line width=.5pt, rounded corners=3pt}
}

\par\medskip\noindent\begin{minipage}{\linewidth}
\hypertarget{resultgraph:arithmetic}{\textbf{A. Sections 3 and 4}}\par\smallskip
\centering
\begin{tikzpicture}[x=1cm,y=1cm]
\node[result graph node] (a41) at (1.7,0)
  {\resultgraphbox{ar:shells}{Lemma}};
\node[result graph node] (a42) at (0,-1.9)
  {\resultgraphbox{ar:local-factor-bounds}{Lemma}};
\node[result graph node] (a43) at (3.4,-1.9)
  {\resultgraphbox{ar:Fejer}{Lemma}};
\node[result graph node,result graph shared] (a34) at (0,-3.9)
  {\resultgraphbox{sc:prime-products}{Lemma}};
\node[result graph node,result graph shared] (a44) at (3.4,-3.9)
  {\resultgraphbox{ar:weight-cost}{Lemma}};
\node[result graph node,result graph shared] (a45) at (7.9,0)
  {\resultgraphbox{ar:quadrature}{Lemma}};
\node[result graph node] (a47) at (11.5,0)
  {\resultgraphbox{ar:joint-valuation-data}{Lemma}};
\node[result graph node] (a46) at (7.9,-1.9)
  {\resultgraphbox{ar:spatial-quadrature}{Lemma}};
\node[result graph node,result graph shared] (a34b) at (11.5,-1.9)
  {\resultgraphbox{sc:prime-products}{Lemma}};
\node[result graph node] (a48) at (11.5,-3.9)
  {\resultgraphbox{ar:joint-valuation-approximation}{Theorem}};
\node[result graph node] (a49) at (7.9,-5.7)
  {\resultgraphbox{ar:matching}{Theorem}};
\node[result graph node,result graph shared] (a410) at (4.4,-7.6)
  {\resultgraphbox{ar:model-positivity}{Theorem}};
\draw[result graph arrow] (a41) -- (a42);
\draw[result graph arrow] (a41) -- (a43);
\draw[result graph arrow] (a41.east) -- (5.2,0) |- (a44.east);
\draw[result graph arrow] (a34) -- (a44);
\draw[result graph arrow] (a34b) -- (a48);
\draw[result graph arrow] (a45) -- (a46);
\draw[result graph arrow] (a47.east) -- (13.1,0) |- (a48.east);
\draw[result graph arrow] (a45.west) -- (6.2,0) |- (a49.west);
\draw[result graph arrow] (a46) -- (a49);
\draw[result graph arrow] (a48) -- (a49);
\draw[result graph arrow] (a42.west) -- (-1.6,-1.9) |- (a410.west);
\draw[result graph arrow] (a43.south) -- (3.4,-2.75) -- (1.7,-2.75) -- (1.7,-6.65) -- (a410.north west);
\draw[result graph arrow] (a44) -- (a410);
\draw[result graph arrow] (a49) -- (a410);
\draw[result graph arrow] (a45.west) -- (6.2,0) -- (6.2,-6.65) -- (a410.north east);
\end{tikzpicture}\par\smallskip
\end{minipage}\par\bigskip

\par\medskip\noindent\begin{minipage}{\linewidth}
\hypertarget{resultgraph:scalar}{\textbf{B. Sections 3 and 6}}\par\smallskip
\centering
\begin{tikzpicture}[x=1cm,y=1cm]
\node[result graph node,result graph shared] (b34) at (0,0)
  {\resultgraphbox{sc:prime-products}{Lemma}};
\node[result graph node] (b35) at (3.8,0)
  {\resultgraphbox{tr:rough-number}{Lemma}};
\node[result graph node] (b64) at (7.6,0)
  {\resultgraphbox{sc:congruence-counts}{Lemma}};
\node[result graph node,result graph external] (b69) at (11.4,0)
  {\resultgraphbox{sc:halasz-external}{Theorem}};
\node[result graph node] (b61) at (1.9,-1.8)
  {\resultgraphbox{sc:decomposition}{Lemma}};
\node[result graph node] (b65) at (7.6,-1.8)
  {\resultgraphbox{sc:rational-approx}{Proposition}};
\node[result graph node] (b610) at (11.4,-1.8)
  {\resultgraphbox{sc:residue-mean}{Lemma}};
\node[result graph node,result graph shared] (b62) at (1.9,-3.6)
  {\resultgraphbox{prop:bsz_dense}{Proposition}};
\node[result graph node] (b68) at (7.6,-3.6)
  {\resultgraphbox{sc:circle-lemma}{Lemma}};
\node[result graph node,result graph external] (b67ext) at (11.4,-3.6)
  {\resultgraphbox{sc:LSS-5.2}{Theorem}};
\node[result graph node] (b63) at (1.9,-5.4)
  {\resultgraphbox{sc:cross-progression}{Lemma}};
\node[result graph node,result graph shared] (b611) at (7.6,-5.4)
  {\resultgraphbox{thm:scalar_inverse}{Theorem}};
\node[result graph node] (b66) at (0,-7.2)
  {\resultgraphbox{sc:primitive-congruence}{Lemma}};
\node[result graph node,result graph shared] (b67) at (3.8,-7.2)
  {\resultgraphbox{lem:linear_dense}{Lemma}};
\draw[result graph arrow] (b34) -- (b35);
\draw[result graph arrow] (b34) -- (b61);
\draw[result graph arrow] (b35) -- (b61);
\draw[result graph arrow] (b61) -- (b62);
\draw[result graph arrow] (b62) -- (b63);
\draw[result graph arrow] (b64) -- (b65);
\draw[result graph arrow] (b66) -- (b67);
\draw[result graph arrow] (b69) -- (b610);
\draw[result graph arrow] (b62) -- (b611);
\draw[result graph arrow] (b63) -- (b611);
\draw[result graph arrow] (b67ext.west) -- (10.0,-3.6) -- (10.0,-4.55) -- (b68.east);
\draw[result graph arrow] (b65.east) -- (9.35,-1.8) -- (9.35,-4.55) -- (b611.north east);
\draw[result graph arrow] (b68) -- (b611);
\draw[result graph arrow] (b610.east) -- (13.1,-1.8) |- (b611.east);
\end{tikzpicture}\par\smallskip
\end{minipage}\par\bigskip

\par\medskip\noindent\begin{minipage}{\linewidth}
\hypertarget{resultgraph:nilsequence}{\textbf{C. Sections 3, 6 and 7}}\par\smallskip
\centering
\begin{tikzpicture}[x=1cm,y=1cm]
\node[result graph node] (c71) at (0,0)
  {\resultgraphbox{inv:progression-bohr}{Lemma}};
\node[result graph node] (c73) at (3.8,0)
  {\resultgraphbox{inv:realization}{Lemma}};
\node[result graph node] (c75) at (7.6,0)
  {\resultgraphbox{inv:fejer}{Lemma}};
\node[result graph node] (c76) at (11.4,0)
  {\resultgraphbox{inv:bracket}{Lemma}};
\node[result graph node] (c72) at (0,-1.8)
  {\resultgraphbox{inv:bohr-rank}{Proposition}};
\node[result graph node] (c77) at (9.5,-1.8)
  {\resultgraphbox{inv:vertical-kernel}{Theorem}};
\node[result graph node,result graph shared] (c31) at (0,-3.6)
  {\resultgraphbox{tr:interval-embedding}{Lemma}};
\node[result graph node,result graph shared] (c74) at (3.8,-3.6)
  {\resultgraphbox{inv:initial-inverse}{Corollary}};
\node[result graph node,result graph shared] (c62) at (7.6,-3.6)
  {\resultgraphbox{prop:bsz_dense}{Proposition}};
\node[result graph node,result graph shared] (c67) at (11.4,-3.6)
  {\resultgraphbox{lem:linear_dense}{Lemma}};
\node[result graph node] (c78) at (9.5,-5.4)
  {\resultgraphbox{inv:dense-constraints}{Lemma}};
\node[result graph node,result graph shared] (c611) at (1.9,-7.2)
  {\resultgraphbox{thm:scalar_inverse}{Theorem}};
\node[result graph node] (c79) at (9.5,-7.2)
  {\resultgraphbox{inv:scalar-reduction}{Theorem}};
\node[result graph node,result graph shared] (c710) at (5.7,-9)
  {\resultgraphbox{thm:mult_inverse}{Theorem}};
\draw[result graph arrow] (c71) -- (c72);
\draw[result graph arrow] (c72) -- (c74);
\draw[result graph arrow] (c73) -- (c74);
\draw[result graph arrow] (c31) -- (c74);
\draw[result graph arrow] (c75) -- (c77);
\draw[result graph arrow] (c76) -- (c77);
\draw[result graph arrow] (c62) -- (c78);
\draw[result graph arrow] (c67) -- (c78);
\draw[result graph arrow] (c77) -- (c78);
\draw[result graph arrow] (c77.east) -- (13.1,-1.8) |- (c79.east);
\draw[result graph arrow] (c78) -- (c79);
\draw[result graph arrow] (c75.west) -- (5.65,0) -- (5.65,-6.3) -- (c79.north west);
\draw[result graph arrow] (c74.south) -- (3.8,-8.1) -- (c710.north west);
\draw[result graph arrow] (c79) -- (c710);
\draw[result graph arrow] (c611) -- (c710);
\end{tikzpicture}\par\smallskip
\end{minipage}\par\bigskip

\par\medskip\noindent\begin{minipage}{\linewidth}
\hypertarget{resultgraph:assembly}{\textbf{D. Sections 1, 3, 4, 5, 7 and 8}}\par\smallskip
\centering
\begin{tikzpicture}[x=1cm,y=1cm]
\node[result graph node,result graph shared] (d34) at (0,0)
  {\resultgraphbox{sc:prime-products}{Lemma}};
\node[result graph node,result graph shared] (d44) at (3.8,0)
  {\resultgraphbox{ar:weight-cost}{Lemma}};
\node[result graph node,result graph shared] (d31) at (7.6,0)
  {\resultgraphbox{tr:interval-embedding}{Lemma}};
\node[result graph node] (d32) at (11.4,0)
  {\resultgraphbox{tr:four-forms}{Lemma}};
\node[result graph node] (d51) at (0,-1.8)
  {\resultgraphbox{con:prime-estimates}{Lemma}};
\node[result graph node,result graph shared] (d45) at (3.8,-1.8)
  {\resultgraphbox{ar:quadrature}{Lemma}};
\node[result graph node] (d81) at (7.6,-1.8)
  {\resultgraphbox{tr:uniform-estimate}{Proposition}};
\node[result graph node,result graph shared] (d710) at (11.4,-1.8)
  {\resultgraphbox{thm:mult_inverse}{Theorem}};
\node[result graph node] (d52) at (0,-3.6)
  {\resultgraphbox{con:common-scalar}{Lemma}};
\node[result graph node] (d53) at (3.8,-3.6)
  {\resultgraphbox{con:invariant-weight}{Lemma}};
\node[result graph node,result graph shared] (d410) at (7.6,-3.6)
  {\resultgraphbox{ar:model-positivity}{Theorem}};
\node[result graph node] (d82) at (11.4,-3.6)
  {\resultgraphbox{main:minimized-energy}{Lemma}};
\node[result graph node] (d54) at (0,-5.4)
  {\resultgraphbox{con:homogeneous-comparison}{Proposition}};
\node[result graph node] (d55) at (3.8,-5.4)
  {\resultgraphbox{con:comparison}{Corollary}};
\node[result graph node] (d84) at (7.6,-5.4)
  {\resultgraphbox{main:finite-measure}{Proposition}};
\node[result graph node] (d83) at (11.4,-5.4)
  {\resultgraphbox{main:capped-average}{Lemma}};
\node[result graph node] (d33) at (3.8,-7.5)
  {\resultgraphbox{tr:finite-transfer}{Proposition}};
\node[result graph node] (d11) at (7.6,-7.5)
  {\resultgraphbox{thm:main}{Theorem}};
\draw[result graph arrow] (d34) -- (d51);
\draw[result graph arrow] (d51) -- (d52);
\draw[result graph arrow] (d52) -- (d53);
\draw[result graph arrow] (d52) -- (d54);
\draw[result graph arrow] (d53) -- (d54);
\draw[result graph arrow] (d45.south west) -- (1.9,-2.65) -- (1.9,-4.5) -- (d54.north east);
\draw[result graph arrow] (d54) -- (d55);
\draw[result graph arrow] (d51.west) -- (-1.6,-1.8) -- (-1.6,-6.4) -- (3.8,-6.4) -- (d55.south);
\draw[result graph arrow] (d44.east) -- (5.7,0) -- (5.7,-4.5) -- (d55.north east);
\draw[result graph arrow] (d31) -- (d81);
\draw[result graph arrow] (d32) -- (d81);
\draw[result graph arrow] (d44) -- (d81);
\draw[result graph arrow] (d45) -- (d81);
\draw[result graph arrow] (d710.east) -- (13.1,-1.8) -- (13.1,-6.5) -- (9.2,-6.5) -- (d84.south east);
\draw[result graph arrow] (d81.east) -- (9.45,-1.8) -- (9.45,-4.55) -- (d84.north east);
\draw[result graph arrow] (d410) -- (d84);
\draw[result graph arrow] (d55) -- (d84);
\draw[result graph arrow] (d82) -- (d84);
\draw[result graph arrow] (d83) -- (d84);
\draw[result graph arrow] (d44.east) -- (5.7,0) -- (5.7,-4.5) -- (d84.north west);
\draw[result graph arrow] (d84) -- (d11);
\draw[result graph arrow] (d33) -- (d11);
\end{tikzpicture}\par\smallskip
\end{minipage}\par\bigskip

\endgroup

\subsection{Reading guide}

\Cref{tr:section} contains the finite Fourier reduction and the
elementary estimates for primes. Its four-form inequality explains
the occurrence of $U^3$. Its transfer proposition is the final
step from a measure on ratios to the extremal bound.

\Cref{ar:section} develops the local arithmetic in the order in
which the finite average is constructed. It begins with valuations
and unit parts, defines the weights, proves the required quadrature
estimates, and then establishes the joint distribution comparison.
The final result of that section is the lower bound for finite
character models. \Cref{con:section} proves the mean-square
comparison needed to replace a general function by such a model.

\Cref{sec:scalar,inv:section} contain the inverse-theorem argument.
The first treats scalar quadratic exponentials, and the second
reduces the general $U^3$ correlation to that case. A reader
checking the numerical exponent should follow the rank estimates,
the explicit group construction, and the scalar reduction in
\Cref{inv:section}, ending with \Cref{thm:mult_inverse}.

\Cref{main:section} makes all final parameters explicit in their
order of choice. It first establishes the periodic uniformity
estimate, then proves the monotone-sum averaging inequality,
constructs the measure, calculates its height, and completes the
proof of \Cref{thm:main}. A reader interested first in how the
estimates fit together may read this section after the present
outline, referring back to the precise results cited there.

\section{Fourier transfer}
\label{tr:section}

Recall that $\mathcal M$ is the class of completely multiplicative
functions with values in $\mathbb S^1$, defined in \Cref{intro:notation}.
We first describe their restrictions to finitely generated subgroups
of $\Q_{>0}$. This identifies the precise finite Fourier transform
that will be used in the transfer argument.
If $p_1,\ldots,p_s$ are distinct primes, put
\begin{equation*}
 v(r)=(v_{p_1}(r),\ldots,v_{p_s}(r))\in\Z^s
\end{equation*}
for rational numbers whose numerator and denominator have no prime
divisors outside this list. Every
$\theta=(\theta_1,\ldots,\theta_s)\in\T^s$ determines an element
$g_\theta\in\mathcal M$ by
\begin{equation*}
 g_\theta(p_i)=e(\theta_i),\qquad
 g_\theta(p)=1\ (p\notin\{p_1,\ldots,p_s\}),\qquad
 g_\theta(r)=e(\theta\cdot v(r)).
\end{equation*}
Conversely, the restriction of any $g\in\mathcal M$ to this
subgroup of $\Q_{>0}$ has this form. Thus a finitely supported
probability measure $\mu$ on positive rationals has a finite-dimensional
Fourier transform
\begin{equation*}
 \widehat\mu(\theta)=\sum_r\mu(r)e(\theta\cdot v(r))
                       =\sum_r\mu(r)g_\theta(r).
\end{equation*}

\subsection{Interval uniformity}

For a finite abelian group $G$, all averages over $G$ are normalized
counting averages. For $\omega\in\{0,1\}^3$, write
$|\omega|=\omega_1+\omega_2+\omega_3$ and
$\omega\cdot h=\omega_1h_1+\omega_2h_2+\omega_3h_3$.
If $F:G\to\C$, define its Gowers norm $\|\cdot\|_{U^3(G)}$:
\begin{equation*}
 \|F\|_{U^3(G)}^8
 =\E_{x,h_1,h_2,h_3\in G}
   \prod_{\omega\in\{0,1\}^3}
      \mathcal C^{|\omega|}F(x+\omega\cdot h),
\end{equation*}
where $\mathcal C z=\overline z$. To verify that the right side is
nonnegative, put $\Delta_hF(x)=F(x)\overline{F(x+h)}$.
For fixed $h_1,h_2$, let $H=\Delta_{h_1}\Delta_{h_2}F$.
The average over $x,h_3$ is
$$\E_{x,h_3}H(x)\overline{H(x+h_3)}=|\E_xH(x)|^2,$$ because
$(x,x+h_3)$ is uniform on $G^2$. Therefore
\begin{equation}
 \|F\|_{U^3(G)}^8
 =\E_{h_1,h_2\in G}
    \left|\E_{x\in G}\Delta_{h_1}\Delta_{h_2}F(x)\right|^2.
 \label{tr:norm-nonnegative}
\end{equation}

For a positive integer $Z$, write $[Z]=\{1,\ldots,Z\}$ and let
\begin{equation*}
 \mathcal Q_Z=
 \{(x,h_1,h_2,h_3)\in\Z^4:
                   x+\omega\cdot h\in[Z]\text{ for every }
                   \omega\in\{0,1\}^3\}.
\end{equation*}
The normalized interval convention used throughout the paper is
\begin{equation}
 \|f\|_{U^3[Z]}^8
 =\frac1{|\mathcal Q_Z|}
   \sum_{(x,h)\in\mathcal Q_Z}
       \prod_{\omega\in\{0,1\}^3}
          \mathcal C^{|\omega|}f(x+\omega\cdot h).
 \label{tr:interval-norm}
\end{equation}
The following identity verifies its positivity and its compatibility
with the cyclic-group formulation.

\begin{lemma}\label{tr:interval-embedding}
Let $Z\ge1$ and $P>8Z$ be integers. For $f:[Z]\to\C$, let
$f_Z$ denote its zero extension to $\Z/P\Z$. Then
\begin{equation}
 \|f_Z\|_{U^3(\Z/P\Z)}
 =\|f\|_{U^3[Z]}
        \|\mathbf1_{[Z]}\|_{U^3(\Z/P\Z)}.
 \label{tr:embedding-identity}
\end{equation}
In particular, if $|f|\le1$, then
$0\le\|f\|_{U^3[Z]}\le1$ and
$\|f_Z\|_{U^3(\Z/P\Z)}\le\|f\|_{U^3[Z]}$.
\end{lemma}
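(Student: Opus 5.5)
The plan is to compare both sides through their eighth powers, using the definitions of the cyclic norm and of the interval norm \eqref{tr:interval-norm}.

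\emph{Step 1: identify the support of the cyclic average.} Expand
\[
\|f_Z\|_{U^3(\Z/P\Z)}^8=P^{-4}\sum_{x,h_1,h_2,h_3\in\Z/P\Z}\prod_{\omega}\mathcal C^{|\omega|}f_Z(x+\omega\cdot h).
\]
A term is nonzero only if all eight points $x+\omega\cdot h$ reduce modulo $P$ into the image of $[Z]$. Lift $x$ to its representative in $[Z]$, and lift each $h_i$ to the representative of $(x+h_i)-x$ with $x+h_i\in[Z]$; then $|h_i|<Z$. For any $\omega$, the integer $x+\omega\cdot h$ lies in $(-3Z,4Z)$, an interval of length less than $P$ because $P>8Z$. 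Its residue lies in $[Z]$ exactly when the integer itself does, since no other integer of this interval is congruent to an element of $[Z]$.

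It follows that the nonzero terms are in bijection with $\mathcal Q_Z$, and each term equals the corresponding interval summand. Hence
\[
\|f_Z\|_{U^3(\Z/P\Z)}^8=P^{-4}\sum_{\mathcal Q_Z}\prod_\omega\mathcal C^{|\omega|}f(x+\omega\cdot h)=P^{-4}|\mathcal Q_Z|\,\|f\|_{U^3[Z]}^8 .
\]

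\emph{Step 2: apply this to the indicator.} Taking $f=\mathbf1_{[Z]}$ gives $\|\mathbf1_{[Z]}\|_{U^3(\Z/P\Z)}^8=P^{-4}|\mathcal Q_Z|$. Dividing, and noting $|\mathcal Q_Z|\ge Z\ge1$ (take $h=0$), we obtain the eighth power of \eqref{tr:embedding-identity}. The left side is nonnegative by \eqref{tr:norm-nonnegative}, so $\|f\|_{U^3[Z]}^8\ge0$, and the real eighth roots are well defined. This yields \eqref{tr:embedding-identity}.

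\emph{Step 3: the consequences for $|f|\le1$.} The bound $\|f\|_{U^3[Z]}\le1$ follows because each summand in \eqref{tr:interval-norm} has modulus at most $1$. Nonnegativity was shown in Step 2. The norm $\|\mathbf1_{[Z]}\|_{U^3(\Z/P\Z)}$ is at most $1$ for the same reason, applied in $\Z/P\Z$. Substituting this into the identity gives $\|f_Z\|_{U^3(\Z/P\Z)}\le\|f\|_{U^3[Z]}$.

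The only point requiring care is the bijection in Step 1, namely that no wraparound occurs. This is where $P>8Z$ enters: all eight lifted points are confined to an interval of length at most about $7Z<P$, so distinct integer configurations cannot collide modulo $P$, and configurations that leave $[Z]$ as integers cannot re-enter it after reduction.
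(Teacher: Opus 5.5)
Your proposal is correct and takes essentially the same route as the paper. You lift each supported cyclic cube through the representatives in $[Z]$ of $\bar x$ and $\bar x+\bar h_i$, and you use $P>8Z$ to rule out wraparound, so that these cubes correspond bijectively to $\mathcal Q_Z$; you then compare with $\mathbf1_{[Z]}$ and take eighth roots using the nonnegativity from \eqref{tr:norm-nonnegative}. One small correction of phrasing: it is the supported cubes, not the nonzero terms, that are in bijection with $\mathcal Q_Z$, and this does not affect the sum identity because terms where $f$ vanishes contribute zero on both sides.
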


\begin{proof}
We divide the proof into three steps.

\medskip
\noindent\textbf{Step 1. Lift supported cubes to the interval.}
Put $G=\Z/P\Z$, and identify $[Z]$ with its image in $G$.
Call a cyclic cube supported if all eight vertices belong to $[Z]$.
Cubes are counted by their parameter tuples; repeated vertices and
zero increments are allowed.

Let $(\bar x,\bar h_1,\bar h_2,\bar h_3)\in G^4$ be supported.
Choose the unique representatives $x,x_i\in[Z]$ of $\bar x$ and
$\bar x+\bar h_i$, respectively, and put
\begin{equation*}
 h_i=x_i-x,\qquad |h_i|\le Z-1 \qquad(1\le i\le3).
\end{equation*}
For each $\omega\in\{0,1\}^3$, let $y_\omega\in[Z]$ represent
$\bar x+\omega\cdot\bar h$. Then
\begin{equation*}
 x+\omega\cdot h\equiv y_\omega\pmod P,\qquad |x+\omega\cdot h-y_\omega|\le |x-y_\omega|+\sum_{i=1}^3\omega_i|h_i|\le4(Z-1)<P.
\end{equation*}
The difference is a multiple of $P$ with absolute value less than
$P$, so it is zero. Thus every integer vertex $x+\omega\cdot h$
belongs to $[Z]$, and the lifted tuple lies in $\mathcal Q_Z$.
Conversely, reduction modulo $P$ sends every tuple of $\mathcal Q_Z$
to a supported cyclic cube. Lifting recovers its initial vertex $x$
and each adjacent vertex $x+h_i$, hence each increment $h_i$.
Lifting and then reducing also recovers the original cyclic tuple.
These maps are therefore inverse bijections.

\medskip
\noindent\textbf{Step 2. Count the lifted cubes and identify the normalized average.}
Write
\begin{equation*}
 N_Z=|\mathcal Q_Z|,\qquad
 S_f=\sum_{(x,h)\in\mathcal Q_Z}
       \prod_{\omega\in\{0,1\}^3}
          \mathcal C^{|\omega|}f(x+\omega\cdot h).
\end{equation*}
Any cyclic cube with a vertex outside $[Z]$ contributes zero to
the average for $f_Z$. On supported cubes, the bijection preserves
every factor. For $\mathbf1_{[Z]}$, each supported cube contributes
one. Since a normalized average over $G^4$ has denominator $P^4$,
\begin{equation}
 \|f_Z\|_{U^3(G)}^8=\frac{S_f}{P^4},\qquad
 \|\mathbf1_{[Z]}\|_{U^3(G)}^8=\frac{N_Z}{P^4}.
 \label{tr:cube-normalization}
\end{equation}
The first quantity is real and nonnegative by
\eqref{tr:norm-nonnegative}; hence $S_f\ge0$, although individual
summands may be complex. The constant cubes $(x,0,0,0)$, with
$x\in[Z]$, give $N_Z\ge Z>0$. Thus the interval average $S_f/N_Z$
is nonnegative, and its nonnegative eighth root is well-defined.
By \eqref{tr:interval-norm} and \eqref{tr:cube-normalization},
\begin{equation*}
 \|f_Z\|_{U^3(G)}^8
 =\frac{S_f}{N_Z}\frac{N_Z}{P^4}
 =\|f\|_{U^3[Z]}^8
    \|\mathbf1_{[Z]}\|_{U^3(G)}^8.
\end{equation*}
Taking nonnegative eighth roots proves \eqref{tr:embedding-identity}.

\medskip
\noindent\textbf{Step 3. Establish the norm bounds.}
Finally, if $|f|\le1$, the triangle inequality gives
\begin{equation*}
 0\le S_f=|S_f|
 \le\sum_{(x,h)\in\mathcal Q_Z}
    \left|\prod_{\omega\in\{0,1\}^3}
        \mathcal C^{|\omega|}f(x+\omega\cdot h)\right|
 \le N_Z.
\end{equation*}
Hence $0\le\|f\|_{U^3[Z]}\le1$. The bijection also gives
$N_Z\le|G|^4=P^4$, so \eqref{tr:cube-normalization} implies
$\|\mathbf1_{[Z]}\|_{U^3(G)}\le1$. The remaining inequality
follows from \eqref{tr:embedding-identity}.
\end{proof}
\subsection{A four-form inequality}

The factorization of $g(R(m,n))$, where $R(m,n)=(m^2-n^2)/2mn$,
involves the four forms
$m-n,m+n,m,n$. We prove the finite-group inequality for these
forms here. The proof introduces one averaging variable for each
of the three factors that must be eliminated by Cauchy--Schwarz.

\begin{lemma}\label{tr:four-forms}
Let $P\ge3$ be odd. For functions
$a,b,c,d:\Z/P\Z\to\C$ of modulus at most one,
\begin{equation}
 \left|\E_{m,n\in\Z/P\Z}
       a(m-n)b(m+n)c(m)d(n)\right|
 \le\|a\|_{U^3(\Z/P\Z)}.
 \label{tr:von-neumann}
\end{equation}
\end{lemma}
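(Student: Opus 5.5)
The plan is to reparametrize the two-variable average so that each of the three forms $m+n$, $m$, $n$ is independent of one of three new variables, and then to apply Cauchy--Schwarz three times. This is the standard generalized von Neumann argument.

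\textbf{Step 1. Reparametrize.} Introduce $x,y,z\in\Z/P\Z$ and substitute
\begin{equation*}
 m=x+z,\qquad n=y-x.
\end{equation*}
Then
\begin{equation*}
 m+n=y+z,\qquad m-n=2x-y+z.
\end{equation*}
For each fixed $x$, the map $(y,z)\mapsto(m,n)$ is a bijection of $(\Z/P\Z)^2$. Averaging over $x$ as well therefore gives
\begin{equation*}
 \E_{m,n}a(m-n)b(m+n)c(m)d(n)
 =\E_{x,y,z}a(2x-y+z)\,b(y+z)\,c(x+z)\,d(y-x).
\end{equation*}
Here $b$ does not depend on $x$, $c$ does not depend on $y$, and $d$ does not depend on $z$.

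\textbf{Step 2. Three applications of Cauchy--Schwarz.}

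First, fix $(y,z)$, pull out $b(y+z)$ using $|b|\le1$, and apply Cauchy--Schwarz over $(y,z)$. This doubles $x$ into $x,x'$ and removes $b$. The bound becomes
\begin{equation*}
 \E_{x,x',y,z}\,a(2x-y+z)\overline{a(2x'-y+z)}\,c(x+z)\overline{c(x'+z)}\,d(y-x)\overline{d(y-x')}.
\end{equation*}

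Second, group by $(x,x',z)$, pull out the $c$-factors, and apply Cauchy--Schwarz. This doubles $y$ and removes $c$.

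Third, group by $(x,x',y,y')$ and apply Cauchy--Schwarz. This doubles $z$ and removes $d$.

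After these three steps,
\begin{equation*}
 |\E|^8\le\E_{x,x',y,y',z,z'}\prod_{\omega\in\{0,1\}^3}\mathcal C^{|\omega|}a\bigl(2x_{\omega_1}-y_{\omega_2}+z_{\omega_3}\bigr),
\end{equation*}
where $x_0=x$, $x_1=x'$, and similarly for $y$ and $z$. The main care needed is in tracking the conjugation pattern. Each Cauchy--Schwarz step conjugates the duplicated copy, so the sign attached to vertex $\omega$ is $\mathcal C^{|\omega|}$, exactly as in the definition of the cube norm.

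\textbf{Step 3. Identify the right side with $\|a\|_{U^3(\Z/P\Z)}^8$.} Change variables:
\begin{equation*}
 t=2x-y+z,\qquad h_1=2(x'-x),\qquad h_2=-(y'-y),\qquad h_3=z'-z.
\end{equation*}
Then $2x_{\omega_1}-y_{\omega_2}+z_{\omega_3}=t+\omega\cdot h$ for every $\omega$. Since $P$ is odd, $2$ is invertible modulo $P$, so this map is surjective onto $(t,h_1,h_2,h_3)$ with fibers of equal size; the two leftover variables are free. Hence the uniform average over $(x,x',y,y',z,z')$ pushes forward to the uniform average over $(t,h)\in(\Z/P\Z)^4$. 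The right side is therefore exactly $\|a\|_{U^3(\Z/P\Z)}^8$. Taking eighth roots gives \eqref{tr:von-neumann}.

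The only delicate points are the bijectivity checks in Steps 1 and 3, where oddness of $P$ is used for the coefficient $2$, and keeping the conjugations consistent with the definition of the Gowers norm.
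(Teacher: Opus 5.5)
Your proof is correct and is essentially the paper's argument: reparametrize so that $b$, $c$, $d$ are each independent of one of three variables, apply Cauchy--Schwarz three times, and identify the resulting average with $\|a\|_{U^3(\Z/P\Z)}^8$ by a linear change of variables. The only difference is the parametrization: the paper sets $m=qu_1+u_3$, $n=-qu_1-u_2$ with $q=2^{-1}$, so $a$ is evaluated at $u_1+u_2+u_3$, whereas your substitution evaluates $a$ at $2x-y+z$ and uses the invertibility of $2$ only in the final pushforward step (the nonnegativity of the intermediate averages, needed to chain the squared inequalities, holds in your argument just as the paper notes for $S_1,S_2$).
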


\begin{proof}
We divide the proof into four steps.

\medskip
\noindent\textbf{Step 1. Factor the four-form average.}
Put $G=\Z/P\Z$ and $q=2^{-1}\in G$, which exists because $P$
is odd. All averages below are normalized finite sums: for
$F:G^k\to\C$,
\begin{equation*}
 \E_{v_1,\ldots,v_k\in G}F(v_1,\ldots,v_k)
 =\frac1{P^k}\sum_{v_1,\ldots,v_k\in G}F(v_1,\ldots,v_k).
\end{equation*}
Let $\Lambda$ denote the average inside the absolute value in
\eqref{tr:von-neumann}. For each fixed $u_1\in G$, the substitution
\begin{equation*}
 m=qu_1+u_3,\qquad n=-qu_1-u_2
\end{equation*}
is a bijection from $(u_2,u_3)\in G^2$ to $(m,n)\in G^2$.
Indeed, its inverse is $u_2=-n-qu_1$, $u_3=m-qu_1$.
Since $2q=1$, averaging also over $u_1$ gives
\begin{equation}
 \Lambda=\E_{u_1,u_2,u_3}
    A(u_1,u_2,u_3)B(u_2,u_3)C(u_1,u_3)D(u_1,u_2),
 \label{tr:four-form-factorization}
\end{equation}
where
\begin{equation*}
 \begin{aligned}
 A(u_1,u_2,u_3)&=a(u_1+u_2+u_3),\\
 B(u_2,u_3)&=b(-u_2+u_3),\\
 C(u_1,u_3)&=c(qu_1+u_3),\\
 D(u_1,u_2)&=d(-qu_1-u_2).
 \end{aligned}
\end{equation*}
Thus $B,C,D$ do not depend on $u_1,u_2,u_3$, respectively, and
have modulus at most one.

\medskip
\noindent\textbf{Step 2. Introduce the finite differences.}
To record the resulting products, write
\begin{equation*}
 \Delta_iF=F(\ldots,u_i^0,\ldots)
                 \overline{F(\ldots,u_i^1,\ldots)},
\end{equation*}
with the other variables unchanged. The superscripts $0,1$ label
two summation variables, not powers. For example,
\begin{equation*}
 \begin{aligned}
 \Delta_2\Delta_1A
=A(u_1^0,u_2^0,u_3)\overline{A(u_1^1,u_2^0,u_3)}
\overline{A(u_1^0,u_2^1,u_3)}A(u_1^1,u_2^1,u_3).
 \end{aligned}
\end{equation*}

\medskip
\noindent\textbf{Step 3. Apply three Cauchy--Schwarz inequalities.}
First write \eqref{tr:four-form-factorization} as
$$\Lambda=\E_{u_2,u_3}B\,\E_{u_1}(ACD).$$
Cauchy--Schwarz over $(u_2,u_3)$ and $|B|\le1$ give
\begin{equation*}
 \begin{aligned}
 |\Lambda|^2
 \le\left(\E_{u_2,u_3}|B|^2\right)
       \E_{u_2,u_3}\left|\E_{u_1}ACD\right|^2
 \le\E_{u_2,u_3}\left|\E_{u_1}ACD\right|^2
 =\E_{u_1^0,u_1^1,u_2,u_3}
       \Delta_1A\,\Delta_1C\,\Delta_1D=:S_1.
 \end{aligned}
\end{equation*}
The last equality expands the squared modulus into a double sum
over $u_1^0,u_1^1$. In particular, $S_1\ge0$.

Next, $\Delta_1C$ does not depend on $u_2$ and has modulus at most
one. Taking the average over $u_2$ first, Cauchy--Schwarz over
$(u_1^0,u_1^1,u_3)$ gives
\begin{equation*}
 \begin{aligned}
 S_1^2
 \le\E_{u_1^0,u_1^1,u_3}
       \left|\E_{u_2}\Delta_1A\,\Delta_1D\right|^2
 =\E_{u_1^0,u_1^1,u_2^0,u_2^1,u_3}
       \Delta_2\Delta_1A\,\Delta_2\Delta_1D=:S_2.
 \end{aligned}
\end{equation*}
Again $S_2\ge0$, since it is an average of squared moduli.
Finally, $\Delta_2\Delta_1D$ does not depend on $u_3$ and has
modulus at most one. Taking the average over $u_3$ first,
Cauchy--Schwarz over $(u_1^0,u_1^1,u_2^0,u_2^1)$ yields
\begin{equation*}
 \begin{aligned}
 S_2^2
 \le\E_{u_1^0,u_1^1,u_2^0,u_2^1}
       \left|\E_{u_3}\Delta_2\Delta_1A\right|^2
 =\E_{u_1^0,u_1^1,u_2^0,u_2^1,u_3^0,u_3^1}
       \prod_{\omega\in\{0,1\}^3}\mathcal C^{|\omega|}
         a\left(\sum_{i=1}^3u_i^{\omega_i}\right)=:S_3.
 \end{aligned}
\end{equation*}
Each index equal to $1$ contributes a conjugation, producing
the exponent $|\omega|$.

\medskip
\noindent\textbf{Step 4. Identify the final $U^3$ average.}
To identify this last sum, set
\begin{equation*}
 x=u_1^0+u_2^0+u_3^0,\qquad
 h_i=u_i^1-u_i^0\quad(1\le i\le3).
\end{equation*}
For each prescribed $(x,h_1,h_2,h_3)\in G^4$, choose $u_1^0,u_2^0$
arbitrarily. The remaining variables are then uniquely determined by
\begin{equation*}
 u_3^0=x-u_1^0-u_2^0,\qquad
 u_i^1=u_i^0+h_i\quad(1\le i\le3).
\end{equation*}
Thus each tuple in $G^4$ has exactly $P^2$ preimages in $G^6$.
Moreover,
\begin{equation*}
 \sum_{i=1}^3u_i^{\omega_i}
 =\sum_{i=1}^3(u_i^0+\omega_i h_i)=x+\omega\cdot h.
\end{equation*}
Consequently, the definition of the group $U^3$ norm gives
\begin{equation*}
 S_3=\frac{P^2}{P^6}\sum_{x,h_1,h_2,h_3\in G}
       \prod_{\omega\in\{0,1\}^3}
          \mathcal C^{|\omega|}a(x+\omega\cdot h)
 =\|a\|_{U^3(G)}^8.
\end{equation*}
Since $S_1,S_2\ge0$, combining the three Cauchy--Schwarz inequalities
and taking nonnegative eighth roots proves \eqref{tr:von-neumann}:
\begin{equation*}
 |\Lambda|^8\le S_1^4\le S_2^2\le S_3=\|a\|_{U^3(G)}^8.\qedhere
\end{equation*}
\end{proof}
\subsection{Transfer to the exponent box}

Call $r\in\Q_{>0}$ a Pythagorean ratio if $1+r^2$ is the square
of a rational number. Such a ratio is different from one, because
$\sqrt2$ is irrational. For a reduced fraction $r=u/v$ with
$u,v\in\N$, its height is $\max(u,v)$.

\begin{proposition}\label{tr:finite-transfer}
Let $0<\delta<1/10$ and $H\ge2$. If a finitely supported
probability measure $\mu$ is supported on Pythagorean ratios of
height at most $H$ and satisfies
\begin{equation}
 \Re\sum_r\mu(r)g(r)\ge-\delta
                  \qquad\text{for every }g\in\mathcal M.
 \label{tr:positive-measure-hypothesis}
\end{equation}
then $f(N)\le2\delta N^N$ for every integer
\begin{equation}
 N\ge\max\left\{H,\frac{2\log H}{\delta^2\log2}\right\}.
 \label{tr:cube-size}
\end{equation}
\end{proposition}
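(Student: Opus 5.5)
We identify $\Phi_N$ with the exponent box $\{0,\ldots,N-1\}^N$ via $a\mapsto\prod p_i^{a_i}$, and embed the box into $G=(\Z/N\Z)^N$; the identification is a bijection on sets, although addition in $G$ wraps around. Let $A\subseteq\Phi_N$ be admissible with density $\alpha=|A|/N^N$, and write $F=\mathbf 1_A$, viewed as a function on $G$. For each ratio $r$ in the support of $\mu$ we set $d(r)=(v_{p_1}(r),\ldots,v_{p_N}(r))$. Because $N\ge H$, every prime dividing the numerator or denominator of $r$ is at most $H\le p_N$, so it lies among $p_1,\ldots,p_N$. Moreover $\sum_i|v_{p_i}(r)|\le\log_2 u+\log_2 v\le 2\log H/\log 2$.

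\textbf{Correlation bound.} The key quantity is
\[
 S=\sum_r\mu(r)\,\E_{x\in G}F(x)F(x+d(r)),
\]
with addition in $G$. Fix $r=u/v$ and split the average over $x$ into two parts.

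\emph{Vectors where $x$ and $x+d(r)$ are both in the box without wraparound.} Suppose both $x$ and the integer vector $x+d(r)$ lie in $A$ as box elements. Write $a$ and $b$ for the corresponding integers. Then $b/a=r$, so $b^2+a^2=a^2(1+r^2)$ is the square of a rational number. Since it is also an integer, it is the square of an integer. Moreover $a\ne b$ because $r\ne1$. This contradicts admissibility, so such pairs contribute nothing.

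\emph{Vectors where wraparound occurs.} Wraparound happens only if some coordinate $x_i$ lies within $|v_{p_i}(r)|$ of the boundary of $\{0,\ldots,N-1\}$. The proportion of such $x$ is at most $\sum_i|v_{p_i}(r)|/N\le 2\log H/(N\log2)$. Averaging over $\mu$ then gives
\[
 S\le \frac{2\log H}{N\log 2}.
\]

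\textbf{Fourier lower bound.} On the other side, Fourier expansion on $G$ gives
\[
 S=\sum_{\xi\in G}|\widehat F(\xi)|^2\sum_r\mu(r)\,e(\xi\cdot d(r)/N).
\]
For each $\xi$, the inner sum equals $\sum_r\mu(r)g_\theta(r)$ with $\theta=\xi/N$, where $g_\theta\in\mathcal M$ is the function from the start of this section. Since $S$ is real, we take real parts term by term. The frequency $\xi=0$ contributes exactly $\alpha^2$. Every other frequency has real part at least $-\delta|\widehat F(\xi)|^2$ by \eqref{tr:positive-measure-hypothesis}. By Parseval, $\sum_{\xi\neq0}|\widehat F(\xi)|^2=\alpha-\alpha^2\le\alpha$. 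Hence $S\ge\alpha^2-\delta\alpha$.

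\textbf{Conclusion.} Combining the two bounds with \eqref{tr:cube-size} gives
\[
 \alpha^2-\delta\alpha\le\frac{2\log H}{N\log 2}\le\delta^2.
\]
Therefore $\alpha\le\delta(1+\sqrt5)/2<2\delta$, that is, $|A|\le 2\delta N^N$. As $A$ was an arbitrary admissible set, $f(N)\le 2\delta N^N$.

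The step needing the most care is the wraparound count. The expected obstacle is making sure that translation modulo $N$ agrees with genuine multiplication by $r$ away from the boundary, and that the union bound over coordinates gives the stated proportion. Both are straightforward once the coordinates are handled one at a time.
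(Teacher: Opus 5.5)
Your proposal is correct and follows the paper's proof essentially step for step. Both arguments use the exponent-box encoding in $(\Z/N\Z)^N$, the boundary count $\|d(r)\|_1/N\le 2\log H/(N\log 2)$ for wraparound vectors, the Fourier lower bound $\alpha^2-\delta\alpha$ via the characters $g_{\xi/N}\in\mathcal M$, and the resulting quadratic giving $\alpha\le\frac{1+\sqrt5}{2}\delta$. The only differences are cosmetic: you translate by $+d(r)$ rather than $-d(r)$, which is harmless since $|\widehat F(\xi)|^2$ is even in $\xi$ for real $F$. You should also state explicitly that $r\ne1$, because $1+1^2=2$ is not the square of a rational number.
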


\begin{proof}
We divide the proof into four steps.
\par
\medskip
\noindent\textbf{Step 1. Encode the box and control translations.}
Fix an integer $N$ satisfying \eqref{tr:cube-size}, and let
$A\subseteq\Phi_N$ be admissible. Put
\begin{equation*}
 B_N=\{0,\ldots,N-1\}^N,\qquad
 G=(\Z/N\Z)^N,\qquad
 \pi(x)=\prod_{i=1}^N p_i^{x_i}.
\end{equation*}
Unique factorization makes $\pi:B_N\to\Phi_N$ a bijection.
Identify $B_N$ with the standard representatives of $G$, and set
\begin{equation*}
 \mathcal A=\pi^{-1}(A),\qquad
 F=\mathbf1_{\mathcal A},\qquad
 \alpha=\frac{|A|}{N^N}=\E_{x\in G}F(x).
\end{equation*}
Arguments of $F$ are evaluated modulo $N$. We will bound the same
weighted correlation from above by admissibility and from below
by \eqref{tr:positive-measure-hypothesis}.

For $r=u/v\in\operatorname{supp}\mu$ in lowest terms, every prime
dividing $u$ or $v$ is at most $H\le N$, hence occurs among the
first $N$ primes. Put $d(r)=(v_{p_i}(r))_{i=1}^N\in\Z^N$.
Since $u$ and $v$ are coprime, unique factorization gives
\begin{equation*}
 \begin{aligned}
 \sum_{i=1}^N|d_i(r)|\log p_i
 &=\sum_{i=1}^N\bigl(v_{p_i}(u)+v_{p_i}(v)\bigr)\log p_i
 =\log(uv),\\
 \|d(r)\|_1
 &\le\frac{\log(uv)}{\log2}
 \le\frac{2\log H}{\log2}.
 \end{aligned}
\end{equation*}

For $d\in\Z^N$, define the boundary set
\begin{equation*}
 E_d=\{x\in B_N:x-d\notin B_N\},
\end{equation*}
where subtraction is in $\Z^N$. In coordinate $i$, the condition
$0\le x_i-d_i<N$ fails when $x_i<d_i$ if $d_i\ge0$, and when
$x_i\ge N+d_i$ if $d_i<0$. Among $0,\ldots,N-1$, exactly
$\min(N,|d_i|)$ choices fail. Counting these failures over the
coordinates yields
\begin{equation*}
 |E_d|\le N^{N-1}\sum_{i=1}^N\min(N,|d_i|)
       \le N^{N-1}\|d\|_1.
\end{equation*}

\par
\medskip
\noindent\textbf{Step 2. Use admissibility to bound the correlation.}
If $x\notin E_{d(r)}$, then the cyclic translate $x-d(r)$ is
the ordinary integer vector in $B_N$. If both $F(x)$ and
$F(x-d(r))$ were one, the labels $a=\pi(x)$ and
$b=\pi(x-d(r))$ would lie in $A$ and satisfy
\begin{equation*}
 \frac ab=\prod_{i=1}^N p_i^{d_i(r)}=r,\qquad
 a^2+b^2=b^2(1+r^2)=(bs)^2,
\end{equation*}
where $s\in\Q_{>0}$ satisfies $1+r^2=s^2$. The labels are distinct
because $r\ne1$. Moreover, $bs$ is an integer: writing $bs=k/\ell$
in lowest terms, the integrality of $(bs)^2$ gives $\ell^2\mid k^2$,
hence $\ell=1$. This contradicts admissibility. Thus
$F(x)F(x-d(r))$ vanishes outside $E_{d(r)}$ and is at most one
on that set. Since $\mu(r)\ge0$ and $\sum_r\mu(r)=1$,
\begin{equation}
 \begin{aligned}
 0\le\mathcal K
 :=\sum_r\mu(r)\E_{x\in G}F(x)F(x-d(r))
 \le\sum_r\mu(r)\frac{|E_{d(r)}|}{N^N}
 \le\frac1N\sum_r\mu(r)\|d(r)\|_1
 \le\frac{2\log H}{N\log2}\le\delta^2.
 \end{aligned}
 \label{tr:upper-correlation}
\end{equation}
The last inequality uses \eqref{tr:cube-size}.

\par
\medskip
\noindent\textbf{Step 3. Compute the Fourier lower bound.}
For the lower bound, define
\begin{equation*}
 \widehat F(\xi)=\E_{x\in G}F(x)e(-\xi\cdot x/N)
               =\frac1{N^N}\sum_{x\in G}F(x)e(-\xi\cdot x/N).
\end{equation*}
The finite geometric-series identity, applied in each coordinate,
gives
\begin{equation}
 \E_{x\in G}e((\xi-\eta)\cdot x/N)=\mathbf1_{\xi=\eta},
 \qquad
 \sum_{\xi\in G}e(\xi\cdot z/N)=N^N\mathbf1_{z=0}.
 \label{tr:cube-orthogonality}
\end{equation}
Indeed, $\sum_{t=0}^{N-1}e(jt/N)$ equals $N$ if $N\mid j$ and
zero otherwise. Since $F$ is real-valued, expanding the two Fourier
coefficients and using \eqref{tr:cube-orthogonality} gives, for every
$d\in\Z^N$,
\begin{equation*}
 \begin{aligned}
 \sum_{\xi\in G}|\widehat F(\xi)|^2e(\xi\cdot d/N)
 &=\E_{x,y\in G}F(x)F(y)
       \sum_{\xi\in G}e(\xi\cdot(y-x+d)/N)\\
 &=N^N\E_{x,y\in G}F(x)F(y)\mathbf1_{y=x-d}\\
 &=\E_{x\in G}F(x)F(x-d).
 \end{aligned}
\end{equation*}
Taking $d=0$, using $F^2=F$, and evaluating $\widehat F(0)$
directly gives
\begin{equation}
 \sum_{\xi\in G}|\widehat F(\xi)|^2=\alpha,\qquad
 \widehat F(0)=\alpha,\qquad
 \sum_{\xi\ne0}|\widehat F(\xi)|^2=\alpha-\alpha^2.
 \label{tr:transfer-fourier-mass}
\end{equation}

For each $\xi\in G$, prescribe
\begin{equation*}
 g_\xi(p_i)=e(\xi_i/N)\quad(1\le i\le N),\qquad
 g_\xi(p)=1\quad\bigl(p\notin\{p_1,\ldots,p_N\}\bigr),
\end{equation*}
and extend by prime factorization. This defines a completely
multiplicative function of modulus one, so $g_\xi\in\mathcal M$.
For every $r\in\operatorname{supp}\mu$,
\begin{equation*}
 g_\xi(r)=\prod_{i=1}^N g_\xi(p_i)^{d_i(r)}
         =e(\xi\cdot d(r)/N).
\end{equation*}
At $\xi=0$, $g_0=1$ and $\sum_r\mu(r)g_0(r)=1$, so the zero
frequency contributes exactly $\alpha^2$. Since $\mathcal K$ is real,
interchanging the finite sums and taking real parts gives
\begin{equation}
 \begin{aligned}
 \mathcal K
 =\sum_{\xi\in G}|\widehat F(\xi)|^2
       \Re\sum_r\mu(r)g_\xi(r)
 \ge\alpha^2-\delta\sum_{\xi\ne0}|\widehat F(\xi)|^2
 =\alpha^2-\delta(\alpha-\alpha^2)
 \ge\alpha^2-\delta\alpha.
 \end{aligned}
 \label{tr:lower-correlation}
\end{equation}
The inequality uses \eqref{tr:positive-measure-hypothesis}, the nonnegative coefficients
$|\widehat F(\xi)|^2$ and \eqref{tr:transfer-fourier-mass}.

\par
\medskip
\noindent\textbf{Step 4. Solve the resulting quadratic.}
Combining \eqref{tr:upper-correlation} and
\eqref{tr:lower-correlation}, and using $\alpha\ge0$, we obtain
\begin{equation*}
 \alpha^2-\delta\alpha-\delta^2\le0,
 \qquad
 \alpha\le\frac{1+\sqrt5}{2}\,\delta<2\delta.
\end{equation*}
Hence $|A|\le2\delta N^N$. Maximizing over admissible subsets
$A\subseteq\Phi_N$ proves the proposition.
\end{proof}
\subsection{Prime estimates}

We first prove the following upper-bound sieve estimate.
Let $z\ge2$, and let
$h:\N\to[0,1]$ be multiplicative with $h(p)<1$ for $p\le z$.
Suppose that, for fixed positive constants $\kappa$ and $B$,
\begin{equation}
 \prod_{\substack{a\le p\le b\\p\ \mathrm{prime}}}(1-h(p))^{-1}
 \le\left(\frac{\log b}{\log a}\right)^\kappa
       \exp(B/\log a)
 \qquad(2\le a\le b\le z).
 \label{sc:sieve-dimension}
\end{equation}
Let $\mathcal S$ be a set of primes, assume that $h(d)=0$ when
$d$ has a prime divisor outside $\mathcal S$, and let $R$ be a
random positive integer. Then
\begin{align}
 \mathbb P\left(p\nmid R\text{ for all }p\le z,
                                      \ p\in\mathcal S\right)
 \le C_{\kappa,B}\prod_{\substack{p\le z\\p\ \mathrm{prime}}}(1-h(p))
  +\sum_{\substack{d\le z\\p\ \mathrm{prime},\ p\mid d\Rightarrow p\in\mathcal S}}
        \mu^2(d)\left|\mathbb P(d\mid R)-h(d)\right|.
 \label{sc:bsz-upper-sieve}
\end{align}
Here $\mu(d)$ is the M\"obius function: it is zero if a square of
a prime divides $d$, and it equals $(-1)^k$ if $d$ is a product of
$k$ distinct primes. The constant depends only on the two displayed
dimension parameters. We verify those parameters in the application.

\begin{proof}[Proof of \eqref{sc:bsz-upper-sieve}]
We divide the proof into four steps.

\medskip
\noindent\textbf{Step 1. Remove the degenerate case and form the Euler product.}
Multiplicativity gives $h(1)=h(1)^2$.
If $h(1)=0$, then $h(n)=h(1)h(n)=0$ for every $n\ge1$.
The $d=1$ term in the error sum in \eqref{sc:bsz-upper-sieve}
then equals $|\mathbb P(1\mid R)-h(1)|=1$.
Since the left side is a probability, the estimate follows.
We may therefore assume $h(1)=1$.

\medskip
\noindent\textbf{Step 2. Invoke the upper-bound sieve weights.}
Put
\begin{equation*}
 Q=\prod_{\substack{p\le z\\p\in\mathcal S}}p,
 \qquad V(z)=\prod_{\substack{p\le z\\p\ \mathrm{prime}}}(1-h(p)).
\end{equation*}
The event on the left of \eqref{sc:bsz-upper-sieve} is
$\gcd(R,Q)=1$. Since $h(p)=0$ outside $\mathcal S$,
multiplicativity and expansion of the finite product give
\begin{equation*}
 V(z)=\prod_{\substack{p\mid Q\\p\ \mathrm{prime}}}(1-h(p))
     =\sum_{d\mid Q}\mu(d)h(d).
\end{equation*}
This explains the Euler product, but its divisor sum need not be
supported on $d\le z$.

\medskip
\noindent\textbf{Step 3. Restrict the weights and obtain a majorant.}
We apply the construction in
\cite[Lemma~2.2, p.~16, and proof of Theorem~2.4, pp.~18--19]{FordSieve2023}
with $g(p)=h(p)$ for $p\le z$ and $g(p)=0$ for $p>z$.
Condition $(\Omega)$ in \cite[p.~18]{FordSieve2023}
is then exactly \eqref{sc:sieve-dimension}.
This construction gives real weights $(c_d)_{d\ge1}$ satisfying
\begin{equation*}
 \begin{gathered}
 c_1=1,\qquad |c_d|\le1\quad(d\ge1),\\
 c_d=0\quad\text{unless }d\le z\text{ is squarefree},\\
 \sum_{d\mid m}c_d\ge0\quad(m\ge1).
 \end{gathered}
 \end{equation*}
Since $g(d)=h(d)$ whenever $c_d\ne0$,
\cite[(2.8), p.~17, and (2.11), p.~19]{FordSieve2023} also gives
\begin{equation}
 \sum_{d\ge1}c_dh(d)=\sum_{d\ge1}c_dg(d)
 \le C_{\kappa,B}V(z),
 \label{sc:upper-sieve-original-main-term}
\end{equation}
where $C_{\kappa,B}>0$ depends only on $\kappa,B$.

Restrict these weights to divisors of $Q$ by setting
$\lambda_d=c_d\mathbf1_{d\mid Q}$. Thus
\begin{equation}
 \lambda_1=1,\qquad |\lambda_d|\le1,\qquad
 \lambda_d=0\quad\text{unless }d\mid Q\text{ and }d\le z.
 \label{sc:upper-sieve-weight-support}
\end{equation}
For $m=\gcd(n,Q)$, the divisor sum of $c_d$ equals one if $m=1$
and is nonnegative otherwise. Hence
\begin{equation}
 \mathbf1_{\gcd(n,Q)=1}
 \le\sum_{d\mid\gcd(n,Q)}c_d
 =\sum_{d\mid n}\lambda_d
 \qquad(n\ge1).
 \label{sc:upper-sieve-majorant}
\end{equation}
If $c_d\ne0$ and $d\nmid Q$, then $d$ has a prime factor outside
$\mathcal S$, so $h(d)=0$.
Thus \eqref{sc:upper-sieve-original-main-term} gives
\begin{equation}
 \sum_d\lambda_d h(d)=\sum_dc_dh(d)\le C_{\kappa,B}V(z).
 \label{sc:upper-sieve-weighted-main-term}
\end{equation}

\medskip
\noindent\textbf{Step 4. Take expectations and bound the error.}
Define the divisibility errors by
\begin{equation*}
 \varepsilon_d=\mathbb P(d\mid R)-h(d).
\end{equation*}
Taking expectations in \eqref{sc:upper-sieve-majorant} and
interchanging the finite divisor sum with expectation yields
\begin{equation}
 \begin{aligned}
 \mathbb P(\gcd(R,Q)=1)
 \le\sum_d\lambda_d\mathbb P(d\mid R)
 =\sum_d\lambda_dh(d)+\sum_d\lambda_d\varepsilon_d
 \le C_{\kappa,B}V(z)
       +\sum_{\substack{d\le z\\d\mid Q}}|\varepsilon_d|.
 \end{aligned}
 \label{sc:upper-sieve-expectation-bound}
\end{equation}
The last step uses \eqref{sc:upper-sieve-weighted-main-term},
the support in \eqref{sc:upper-sieve-weight-support}, and
$|\lambda_d\varepsilon_d|\le|\varepsilon_d|$.
For $d\le z$, the condition $d\mid Q$ is equivalent to
$\mu^2(d)=1$ and $p\ \mathrm{prime},\ p\mid d\Rightarrow p\in\mathcal S$.
Consequently,
\begin{equation*}
 \sum_{\substack{d\le z\\d\mid Q}}|\varepsilon_d|
 =\sum_{\substack{d\le z\\p\ \mathrm{prime},\ p\mid d\Rightarrow p\in\mathcal S}}
       \mu^2(d)\left|\mathbb P(d\mid R)-h(d)\right|.
\end{equation*}
Substitution into \eqref{sc:upper-sieve-expectation-bound} proves
\eqref{sc:bsz-upper-sieve}. In particular, the coefficient one on
the error sum comes from $|\lambda_d|\le1$.
\end{proof}

\begin{lemma}\label{sc:prime-products}
There is an absolute $C$ such that, for every real $x\ge2$,
\begin{equation}
 \log x\le E(x):=\prod_{\substack{p\le x\\p\ \mathrm{prime}}}(1-p^{-1})^{-1}\le C\log x.
 \label{sc:bsz-euler-bounds}
\end{equation}
For every set of primes $\mathcal S$, let $h$ be the multiplicative
function defined by $h(1)=1$ and, for each prime $p$ and integer $k\ge1$,
$h(p^k)=p^{-k}$ if $p\in\mathcal S$ and $h(p^k)=0$ otherwise.
Then $h$ satisfies \eqref{sc:sieve-dimension} with absolute constants
$\kappa,B$ independent of $\mathcal S$.
\end{lemma}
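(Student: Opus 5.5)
The plan is to derive both parts from Mertens-type estimates, used in an explicit, self-contained form. For the lower bound $E(x)\ge\log x$, expand each factor as a geometric series,
\begin{equation*}
 (1-p^{-1})^{-1}=\sum_{k\ge0}p^{-k}.
\end{equation*}
Multiplying out over $p\le x$ and invoking unique factorization, the product is at least $\sum_{n\le x}1/n$, since every $n\le x$ has all of its prime factors at most $x$. Moreover $\sum_{n\le x}1/n\ge\int_1^{\lfloor x\rfloor+1}dt/t\ge\log x$.

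For the upper bound, I take logarithms:
\begin{equation*}
 \log E(x)=\sum_{p\le x}\Bigl(\frac1p+O(p^{-2})\Bigr).
\end{equation*}
The second-order terms sum to an absolute constant, so it suffices to show $\sum_{p\le x}1/p\le\log\log x+O(1)$. I will prove this by partial summation from Mertens' first estimate $\sum_{p\le x}(\log p)/p=\log x+O(1)$. That estimate in turn follows from $\log(\lfloor x\rfloor!)=\sum_{p^k\le x}\lfloor x/p^k\rfloor\log p$, Stirling's bound, and Chebyshev's bound $\theta(x)\ll x$. Chebyshev's bound comes from $\prod_{n<p\le 2n}p\mid\binom{2n}{n}\le4^n$. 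Only the upper direction of these estimates is needed, which keeps the argument short. Exponentiating gives $E(x)\le C\log x$.

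For the second part, $h(p)=1/p<1$ for $p\in\mathcal S$ and $h(p)=0$ otherwise, so $h(p)<1$ for all $p$ and $h$ maps into $[0,1]$. Since every factor $(1-h(p))^{-1}$ is at least $1$, the product over $a\le p\le b$ is bounded by the full product $\prod_{a\le p\le b}(1-1/p)^{-1}$, uniformly in $\mathcal S$. It therefore suffices to treat $\mathcal S$ equal to all primes. Taking logarithms again,
\begin{equation*}
 \log\prod_{a\le p\le b}(1-p^{-1})^{-1}\le\sum_{a\le p\le b}\frac1p+\sum_{p\ge a}\frac{2}{p^2}.
\end{equation*}
The tail $\sum_{p\ge a}2/p^2$ is $\ll1/a\ll1/\log a$.

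The main obstacle is that the sum over primes in $[a,b]$ must be bounded by $\log(\log b/\log a)+O(1/\log a)$, with an error that decays in $a$ and does not merely stay $O(1)$. An $O(1)$ error would not give the form $\exp(B/\log a)$ with $\kappa$ fixed. I will obtain it by partial summation from $R(t)=\sum_{p\le t}(\log p)/p-\log t=O(1)$:
\begin{equation*}
 \sum_{a\le p\le b}\frac1p=\int_{a^-}^{b}\frac{d(\log t+R(t))}{\log t}
 =\log\frac{\log b}{\log a}+\Bigl[\frac{R(t)}{\log t}\Bigr]_{a^-}^{b}+\int_a^b\frac{R(t)}{t\log^2t}\,dt .
\end{equation*}
Both error terms are $O(1/\log a)$ because $R$ is bounded. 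This gives the required dimension condition with $\kappa=1$ and an absolute $B$, independent of $\mathcal S$.
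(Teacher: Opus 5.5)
Your argument is correct but takes a different route from the paper. You go through Mertens' first theorem, $\sum_{p\le t}(\log p)/p=\log t+R(t)$ with $R=O(1)$, and this gives the sharp dimension $\kappa=1$. The paper never proves Mertens' estimate; it uses only the one-sided Chebyshev bound $\vartheta(t)\le 4\log2\cdot t$.
\begin{itemize}
\item For $E(x)\le C\log x$ it applies Rankin's trick. It compares $E(x)$ with $\prod_{p\le x}(1-p^{-\sigma})^{-1}\le\zeta(\sigma)\le 1+\log x$ at $\sigma=1+1/\log x$. The difference of logarithms is then at most $\frac{1}{\log x}\sum_{p\le x}\frac{\log p}{p-1}=O(1)$.
\item For the dimension condition it sums $1/p$ by parts directly against $\vartheta$ and discards the nonpositive lower-endpoint term $-\vartheta(a)/(a\log a)$. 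This yields $\kappa=4\log 2$ and $B=4\log 2+3$.
\end{itemize}
The paper's route is shorter and purely one-sided, at the price of a non-sharp $\kappa$. That is harmless, since the lemma only asks for absolute constants. Yours gives the natural dimension one but needs Mertens' estimate in both directions.

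That is why your remark that ``only the upper direction of these estimates is needed'' needs correcting. It is true for the bound on $E(x)$, where the partial summation starts below the first prime. In the second part, however, the boundary term $-R(a^-)/\log a$ is $O(1/\log a)$ only if $R$ is bounded below, that is, $\sum_{p<a}(\log p)/p\ge\log a-O(1)$. The trivial bound $\sum_{p<a}(\log p)/p\ge 0$ gives only an $O(1)$ error, which loses exactly the decay you correctly identified as essential.

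The lower bound does follow from the ingredients you list. Legendre's formula gives $x\sum_{p\le x}(\log p)/p\ge\log\lfloor x\rfloor!-x\sum_{p}\frac{\log p}{p(p-1)}$, the last series converges, and $\log\lfloor x\rfloor!\ge x\log x-O(x)$. With this added, your proof is complete.
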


\begin{proof}
We divide the proof into three steps.
\par
\medskip
\noindent\textbf{Step 1. Establish the Chebyshev and Euler-product bounds.}
We first prove the Chebyshev bound
\begin{equation}
 \vartheta(x):=\sum_{\substack{p\le x\\p\ \mathrm{prime}}}\log p\le C_0x
 \qquad(x\ge2),\qquad C_0=4\log2.
 \label{sc:chebyshev}
\end{equation}
For each integer $n\ge1$, every prime $n<p\le2n$ divides $(2n)!$
but not $n!$, and hence divides $\displaystyle\binom{2n}{n}$. The product of
these distinct primes therefore divides the binomial coefficient.
The binomial theorem gives $\displaystyle\binom{2n}{n}\le2^{2n}$, so
\begin{equation*}
 \vartheta(2n)-\vartheta(n)
 \le\log\binom{2n}{n}\le2n\log2.
\end{equation*}
Taking $n=2^{j-1}$ and summing over $1\le j\le k$ yields
\begin{equation*}
 \vartheta(2^k)
 \le\sum_{j=1}^k2^j\log2
 =(2^{k+1}-2)\log2\le2^{k+1}\log2.
\end{equation*}
For $2^{k-1}<x\le2^k$, monotonicity gives
$\vartheta(x)\le\vartheta(2^k)\le4x\log2$, proving
\eqref{sc:chebyshev}.

Expanding the Euler factors as geometric series and using unique
factorization gives
\begin{equation*}
 \begin{aligned}
 E(x)
 =\sum_{\substack{n\ge1\\p\ \mathrm{prime},\ p\mid n\Rightarrow p\le x}}\frac1n
 \ge\sum_{n=1}^{\lfloor x\rfloor}\frac1n
 \ge\int_1^{\lfloor x\rfloor+1}\frac{dt}{t}
 =\log(\lfloor x\rfloor+1)\ge\log x.
 \end{aligned}
\end{equation*}
Every $n\le x$ occurs in the first sum, and $1/t\le1/n$ on
$[n,n+1]$.

For the upper bound, put
\begin{equation*}
 \sigma=1+\frac1{\log x},\qquad
 E_\sigma(x)=\prod_{\substack{p\le x\\p\ \mathrm{prime}}}(1-p^{-\sigma})^{-1}.
\end{equation*}
Since $\sigma>1$, geometric expansion and comparison with an integral give
\begin{equation}
 E_\sigma(x)\le\sum_{n=1}^\infty n^{-\sigma}
 \le1+\int_1^\infty t^{-\sigma}\,dt
 =1+\frac1{\sigma-1}=1+\log x.
 \label{sc:prime-convergent-product}
\end{equation}
Using $-\log(1-u)=\sum_{k\ge1}u^k/k$ and
$1-e^{-v}\le v$ for $v\ge0$, we obtain
\begin{equation}
 \begin{aligned}
 \log E(x)-\log E_\sigma(x)
 &=\sum_{\substack{p\le x\\p\ \mathrm{prime}}}\sum_{k\ge1}
   \frac{p^{-k}(1-e^{-k(\sigma-1)\log p})}{k}\\
 &\le(\sigma-1)\sum_{\substack{p\le x\\p\ \mathrm{prime}}}(\log p)\sum_{k\ge1}p^{-k}\\
 &=\frac1{\log x}\sum_{\substack{p\le x\\p\ \mathrm{prime}}}\frac{\log p}{p-1}.
 \end{aligned}
 \label{sc:bsz-rankin-euler}
\end{equation}
All series here have nonnegative terms and converge. Partial summation
and \eqref{sc:chebyshev} give
\begin{equation*}
 \begin{aligned}
 \sum_{\substack{p\le x\\p\ \mathrm{prime}}}\frac{\log p}{p}
 =\frac{\vartheta(x)}x+\int_2^x\frac{\vartheta(t)}{t^2}\,dt
 \le C_0\left(1+\log\frac x2\right).
 \end{aligned}
\end{equation*}
The identity follows by multiplying
$$1/p=1/x+\int_p^x t^{-2}\,dt$$ by $\log p$ and summing.
Since $1/(p-1)\le2/p$, \eqref{sc:bsz-rankin-euler} implies
\begin{equation*}
 \log E(x)-\log E_\sigma(x)
 \le2C_0\frac{1+\log(x/2)}{\log x}
 \le\frac{2C_0}{\log2}=8.
\end{equation*}
The last inequality uses
$$(1+\log(x/2))/\log x=1+(1-\log2)/\log x\le1/\log2.$$
Together with \eqref{sc:prime-convergent-product}, this proves
\begin{equation*}
 E(x)\le e^8(1+\log x)\le C\log x,
 \qquad C=e^8\left(1+\frac1{\log2}\right),
\end{equation*}
and completes \eqref{sc:bsz-euler-bounds}.

\par
\medskip
\noindent\textbf{Step 2. Verify the sieve-dimension estimate.}
Now fix $2\le a\le b\le z$. Since $h(p)$ is either $1/p$ or zero,
the logarithmic series gives
\begin{equation}
 \begin{aligned}
 \log\prod_{\substack{a\le p\le b\\p\ \mathrm{prime}}}(1-h(p))^{-1}
 \le\sum_{\substack{a\le p\le b\\p\ \mathrm{prime}}}\sum_{k\ge1}\frac1{kp^k}
 =\sum_{\substack{a\le p\le b\\p\ \mathrm{prime}}}\frac1p
   +\sum_{\substack{a\le p\le b\\p\ \mathrm{prime}}}\sum_{k\ge2}\frac1{kp^k}.
 \end{aligned}
 \label{sc:prime-dimension-decomposition}
\end{equation}
For the first sum, apply partial summation with
$f(t)=1/(t\log t)$, whose derivative is
$$f'(t)=-(\log t+1)/(t^2(\log t)^2).$$ This yields
\begin{equation}
 \begin{aligned}
 \sum_{\substack{a\le p\le b\\p\ \mathrm{prime}}}\frac1p
 =\frac{\vartheta(b)}{b\log b}
   -\frac{\vartheta(a)}{a\log a}
+\int_a^b
   \frac{\vartheta(t)(\log t+1)}{t^2(\log t)^2}\,dt
   +\frac{\mathbf1_{\{a\text{ is prime}\}}}{a}.
 \end{aligned}
 \label{sc:prime-reciprocal-partial-summation}
\end{equation}
The final term restores the lower endpoint when $a$ is prime.
The integral needed after applying \eqref{sc:chebyshev} is
\begin{equation}
 \begin{aligned}
 \int_a^b\frac{\log t+1}{t(\log t)^2}\,dt
 =\int_{\log a}^{\log b}\left(\frac1u+\frac1{u^2}\right)\,du
 =\log\frac{\log b}{\log a}
   +\frac1{\log a}-\frac1{\log b}.
 \end{aligned}
 \label{sc:prime-reciprocal-integral}
\end{equation}
Discarding the nonpositive lower-endpoint term in
\eqref{sc:prime-reciprocal-partial-summation} and using
\eqref{sc:chebyshev} and \eqref{sc:prime-reciprocal-integral}, we get
\begin{equation}
 \begin{aligned}
 \sum_{\substack{a\le p\le b\\p\ \mathrm{prime}}}\frac1p
 \le C_0\log\frac{\log b}{\log a}
      +\frac{C_0}{\log a}+\frac1a
 \le C_0\log\frac{\log b}{\log a}
      +\frac{C_0+1}{\log a}.
 \end{aligned}
 \label{sc:bsz-dimension-primes}
\end{equation}
Here the terms $C_0/\log b$ cancel, and $1/a\le1/\log a$.

For the terms with $k\ge2$, use $1/k\le1$, sum the geometric
series, and then sum over all integers at least $a$:
\begin{equation}
 \begin{aligned}
 \sum_{\substack{p\ge a\\p\ \mathrm{prime}}}\sum_{k\ge2}\frac1{kp^k}
 \le\sum_{n\ge\lceil a\rceil}\frac1{n(n-1)}
 =\sum_{n\ge\lceil a\rceil}
     \left(\frac1{n-1}-\frac1n\right)
 =\frac1{\lceil a\rceil-1}\le\frac2a.
 \end{aligned}
 \label{sc:prime-higher-power-tail}
\end{equation}
The last inequality follows from $\lceil a\rceil-1\ge a-1\ge a/2$.
Combining \eqref{sc:prime-dimension-decomposition}, the first bound
in \eqref{sc:bsz-dimension-primes}, and
\eqref{sc:prime-higher-power-tail} gives
\begin{equation*}
 \begin{aligned}
 \log\prod_{\substack{a\le p\le b\\p\ \mathrm{prime}}}(1-h(p))^{-1}
 \le C_0\log\frac{\log b}{\log a}
      +\frac{C_0}{\log a}+\frac3a
 \le C_0\log\frac{\log b}{\log a}
      +\frac{C_0+3}{\log a}.
 \end{aligned}
\end{equation*}
\par
\medskip
\noindent\textbf{Step 3. Conclude the lemma.}
Exponentiation proves \eqref{sc:sieve-dimension} with
$\kappa=4\log2$ and $B=4\log2+3$. These constants are independent
of $\mathcal S$.
\end{proof}
\begin{lemma}\label{tr:rough-number}
There is an absolute constant $C>0$, independent of $A,B,N$, with
the following property. Let $A,B$ be real numbers with $2\le A<B$,
let $N\ge1$ be an integer, and put $z=\lceil B\rceil$. Then
\begin{equation}
 \frac1N\#\{1\le n\le N:p\nmid n\text{ for every prime }A\le p<B\}
 \le C\frac{\log A}{\log B}+\frac zN.
 \label{sc:bsz-rough}
\end{equation}
\end{lemma}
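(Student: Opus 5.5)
Apply the upper-bound sieve estimate \eqref{sc:bsz-upper-sieve} with $R$ uniform on $\{1,\dots,N\}$, $\mathcal S=\{p\text{ prime}: A\le p<B\}$, sifting level $z=\lceil B\rceil$, and $h$ the multiplicative function of \Cref{sc:prime-products}: $h(p^k)=p^{-k}$ for $p\in\mathcal S$ and $0$ otherwise. That lemma shows $h$ satisfies \eqref{sc:sieve-dimension} with absolute $\kappa,B$, independent of $\mathcal S$. Also $h(p)\le 1/2<1$ for every $p$. So the constant $C_{\kappa,B}$ is absolute. Every prime $p\in\mathcal S$ satisfies $p<B\le z$, so the event ``$p\nmid R$ for all $p\le z$, $p\in\mathcal S$'' is exactly the event counted on the left of \eqref{sc:bsz-rough}.

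\textbf{Main term.} Here
\[
\prod_{p\le z}(1-h(p))=\prod_{A\le p<B}(1-p^{-1}).
\]
Write this as $E(A^-)/E(B^-)$ in terms of Euler products over $p<A$ and $p<B$. Compare these with $E$ from \eqref{sc:bsz-euler-bounds}.
\begin{itemize}
\item The product over $p<B$ is at least $E(B)(1-1/2)$, since at most one extra prime $p=B$ is lost. Thus it is $\ge \tfrac12\log B$.
\item The product over $p<A$ is at most $E(A)\le C\log A$.
\end{itemize}
Hence the main term is $\ll\log A/\log B$. One should check the edge cases with $A<2$ excluded and $B$ possibly prime. Using $E(B)\ge\log B$ for $B\ge2$, the bound $(1-1/B)\ge1/2$ suffices.

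\textbf{Error term.} For squarefree $d\le z$ composed of primes in $\mathcal S$, we have $h(d)=1/d$ and $\mathbb P(d\mid R)=\lfloor N/d\rfloor/N$. Therefore
\[
|\mathbb P(d\mid R)-h(d)|\le 1/N.
\]
The number of such $d\le z$ is at most $z$, so the error sum is at most $z/N$. This gives exactly the second term of \eqref{sc:bsz-rough} with coefficient one. That is why the sieve estimate was stated with coefficient one on the error.

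\textbf{Main obstacle.} The only delicate point is bookkeeping. We must ensure that the sieve hypotheses hold uniformly, which is already supplied by \Cref{sc:prime-products}. We must also handle the half-open range $A\le p<B$ against the closed range $p\le z$ in the Euler-product comparison. The final constant is $C=C_{\kappa,B}\cdot 2C$, which is absolute.
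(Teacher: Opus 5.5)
Your proposal is correct and follows the paper's proof essentially step for step. It uses the same application of \eqref{sc:bsz-upper-sieve} with $R$ uniform on $\{1,\dots,N\}$, $\mathcal S=\{p: A\le p<B\}$ and $h(d)=1/d$ on $\mathcal S$-supported $d$, the same $1/N$ per-divisor bound giving the $z/N$ error, and the same Euler-product comparison $E(A^-)\le C_E\log A$, $E(B^-)\ge\tfrac12\log B$ for the main term (the only cosmetic issue is that naming the sieve constant $C_{\kappa,B}$ clashes with the endpoint $B$, so call it $C_{\mathrm{sieve}}$ instead).
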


\begin{proof}
We divide the proof into three steps.

\medskip
\noindent\textbf{Step 1. Set up the sieve probability.}
Let $\mathcal S=\{p\text{ prime}:A\le p<B\}$, and let $R$ take
each value in $\{1,\ldots,N\}$ with probability $1/N$. Thus
\begin{equation}
 \rho:=\frac1N\#\{1\le n\le N:p\nmid n\text{ for all }p\in\mathcal S\}=\mathbb P(p\nmid R\text{ for all }p\in\mathcal S).
 \label{sc:rough-count-probability}
\end{equation}
Define the multiplicative function
\begin{equation*}
 h(d)=
 \begin{cases}
 d^{-1},&\text{if every prime divisor of }d\text{ belongs to }\mathcal S,\\
 0,&\text{otherwise}.
 \end{cases}
\end{equation*}
In particular, $h(1)=1$ and $0\le h(p)\le1/2<1$.
By \Cref{sc:prime-products}, $h$ satisfies
\eqref{sc:sieve-dimension} with fixed absolute parameters
$\kappa_0,\beta_0$, independent of $\mathcal S$.
Every prime in $\mathcal S$ is less than $B\le z$, so
\eqref{sc:bsz-upper-sieve} applies and gives
\begin{equation}
 \rho\le C_{\mathrm{sieve}}\prod_{\substack{A\le p<B\\p\ \mathrm{prime}}}(1-p^{-1})
 +\sum_{\substack{d\le z\\p\ \mathrm{prime},\ p\mid d\Rightarrow p\in\mathcal S}}
       \mu^2(d)\left|\mathbb P(d\mid R)-\frac1d\right|.
 \label{sc:rough-sieve-application}
\end{equation}
Here $C_{\mathrm{sieve}}=C_{\kappa_0,\beta_0}$ is absolute and
independent of $A,B,N$. The product contains only primes in
$\mathcal S$ because $h(p)=0$ outside that set.

\medskip
\noindent\textbf{Step 2. Bound the divisibility error.}
There are exactly $\lfloor N/d\rfloor$ multiples of $d$ in
$\{1,\ldots,N\}$. Hence, for every positive integer $d$,
\begin{equation*}
 \begin{aligned}
 \mathbb P(d\mid R)&=\frac{\lfloor N/d\rfloor}{N},\\
 \left|\mathbb P(d\mid R)-\frac1d\right|
 &=\frac1N\left|\lfloor N/d\rfloor-\frac Nd\right|\le\frac1N.
 \end{aligned}
\end{equation*}
This uses $0\le t-\lfloor t\rfloor<1$ and also holds when $d>N$.
Since $\mu^2(d)\le1$ and there are at most $z$ positive integers
$d\le z$, the error in \eqref{sc:rough-sieve-application} is bounded by
\begin{equation}
 \sum_{\substack{d\le z\\p\ \mathrm{prime},\ p\mid d\Rightarrow p\in\mathcal S}}
       \mu^2(d)\left|\mathbb P(d\mid R)-\frac1d\right|
 \le\frac1N\sum_{d\le z}1=\frac zN.
 \label{sc:rough-total-error}
\end{equation}

\medskip
\noindent\textbf{Step 3. Estimate the Euler product and conclude.}
To estimate the product, write
$\displaystyle E(t^-)=\prod_{\substack{p<t\\p\ \mathrm{prime}}}(1-p^{-1})^{-1}$, with a strict upper endpoint.
Factoring the product over primes below $B$ at $A$ gives
\begin{equation}
 E(B^-)=E(A^-)\prod_{\substack{A\le p<B\\p\ \mathrm{prime}}}(1-p^{-1})^{-1},\qquad \prod_{\substack{A\le p<B\\p\ \mathrm{prime}}}(1-p^{-1})=\frac{E(A^-)}{E(B^-)}.
 \label{sc:rough-endpoint-ratio}
\end{equation}
Let $C_E$ be the absolute constant in \eqref{sc:bsz-euler-bounds}.
Its upper bound gives
\begin{equation*}
 E(A^-)\le E(A)\le C_E\log A.
\end{equation*}
If $B$ is not prime, then $E(B^-)=E(B)$. If $B$ is prime, then
$$E(B^-)=(1-B^{-1})E(B)\ge E(B)/2.$$ Thus in both cases,
\begin{equation}
 E(B^-)\ge\frac12E(B)\ge\frac12\log B.
 \label{sc:rough-product-denominator}
\end{equation}
Combining \eqref{sc:rough-endpoint-ratio}--\eqref{sc:rough-product-denominator},
we obtain
\begin{equation}
 \prod_{\substack{A\le p<B\\p\ \mathrm{prime}}}(1-p^{-1})
 \le 2C_E\frac{\log A}{\log B}.
 \label{sc:rough-product-bound}
\end{equation}
Substituting \eqref{sc:rough-total-error} and
\eqref{sc:rough-product-bound} into \eqref{sc:rough-sieve-application} yields
\begin{equation*}
 \rho\le2C_{\mathrm{sieve}}C_E\frac{\log A}{\log B}+\frac zN.
\end{equation*}
By \eqref{sc:rough-count-probability}, this proves
\eqref{sc:bsz-rough} with $C=2C_{\mathrm{sieve}}C_E$.
Both factors are absolute, so $C$ is independent of $A,B,N$.
\end{proof}

\section{Weighted Pythagorean ratios}
\label{ar:section}

Throughout this section,
\begin{equation*}
 R(m,n)=\frac{m^2-n^2}{2mn},
 \qquad m>n>0.
\end{equation*}
We average these ratios using a congruence weight at small primes
and a fixed density in the two real variables. For $y\ge1$, call a
unit-modulus completely multiplicative function $g$ a finite
character model if $g(p)=\chi(p)p^{it}$ for every prime $p>y$,
where $\chi$ is a Dirichlet character of conductor at most $y$
and $t\in\R$.
We approximate the integer average by its local factors, with an
explicit error, and bound its real part below for every such model.

\subsection{Local distributions}

Write $\mathbb S^1=\{z\in\C:|z|=1\}$. For $p$-adic
fields and topology, see
\cite[Sections~3.1--3.2 and 4.1--4.4]{GouveaPadic};
for Haar measure, see
\cite[Section~6, Proposition~I.6.3]{CasselmanHaar}.
All subsets of $p$-adic spaces considered below are Borel.

\par\smallskip\noindent\textbf{$p$-adic numbers and residue classes.}
Fix a prime $p$. Write $r\in\Q^\times$ as $r=p^k a/b$,
where $k\in\Z$, $a,b\in\Z\setminus\{0\}$, and $p\nmid ab$.
Set $v_p(r)=k$ and $|r|_p=p^{-k}$, with $|0|_p=0$.
The field $\Q_p$ is the completion of $\Q$ for this
absolute value. Extend $v_p$ to $\Q_p$ by
$v_p(x)=-\log_p|x|_p$ for $x\ne0$, and set
$v_p(0)=+\infty$. Then
\begin{equation*}
 \Z_p=\{x\in\Q_p:v_p(x)\ge0\},\qquad
 \Z_p^\times=\{x\in\Q_p:v_p(x)=0\}
              =\Z_p\setminus p\Z_p.
\end{equation*}
Every $x\in\Z_p$ has a unique expansion
$\displaystyle x=\sum_{j\ge0}x_jp^j$ with $x_j\in\{0,\ldots,p-1\}$.
For each integer $a\ge1$, its residue class modulo $p^a$
is determined by the first $a$ digits, giving
$\Z_p/p^a\Z_p\simeq\Z/p^a\Z$.
The ring $\Z_p$ is compact. The residue classes
$b+p^a\Z_p$ form a basis of open and closed sets and
generate its Borel $\sigma$-algebra.

\par\smallskip\noindent\textbf{Haar measure and random variables on $\Z_p$.}
Let $\lambda_p$ be the translation-invariant Borel
probability measure on $\Z_p$. The $p^a$ residue classes
modulo $p^a$ have equal measure, so
\begin{equation}
 \lambda_p(\Z_p)=1,\qquad
 \lambda_p(b+p^a\Z_p)=p^{-a}
 \quad(a\ge1,\ b\in\Z_p).
 \label{ar:haar-residues}
\end{equation}
These values determine $\lambda_p$. A $\Z_p$-valued random variable
$X$ is Haar uniform on $\Z_p$ if it has law $\lambda_p$,
or equivalently,
\begin{equation*}
 \Pp(X\equiv b\pmod{p^a})=p^{-a}
 \quad(a\ge1,\ b\in\Z/p^a\Z).
\end{equation*}
One construction is $\displaystyle X=\sum_{j\ge0}D_jp^j$ with independent
digits $D_j$ uniform on $\{0,\ldots,p-1\}$: a prescribed
residue fixes $a$ digits and has probability $p^{-a}$.
Also $\Pp(X=x)\le p^{-a}$ for every $a$, so each
singleton has probability zero.

A Haar pair $(M,N)$ consists of independent $\Z_p$-valued
random variables $M,N$, each with law $\lambda_p$.
Their joint law is $\lambda_p\otimes\lambda_p$.
For $A,B\subseteq\Z_p$, $a\ge1$, and $b,c\in\Z/p^a\Z$,
\begin{equation*}
 \Pp(M\in A,\ N\in B)=\lambda_p(A)\lambda_p(B),\qquad
 \Pp(M\equiv b,\ N\equiv c\pmod{p^a})=p^{-2a}.
\end{equation*}
Similarly, a Haar vector in $\Z_p^d$ has law
$\lambda_p^{\otimes d}$. Haar scalar and unrestricted Haar
pair refer to these laws before conditioning; independence
between different primes is assumed only when stated.

For $\Pp(E)>0$, conditional probability is
$\Pp(F\mid E)=\Pp(F\cap E)/\Pp(E)$.
For $X$ with law $\lambda_p$ and $B\subseteq\Z_p$ with
$\lambda_p(B)>0$, its conditional law given $X\in B$
assigns $A\subseteq\Z_p$ the probability
$\lambda_p(A\cap B)/\lambda_p(B)$.
In particular, $b+p^aX$ has the normalized restriction
of $\lambda_p$ to $b+p^a\Z_p$:
\begin{equation}
 \Pp(b+p^aX\in A)
   =p^a\lambda_p\bigl(A\cap(b+p^a\Z_p)\bigr)
 \quad(a\ge0,\ b\in\Z_p,\ A\subseteq\Z_p).
 \label{ar:haar-ball}
\end{equation}
Indeed, for $j\ge0$ and $c\in\Z_p$, the preimage of
$b+p^ac+p^{a+j}\Z_p$ is $c+p^j\Z_p$.
Both sides of \eqref{ar:haar-ball} assign this target
class probability $p^{-j}=p^ap^{-(a+j)}$; equality on
residue classes proves the formula.

\par\smallskip\noindent\textbf{Haar measure on the unit group.}
Since $\lambda_p(\Z_p^\times)=1-p^{-1}$, define
\begin{equation}
 \lambda_p^\times(A)
   =\frac{\lambda_p(A)}{1-p^{-1}}
 \quad(A\subseteq\Z_p^\times).
 \label{ar:haar-unit-measure}
\end{equation}
For $X$ with law $\lambda_p$, this is its conditional
law given $X\in\Z_p^\times$. Multiplication by a unit
permutes residue classes, hence preserves $\lambda_p$
and $\lambda_p^\times$. Thus $\lambda_p^\times$ is
normalized multiplicative Haar measure on $\Z_p^\times$.
A $\Z_p^\times$-valued random variable $U$ is Haar uniform
if it has law $\lambda_p^\times$, equivalently if
\begin{equation*}
 \Pp(U\equiv u\pmod{p^a})
   =\frac{p^{-a}}{1-p^{-1}}
   =\frac1{\varphi(p^a)}
 \quad(a\ge1,\ u\in(\Z/p^a\Z)^\times).
\end{equation*}
Here $\varphi(p^a)=(p-1)p^{a-1}$. These residue probabilities
also prove uniqueness, since the unit classes generate
the Borel sets. If $U$ has this law, then for $s\ge1$
its conditional law
on $1+p^s\Z_p$ assigns $A\subseteq1+p^s\Z_p$ mass
$\lambda_p^\times(A)/(p^{-s}/(1-p^{-1}))=p^s\lambda_p(A)$.
It is the normalized multiplicative Haar measure on this
subgroup, since multiplication by its elements preserves
the subgroup and $\lambda_p$.

\par\smallskip\noindent\textbf{Primitive pairs.}
A pair $(m,n)\in\Z_p^2$ is primitive if at least one
coordinate is a unit, equivalently if
$\min(v_p(m),v_p(n))=0$. Put
$C_p=\Z_p^2\setminus(p\Z_p)^2$. For a Haar pair $(M,N)$
under $\Pp$, let $E_p=\{(M,N)\in C_p\}$.
Independence and conditional probability give
\begin{equation}
 \Pp(E_p)=1-p^{-2},\qquad
 \Pp((M,N)\in A\mid E_p)=\frac{(\lambda_p\otimes\lambda_p)(A\cap C_p)}{1-p^{-2}}\quad(A\subseteq\Z_p^2).
 \label{ar:haar-primitive}
\end{equation}
\par\smallskip\noindent\textbf{The induced measure on the projective line.}
The projective line $\mathbb P^1(\Q_p)$ consists of nonzero
pairs $(m,n)\in\Q_p^2$ modulo common nonzero scaling,
with the quotient topology. Write $[m:n]=m/n$ when
$n\ne0$, and $[1:0]=\infty$. The map
$\iota:p\Z_p\to\mathbb P^1(\Q_p)$, $u\mapsto[1:u]$,
is a homeomorphism onto $(\Q_p\setminus\Z_p)\cup\{\infty\}$;
thus $\iota(u)=1/u$ for $u\ne0$, $\iota(0)=\infty$, and
$\mathbb P^1(\Q_p)=\Z_p\sqcup\iota(p\Z_p)$.

Let $(M,N)$ be a Haar pair under $\Pp$, with primitive
event $E_p$. Since the pair is nonzero on $E_p$, define
$\mu_p$ as the law of $[M:N]$ under
$\Pp(\,\cdot\mid E_p)$. For $A\subseteq\Z_p$ and
$B\subseteq p\Z_p$, the events $[M:N]\in A$ and
$[M:N]\in\iota(B)$, restricted to $E_p$, are respectively
\mbox{$\{N\in\Z_p^\times,\ M\in NA\}$} and
\mbox{$\{M\in\Z_p^\times,\ N\in MB\}$}.
Multiplication by a unit preserves $\lambda_p$.
Independence and division by $\Pp(E_p)=1-p^{-2}$ therefore give
\begin{equation}
 \begin{aligned}
 \mu_p(A)&=\frac{(1-p^{-1})\lambda_p(A)}{1-p^{-2}}
          =\frac p{p+1}\lambda_p(A),\\
 \mu_p(\iota(B))&=\frac{(1-p^{-1})\lambda_p(B)}{1-p^{-2}}
          =\frac p{p+1}\lambda_p(B).
 \end{aligned}
 \label{ar:projective}
\end{equation}
In particular, $\mu_p(\Z_p)=p/(p+1)$ and
$\mu_p(\iota(p\Z_p))=1/(p+1)$.
Given $M\in\Z_p^\times$ and $N\in p\Z_p$, the random
variable $u=N/M$ has law $p\,\lambda_p|_{p\Z_p}$.
The unnormalized restriction $\lambda_p|_{p\Z_p}$ has mass $1/p$.

For $m,n\in\Q_p$ with $mn\ne0$, put
$R(m,n)=(m^2-n^2)/(2mn)$ and
$R(t)=R(t,1)=(t^2-1)/(2t)$ for $t\in\Q_p^\times$.
Then $R(m,n)=R(m/n)$ and $R(p^jm,p^jn)=R(m,n)$.
Assign $R(M,N)$ the value zero on the null event $MN=0$.
Its law is the same under $\Pp$ and
$\Pp(\,\cdot\mid E_p)$. Indeed, for
$J=\min(v_p(M),v_p(N))$,
$\Pp(J=j)=p^{-2j}-p^{-2j-2}=p^{-2j}(1-p^{-2})$
for $j\ge0$.
Given $M,N\in p^j\Z_p$, the scaled pair
$(p^{-j}M,p^{-j}N)$ is a Haar pair by
\eqref{ar:haar-ball} and independence; $J=j$ means that this
scaled pair is primitive. Hence its law given $J=j$ is
\eqref{ar:haar-primitive}. Homogeneity makes the conditional
law of $R(M,N)$ independent of $j$, hence equal to its
law given $E_p=\{J=0\}$. Averaging over $j$, using
$\displaystyle \sum_{j\ge0}p^{-2j}(1-p^{-2})=1$, proves the assertion.

For $t$ with law $\mu_p$, define, outside the null set
$\{0,1,-1,\infty\}$,
\begin{equation*}
 V_p=v_p(R(t)),\qquad U_p=p^{-V_p}R(t)\in\Z_p^\times.
\end{equation*}
The unit part $U_p$ need not be an ordinary integer.

\Needspace{14\baselineskip}
\begin{samepage}
\begin{lemma}
\label{ar:shells}
With the notation above, for every odd prime $p$ and
every integer $k\ge1$,
\begin{equation}
 \mu_p(V_p=k)=\mu_p(V_p=-k)=\frac{2(p-1)}{p+1}p^{-k},\qquad
 \mu_p(V_p=0)=\frac{p-3}{p+1}.
 \label{ar:odd-shells}
\end{equation}
For $p=2$, one has $|V_2|\ge2$ almost surely, and for
every integer $h\ge2$,
\begin{equation}
 \mu_2(V_2=h)=\frac{2^{1-h}}3,\qquad
 \mu_2(V_2=-h)=\frac{2^{2-h}}3.
 \label{ar:dyadic-shells}
\end{equation}
For every nonzero integer $j$ with $\mu_p(V_p=j)>0$,
the conditional distribution of $U_p$ given $V_p=j$ is
the normalized Haar measure $\lambda_p^\times$ on
$\Z_p^\times$. Explicitly, for every integer $a\ge1$ and
every $b\in(\Z/p^a\Z)^\times$,
\begin{equation}
 \mu_p\bigl(U_p\equiv b\pmod{p^a}\mid V_p=j\bigr)
   =\frac1{\varphi(p^a)}
   =\frac1{(p-1)p^{a-1}}.
 \label{ar:shell-unit-law}
\end{equation}
\end{lemma}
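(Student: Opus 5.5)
We compute the law of $R(t)=(t^2-1)/(2t)$ under $\mu_p$ by splitting $\mathbb P^1(\Q_p)$ according to the valuation of $t$, using \eqref{ar:projective}. The decomposition is $\mathbb P^1(\Q_p)=\{v_p(t)\ge1\}\sqcup\{v_p(t)=0\}\sqcup\{v_p(t)\le-1\}$, with $\{v_p(t)\le-1\}=\iota(p\Z_p)$. The involution $t\mapsto-1/t$ preserves $R$ up to sign: $R(-1/t)=-R(t)$. Since $-1$ is a unit, sign changes do not affect the conditional unit-law statement (Haar measure on $\Z_p^\times$ is invariant under multiplication by $-1$). So we may work in the chart $t\in\Z_p$ and transport the results through $\iota$, using the symmetric formulas in \eqref{ar:projective}.

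\textbf{Odd $p$.}
\begin{itemize}
\item If $v_p(t)=k\ge1$, write $t=p^ku$ with $u\in\Z_p^\times$. Then $R(t)=-(1-t^2)/(2t)$ has valuation $-k$ and unit part $-(1-p^{2k}u^2)/(2u)$.
\item On $\{v_p(t)=-k\}$, i.e.\ $t=\iota(p^ku)$, we get $V_p=+k$.
\end{itemize}
Each of these two pieces has $\mu_p$-mass $\frac p{p+1}\lambda_p(p^k\Z_p^\times)=\frac{p}{p+1}p^{-k}(1-p^{-1})$, which gives $\frac{p-1}{p+1}p^{-k}$.

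The remaining mass of $V_p=\pm k$ comes from the unit region. There $v_p(R)=v_p(t-1)+v_p(t+1)$, and at most one of these valuations is positive. Putting $t=\pm1+p^ku$ gives $V_p=k$ with unit part $\pm(2p^ku+p^{2k}u^2)p^{-k}/(2t)$, up to sign. Each of the two residue shells has mass $\frac{p}{p+1}p^{-k}(1-p^{-1})$, so they contribute another $\frac{2(p-1)}{p+1}p^{-k}$ to $V_p=k$ in total.

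This bookkeeping has to be done carefully, since I am not certain the stated constant $2(p-1)/(p+1)$ splits as written. The plan is to recount directly: $V_p=k$ receives mass $\frac{p-1}{p+1}p^{-k}$ from $v_p(t)=-k$, and the shells near $\pm1$ contribute $2\cdot\frac{p}{p+1}\bigl(p^{-k}-p^{-k-1}\bigr)$. The total must match \eqref{ar:odd-shells}, and the symmetry $t\mapsto-1/t$ then gives $V_p=-k$. The value $\mu_p(V_p=0)=(p-3)/(p+1)$ follows by complement, or directly: it is the unit residues $\not\equiv0,\pm1$, of mass $\frac p{p+1}\cdot\frac{p-3}{p}$.

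\textbf{$p=2$.} A unit $t$ has $t^2-1\in8\Z_2$, so $v_2(R)=v_2(t^2-1)-1\ge2$. Also $v_2(t)=k\ge1$ gives $V_2=-k-1\le-2$, and $t=\iota(\cdot)$ gives $V_2=k+1$. The shell masses are computed in the same way, using that $v_2(t^2-1)=v_2(t-1)+v_2(t+1)$ with one of the two factors having valuation exactly $1$.

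\textbf{Unit law.} This is the main point. In each piece the unit part is an explicit function $\phi(u)$ of a Haar-uniform unit $u$, for example $\phi(u)=-(1-p^{2k}u^2)/(2u)$. I would show that $\phi$ induces, for every $a$, a bijection of $(\Z/p^a\Z)^\times$. Modulo $p^a$, the correction term $p^{2k}u^2$ is a small perturbation of $-1/(2u)$, and a Hensel-type argument shows the map is a measure-preserving homeomorphism of $\Z_p^\times$. Concretely, $\phi(u)\equiv\phi(u')\pmod{p^a}$ forces $u\equiv u'\pmod{p^a}$, because the derivative of $\phi$ is a unit and $\phi$ is given by a convergent power series with $p$-adically contracting higher terms. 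Injectivity modulo every $p^a$ on a finite set gives surjectivity, so pushforward of $\lambda_p^\times$ is $\lambda_p^\times$.

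In the shell $t=\pm1+p^ku$, the unit part is $\pm(2u+p^ku^2)/(2(\pm1+p^ku))$, again a unit-derivative perturbation of $u$. For $p=2$ the factor $2$ changes the normalization: one writes $t=\pm1+2^ku$ with $k\ge2$, and the unit part becomes $(u+2^{k-1}u^2)/t$ up to sign. This is still a bijection modulo every $2^a$.

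Mixing two pieces, each with conditional law $\lambda_p^\times$, still gives $\lambda_p^\times$, which proves \eqref{ar:shell-unit-law}. I expect the main obstacle to be the careful verification of these bijections and the exact shell masses at $p=2$, particularly the edge case $k=1$ at $p=2$ (e.g.\ $t=1+2u$) and the boundary between shells.
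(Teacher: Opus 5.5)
The shell-mass computation fails at the chart $\iota(p\Z_p)$. For $t=\iota(u)=1/u$ with $v_p(u)=k\ge1$, the identity $R(1/u)=-R(u)$ gives $V_p(t)=v_p(R(u))=-k-v_p(2)$. This is a \emph{negative} valuation, whereas you assign it $+k$, and $k+1$ when $p=2$. Relatedly, $R(-1/t)=R(t)$, not $-R(t)$; the sign change belongs to $t\mapsto 1/t$.

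With your assignment the odd-$p$ count goes wrong. The event $V_p=k$ collects $\frac{p-1}{p+1}p^{-k}$ from the chart plus $\frac{2(p-1)}{p+1}p^{-k}$ from the shells $\pm1+p^k\Z_p^\times$, i.e.\ $\frac{3(p-1)}{p+1}p^{-k}$. Meanwhile $V_p=-k$ receives only $\frac{p-1}{p+1}p^{-k}$. For $p=2$, the two formulas in \eqref{ar:dyadic-shells} come out interchanged.

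You noticed the totals do not match, but the proposed repair (``the symmetry $t\mapsto-1/t$ then gives $V_p=-k$'') cannot work. Both $t\mapsto1/t$ and $t\mapsto-1/t$ preserve $|R(t)|_p$, so no such symmetry moves mass between $V_p=k$ and $V_p=-k$. As written, the argument does not prove \eqref{ar:odd-shells} or \eqref{ar:dyadic-shells}.

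The fix is the paper's bookkeeping. Negative valuations come only from $t\in p\Z_p$ and from $\iota(p\Z_p)$. For odd $p$, each of these contributes $\frac{p}{p+1}(p^{-k}-p^{-k-1})=\frac{p-1}{p+1}p^{-k}$ to $V_p=-k$. For $p=2$, the sets $\{v_2(t)=h-1\}$ and $\iota(\{v_2(u)=h-1\})$ each contribute $\frac23\,2^{-h}$ to $V_2=-h$.

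Positive valuations come only from the unit region. For odd $p$, $V_p=k$ comes from the two shells $\pm1+p^k\Z_p^\times$, each of mass $\frac{p-1}{p+1}p^{-k}$. For $p=2$, $V_2=h$ comes from the two shells $\pm1+2^h\Z_2^\times$, each of mass $\frac13\,2^{-h}$.

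Once this is corrected, your unit-law step is essentially the paper's isometry argument. On each piece the unit part is an explicit map $\phi$ of a Haar unit with unit difference quotient. For example, for odd $p$ on $\{v_p(t)=k\}$ one has $\phi(u)-\phi(u')=(u-u')\bigl(p^{2k}+(uu')^{-1}\bigr)/2$. Hence $\phi$ is a bijection modulo every $p^a$ and preserves $\lambda_p^\times$. Mixing the pieces that share the same value of $V_p$ then gives \eqref{ar:shell-unit-law}. The misassignment happens not to affect the unit law, only the shell masses.
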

\end{samepage}
\begin{proof}
We divide the proof into three steps.
\par
\medskip
\noindent\textbf{Step 1. Compute the odd-prime shells.}
We use the residue-class measures in \eqref{ar:haar-residues},
the unit measure in \eqref{ar:haar-unit-measure},
the projective measure \eqref{ar:projective}, and
compactness of $\Z_p$. We omit the four null points
throughout the valuation calculations. For $c\in\Z_p$ and every
integer $k\ge0$,
\begin{equation}
 \lambda_p\{x\in\Z_p:v_p(x-c)=k\}=\lambda_p(c+p^k\Z_p)-\lambda_p(c+p^{k+1}\Z_p)=p^{-k}-p^{-k-1}=(1-p^{-1})p^{-k}.
 \label{ar:shell-residue-mass}
\end{equation}
For $t\in\Q_p\setminus\{0,1,-1\}$, factoring the numerator
of $R$ gives
\begin{equation}
 v_p(R(t))=v_p(t-1)+v_p(t+1)-v_p(2)-v_p(t),\qquad R(1/t)=-R(t).
 \label{ar:shell-valuation-identity}
\end{equation}

Suppose first that $p$ is odd. If $t\in p\Z_p\setminus\{0\}$,
then $t-1$ and $t+1$ are units, so $V_p=-v_p(t)$.
For $t=\iota(u)$ with $u\in p\Z_p\setminus\{0\}$,
the second identity in \eqref{ar:shell-valuation-identity}
likewise gives $V_p=-v_p(u)$. For $k\ge1$, each of
these two disjoint regions therefore contributes
\begin{equation}
 \frac p{p+1}(p^{-k}-p^{-k-1})
   =\frac{p-1}{p+1}p^{-k}
 \label{ar:odd-shell-branch}
\end{equation}
to $\mu_p(V_p=-k)$, by \eqref{ar:projective} and
\eqref{ar:shell-residue-mass}.

If $t\in1+p\Z_p$, then $t$ and $t+1$ are units,
so $V_p=v_p(t-1)$. If $t\in-1+p\Z_p$, then $t$ and
$t-1$ are units, so $V_p=v_p(t+1)$. These balls are
disjoint, and each contributes \eqref{ar:odd-shell-branch}
to $\mu_p(V_p=k)$. This proves both nonzero-valuation
formulas in \eqref{ar:odd-shells}. The remaining points
lie in $\Z_p$ and satisfy $t\bmod p\notin\{0,1,-1\}$.
Both the
numerator and denominator of $R(t)$ are units there.
The $p-3$ remaining residue classes each have
$\mu_p$-measure $1/(p+1)$. Thus
\begin{equation}
 \mu_p(V_p=0)=\frac{p-3}{p+1},\qquad
 \sum_{k\ge1}\mu_p(V_p=k)
  =\sum_{k\ge1}\mu_p(V_p=-k)=\frac2{p+1}.
 \label{ar:odd-shell-total}
\end{equation}
The three probabilities in \eqref{ar:odd-shell-total}
sum to one.

\par
\medskip
\noindent\textbf{Step 2. Compute the dyadic shells.}
Now let $p=2$. If $t=2^k a$, where $k\ge1$ and
$a\in\Z_2^\times$, then $t^2-1$ is odd and
$V_2=-k-1$. For $h\ge2$, the set giving $V_2=-h$
in $2\Z_2$ is therefore $\{t:v_2(t)=h-1\}$.
The region parametrized by $\iota(u)$ contributes
the same measure, since $R(1/u)=-R(u)$. Hence
\begin{equation*}
 \mu_2(V_2=-h)
  =2\cdot\frac23\bigl(2^{-(h-1)}-2^{-h}\bigr)
  =\frac{2^{2-h}}3\qquad(h\ge2).
\end{equation*}
If $t$ is odd and $t\ne\pm1$, exactly one of $t-1,t+1$
has valuation one, while the other has valuation at least
two. Consequently,
\begin{equation}
 V_2=
 \begin{cases}
  v_2(t-1),&t\in1+4\Z_2,\quad t\ne1,\\
  v_2(t+1),&t\in-1+4\Z_2,\quad t\ne-1.
 \end{cases}
 \label{ar:dyadic-positive-valuation}
\end{equation}
The two balls in \eqref{ar:dyadic-positive-valuation}
partition $\Z_2^\times$. Each contributes
$(2/3)(2^{-h}-2^{-h-1})$ to $\mu_2(V_2=h)$, giving
\begin{equation*}
 \mu_2(V_2=h)
  =2\cdot\frac23\bigl(2^{-h}-2^{-h-1}\bigr)
  =\frac{2^{1-h}}3\qquad(h\ge2).
\end{equation*}
These cases exhaust the projective line apart from the
four null points. In particular, $|V_2|\ge2$ almost surely,
and the total probabilities are
\begin{equation*}
 \sum_{h\ge2}\mu_2(V_2=h)=\frac13,\qquad
 \sum_{h\ge2}\mu_2(V_2=-h)=\frac23.
\end{equation*}

\par
\medskip
\noindent\textbf{Step 3. Identify the conditional unit law.}
To prove \eqref{ar:shell-unit-law}, we first justify the
measure-preservation fact needed below. Let
$B=c+p^r\Z_p$ and $C=d+p^r\Z_p$, where $r\ge0$
is an integer and $c,d\in\Z_p$. Suppose that $F:B\to C$
satisfies
\begin{equation}
 v_p(F(x)-F(y))=v_p(x-y)\qquad(x,y\in B).
 \label{ar:shell-isometry}
\end{equation}
For each $a\ge r$, congruence of $x,y$ modulo $p^a$
is equivalent to congruence of $F(x),F(y)$ modulo $p^a$.
Thus $F$ induces an injection between the residue classes
of $B$ and $C$ modulo $p^a$. Both sets have $p^{a-r}$
classes, so the induced map is a bijection. Each target
residue class has exactly one source residue class as its
preimage, and their normalized Haar probabilities are
both $p^{r-a}$. For a fixed $z\in C$, the preimages of
$z+p^a\Z_p$ are nested nonempty compact balls. Their
intersection contains a point $x$, and
$F(x)\equiv z\pmod{p^a}$ for all $a\ge r$ implies
$F(x)=z$. Hence $F$ is onto and carries normalized Haar
measure on $B$ to normalized Haar measure on $C$,
because residue classes generate the Borel sets.
The same argument applies to $F:\Z_p^\times\to\Z_p^\times$
satisfying \eqref{ar:shell-isometry}: for $a\ge1$ there
are $\varphi(p^a)$ residue classes, each of probability
$1/\varphi(p^a)$ under $\lambda_p^\times$.

For positive valuations, put $r=1$ when $p$ is odd and
$r=2$ when $p=2$. For $\epsilon\in\{1,-1\}$, let
$B_\epsilon=\epsilon+p^r\Z_p$. The two balls are disjoint.
For $t,s\in B_\epsilon$, direct subtraction gives
\begin{equation*}
 R(t)-R(s)=\frac{t-s}{2}\left(1+\frac1{ts}\right).
\end{equation*}
For odd $p$, the factor multiplying $t-s$ is congruent
to $1$ modulo $p$, since $ts\equiv1\pmod p$.
For $p=2$, one has $ts\equiv1\pmod4$, so
$v_2(ts+1)=1$ and $(ts+1)/(2ts)$ is odd.
The factor is therefore a unit in both cases. Since
$R(\epsilon)=0$, the map $R$ is an isometry
$B_\epsilon\to p^r\Z_p$. The preceding argument proves
that it is onto and preserves normalized Haar measure.
By \eqref{ar:projective}, the conditional distribution
of $t$ given $t\in B_\epsilon$ is normalized additive
Haar measure on that ball. Its image under $R$ is
therefore normalized additive Haar measure on $p^r\Z_p$.
For every $j\ge r$, $a\ge1$, and
$b\in(\Z/p^a\Z)^\times$, it follows that
\begin{equation*}
 \begin{aligned}
 \mu_p\bigl(U_p\equiv b\pmod{p^a}
          \mid V_p=j,\ t\in B_\epsilon\bigr)
   =\frac{p^{-(j+a)}}{p^{-j}-p^{-j-1}}
   =\frac1{(p-1)p^{a-1}}.
 \end{aligned}
\end{equation*}
Here the numerator is the additive Haar measure of
$p^j b+p^{j+a}\Z_p$, using any integer representative
of $b$; the coset is independent of that choice.
The denominator is the measure of $p^j\Z_p^\times$.
The common normalizing factor for
$p^r\Z_p$ cancels in the quotient. Both balls give the
same conditional unit law. Since they exhaust the
event $V_p=j$, the law of total probability gives
\eqref{ar:shell-unit-law} for positive $j$.

For negative valuations, set $\delta=v_p(2)$ and
$d=2/p^\delta\in\Z_p^\times$; thus $(\delta,d)=(0,2)$
for odd $p$ and $(\delta,d)=(1,1)$ for $p=2$.
Write $t=p^k a$ with $k\ge1$ and $a\in\Z_p^\times$.
Given $v_p(t)=k$ in the region $p\Z_p$, the variable
$a=p^{-k}t$ has distribution $\lambda_p^\times$,
by \eqref{ar:projective} and the residue-class measures.
Since $V_p=-k-\delta$, its unit part is
\begin{equation*}
 U_p=p^{k+\delta}R(p^k a)
    =\frac{p^{2k}a-a^{-1}}d=:F_{p,k}(a).
\end{equation*}
The numerator is a unit, so $F_{p,k}$ maps
$\Z_p^\times$ into itself. For units $a,b$,
\begin{equation*}
 F_{p,k}(a)-F_{p,k}(b)
   =(a-b)\frac{p^{2k}+(ab)^{-1}}d.
\end{equation*}
The second factor is a unit. Thus $F_{p,k}$ satisfies
\eqref{ar:shell-isometry} on $\Z_p^\times$ and preserves
$\lambda_p^\times$. In the other region, parametrized by
$t=\iota(u)$ with $v_p(u)=k$, the identity
$R(1/u)=-R(u)$ changes only the sign of the unit part.
Multiplication by $-1$ permutes unit residue classes
modulo every $p^a$, so this region gives the same unit law.
The two regions exhaust the event $V_p=-k-\delta$.
The law of total probability therefore proves
\eqref{ar:shell-unit-law} for every negative valuation
of positive probability.
\end{proof}
For $z\in\mathbb S^1$ and a continuous character
$\eta:\Z_p^\times\to\mathbb S^1$, put
\begin{equation*}
 K_p(z,\eta)=\int z^{V_p}\eta(U_p)\,d\mu_p.
\end{equation*}
The conductor exponent of a nontrivial $\eta$ is the least
integer $a\ge1$ such that $\eta$ is trivial on $1+p^a\Z_p$.
For the trivial character the conductor exponent is zero.
Such an exponent exists for every continuous character:
continuity puts the image of $1+p^a\Z_p$, for sufficiently
large $a$, in the arc $\{e^{i\theta}:|\theta|<\pi/3\}$.
This image is a subgroup, and the only subgroup of
$\mathbb S^1$ contained in that arc is $\{1\}$, since the
powers of any nonidentity element leave the arc.

\begin{lemma}
\label{ar:local-factor-bounds}
Let $p$ be an odd prime, $z\in\mathbb S^1$, and
$\eta:\Z_p^\times\to\mathbb S^1$ a continuous character.
If $\eta$ is trivial, then $K_p(z,1)$ is real and
\begin{equation}
 K_p(z,1)=\frac{p-3}{p+1}+\frac{4(p-1)}{p+1}\sum_{k\ge1}p^{-k}\Re(z^k),\qquad K_p(z,1)\ge\frac{p^2-6p+1}{(p+1)^2}.
 \label{ar:principal-factor}
\end{equation}
In particular, $K_p(z,1)\in[0,1]$ for $p\ge7$.
If $\eta$ is nontrivial with conductor exponent $a\ge1$, then
\begin{equation}
 |K_p(z,\eta)|\le
 \begin{cases}2\sqrt p/(p+1),&a=1,\\2/(p+1),&a\ge2.
 \end{cases}
 \label{ar:nonprincipal-factor}
\end{equation}
\end{lemma}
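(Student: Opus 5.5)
The plan is to decompose $\mu_p$ according to the shells of \Cref{ar:shells}, integrate $z^{V_p}\eta(U_p)$ shell by shell, and read off both the principal and nonprincipal cases.

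\textbf{Trivial character.} Since $\eta=1$, the integrand depends only on $V_p$. Summing over the shells in \eqref{ar:odd-shells} gives
\[
K_p(z,1)=\frac{p-3}{p+1}+\frac{2(p-1)}{p+1}\sum_{k\ge1}p^{-k}(z^k+z^{-k}).
\]
Because $|z|=1$, we have $z^k+z^{-k}=2\Re(z^k)$, which proves the formula in \eqref{ar:principal-factor}; in particular $K_p(z,1)$ is real.

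For the lower bound, write $z=e^{i\theta}$ and sum the geometric series. This gives $\sum_{k\ge1}p^{-k}\Re(z^k)=\Re\bigl(z/(p-z)\bigr)$. We have
\[
\Re\frac{z}{p-z}=\frac{p\cos\theta-1}{p^2-2p\cos\theta+1},
\]
which is increasing in $c=\cos\theta$, so its minimum occurs at $c=-1$, where it equals $-1/(p+1)$. Substituting,
\[
K_p(z,1)\ge\frac{p-3}{p+1}-\frac{4(p-1)}{(p+1)^2}=\frac{p^2-6p+1}{(p+1)^2}.
\]
For $p\ge7$ this lower bound is positive. The upper bound $K_p(z,1)\le1$ is immediate, since $\mu_p$ is a probability measure and $|z^{V_p}|=1$.

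\textbf{Nontrivial character.} On each nonzero shell $V_p=j$, the conditional law of $U_p$ is $\lambda_p^\times$ by \eqref{ar:shell-unit-law}. A nontrivial character integrates to $0$ against Haar measure, so every nonzero shell contributes nothing. Hence
\[
K_p(z,\eta)=\int_{V_p=0}\eta(U_p)\,d\mu_p .
\]
The event $V_p=0$ consists of the points $t\in\Z_p$ with $t\not\equiv0,\pm1\pmod p$, and on this set $U_p=R(t)$. By \eqref{ar:projective}, $\mu_p$ restricted to $\Z_p$ equals $\tfrac{p}{p+1}\lambda_p$. Therefore
\[
K_p(z,\eta)=\frac{p}{p+1}\int_{t\in\Z_p,\ t\not\equiv0,\pm1}\eta\!\left(\frac{t^2-1}{2t}\right)d\lambda_p(t).
\]

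\emph{Case $a=1$.} Here $\eta$ factors through $(\Z/p\Z)^\times$, so
\[
K_p(z,\eta)=\frac{1}{p+1}\sum_{t\in\mathbb F_p\setminus\{0,\pm1\}}\eta\bigl(R(t)\bigr).
\]
This is the main step. Substituting $s=R(t)$, each $s$ has as many preimages as there are roots of $t^2-2st-1=0$, namely $1+\bigl(\tfrac{s^2+1}{p}\bigr)$ for $s\ne0$; the value $s=0$ is excluded since it corresponds to $t=\pm1$. The excluded $t$ contribute nothing: $t=0$ is not a root, and $t=\pm1$ correspond only to $s=0$. So the sum becomes
\[
\sum_{s\ne0}\eta(s)+\sum_{s\ne0}\eta(s)\Bigl(\frac{s^2+1}{p}\Bigr).
\]
The first sum vanishes by orthogonality. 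For the second, add the $s=0$ term, which has modulus at most $1$; the completed sum $\sum_{s}\eta(s)\bigl(\tfrac{s^2+1}{p}\bigr)$ is a mixed multiplicative character sum with a nonconstant argument and is at most $2\sqrt p$ in modulus by Weil. Cancelling the added term gives a bound of about $2\sqrt p+1$. A direct Jacobi-sum evaluation should do better. When $\eta$ is not quadratic the sum is a Jacobi-type sum of modulus $\sqrt p$ or so, and when $\eta$ is the quadratic character the sum is $\sum_s\bigl(\tfrac{s^3+s}{p}\bigr)$, an elliptic-curve sum bounded by $2\sqrt p$ by Hasse. This is the step where care with constants is needed: the target is $|\cdot|\le 2\sqrt p$ so that dividing by $p+1$ gives $2\sqrt p/(p+1)$.

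\emph{Case $a\ge2$.} Use the isometry idea from Step~3 of \Cref{ar:shells}. Fix a residue class $t_0+p\Z_p$ with $t_0\not\equiv0,\pm1$. Then $R'(t)=(t^2+1)/(2t^2)$. For $t$ in this class, $R(t+px)$ is, to first order, $R(t)+px\,R'(t)$. If $t^2\not\equiv-1\pmod p$, this map is a $p$-adic isometry onto a residue class of units, so it pushes Haar measure to Haar measure on $R(t_0)+p\Z_p$. A character of conductor exponent $a\ge2$ is nontrivial on $1+p\Z_p$, so its integral over any coset of $1+p\Z_p$ vanishes. Hence only the classes with $t_0^2\equiv-1$ can contribute, and there are at most two such classes. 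Each has $\mu_p$-mass $1/(p+1)$, and $|\eta|=1$. This gives $|K_p(z,\eta)|\le 2/(p+1)$.
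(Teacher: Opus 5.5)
Your principal-character computation and the reduction of the nonprincipal case to the shell $V_p=0$ are exactly the paper's Steps~1--2. The one place where the proposal, as written, does not reach the statement is $a=1$. There your completion step yields only $(2\sqrt p+1)/(p+1)$, and the Jacobi/Hasse remarks are a sketch rather than an argument. The phrase ``a Jacobi-type sum of modulus $\sqrt p$'' is also inaccurate. With $\phi$ the quadratic character, $\sum_{s\ne0}\eta(s)\phi(s^2+1)$ vanishes for odd $\eta$ by $s\mapsto-s$. For even $\eta$, written $\eta(s)=\lambda(s^2)$, it is, up to signs, $J(\lambda,\phi)\pm J(\lambda\phi,\phi)$: two Jacobi sums of modulus $\sqrt p$ each, not one. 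The extra $1$ is spurious. With the convention $\eta(0)=0$, which the Weil bound uses anyway, the $s=0$ term is already zero, so nothing has to be added or cancelled. Now take $\chi$ of order $m=\operatorname{lcm}(\operatorname{ord}\eta,2)$ with $\eta=\chi^{\alpha}$ and $\phi=\chi^{m/2}$. The summand is then $\chi\bigl(s^{\alpha}(s^2+1)^{m/2}\bigr)$. This polynomial has the three distinct roots $0,\pm\sqrt{-1}$ and is not an $m$th power, so \eqref{ar:Weil} gives exactly $2\sqrt p$. The paper never makes your substitution $s=R(t)$. It writes $\chi\bigl(\tfrac{t^2-1}{2t}\bigr)=\chi\bigl(2^{d-1}t^{d-1}(t-1)(t+1)\bigr)$, with $d$ the order of $\chi$, and applies \eqref{ar:Weil} directly with $r=3$.

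For $a\ge2$ your route differs from the paper's and is correct, provided ``to first order'' is replaced by the exact identity $R(t)-R(s)=\frac{t-s}{2}\bigl(1+\frac1{ts}\bigr)$ from Step~3 of \Cref{ar:shells}. For $t,s\in t_0+p\Z_p$ the second factor is congruent to $(t_0^2+1)/(2t_0^2)$ modulo $p$, which is a unit when $t_0^2\not\equiv-1$. The measure-transport argument of that step then carries normalized Haar measure on $t_0+p\Z_p$ onto that on $R(t_0)+p\Z_p=R(t_0)(1+p\Z_p)$. There $\eta$, being nontrivial on $1+p^{a-1}\Z_p\subseteq1+p\Z_p$, integrates to zero. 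The paper works instead modulo $p^a$. It groups residues by their class modulo $p^{a-1}$, uses $R(t+bp^{a-1})/R(t)\equiv1+bp^{a-1}R'(t)/R(t)\pmod{p^a}$, and sums the nontrivial additive character $b\mapsto\eta(1+bp^{a-1})$ over $\mathbb F_p$. Both arguments single out the same at most two classes with $t^2\equiv-1\pmod p$. Your version works at the fixed level $p$ and reuses an existing lemma, whereas the paper's is a self-contained finite computation at the conductor level.
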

\begin{proof}
We divide the proof into three steps.
\par
\medskip
\noindent\textbf{Step 1. Treat the principal character.}
For $\eta=1$, decompose the integral according to $V_p$:
\begin{equation*}
 K_p(z,1)=\mu_p(V_p=0)
 +\sum_{k\ge1}\bigl(z^k\mu_p(V_p=k)+z^{-k}\mu_p(V_p=-k)\bigr).
\end{equation*}
The series converges absolutely. Substituting
\eqref{ar:odd-shells} and using $z^{-k}=\overline{z^k}$ proves
the equality in \eqref{ar:principal-factor} and its reality.
For $c=\Re z\in[-1,1]$, the geometric series gives
\begin{equation*}
 f(c):=\sum_{k\ge1}p^{-k}\Re(z^k)=\Re\frac z{p-z}=\frac{pc-1}{p^2-2pc+1},\qquad f'(c)=\frac{p(p^2-1)}{(p^2-2pc+1)^2}>0.
\end{equation*}
Thus $f(c)\ge f(-1)=-1/(p+1)$, and
\begin{equation}
 K_p(z,1)\ge\frac{p-3}{p+1}-\frac{4(p-1)}{(p+1)^2}
 =\frac{p^2-6p+1}{(p+1)^2}.
 \label{ar:principal-lower-bound}
\end{equation}
Since $|z^{V_p}\eta(U_p)|=1$ almost surely and $\mu_p$ is a
probability measure, $|K_p(z,\eta)|\le1$. For $p\ge7$, the
numerator in \eqref{ar:principal-lower-bound} is positive,
which proves the asserted interval bound.

\par
\medskip
\noindent\textbf{Step 2. Reduce a nonprincipal factor to a finite character sum.}
Now suppose that $\eta$ is nontrivial. Put
$I=\int_{\Z_p^\times}\eta(u)\,d\lambda_p^\times(u)$ and choose
$u_0\in\Z_p^\times$ with $\eta(u_0)\ne1$.
Multiplicative invariance of Haar measure gives
\begin{equation*}
 I=\int_{\Z_p^\times}\eta(u_0u)\,d\lambda_p^\times(u)
   =\eta(u_0)I,\qquad I=0.
\end{equation*}
By \eqref{ar:shell-unit-law}, for every $j\ne0$,
\begin{equation*}
 \int_{\{V_p=j\}}z^{V_p}\eta(U_p)\,d\mu_p
 =z^j\mu_p(V_p=j)I=0.
\end{equation*}
Consequently only $V_p=0$ contributes. Up to a null set, this event is
$A=\{t\in\Z_p:t\bmod p\notin\{0,1,-1\}\}$, where $U_p=R(t)$,
so
\begin{equation*}
 K_p(z,\eta)=\int_A\eta(R(t))\,d\mu_p(t).
\end{equation*}
In particular, this expression is independent of $z$.

Let $a\ge1$ be the conductor exponent and let $\eta_a$ be the
induced character of $(\Z/p^a\Z)^\times$. By
\eqref{ar:projective}, every class $r+p^a\Z_p\subseteq\Z_p$
has measure $p^{1-a}/(p+1)$. On $A$, the value of $R(t)$ modulo
$p^a$ depends only on $t$ modulo $p^a$, since $2t$ is a unit.
Hence
\begin{equation}
 K_p(z,\eta)=\frac1{(p+1)p^{a-1}}
 \sum_{\substack{t\bmod p^a\\p\nmid t(t^2-1)}}
 \eta_a\bigl((t^2-1)(2t)^{-1}\bigr),
 \label{ar:finite-conductor-factor}
\end{equation}
where the inverse is taken modulo $p^a$.

If $a=1$, write $\chi=\eta_1$, of order $d>1$,\footnote{The
order of $\chi$ is the least positive integer $d$ such that
$\chi(u)^d=1$ for every $u\in\mathbb F_p^\times$; equivalently,
$\chi^d$ is the trivial character. It divides $p-1$ and equals
$1$ exactly when $\chi$ is trivial. } and extend it by $\chi(0)=0$. Then \eqref{ar:finite-conductor-factor} becomes
\begin{equation}
 K_p(z,\eta)=\frac1{p+1}
 \sum_{t\in\mathbb F_p\setminus\{0,1,-1\}}
          \chi\left(\frac{t^2-1}{2t}\right).
 \label{ar:finite-field-factor}
\end{equation}
We use the following form of Weil's character-sum bound
\cite[Theorem 5.41]{LidlNiederreiter}. Let $q$ be a prime power,
let $\xi:\mathbb F_q^\times\to\mathbb S^1$ be a multiplicative
character of order $m>1$, and set $\xi(0)=0$.
If a monic polynomial $F\in\mathbb F_q[X]$ of positive degree
is not an $m$th power and has $r$ distinct roots in its splitting
field, then, for every $\alpha\in\mathbb F_q^\times$,
\begin{equation}
 \left|\sum_{x\in\mathbb F_q}\xi(\alpha F(x))\right|
 \le(r-1)\sqrt q.
 \label{ar:Weil}
\end{equation}
For the application to \eqref{ar:finite-field-factor}, put
$P(T)=T^{d-1}(T-1)(T+1)$ and
$\gamma=2^{d-1}\in\mathbb F_p^\times$.
For $t\notin\{0,1,-1\}$,
\begin{equation*}
 \chi(\gamma P(t))=\chi(2)^{-1}\chi(t)^{-1}\chi(t-1)\chi(t+1)
 =\chi\left(\frac{t^2-1}{2t}\right).
\end{equation*}
because $\chi(u)^{d-1}=\chi(u)^{-1}$ for every unit $u$.
The polynomial $P$ is monic and has exactly three distinct roots
$0,1,-1$, since $p$ is odd. Its root at $1$ has multiplicity
one, so $P$ is not a $d$th power. The terms at $0,1,-1$ vanish
under the extension of $\chi$ by zero.
Apply \eqref{ar:Weil} with $q=p$, $\xi=\chi$, $m=d$,
$F=P$, $r=3$, and $\alpha=\gamma$.
This bounds the sum in \eqref{ar:finite-field-factor} by
$2\sqrt p$, proving the first line of
\eqref{ar:nonprincipal-factor}.

\par
\medskip
\noindent\textbf{Step 3. Handle higher conductor.}
Suppose finally that $a\ge2$. Define
$\psi(b)=\eta(1+bp^{a-1})$ for $b\in\mathbb F_p$, using any
integer representative of $b$. This is well-defined because
$\eta$ depends only on a unit modulo $p^a$. Moreover,
\begin{equation*}
 (1+bp^{a-1})(1+cp^{a-1})
 \equiv1+(b+c)p^{a-1}\pmod{p^a},
\end{equation*}
since $2a-2\ge a$. Hence $\psi$ is an additive character of
$\mathbb F_p$. It is nontrivial: otherwise $\eta$ would be
trivial on $1+p^{a-1}\Z_p$, contrary to the minimality of $a$.

Fix $t\in A$. For $h\in p^{a-1}\Z_p$, direct subtraction gives
\begin{equation*}
 R(t+h)=R(t)+hR'(t)-\frac{h^2}{2t^2(t+h)},
 \qquad R'(t)=\frac{1+t^{-2}}2.
\end{equation*}
The quantities $2,t,t+h,R(t)$ are units. After division by
$R(t)$, the remainder belongs to
$p^{2a-2}\Z_p\subseteq p^a\Z_p$. With $h=bp^{a-1}$ this gives
\begin{equation}
 \frac{R(t+bp^{a-1})}{R(t)}
 \equiv1+bp^{a-1}\frac{R'(t)}{R(t)}\pmod{p^a}.
 \label{ar:unit-Taylor}
\end{equation}

Group the sum in \eqref{ar:finite-conductor-factor} by residues
modulo $p^{a-1}$. The $p$ lifts of each such residue have equal
measure. For a representative $t\in A$, put
$\beta_t=R'(t)/R(t)\bmod p\in\mathbb F_p$.
\eqref{ar:unit-Taylor} gives their average as
\begin{equation*}
 \begin{aligned}
 \frac1p\sum_{b\in\mathbb F_p}\eta(R(t+bp^{a-1}))
 =\frac{\eta(R(t))}{p}\sum_{b\in\mathbb F_p}\psi(b\beta_t)
 =\begin{cases}\eta(R(t)),&\beta_t=0,\\0,&\beta_t\ne0.
 \end{cases}
 \end{aligned}
\end{equation*}
Indeed, when $\beta_t\ne0$, multiplication by $\beta_t$
permutes $\mathbb F_p$. The sum of $\psi$ is zero: translating
its argument by $b_0$ with $\psi(b_0)\ne1$ multiplies the sum
by $\psi(b_0)$ without changing it.
Since $R(t)$ is a unit,
\begin{equation*}
 \beta_t=0\quad\Longleftrightarrow\quad
 R'(t)\equiv0\pmod p\quad\Longleftrightarrow\quad
 t^2\equiv-1\pmod p.
\end{equation*}
Thus only the residue classes $r+p\Z_p$ with $r^2=-1$ in
$\mathbb F_p$ can contribute. There are at most two, and each
has measure $1/(p+1)$. Since $|\eta(R(t))|=1$ on $A$,
\begin{equation*}
 |K_p(z,\eta)|
 \le\sum_{\substack{r\in\mathbb F_p\\r^2=-1}}
       \mu_p(r+p\Z_p)
 \le\frac2{p+1},
\end{equation*}
where an integer representative of $r$ is understood.
This proves the second line of \eqref{ar:nonprincipal-factor}.
\end{proof}
\subsection{Weights}

For a prime $p$, an integer $j\ne0$, and an integer $s\ge1$, let
\begin{equation*}
 E_{p,j,s}=\{V_p=2j,\ U_p\in1+p^s\Z_p\}.
\end{equation*}
Its probability is
\begin{equation}
 b_{p,j,s}:=\mu_p(E_{p,j,s})=
 \begin{cases}
 \dfrac2{p+1}p^{1-s-2|j|},&p\text{ odd},\\[3pt]
 2^{2-s-2j}/3,&p=2,\ j>0,\\
 2^{3-s-2|j|}/3,&p=2,\ j<0.
 \end{cases}
 \label{ar:conditioning-mass}
\end{equation}
Indeed, $U_p\in1+p^s\Z_p$ means $U_p\equiv1\pmod{p^s}$.
The conditional unit law \eqref{ar:shell-unit-law} therefore gives
\begin{equation*}
 b_{p,j,s}=\mu_p(V_p=2j)\,\mu_p(U_p\equiv1\pmod{p^s}\mid V_p=2j)
 =\frac{\mu_p(V_p=2j)}{(p-1)p^{s-1}}.
\end{equation*}
Substituting \eqref{ar:odd-shells} and \eqref{ar:dyadic-shells}
proves \eqref{ar:conditioning-mass}. These values satisfy
$b_{p,j,s}\ge p^{-s-2|j|}>0$. Hence, for integers $T\ge1$
and $0<|j|\le T$,
\begin{equation}
 \left\|\frac{\mathbf1_{E_{p,j,s}}}{b_{p,j,s}}\right\|_\infty
 =b_{p,j,s}^{-1}\le p^{s+2|j|}\le p^{s+2T}.
 \label{ar:conditioning-density-bound}
\end{equation}

Fix integers $T\ge1$ and $s\ge1$, and a finite set of primes
$\mathcal B$. Let $J$ be a random variable taking values in
$\{\pm1,\ldots,\pm T\}$ with law
\begin{equation}
 \Pp(J=j)=\frac{T+1-|j|}{T(T+1)}
 \quad(0<|j|\le T).
 \label{ar:label-law}
\end{equation}
Given $J=j$, let $t_p\in\mathbb P^1(\Q_p)$, $p\in\mathcal B$,
be independent random variables with respective laws
$\mu_p(\,\cdot\mid E_{p,j,s})$.
Here $V_p,U_p$ are evaluated at $t_p$; in particular,
$V_p=2J$ for every $p\in\mathcal B$ almost surely.

\begin{lemma}
\label{ar:Fejer}
Let $J$ and $t_p$, $p\in\mathcal B$, be the random variables
defined above. For each $p\in\mathcal B$, let
$z_p\in\mathbb S^1$ and let $\eta_p:\Z_p^\times\to\mathbb S^1$
be a continuous character. Then
$\E[\prod_{p\in\mathcal B}z_p^{V_p}\eta_p(U_p)]$ is zero
if some $\eta_p$ has conductor exponent greater than $s$.
Otherwise this expectation is real and belongs to $[-1/T,1]$.
\end{lemma}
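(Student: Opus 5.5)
Condition on $J=j$. Given $J=j$, the variables $t_p$ are independent, and each has law $\mu_p(\,\cdot\mid E_{p,j,s})$. On $E_{p,j,s}$ we have $V_p=2j$, so the factor $z_p^{V_p}=z_p^{2j}$ is deterministic. The conditional expectation therefore factors as
\begin{equation*}
 \E\Bigl[\prod_{p\in\mathcal B}z_p^{V_p}\eta_p(U_p)\Bigm|J=j\Bigr]
 =\Bigl(\prod_{p\in\mathcal B}z_p^{2}\Bigr)^{j}
  \prod_{p\in\mathcal B}\E\bigl[\eta_p(U_p)\bigm|E_{p,j,s}\bigr].
\end{equation*}

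\textbf{The unit factor.} By \eqref{ar:shell-unit-law}, given $V_p=2j$ the unit part $U_p$ has law $\lambda_p^\times$. Conditioning further on $U_p\in1+p^s\Z_p$ gives normalized multiplicative Haar measure on the subgroup $1+p^s\Z_p$; this identification was made just after \eqref{ar:haar-unit-measure}. Note that $\mu_p(V_p=2j)>0$ for every $j\neq0$, including the dyadic case $|2j|\ge2$.
\begin{itemize}
\item If $\eta_p$ is trivial on $1+p^s\Z_p$, i.e.\ its conductor exponent is at most $s$, then the unit factor equals $1$.
\item Otherwise pick $u_0\in1+p^s\Z_p$ with $\eta_p(u_0)\neq1$. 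Invariance of this Haar measure under multiplication by $u_0$ gives $I=\eta_p(u_0)I$, so the factor is $0$. This is the same argument as Step 2 of \Cref{ar:local-factor-bounds}.
\end{itemize}
So if some conductor exponent exceeds $s$, every conditional expectation vanishes, and hence so does the full expectation.

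\textbf{The Fejér identity.} Otherwise put $z=\prod_{p\in\mathcal B}z_p^2\in\mathbb S^1$. The expectation equals
\begin{equation*}
 \sum_{0<|j|\le T}\frac{T+1-|j|}{T(T+1)}z^j
 =\frac1{T(T+1)}\Bigl(\sum_{|j|\le T}(T+1-|j|)z^j-(T+1)\Bigr)
 =\frac1T\Bigl\{\frac1{T+1}\Bigl|\sum_{j=0}^{T}z^j\Bigr|^2-1\Bigr\}.
\end{equation*}
This uses the Fejér identity $\bigl|\sum_{j=0}^{T}z^j\bigr|^2=\sum_{|j|\le T}(T+1-|j|)z^j$, obtained by expanding the product and counting pairs with $k-l=j$. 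The result is manifestly real.

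\textbf{The bounds.} It is at least $-1/T$ because the squared modulus is nonnegative. It is at most $1$ because it is an average of unimodular numbers under the probability law \eqref{ar:label-law}; alternatively, $\bigl|\sum_j z^j\bigr|^2\le(T+1)^2$ gives the value $\le(T+1-1)/T=1$.

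The only delicate point is confirming that the conditional law of $U_p$ on $E_{p,j,s}$ is Haar measure on $1+p^s\Z_p$ for $p=2$ as well, and that the shells involved have positive mass. Both follow directly from \Cref{ar:shells}.
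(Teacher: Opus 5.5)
Your proposal is correct and follows essentially the same route as the paper. It conditions on $J=j$ and uses the conditional unit law (Haar measure on $1+p^s\mathbb Z_p$) to make each local character integral equal to $1$ or $0$. It then applies the Fej\'er identity to the triangular weights, obtaining the real value $\frac1T\bigl\{\frac1{T+1}\bigl|\sum_{k=0}^Tz^k\bigr|^2-1\bigr\}\in[-1/T,1]$.
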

\begin{proof}
We divide the proof into three steps.

\medskip
\noindent\textbf{Step 1. Normalize the valuation labels and condition the unit variables.}
The probabilities in \eqref{ar:label-law} sum to one, since
\begin{equation}
 \sum_{0<|j|\le T}\frac{T+1-|j|}{T(T+1)}
 =\frac2{T(T+1)}\sum_{j=1}^T(T+1-j)=1.
 \label{ar:label-law-normalization}
\end{equation}
Put $H_{p,s}=1+p^s\Z_p$. For $0<|j|\le T$ and a Borel set
$D\subseteq\Z_p^\times$, the definition of the conditional law
and \eqref{ar:shell-unit-law} give
\begin{equation*}
 \begin{aligned}
 \Pp(U_p\in D\mid J=j)
 =\frac{\mu_p(U_p\in D\cap H_{p,s}\mid V_p=2j)}
         {\mu_p(U_p\in H_{p,s}\mid V_p=2j)}
 =\frac{\lambda_p^\times(D\cap H_{p,s})}
         {\lambda_p^\times(H_{p,s})}
 =:\nu_{p,s}(D).
 \end{aligned}
\end{equation*}
Here $\lambda_p^\times(H_{p,s})=1/((p-1)p^{s-1})>0$.
The normalized restriction $\nu_{p,s}$ is invariant under
multiplication by elements of $H_{p,s}$, so it is Haar probability
on this subgroup. In particular, this conditional unit law
does not depend on $j$.

\medskip
\noindent\textbf{Step 2. Evaluate the local character integral.}
Let $a_p$ be the conductor exponent of $\eta_p$, with $a_p=0$
for the trivial character. By definition, $\eta_p$ is trivial
on $H_{p,s}$ exactly when $a_p\le s$. Thus
\begin{equation*}
 I_{p,s}:=\int_{H_{p,s}}\eta_p(u)\,d\nu_{p,s}(u)
 =\begin{cases}1,&a_p\le s,\\0,&a_p>s.
 \end{cases}
\end{equation*}
Indeed, the integral is one in the first case. In the second,
choose $h_0\in H_{p,s}$ with $\eta_p(h_0)\ne1$.
Haar invariance gives $I_{p,s}=\eta_p(h_0)I_{p,s}$, forcing
$I_{p,s}=0$.

\medskip
\noindent\textbf{Step 3. Average over the labels and apply the Fej'er identity.}
Write $$Y=\prod_{p\in\mathcal B}z_p^{V_p}\eta_p(U_p)\quad \text{and} \quad
z=\prod_{p\in\mathcal B}z_p^2\in\mathbb S^1.$$
Given $J=j$, the variables $t_p$ are independent and
$V_p=2j$ almost surely. Hence
\begin{equation}
 \E[Y\mid J=j]=\prod_{p\in\mathcal B}z_p^{2j}\E[\eta_p(U_p)\mid J=j]=z^j\prod_{p\in\mathcal B}I_{p,s}.
 \label{ar:conditional-product-average}
\end{equation}
If some $a_p>s$, then $\E[Y\mid J=j]=0$ for every
$0<|j|\le T$. Averaging with the probabilities in
\eqref{ar:label-law} gives $\E[Y]=0$.

Suppose now that $a_p\le s$ for every $p\in\mathcal B$.
Then all $I_{p,s}=1$. Averaging \eqref{ar:conditional-product-average}
over $J$ and pairing $j$ with $-j$ gives
\begin{equation}
 \E[Y]=\frac1{T(T+1)}\sum_{0<|j|\le T}(T+1-|j|)z^j
 =\frac2{T(T+1)}\sum_{j=1}^T(T+1-j)\Re(z^j),
 \label{ar:triangular-character-average}
\end{equation}
since $z^{-j}=\overline{z^j}$. This expression is real.
Expanding a squared absolute value, we obtain
\begin{equation}
 \left|\sum_{k=0}^Tz^k\right|^2=\sum_{k,\ell=0}^Tz^{k-\ell}
 =(T+1)+2\sum_{j=1}^T(T+1-j)\Re(z^j).
 \label{ar:Fejer-square-identity}
\end{equation}
For each $-T\le j\le T$, there are $T+1-|j|$ pairs
$(k,\ell)\in\{0,\ldots,T\}^2$ with $k-\ell=j$, which proves
the second equality. Combining
\eqref{ar:triangular-character-average} and
\eqref{ar:Fejer-square-identity} yields
\begin{equation}
 \E[Y]=\frac1T\left\{
       \frac1{T+1}\left|\sum_{k=0}^Tz^k\right|^2-1\right\}
 \ge-\frac1T.
 \label{ar:Fejer-expectation-bound}
\end{equation}
Finally, $|Y|=1$ almost surely, so
$|\E[Y]|\le\E[|Y|]=1$. Together with reality and
\eqref{ar:Fejer-expectation-bound}, this proves the interval bound.
\end{proof}
Fix $0<\delta<1/10$ and define
\begin{equation}
 T_0=\lceil100/\delta\rceil,\qquad
 B=(200/\delta)^2,\qquad
 D_0=\prod_{\substack{p\le B\\p\ \mathrm{prime}}}p^{2T_0+5}.
 \label{ar:parameters}
\end{equation}
For $0<|j|\le T_0$, define a function on $\Z_p^2$ by
\begin{equation}
 W_{p,j}(m,n)=\frac{\mathbf1_{\min(v_p(m),v_p(n))=0}\mathbf1_{E_{p,j,3}}(m,n)}{(1-p^{-2})b_{p,j,3}}.
 \label{ar:local-weight}
\end{equation}
Set $W_{p,j}(m,n)=0$ whenever $mn(m^2-n^2)=0$.
This union of four lines has product Haar measure zero.
The denominator normalizes the weight to have mean one on $\Z_p^2$.
Define the global congruence weight $W_0:\Z^2\to[0,\infty)$ by
\begin{equation}
 W_0(m,n)=\sum_{0<|j|\le T_0}
 \frac{T_0+1-|j|}{T_0(T_0+1)}\prod_{\substack{p\le B\\p\ \mathrm{prime}}}W_{p,j}(m,n).
 \label{ar:congruence-weight}
\end{equation}

\begin{lemma}
\label{ar:weight-cost}
The function $W_0$ is nonnegative, is periodic modulo $D_0$ in
each variable, and satisfies
\begin{equation*}
 D_0^{-2}\sum_{a,b\bmod D_0}W_0(a,b)=1.
\end{equation*}
For all integers $m,n,c$ with $\gcd(c,D_0)=1$, we have
$W_0(cm,cn)=W_0(m,n)$. Moreover,
\begin{equation}
 \log(2D_0)+\log(2\|W_0\|_\infty)\le C\delta^{-3}.
 \label{ar:weight-size}
\end{equation}
Here $C>0$ is an absolute constant.
\end{lemma}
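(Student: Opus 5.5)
We verify the four properties in turn, working prime by prime and then assembling the product over $p\le B$ and the sum over $j$.

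\emph{Nonnegativity and periodicity.} Nonnegativity is immediate, since each $W_{p,j}$ is an indicator divided by the positive constant $(1-p^{-2})b_{p,j,3}$, and the label weights $(T_0+1-|j|)/(T_0(T_0+1))$ are positive. For periodicity, fix $p\le B$ and $0<|j|\le T_0$. We show that $W_{p,j}(m,n)$ depends only on $(m,n)$ modulo $p^{2T_0+5}$.

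Primitivity is read off modulo $p$. On a primitive pair off the four lines, the condition $V_p=2j$ with $|2j|\le 2T_0$, together with $U_p\equiv1\pmod{p^3}$, is determined by $R(m,n)$ modulo a bounded power of $p$. Writing $R=(m-n)(m+n)/(2mn)$, the case analysis of \Cref{ar:shells} applies: in the branches where $v_p(t\pm1)$ or $v_p(t)$ carries the valuation, that valuation is at most $2T_0+v_p(2)$. Hence knowing $m,n$ modulo $p^{2T_0+5}$ fixes $v_p$ of each factor exactly when it is $\le 2T_0+1$. It also fixes the unit part modulo $p^3$, since the unit part requires three further digits beyond the valuation, plus at most one digit for $p=2$.

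If the valuation of $m\pm n$ exceeds the window, then $|V_p|>2T_0$ and the indicator vanishes. The residue data also certifies this. The set $mn(m^2-n^2)=0$ lies inside this vanishing case, so setting $W_{p,j}=0$ there is consistent. This bookkeeping is the only delicate step, and I expect it to be the main obstacle: one must check that the exponent $2T_0+5$ covers the extra factor of $2$ at $p=2$ and the $U_p\equiv1\pmod{p^3}$ condition. Since every prime-power modulus divides $D_0$, $W_0$ is $D_0$-periodic by CRT.

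\emph{Mean one.} By CRT, the average over $(\Z/D_0\Z)^2$ of a product over $p\le B$ of functions, each periodic modulo $p^{2T_0+5}$, factors as the product of the local averages. Each local average equals the Haar integral over $\Z_p^2$. By \eqref{ar:haar-primitive} and the definition of $\mu_p$ as the law of $[M:N]$ given $E_p$, that integral is
\[
\Pp(E_p)\,\mu_p(E_{p,j,3})/\bigl((1-p^{-2})b_{p,j,3}\bigr)=1.
\]
Summing the label weights then gives $1$ by \eqref{ar:label-law-normalization}.

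\emph{Unit invariance.} If $\gcd(c,D_0)=1$, then $c$ is a $p$-adic unit for every $p\le B$. Primitivity is preserved, and $R(cm,cn)=R(m,n)$ by homogeneity, so each $W_{p,j}$ is invariant, and hence so is $W_0$.

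\emph{Size bound.} By \eqref{ar:conditioning-density-bound} with $s=3$,
\[
\|W_{p,j}\|_\infty\le p^{3+2T_0}/(1-p^{-2})\le 2p^{2T_0+3}.
\]
Hence
\[
\log\|W_0\|_\infty\le\sum_{p\le B}\bigl((2T_0+3)\log p+\log2\bigr)\ll T_0\,\vartheta(B)+\pi(B).
\]
Likewise $\log D_0=(2T_0+5)\vartheta(B)$. The Chebyshev bound \eqref{sc:chebyshev} gives $\vartheta(B)\le 4\log2\cdot B\ll\delta^{-2}$. Since $T_0\ll\delta^{-1}$, the total is $\ll\delta^{-3}$, proving \eqref{ar:weight-size}.
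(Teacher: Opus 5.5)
Your proposal is correct and follows essentially the same route as the paper. Periodicity comes from reading valuations and unit parts off residues modulo $p^{2T_0+5}$; the paper does this via $F=m^2-n^2$, $G=2mn$ and the fact that $\min(v_p(F),v_p(G))\le v_p(2)$ on primitive pairs, which also handles large $v_p(m)$ or $v_p(n)$ — a case your vanishing sentence should name alongside $m\pm n$. The mean comes from CRT and $\Pp(E_p)\mu_p(E_{p,j,3})=(1-p^{-2})b_{p,j,3}$, invariance from homogeneity, and the size bound from \eqref{ar:conditioning-density-bound} with Chebyshev; the paper uses $\prod_{p\le B}(1-p^{-2})^{-1}\le2$ where you carry a harmless $\pi(B)\log2\ll\delta^{-2}$ term.
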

\begin{proof}
We divide the proof into four steps.

\medskip
\noindent\textbf{Step 1. Prove local periodicity.}
Nonnegativity follows from \eqref{ar:local-weight} and
\eqref{ar:congruence-weight}. Fix $p\le B$ and $0<|j|\le T_0$,
and put
\begin{equation*}
 F=m^2-n^2,\qquad G=2mn,\qquad
 \ell=2|j|+1,\qquad k=\ell+4.
\end{equation*}
We show that $W_{p,j}$ depends only on $(m,n)\bmod p^k$.
Primitivity is determined modulo $p$; nonprimitive pairs have
weight zero. For a primitive pair, write $\alpha=v_p(F)$ and
$\beta=v_p(G)$, with $v_p(0)=\infty$. If $p$ is odd and
$p\mid G$, exactly one coordinate is divisible by $p$, so
$F$ is a unit. If $p=2$, opposite parity makes $F$ odd,
whereas two odd coordinates give $\beta=1$. Thus
\begin{equation}
 \min(\alpha,\beta)=0\quad(p\text{ odd}),\qquad
 \min(\alpha,\beta)\le1\quad(p=2).
 \label{ar:weight-small-valuation}
\end{equation}

The residues of $m,n$ modulo $p^k$ determine those of $F,G$.
If either $F$ or $G$ is divisible by $p^{\ell+1}$, the pair
has weight zero. Indeed, when $FG\ne0$,
\eqref{ar:weight-small-valuation} gives
$|\alpha-\beta|\ge\ell>2|j|$; when $FG=0$, the defining
zero convention applies. Otherwise $\alpha,\beta\le\ell$
are determined exactly. The units $F_0=p^{-\alpha}F$ and
$G_0=p^{-\beta}G$ are determined modulo $p^{k-\alpha}$ and
$p^{k-\beta}$, respectively. Since both exponents are at
least four, these units are known modulo $p^3$. As
$R=p^{\alpha-\beta}F_0/G_0$, we have
\begin{equation*}
 [m:n]\in E_{p,j,3}
 \quad\Longleftrightarrow\quad
 \alpha-\beta=2j\quad\text{and}\quad F_0\equiv G_0\pmod{p^3}.
\end{equation*}
Both conditions are determined by the prescribed residues.
This also proves that the zero convention at $FG=0$ is
compatible with periodicity. Since $k\le2T_0+5$, each
$W_{p,j}$ depends only on the pair modulo $p^{2T_0+5}$.
Their products and sum in \eqref{ar:congruence-weight}
are therefore periodic modulo $D_0$ in each variable.

\medskip
\noindent\textbf{Step 2. Compute the mean by Haar measure and the Chinese remainder theorem.}
Put $L=2T_0+5$. A Haar pair in $\Z_p^2$ is primitive with
probability $1-p^{-2}$, and its projective point has law
$\mu_p$ given primitivity. Every residue pair modulo $p^L$
has product Haar measure $p^{-2L}$. Thus the preceding
periodicity and \eqref{ar:local-weight} give
\begin{equation}
 \begin{aligned}
 p^{-2L}\sum_{r,s\bmod p^L}W_{p,j}(r,s)
 =\int_{\Z_p^2}W_{p,j}\,d(\lambda_p\otimes\lambda_p)
 =\frac{(1-p^{-2})\mu_p(E_{p,j,3})}
         {(1-p^{-2})b_{p,j,3}}=1.
 \end{aligned}
 \label{ar:weight-local-average}
\end{equation}
For each fixed $j$, the Chinese remainder theorem\footnote{For
pairwise coprime $d_1,\ldots,d_k\ge2$ and $D=\prod_{\ell=1}^k d_\ell$,
the map $\Z/D\Z\to\prod_{\ell=1}^k\Z/d_\ell\Z$,
$a\mapsto(a\bmod d_\ell)_\ell$, is a ring isomorphism.} yields
\begin{equation}
 \begin{aligned}
 D_0^{-2}\sum_{a,b\bmod D_0}\prod_{\substack{p\le B\\p\ \mathrm{prime}}}W_{p,j}(a,b)
 &=\prod_{\substack{p\le B\\p\ \mathrm{prime}}}\left(
 p^{-2L}\sum_{r,s\bmod p^L}W_{p,j}(r,s)\right)=1.
 \end{aligned}
 \label{ar:weight-crt-average}
\end{equation}
The coefficients
$\omega_j=(T_0+1-|j|)/(T_0(T_0+1))$ sum to one by
\eqref{ar:label-law-normalization}. Averaging
\eqref{ar:weight-crt-average} with these coefficients proves
the asserted mean of $W_0$.

\medskip
\noindent\textbf{Step 3. Prove unit-scaling invariance.}
For $u\in\Z_p^\times$, simultaneous multiplication of $m,n$
by $u$ preserves primitivity and multiplies both $F$ and $G$
by $u^2$. Hence it preserves their valuations, their unit
quotient, and the exceptional set $FG=0$. Therefore
\begin{equation}
 W_{p,j}(um,un)=W_{p,j}(m,n)
 \qquad(m,n\in\Z_p,\ u\in\Z_p^\times).
 \label{ar:weight-local-scaling}
\end{equation}
If $\gcd(c,D_0)=1$, then $c\in\Z_p^\times$ for every
$p\le B$. Applying \eqref{ar:weight-local-scaling} to each
factor in \eqref{ar:congruence-weight} proves the stated
identity for $W_0$.

\medskip
\noindent\textbf{Step 4. Bound the supremum norm.}
By \eqref{ar:conditioning-density-bound} with $s=3$,
\begin{equation*}
 \|W_{p,j}\|_\infty
 \le(1-p^{-2})^{-1}b_{p,j,3}^{-1}
 \le(1-p^{-2})^{-1}p^{3+2T_0}.
\end{equation*}
Since $\omega_j\ge0$ and $\displaystyle\sum_j\omega_j=1$, it follows that
\begin{equation*}
 \|W_0\|_\infty\le\sum_{0<|j|\le T_0}\omega_j\prod_{\substack{p\le B\\p\ \mathrm{prime}}}\|W_{p,j}\|_\infty\le\prod_{\substack{p\le B\\p\ \mathrm{prime}}}(1-p^{-2})^{-1}p^{3+2T_0}\le2\prod_{\substack{p\le B\\p\ \mathrm{prime}}}p^{3+2T_0}.
\end{equation*}
For the last inequality, each factor exceeds one and
\begin{equation*}
 \prod_{n=2}^{N}(1-n^{-2})^{-1}
 =\left(\prod_{n=2}^{N}\frac n{n-1}\right)
  \left(\prod_{n=2}^{N}\frac n{n+1}\right)
 =\frac{2N}{N+1}\longrightarrow2.
\end{equation*}
Thus the product over $p\le B$ is at most two. Taking
logarithms and using the Chebyshev estimate
$\displaystyle\sum_{\substack{p\le B\\p\ \mathrm{prime}}}\log p\le4B$ from \eqref{sc:chebyshev}, we obtain
\begin{equation*}
 \log D_0\le4(2T_0+5)B,\qquad
 \log\|W_0\|_\infty\le\log2+4(2T_0+3)B.
\end{equation*}
Finally, $T_0\le101\delta^{-1}$ and $B=40000\delta^{-2}$,
so
\begin{equation*}
 \log(2D_0)+\log(2\|W_0\|_\infty)
 \le3\log2+(16T_0+32)B\le C\delta^{-3}.
\end{equation*}
This proves \eqref{ar:weight-size}.
\end{proof}
We next construct a fixed real probability density, independently
of $\delta$. Let $b=\mathbf1_{[-1/2,1/2]}$ and $h=b*b*b*b$.
Then
\begin{equation}
 h(s)=\frac16\sum_{k=0}^4(-1)^k\binom4k(s+2-k)_+^3,
 \qquad a_+=\max(a,0).
 \label{ar:spline-formula}
\end{equation}
Put $c=\mathbf1_{[0,1]}$, and write $c^{*r}$ for its
$r$-fold convolution. Since $b(s)=c(s+1/2)$, we have
$h(s)=c^{*4}(s+2)$. The value $(c*c)(x)$ is the length
of $[0,1]\cap[x-1,x]$, so
\begin{equation*}
 (c*c)(x)=x_+-2(x-1)_++(x-2)_+.
\end{equation*}
Convolving once more means integrating over $0\le a\le1$.
For $q=1,2$, the antiderivative of a positive-part power gives
\begin{equation*}
 \int_0^1(x-a-k)_+^q\,da
 =\frac{(x-k)_+^{q+1}-(x-k-1)_+^{q+1}}{q+1}.
\end{equation*}
Applying this identity to each term, first with $q=1$ and
then with $q=2$, yields
\begin{equation*}
 \begin{aligned}
 c^{*3}(x)
 &=\frac12\bigl[x_+^2-3(x-1)_+^2
                   +3(x-2)_+^2-(x-3)_+^2\bigr],\\
 c^{*4}(x)
 &=\frac16\bigl[x_+^3-4(x-1)_+^3+6(x-2)_+^3
                   -4(x-3)_+^3+(x-4)_+^3\bigr].
 \end{aligned}
\end{equation*}
Replacing $x$ by $s+2$ proves \eqref{ar:spline-formula}.
The convolution definition gives $h\ge0$ and $h(s)=0$ for $|s|>2$.
Integrating \eqref{ar:spline-formula} over $[-2,2]$ gives
\begin{equation*}
 \int_\R h(s)\,ds
 =\frac{4^4-4\cdot3^4+6\cdot2^4-4}{24}=1.
\end{equation*}
Moreover, for every real $x$,
\begin{equation*}
 \frac{d}{dx}x_+^3=3x_+^2,\qquad
 \frac{d^2}{dx^2}x_+^3=6x_+.
\end{equation*}
Both derivatives are continuous, including at zero. \eqref{ar:spline-formula} and the compact support therefore
give $h\in C_c^2(\R)$.
In fact, $\operatorname{supp}h=[-2,2]$.

For $t>1$, put $r(t)=(t^2-1)/(2t)$ and
$\phi(t)=\log r(t)$. Direct differentiation and the solution
of $r(t)=e^s$ give
\begin{equation*}
 \phi'(t)=\frac{r'(t)}{r(t)}
         =\frac{t^2+1}{t(t^2-1)}>0,\qquad
 \phi^{-1}(s)=e^s+\sqrt{1+e^{2s}}\quad(s\in\R).
\end{equation*}
Thus $\phi:(1,\infty)\to\R$ is an increasing smooth bijection.
Fix a nonnegative $C_c^2((1,2))$ probability density $\psi_0$.
Define
\begin{equation}
 \rho(t)=h(\log r(t))\frac{t^2+1}{t(t^2-1)}\quad(t>1),\qquad
 w(u,v)=\frac{\psi_0(v)}v\rho(u/v)\quad(u>v>0),
 \label{ar:real-profile}
\end{equation}
and set $\rho(t)=0$ for $t\le1$ and $w=0$ outside $u>v>0$.
Since $\phi'>0$ and $\operatorname{supp}h=[-2,2]$, the support
of $\rho$ is exactly
\begin{equation*}
 \operatorname{supp}\rho=[t_-,t_+],\qquad
 t_-=e^{-2}+\sqrt{1+e^{-4}},\qquad
 t_+=e^2+\sqrt{1+e^4}.
\end{equation*}
In particular, $1<t_-<t_+<\infty$, and
$\rho\in C_c^2(\R)$. The support of $w$ is contained in
\begin{equation*}
 \{(u,v):v\in\operatorname{supp}\psi_0,
                   \ t_-v\le u\le t_+v\},
\end{equation*}
a compact subset of $u>v>0$. The formula in
\eqref{ar:real-profile} is $C^2$ on this open set, and it
vanishes near its boundary. Thus its zero extension satisfies
$w\in C_c^2(\R^2)$.

The factor multiplying $h(\log r(t))$ in $\rho(t)$ is
$ds/dt$ for $s=\log r(t)$. Consequently this substitution gives
\begin{equation*}
 \rho(t)\,dt=h(s)\,ds,\qquad
 \int_1^\infty\rho(t)\,dt=\int_\R h(s)\,ds=1.
\end{equation*}
For fixed $v>0$, substitute $u=vt$, so that $du=v\,dt$.
It follows that
\begin{equation*}
 \begin{aligned}
 \int_{\R^2}w(u,v)\,du\,dv
 &=\int_0^\infty\psi_0(v)
          \left(\int_1^\infty\rho(t)\,dt\right)dv=1.
 \end{aligned}
\end{equation*}

For $\tau\in\R$, use $R(vt,v)=r(t)$ and the same two
substitutions to obtain
\begin{equation}
 \int_{u>v>0}w(u,v)R(u,v)^{i\tau}\,du\,dv=\int_0^\infty\psi_0(v)\,dv\int_1^\infty\rho(t)r(t)^{i\tau}\,dt=\int_\R h(s)e^{i\tau s}\,ds.
 \label{ar:Mellin-substitutions}
\end{equation}
These integrals are absolutely convergent, since the complex
factors have modulus one and both densities have integral one.
Expanding the fourfold convolution defining $h$ gives
\begin{equation*}
 \begin{aligned}
 \int_\R h(s)e^{i\tau s}\,ds
 =\int_{[-1/2,1/2]^4}
       e^{i\tau(a_1+a_2+a_3+a_4)}\,da_1\,da_2\,da_3\,da_4
 =\left(\int_{-1/2}^{1/2}e^{i\tau a}\,da\right)^4.
 \end{aligned}
\end{equation*}
Here Fubini's theorem applies to the real and imaginary parts;
see \cite[Theorem~5.32, p.~132]{AxlerMeasure}.
The absolute value of the four-variable integrand has integral
one. Direct integration gives
\begin{equation}
 \int_{-1/2}^{1/2}e^{i\tau a}\,da
 =\begin{cases}
 \dfrac{e^{i\tau/2}-e^{-i\tau/2}}{i\tau}
       =\dfrac{\sin(\tau/2)}{\tau/2},&\tau\ne0,\\[6pt]
 1,&\tau=0.
 \end{cases}
 \label{ar:interval-Fourier-integral}
\end{equation}
Combining \eqref{ar:Mellin-substitutions}--\eqref{ar:interval-Fourier-integral},
we obtain
\begin{equation}
 \int_{u>v>0}w(u,v)R(u,v)^{i\tau}\,du\,dv
 =\left(\frac{\sin(\tau/2)}{\tau/2}\right)^4\in[0,1],
 \label{ar:Mellin}
\end{equation}
where the value at $\tau=0$ is one. The interval bound follows
from $|\sin x|\le|x|$.

Choose an integer $C_0\ge2$ large enough that
\begin{equation}
 u+v<C_0-1\quad\text{on }\operatorname{supp}w.
 \label{ar:profile-bound}
\end{equation}
Every axis-parallel rectangle with both side lengths at most
$1/2$ that meets $\operatorname{supp}w$ is contained in
$\{u+v<C_0\}$. Indeed, if $(u_0,v_0)$ is a point of that
intersection, then every $(u,v)$ in the rectangle satisfies
\begin{equation*}
 u+v\le u_0+v_0+|u-u_0|+|v-v_0|
      \le u_0+v_0+1<C_0.
\end{equation*}
This is the purpose of the margin in \eqref{ar:profile-bound}.

Let $g:\Q_{>0}\to\mathbb S^1$ be completely multiplicative:
$g(q_1q_2)=g(q_1)g(q_2)$ for all $q_1,q_2\in\Q_{>0}$.
To form the finite kernel average, fix an integer $X\ge1$ and sum over
the integer pairs $(m,n)$ with $m>n>0$.  The continuous weight is sampled
at $(m/X,n/X)$, while $W_0(m,n)$ imposes the prescribed congruence weight.
For this reason we first define the unnormalized weighted sum
$I_X(g)$, and then normalize it by its value at the constant function
$1$:
\begin{equation}
 \begin{aligned}
 I_X(g)&=X^{-2}\sum_{\substack{m,n\in\Z\\m>n>0}}
    w(m/X,n/X)W_0(m,n)g(R(m,n)),\\
 Z_X&=I_X(1),\qquad K_X(g)=I_X(g)/Z_X
       \quad\text{when }Z_X>0.
 \end{aligned}
 \label{ar:kernel}
\end{equation}
The sum is finite because $w$ has compact support.
Every contributing pair satisfies $m>n>0$, so
$R(m,n)=(m^2-n^2)/(2mn)\in\Q_{>0}$ and $g(R(m,n))$ is defined.

The positivity of $Z_X$ follows from the quadrature estimate
in \Cref{ar:quadrature} below. By \Cref{ar:weight-cost},
$W_0$ is nonnegative, periodic modulo $D_0$, and has average
one on residue pairs. Applying \eqref{ar:normalization} with
$D=D_0$ and $V=W_0$ therefore gives, for a constant $A_w>0$
depending only on the fixed density $w$,
\begin{equation*}
 |Z_X-1|\le A_w\left(\frac{D_0}{X}+\frac{D_0^2}{X^2}\right).
\end{equation*}
Put $C_w=\max\{1,4A_w\}$. If $X\ge C_wD_0$, then
$D_0/X\le1/C_w\le1$, and hence
\begin{equation*}
 |Z_X-1|\le\frac{2A_w}{C_w}\le\frac12,
 \qquad \frac12\le Z_X\le\frac32.
\end{equation*}
Thus $K_X(g)$ is defined in this range. Its coefficients are
nonnegative and sum to one, so it is an average of the
unit-modulus values $g(R(m,n))$; in particular,
$K_X(1)=1$ and $|K_X(g)|\le1$.

%

\subsection{Quadrature}

%

In the quadrature estimates, $\|\cdot\|_1$ denotes the Lebesgue
integral of the absolute value on $\R^2$.

\begin{lemma}
\label{ar:quadrature}
Let $F\in C_c^2(\R^2)$. For every integer $d\ge1$ and every
coset $\mathbf a+d\Z^2$,
\begin{equation}
 \begin{split}
 \left|\sum_{n\in\mathbf a+d\Z^2}F(n)-d^{-2}\int_{\R^2}F\right|
 \le{}&d^{-1}\bigl(\|\partial_1F\|_1+\|\partial_2F\|_1\bigr)\\
 &+\|\partial_1\partial_2F\|_1.
 \end{split}
 \label{ar:residue-quadrature}
\end{equation}
In particular, let $D\ge1$ be an integer, $X>0$, and
$V:\Z^2\to[0,\infty)$ be periodic modulo $D$ in each variable,
with $\displaystyle D^{-2}\sum_{a,b\bmod D}V(a,b)=1$. Then
\begin{equation}
 \left|X^{-2}\sum_{m,n\in\Z}w(m/X,n/X)V(m,n)-1\right|
 \le C_w(D/X+D^2/X^2),
 \label{ar:normalization}
\end{equation}
where $w$ is the fixed density in \eqref{ar:real-profile} and
$C_w$ depends only on $w$.
\end{lemma}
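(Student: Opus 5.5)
The first estimate reduces, by translation and dilation, to the case $d=1$. Put $G(x)=F(\mathbf a+dx)$ for $x\in\R^2$. Then the sum over $\mathbf a+d\Z^2$ equals $\sum_{k\in\Z^2}G(k)$ and $\int G=d^{-2}\int F$. Moreover, $\|\partial_iG\|_1=d\cdot d^{-2}\|\partial_iF\|_1=d^{-1}\|\partial_iF\|_1$ and $\|\partial_1\partial_2G\|_1=d^2d^{-2}\|\partial_1\partial_2F\|_1=\|\partial_1\partial_2F\|_1$. These are exactly the scalings appearing in \eqref{ar:residue-quadrature}, so it suffices to prove
\begin{equation*}
 \Bigl|\sum_{k\in\Z^2}G(k)-\int_{\R^2}G\Bigr|\le\|\partial_1G\|_1+\|\partial_2G\|_1+\|\partial_1\partial_2G\|_1
\end{equation*}
for $G\in C_c^2(\R^2)$.

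For this I would use a two-dimensional first-order Euler--Maclaurin identity on each unit square $Q_k=k+[0,1)^2$. For $x\in Q_k$, write
\begin{equation*}
 G(k)=G(x)-\int_{k_1}^{x_1}\partial_1G(s,k_2)\,ds-\int_{k_2}^{x_2}\partial_2G(x_1,t)\,dt,
\end{equation*}
and rewrite $\partial_1G(s,k_2)$ as $\partial_1G(s,x_2)-\int_{k_2}^{x_2}\partial_1\partial_2G(s,t)\,dt$. Next integrate over $x\in Q_k$ and sum over $k$. The single-derivative terms are bounded by $\int_{Q_k}|\partial_iG|$, since each point appears with a weight at most one after Fubini. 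The mixed term is bounded by $\int_{Q_k}|\partial_1\partial_2G|$. Summing over $k$ gives the claim. Compact support makes all sums finite and Fubini legitimate. The only point needing care is the bookkeeping that each weight is at most $1$. This is also where the sharp constants matter, and I expect it to be the main (mild) obstacle.

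For \eqref{ar:normalization}, split the sum by residue classes $\mathbf a=(a,b)\bmod D$. Apply \eqref{ar:residue-quadrature} with $F(x)=w(x/X)$ and $d=D$, and note that $\int F=X^2$. The rescaled norms are $\|\partial_iF\|_1=X\|\partial_iw\|_1$ and $\|\partial_1\partial_2F\|_1=\|\partial_1\partial_2w\|_1$. Multiplying by $X^{-2}V(\mathbf a)$ and summing over the $D^2$ classes gives
\begin{equation*}
 \Bigl|X^{-2}\sum w(m/X,n/X)V(m,n)-D^{-2}\sum_{\mathbf a}V(\mathbf a)\Bigr|\le\sum_{\mathbf a}V(\mathbf a)\Bigl(\frac{\|\partial_1w\|_1+\|\partial_2w\|_1}{DX}+\frac{\|\partial_1\partial_2w\|_1}{X^2}\Bigr).
\end{equation*}
Since $\sum_{\mathbf a}V(\mathbf a)=D^2$ by the normalization, the main term equals $1$ and the error is at most $C_w(D/X+D^2/X^2)$, with $C_w=\|\partial_1w\|_1+\|\partial_2w\|_1+\|\partial_1\partial_2w\|_1$. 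This constant is finite because $w\in C_c^2(\R^2)$.
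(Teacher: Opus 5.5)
Your proposal is correct. The derivation of \eqref{ar:normalization} is the paper's own: split by residue classes modulo $D$, apply the quadrature estimate to $F(x)=w(x/X)$, and use $V\ge0$ together with $\sum_{\mathbf a}V(\mathbf a)=D^2$.

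Your proof of \eqref{ar:residue-quadrature}, however, takes a different route. The paper keeps the modulus $d$ and iterates the one-dimensional estimate $\bigl|\sum_{n\in a+d\Z}f(n)-d^{-1}\int_\R f\bigr|\le\int_\R|f'|$. It applies this to $y\mapsto F(x_i,y)$, to $G(x)=\int F(x,y)\,dy$, and to the auxiliary function $A(x)=\int|\partial_2F(x,y)|\,dy$. The mixed term then appears as the variation of $A$, which forces a small detour showing that $A$ is Lipschitz with $|A'|\le\int|\partial_1\partial_2F(\cdot,y)|\,dy$ almost everywhere.

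You instead rescale to $d=1$; your scaling computations are right and reproduce exactly the powers of $d$ in \eqref{ar:residue-quadrature}. You then obtain an exact cell identity. Integrated over $Q_k$, the quantity $G(k)-\int_{Q_k}G$ equals
\begin{itemize}
\item minus the integral of $\partial_1G(s,t)$ against $k_1+1-s$,
\item minus the integral of $\partial_2G(s,t)$ against $k_2+1-t$,
\item plus the integral of $\partial_1\partial_2G(s,t)$ against $(k_1+1-s)(k_2+1-t)$.
\end{itemize}
These weights lie in $(0,1]$, so the bookkeeping you flagged as the main obstacle is immediate.

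Your version is shorter and fully explicit, with no auxiliary function and no almost-everywhere differentiation. The paper's modular one-dimensional lemma has a different advantage: it is reused, in a bounded-variation form with boundary jumps, for the cell-restricted sums in \Cref{ar:spatial-quadrature}.
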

\begin{proof}
We divide the proof into two steps.
\par
\medskip
\noindent\textbf{Step 1. Prove the residue-class quadrature estimate.}

For a compactly supported absolutely continuous function
$f:\R\to\C$, we first prove
\begin{equation}
 \left|\sum_{n\in a+d\Z}f(n)-d^{-1}\int_\R f\right|
 \le\int_\R|f'|.
 \label{ar:one-dimensional-quadrature}
\end{equation}
Put $t_i=a+di$ and $J_i=[t_i,t_i+d)$. The fundamental theorem
of calculus gives $$f(t)-f(t_i)=\int_{t_i}^t f'(s)\,ds$$ for
$t\in J_i$. Hence
\begin{equation}
 \begin{aligned}
 \left|f(t_i)-d^{-1}\int_{J_i}f(t)\,dt\right|
 \le d^{-1}\int_{J_i}\int_{t_i}^t|f'(s)|\,ds\,dt
 \le\int_{J_i}|f'(s)|\,ds.
 \end{aligned}
\end{equation}
Summing over the intervals, which partition $\R$, proves
\eqref{ar:one-dimensional-quadrature}.
Write $x_i=a_1+di$, $y_j=a_2+dj$, and define
\begin{equation}
 G(x)=\int_\R F(x,y)\,dy,\qquad
 A(x)=\int_\R|\partial_2F(x,y)|\,dy.
\end{equation}
The function $F$ and its derivatives vanish outside a common
compact rectangle. Differentiation under the integral gives
\begin{equation}
 G'(x)=\int_\R\partial_1F(x,y)\,dy,\qquad
 \int_\R|G'|\le\|\partial_1F\|_1.
\end{equation}
To control $A$, put
$B(s)=\int_\R|\partial_1\partial_2F(s,y)|\,dy$.
For $x<x'$, the reverse triangle inequality and the
fundamental theorem of calculus give
\begin{equation}
 \begin{aligned}
 |A(x')-A(x)|
 &\le\int_\R
       |\partial_2F(x',y)-\partial_2F(x,y)|\,dy\\
 &\le\int_\R\int_x^{x'}
       |\partial_1\partial_2F(s,y)|\,ds\,dy
   =\int_x^{x'}B(s)\,ds.
 \end{aligned}
\end{equation}
Since $B$ is continuous, bounded, and compactly supported,
this makes $A$ Lipschitz, hence absolutely continuous.
Dividing by $|x'-x|$ and taking the limit at points of
differentiability yields $|A'(x)|\le B(x)$ almost everywhere.
Consequently,
\begin{equation}
 \int_\R A=\|\partial_2F\|_1,\qquad
 \int_\R|A'|\le\int_\R B=\|\partial_1\partial_2F\|_1.
\end{equation}
Both $G$ and $A$ are compactly supported. Applying
\eqref{ar:one-dimensional-quadrature} to
$y\mapsto F(x_i,y)$, to $G$, and to $A$, respectively, gives
\begin{equation}
 \begin{aligned}
 \left|\sum_jF(x_i,y_j)-d^{-1}G(x_i)\right|&\le A(x_i),\\
 \left|\sum_iG(x_i)-d^{-1}\int_\R G\right|
   &\le\|\partial_1F\|_1,\\
 \sum_iA(x_i)&\le d^{-1}\|\partial_2F\|_1
                     +\|\partial_1\partial_2F\|_1.
 \end{aligned}
\end{equation}
Since $\int_\R G=\int_{\R^2}F$, the triangle inequality now yields
\begin{equation}
 \begin{aligned}
 \left|\sum_{i,j}F(x_i,y_j)-d^{-2}\int_{\R^2}F\right|
 &\le\sum_i A(x_i)
   +d^{-1}\left|\sum_iG(x_i)-d^{-1}\int_\R G\right|\\
 &\le d^{-1}\bigl(\|\partial_1F\|_1+\|\partial_2F\|_1\bigr)
      +\|\partial_1\partial_2F\|_1.
 \end{aligned}
\end{equation}
All sums are finite, and all integrals are absolutely
convergent, by compact support. This proves
\eqref{ar:residue-quadrature}.
\par
\medskip
\noindent\textbf{Step 2. Apply the estimate to the periodic normalization.}
For the second assertion, recall that $w\in C_c^2(\R^2)$
and $\int_{\R^2}w=1$. Set $F_X(x,y)=w(x/X,y/X)$.
The substitution $(x,y)=(Xu,Xv)$ gives
\begin{equation}
 \begin{aligned}
 \int_{\R^2}F_X&=X^2,\\
 \|\partial_iF_X\|_1&=X\|\partial_iw\|_1\quad(i=1,2),\\
 \|\partial_1\partial_2F_X\|_1&=\|\partial_1\partial_2w\|_1.
 \end{aligned}
\end{equation}
Put $L_w=\|\partial_1w\|_1+\|\partial_2w\|_1$ and
$M_w=\|\partial_1\partial_2w\|_1$.
For integers $0\le a,b<D$, let $S_{a,b}$ be the sum of $F_X$
over $(a,b)+D\Z^2$.
Applying \eqref{ar:residue-quadrature} to this coset gives
\begin{equation}
 \left|S_{a,b}-\frac{X^2}{D^2}\right|
 \le L_w\frac XD+M_w.
\end{equation}
Periodicity and $\sum_{a,b\bmod D}V(a,b)=D^2$ give
\begin{equation}
 \sum_{m,n\in\Z}F_X(m,n)V(m,n)-X^2
 =\sum_{a,b\bmod D}V(a,b)
       \left(S_{a,b}-\frac{X^2}{D^2}\right).
\end{equation}
Since $V\ge0$, it follows that
\begin{equation}
 \begin{aligned}
 \left|X^{-2}\sum_{m,n\in\Z}F_X(m,n)V(m,n)-1\right|
 &\le X^{-2}\sum_{a,b\bmod D}V(a,b)
       \left|S_{a,b}-\frac{X^2}{D^2}\right|\\
 &\le L_w\frac DX+M_w\frac{D^2}{X^2}.
 \end{aligned}
\end{equation}
This proves \eqref{ar:normalization} with a constant depending
only on $w$. The bound uses the total residue
mass $D^2$ of $V$, so it has no factor $\|V\|_\infty$.
\end{proof}
For the next lemma, fix $X>0$ and an integer $K\ge1$.
Partition $\R^2$ into a grid of axis-parallel half-open squares
of side $1/K$. Let $Q_\gamma$ be the finitely many squares
that meet $\operatorname{supp}w$. For each $\gamma$, define
\begin{equation*}
 C_\gamma=\{(m,n)\in\R^2:(m/X,n/X)\in Q_\gamma\}.
\end{equation*}
Thus $C_\gamma$ is the dilation of $Q_\gamma$ by the factor $X$.
Define $F:\R^2\to[0,\infty)$ by $F(m,n)=w(m/X,n/X)$, and set
\begin{equation*}
 I=\int_{\R^2}F=X^2,\qquad
 I_\gamma=\int_{C_\gamma}F,\qquad \pi_\gamma=I_\gamma/I.
\end{equation*}

For each integer $d\ge1$ and each $\mathbf a\in\Z^2$,
let $N_{\mathbf a}$ be the sum of $F$ over $\mathbf a+d\Z^2$.
When $N_{\mathbf a}>0$, define a random vector $\mathbf Z$
taking values in this coset by
\begin{equation*}
 \Pp(\mathbf Z=\mathbf z)=\frac{F(\mathbf z)}{N_{\mathbf a}}
 \qquad(\mathbf z\in\mathbf a+d\Z^2).
\end{equation*}
Almost surely, $\mathbf Z$ lies in a unique cell $C_\gamma$.
Let $\Gamma$ be the index of this cell.

\begin{lemma}
\label{ar:spatial-quadrature}
For every integer $d\ge1$ and every $\mathbf a\in\Z^2$,
\begin{equation}
 \sum_\gamma\left|
 \sum_{\mathbf n\in(\mathbf a+d\Z^2)\cap C_\gamma}F(\mathbf n)
             -\frac{I_\gamma}{d^2}\right|
 \le C_w I\left(\frac K{Xd}+\frac{K^2}{X^2}\right).
 \label{ar:partition-quadrature}
\end{equation}
If $d\le c_wX$, then $N_{\mathbf a}\ge I/(2d^2)>0$.
Thus $\mathbf Z$ and $\Gamma$ are defined.
In this case, for every integer $Q\ge1$ coprime to $d$
and every $\mathbf b\in(\Z/Q\Z)^2$,
\begin{equation}
 \sum_\gamma\left|
 \Pp(\Gamma=\gamma,\mathbf Z\equiv\mathbf b\pmod Q)
                  -\frac{\pi_\gamma}{Q^2}\right|
 \le C_w\left(\frac{dK}{XQ}+\frac{d^2K^2}{X^2}\right).
 \label{ar:spatial-residue-discrepancy}
\end{equation}
Here $C_w>0$ and the sufficiently small constant $c_w>0$
depend only on $w$.
\end{lemma}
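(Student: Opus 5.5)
The plan is to apply the residue-class quadrature estimate \eqref{ar:residue-quadrature} cell by cell. A smooth cutoff is not needed, because $F$ restricted to a cell is a $C^2$ function on a rectangle. The cleanest way to handle the cell boundary is to redo the one-dimensional argument \eqref{ar:one-dimensional-quadrature} on a bounded interval, which introduces boundary terms.

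\textbf{Step 1: one cell.} For a half-open interval $[\alpha,\beta)$ of length $\ell$ and a $C^1$ function $f$ on it, I would show
\[
\Bigl|\sum_{n\in(a+d\Z)\cap[\alpha,\beta)}f(n)-d^{-1}\int_\alpha^\beta f\Bigr|\le\int_\alpha^\beta|f'|+2\sup_{[\alpha,\beta)}|f|.
\]
The argument compares each lattice value with the average of $f$ over the following gap of length $d$, exactly as in Step 1 of \Cref{ar:quadrature}. At most two partial gaps remain at the ends, and each costs at most $\sup|f|$.

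Iterating in two variables as in that proof, on $C_\gamma$ (a square of side $X/K$) the error for $(\mathbf a+d\Z^2)\cap C_\gamma$ is bounded by
\[
d^{-1}\int_{C_\gamma}\bigl(|\partial_1F|+|\partial_2F|\bigr)+\int_{C_\gamma}|\partial_1\partial_2F|
+\frac{X}{Kd}\sup|\nabla F|+\sup|F|\cdot O\Bigl(1+\frac{X}{Kd}\Bigr).
\]
The last term is the boundary cost; it is largest when $d$ is small compared with $X/K$.

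\textbf{Step 2: sum over cells.} Scaling gives $\int|\partial_iF|=X\|\partial_iw\|_1$ and $\int|\partial_1\partial_2F|=\|\partial_1\partial_2w\|_1$. We also have $\sup|F|=\|w\|_\infty$, $\sup|\nabla F|\ll X^{-1}$, and the number of cells is $\ll_w K^2$. The derivative terms therefore contribute $\ll X/d+1$, and the boundary terms contribute $\ll K^2\bigl(1+X/(Kd)\bigr)=K^2+KX/d$.

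Since $I=X^2$, the total is
\[
\ll X^2\Bigl(\frac{1}{Xd}+\frac{1}{X^2}+\frac{K^2}{X^2}+\frac{K}{Xd}\Bigr)\ll I\Bigl(\frac{K}{Xd}+\frac{K^2}{X^2}\Bigr),
\]
using $K\ge1$. This proves \eqref{ar:partition-quadrature}.

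The main obstacle is this boundary accounting. Carrying the per-cell boundary error honestly through the Fubini-type iteration, rather than wrongly assuming the cell sums telescope, is what produces exactly the $K/(Xd)$ and $K^2/X^2$ shape.

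\textbf{Step 3: lower bound on $N_{\mathbf a}$.} The global estimate \eqref{ar:residue-quadrature} gives $|N_{\mathbf a}-I/d^2|\le C(X/d+1)$. Dividing by $I/d^2=X^2/d^2$, this is at most $C(d/X+d^2/X^2)\le\tfrac12$ once $d\le c_wX$. Hence $N_{\mathbf a}\ge I/(2d^2)$.

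\textbf{Step 4: residue discrepancy.} Since $\gcd(Q,d)=1$, the Chinese remainder theorem makes $(\mathbf a+d\Z^2)\cap(\mathbf b+Q\Z^2)$ a single coset $\mathbf c+dQ\Z^2$. The probability in question is therefore
\[
N_{\mathbf a}^{-1}\sum_{\mathbf n\in(\mathbf c+dQ\Z^2)\cap C_\gamma}F(\mathbf n).
\]
Apply \eqref{ar:partition-quadrature} with modulus $dQ$: these sums equal $I_\gamma/(d^2Q^2)$ up to a total error $\ll I\bigl(K/(XdQ)+K^2/X^2\bigr)$. Replacing $N_{\mathbf a}$ by $I/d^2$ introduces a relative error $\ll d/X+d^2/X^2$, by Step 3 and the fact that $\sum_\gamma\pi_\gamma=1$. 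Multiplying the first error by $d^2/I$ and adding the second gives
\[
\ll\frac{dK}{XQ}+\frac{d^2K^2}{X^2}+\frac{d^2}{X^2Q^2}+\frac{d}{XQ^2},
\]
and the last two terms are absorbed into the first two. This proves \eqref{ar:spatial-residue-discrepancy}.
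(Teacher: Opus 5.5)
Your proposal is correct and follows the paper's proof essentially step by step. The paper proves a two-variable quadrature on each cell with explicit boundary costs, sums over the $\ll K^2$ cells to get $I\bigl(K/(Xd)+K^2/X^2\bigr)$, normalizes $N_{\mathbf a}$ via the global residue quadrature, and uses the Chinese remainder theorem to pass to a single coset modulo $dQ$ before comparing normalizations. The only cosmetic difference is that you charge each cell edge $2\sup|f|$ in the one-dimensional step, whereas the paper uses a bounded-variation version of the one-dimensional estimate with exact edge and vertex terms (its quantities $V_1,V_2,V_{12}$); after summing over cells, both bookkeepings give the same bound.
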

\begin{proof}
We divide the proof into four steps.
\par
\medskip
\noindent\textbf{Step 1. Establish bounded-variation quadrature on one rectangle.}
We first extend \eqref{ar:one-dimensional-quadrature} to
compactly supported functions $h:\R\to\R$ of bounded
pointwise total variation, denoted by $\operatorname{Var}(h)$.
The values at jump points are included in this variation.
For $t_j=u+jd$, comparison on $[t_j,t_j+d]$ gives

\begin{equation}
 \begin{aligned}
 \left|h(t_j)-\frac1d\int_{t_j}^{t_j+d}h(t)\,dt\right|
 &\le \operatorname{Var}_{[t_j,t_j+d]}(h),\\
 \left|\sum_{j\in\Z}h(u+jd)-\frac1d\int_\R h\right|
 &\le \operatorname{Var}(h).
 \end{aligned}
 \label{ar:spatial-bv-quadrature}
\end{equation}
The second inequality follows by summing the first; variation
is additive on adjacent intervals. If $h$ is absolutely
continuous on $[\alpha,\beta]$, its half-open restriction satisfies
\begin{equation}
 \operatorname{Var}(\mathbf1_{[\alpha,\beta)}h)
 \le |h(\alpha)|+\int_\alpha^\beta|h'(x)|\,dx+|h(\beta)|.
 \label{ar:spatial-cutoff-variation}
\end{equation}
The jumps at $\alpha$ and $\beta$ have absolute values
$|h(\alpha)|$ and $|h(\beta)|$, respectively.
For a rectangle $C=[\alpha,\beta)\times[\eta,\zeta)$, define
\begin{equation}
 \begin{aligned}
 V_1(C)&=\int_C|\partial_1F|
   +\int_\eta^\zeta\bigl(|F(\alpha,y)|+|F(\beta,y)|\bigr)\,dy,\\
 V_2(C)&=\int_C|\partial_2F|
   +\int_\alpha^\beta\bigl(|F(x,\eta)|+|F(x,\zeta)|\bigr)\,dx,\\
 V_{12}(C)&=\int_C|\partial_1\partial_2F|
   +\int_\eta^\zeta\bigl(|\partial_2F(\alpha,y)|
                         +|\partial_2F(\beta,y)|\bigr)\,dy\\
 &\quad+\int_\alpha^\beta\bigl(|\partial_1F(x,\eta)|
                         +|\partial_1F(x,\zeta)|\bigr)\,dx
   +\sum_{\substack{\xi\in\{\alpha,\beta\}\\
                    \nu\in\{\eta,\zeta\}}}|F(\xi,\nu)|.
 \end{aligned}
 \label{ar:spatial-boundary-quantities}
\end{equation}
To apply \eqref{ar:spatial-bv-quadrature} in both coordinates, set
\begin{equation}
 \begin{aligned}
 g(x)&=\int_\eta^\zeta F(x,y)\,dy,\qquad
 G_C(x)=\mathbf1_{[\alpha,\beta)}(x)g(x),\\
 k(x)&=\int_\eta^\zeta|\partial_2F(x,y)|\,dy
                      +|F(x,\eta)|+|F(x,\zeta)|,\\
 A_C(x)&=\mathbf1_{[\alpha,\beta)}(x)k(x)
 \qquad(x\in\R).
 \end{aligned}
 \label{ar:spatial-auxiliary-functions}
\end{equation}
The function $g$ is continuously differentiable.
For $\alpha\le x<x'\le\beta$, the fundamental theorem of
calculus and $\bigl||r|-|s|\bigr|\le|r-s|$ give
\begin{equation}
 \begin{split}
 |k(x')-k(x)|\le\int_x^{x'}\biggl\{
   \int_\eta^\zeta|\partial_1\partial_2F(t,y)|\,dy
   +|\partial_1F(t,\eta)|+|\partial_1F(t,\zeta)|
   \biggr\}\,dt.
 \end{split}
\end{equation}
Thus $k$ is absolutely continuous, and $|k'|$ is bounded
almost everywhere by the integrand in braces.
Applying \eqref{ar:spatial-cutoff-variation} to $g$ and $k$
therefore yields
\begin{equation}
 \operatorname{Var}(G_C)\le V_1(C),\qquad
 \int_\R A_C=V_2(C),\qquad
 \operatorname{Var}(A_C)\le V_{12}(C).
\end{equation}
The jumps of $A_C$ at $\alpha$ and $\beta$ have absolute values
$k(\alpha)$ and $k(\beta)$, respectively.
Their sum is included in $V_{12}(C)$, by
\eqref{ar:spatial-boundary-quantities} and
\eqref{ar:spatial-auxiliary-functions}.
Write $x_i=a_1+id$ and $y_j=a_2+jd$.
By \eqref{ar:spatial-bv-quadrature} and
\eqref{ar:spatial-cutoff-variation},
\begin{equation}
 \left|\sum_{j\in\Z}\mathbf1_C(x_i,y_j)F(x_i,y_j)
                -\frac{G_C(x_i)}d\right|
 \le A_C(x_i)\qquad(i\in\Z).
\end{equation}
Since $\int_\R G_C=\int_C F$, a second application in $x$ gives
\begin{equation}
 \begin{aligned}
 \left|\sum_{\mathbf n\in(\mathbf a+d\Z^2)\cap C}F(\mathbf n)
                    -\frac1{d^2}\int_CF\right|
 &\le\sum_{i\in\Z}A_C(x_i)
   +\frac1d\left|\sum_{i\in\Z}G_C(x_i)-\frac1d\int_\R G_C\right|\\
 &\le\frac{V_1(C)+V_2(C)}d+V_{12}(C).
 \end{aligned}
 \label{ar:rectangle-variation}
\end{equation}
Each point of $\operatorname{supp}F$ belongs to exactly one $C_\gamma$.
The variation bounds use all four edges of $C$.

\par
\medskip
\noindent\textbf{Step 2. Sum the cell errors.}
Choose $R_0\ge1$ such that
$\operatorname{supp}w\subset[-R_0,R_0]^2$.
Let $M_w$ bound the supremum norms of $w$, its first
derivatives, and its mixed second derivative.
The scaling $F(x,y)=w(x/X,y/X)$ gives, on all of $\R^2$,

\begin{equation}
 |F|\le M_w,\qquad |\partial_iF|\le M_w/X\quad(i=1,2),
 \qquad |\partial_1\partial_2F|\le M_w/X^2.
 \label{ar:scaled-derivatives}
\end{equation}
Each cell has side length $h=X/K$, and at most
$(2R_0K+3)^2\le(2R_0+3)^2K^2$ cells meet the support.
Each cell has area $h^2$ and perimeter $4h$.
At each of the four vertices, $|F|\le M_w$.
Substituting \eqref{ar:scaled-derivatives} into
\eqref{ar:spatial-boundary-quantities} gives
\begin{equation}
 V_1(C_\gamma)+V_2(C_\gamma)\le M_w(2h^2/X+4h),\qquad
 V_{12}(C_\gamma)\le M_w(h^2/X^2+4h/X+4).
\end{equation}
Summing these inequalities and using $K\ge1$ and $I=X^2$, we obtain
\begin{equation}
 \begin{aligned}
 \sum_\gamma\bigl(V_1(C_\gamma)+V_2(C_\gamma)\bigr)
   &\le C_wIK/X,\\
 \sum_\gamma V_{12}(C_\gamma)
   &\le C_wIK^2/X^2.
 \end{aligned}
\end{equation}
Together with \eqref{ar:rectangle-variation}, this proves
\eqref{ar:partition-quadrature} for every $d\ge1$.
\par
\medskip
\noindent\textbf{Step 3. Normalize the original coset.}
To prove the second assertion, apply
\eqref{ar:residue-quadrature} to $F$ on the original coset.
The derivative integrals are $O_w(X)$ in first order and
$O_w(1)$ in mixed second order. Hence
\begin{equation}
 N_{\mathbf a}=\frac I{d^2}(1+\varepsilon_{\mathbf a}),
 \qquad |\varepsilon_{\mathbf a}|
       \le C_w\left(\frac dX+\frac{d^2}{X^2}\right).
\end{equation}
Choose $c_w>0$ so small that the last bound is at most $1/2$
whenever $d\le c_wX$.
Assume $d\le c_wX$ from now on. Then
$|\varepsilon_{\mathbf a}|\le1/2$ and
$N_{\mathbf a}\ge I/(2d^2)>0$, as asserted.
\par
\medskip
\noindent\textbf{Step 4. Impose the extra congruence.}
Let $Q\ge1$ be coprime to $d$, and let
$\mathbf b\in(\Z/Q\Z)^2$.
The Chinese remainder theorem, applied in each coordinate,
gives $\mathbf c\in\Z^2$ such that
\begin{equation}
 \{\mathbf n\in\mathbf a+d\Z^2:
                  \mathbf n\equiv\mathbf b\pmod Q\}
       =\mathbf c+dQ\Z^2.
\end{equation}
Define
$N_{\gamma,\mathbf b}
 =\sum_{\mathbf n\in(\mathbf c+dQ\Z^2)\cap C_\gamma}F(\mathbf n)$.
Applying \eqref{ar:partition-quadrature} with modulus $dQ$ yields
\begin{equation}
 \sum_\gamma\left|N_{\gamma,\mathbf b}
                         -\frac{I_\gamma}{d^2Q^2}\right|
 \le C_w I\left(\frac K{XdQ}+\frac{K^2}{X^2}\right).
\end{equation}
This step places no size restriction on $dQ$.
By the definition of $\mathbf Z$, the joint probability in
\eqref{ar:spatial-residue-discrepancy} is
$N_{\gamma,\mathbf b}/N_{\mathbf a}$.
Since $I_\gamma\ge0$ and $\displaystyle \sum_\gamma I_\gamma=I$,
the triangle inequality gives
\begin{equation}
 \begin{aligned}
 \sum_\gamma\left|
   \frac{N_{\gamma,\mathbf b}}{N_{\mathbf a}}
                       -\frac{\pi_\gamma}{Q^2}\right|
 &\le\frac1{N_{\mathbf a}}\sum_\gamma
   \left|N_{\gamma,\mathbf b}-\frac{I_\gamma}{d^2Q^2}\right|
   +\frac1{Q^2}\left|\frac I{d^2N_{\mathbf a}}-1\right|\\
 &\le C_w\left\{\frac{dK}{XQ}+\frac{d^2K^2}{X^2}
                +\frac{d/X+d^2/X^2}{Q^2}\right\}.
 \end{aligned}
\end{equation}
Here we used $N_{\mathbf a}^{-1}\le2d^2/I$ and
$|I/(d^2N_{\mathbf a})-1|
 =|\varepsilon_{\mathbf a}|/|1+\varepsilon_{\mathbf a}|
 \le2|\varepsilon_{\mathbf a}|$.
Finally, $K,Q\ge1$ imply
\begin{equation}
 \frac d{XQ^2}\le\frac{dK}{XQ},\qquad
 \frac{d^2}{X^2Q^2}\le\frac{d^2K^2}{X^2}.
\end{equation}
Absorbing these two terms proves
\eqref{ar:spatial-residue-discrepancy}.

\end{proof}
\subsection{Joint valuation laws}

Write $\mathbf Z=(M,N)$ for the random lattice point defined
before \Cref{ar:spatial-quadrature}.
Recall that $\Gamma=\gamma$ when $\mathbf Z\in C_\gamma$.
For a finite set $\mathcal P$ of primes, our application concerns
the joint distribution of
\begin{equation*}
 \left(\Gamma,
 \bigl(v_p(M-N),v_p(M+N),v_p(M),v_p(N)\bigr)_{p\in\mathcal P}
 \right).
 \end{equation*}

Let $\mu,\nu$ be probability measures on a countable set $S$.
Define
\begin{equation}
 \TV(\mu,\nu)=\frac12\sum_{x\in S}|\mu(x)-\nu(x)|.
 \label{ar:countable-total-variation}
\end{equation}
For $f:S\to\mathbb C$ with $|f(x)|\le1$ for every $x\in S$,
the triangle inequality gives
\begin{equation}
 |\mathbb E_\mu f-\mathbb E_\nu f|=\left|\sum_{x\in S}f(x)\bigl(\mu(x)-\nu(x)\bigr)\right|\le\sum_{x\in S}|\mu(x)-\nu(x)|=2\TV(\mu,\nu).
 \label{ar:total-variation-expectation}
\end{equation}

Fix $\kappa,B_1>0$.
Let $z,D$ be integers with $2\le z\le D^{1/2}$, and put
$s=\log D/\log z$. Assume $s\ge100$.
We use divisor sums to bound from below the indicator that
an integer has no prime divisor at most $z$.
The fundamental lemma of sieve theory, in its lower-bound form
\cite[Theorem~3.6(a)]{FordSieve2023}, provides a function
$\lambda^-:\N\to[-1,1]$ depending only on $z,D,\kappa,B_1$.
Its values $\lambda^-(e)$ are the sieve coefficients.
They satisfy $\lambda^-(1)=1$ and vanish unless $e\le D$
is squarefree and every prime divisor of $e$ is at most $z$.
Their divisor sums satisfy

\begin{equation}
 \sum_{e\mid n}\lambda^-(e)\le\mathbf1_{n=1}
 \quad\text{for every }n\ge1\text{ with prime divisors at most }z.
 \label{ar:lower-sieve-pointwise}
\end{equation}
For $n=1$, the sum equals $\lambda^-(1)=1$.
For $n>1$ whose prime divisors are all at most $z$,
\cite[(3.1)]{FordSieve2023} gives
$\sum_{e\mid n}\lambda^-(e)\le0$.
More generally, put $P_z=\prod_{\substack{p\le z\\p\ \mathrm{prime}}}p$.
Since a nonzero coefficient requires $e$ to be a squarefree
divisor of $P_z$, for every $n\ge1$ we have
\begin{equation}
 \sum_{e\mid n}\lambda^-(e)
 =\sum_{e\mid\gcd(n,P_z)}\lambda^-(e)
 \le\mathbf1_{\gcd(n,P_z)=1}.
\end{equation}
The inequality follows by applying
\eqref{ar:lower-sieve-pointwise} to $\gcd(n,P_z)$.
Let $\mathcal S_{\mathrm{sf}}$ be the set of squarefree positive integers.
Choose a number $h(p)\in[0,1)$ for each prime $p$.
Define $h:\mathcal S_{\mathrm{sf}}\to[0,1]$ by
\begin{equation}
 h(1)=1,\qquad h(e)=\prod_{\substack{p\mid e\\p\ \mathrm{prime}}}h(p)
 \quad(e\in\mathcal S_{\mathrm{sf}},\ e>1).
\end{equation}
Thus $h(ab)=h(a)h(b)$ whenever
$a,b\in\mathcal S_{\mathrm{sf}}$ are coprime.

Assume
\begin{equation}
 \prod_{\substack{a\le p\le b\\p\ \mathrm{prime}}}(1-h(p))^{-1}
 \le\left(\frac{\log b}{\log a}\right)^\kappa
          \exp(B_1/\log a)\quad(2\le a\le b\le z).
 \label{ar:sieve-dimension}
\end{equation}
Then

\begin{equation}
 \begin{split}
 \sum_e\lambda^-(e)h(e)
 =\prod_{\substack{p\le z\\p\ \mathrm{prime}}}(1-h(p))
 \bigl\{1+O_{\kappa,B_1}\bigl(
 e^{-s\log s+s\log\log\log s+C_{\kappa,B_1}s}\bigr)\bigr\}.
 \end{split}
 \label{ar:fundamental-lemma}
\end{equation}
Here the sum is over the support of $\lambda^-$, so only
squarefree values of $h$ occur.
The $O$-term is a relative error; its uniform bound is given below.
To verify \eqref{ar:fundamental-lemma}, extend $h$ to a
multiplicative function $\widetilde h:\N\to[0,1]$ by setting
\begin{equation}
 \widetilde h(1)=1,\qquad \widetilde h(p)=h(p),\qquad
 \widetilde h(p^j)=0\quad(p\text{ prime},\ j\ge2).
\end{equation}
For squarefree $e$, multiplicativity gives
$\widetilde h(e)=\prod_{\substack{p\mid e\\p\ \mathrm{prime}}}h(p)=h(e)$.
Consequently,
\begin{equation}
 \sum_{e\ge1}\lambda^-(e)\widetilde h(e)=\sum_e\lambda^-(e)h(e),\qquad
 \prod_{\substack{p\le z\\p\ \mathrm{prime}}}(1-\widetilde h(p))=\prod_{\substack{p\le z\\p\ \mathrm{prime}}}(1-h(p)).
 \label{ar:sieve-extension-identities}
\end{equation}
We check the remaining hypotheses of
\cite[Theorem~3.6(a)]{FordSieve2023}.
The function $\widetilde h$ satisfies $0\le\widetilde h(p)<1$.
Condition \eqref{ar:sieve-dimension} is precisely the condition
$(\Omega)$ in that theorem, with parameters $\kappa,B_1$.
The integers $z,D$ satisfy $2\le z\le D^{1/2}$.
The theorem uses $\max\{100,\log D/\log z\}$, which equals
$s$ under our assumption $s\ge100$.
Hence \cite[Theorem~3.6(a), p.~38]{FordSieve2023} gives
\begin{equation}
 \begin{aligned}
 \sum_{e\ge1}\lambda^-(e)\widetilde h(e)
 =\prod_{\substack{p\le z\\p\ \mathrm{prime}}}(1-\widetilde h(p))
 \left\{1+O_{\kappa,B_1}\!\left(
 \exp\!\left[-s\log s+s\log\log\log s+O_{\kappa,B_1}(s)\right]
 \right)\right\}.
 \end{aligned}
 \label{ar:sieve-source-estimate}
\end{equation}
Choose $A_{\kappa,B_1}>0$ to bound the implied constant in
the outer $O$-term of \eqref{ar:sieve-source-estimate}.
Choose $C_{\kappa,B_1}>0$ so that the inner $O_{\kappa,B_1}(s)$-term
in \eqref{ar:sieve-source-estimate} has absolute value at most
$C_{\kappa,B_1}s$.
Both constants depend only on $\kappa,B_1$.
The identities \eqref{ar:sieve-extension-identities} then give
\begin{equation}
 \left|
 \frac{\sum_e\lambda^-(e)h(e)}{\prod_{\substack{p\le z\\p\ \mathrm{prime}}}(1-h(p))}-1
 \right|
 \le A_{\kappa,B_1}
 \exp\!\left(-s\log s+s\log\log\log s+C_{\kappa,B_1}s\right).
 \label{ar:sieve-relative-error}
\end{equation}
The denominator is positive because $h(p)<1$ for every prime $p$.
This bound holds uniformly for all $z,D,h$ satisfying the stated
assumptions. It gives the precise meaning of the $O$-term in
\eqref{ar:fundamental-lemma} and proves that estimate.
See also \cite[Proposition~3.1]{Gorodetsky}.
We record the weighted lower bound in the construction of
\cite[Theorem~3.6(a), pp.~38--39]{FordSieve2023}.
Let $\mathcal P_1,\ldots,\mathcal P_t$ be its partition of all primes
$p\le z$, with products $P_j=\prod_{p\in\mathcal P_j}p$ and even cutoffs
$k_j\ge0$. For $1\le j\le t$, put
\begin{equation}
 \begin{aligned}
 U_j&=\sum_{\substack{e\mid P_j\\\omega(e)\le k_j}}\mu(e)h(e),
 &V_j&=\prod_{\substack{p\mid P_j\\p\ \mathrm{prime}}}(1-h(p)),\\
 W_j&=\sum_{\substack{e\mid P_j\\\omega(e)=k_j+1}}h(e),
 &L_j&=\log(V_j^{-1}).
 \end{aligned}
\end{equation}
Here $\mu$ is the M\"obius function and $\omega(e)$ counts distinct
prime divisors. By \cite[p.~17]{FordSieve2023}, $U_j\ge V_j>0$.
Define
\begin{equation}
 \begin{gathered}
 U=\prod_{j=1}^tU_j,\qquad V=\prod_{j=1}^tV_j,
 \qquad \eta=\sum_{j=1}^t\frac{W_j}{U_j},\\
 \mathfrak E=\sum_{j=1}^t
    \exp(L_j\mathbf1_{k_j>0})\frac{L_j^{k_j+1}}{(k_j+1)!},
 \qquad \Sigma=\sum_e\lambda^-(e)h(e)=U(1-\eta).
 \end{gathered}
\end{equation}
The identity for $\Sigma$ follows from
\cite[(3.2), p.~30]{FordSieve2023}.
The bounds in \cite[(2.5)--(2.6), p.~17, and p.~31]{FordSieve2023}
give $V\le U\le Ve^{\mathfrak E}$ and $0\le\eta\le\mathfrak E$.
To bound $\Sigma$ from above, take independent Bernoulli variables
$B_p$ with $\Pp(B_p=1)=h(p)$ for $p\le z$, and apply
\eqref{ar:lower-sieve-pointwise} to $\displaystyle N=\prod_{\substack{p\le z\\p\ \mathrm{prime}}}p^{B_p}$.
Taking expectations gives $\Sigma\le V$. Hence
\begin{equation}
 \begin{aligned}
 0\le V-\Sigma&=V-U+U\eta\le U\mathfrak E
                      \le V\mathfrak E e^{\mathfrak E},\\
 \Sigma&\ge U(1-\mathfrak E)\ge V(1-\mathfrak E)
                    &&(\mathfrak E\le1).
 \end{aligned}
 \label{ar:sieve-corrected-lower-bound}
\end{equation}
Thus the lower bound in \cite[(3.4), p.~31]{FordSieve2023}
is used with $\mathfrak E\le1$.
The parameter choices in \cite[(3.11), pp.~38--39]{FordSieve2023}
give
\begin{equation}
 \mathfrak E\ll_{\kappa,B_1}
 \exp\!\left(-s\log s+s\log\log\log s+C_{\kappa,B_1}s\right)
 \qquad(s\ge s_0(\kappa,B_1)).
 \label{ar:sieve-construction-decay}
\end{equation}
In particular, $\mathfrak E\le1/2$ for sufficiently large $s$.
For $100\le s\le s_0(\kappa,B_1)$, those choices give
$\mathfrak E=O_{\kappa,B_1}(1)$.
The first bound in \eqref{ar:sieve-corrected-lower-bound}
then supplies this remaining range after enlarging the constants
in \eqref{ar:sieve-relative-error}.

Fix from now on the four primitive forms
\begin{equation}
 L_1(m,n)=m-n,\quad L_2(m,n)=m+n,\quad
 L_3(m,n)=m,\quad L_4(m,n)=n.
 \label{ar:four-forms}
\end{equation}
For distinct $i,j\in\{1,2,3,4\}$, the coefficient matrix of
$(L_i,L_j)$ has determinant $\pm1$ or $\pm2$.
For $p\ge5$, this determinant is a unit in $\Z_p$.
Thus $(m,n)\mapsto(L_i(m,n),L_j(m,n))$ is a bijection of $\Z_p^2$.

\Needspace{24\baselineskip}
\begin{samepage}
\begin{lemma}
\label{ar:joint-valuation-data}
Let $p\ge5$ be prime, and let $\mathbf Z_p=(X,Y)$ be a
Haar-distributed random vector on $\Z_p^2$.
Put $\nu_r=v_p(L_r(\mathbf Z_p))$ for $1\le r\le4$, and write
$\boldsymbol\nu=(\nu_1,\ldots,\nu_4)$.
Almost surely, $\boldsymbol\nu$ has one of the forms
\begin{equation}
 (j,j,j,j),\qquad
 (j,\ldots,j,\underset{i}{j+k},j,\ldots,j),
 \quad j\ge0,\ k\ge1,\ 1\le i\le4,
 \label{ar:state-types}
\end{equation}
where $j,k,i$ are integers.
For each vector $\mathbf a=(a_1,\ldots,a_4)$ in
\eqref{ar:state-types}, define
\begin{equation*}
 \begin{aligned}
 E_p(\mathbf a)&=\{\nu_r\ge a_r\text{ for every }1\le r\le4\},\\
 g_p(\mathbf a)&=\Pp(E_p(\mathbf a)),\\
 h_{\mathbf a}(p)&=\Pp(\boldsymbol\nu\ne\mathbf a\mid E_p(\mathbf a)).
 \end{aligned}
 \end{equation*}
Then
\begin{equation}
 \begin{array}{c|c|c}
 \mathbf a&g_p(\mathbf a)&h_{\mathbf a}(p)\\\hline
 (j,j,j,j)&p^{-2j}&4/p-3/p^2\\
 a_i=j+k,\quad a_r=j\ (r\ne i)&p^{-2j-k}&2/p-1/p^2.
 \end{array}
 \label{ar:joint-valuation-table}
\end{equation}
\end{lemma}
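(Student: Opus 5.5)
The plan rests on one observation. For $p\ge5$, any two of the four forms have coefficient matrix with unit determinant in $\Z_p$. Hence for distinct $r\ne r'$, the pair $(L_r(\mathbf Z_p),L_{r'}(\mathbf Z_p))$ is again Haar distributed on $\Z_p^2$. Each single form is also Haar on $\Z_p$, since it is one coordinate of such a pair.

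\textbf{Structure of $\boldsymbol\nu$.} Put $j=\min(v_p(X),v_p(Y))$. Every $L_r$ is an integral combination of $X,Y$, so $\nu_r\ge j$ for all $r$. Conversely, any two forms determine $X,Y$ through a matrix invertible over $\Z_p$. So for each pair $r\ne r'$ we get $\min(\nu_r,\nu_{r'})\le j$, and therefore at most one $\nu_r$ exceeds $j$. This is exactly the dichotomy \eqref{ar:state-types}. The null set where some $L_r=0$, giving an infinite valuation, is excluded because a single Haar variable vanishes with probability zero.

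\textbf{Computing $g_p$.} For $\mathbf a=(j,j,j,j)$, the event $E_p(\mathbf a)$ says all four forms lie in $p^j\Z_p$. By the argument above this is equivalent to $X,Y\in p^j\Z_p$, which has probability $p^{-2j}$. For $\mathbf a$ with $a_i=j+k$, pick any $r\ne i$. The event is equivalent to $L_i\in p^{j+k}\Z_p$ and $L_r\in p^j\Z_p$: these two forms determine $X,Y\in p^j\Z_p$, and then the remaining forms automatically have valuation at least $j$. Since $(L_i,L_r)$ is Haar on $\Z_p^2$, the probability is $p^{-j-k}p^{-j}=p^{-2j-k}$.

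\textbf{Computing $h_{\mathbf a}(p)$.} By homogeneity it suffices to work after rescaling. Conditionally on $X,Y\in p^j\Z_p$, the pair $(p^{-j}X,p^{-j}Y)$ is Haar by \eqref{ar:haar-ball}. So the first row reduces to $j=0$, where
\[
 h=\Pp(\text{some }\nu_r\ge1)=1-\Pp(\text{all four forms are units}).
\]
Reducing modulo $p$, the pair $(X,Y)\bmod p$ is uniform on $\mathbb F_p^2$. The four forms vanish on four distinct lines through the origin; distinctness holds for $p\ge5$, and in fact only needs $p$ odd. Their union has $4(p-1)+1=4p-3$ points, which gives $h=(4p-3)/p^2=4/p-3/p^2$.

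For the second row, condition on $E_p(\mathbf a)$ and again rescale by $p^j$. With $r\ne i$, write the rescaled pair as $L_i=p^kU$ and $L_r=V$, where $(U,V)$ is Haar on $\Z_p^2$. The outcome $\boldsymbol\nu=\mathbf a$ requires three things: $U$ is a unit, $V$ is a unit, and the other two forms are units. The other two forms are combinations $\alpha p^kU+\beta V$ with $\beta$ a unit, so modulo $p$ they are congruent to $\beta V$. They are therefore units exactly when $V$ is. Hence $\Pp(\boldsymbol\nu=\mathbf a\mid E)=(1-1/p)^2$, and
\[
 h=1-(1-1/p)^2=2/p-1/p^2.
\]

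The main obstacle is the bookkeeping in this last step. One has to verify that in each of the other forms, expressed through $(L_i,L_r)$, the coefficient of $L_r$ is a $p$-adic unit. This holds because each pair of the forms has determinant $\pm1$ or $\pm2$, which is a unit for $p\ge5$.
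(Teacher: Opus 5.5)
Your proposal is correct and follows essentially the same route as the paper. Both arguments use that every pair of forms is unimodular over $\Z_p$, which gives the valuation dichotomy and the values of $g_p$; both count the $4p-3$ points on four lines in $\mathbb F_p^2$ for the first row; and both write the remaining forms as $\alpha L_i+\beta L_r$ with $\beta$ a unit, obtained from $\det(L_i,L_{r'})=\beta\det(L_i,L_r)$, for the second row.
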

\end{samepage}
\begin{proof}
We divide the proof into four steps.

\medskip
\noindent\textbf{Step 1. Establish the independent Haar coordinates.}
For $r\ne s$, the coefficient matrix of $(L_r,L_s)$ belongs
to $\operatorname{GL}_2(\Z_p)$ by \eqref{ar:four-forms}.
It permutes the residue pairs modulo every power of $p$,
so it preserves normalized Haar measure.
Thus $L_r(\mathbf Z_p)$ and $L_s(\mathbf Z_p)$ are independent
Haar-distributed random variables on $\Z_p$.
In particular,
\begin{equation*}
 \Pp(L_r(\mathbf Z_p)=0)
 \le\Pp(p^t\mid L_r(\mathbf Z_p))=p^{-t}
 \quad(t\ge1).
 \end{equation*}
Thus $\Pp(L_r(\mathbf Z_p)=0)=0$ for $1\le r\le4$.
All four valuations are therefore finite with probability one.

\medskip
\noindent\textbf{Step 2. Classify the possible valuation vectors.}
At a point where all four valuations are finite, set
$j=\min\{v_p(X),v_p(Y)\}$.
The inequality $v_p(X\pm Y)\ge j$ gives $\min_r\nu_r=j$.
Writing $\mathbf Z_p=p^j\mathbf W$, we obtain
\begin{equation*}
 \mathbf W\in\Z_p^2\setminus p\Z_p^2,\qquad
 \nu_r=j+v_p(L_r(\mathbf W))\quad(1\le r\le4).
 \end{equation*}
If two distinct forms vanished modulo $p$ at $\mathbf W$,
their invertible coefficient matrix would imply
$\mathbf W\in p\Z_p^2$.
Hence at most one of the four values $L_r(\mathbf W)$ is divisible
by $p$. If none is divisible by $p$, then $\nu_r=j$ for every $r$.
Otherwise, let $i$ be the unique index for which $p\mid L_i(\mathbf W)$,
and put $k=v_p(L_i(\mathbf W))\ge1$.
Then $\nu_i=j+k$ and $\nu_r=j$ for $r\ne i$.
These are the two possibilities in \eqref{ar:state-types}.

\medskip
\noindent\textbf{Step 3. Compute the equal-valuation row.}
For the first row of \eqref{ar:joint-valuation-table}, fix $j\ge0$ and
let $\mathbf a=(j,j,j,j)$.
The conditions on $L_3,L_4$ require both $X,Y$ to be divisible
by $p^j$, and these conditions also suffice for $L_1,L_2$.
Therefore
\begin{equation*}
 E_p(\mathbf a)=\{X,Y\in p^j\Z_p\},\qquad
 g_p(\mathbf a)=p^{-j}p^{-j}=p^{-2j}.
 \end{equation*}
Conditional on this event, $\mathbf W=p^{-j}\mathbf Z_p$ has
normalized Haar distribution on all of $\Z_p^2$.
Consequently, each of the $p^2$ residue pairs modulo $p$ has
conditional probability $p^{-2}$.
The event $\boldsymbol\nu\ne\mathbf a$ occurs exactly when
at least one $L_r(\mathbf W)$ vanishes modulo $p$.
Writing $\mathbf W=(x,y)$, the four zero sets are the lines
\begin{equation*}
 x-y=0,\qquad x+y=0,\qquad x=0,\qquad y=0
 \quad\text{in }\mathbb F_p^2.
 \end{equation*}
Each line contains $p$ points, and two distinct lines meet
only at the origin. Their union has $1+4(p-1)$ points, so
\begin{equation*}
 h_{\mathbf a}(p)=\frac{1+4(p-1)}{p^2}
 =\frac4p-\frac3{p^2}.
 \end{equation*}

\medskip
\noindent\textbf{Step 4. Compute the exceptional row and conclude.}
For the second row, let $a_i=j+k$ and $a_r=j$ for $r\ne i$,
where $j\ge0$ and $k\ge1$.
Choose $\ell\ne i$ and put
$(S,T)=(L_i(\mathbf Z_p),L_\ell(\mathbf Z_p))$.
These are independent Haar-distributed random variables.
The inverse coordinate map shows that $S,T\in p^j\Z_p$
is equivalent to $\mathbf Z_p\in p^j\Z_p^2$.
Consequently,
\begin{equation*}
 E_p(\mathbf a)=\{S\in p^{j+k}\Z_p,\ T\in p^j\Z_p\},\qquad
 g_p(\mathbf a)=p^{-(j+k)}p^{-j}=p^{-2j-k}.
 \end{equation*}
The event restricts the independent coordinates $S,T$ separately.
Thus, conditional on $E_p(\mathbf a)$, we can write
\begin{equation}
 S=p^{j+k}U,\qquad T=p^jV,
 \label{ar:state-conditional-coordinates}
\end{equation}
where $U,V$ are independent and have normalized Haar distribution
on $\Z_p$.

Since $L_i,L_\ell$ form a coordinate system, for every $r\ne i$
we can write
\begin{equation}
 L_r=\alpha_rL_i+\beta_rL_\ell,\qquad
 \alpha_r\in\Z_p,\quad\beta_r\in\Z_p^\times.
 \label{ar:state-other-forms}
\end{equation}
To verify that $\beta_r$ is a unit, take determinants of
coefficient matrices. The determinant for $(L_i,L_r)$ is
$\beta_r$ times the determinant for $(L_i,L_\ell)$,
and both determinants are units.
The identities \eqref{ar:state-conditional-coordinates} and
\eqref{ar:state-other-forms} give
\begin{equation*}
 L_r(\mathbf Z_p)=p^j(\alpha_rp^kU+\beta_rV),\qquad \alpha_rp^kU+\beta_rV\equiv\beta_rV\pmod p\quad(r\ne i).
\end{equation*}
where the congruence uses $k\ge1$.
Since $\beta_r$ is a unit, under this conditional law
\begin{equation*}
 \nu_i>a_i\iff p\mid U,\qquad
 \nu_r>a_r\iff p\mid V\quad(r\ne i).
 \end{equation*}
Thus $\boldsymbol\nu\ne\mathbf a$ exactly when $p\mid U$ or
$p\mid V$. Independence and inclusion--exclusion yield
\begin{equation*}
 h_{\mathbf a}(p)=\frac1p+\frac1p-\frac1{p^2}
 =\frac2p-\frac1{p^2}.
 \end{equation*}
This proves the second row of \eqref{ar:joint-valuation-table}.
In particular,
\begin{equation*}
 \begin{aligned}
 \Pp(\boldsymbol\nu=\mathbf a)
 &=g_p(\mathbf a)(1-h_{\mathbf a}(p))\\
 &=\begin{cases}
 p^{-2j}(p-1)(p-3)/p^2,&\mathbf a=(j,j,j,j),\\
 p^{-2j-k}(1-1/p)^2,&a_i=j+k,\ a_r=j\ (r\ne i).
 \end{cases}
 \end{aligned}
 \end{equation*}
Both expressions are positive for $p\ge5$, so every vector
in \eqref{ar:state-types} occurs with positive probability.
\end{proof}
For the next theorem, let $Y,y$ be real numbers with
$Y\ge3$ and $5\le y\le Y$, and let $\mathcal P$ be a set
of primes in $[5,y]$.
Let $\mathcal C$ be a nonempty finite set.
Let $\mathbf Z$ be a $\Z^2$-valued random vector and
$C$ a $\mathcal C$-valued random variable, defined on the same
probability space.
Assume that $\mathbf Z$ has finite support and that
$\Pp(L_i(\mathbf Z)=0)=0$ for $1\le i\le4$.

Let $\pi=(\pi_c)_{c\in\mathcal C}$ be a prescribed probability
vector: $\pi_c\ge0$ for every $c\in\mathcal C$ and
$\displaystyle \sum_{c\in\mathcal C}\pi_c=1$.
For a real constant $A_0\ge0$, consider the joint
residue-discrepancy condition\footnote{We use this phrase for
\eqref{ar:joint-discrepancy}. Its antecedents are estimates for
divisibility probabilities in the Kubilius model and Elliott's
generalization; see \cite[Sections~1.1--1.2, pp.~1--2]{Gorodetsky}.}
\begin{equation}
 \begin{split}
 \sum_{c\in\mathcal C}\left|
 \Pp(C=c,\mathbf Z\equiv\mathbf b\pmod Q)-\frac{\pi_c}{Q^2}
 \right|
 \le A_0\left(\frac1{YQ}+\frac1{Y^2}\right),
 \end{split}
 \label{ar:joint-discrepancy}
\end{equation}
for every positive integer $Q$ supported on $\mathcal P$ and every
$\mathbf b\in(\Z/Q\Z)^2$.

For each $p\in\mathcal P$, let $\mathbf Z_p$ be a
Haar-distributed random vector on $\Z_p^2$.
Assume that these vectors are independent.
Let $C^*$ be a $\mathcal C$-valued random variable, independent
of $(\mathbf Z_p)_{p\in\mathcal P}$, with
$\Pp(C^*=c)=\pi_c$ for every $c\in\mathcal C$.
Write
\begin{equation*}
 \mathbf V=(v_p(L_i(\mathbf Z)))_{p,i},\qquad
 \mathbf A=(v_p(L_i(\mathbf Z_p)))_{p,i},\qquad
 u=\frac{\log Y}{\log y}.
\end{equation*}
The indices in these vectors range over $p\in\mathcal P$
and $1\le i\le4$. Their common state space is the countable
set $(\N_0^4)^{\mathcal P}$, where
$\N_0=\{0,1,2,\ldots\}$. If $\mathcal P$ is empty, this
space consists of one empty vector. All products over an
empty set below are assigned the value one.
\Needspace{7\baselineskip}
\begin{samepage}
\begin{theorem}
\label{ar:joint-valuation-approximation}
There exist absolute constants $K\ge1$ and $0<c\le1$ such that,
for every choice of the preceding data satisfying
\eqref{ar:joint-discrepancy},
\begin{equation*}
 \TV((C,\mathbf V),(C^*,\mathbf A))
 \le K\left(e^{-cu}+\frac{A_0(1+\log Y)^{16}}{\sqrt Y}\right).
\end{equation*}
\end{theorem}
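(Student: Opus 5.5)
We compare the two laws state by state and sum the discrepancies. By \Cref{ar:joint-valuation-data}, the model law is a product over $p\in\mathcal P$. Each state is a tuple $(c,(\mathbf a_p)_p)$ with every $\mathbf a_p$ of the form \eqref{ar:state-types}, and
\[
\Pp(C^*=c,\mathbf A=\mathbf a)=\pi_c\prod_{p\in\mathcal P}g_p(\mathbf a_p)\bigl(1-h_{\mathbf a_p}(p)\bigr).
\]
We first reduce the admissible states. Since $\Pp(L_i(\mathbf Z)=0)=0$, the valuations $\mathbf V$ are almost surely finite. The argument of Step~2 of \Cref{ar:joint-valuation-data} also applies to integers, so $\mathbf V$ takes the same shapes \eqref{ar:state-types}. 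Let $Q(\mathbf a)=\prod_p p^{\max_r a_{p,r}+\min_r a_{p,r}}$. The event $\{\nu_{p,r}\ge a_{p,r}\ \forall p,r\}$ is then a union of $Q(\mathbf a)^2g(\mathbf a)$ residue pairs modulo $Q(\mathbf a)$; for $p\ge5$ the conditions on two forms can be disentangled as in Step~4 of that lemma.

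The first step truncates to states of small modulus. Fix $D=Y^{1/4}$, and discard all states with $Q(\mathbf a)>D^{1/2}$, say. For both the model and $\mathbf Z$, the probability of a large state should be bounded by the divisibility tail $\sum_{Q>D^{1/2}}(\text{mass of }Q\mid\cdots)$. In the model this is a product computation giving $\ll e^{-cu}$ via Rankin's trick: weight by $Q^{\epsilon}$ with $\epsilon\asymp 1/\log y$, and use the product over $p\le y$ from \Cref{sc:prime-products}. For $\mathbf Z$, apply \eqref{ar:joint-discrepancy} with $Q=p^k$ and products of such moduli, paying $A_0(1/(YQ)+1/Y^2)$ per modulus. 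Because there are at most $Y^{O(1)}$ relevant moduli with polynomially many residues, this gives a $\log$-power factor. I expect this is where the exponent $16$ comes from: four forms, each producing a divisor-type sum of size $(\log Y)^4$.

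The second step handles a fixed small state $\mathbf a$ with $Q=Q(\mathbf a)\le D^{1/2}$. Write $\{\mathbf V=\mathbf a\}$ as the event $E(\mathbf a)$ minus the union over $p$ of the events $\{$one more prime divides the relevant unit $U_p$ or $V_p\}$, following Step~4 of \Cref{ar:joint-valuation-data}. Its indicator is sieved from below and above with the fundamental lemma weights $\lambda^\pm$ from \eqref{ar:lower-sieve-pointwise} and \eqref{ar:fundamental-lemma}. Take the sieve dimension $\kappa\le4$; the function $h_{\mathbf a}$ from \eqref{ar:joint-valuation-table} satisfies \eqref{ar:sieve-dimension} with absolute constants, by \Cref{sc:prime-products}. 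Choose level $D/Q$ and sifting range $z=y$, so that $s\gg u$. Expanding $\sum_e\lambda^\pm(e)$ turns the problem into probabilities of joint congruence events modulo $Qe\le D$. Each such event is a union of residue classes, and \eqref{ar:joint-discrepancy} controls each one jointly in $c$. The main terms then reproduce $\pi_c\,g(\mathbf a)\prod_p(1-h_{\mathbf a_p}(p))\bigl(1+O(e^{-s\log s})\bigr)$.

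Summing the main-term errors over $\mathbf a$ gives $\ll e^{-cu}$, because $\sum_{\mathbf a}g(\mathbf a)\ll\prod_{p\le y}(1+O(1/p))$, which is polynomial in $\log y$ and absorbed into the exponential. Summing the remainders gives $A_0\sum_{Q\le D}(\#\text{classes})(1/(YQ)+1/Y^2)\ll A_0D(\log Y)^{16}/Y$, which with $D=Y^{1/2}$ produces the $A_0(\log Y)^{16}/\sqrt Y$ term.

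The main obstacle is the uniform bookkeeping in the second step. The number of residue classes in each congruence event must be counted carefully: up to $Q^2g$ for the mod-$Q$ part, times the classes coming from sieve divisors. This count has to stay compatible with the per-class discrepancy $1/(YQ)$, so that the total divisor-function losses come out as $(\log Y)^{16}$ rather than a power of $Y$. I would first treat the case where $\mathcal C$ is a single point and check that the counts close; the joint sum over $c$ is then carried linearly. Finally, when $u$ is small the claimed bound exceeds $1$ for suitable $K$, since total variation is at most $1$, so we may assume $u\ge 200$ and hence $s\ge100$, as the sieve lemma requires.
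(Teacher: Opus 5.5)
The step that does not go through as described is the separate bound for the large states of $\mathbf Z$. The event $\{Q(\mathbf V)>D^{1/2}\}$ is not a union of residue classes modulo any modulus you can afford. So \eqref{ar:joint-discrepancy} can only reach it through a union bound over divisibility witnesses, as in your $\sum_{Q>D^{1/2}}(\text{mass of }Q\mid\cdots)$.

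Already the main term of that union bound is too large. There are at least $4^{\omega(Q)}$ witness states of each modulus $Q$, each of mass $1/Q$. Taking $\mathcal P$ to be all primes in $[5,y]$, the sum is of size $(\log y)^4$ times a positive factor depending on $u$ alone. Since $\log y=\log Y/u$ is unbounded when $u$ is fixed and $Y\to\infty$, this is not $\ll e^{-cu}$. Your error count ``$Y^{O(1)}$ moduli with polynomially many residues'' gives $A_0Y^{O(1)-1}$, not a logarithmic loss.

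The missing observation is that this tail is never needed. Write $m_{c,\mathbf a}=\Pp(C^*=c,\mathbf A=\mathbf a)$ and $P_{c,\mathbf a}=\Pp(C=c,\mathbf V=\mathbf a)$. Both laws have mass one, so
\[
 \TV\bigl((C,\mathbf V),(C^*,\mathbf A)\bigr)=\sum_{c,\mathbf a}\bigl(m_{c,\mathbf a}-P_{c,\mathbf a}\bigr)_+ .
\]
Large states then cost at most their \emph{model} mass $\Pp(q(\mathbf A)>H)$, and on small states only a \emph{lower} bound for $P_{c,\mathbf a}$ is needed. This is the paper's route. Rankin's trick is applied to the model alone, as $\E\,q(\mathbf A)^\alpha\le B$ with $\alpha=\min\{1/4,1/\log y\}$; this is bounded because $\E p^{\alpha M_p}\le 1+B\alpha\log p/p$, so no power of $\log y$ appears. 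Only $\lambda^-$ is used, so your upper sieve is superfluous. In your two-sided setup the mass identity $\sum_{\mathrm{large}}P_{c,\mathbf a}=\sum_{\mathrm{large}}m_{c,\mathbf a}+\sum_{\mathrm{small}}(m_{c,\mathbf a}-P_{c,\mathbf a})$ would equally repair the argument.

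The same issue affects your summation of the main-term errors. A bound $(\log y)^{O(1)}e^{-s\log s}$ cannot be ``absorbed into the exponential'' uniformly in $Y$. The sieve error is relative, so it multiplies $\pi_c g(\mathbf a)\prod_p(1-h_{\mathbf a_p}(p))=m_{c,\mathbf a}$, and these sum to at most one. That is how the paper obtains $Be^{-u\log u/64}$, without any appeal to $\sum_{\mathbf a}g(\mathbf a)$.

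For the remainders, the paper uses the crude count $|\mathcal R_{\mathbf d,e}|\le(qe)^2$ with $q=\operatorname{lcm}(d_1,\dots,d_4)\le H=Y^{1/8}$, $e\le S/q$ and $S=Y^{1/4}$. This gives $A_0(S^2/Y+S^3/Y^2)\sum_{q\le H}d_{16}(q)/q$. The exponent $16$ comes from $\tau(q)^4\le d_{16}(q)$, which bounds the number of $4$-tuples with least common multiple $q$. With that count the sieve level must be $Y^{1/4}$, not the $Y^{1/2}$ you end with. Level $Y^{1/2}$ requires your sharper class count, $Q(\mathbf a)e^2h_{\mathbf a}(e)$ in your normalization $g(\mathbf a)=1/Q(\mathbf a)$.

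A minor point: \Cref{sc:prime-products} does not give $\kappa\le4$ for $h_{\mathbf a}(p)\le 4/p$. The paper verifies \eqref{ar:sieve-dimension} with $(\kappa,B_1)=(16,56)$, which is harmless since any fixed dimension suffices.
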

\end{samepage}
\begin{proof}
Throughout the proof, $B>0$ denotes an absolute constant
that may increase from one occurrence to the next. We divide the rest proof into six steps.

\par
\medskip
\noindent\textbf{Step 1. Verify the sieve condition.}
By \eqref{sc:chebyshev}, $\vartheta(t)\le4t$. Partial summation gives
\begin{equation}
 \begin{aligned}
 \sum_{\substack{p\le t\\p\ \mathrm{prime}}}\frac{\log p}{p}&\le B\log t &&(t\ge2),\\
 \sum_{\substack{a\le p\le b\\p\ \mathrm{prime}}}\frac1p
 &\le4\log\frac{\log b}{\log a}+\frac4{\log a}
 &&(2\le a\le b).
 \end{aligned}
 \label{ar:prime-estimates}
\end{equation}
Writing $\displaystyle\vartheta(a^-)=\sum_{\substack{p<a\\p\ \mathrm{prime}}}\log p$, the identities are
\begin{equation*}
 \begin{aligned}
 \sum_{\substack{p\le t\\p\ \mathrm{prime}}}\frac{\log p}{p}
 &=\frac{\vartheta(t)}t+\int_2^t\frac{\vartheta(v)}{v^2}\,dv,\\
 \sum_{\substack{a\le p\le b\\p\ \mathrm{prime}}}\frac1p
 &=\frac{\vartheta(b)}{b\log b}
   -\frac{\vartheta(a^-)}{a\log a}
   +\int_a^b\frac{\vartheta(v)(\log v+1)}{v^2(\log v)^2}\,dv.
 \end{aligned}
 \end{equation*}
The first is at most $4+4\log(t/2)$. The second is at most
$4\log(\log b/\log a)+4/\log a$.
Suppose $h(p)=0$ for $p<5$ and $0\le h(p)\le4/p$ for $p\ge5$.
The inequality $-\log(1-x)\le x+x^2/(2(1-x))$ for $0\le x<1$ yields
\begin{equation*}
 \begin{aligned}
 -\log(1-h(p))&\le\frac4p+\frac{40}{p^2} &&(p\ge5),\\
 \sum_{n\ge a}\frac1{n^2}
 &\le\int_{\lceil a\rceil-1}^{\infty}\frac{dt}{t^2}
 \le\frac1{a-1}\le\frac1{\log a} &&(a\ge2).
 \end{aligned}
\end{equation*}
Summing the first bound and using \eqref{ar:prime-estimates} gives
\begin{equation*}
 \prod_{\substack{a\le p\le b\\p\ \mathrm{prime}}}(1-h(p))^{-1}
 \le\left(\frac{\log b}{\log a}\right)^{16}
       \exp(56/\log a)\qquad(2\le a\le b).
 \end{equation*}
Thus \eqref{ar:sieve-dimension} holds with the fixed parameters
$\kappa=16$ and $B_1=56$.

\par
\medskip
\noindent\textbf{Step 2. Express the model probabilities and total variation.}
Let $\mathscr D\subset\N^4$ be the set of tuples obtained from
the local vectors in \eqref{ar:state-types} by
\begin{equation*}
 d_i=\prod_{p\in\mathcal P}p^{a_{i,p}}\quad(1\le i\le4),
 \qquad q(\mathbf d)=\operatorname{lcm}(d_1,d_2,d_3,d_4).
 \end{equation*}
Unique factorization identifies each valuation vector with its tuple.
We use this identification in events such as $\mathbf A=\mathbf d$.
For $\mathbf d\in\mathscr D$, put
\begin{equation*}
 \begin{gathered}
 \mathbf a_p=(v_p(d_1),\ldots,v_p(d_4)),\qquad
 g_{\mathbf d}=\prod_{p\in\mathcal P}g_p(\mathbf a_p),\\
 h_{\mathbf d}(p)=
 \begin{cases}h_{\mathbf a_p}(p),&p\in\mathcal P,\\0,&p\notin\mathcal P,
 \end{cases}
 \qquad
 h_{\mathbf d}(e)=\prod_{\substack{p\mid e\\p\ \mathrm{prime}}}h_{\mathbf d}(p)
 \quad(e\in\mathcal S_{\mathrm{sf}}).
 \end{gathered}
 \end{equation*}
Here $h_{\mathbf d}(1)=1$. By \Cref{ar:joint-valuation-data}, the exact
probability at $p$ is $g_p(\mathbf a_p)(1-h_{\mathbf d}(p))$.
The vectors $\mathbf Z_p$ are independent as $p$ varies over $\mathcal P$.
Also, $C^*$ is independent of $\mathbf A$. Therefore
\begin{equation}
 \begin{gathered}
 V_{\mathbf d}=\prod_{p\in\mathcal P}(1-h_{\mathbf d}(p)),
 \qquad \Pp(\mathbf A=\mathbf d)=g_{\mathbf d}V_{\mathbf d},\\
 m_{c,\mathbf d}:=\Pp(C^*=c,\mathbf A=\mathbf d)
 =\pi_cg_{\mathbf d}V_{\mathbf d},\qquad
 \sum_{c\in\mathcal C}\sum_{\mathbf d\in\mathscr D}m_{c,\mathbf d}=1.
 \end{gathered}
 \label{ar:model-state-mass}
\end{equation}
Both joint distributions have total probability one.
Using $|t|=2t_+-t$, where $t_+=\max\{t,0\}$ for $t\in\mathbb R$,
in \eqref{ar:countable-total-variation} gives
\begin{equation*}
 \Delta:=\TV((C,\mathbf V),(C^*,\mathbf A))
 =\sum_{c\in\mathcal C}\sum_{\mathbf d\in\mathscr D}
       \bigl(m_{c,\mathbf d}-\Pp(C=c,\mathbf V=\mathbf d)\bigr)_+.
\end{equation*}
The sum is restricted to $\mathscr D$ because
$\Pp(\mathbf A=\mathbf d)=0$ for $\mathbf d\notin\mathscr D$.
Set $S=Y^{1/4}$ and $H=Y^{1/8}$.
The terms with $q(\mathbf d)>H$ sum to at most $\Pp(q(\mathbf A)>H)$.

\par
\medskip
\noindent\textbf{Step 3. Bound the probability that $q(\mathbf A)>H$.}
Put $M_p=\max_{1\le i\le4}v_p(L_i(\mathbf Z_p))$.
Each $L_i(\mathbf Z_p)$ has normalized Haar distribution on $\Z_p$.
The union bound therefore gives
\begin{equation*}
 \Pp(M_p\ge k)\le4p^{-k}\qquad(k\ge1).
\end{equation*}
Choose $\alpha=\min\{1/4,1/\log y\}$. Telescoping and Tonelli's
theorem yield
\begin{equation*}
 \begin{aligned}
 \E p^{\alpha M_p}
 =1+\sum_{k\ge1}(p^{\alpha k}-p^{\alpha(k-1)})\Pp(M_p\ge k)
 \le 1+\frac{4(p^\alpha-1)}{p-p^\alpha}
 \le1+B\alpha\frac{\log p}{p}.
 \end{aligned}
 \end{equation*}
For the last inequality, $\alpha\log p\le1$ implies
$p^\alpha-1\le e\alpha\log p$, while
$p-p^\alpha\ge p(1-5^{-3/4})$.
Since $\displaystyle q(\mathbf A)=\prod_{p\in\mathcal P}p^{M_p}$, independence
and \eqref{ar:prime-estimates} give
\begin{equation*}
 \E q(\mathbf A)^\alpha
 \le\exp\left(B\alpha\sum_{\substack{p\le y\\p\ \mathrm{prime}}}\frac{\log p}{p}\right)\le B,
 \qquad \alpha\log H\ge\frac{\log5}{32}u.
 \end{equation*}
The last inequality uses $\alpha\log y=\min\{\log y/4,1\}
\ge\log5/4$. Markov's inequality now gives
\begin{equation*}
 \Pp(q(\mathbf A)>H)\le BH^{-\alpha}\le Be^{-\beta u},
 \qquad \beta=\frac{\log5}{32}.
\end{equation*}

\par
\medskip
\noindent\textbf{Step 4. Apply the residue-discrepancy condition.}
Let $\mathbf d\in\mathscr D$ satisfy $q=q(\mathbf d)\le H$.
Write $\mathcal E$ for the squarefree positive integers supported
on $\mathcal P$, including $1$. Define
\begin{equation*}
 \begin{aligned}
 E_{\mathbf d}&=\{d_i\mid L_i(\mathbf Z)\text{ for }1\le i\le4\},\\
 E_{\mathbf d,e}&=E_{\mathbf d}\cap
 \bigcap_{p\mid e}\bigcup_{i=1}^4
 \{v_p(L_i(\mathbf Z))\ge a_{i,p}+1\}\qquad(e\in\mathcal E).
 \end{aligned}
 \end{equation*}
This event depends only on $\mathbf Z$ modulo $qe$:
at $p\mid e$ the required exponents are at most $v_p(q)+1$,
and at other primes they are at most $v_p(q)$.
Let $\mathcal R_{\mathbf d,e}\subset(\Z/qe\Z)^2$ be its residue classes.
The Chinese remainder theorem and \Cref{ar:joint-valuation-data} give
\begin{equation*}
 \frac{|\mathcal R_{\mathbf d,e}|}{(qe)^2}
 =g_{\mathbf d}h_{\mathbf d}(e).
 \end{equation*}
By the triangle inequality, summing \eqref{ar:joint-discrepancy}
over these classes yields
\begin{equation}
 \begin{aligned}
 \sum_{c\in\mathcal C}\left|\Pp(C=c,E_{\mathbf d,e})
       -\pi_cg_{\mathbf d}h_{\mathbf d}(e)\right|
 &\le |\mathcal R_{\mathbf d,e}|A_0
       \left(\frac1{Yqe}+\frac1{Y^2}\right)\\
 &\le A_0\left(\frac{qe}{Y}+\frac{(qe)^2}{Y^2}\right).
 \end{aligned}
 \label{ar:surplus-discrepancy}
\end{equation}

\par
\medskip
\noindent\textbf{Step 5. Obtain a lower bound for each exact probability.}
Assume $u\ge u_0$, where $u_0\ge1600$ is a sufficiently large
absolute constant. Set $z=\lfloor y\rfloor$ and $D=\lfloor S/q\rfloor$.
Since $S/q\ge Y^{1/8}$ and $D\ge S/(2q)$,
\begin{equation}
 s=\frac{\log D}{\log z}
 \ge\frac{\log Y/8-\log2}{\log y}
 \ge u/8-1\ge u/16\ge100.
 \label{ar:sieve-ratio}
\end{equation}
Thus $2\le z\le D^{1/2}$. By \eqref{ar:joint-valuation-table},
$0\le h_{\mathbf d}(p)\le4/p<1$ for $p\in\mathcal P$.
Step~1 verifies \eqref{ar:sieve-dimension} with the same
parameters $(\kappa,B_1)=(16,56)$ for every $\mathbf d$.
Use the coefficients constructed in
\cite[Theorem~3.6(a), pp.~38--39]{FordSieve2023}, with these parameters.
Increase $u_0$ so that \eqref{ar:sieve-construction-decay} gives
$\mathfrak E\le1/2$ whenever $s\ge u_0/16$.
This choice is absolute, and \eqref{ar:sieve-ratio} gives
$s\ge u/16\ge u_0/16$, so $\mathfrak E\le1/2$.
Thus \eqref{ar:sieve-corrected-lower-bound} holds for every
$\mathbf d\in\mathscr D$ with $q(\mathbf d)\le H$.

Define the squarefree integer
\begin{equation*}
 T=\prod_{\substack{p\in\mathcal P\\
          \exists i:\ v_p(L_i(\mathbf Z))>a_{i,p}}}p.
 \qquad
 \{\mathbf V=\mathbf d\}=E_{\mathbf d}\cap\{T=1\}.
 \end{equation*}
Every prime divisor of $T$ belongs to $\mathcal P$, so it is
at most $\lfloor y\rfloor=z$.
Thus $T$ satisfies the prime-support restriction in
\eqref{ar:lower-sieve-pointwise}; see also
\cite[(3.1), p.~30]{FordSieve2023}.
For $e\in\mathcal E$,
$E_{\mathbf d}\cap\{e\mid T\}=E_{\mathbf d,e}$.
Apply \eqref{ar:lower-sieve-pointwise} to $T$, multiply by
$\mathbf1_{C=c}\mathbf1_{E_{\mathbf d}}$, and take expectations:
\begin{equation}
 \Pp(C=c,\mathbf V=\mathbf d)
 \ge\sum_{\substack{e\in\mathcal E\\e\le D}}
       \lambda^-(e)\Pp(C=c,E_{\mathbf d,e}).
 \label{ar:pointwise-sieve-application}
\end{equation}
Because $h_{\mathbf d}(p)=0$ outside $\mathcal P$,
\eqref{ar:fundamental-lemma} takes the form
\begin{equation*}
 \sum_{\substack{e\in\mathcal E\\e\le D}}
       \lambda^-(e)h_{\mathbf d}(e)
 =V_{\mathbf d}(1+\delta_{\mathbf d}),\qquad
 |\delta_{\mathbf d}|\le Be^{-u\log u/64}.
 \end{equation*}
Indeed, after increasing $u_0$, the exponent in
\eqref{ar:sieve-relative-error} satisfies
\begin{equation*}
 -s\log s+s\log\log\log s+B s
 \le-\tfrac12s\log s\le-\tfrac1{64}u\log u.
 \end{equation*}
Set $r_{c,\mathbf d,e}=
\Pp(C=c,E_{\mathbf d,e})-\pi_cg_{\mathbf d}h_{\mathbf d}(e)$.
Substitution in \eqref{ar:pointwise-sieve-application} gives
\begin{equation}
 \begin{aligned}
 \Pp(C=c,\mathbf V=\mathbf d)
 &\ge m_{c,\mathbf d}(1+\delta_{\mathbf d})
       +\sum_{\substack{e\in\mathcal E\\e\le D}}\lambda^-(e)r_{c,\mathbf d,e},\\
 \bigl(m_{c,\mathbf d}-\Pp(C=c,\mathbf V=\mathbf d)\bigr)_+
 &\le Be^{-u\log u/64}m_{c,\mathbf d}
       +\sum_{\substack{e\in\mathcal E\\e\le D}}|r_{c,\mathbf d,e}|.
 \end{aligned}
 \label{ar:exact-state-deficit}
\end{equation}
For the second inequality, take the absolute value of the signed
remainder and use $|\lambda^-(e)|\le1$.
Summing its first term over $c\in\mathcal C$ and $q(\mathbf d)\le H$
gives at most
$Be^{-u\log u/64}$ by \eqref{ar:model-state-mass}.

\par
\medskip
\noindent\textbf{Step 6. Sum the residue errors and conclude.}
Each $d_i$ divides $q(\mathbf d)$, so at most $\tau(q)^4$ tuples
have least common multiple $q$. Let $d_{16}(q)$ count ordered
factorizations of $q$ into sixteen positive integers. Then
\begin{equation}
 \tau(q)^4\le d_{16}(q),\qquad
 \sum_{q\le H}\frac{d_{16}(q)}q
 \le\left(\sum_{n\le H}\frac1n\right)^{16}
 \le(1+\log H)^{16}.
 \label{ar:state-counting-bound}
\end{equation}
For the first inequality at $q=p^a$, assign to
$(a_1,\ldots,a_4)\in\{0,\ldots,a\}^4$ the integers
\begin{equation*}
 n_{\boldsymbol\varepsilon}
 =\#\{1\le r\le a:(\mathbf1_{r\le a_i})_{i=1}^4
                       =\boldsymbol\varepsilon\},\qquad
 \sum_{\boldsymbol\varepsilon}n_{\boldsymbol\varepsilon}=a,
 \qquad
 a_i=\sum_{\boldsymbol\varepsilon}\varepsilon_i n_{\boldsymbol\varepsilon},
 \end{equation*}
where $\boldsymbol\varepsilon\in\{0,1\}^4$.
The last identity proves injectivity into the sixteen-part weak
compositions of $a$, counted by $d_{16}(p^a)$.
Multiplication over primes proves $\tau(q)^4\le d_{16}(q)$.
Expanding $d_{16}(q)$ and allowing each factor to range independently
up to $H$ proves the remaining inequalities in
\eqref{ar:state-counting-bound}.

Sum \eqref{ar:surplus-discrepancy} over the terms in
\eqref{ar:exact-state-deficit}. In the resulting numerical bound,
enlarge the sum over $e$ to all positive integers $e\le S/q$.
Using $\sum_{e\le R}e\le R^2$ and $\sum_{e\le R}e^2\le R^3$
for $R\ge1$, we obtain
\begin{align*}
 &A_0\sum_{q\le H}d_{16}(q)
       \sum_{e\le S/q}\left(\frac{qe}{Y}+\frac{(qe)^2}{Y^2}\right)\notag\\
 &\qquad\le A_0\left(\frac{S^2}{Y}+\frac{S^3}{Y^2}\right)
                \sum_{q\le H}\frac{d_{16}(q)}q\notag\\
 &\qquad\le A_0(Y^{-1/2}+Y^{-5/4})(1+\log H)^{16}
 \le2A_0Y^{-1/2}(1+\log Y)^{16}.
\end{align*}
Combining Steps~2, 3, and 5 with this bound gives
\begin{equation}
 \Delta\le Be^{-\beta u}+Be^{-u\log u/64}
          +2A_0Y^{-1/2}(1+\log Y)^{16}\qquad(u\ge u_0).
 \label{ar:combined-approximation-error}
\end{equation}
Choose one absolute $B\ge1$ valid for both exponential terms
in \eqref{ar:combined-approximation-error}, and set
\begin{equation*}
 c=\frac1{64},\qquad K=\max\{2B,e^{u_0/64}\}.
 \end{equation*}
Since $\beta>1/64$ and $\log u\ge1$ for $u\ge u_0$,
these constants give the stated estimate in that range.
For $1\le u<u_0$, we have $\Delta\le1\le Ke^{-u/64}$.
If $\mathcal P=\varnothing$, both valuation vectors are empty.
Taking $Q=1$ in \eqref{ar:joint-discrepancy} gives directly
\begin{equation*}
 \Delta=\frac12\sum_{c\in\mathcal C}|\Pp(C=c)-\pi_c|
 \le\frac{A_0}{2}(Y^{-1}+Y^{-2})\le A_0Y^{-1/2},
 \end{equation*}
which also satisfies the claimed bound.
\end{proof}
\subsection{Local approximation}

We compare the joint distribution of the valuations and the character
values of the unit parts of the four linear forms.
Residues modulo $D$ determine these quantities at primes dividing
$D_0q$, except on an event of small probability.

Fix $0<\delta<1/10$, and let $D_0=D_0(\delta)$ be defined by
\eqref{ar:parameters}. Let $q\ge1$ be an integer and $0<\eta\le1$.
For each prime $p\mid q$, put $a_p=v_p(q)$ and let $\chi_p$ be a
character of $(\Z/p^{a_p}\Z)^\times$.
Set $a_p=0$ for $p\nmid q$.
Let $\mathcal B=\{p\text{ prime}:p\mid D_0q\}$ and $J=|\mathcal B|$.
Since $2\mid D_0$, we have $J\ge1$.
For $p\in\mathcal B$, define
\begin{equation*}
 A=8J/\eta,\qquad b_p=v_p(D_0),\qquad
 k_p=\left\lceil\frac{\log A}{\log p}\right\rceil,\qquad K_p=b_p+k_p-1.
\end{equation*}
Set
\begin{equation}
 D=\prod_{p\in\mathcal B}p^{K_p+\max(a_p,1)}.
 \label{ar:matching-modulus}
\end{equation}
Then
\begin{equation}
 D_0\mid D,\qquad
 D\le D_0qA^J
       \prod_{\substack{p\in\mathcal B\\p\nmid q}}p
 \le D_0^2qA^J.
 \label{ar:modulus-bound}
\end{equation}
Indeed, $A\ge8$ gives $k_p\ge1$, so each exponent in $D$ is
at least $b_p$. Moreover,
\begin{equation*}
 D=D_0q\left(\prod_{p\in\mathcal B}p^{k_p-1}\right)
          \prod_{\substack{p\in\mathcal B\\p\nmid q}}p,
 \qquad p^{k_p-1}<A\quad(p\in\mathcal B).
 \end{equation*}
This gives the first upper bound in \eqref{ar:modulus-bound}.
The last product divides $D_0$, which gives the second.

Let $V:(\Z/D_0\Z)^2\to[0,\infty)$ satisfy
\begin{equation*}
 D_0^{-2}\sum_{\mathbf a\in(\Z/D_0\Z)^2}V(\mathbf a)=1,
\end{equation*}
and extend $V$ periodically to $\Z^2$.
Retain the density $w$ from \eqref{ar:real-profile}.
Let $\psi:\R^2\to\C$ be Borel measurable, with
$|\psi|\le1$ on $\operatorname{supp}w$.
Assume that $T\ge0$ and that, for every integer $k\ge1$,
\begin{equation*}
 |\psi(\mathbf x)-\psi(\mathbf x')|\le 2T/k
\end{equation*}
whenever $\mathbf x,\mathbf x'\in\operatorname{supp}w$ lie
in the same square of side $1/k$.
Let $X\ge1$ be an integer, and choose
\begin{equation*}
 K=\max\{1,\lceil8T/\eta\rceil\},\qquad
 X_*=\frac X{DK},\qquad 5\le y\le X_*.
\end{equation*}
Assume $D\le c_wX$, where $c_w>0$ is sufficiently small and
depends only on $w$. Let $\mathcal P$ be any set of primes
at most $y$ disjoint from $\mathcal B$.

Define
\begin{equation*}
 N_{X,V}=\sum_{m,n\in\Z}w(m/X,n/X)V(m,n).
 \end{equation*}
By \eqref{ar:normalization} and $D_0\le D\le c_wX$,
\begin{equation}
 \left|\frac{N_{X,V}}{X^2}-1\right|
 \le C_w(D_0/X+D_0^2/X^2)\le\frac12
 \label{ar:matching-positive-normalization}
\end{equation}
after decreasing $c_w$ if necessary. Thus $N_{X,V}>0$.
Let $\mathbf Z$ be the $\Z^2$-valued random vector with distribution
\begin{equation*}
 \Pp(\mathbf Z=(m,n))
 =\frac{w(m/X,n/X)V(m,n)}{N_{X,V}}
 \qquad((m,n)\in\Z^2).
 \end{equation*}

For the comparison distribution, let $\mathbf A$ take values in
$\{0,\ldots,D_0-1\}^2$, with
\begin{equation*}
 \Pp(\mathbf A=\mathbf a)=\frac{V(\mathbf a)}{D_0^2}
 \qquad\bigl(\mathbf a\in\{0,\ldots,D_0-1\}^2\bigr).
 \end{equation*}
Let $(\mathbf U_p)_{p\in\mathcal B\cup\mathcal P}$ be
independent Haar vectors in $\Z_p^2$, independent of $\mathbf A$.
Put
\begin{equation*}
 \mathbf Z_p=
 \begin{cases}
 \mathbf A+p^{b_p}\mathbf U_p,&p\in\mathcal B,\\
 \mathbf U_p,&p\in\mathcal P.
 \end{cases}
 \end{equation*}
Conditional on $\mathbf A=\mathbf a$, these vectors are independent.
For $p\in\mathcal B$, the distribution is normalized Haar measure
on $\mathbf a+p^{b_p}\Z_p^2$; when $b_p=0$, this is all of $\Z_p^2$.
For $p\in\mathcal P$, the distribution is normalized Haar measure
on $\Z_p^2$.

For $z\in\Q_p^\times$, write $u_p(z)=p^{-v_p(z)}z\in\Z_p^\times$.
For $p\mid q$, evaluate $\chi_p(u)$, $u\in\Z_p^\times$,
by reducing $u$ modulo $p^{a_p}$.
Using the forms in \eqref{ar:four-forms}, define the two random tuples
\begin{equation}
 \begin{aligned}
 \mathcal D_{\mathrm{int}}
 &=\Bigl((v_p(L_i(\mathbf Z)))_{p\in\mathcal B\cup\mathcal P,\,1\le i\le4},
       (\chi_p(u_p(L_i(\mathbf Z))))_{p\mid q,\,1\le i\le4}\Bigr),\\
 \mathcal D_{\mathrm{loc}}
 &=\Bigl((v_p(L_i(\mathbf Z_p)))_{p\in\mathcal B\cup\mathcal P,\,1\le i\le4},
       (\chi_p(u_p(L_i(\mathbf Z_p))))_{p\mid q,\,1\le i\le4}\Bigr).
 \end{aligned}
 \label{ar:local-data}
\end{equation}
All four $L_i(\mathbf Z)$ are positive almost surely because of
the support of $w$. Each $L_i$ has a coefficient that is a unit
in $\Z_p$, so $L_i(\mathbf Z_p)$ has no atoms, even conditional
on $\mathbf A$. In particular, $L_i(\mathbf Z_p)\ne0$ almost surely.
Thus both tuples in \eqref{ar:local-data} are defined almost surely.
They take values in the countable set
\begin{equation}
 \mathscr S=(\Z_{\ge0}^4)^{\mathcal B\cup\mathcal P}
       \times\prod_{\substack{p\mid q\\p\ \mathrm{prime}}}\{\chi_p(u):u\in(\Z/p^{a_p}\Z)^\times\}^{4}.
 \label{ar:matching-data-space}
\end{equation}
Write $\E_{\mathrm{loc}}$ for expectation with respect to
$\mathbf A$ and the vectors $\mathbf U_p$.
\begin{theorem}
\label{ar:matching}
Under the preceding hypotheses, every function
$F:\mathscr S\to\C$ with $|F|\le1$ satisfies
\begin{equation}
 \begin{split}
 \left|\E\!\left[F(\mathcal D_{\mathrm{int}})\psi(\mathbf Z/X)\right]
 -\E_{\mathrm{loc}}\!\left[F(\mathcal D_{\mathrm{loc}})\right]
     \int w\psi\right|
 \le C_w\bigl\{\eta+e^{-c\log X_*/\log y}\\
 \hspace{35mm}
 +X_*^{-1/2}(1+\log X_*)^{16}+D/X\bigr\}.
 \end{split}
 \label{ar:matching-error}
\end{equation}
Here $C_w$ depends only on $w$, and $c>0$ is absolute.
\end{theorem}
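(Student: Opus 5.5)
The plan is to reduce \eqref{ar:matching-error} to \Cref{ar:joint-valuation-approximation}. The auxiliary label will be the pair $C=(\Gamma,\mathbf Z\bmod D)$, where $\Gamma$ indexes the grid cell of side $1/K$ containing $\mathbf Z/X$. The primes in $\mathcal P$ will play the role of the sieved primes.

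\textbf{Step 1: freeze $\psi$ on cells.} Partition $\operatorname{supp}w$ into the squares $Q_\gamma$ of side $1/K$, as before \Cref{ar:spatial-quadrature}, and fix a point $\mathbf x_\gamma\in Q_\gamma\cap\operatorname{supp}w$. The oscillation hypothesis gives $|\psi(\mathbf x)-\psi(\mathbf x_\gamma)|\le2T/K\le\eta/4$ on each cell. So, at a cost $O(\eta)$ on both sides, we may replace $\psi(\mathbf Z/X)$ by $\psi_\Gamma:=\psi(\mathbf x_\Gamma)$. On the model side we replace $\int w\psi$ by $\sum_\gamma\pi_\gamma\psi_\gamma$, where $\pi_\gamma=\int_{Q_\gamma}w$. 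The target then becomes a comparison of $\E[F(\mathcal D_{\rm int})\psi_\Gamma]$ with $\E_{\rm loc}[F(\mathcal D_{\rm loc})]\sum_\gamma\pi_\gamma\psi_\gamma$. Both are expectations of functions bounded by $1$ of the triple (cell, data at $\mathcal B$, data at $\mathcal P$).

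\textbf{Step 2: the data at $\mathcal B$ is a function of residues mod $D$, up to a bad event.} For $p\in\mathcal B$ put $e_p=K_p+\max(a_p,1)=v_p(D)$. If every $v_p(L_i)$ is at most $K_p$, then:
\begin{itemize}
\item the residue of $(m,n)$ modulo $p^{e_p}$ determines the four valuations exactly;
\item it also determines each unit part $u_p(L_i)$ modulo $p^{e_p-v_p(L_i)}$, a modulus divisible by $p^{a_p}$, hence it determines $\chi_p(u_p(L_i))$.
\end{itemize}
Let $\mathcal G$ be the event that $v_p(L_i)\le K_p$ for all $p\in\mathcal B$ and all $i$. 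Off $\mathcal G$ the data may not be determined. On the local side, conditional on $\mathbf A$, the event $v_p(L_i(\mathbf Z_p))>K_p=b_p+k_p-1$ forces $k_p$ further $p$-adic digits of a Haar variable in $p^{b_p}\Z_p$ to vanish (each $L_i$ has a unit coefficient). This has probability at most $p^{-k_p}\le1/A$. A union bound over $4J$ pairs gives $\Pp(\mathcal G^c)\le4J/A=\eta/2$. On the integer side the same event depends only on $\mathbf Z\bmod D$. I expect its probability to match the local one up to the residue discrepancy controlled in Step~3, which again gives $O(\eta)+O_w(D/X)$.

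\textbf{Step 3: verify the discrepancy condition \eqref{ar:joint-discrepancy}.} Condition on the coset $\mathbf a+D\Z^2$. By \Cref{ar:spatial-quadrature} with $d=D\le c_wX$, for every $Q$ supported on $\mathcal P$ (hence coprime to $D$) the law of $(\Gamma,\mathbf Z\bmod Q)$ within the coset satisfies \eqref{ar:spatial-residue-discrepancy}. Since $DK/X=1/X_*$, the error is $C_w(1/(X_*Q)+1/X_*^2)$. Averaging over cosets, weighted by $V(\mathbf a)N_{\mathbf a}/N_{X,V}$, gives the following. By \eqref{ar:residue-quadrature} and \eqref{ar:matching-positive-normalization} these weights equal $V(\mathbf a)/D^2$ up to relative error $O_w(D/X)$. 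Also $V(\mathbf a)/D^2$ is exactly the law of $\mathbf Z_{\mathcal B}\bmod D$ in the local model, since $\mathbf A+p^{b_p}\mathbf U_p$ is uniform on lifts, by the Chinese remainder theorem. Hence $C=(\Gamma,\mathbf Z\bmod D)$ satisfies \eqref{ar:joint-discrepancy} with $Y=X_*$, $A_0=C_w$, and limiting law $\pi_{(\gamma,\mathbf a)}=\pi_\gamma V(\mathbf a)/D^2$, after paying $O_w(D/X)$ in total variation.

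\textbf{Step 4: apply \Cref{ar:joint-valuation-approximation}.} It bounds $\TV\bigl((C,\mathbf V_{\mathcal P}),(C^*,\mathbf A_{\mathcal P})\bigr)$ by $K(e^{-cu}+C_wX_*^{-1/2}(1+\log X_*)^{16})$ with $u=\log X_*/\log y$. In the model $C^*$ is independent of the Haar vectors at $\mathcal P$, which is exactly the independence in the local model. Characters only occur at primes dividing $q$, which lie in $\mathcal B$, so the $\mathcal P$-data consists of valuations alone. On $\mathcal G$, $F(\cdot)\psi_\Gamma$ is a bounded function of $(C,\mathbf V_{\mathcal P})$, and \eqref{ar:total-variation-expectation} transfers the bound. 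Adding the errors from Steps~1--3 gives \eqref{ar:matching-error}.

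The main obstacle is Step~2 on the integer side. There the bad event must be controlled through the residue discrepancy rather than by a Haar computation. The cleanest route is probably to treat $\mathcal G$ as a union of residue classes mod $D$ and compare directly with $V/D^2$. The other point needing care is the bookkeeping in Step~3, where the $V$-weighted coset average has to be merged into a single discrepancy constant without a factor $\|V\|_\infty$. This works because \Cref{ar:spatial-quadrature} gives a bound uniform in the coset, and the weights have total mass one.
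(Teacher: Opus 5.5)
Your proposal is correct and is essentially the paper's proof: it uses the same freezing of $\psi$ on the squares $Q_\gamma$, the same exceptional set of residues modulo $D$ (local mass at most $4J/A=\eta/2$, transferred to the integer side through the $O_w(D/X)$ total-variation comparison between the residue laws $V(\mathbf c)N_{\mathbf c}/N_{X,V}$ and $V(\mathbf c)/D^2$), and the same application of \Cref{ar:joint-valuation-approximation} fed by \Cref{ar:spatial-quadrature} with $d=D$. The only difference is packaging: the paper conditions on each coset $\mathbf c+D\Z^2$, applies the theorem with label $\Gamma$, and then averages over cosets, whereas you apply it once with label $(\Gamma,\mathbf Z\bmod D)$ and limit law $\pi_\gamma V(\mathbf c)/D^2$; this is legitimate, and in fact the coset-weight error $C_wD/(XQ^2)$ is already dominated by $C_w/(X_*Q)=C_wDK/(XQ)$, so no separate total-variation payment is needed at that step.
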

\begin{proof}
Let $0<c_w\le1$ be no larger than the constant in
\Cref{ar:spatial-quadrature}.
With $C_w$ as in \eqref{ar:normalization}, decrease $c_w$ so that
$C_w(c_w+c_w^2)\le1/4$. We divide the rest proof into seven steps.

\par
\medskip
\noindent\textbf{Step 1. Condition on a residue modulo $D_0$.}
Let $\mathbf a\in\{0,\ldots,D_0-1\}^2$ satisfy $V(\mathbf a)>0$.
Write $\Pp_{\mathbf a},\E_{\mathbf a}$ for the integer distribution
conditioned on $\mathbf Z\equiv\mathbf a\pmod{D_0}$, and
$\Pp^{\mathrm{loc}}_{\mathbf a},\E^{\mathrm{loc}}_{\mathbf a}$
for the local distribution conditioned on $\mathbf A=\mathbf a$.
Since $V$ is constant on $\mathbf a+D_0\Z^2$,
\begin{equation*}
 \begin{gathered}
 N_{\mathbf a}=\sum_{\mathbf z\in\mathbf a+D_0\Z^2}w(\mathbf z/X),
 \qquad N_{\mathbf a}\ge\frac{X^2}{2D_0^2}>0,\\
 \Pp_{\mathbf a}(\mathbf Z=\mathbf z)
 =\frac{w(\mathbf z/X)}{N_{\mathbf a}}
 \qquad(\mathbf z\in\mathbf a+D_0\Z^2).
 \end{gathered}
 \end{equation*}
The lower bound follows from \Cref{ar:quadrature} and $D_0\le D\le c_wX$.
We first prove the comparison with a bound independent of $\mathbf a$.

\par
\medskip
\noindent\textbf{Step 2. Bound large valuations at primes in $\mathcal B$.}
For $p\in\mathcal B$, the conditional local vector is
$\mathbf Z_p=\mathbf a+p^{b_p}\mathbf U_p$.
Each $L_i$ has a unit coefficient in $\Z_p$, so $L_i(\mathbf U_p)$
has Haar distribution on $\Z_p$.
If $v_p(L_i(\mathbf a))<b_p$, this valuation is unchanged by
adding $p^{b_p}L_i(\mathbf U_p)$.
Otherwise $L_i(\mathbf Z_p)/p^{b_p}$ has Haar distribution on $\Z_p$.
Since $K_p+1=b_p+k_p$ and $p^{k_p}\ge A$,
\begin{equation}
 \Pp^{\mathrm{loc}}_{\mathbf a}
    \bigl(v_p(L_i(\mathbf Z_p))>K_p\bigr)
 =\begin{cases}
 0,&v_p(L_i(\mathbf a))<b_p,\\
 p^{-k_p},&v_p(L_i(\mathbf a))\ge b_p
 \end{cases}
 \le A^{-1}.
 \label{ar:matching-local-tail}
\end{equation}
The second case uses the Haar probability $p^{-k_p}$ of $p^{k_p}\Z_p$.

Let $\mathcal R_{\mathrm{exc}}$ be the following set of residues:
\begin{equation*}
 \mathcal R_{\mathrm{exc}}
 =\left\{\mathbf c\in(\Z/D\Z)^2:
    L_i(\mathbf c)\equiv0\pmod{p^{K_p+1}}
    \text{ for some }p\in\mathcal B,\ 1\le i\le4\right\}.
 \end{equation*}
This is well defined because $v_p(D)\ge K_p+1$.
For $\mathbf c\notin\mathcal R_{\mathrm{exc}}$, the residue determines
each valuation $r=v_p(L_i)$ at $p\in\mathcal B$, and $r\le K_p$.
For $p\mid q$, it also determines $p^{-r}L_i$ modulo $p^{a_p}$, since
\begin{equation*}
 v_p(D)-r=K_p+a_p-r\ge a_p.
 \end{equation*}
Thus every component of \eqref{ar:local-data} belonging to $\mathcal B$
is fixed on each residue outside $\mathcal R_{\mathrm{exc}}$.

\par
\medskip
\noindent\textbf{Step 3. Compare the residue distributions modulo $D$.}
Put
\begin{equation*}
 \mathcal R_{\mathbf a}
 =\{\mathbf c\in(\Z/D\Z)^2:
             \mathbf c\equiv\mathbf a\pmod{D_0}\},
 \qquad r_0=|\mathcal R_{\mathbf a}|=(D/D_0)^2.
 \end{equation*}
The Chinese remainder theorem defines a unique
$\mathbf C\in(\Z/D\Z)^2$ by the congruences
$\mathbf C\equiv\mathbf Z_p\pmod{p^{v_p(D)}}$ for $p\in\mathcal B$.
Under $\Pp^{\mathrm{loc}}_{\mathbf a}$, the residues
$\mathbf Z_p\bmod p^{v_p(D)}$ are independent.
Each is uniformly distributed among the residues congruent to
$\mathbf a\pmod{p^{b_p}}$.
Therefore $\mathbf C$ is uniform on $\mathcal R_{\mathbf a}$.
The union bound and \eqref{ar:matching-local-tail} give
\begin{equation}
 \Pp^{\mathrm{loc}}_{\mathbf a}
       (\mathbf C\in\mathcal R_{\mathrm{exc}})
 \le\frac{4J}{A}=\frac\eta2.
 \label{ar:matching-local-exception}
\end{equation}

For $\mathbf c\in\mathcal R_{\mathbf a}$, define
$\displaystyle N_{\mathbf c}=\sum_{\mathbf z\in\mathbf c+D\Z^2}w(\mathbf z/X)$.
\Cref{ar:quadrature} gives
\begin{equation}
 \begin{aligned}
 \left|N_{\mathbf c}-\frac{X^2}{D^2}\right|
 &\le C_w(X/D+1),&
 N_{\mathbf c}&\ge\frac{X^2}{2D^2}>0,\\
 \left|N_{\mathbf a}-\frac{X^2}{D_0^2}\right|
 &\le C_w(X/D_0+1),&
 N_{\mathbf a}&\ge\frac{X^2}{2D_0^2}>0.
 \end{aligned}
 \label{ar:matching-coset-normalization}
\end{equation}
Consequently the integer residue probabilities are
$\theta_{\mathbf c}=N_{\mathbf c}/N_{\mathbf a}$, and
$\displaystyle \sum_{\mathbf c\in\mathcal R_{\mathbf a}}\theta_{\mathbf c}=1$.
Inserting $X^2/(D^2N_{\mathbf a})$ and using $r_0=(D/D_0)^2$ yields
\begin{equation*}
 \begin{aligned}
 \sum_{\mathbf c\in\mathcal R_{\mathbf a}}
       \left|\theta_{\mathbf c}-\frac1{r_0}\right|
 &\le\frac{r_0C_w(X/D+1)}{N_{\mathbf a}}
       +\frac{|X^2/D_0^2-N_{\mathbf a}|}{N_{\mathbf a}}\\
 &\le C_w\left(\frac D X+\frac{D^2}{X^2}
                       +\frac{D_0}X+\frac{D_0^2}{X^2}\right).
 \end{aligned}
\end{equation*}
Since $D_0\le D\le c_wX$, we obtain
\begin{equation*}
 \ell_{\mathbf a}:=
 \sum_{\mathbf c\in\mathcal R_{\mathbf a}}
       \left|\theta_{\mathbf c}-\frac1{r_0}\right|
 \le C_w(D/X+D^2/X^2)\le C'_wD/X.
\end{equation*}
The total-variation distance of the two residue distributions is
$\ell_{\mathbf a}/2$. Thus \eqref{ar:matching-local-exception} implies
\begin{equation}
 \Pp_{\mathbf a}(\mathbf Z\bmod D\in\mathcal R_{\mathrm{exc}})
 \le\frac\eta2+\frac{\ell_{\mathbf a}}2.
 \label{ar:matching-integer-exception}
\end{equation}

\par
\medskip
\noindent\textbf{Step 4. Approximate $\psi$ on the spatial squares.}
Use the squares $Q_\gamma$ of side $1/K$ from
\Cref{ar:spatial-quadrature}. Choose
$\mathbf x_\gamma\in Q_\gamma\cap\operatorname{supp}w$, and put
$\psi_\gamma=\psi(\mathbf x_\gamma)$ and
$\displaystyle \psi^{(K)}=\sum_\gamma\psi_\gamma\mathbf1_{Q_\gamma}$.
The oscillation assumption and the choice of $K$ give
\begin{equation}
 |\psi(\mathbf x)-\psi^{(K)}(\mathbf x)|
 \le2T/K\le\eta/4
 \qquad(\mathbf x\in\operatorname{supp}w).
 \label{ar:phase-approximation}
\end{equation}
This includes $T=0$. Set
\begin{equation}
 \begin{gathered}
 \pi_\gamma=\int_{Q_\gamma}w=I_\gamma/X^2,
 \qquad I_\psi=\int w\psi,\qquad
 I_\psi^{(K)}=\sum_\gamma\pi_\gamma\psi_\gamma=\int w\psi^{(K)},\\
 \sum_\gamma\pi_\gamma=1,
 \qquad |I_\psi^{(K)}-I_\psi|\le\eta/4.
 \end{gathered}
 \label{ar:matching-spatial-integral}
\end{equation}
Both $I_\psi$ and $I_\psi^{(K)}$ have modulus at most one.
Replacing $\psi(\mathbf Z/X)$ by $\psi^{(K)}(\mathbf Z/X)$
in an expectation multiplied by $F$ also costs at most $\eta/4$.

\par
\medskip
\noindent\textbf{Step 5. Apply \Cref{ar:joint-valuation-approximation} at primes in $\mathcal P$.}
For $\mathbf c\in\mathcal R_{\mathbf a}$, let $\Pp_{\mathbf c}$
be the integer distribution conditioned on
$\mathbf Z\equiv\mathbf c\pmod D$.
It assigns probability $w(\mathbf z/X)/N_{\mathbf c}$ to
$\mathbf z\in\mathbf c+D\Z^2$.
This is defined by \eqref{ar:matching-coset-normalization}; it has
finite support, on which all four forms are positive.
Let $\Gamma$ be the index of the square containing $\mathbf Z/X$.
Every positive integer $Q$ supported on $\mathcal P$ is coprime to $D$.
Applying \eqref{ar:spatial-residue-discrepancy} with $d=D$ gives,
for every $\mathbf b\in(\Z/Q\Z)^2$,
\begin{equation*}
 \begin{aligned}
 \sum_\gamma\left|
 \Pp_{\mathbf c}(\Gamma=\gamma,\mathbf Z\equiv\mathbf b\pmod Q)
       -\frac{\pi_\gamma}{Q^2}\right|
 \le C_w\left(\frac{DK}{XQ}+\frac{D^2K^2}{X^2}\right)
 =C_w\left(\frac1{X_*Q}+\frac1{X_*^2}\right).
 \end{aligned}
 \end{equation*}
This is \eqref{ar:joint-discrepancy} with $Y=X_*$ and $A_0=C_w$.
The cell index set is finite and nonempty, and $(\pi_\gamma)$ is
a probability vector. Also, $2,3\mid D_0$, so
$\mathcal P\subset[5,y]$, while $5\le y\le X_*$.
Thus all hypotheses of \Cref{ar:joint-valuation-approximation} hold.

Write $\mathbf V_{\mathcal P}$ and $\mathbf V^*_{\mathcal P}$ for
the valuation vectors at primes in $\mathcal P$ under the integer
and local distributions, respectively.
Let $\mu_{\mathbf c}$ be the law of $(\Gamma,\mathbf V_{\mathcal P})$
under $\Pp_{\mathbf c}$, and let $\nu$ be the product of
$(\pi_\gamma)$ and the law of $\mathbf V^*_{\mathcal P}$.
\Cref{ar:joint-valuation-approximation} gives
\begin{equation}
 \TV(\mu_{\mathbf c},\nu)\le\varepsilon_*
 :=C_w\left\{e^{-c\log X_*/\log y}
          +X_*^{-1/2}(1+\log X_*)^{16}\right\}.
 \label{ar:conditional-joint-valuation-error}
\end{equation}
The vectors $\mathbf Z_p$, $p\in\mathcal P$, are independent of
$\mathbf A$ and $(\mathbf U_p)_{p\in\mathcal B}$.
Since $\mathbf C$ is determined by these latter variables,
conditioning on $\mathbf A=\mathbf a$ and $\mathbf C=\mathbf c$
does not change the law of $\mathbf V^*_{\mathcal P}$.
Consequently $\nu$ is the same for every $\mathbf a,\mathbf c$.

\par
\medskip
\noindent\textbf{Step 6. Complete the comparison on the initial coset.}
For $\mathbf c\in\mathcal R_{\mathbf a}\setminus\mathcal R_{\mathrm{exc}}$,
define $F_{\mathbf c}:(\Z_{\ge0}^4)^{\mathcal P}\to\C$
by inserting the fixed data at
$\mathcal B$ and the valuation vector $\mathbf v$ at $\mathcal P$
into $F$. Set $F_{\mathbf c}=0$ for
$\mathbf c\in\mathcal R_{\mathbf a}\cap\mathcal R_{\mathrm{exc}}$.
Put $\Phi_{\mathbf c}(\gamma,\mathbf v)
=F_{\mathbf c}(\mathbf v)\psi_\gamma$.
Then $|\Phi_{\mathbf c}|\le1$ and
\begin{equation}
 \begin{aligned}
 \sum_{\mathbf c\in\mathcal R_{\mathbf a}}\theta_{\mathbf c}
       \int\Phi_{\mathbf c}\,d\mu_{\mathbf c}
 &=\E_{\mathbf a}\!\left[
    \mathbf1_{\{\mathbf Z\bmod D\notin\mathcal R_{\mathrm{exc}}\}}
    F(\mathcal D_{\mathrm{int}})\psi^{(K)}(\mathbf Z/X)\right],\\
 \frac1{r_0}\sum_{\mathbf c\in\mathcal R_{\mathbf a}}
       \int\Phi_{\mathbf c}\,d\nu
 &=\E^{\mathrm{loc}}_{\mathbf a}\!\left[
    \mathbf1_{\{\mathbf C\notin\mathcal R_{\mathrm{exc}}\}}
    F(\mathcal D_{\mathrm{loc}})\right]I_\psi^{(K)}.
 \end{aligned}
 \label{ar:matching-truncated-identities}
\end{equation}
The first identity follows by conditioning on $\mathbf Z\bmod D$.
For the second identity, work under $\Pp^{\mathrm{loc}}_{\mathbf a}$.
The vector $\mathbf C$ is uniform on $\mathcal R_{\mathbf a}$ and
independent of $\mathbf V^*_{\mathcal P}$.
Since $\nu$ is a product measure,
$\int\Phi_{\mathbf c}\,d\nu
=I_\psi^{(K)}\E^{\mathrm{loc}}_{\mathbf a}
[F_{\mathbf c}(\mathbf V^*_{\mathcal P})]$.
Averaging over $\mathbf c\in\mathcal R_{\mathbf a}$ gives the second identity.
By \eqref{ar:total-variation-expectation},
\eqref{ar:conditional-joint-valuation-error}, and $\sum\theta_{\mathbf c}=1$,
\begin{equation}
 \begin{aligned}
 \left|\sum_{\mathbf c\in\mathcal R_{\mathbf a}}\theta_{\mathbf c}
       \int\Phi_{\mathbf c}\,d\mu_{\mathbf c}
 -\frac1{r_0}\sum_{\mathbf c\in\mathcal R_{\mathbf a}}
       \int\Phi_{\mathbf c}\,d\nu\right|
 \le\sum_{\mathbf c\in\mathcal R_{\mathbf a}}
             2\theta_{\mathbf c}\varepsilon_*+\ell_{\mathbf a}
 =2\varepsilon_*+\ell_{\mathbf a}.
 \end{aligned}
 \label{ar:matching-averaged-tv}
\end{equation}

Define
\begin{equation*}
 B_{\mathbf a}=\E_{\mathbf a}
       [F(\mathcal D_{\mathrm{int}})\psi(\mathbf Z/X)],
 \qquad
 M_{\mathbf a}=\E^{\mathrm{loc}}_{\mathbf a}
       [F(\mathcal D_{\mathrm{loc}})].
 \end{equation*}
By \eqref{ar:matching-integer-exception}, the contribution from
$\mathbf Z\bmod D\in\mathcal R_{\mathrm{exc}}$ to the integer expectation
has modulus at most $\eta/2+\ell_{\mathbf a}/2$.
By \eqref{ar:matching-local-exception}, the contribution from
$\mathbf C\in\mathcal R_{\mathrm{exc}}$ to the local expectation,
multiplied by $I_\psi^{(K)}$, has modulus at most $\eta/2$.
These bounds use $|F|,|\psi^{(K)}|,|I_\psi^{(K)}|\le1$.

For the integer approximation, \eqref{ar:phase-approximation} gives
$|\E_{\mathbf a}[F(\mathcal D_{\mathrm{int}})
 (\psi-\psi^{(K)})(\mathbf Z/X)]|\le\eta/4$.
For the spatial integral, \eqref{ar:matching-spatial-integral} gives
$|(I_\psi-I_\psi^{(K)})M_{\mathbf a}|\le\eta/4$.
Combining these four bounds with
\eqref{ar:matching-truncated-identities} and
\eqref{ar:matching-averaged-tv}, we obtain
\begin{equation}
 |B_{\mathbf a}-I_\psi M_{\mathbf a}|
 \le\frac{3\eta}{2}+2\varepsilon_*+\frac{3\ell_{\mathbf a}}2
 \le C_w\left(\eta+\varepsilon_*+\frac DX\right).
 \label{ar:matching-conditional-bound}
\end{equation}
This bound is independent of $\mathbf a$.

\par
\medskip
\noindent\textbf{Step 7. Average over the residues modulo $D_0$.}
For every $\mathbf a\bmod D_0$, quadrature gives
\begin{equation*}
 \begin{gathered}
 N_{\mathbf a}=\frac{X^2}{D_0^2}(1+e_{\mathbf a}),
 \qquad |e_{\mathbf a}|\le
 E_0:=C_w(D_0/X+D_0^2/X^2)\le\frac14,\\
 \sigma_{\mathbf a}=\frac{V(\mathbf a)}{D_0^2},
 \qquad \bar e=\sum_{\mathbf a}\sigma_{\mathbf a}e_{\mathbf a},
 \qquad N_{X,V}=X^2(1+\bar e).
 \end{gathered}
 \end{equation*}
Here $\sum\sigma_{\mathbf a}=1$ and $|\bar e|\le E_0$.
The integer residue probabilities therefore satisfy
\begin{equation}
 \begin{aligned}
 \rho_{\mathbf a}
 &:=\Pp(\mathbf Z\equiv\mathbf a\pmod{D_0})
   =\sigma_{\mathbf a}\frac{1+e_{\mathbf a}}{1+\bar e},\\
 \sum_{\mathbf a}|\rho_{\mathbf a}-\sigma_{\mathbf a}|
 &=\frac{\sum_{\mathbf a}\sigma_{\mathbf a}|e_{\mathbf a}-\bar e|}
         {1+\bar e}
 \le\frac{2E_0}{1-E_0}\le C_wD_0/X.
 \end{aligned}
 \label{ar:periodic-mixture-error}
\end{equation}
Set $B_{\mathbf a}=M_{\mathbf a}=0$ when $V(\mathbf a)=0$;
then $\rho_{\mathbf a}=\sigma_{\mathbf a}=0$ as well.
Since $|I_\psi|,|M_{\mathbf a}|\le1$,
\begin{equation*}
 \begin{aligned}
 &\left|\E[F(\mathcal D_{\mathrm{int}})\psi(\mathbf Z/X)]
       -I_\psi\E_{\mathrm{loc}}[F(\mathcal D_{\mathrm{loc}})]\right|\\
 &\qquad\le\sum_{\mathbf a}\rho_{\mathbf a}
          |B_{\mathbf a}-I_\psi M_{\mathbf a}|
       +\sum_{\mathbf a}|\rho_{\mathbf a}-\sigma_{\mathbf a}|\\
 &\qquad\le C_w\left(\eta+\varepsilon_*+\frac DX\right).
 \end{aligned}
 \end{equation*}
The last inequality uses \eqref{ar:matching-conditional-bound},
\eqref{ar:periodic-mixture-error}, and $D_0\le D$.
Substituting the value of $\varepsilon_*$ from
\eqref{ar:conditional-joint-valuation-error} proves \eqref{ar:matching-error}.
The bound in \eqref{ar:periodic-mixture-error} uses $V$ only through
the probability vector $(\sigma_{\mathbf a})$.
Thus the constants are independent of $\|V\|_\infty$.
\end{proof}
\subsection{Model positivity}

Retain $0<\delta<1/10$ and the parameters $T_0,B,D_0$ in
\eqref{ar:parameters}. Let $\chi$ be a primitive Dirichlet character
of conductor $q=q_\chi$.
For each prime $p\mid q$, the Chinese remainder theorem determines
a character $\chi_p:(\Z/p^{v_p(q)}\Z)^\times\to\mathbb S^1$ such that
$\displaystyle\chi(n)=\prod_{\substack{p\mid q\\p\ \mathrm{prime}}}\chi_p(n)$ whenever $\gcd(n,q)=1$.
For $u\in\Z_p^\times$, evaluate $\chi_p(u)$ at the residue of $u$
modulo $p^{v_p(q)}$. For $p\nmid q$, let $\chi_p$ be the trivial
character on $\Z_p^\times$.

Let $g:\N\to\mathbb S^1$ be completely multiplicative, and let
$t\in\R$. Extend $g$ to $\Q_{>0}$ by $g(a/b)=g(a)/g(b)$ for
$a,b\in\N$. For every prime $p$, put
\begin{equation}
 z_p=\frac{g(p)p^{-it}}
 {\displaystyle\prod_{\substack{\ell\mid q,\ \ell\ne p\\\ell\ \mathrm{prime}}}\chi_\ell(p)}.
 \label{ar:prime-phases}
\end{equation}
Each factor in the denominator has modulus one, so $z_p\in\mathbb S^1$.
For $r\in\Q_p^\times$, write
$u_p(r)=p^{-v_p(r)}r\in\Z_p^\times$.
For every $r\in\Q_{>0}$,
\begin{equation}
 g(r)=r^{it}\prod_{p\ \mathrm{prime}}z_p^{v_p(r)}
       \prod_{\substack{\ell\mid q\\\ell\ \mathrm{prime}}}\chi_\ell(u_\ell(r)).
 \label{ar:character-factorization}
\end{equation}
Both sides are homomorphisms from $\Q_{>0}$ to $\mathbb S^1$.
For $r=p$, we have $u_p(p)=1$ and $u_\ell(p)=p$ for $\ell\ne p$,
so \eqref{ar:character-factorization} follows from
\eqref{ar:prime-phases}. Unique factorization then proves the identity
for every positive rational $r$. Only finitely many prime exponents
$v_p(r)$ are nonzero.

Assume $y\ge\max(B,q)$ and $g(p)=\chi(p)p^{it}$ for every $p>y$.
Then \eqref{ar:prime-phases} gives $z_p=1$ for $p>y$.
Take $\mathcal B=\{p\text{ prime}:p\mid D_0q\}$ and
$\mathcal P=\{p\text{ prime}:p\le y,\ p\notin\mathcal B\}$ in the local distribution
preceding \Cref{ar:matching}, with $V=W_0$.
Since each prime divisor of $D_0$ is at most $B$, the disjoint union
$\mathcal B\cup\mathcal P$ is the set of all primes at most $y$.
The normalization of $W_0$ follows from \Cref{ar:weight-cost}.

We first express $g(R)$ in terms of the data in \eqref{ar:local-data}.
For $p\le y$, set $c_p=2p^{-v_p(2)}\in\Z_p^\times$.
For $\mathbf z\in\Z_p^2$ with $\displaystyle\prod_{i=1}^4L_i(\mathbf z)\ne0$,
the identity $R=L_1L_2/(2L_3L_4)$ gives
\begin{equation}
 \begin{aligned}
 v_p(R(\mathbf z))
 &=v_p(L_1(\mathbf z))+v_p(L_2(\mathbf z))
   -v_p(L_3(\mathbf z))-v_p(L_4(\mathbf z))-v_p(2),\\
 u_p(R(\mathbf z))
 &=c_p^{-1}
   \frac{u_p(L_1(\mathbf z))u_p(L_2(\mathbf z))}
        {u_p(L_3(\mathbf z))u_p(L_4(\mathbf z))}.
 \end{aligned}
 \label{ar:model-ratio-data}
\end{equation}
On the data space $\mathscr S$ in \eqref{ar:matching-data-space}, define
$F:\mathscr S\to\mathbb S^1$ by
\begin{equation}
 \begin{aligned}
 F\bigl((a_{p,i}),(\xi_{p,i})\bigr)
 ={}\prod_{\substack{p\le y\\p\ \mathrm{prime}}}
 z_p^{a_{p,1}+a_{p,2}-a_{p,3}-a_{p,4}-v_p(2)}
 \prod_{\substack{p\mid q\\p\ \mathrm{prime}}}
 \frac{\xi_{p,1}\xi_{p,2}}
      {\chi_p(c_p)\xi_{p,3}\xi_{p,4}}.
 \end{aligned}
 \label{ar:model-test-function}
\end{equation}
Here $(a_{p,i})$ are the valuation coordinates and $(\xi_{p,i})$
are the character-value coordinates. All factors have modulus one.
When $q=1$, the character product is one.
By \eqref{ar:character-factorization} and \eqref{ar:model-ratio-data},
\begin{equation}
 \begin{aligned}
 F(\mathcal D_{\mathrm{int}})
 &=g(R(\mathbf Z))R(\mathbf Z)^{-it},\\
 F(\mathcal D_{\mathrm{loc}})
 &=\prod_{\substack{p\le y\\p\ \mathrm{prime}}}z_p^{v_p(r_p)}\chi_p(u_p(r_p)),
 \qquad r_p=R(\mathbf Z_p).
 \end{aligned}
 \label{ar:model-data-identities}
\end{equation}
The local quantities are defined almost surely, as noted after
\eqref{ar:local-data}. In particular, \eqref{ar:model-test-function}
includes the factor $2$ in the denominator of $R$.

Define $\psi:\R^2\to\C$ by $\psi(u,v)=R(u,v)^{it}$ for $u>v>0$,
and by zero elsewhere. Since $R(\mathbf Z/X)=R(\mathbf Z)$,
the integer integrand in \Cref{ar:matching} is $g(R(\mathbf Z))$.
Moreover, \eqref{ar:Mellin} gives $\displaystyle \int w\psi=K_\infty(t)\in[0,1]$.
We now compute the corresponding local expectation.
The size conditions needed to apply \Cref{ar:matching} are verified
in the proof of \Cref{ar:model-positivity}.

Put $\omega_j=(T_0+1-|j|)/(T_0(T_0+1))$ for $0<|j|\le T_0$.
The distribution at the primes $p\le B$ has the following description.
Choose an integer $J_0$ with $\Pp(J_0=j)=\omega_j$.
Conditional on $J_0=j$, the vectors $\mathbf Z_p$, $p\le B$, are
independent with densities $W_{p,j}$ relative to product Haar measure
on $\Z_p^2$. This follows from \eqref{ar:local-weight},
\eqref{ar:congruence-weight}, and \eqref{ar:weight-local-average}.
Their projective points therefore have distributions
$\mu_p(\,\cdot\mid E_{p,j,3})$.
Indeed, the two factors in the denominator of $W_{p,j}$ are the
probability $1-p^{-2}$ of primitivity and the probability $b_{p,j,3}$
of $E_{p,j,3}$ under $\mu_p$.
Thus the contribution of these primes is
\begin{equation*}
 K_{\le B}
 :=\sum_{0<|j|\le T_0}\omega_j
   \prod_{\substack{p\le B\\p\ \mathrm{prime}}}\left(
       \frac1{b_{p,j,3}}
       \int_{E_{p,j,3}}z_p^{V_p}\chi_p(U_p)\,d\mu_p\right).
 \end{equation*}
Here $V_p,U_p$ are the valuation and unit part of $R$ under $\mu_p$.
Applying \Cref{ar:Fejer} with $T=T_0$ and $s=3$ gives
$K_{\le B}\in[-1/T_0,1]$.

For $B<p\le y$, the vector $\mathbf Z_p$ has Haar distribution on
$\Z_p^2$, independently of the other such vectors and of the vectors
at primes at most $B$.
For $p\mid q$, this follows because $v_p(D_0)=0$:
conditional on $\mathbf A$, the vector $\mathbf A+\mathbf U_p$ is
Haar, and its distribution does not depend on $\mathbf A$.
For $p\nmid q$, it is part of the definition of the local distribution.
The homogeneity of $R$ and \eqref{ar:projective} show that the
expectation of $z_p^{v_p(r_p)}\chi_p(u_p(r_p))$ is $K_p(z_p,\chi_p)$.
Combining these expectations with \eqref{ar:model-data-identities} gives
\begin{equation}
 \E_{\mathrm{loc}}[F(\mathcal D_{\mathrm{loc}})]\int w\psi
 =K_\infty(t)\,K_{\le B}\,
           \prod_{\substack{B<p\le y\\p\ \mathrm{prime}}}K_p(z_p,\chi_p).
 \label{ar:ideal-main}
\end{equation}

If every $\chi_p$ with $B<p\le y$ is trivial,
\Cref{ar:local-factor-bounds} gives $K_p(z_p,\chi_p)\in[0,1]$.
Together with $K_\infty(t)\in[0,1]$ and $K_{\le B}\ge-1/T_0$,
this bounds \eqref{ar:ideal-main} below by $-1/T_0$.
If some $\chi_{p_0}$ with $B<p_0\le y$ is nontrivial,
\Cref{ar:local-factor-bounds} gives
$|K_{p_0}(z_{p_0},\chi_{p_0})|\le2/\sqrt{p_0}\le2/\sqrt B$.
Every other factor has modulus at most one, so the real part of
\eqref{ar:ideal-main} is at least $-2/\sqrt B$.
The bounds from \Cref{ar:local-factor-bounds} apply here because $B>5$.
In either case,
\begin{equation}
 \Re\left(K_\infty(t)K_{\le B}
                  \prod_{\substack{B<p\le y\\p\ \mathrm{prime}}}K_p(z_p,\chi_p)\right)
 \ge-\max(T_0^{-1},2B^{-1/2})\ge-\delta/100.
 \label{ar:ideal-lower-bound}
\end{equation}
The last inequality follows from $T_0\ge100/\delta$ and
$B=(200/\delta)^2$.

In \Cref{ar:model-positivity}, $M_1>0$ is a real parameter and
$C_*>0$ is an absolute constant. The parameter bounds are
\begin{equation}
 M_1\ge\exp(C_*\delta^{-3}),\qquad
 q_\chi\le M_1,\qquad |t|\le M_1.
 \label{ar:model-parameters}
\end{equation}

\begin{theorem}
\label{ar:model-positivity}
There are absolute constants $C_*,C_U>0$ such that, for each
$0<\delta<1/10$, an integer $2\le U\le C_U\delta^{-3}$,
depending only on $\delta$, can be chosen so that, whenever
\eqref{ar:model-parameters} holds, $y\ge M_1$, and
$X\ge(M_1y)^U$ is an integer, every finite character model $g$
with $g(p)=\chi(p)p^{it}$ for $p>y$ satisfies
\begin{equation}
 \Re K_X(g)\ge-\delta/50.
 \label{ar:model-positivity-bound}
\end{equation}
\end{theorem}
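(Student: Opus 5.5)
The plan is to apply \Cref{ar:matching} with the test function $F$ of \eqref{ar:model-test-function} and the phase $\psi(u,v)=R(u,v)^{it}$, and then compare with the local product \eqref{ar:ideal-main}. That product already satisfies \eqref{ar:ideal-lower-bound}, so the whole task is to make the right side of \eqref{ar:matching-error} at most $\delta/200$. The normalization $Z_X$ is then handled separately.

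\textbf{Step 1: the identification.} By \eqref{ar:model-data-identities}, $F(\mathcal D_{\mathrm{int}})\psi(\mathbf Z/X)=g(R(\mathbf Z))$. Hence
\[
\E[F(\mathcal D_{\mathrm{int}})\psi(\mathbf Z/X)]=X^{-2}N_{X,W_0}^{-1}X^{2}\,I_X(g)=K_X(g).
\]
The last equality holds because $\mathbf Z$ has law proportional to $w(\mathbf z/X)W_0(\mathbf z)$ and $N_{X,W_0}=X^2Z_X$. So $\Re K_X(g)\ge -\delta/100-\text{error}$, and no bound on $Z_X$ beyond positivity is needed.

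\textbf{Step 2: the hypotheses of \Cref{ar:matching}.}
\begin{itemize}
\item Oscillation of $\psi$. On $\operatorname{supp}w$, $\log R$ is Lipschitz with a constant depending only on $w$. Therefore $|\psi(\mathbf x)-\psi(\mathbf x')|\le |t|\,C_w'|\mathbf x-\mathbf x'|$, which gives the hypothesis with $T\ll_w |t|\le M_1$. Also $|\psi|\le1$.
\item Parameters. Take $\eta=c\delta$ with a small absolute $c$, so that $K\ll M_1/\delta$. Then $A=8J/\eta$, where $J\le\pi(B)+\omega(q)\ll\delta^{-2}+\log M_1$.
\item Modulus. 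By \eqref{ar:modulus-bound}, $D\le D_0^2qA^J$. This gives
\[
\log D\le 2\log D_0+\log M_1+J\log A\ll \delta^{-3}+\log M_1\cdot\log(\log M_1/\delta).
\]
\end{itemize}

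\textbf{Step 3: choice of $U$, the main obstacle.} The term $J\log A$ carries an extra $\log\log M_1$ factor, which is awkward if $U$ must depend only on $\delta$. There are two ways to handle it.
\begin{itemize}
\item First option: bound $\omega(q)\ll\log q/\log\log q$. Then $J\log A\ll \delta^{-2}\log(1/\delta)+\log M_1\cdot O(1)$, using that $\log\log M_1\gg 1$ and $\log(1/\delta)\ll\log\log M_1$ when $M_1\ge\exp(C_*\delta^{-3})$.
\item Second option: use $C_*\delta^{-3}\le\log M_1$ to absorb everything into $\log D\le C\log M_1$.
\end{itemize}
Either way, $D\le M_1^{C}$, so $DK\le M_1^{C+2}$.

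Now take $U\ge C+3$, together with a further multiple explained below. Then $X_*=X/(DK)\ge (M_1y)^{U}/M_1^{C+2}\ge y^{U}\cdot M_1$. Choose $y$ itself as the sieve parameter, which is allowed since $y\ge M_1\ge5$; the condition $y\le X_*$ holds. This gives the following error terms.
\begin{itemize}
\item $\log X_*/\log y\ge U$, so $e^{-cU}\le\delta/10^4 C_w$ provided $U\ge C\log(1/\delta)/c$.
\item $X_*^{-1/2}(1+\log X_*)^{16}$ is tiny because $X_*\ge M_1\ge\exp(C_*\delta^{-3})$.
\item $D/X\le M_1^{-1}$.
\item $D\le c_wX$ holds trivially.
\item $y\ge\max(B,q)$ follows from $y\ge M_1$, once $C_*$ is large enough that $M_1\ge B$.
\end{itemize}
All of these hold for some $U\ll\delta^{-3}$; a much smaller $U\ll\log(1/\delta)$ in fact suffices. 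The total error is then $\le\delta/200$.

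\textbf{Step 4: conclusion.} Combining Step 1 with \eqref{ar:ideal-main} and \eqref{ar:ideal-lower-bound} gives
\[
\Re K_X(g)\ge-\delta/100-\delta/200\ge-\delta/50.
\]
The remaining care is in Step 3: the bookkeeping for $J\log A$ must keep the exponent $U$ independent of $M_1$.
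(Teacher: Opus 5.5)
Your proof is correct and is essentially the paper's argument: you apply \Cref{ar:matching} with the test function \eqref{ar:model-test-function}, $\psi=R^{it}$ and $V=W_0$, identify the integer expectation with $K_X(g)$, compare with \eqref{ar:ideal-main}--\eqref{ar:ideal-lower-bound}, and choose $U$ so that $\log(DK)\le\frac U2\log M_1$ and $e^{-cU}$ is negligible. The one substantive difference is your tolerance $\eta=c\delta$ in place of the paper's $\eta=\delta/(C_\eta(1+\|W_0\|_\infty))$; this is legitimate because the constants in \eqref{ar:matching-error} do not depend on $\|V\|_\infty$, and it gives the sharper $U\ll\log(2/\delta)$, although only your first option in Step~3 (the counterpart of the paper's bound $J\log(2J)\ll\log(D_0q)$) actually removes the $\log\log M_1$ factor, whereas the second option, as stated, does not.
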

\begin{proof}
All constants depending on the fixed density $w$ are absolute.
Write $q=q_\chi$ and $W=\|W_0\|_\infty$. We divide the rest proof into six steps.

\par
\medskip
\noindent\textbf{Step 1. Choose the approximation tolerance.}
Let $C_{\mathrm{match}}$ be the constant $C_w$ in
\eqref{ar:matching-error}. Fix
$C_\eta\ge\max\{1,400C_{\mathrm{match}}\}$ and put
\begin{equation*}
 \eta=\frac{\delta}{C_\eta(1+W)}\in(0,1).
 \end{equation*}
By \Cref{ar:weight-cost}, there is an absolute $A_\eta\ge1$ such that
\begin{equation}
 \log(1/\eta)
 =\log C_\eta+\log(1+W)+\log(1/\delta)
 \le A_\eta\delta^{-3}.
 \label{ar:positivity-tolerance-size}
\end{equation}
\Cref{ar:weight-cost} also gives $\log D_0\le C\delta^{-3}$.
Choose $C_*\ge1$ large enough that
\eqref{ar:model-parameters} implies $M_1\ge\max\{D_0,B,5\}$.
In particular, $y\ge\max\{B,q,5\}$.

\par
\medskip
\noindent\textbf{Step 2. Bound the modulus.}
Use $D$ from \eqref{ar:matching-modulus}, and let $J$ be the number
of distinct prime divisors of $D_0q$.
Order them as $p_1<\cdots<p_J$. Since $p_j\ge j+1$,
\begin{equation*}
 (J+1)!\le\prod_{j=1}^Jp_j\le D_0q,
 \qquad 2^J\le D_0q.
 \end{equation*}
For $J\ge4$, at least $J/2$ factors in $(J+1)!$ are at least $J/2$.
Thus $\log((J+1)!)\ge(J/2)\log(J/2)$.
Including $J<4$ by enlarging an absolute constant gives
\begin{equation*}
 J\log(2J)\le C\log(D_0q),\qquad
 J\le\frac{\log(D_0q)}{\log2}.
 \end{equation*}
By \eqref{ar:modulus-bound},
\begin{align}
 \log D
 &\le2\log D_0+\log q+J\log(8J)+J\log(1/\eta) \notag\\
 &\le C(1+\log(1/\eta))\log(D_0q) \notag\\
 &\le C_D\delta^{-3}\log M_1.
 \label{ar:modulus-size}
\end{align}
The last line uses \eqref{ar:positivity-tolerance-size} and
$D_0,q\le M_1$.

\par
\medskip
\noindent\textbf{Step 3. Bound the spatial grid size.}
Let $L_w$ bound $|\nabla\log R|$ on a fixed compact convex subset
of $u>v>0$ containing $\operatorname{supp}w$.
For $\mathbf x,\mathbf x'\in\operatorname{supp}w$ in the same square
of side $1/k$, the mean value inequality gives
\begin{equation}
 \begin{aligned}
 |\psi(\mathbf x)-\psi(\mathbf x')|
 &\le |t|\,|\log R(\mathbf x)-\log R(\mathbf x')|\\
 &\le\sqrt2L_w|t|/k\le2T/k,
 \qquad T=\max\{1,L_w\}(1+|t|).
 \end{aligned}
 \label{ar:positivity-spatial-oscillation}
\end{equation}
Choose $K=\max\{1,\lceil8T/\eta\rceil\}$ as in \Cref{ar:matching}.
Then
\begin{equation}
 \log K\le C_w+\log(1+|t|)+\log(1/\eta)
          \le C_K\log M_1.
 \label{ar:grid-size}
\end{equation}
Here we used $|t|\le M_1$, \eqref{ar:positivity-tolerance-size},
and $\log M_1\ge C_*\delta^{-3}$.

\par
\medskip
\noindent\textbf{Step 4. Choose the exponent $U$.}
Let $s_0\ge1$ be an absolute constant such that
$16\log(1+s)\le s/4$ for $s\ge s_0$.
Let $c>0$ be the constant in \eqref{ar:matching-error}.
Let $0<c_w\le1$ be the constant in the condition $D\le c_wX$
imposed before \Cref{ar:matching}.
Choose an absolute $C'\ge2$ satisfying
\begin{equation}
 C'\ge2(C_D+C_K),\qquad cC'\ge A_\eta,\qquad
 \frac32C'C_*\ge\max\{s_0,4A_\eta,\log(1/c_w)\}.
 \label{ar:positivity-constant-choices}
\end{equation}
Set $U=\lceil C'\delta^{-3}\rceil$.
The bounds \eqref{ar:modulus-size} and \eqref{ar:grid-size} give
\begin{equation*}
 \log(DK)\le(C_D+C_K)\delta^{-3}\log M_1
            \le\frac U2\log M_1.
 \end{equation*}
Put $X_*=X/(DK)$, $s=\log X_*$, and
$\sigma=(3U/2)\log M_1$.
Since $X\ge(M_1y)^U$ and $y\ge M_1$,
\begin{equation}
 \begin{aligned}
 X_*&\ge M_1^{U/2}y^U,\\
 s&\ge U\log y+(U/2)\log M_1\ge\sigma,\\
 \sigma&\ge\frac32C'C_*\delta^{-6}
       \ge\max\{s_0,4\log(1/\eta),\log(1/c_w)\}.
 \end{aligned}
 \label{ar:positivity-effective-size}
\end{equation}
The last inequality follows from
\eqref{ar:positivity-tolerance-size},
\eqref{ar:positivity-constant-choices}, and $\delta^{-3}\ge1$.
In particular, $5\le y\le X_*$.

\par
\medskip
\noindent\textbf{Step 5. Verify the error bounds and normalization.}
The choices above give
\begin{equation}
 \begin{aligned}
 e^{-c\log X_*/\log y}
 &\le e^{-cU}\le e^{-A_\eta\delta^{-3}}\le\eta,\\
 X_*^{-1/2}(1+\log X_*)^{16}
 &=e^{-s/2+16\log(1+s)}\le e^{-s/4}\le\eta,\\
 D/X&\le M_1^{-U/2}y^{-U}\le e^{-\sigma}
       \le\min\{\eta,c_w\}.
 \end{aligned}
 \label{ar:positivity-error-terms}
\end{equation}
For the second line, use $s\ge s_0$ and
$s/4\ge\log(1/\eta)$ from \eqref{ar:positivity-effective-size}.
For the third, use $D\le DK\le M_1^{U/2}$ and
$X\ge(M_1y)^U$.
Thus $D\le c_wX$, as required in \Cref{ar:matching}.
Since $D_0\mid D$, \Cref{ar:weight-cost} and
\eqref{ar:normalization} also give
\begin{equation*}
 |Z_X-1|\le C_w(D_0/X+D_0^2/X^2)\le\frac12.
 \end{equation*}
The last inequality is \eqref{ar:matching-positive-normalization}
with $V=W_0$. Hence $Z_X\ge1/2$.

\par
\medskip
\noindent\textbf{Step 6. Apply the local approximation.}
Take $V=W_0$ and $F$ as in \eqref{ar:model-test-function}.
Define $\psi:\R^2\to\C$ by $\psi(u,v)=R(u,v)^{it}$ for $u>v>0$,
and by zero elsewhere.
By \Cref{ar:weight-cost}, $V$ is nonnegative, is periodic modulo $D_0$,
and has residue average one. The definition of $F$ gives $|F|=1$.
By \eqref{ar:positivity-spatial-oscillation},
$|\psi(\mathbf x)-\psi(\mathbf x')|\le2T/k$ for every integer $k\ge1$
and all $\mathbf x,\mathbf x'\in\operatorname{supp}w$ lying in the
same square of side $1/k$.
The bounds \eqref{ar:positivity-effective-size} and
\eqref{ar:positivity-error-terms} give $5\le y\le X_*$ and $D\le c_wX$.
We may therefore apply \Cref{ar:matching}.
Moreover, $N_{X,W_0}=X^2Z_X$ and
\eqref{ar:model-data-identities} give the exact identity
\begin{equation*}
 \E[F(\mathcal D_{\mathrm{int}})\psi(\mathbf Z/X)]
 =\frac{\displaystyle\sum_{\substack{m,n\in\Z\\m>n>0}}
       w(m/X,n/X)W_0(m,n)g(R(m,n))}{X^2Z_X}
 =K_X(g).
 \end{equation*}
Using \eqref{ar:ideal-main}, \eqref{ar:matching-error}, and
\eqref{ar:positivity-error-terms}, we obtain
\begin{equation*}
 \left|K_X(g)-K_\infty(t)K_{\le B}
                    \prod_{\substack{B<p\le y\\p\ \mathrm{prime}}}K_p(z_p,\chi_p)\right|
 \le4C_{\mathrm{match}}\eta\le\delta/100.
 \end{equation*}
\eqref{ar:ideal-lower-bound} therefore yields
\begin{equation*}
 \Re K_X(g)\ge-\delta/100-\delta/100=-\delta/50,
 \end{equation*}
which is \eqref{ar:model-positivity-bound}.
Finally,
$2\le U\le C'\delta^{-3}+1\le(C'+1)\delta^{-3}$.
Thus $C_U=C'+1$ is admissible, and $U$ depends only on $\delta$.
\end{proof}

\section{Weighted concentration}
\label{con:section}

We compare multiplicative functions that agree at small primes.
Although $\|W_0\|_\infty$ depends on $\delta$, the comparison
constant depends only on the primes dividing its period.
The key property is invariance under simultaneous multiplication
by units.

We first state the additive-function estimate that we shall use.
For a real additive function $H:\N\to\R$ and a real number
$Y\ge2$, define
\begin{equation*}
 A_H(Y)=\sum_{\substack{p\ \mathrm{prime},\,j\in\N\\p^j\le Y}}\frac{H(p^j)}{p^j},
 \qquad
 B_H(Y)=\sum_{\substack{p\ \mathrm{prime},\,j\in\N\\p^j\le Y}}\frac{|H(p^j)|^2}{p^j}.
\end{equation*}
Both sums range over primes $p$ and integers $j\ge1$.
The Tur\'an--Kubilius inequality\footnote{There is an absolute constant $C>0$ such that
every additive function $f:\N\to\C$, meaning
$f(ab)=f(a)+f(b)$ whenever $(a,b)=1$, satisfies
\[
 \sum_{n\le x}\left|f(n)-\sum_{\substack{p\ \mathrm{prime},\,j\in\N\\p^j\le x}}\frac{f(p^j)}{p^j}\right|^2
 \le Cx\sum_{\substack{p\ \mathrm{prime},\,j\in\N\\p^j\le x}}\frac{|f(p^j)|^2}{p^j}
 \qquad(x\ge2).
\]
Here $x$ is real, and the prime-power sums range over primes $p$
and integers $j\ge1$. See \cite[p.~893, equation~(1)]{Elliott}.}
states that there is an absolute constant $C_{\rm TK}$ such that,
for every such $H$ and integer $M\ge2$,
\begin{equation}
 \frac1M\sum_{n\le M}|H(n)-A_H(M)|^2
 \le C_{\rm TK}B_H(M).
 \label{con:TK}
\end{equation}

\begin{lemma}
\label{con:prime-estimates}
Let $C_E\ge1$ be the absolute constant in
\eqref{sc:bsz-euler-bounds}.  For $Z\ge4$ and $M\ge\sqrt Z$,
\begin{equation}
 \sum_{\substack{p\ \mathrm{prime},\,j\in\N\\p^j\le Z,\ p^j>M}}p^{-j}
 \le\log(2C_E)+1.
 \label{con:prime-power-interval}
\end{equation}
If every prime factor of a positive integer $D$ is at most
$B\ge2$, then
\begin{equation*}
 \frac D{\varphi(D)}\le C_E\log(2B),
\end{equation*}
where $\varphi$ is Euler's totient\footnote{For each positive integer
$n$, $\varphi(n)=\#\{a\in\Z:1\le a\le n,\ \gcd(a,n)=1\}$,
the number of integers from $1$ to $n$ that are coprime to $n$.}
and $\varphi(1)=1$.
\end{lemma}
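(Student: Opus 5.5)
The plan is to prove both estimates directly from the Euler-product bounds \eqref{sc:bsz-euler-bounds} of \Cref{sc:prime-products}, since both are simple statements about sums over primes.

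\textbf{Totient bound.} This part is immediate. We have
\[
 \frac D{\varphi(D)}=\prod_{p\mid D}(1-p^{-1})^{-1}.
\]
Every factor exceeds one and every prime divisor of $D$ is at most $B$, so the product is at most $E(B)$. Then \eqref{sc:bsz-euler-bounds} gives $E(B)\le C_E\log B\le C_E\log(2B)$. The case $D=1$ is trivial, since the left side is $1$ and $C_E\log 4\ge1$.

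\textbf{Prime-power interval, set-up.} I would split the sum in \eqref{con:prime-power-interval} into three pieces:
\begin{itemize}
\item the prime powers $p^j$ with $j\ge2$;
\item the primes $p$ with $M<p\le Z$;
\item nothing further, since if $M\ge Z$ the sum is empty and there is nothing to prove.
\end{itemize}

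\textbf{Higher powers.} For $j\ge2$ with $p^j>M\ge\sqrt Z\ge2$, I would simply bound the whole range by the full sum over all $p$ and all $j\ge2$:
\[
 \sum_p\sum_{j\ge2}p^{-j}=\sum_p\frac1{p(p-1)}\le\sum_{n\ge2}\frac1{n(n-1)}=1.
\]
This is where the $+1$ in \eqref{con:prime-power-interval} comes from.

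\textbf{Primes.} For $j=1$, the range $\sqrt Z\le M<p\le Z$ is what matters. Using $-\log(1-x)\ge x$, I get
\[
 \sum_{M<p\le Z}\frac1p\le\sum_{\sqrt Z<p\le Z}\log(1-p^{-1})^{-1}=\log\frac{E(Z)}{E(\sqrt Z)}.
\]
(When $\sqrt Z$ is itself prime, excluding it only helps.) Then \eqref{sc:bsz-euler-bounds} gives
\[
 \frac{E(Z)}{E(\sqrt Z)}\le\frac{C_E\log Z}{\log\sqrt Z}=2C_E,
\]
which is valid because $\sqrt Z\ge2$ precisely when $Z\ge4$. Hence this part is at most $\log(2C_E)$, and adding the two parts gives \eqref{con:prime-power-interval}.

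\textbf{Main obstacle.} There is no real obstacle; the only care needed is at the endpoints. The hypothesis $Z\ge4$ is exactly what allows the lower Euler bound to be applied at $\sqrt Z$. The inclusion of the range $p>M$ in $p>\sqrt Z$ uses $M\ge\sqrt Z$, and the one-sided inequality $x\le-\log(1-x)$ points in the right direction.
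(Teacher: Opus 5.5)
Your proposal is correct and follows essentially the same route as the paper. You bound the $j=1$ terms by $\log\bigl(E(Z)/E(\sqrt Z)\bigr)\le\log(2C_E)$ via the two-sided Euler-product bound at $Z$ and $\sqrt Z\ge2$, the $j\ge2$ terms by $\sum_p 1/(p(p-1))\le1$, and $D/\varphi(D)$ by $E(B)\le C_E\log(2B)$ with $D=1$ checked separately, exactly as the paper's proof does.
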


\begin{proof}
Write $E(x)$ for the product in the proof of
\Cref{sc:prime-products}; its constant is the $C_E$ fixed above.
The terms with $j=1$ in \eqref{con:prime-power-interval} are at
most
\begin{equation*}
 \sum_{\substack{\sqrt Z<p\le Z\\p\ \mathrm{prime}}}\frac1p
 \le\log\frac{E(Z)}{E(\sqrt Z)}\le\log(2C_E).
\end{equation*}
For the terms with $j\ge2$, the sum over all primes and powers is
bounded by
\begin{equation*}
 \sum_{\substack{p\ \mathrm{prime}}}\sum_{j\ge2}p^{-j}
 =\sum_{\substack{p\ \mathrm{prime}}}\frac1{p(p-1)}
 \le\sum_{n\ge2}\frac1{n(n-1)}=1.
\end{equation*}
This proves \eqref{con:prime-power-interval}. Finally,
$$D/\varphi(D)=\prod_{\substack{p\mid D\\p\ \mathrm{prime}}}(1-p^{-1})^{-1}\le E(B)$$ when
$D>1$, and the asserted bound also holds for $D=1$.
\end{proof}
For the next two lemmas, let $h:\N\to\mathbb S^1$ be completely
multiplicative, let
$Z\ge4$ be an integer, and suppose $2\le y\le Z$ and $h(p)=1$
for every prime $p\le y$. Put
\begin{equation*}
 \Delta=\sum_{\substack{y<p\le Z\\p\ \mathrm{prime}}}\frac{1-\Re h(p)}p.
\end{equation*}

\begin{lemma}
\label{con:common-scalar}
There exist an absolute constant $C\ge1$ and a number $\zeta\in\mathbb S^1$ such that, for
every integer $M$ with $\sqrt Z\le M\le Z$,
\begin{equation}
 \frac1M\sum_{n\le M}|h(n)-\zeta|^2\le C\Delta.
 \label{con:prefix-concentration}
\end{equation}
\end{lemma}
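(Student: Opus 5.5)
The plan is to apply the Tur\'an--Kubilius inequality \eqref{con:TK} to the real additive function $H$ defined on prime powers by $H(p^j)=j\theta_p$. Here $h(p)=e^{i\theta_p}$ with $\theta_p\in(-\pi,\pi]$, so $\theta_p=0$ for $p\le y$. Then $h(n)=e^{iH(n)}$ for every $n$, since $h$ is completely multiplicative. The natural choice is $\zeta=e^{iA}$ for a single real $A$ independent of $M$. The candidate is $A=A_H(Z)$, or a suitably truncated version of it.

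The first step is to bound $B_H(M)$ in terms of $\Delta$. For primes, $1-\cos\theta\ge 2\theta^2/\pi^2$ on $(-\pi,\pi]$, so
\[
\sum_{y<p\le M}\frac{\theta_p^2}{p}\le\frac{\pi^2}{2}\Delta .
\]
Prime powers with $j\ge2$ are the problem: $j^2\theta_p^2$ is not controlled by $1-\cos\theta_p$ in a way that decays in $j$. To avoid this, I would instead define $H(p^j)=\theta_p$ for all $j\ge1$; this choice is still additive. Then $h(n)=e^{iH(n)}$ fails only for integers divisible by $p^2$ with $p>y$. Handling these exceptional $n$ is the delicate point, because $y$ may be as small as $2$. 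I therefore expect to keep $H(p^j)=j\theta_p$ and bound the $j\ge2$ contribution crudely. Its total is at most $\pi^2\sum_{p>y}\sum_{j\ge2}j^2p^{-j}$, which is $O(1/y)$ but is not bounded by $\Delta$. This is the main obstacle.

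I would try the following way around it. Since $|h(n)-\zeta|^2\le4$ trivially, there are two regimes:
\begin{itemize}
\item If $\Delta\ge c_0$, the bound \eqref{con:prefix-concentration} holds trivially once $C\ge4/c_0$.
\item If $\Delta<c_0$, work multiplicatively: $|h(n)-\zeta|\le|H(n)-A|$, and replace the terms $\theta_p^2\,j^2$ via $|e^{ij\theta}-1|\le j|e^{i\theta}-1|$. Then apply Tur\'an--Kubilius to the complex additive-type quantities, or simply to the real $H$ with $\theta_p$, observing that $j^2\theta_p^2/p^j\le (j^2/p^{j-1})\cdot\theta_p^2/p$. This gives $B_H(M)\ll\sum_{p>y}\theta_p^2/p\ll\Delta$, since $j^2/p^{j-1}$ is bounded.
\end{itemize}
So the prime-power issue does resolve, and I obtain
\[
\frac1M\sum_{n\le M}|H(n)-A_H(M)|^2\ll\Delta .
\]

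The second step is to replace $A_H(M)$ by an $M$-independent constant. For $\sqrt Z\le M\le Z$, Cauchy--Schwarz together with \eqref{con:prime-power-interval} gives
\[
|A_H(Z)-A_H(M)|^2\le\Bigl(\sum_{M<p^j\le Z}\frac{|H(p^j)|^2}{p^j}\Bigr)\Bigl(\sum_{M<p^j\le Z}p^{-j}\Bigr)\ll\Delta .
\]
Set $\zeta=e^{iA_H(Z)}$. Using $|e^{ia}-e^{ib}|\le|a-b|$ and $(x+y)^2\le2x^2+2y^2$, this yields \eqref{con:prefix-concentration}.
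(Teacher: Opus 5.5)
Your final argument is correct and is essentially the paper's proof. It uses the completely additive $H(p^j)=j\theta_p$, the bound $B_H(Z)\le 12\sum_{y<p\le Z}\theta_p^2/p\le 6\pi^2\Delta$ (your observation that $\sum_{j\ge1}j^2p^{-j}\ll 1/p$), Tur\'an--Kubilius at $M$, and the Cauchy--Schwarz comparison of $A_H(M)$ with $A_H(Z)$ via \eqref{con:prime-power-interval}, with $\zeta=e^{iA_H(Z)}$. The detours you considered (the case split on $\Delta\ge c_0$, the truncated choice $H(p^j)=\theta_p$, complex additive functions) are unnecessary, since that bound handles the prime powers uniformly for every value of $\Delta$.
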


\begin{proof}
We divide the proof into three steps.

\medskip
\noindent\textbf{Step 1. Represent the multiplicative function by an additive phase.}
For each prime $p$, choose
$\theta_p=\operatorname{Arg}h(p)\in(-\pi,\pi]$ and define
\begin{equation*}
 H(n)=\sum_{\substack{p\ \mathrm{prime}}} v_p(n)\theta_p.
 \end{equation*}
Since $v_p(ab)=v_p(a)+v_p(b)$, the function $H$ is real and
completely additive, with $H(p^j)=j\theta_p$.
Unique prime factorization and complete multiplicativity give
$$h(n)=\prod_{p\ \mathrm{prime}} e^{iv_p(n)\theta_p}=e^{iH(n)}.$$
Moreover, $\theta_p=0$ for $p\le y$.

\medskip
\noindent\textbf{Step 2. Bound the phase variance.}
We first bound $B_H(Z)$ in terms of $\Delta$.
Concavity of sine gives $\sin u\ge2u/\pi$ on $[0,\pi/2]$.
Hence, for $|\theta|\le\pi$,
\begin{equation*}
 1-\cos\theta=2\sin^2(|\theta|/2)
 \ge\frac{2\theta^2}{\pi^2},
 \qquad
 \theta^2\le\frac{\pi^2}{2}(1-\cos\theta).
 \end{equation*}
Differentiating the geometric series for $0<x<1$ gives
\begin{equation*}
 \sum_{j\ge1}j^2x^j
 =x\frac{d}{dx}\left(\frac{x}{(1-x)^2}\right)
 =\frac{x(1+x)}{(1-x)^3}.
 \end{equation*}
Taking $x=1/p\le1/2$, we obtain
\begin{equation*}
 \sum_{j\ge1}\frac{j^2}{p^j}
 =\frac1p\frac{1+1/p}{(1-1/p)^3}
 \le\frac1p\frac{3/2}{(1/2)^3}
 =\frac{12}{p}.
 \end{equation*}
Since the summands are nonnegative and $\cos\theta_p=\Re h(p)$,
\begin{equation}
 \begin{aligned}
 B_H(Z)
 =\sum_{\substack{y<p\le Z\\p\ \mathrm{prime}}}\theta_p^2
       \sum_{\substack{j\ge1\\p^j\le Z}}\frac{j^2}{p^j}
 \le12\sum_{\substack{y<p\le Z\\p\ \mathrm{prime}}}\frac{\theta_p^2}{p}
 \le6\pi^2\sum_{\substack{y<p\le Z\\p\ \mathrm{prime}}}\frac{1-\Re h(p)}p
 =6\pi^2\Delta.
 \end{aligned}
 \label{con:phase-variance-bound}
\end{equation}

\medskip
\noindent\textbf{Step 3. Transfer the variance bound to the multiplicative function.}
Fix an integer $M$ with $\sqrt Z\le M\le Z$.
We compare $A_H(M)$ with the fixed center $A_H(Z)$.
The upper bound $M\le Z$ and Cauchy--Schwarz give
\begin{equation*}
 \begin{aligned}
 |A_H(Z)-A_H(M)|^2
 &=\left|\sum_{\substack{p\ \mathrm{prime},\,j\in\N\\M<p^j\le Z}}\frac{H(p^j)}{p^j}\right|^2\\
 &\le
 \left(\sum_{\substack{p\ \mathrm{prime},\,j\in\N\\M<p^j\le Z}}\frac{|H(p^j)|^2}{p^j}\right)
 \left(\sum_{\substack{p\ \mathrm{prime},\,j\in\N\\M<p^j\le Z}}\frac1{p^j}\right)\\
 &\le L B_H(Z),\qquad L:=\log(2C_E)+1.
 \end{aligned}
 \end{equation*}
The last line uses \eqref{con:prime-power-interval}, whose hypotheses
$Z\ge4$ and $M\ge\sqrt Z$ hold here.
Since $H$ is additive and $M\ge\sqrt Z\ge2$, the
Tur\'an--Kubilius inequality \eqref{con:TK} applies at $M$.
Also $B_H(M)\le B_H(Z)$ because $M\le Z$.
Writing
$H(n)-A_H(Z)=(H(n)-A_H(M))+(A_H(M)-A_H(Z))$
and using $|a+b|^2\le2|a|^2+2|b|^2$, we obtain
\begin{equation*}
 \begin{aligned}
 \E_{n\in[M]}|H(n)-A_H(Z)|^2
 &\le2\E_{n\in[M]}|H(n)-A_H(M)|^2
       +2|A_H(M)-A_H(Z)|^2\\
 &\le2C_{\rm TK}B_H(M)+2LB_H(Z)\\
 &\le2(C_{\rm TK}+L)B_H(Z)
 \le C\Delta,
 \end{aligned}
\end{equation*}
where $C=12\pi^2(C_{\rm TK}+L)$ is absolute, by
\eqref{con:phase-variance-bound}.

Set $\zeta=e^{iA_H(Z)}\in\mathbb S^1$.
For real $a,b$, the inequality
$|e^{ia}-e^{ib}|=2|\sin((a-b)/2)|\le|a-b|$ gives
\begin{equation*}
 \E_{n\in[M]}|h(n)-\zeta|^2
 \le\E_{n\in[M]}|H(n)-A_H(Z)|^2
 \le C\Delta.
 \end{equation*}
This is \eqref{con:prefix-concentration}.
The scalar $\zeta$ depends only on $h$ and $Z$, so it works for
every integer $M$ in the stated range.
\end{proof}
Let $D\ge1$ be an integer. A weight $v:\Z/D\Z\to[0,\infty)$
has mean one and is unit-invariant when
\begin{equation}
 D^{-1}\sum_{a\bmod D}v(a)=1,
 \qquad v(ua)=v(a)\quad(u\in(\Z/D\Z)^\times).
 \label{con:one-dimensional-invariance}
\end{equation}

\begin{lemma}
\label{con:invariant-weight}
There is an absolute constant $C\ge1$ such that, whenever
\eqref{con:one-dimensional-invariance} holds,
$Z\ge4D^2$, and $y\ge D$, the scalar $\zeta$ in
\Cref{con:common-scalar} satisfies
\begin{equation}
 \frac1Z\sum_{a\le Z}v(a)|h(a)-\zeta|^2
 \le C\frac D{\varphi(D)}\Delta.
 \label{con:weighted-prefix}
\end{equation}
\end{lemma}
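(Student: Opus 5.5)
Since $v$ is unit-invariant, it is constant on the sets $\{a\bmod D:\gcd(a,D)=e\}$ for each divisor $e\mid D$. Write $v(a)=v_e$ when $\gcd(a,D)=e$. The plan is to decompose the weighted sum by $e=\gcd(a,D)$. Each piece is then treated with complete multiplicativity and the hypothesis $h(p)=1$ for $p\le y$. Since $y\ge D$, every prime dividing $e$ is at most $y$, so $h(e)=1$, and writing $a=eb$ gives $h(a)=h(b)$.

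The first step is the mean-one condition, which reads
\[
\sum_{e\mid D}v_e\,\varphi(D/e)=D .
\]
In particular $v_e\le D/\varphi(D/e)$.

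Next, we bound the contribution of a fixed divisor $e$ by dropping the coprimality condition on $b$:
\[
\frac1Z\sum_{\substack{a\le Z\\ \gcd(a,D)=e}}v_e|h(a)-\zeta|^2
\le\frac{v_e}{Z}\sum_{b\le Z/e}|h(b)-\zeta|^2 .
\]
Put $M=\lfloor Z/e\rfloor$. Because $e\le D$ and $Z\ge4D^2$, we get $M\ge Z/D-1\ge\sqrt Z$, since $Z/D\ge2\sqrt Z$ and $\sqrt Z\ge2$. Also $M\le Z$. Hence Lemma~\ref{con:common-scalar} applies with the same scalar $\zeta$ and gives the bound $v_eMC\Delta/Z\le C\Delta\,v_e/e$.

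It then remains to bound $\sum_{e\mid D}v_e/e$ by a multiple of $D/\varphi(D)$. This is the main obstacle: the crude bound $v_e\le D/\varphi(D/e)$ summed over all $e$ would produce a divisor-count factor. The fix is to use the mean-one identity as a whole rather than bound each $v_e$ separately. We use
\[
\frac{\varphi(D/e)}{D/e}\ge\frac{\varphi(D)}{D},
\]
which holds because the primes dividing $D/e$ are a subset of those dividing $D$. Then
\[
\sum_{e\mid D}\frac{v_e}{e}
=\sum_{e\mid D}v_e\frac{\varphi(D/e)}{D}\cdot\frac{D/e}{\varphi(D/e)}
\le\frac{D}{\varphi(D)}\cdot\frac1D\sum_{e\mid D}v_e\varphi(D/e)
=\frac{D}{\varphi(D)} .
\]
Combining this with the previous estimate gives \eqref{con:weighted-prefix}, with the same absolute constant $C$ as in Lemma~\ref{con:common-scalar}.
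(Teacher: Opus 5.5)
Your proof is correct and follows essentially the same route as the paper. Your quantity $v_e\varphi(D/e)/D$ is exactly the paper's orbit mass $M_e$, and your identity $v_e/e=M_e\cdot\frac{D/e}{\varphi(D/e)}$, followed by the totient comparison and $\sum_e M_e=1$, is the paper's final step. The one point you assert without proof---that each class $\{a\bmod D:\gcd(a,D)=e\}$ is a single orbit of $(\Z/D\Z)^\times$, so that $v$ is constant on it---is justified in the paper by lifting a unit modulo $D/e$ to a unit modulo $D$ via the Chinese remainder theorem.
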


\begin{proof}
We divide the proof into four steps.

\medskip
\noindent\textbf{Step 1. Decompose the unit-invariant weight into gcd-orbits.}
Regard $v$ as a $D$-periodic function on the integers.
For each divisor $d\mid D$, put
\begin{equation*}
 Q_d=\frac Dd,\qquad
 \mathcal O_d=\{a\bmod D:\gcd(a,D)=d\}.
 \end{equation*}
We first show that the sets $\mathcal O_d$ are precisely the
orbits of multiplication by units modulo $D$.
Multiplication by a unit preserves the gcd with $D$.
Conversely, suppose $d<D$ and write two residues in $\mathcal O_d$
as $da_1$ and $da_2$, with $a_1,a_2$ units modulo $Q_d$.
Set $u_0\equiv a_2a_1^{-1}\pmod{Q_d}$.
For each $p^e\Vert D$, put $f=v_p(Q_d)$.
If $f\ge1$, choose a lift of $u_0\bmod p^f$ to $p^e$;
it remains nonzero modulo $p$.
If $f=0$, choose the residue $1\bmod p^e$.
The Chinese remainder theorem, applied to these pairwise coprime
prime powers, gives a unit $u\bmod D$ with $u\equiv u_0\pmod{Q_d}$.
Thus $u(da_1)\equiv da_2\pmod D$.
For $d=D$, the set $\mathcal O_D$ is the singleton zero residue.

The map $t\bmod Q_d\mapsto dt\bmod D$ is a bijection from the
units modulo $Q_d$ to $\mathcal O_d$, since
\begin{equation}
 \gcd(dt,D)=d\gcd(t,Q_d),\qquad
 dt_1\equiv dt_2\pmod D
 \ \Longleftrightarrow\ t_1\equiv t_2\pmod{Q_d}.
 \label{con:gcd-orbit-bijection}
\end{equation}
Hence $|\mathcal O_d|=\varphi(Q_d)$, including $d=D$ by
$\varphi(1)=1$.
Unit invariance makes $v$ constant on each $\mathcal O_d$.
Define its normalized mass by
\begin{equation}
 M_d=\frac1D\sum_{a\in\mathcal O_d}v(a)\ge0,\qquad \sum_{d\mid D}M_d=1,\qquad v(a)=\frac{DM_d}{\varphi(Q_d)}\quad(a\bmod D\in\mathcal O_d).
 \label{con:gcd-orbit-masses}
\end{equation}
The equality $\sum_{d\mid D}M_d=1$ follows from
\eqref{con:one-dimensional-invariance}, since the orbits partition
$\Z/D\Z$. The formula for $v(a)$ uses $|\mathcal O_d|=\varphi(Q_d)$.

\medskip
\noindent\textbf{Step 2. Reduce the weighted sum to long prefixes.}
For each $d\mid D$, set $Y_d=Z/d$ and $m_d=\lfloor Y_d\rfloor$.
Since $Z\ge4D^2$ and $d\le D$, we have
\begin{equation}
 m_d\ge Z/d-1\ge Z/D-1\ge2\sqrt Z-1\ge\sqrt Z,\qquad
 m_d\le Z/d\le Z.
 \label{con:long-prefix}
\end{equation}
Every prime factor of $d$ is at most $D\le y$, so $h(d)=1$.
Complete multiplicativity gives $h(dt)=h(t)$.
Put $F(t)=|h(t)-\zeta|^2\ge0$.
By \eqref{con:gcd-orbit-bijection}, the integers $a\le Z$
with $\gcd(a,D)=d$ are exactly $a=dt$ with
$t\le Y_d$ and $(t,Q_d)=1$.
Thus \eqref{con:gcd-orbit-masses} and $D/Z=Q_d/Y_d$ give
\begin{equation}
 \begin{aligned}
 \frac1Z\sum_{a\le Z}v(a)F(a)
 =\sum_{d\mid D}\frac{DM_d}{Z\varphi(Q_d)}
       \sum_{\substack{t\le Y_d\\(t,Q_d)=1}}F(t)
 =\sum_{d\mid D}M_d\frac{Q_d}{\varphi(Q_d)}\frac1{Y_d}
       \sum_{\substack{t\le Y_d\\(t,Q_d)=1}}F(t).
 \end{aligned}
 \label{con:weighted-gcd-decomposition}
\end{equation}

\medskip
\noindent\textbf{Step 3. Apply prefix concentration.}
Since $F(t)\ge0$, dropping the restriction $(t,Q_d)=1$
can only increase the sum.
The integers $t\le Y_d$ are exactly those $t\le m_d$, and
$m_d\le Y_d$. Therefore
\begin{equation}
 \frac1{Y_d}\sum_{\substack{t\le Y_d\\(t,Q_d)=1}}F(t)
 \le\frac1{Y_d}\sum_{t\le m_d}F(t)
 \le\E_{t\in[m_d]}F(t)
 \le C\Delta.
 \label{con:coprime-prefix-bound}
\end{equation}
The last inequality follows from \Cref{con:common-scalar}, since
\eqref{con:long-prefix} places every $m_d$ in $[\sqrt Z,Z]$.
That lemma supplies the same $\zeta$ for all these interval lengths.

\medskip
\noindent\textbf{Step 4. Use the totient comparison and conclude.}
Finally, $Q_d\mid D$, so the totient product formula gives
\begin{equation}
 \frac{Q_d}{\varphi(Q_d)}
 =\prod_{\substack{p\mid Q_d\\p\ \mathrm{prime}}}(1-p^{-1})^{-1}
 \le\prod_{\substack{p\mid D\\p\ \mathrm{prime}}}(1-p^{-1})^{-1}
 =\frac D{\varphi(D)},
 \label{con:totient-divisor-comparison}
\end{equation}
with an empty product equal to $1$.
Substituting \eqref{con:coprime-prefix-bound} and
\eqref{con:totient-divisor-comparison} into
\eqref{con:weighted-gcd-decomposition}, and using
$\displaystyle\sum_{d\mid D}M_d=1$, yields
\begin{equation*}
 \frac1Z\sum_{a\le Z}v(a)|h(a)-\zeta|^2
 \le C\Delta\sum_{d\mid D}M_d\frac{Q_d}{\varphi(Q_d)}
 \le C\frac D{\varphi(D)}\Delta.
 \end{equation*}
This proves \eqref{con:weighted-prefix} with the constant from
\Cref{con:common-scalar}.
\end{proof}
For the two-variable comparison, let $D\ge1$ be an integer and
let $V:(\Z/D\Z)^2\to[0,\infty)$ have mean one:
$$D^{-2}\sum_{a,b\bmod D}V(a,b)=1.$$
Assume $V(ua,ub)=V(a,b)$ for every unit $u$ modulo $D$.
Extend $V$ periodically to $\Z^2$.
Using the fixed $w,C_0$ of \eqref{ar:real-profile} and
\eqref{ar:profile-bound}, for each integer $X\ge1$ define
$I_{X,V}$ and $Z_{X,V}=I_{X,V}(1)$ by \eqref{ar:kernel}
with $V$ in place of $W_0$. Set $K_{X,V}=I_{X,V}/Z_{X,V}$
whenever $Z_{X,V}>0$.
The quadrature estimate \eqref{ar:normalization} gives
\begin{equation*}
 |Z_{X,V}-1|\le A_w\left(\frac DX+\frac{D^2}{X^2}\right),
 \end{equation*}
where $A_w>0$ depends only on $w$, uniformly in $D$ and $V$.
Put $B_w=\max\{1,4A_w\}$. If $X\ge B_wD$, then
$D/X\le1/B_w\le1$, so
\begin{equation}
 |Z_{X,V}-1|\le\frac{2A_w}{B_w}\le\frac12,
 \qquad \frac12\le Z_{X,V}\le\frac32.
 \label{con:kernel-positive-normalization}
\end{equation}
Thus $K_{X,V}$ is defined for every integer $X\ge B_wD$.
This sufficient lower bound for $X$ depends on $V$ only through
its chosen modulus $D$.
The proposition below assumes the stronger condition $X\ge C_wD^2$,
with $C_w\ge B_w$.

\begin{proposition}
\label{con:homogeneous-comparison}
There is a constant $C_w$, depending only on $w$, such that whenever
$X$ is an integer and
\begin{equation}
 X\ge C_wD^2,\qquad Z=C_0X,\qquad \max(2,D)\le y\le Z,
 \label{con:comparison-sizes}
\end{equation}
any two unit-modulus completely multiplicative functions $f,g$
agreeing at every prime at most $y$ satisfy
\begin{equation}
 |K_{X,V}(f)-K_{X,V}(g)|
 \le C_w\sqrt{\frac D{\varphi(D)}}
       \left(\sum_{\substack{y<p\le Z\\p\ \mathrm{prime}}}\frac{1-\Re(f(p)\overline{g(p)})}p\right)^{1/2}.
 \label{con:general-comparison}
\end{equation}
\end{proposition}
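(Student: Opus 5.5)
Put $h=f\overline g$. It is completely multiplicative, unimodular, and $h(p)=1$ for $p\le y$, with $\Delta=\sum_{y<p\le Z}(1-\Re h(p))/p$ equal to the sum on the right of \eqref{con:general-comparison}. Let $\zeta$ be the scalar from \Cref{con:common-scalar} for $h$ and $Z$. By \eqref{fw:four-forms}, for $m>n>0$,
\begin{equation*}
 f(R(m,n))-g(R(m,n))=g(R(m,n))\bigl(h(R(m,n))-1\bigr),\qquad
 h(R(m,n))=\overline{h(2)}\,h(m-n)h(m+n)\overline{h(m)}\,\overline{h(n)}.
\end{equation*}
Since $2\le y$, we have $h(2)=1$. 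Replace each of the four factors by $\zeta$ and use the telescoping bound for products of unimodular numbers:
\begin{equation*}
 |h(R(m,n))-1|=\bigl|h(m-n)h(m+n)\overline{h(m)h(n)}-\zeta\zeta\overline\zeta\,\overline\zeta\bigr|
 \le\sum_{i=1}^4|h(L_i(m,n))-\zeta|.
\end{equation*}
Because $Z_{X,V}\ge1/2$ by \eqref{con:kernel-positive-normalization} (as $C_wD^2\ge B_wD$), we get
\begin{equation*}
 |K_{X,V}(f)-K_{X,V}(g)|\le 2X^{-2}\sum_{i=1}^4\sum_{m>n>0}w(m/X,n/X)V(m,n)|h(L_i(m,n))-\zeta|.
\end{equation*}
Cauchy--Schwarz, with total weight $X^{-2}\sum wV=Z_{X,V}\le 3/2$, then reduces each form to bounding
\[
S_i=X^{-2}\sum_{m,n}w(m/X,n/X)V(m,n)\,|h(L_i(m,n))-\zeta|^2
\]
by $C\frac{D}{\varphi(D)}\Delta$.

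To bound $S_i$, bound $w\le\|w\|_\infty$ and enlarge the range to the box $0<m,n<C_0X$. On $\operatorname{supp}w$ every $L_i(m,n)$ lies in $[1,Z]$, with $Z=C_0X$, by \eqref{ar:profile-bound}. Then change variables to $a=L_i(m,n)$ and a complementary coordinate $b$ (for $L_1,L_2$ take $b=n$; for $L_3,L_4$ take the other variable). The map is injective, and each $a\le Z$ has $O(X)$ preimages in the box. The key step is pushing $V$ forward: define
\[
v_i(a)=D^{-1}\sum_{b\bmod D}V(\text{pair with }L_i=a,\ \text{other coordinate }b),
\]
computed modulo $D$. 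For $L_3,L_4$ this is just the marginal. For $L_1,L_2$, the pair $(b\pm a,b)$ is determined mod $D$ by $(a\bmod D,b\bmod D)$, so $v_i$ is well defined and $D$-periodic. Unit invariance of $V$ gives $v_i(ua)=v_i(a)$: substitute $b\mapsto ub$ and use $V(ux,uy)=V(x,y)$. The mean of $v_i$ is one because the change of variables permutes $(\Z/D\Z)^2$.

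Summing over $b$ in a range of length $\asymp X\ge D$ replaces $V$ by $v_i(a)$ up to a relative error from incomplete periods. This error is controlled because $X\ge C_wD^2$ makes a partial period at most a fraction $D/X$ of the range. The cleanest route is to sum $b$ over complete residue systems, of which there are at least $\lfloor X/D\rfloor$, and enlarge the range by at most one extra period, so that $S_i\ll Z^{-1}\sum_{a\le Z}v_i(a)|h(a)-\zeta|^2$. \Cref{con:invariant-weight} then applies: $Z=C_0X\ge4D^2$ once $C_w\ge4$, and $y\ge D$. It gives $S_i\ll\frac{D}{\varphi(D)}\Delta$, with the same $\zeta$ for all four forms since $\zeta$ depends only on $h$ and $Z$. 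Taking square roots and summing over the four forms yields \eqref{con:general-comparison}.

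The main obstacle is the pushforward step, namely checking that $v_i$ is unit-invariant with mean one for the forms $m\pm n$, and handling boundary periods in $b$ without incurring any factor $\|V\|_\infty$. I will keep $V$ inside sums over full residue classes so that only its mean appears, and absorb the incomplete periods by nonnegativity (overcounting).
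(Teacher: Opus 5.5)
Your proposal is correct and follows essentially the same route as the paper's proof. Both use the scalar $\zeta$ from \Cref{con:common-scalar}, the telescoping bound for the four unimodular factors followed by Cauchy--Schwarz, and the pushforward of $V$ along each form with the same complementary coordinate to a mean-one, unit-invariant weight, where summing over $b\in[Z]$ in at most $Z/D+1$ complete residue classes gives $\|w\|_\infty(Z+D)v_i(a)$ before \Cref{con:invariant-weight} is applied.
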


\begin{proof}
We divide the proof into four steps.

\medskip
\noindent\textbf{Step 1. Fix the constants and define the probability measure.}
Let $C_*$ be the absolute constant in \Cref{con:invariant-weight}.
Set
\begin{equation}
 A(w)=2C_*\|w\|_\infty C_0(C_0+1),
 \qquad C_w\ge\max\{B_w,4/C_0,4\sqrt{A(w)}\}.
 \label{con:comparison-constants}
\end{equation}
These constants depend only on the fixed $w$ and its chosen $C_0$.
Since $D\ge1$, \eqref{con:comparison-sizes} gives
$X\ge B_wD\ge D$ and $Z=C_0X\ge4D^2$.
Also $Z_{X,V}\ge1/2$ by \eqref{con:kernel-positive-normalization}.
Set $h=f\overline g$ and let $\Delta$ denote the prime sum in
\eqref{con:general-comparison}.
Then $h$ is completely multiplicative, $|h|=1$, and $h(p)=1$
for every prime $p\le y$.
Choose the scalar $\zeta\in\mathbb S^1$ from \Cref{con:common-scalar}.

\medskip
\noindent\textbf{Step 2. Obtain marginal concentration for the four linear forms.}
For integer pairs $m>n>0$, define
\begin{equation*}
 \nu_{X,V}(m,n)=\frac{w(m/X,n/X)V(m,n)}{X^2Z_{X,V}},
 \qquad \sum_{m>n>0}\nu_{X,V}(m,n)=1.
 \end{equation*}
The sum equals one by the definition of $Z_{X,V}$.
Thus $\nu_{X,V}$ is a probability measure, depending only on $w,V,X$,
and $K_{X,V}(\psi)=\E_{\nu_{X,V}}\psi(R(m,n))$ for $\psi=f,g$.

Put $\mathcal L=\{m-n,m+n,m,n\}$.
For each $L\in\mathcal L$, use the coordinate map $T_L$ and its
inverse in the following table:
\begin{equation*}
 \begin{array}{c|c|c}
 L&T_L(m,n)=(a,b)&T_L^{-1}(a,b)=(m,n)\\ \hline
 m-n&(m-n,n)&(a+b,b)\\
 m+n&(m+n,n)&(a-b,b)\\
 m&(m,n)&(a,b)\\
 n&(n,m)&(b,a).
 \end{array}
 \end{equation*}
Each map is linear with determinant $\pm1$, so it is a bijection
of $\Z^2$ and of $(\Z/D\Z)^2$.
Define
\begin{equation*}
 V_L(a,b)=V(T_L^{-1}(a,b)),
 \qquad v_L(a)=\frac1D\sum_{b\bmod D}V_L(a,b).
 \end{equation*}
The weight $v_L$ is nonnegative and $D$-periodic.
Linearity and the unit invariance of $V$ give
$V_L(ua,ub)=V_L(a,b)$ for every unit $u\bmod D$.
The bijection modulo $D$ and the substitution $b=uc$ therefore give
\begin{equation*}
 \begin{aligned}
 \frac1D\sum_{a\bmod D}v_L(a)
 &=\frac1{D^2}\sum_{a,b\bmod D}V_L(a,b)
 =\frac1{D^2}\sum_{m,n\bmod D}V(m,n)=1,\\
 v_L(ua)
 &=\frac1D\sum_{c\bmod D}V_L(ua,uc)
 =\frac1D\sum_{c\bmod D}V_L(a,c)=v_L(a).
 \end{aligned}
 \end{equation*}
Since $Z\ge4D^2$ and $y\ge D$, \Cref{con:invariant-weight}
applies to every $v_L$ and yields
\begin{equation}
 \frac1Z\sum_{a\in[Z]}v_L(a)|h(a)-\zeta|^2
 \le C_*\frac D{\varphi(D)}\Delta
 \qquad(L\in\mathcal L).
 \label{con:comparison-marginal-concentration}
\end{equation}
The same $\zeta$ works for all four forms because it depends only
on $h$ and $Z$.

\medskip
\noindent\textbf{Step 3. Convert marginal bounds into second moments.}
On the support of $w(m/X,n/X)$, we have $m>n>0$ and
$m+n<Z$ by \eqref{ar:profile-bound}.
Hence both $a=L(m,n)$ and its second coordinate $b$ lie in $[Z]$.
For fixed $a$, each $b$ determines a unique integer pair, and each
residue modulo $D$ occurs at most $Z/D+1$ times in $[Z]$.
Thus, for $a\in[Z]$, nonnegativity and periodicity give
\begin{equation*}
 \begin{aligned}
 S_L(a)&:=\sum_{\substack{m,n\in\Z\\m>n>0,\ L(m,n)=a}}
                 w(m/X,n/X)V(m,n)\\
 &\le\|w\|_\infty\sum_{b\in[Z]}V_L(a,b)\\
 &\le\|w\|_\infty(Z/D+1)\sum_{b\bmod D}V_L(a,b)
 =\|w\|_\infty(Z+D)v_L(a).
 \end{aligned}
 \end{equation*}
Grouping by $a=L(m,n)$ and applying
\eqref{con:comparison-marginal-concentration}, we obtain
\begin{equation}
 \begin{aligned}
 \E_{\nu_{X,V}}|h(L)-\zeta|^2
 &=\frac1{X^2Z_{X,V}}\sum_{a\in[Z]}S_L(a)|h(a)-\zeta|^2\\
 &\le\frac{\|w\|_\infty(Z+D)}{X^2Z_{X,V}}
                \sum_{a\in[Z]}v_L(a)|h(a)-\zeta|^2\\
 &\le C_*\frac{\|w\|_\infty(Z+D)Z}{X^2Z_{X,V}}
                \frac D{\varphi(D)}\Delta
 \le A(w)\frac D{\varphi(D)}\Delta.
 \end{aligned}
 \label{con:comparison-second-moments}
\end{equation}
Here $h(L)$ denotes $h(L(m,n))$.
The last inequality uses $Z=C_0X$, $D\le X$, and $Z_{X,V}\ge1/2$:
\begin{equation*}
 \frac{(Z+D)Z}{X^2Z_{X,V}}
 \le\frac{2(C_0X+X)C_0X}{X^2}=2C_0(C_0+1).
 \end{equation*}

\medskip
\noindent\textbf{Step 4. Compare the rational phases and conclude.}
Since $y\ge2$, we have $h(2)=1$.
Complete multiplicativity and the rational extension
\eqref{eq:rational-extension} give
\begin{equation}
 h(R(m,n))=\frac{h(m-n)h(m+n)}{h(2)h(m)h(n)}
 =h(m-n)h(m+n)\overline{h(m)}\,\overline{h(n)}.
 \label{con:comparison-rational-identity}
\end{equation}
For unit complex numbers $\alpha_j,\beta_j$, we have
\begin{equation*}
 \prod_{j=1}^4\alpha_j-\prod_{j=1}^4\beta_j
 =\sum_{j=1}^4
   \left(\prod_{i<j}\beta_i\right)(\alpha_j-\beta_j)
   \left(\prod_{i>j}\alpha_i\right)
 \end{equation*}
Taking absolute values bounds the left-hand side by
$\sum_{j=1}^4|\alpha_j-\beta_j|$.
Apply it to the four factors in \eqref{con:comparison-rational-identity}
and to $(\beta_1,\beta_2,\beta_3,\beta_4)
=(\zeta,\zeta,\overline\zeta,\overline\zeta)$.
The latter four numbers have product $\zeta^2\overline\zeta^2=1$.
Since $f(R)=g(R)h(R)$, $|g(R)|=1$, and conjugation preserves
absolute differences, this gives
\begin{equation*}
 |f(R)-g(R)|=|h(R)-1|
 \le\sum_{L\in\mathcal L}|h(L)-\zeta|.
 \end{equation*}
Taking expectations under the common measure $\nu_{X,V}$,
then using Cauchy--Schwarz and \eqref{con:comparison-second-moments},
we conclude that
\begin{equation*}
 \begin{aligned}
 |K_{X,V}(f)-K_{X,V}(g)|
 &\le\E_{\nu_{X,V}}|f(R)-g(R)|\\
 &\le\sum_{L\in\mathcal L}\E_{\nu_{X,V}}|h(L)-\zeta|\\
 &\le\sum_{L\in\mathcal L}
        \bigl(\E_{\nu_{X,V}}|h(L)-\zeta|^2\bigr)^{1/2}\\
 &\le4\sqrt{A(w)}\sqrt{\frac D{\varphi(D)}}\sqrt\Delta.
 \end{aligned}
 \end{equation*}
The choice \eqref{con:comparison-constants} proves
\eqref{con:general-comparison}.
\end{proof}
We now return to $D_0,W_0,K_X$ from \eqref{ar:parameters}
and \eqref{ar:kernel}. Let $\chi$ be a primitive character of
conductor $q$, let $t\in\R$, and let
$g:\N\to\mathbb S^1$ be completely multiplicative.
For $y\ge q$, define its finite model
$\widetilde g:\N\to\mathbb S^1$ by complete multiplicativity and
\begin{equation*}
 \widetilde g(p)=
 \begin{cases}g(p),&p\le y,\\ \chi(p)p^{it},&p>y.
 \end{cases}
\end{equation*}
Both functions extend uniquely to $\Q_{>0}$ by
\eqref{eq:rational-extension}. We use these extensions, denoted
by the same symbols, in $K_X(g)$ and $K_X(\widetilde g)$.

\Needspace{10\baselineskip}
\begin{corollary}
\label{con:comparison}
There is a constant $C_w\ge1$, depending only on $w$, with the
following property. For each $0<\delta<1/10$, let $D_0,W_0,K_X$
be defined by \eqref{ar:parameters}, \eqref{ar:congruence-weight},
and \eqref{ar:kernel}, respectively. Here $C_0\ge2$ is the fixed
integer chosen in \eqref{ar:profile-bound}.
One can choose $b_\delta>0$, depending only on $\delta$ and $w$,
so that for all $g,\chi,t$ as above, every integer $X$, and every
real $y$ satisfying
\begin{equation*}
 X\ge C_wD_0^2,\qquad Z=C_0X,\qquad \max(D_0,q,2)\le y\le Z,
\end{equation*}
we have
\begin{equation}
 \begin{gathered}
 |K_X(g)-K_X(\widetilde g)|
 \le b_\delta
 \left(\sum_{\substack{y<p\le Z\\p\ \mathrm{prime}}}
       \frac{1-\Re(g(p)\overline{\chi(p)}p^{-it})}{p}\right)^{1/2},\\
 b_\delta^2\le C_w\log(2/\delta).
 \end{gathered}
 \label{con:model-comparison-bound}
\end{equation}
\end{corollary}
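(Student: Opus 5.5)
The corollary is a direct specialisation of \Cref{con:homogeneous-comparison} with $D=D_0$, $V=W_0$, $f=g$ and the model $\widetilde g$ in the role of the second function. The plan is to check each hypothesis of that proposition in this setting, and then to estimate $D_0/\varphi(D_0)$ using \Cref{con:prime-estimates}.

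\textbf{Verifying the hypotheses.} By \Cref{ar:weight-cost}, $W_0$ is nonnegative and periodic modulo $D_0$ in each variable. It has mean one on $(\Z/D_0\Z)^2$ and satisfies $W_0(cm,cn)=W_0(m,n)$ whenever $\gcd(c,D_0)=1$. Every unit $u$ modulo $D_0$ has an integer representative coprime to $D_0$, so $W_0$ is unit-invariant in the sense required before \Cref{con:homogeneous-comparison}. With these choices, $K_{X,V}$ coincides with $K_X$ of \eqref{ar:kernel}.

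The size conditions \eqref{con:comparison-sizes} then read
\[
X\ge C_wD_0^2,\qquad Z=C_0X,\qquad \max(2,D_0)\le y\le Z .
\]
All three are implied by the assumptions of the corollary. The extra condition $y\ge q$ is only needed so that $\widetilde g$ is defined as a finite model.

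The functions $g$ and $\widetilde g$ are unit-modulus and completely multiplicative, and they agree at all primes $p\le y$. For $p>y$ we have $g(p)\overline{\widetilde g(p)}=g(p)\overline{\chi(p)}p^{-it}$; this uses $|\chi(p)|=1$, which holds because $p>y\ge q$ forces $p\nmid q$. The prime sum in \eqref{con:general-comparison} is therefore exactly the sum in \eqref{con:model-comparison-bound}. It follows that
\[
|K_X(g)-K_X(\widetilde g)|\le C_w\sqrt{D_0/\varphi(D_0)}\,(\cdots)^{1/2}.
\]

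\textbf{Bounding the totient ratio.} Every prime factor of $D_0$ is at most $B=(200/\delta)^2$. \Cref{con:prime-estimates} therefore gives
\[
\frac{D_0}{\varphi(D_0)}\le C_E\log(2B)\le C_E\bigl(\log 2+2\log(200/\delta)\bigr)\ll\log(2/\delta),
\]
where the last step uses $\delta<1/10$. Setting $b_\delta=C_w\sqrt{D_0/\varphi(D_0)}$, we get $b_\delta^2\le C_w^2C_E\log(2B)$. After enlarging $C_w$ to absorb the absolute constants (including $C_w^2C_E$), this yields $b_\delta^2\le C_w\log(2/\delta)$. The enlarged $C_w$ still depends only on $w$, and $C_w\ge1$, so increasing it keeps the hypothesis $X\ge C_wD_0^2$ strong enough.

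\textbf{Main difficulty.} The only point that needs care is that the constant $C_w$ in the statement is used in two places: in the threshold $X\ge C_wD_0^2$ and in the bound for $b_\delta^2$. I would take it to be the maximum of the two requirements. This is legitimate because a larger threshold constant only weakens the claim (the hypothesis becomes more restrictive). Apart from this bookkeeping, the argument is a direct substitution into results already proved.
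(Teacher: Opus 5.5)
Your proposal is correct and follows the paper's proof: apply \Cref{con:homogeneous-comparison} with $D=D_0$, $V=W_0$ to $g$ and $\widetilde g$, then bound $D_0/\varphi(D_0)=\prod_{p\le B}(1-p^{-1})^{-1}\le C_E\log(2B)\le 6C_E\log(2/\delta)$. The only difference is notational: the paper gives the proposition's constant its own name $P_w$ and sets $b_\delta=P_w\sqrt{D_0/\varphi(D_0)}$ and $C_w=\max\{1,P_w,6C_EP_w^2\}$, which avoids the apparent circularity in your phrase ``enlarging $C_w$'' when $b_\delta$ was itself defined using $C_w$.
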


\begin{proof}
We divide the proof into three steps.

\medskip
\noindent\textbf{Step 1. Choose the model constants.}
Let $P_w\ge1$ be a fixed constant valid in
\Cref{con:homogeneous-comparison}, and let $C_E$ be the constant
in \Cref{con:prime-estimates}. Choose
\begin{equation}
 C_w=\max\{1,P_w,6C_EP_w^2\},
 \qquad b_\delta=P_w\sqrt{\frac{D_0}{\varphi(D_0)}}.
 \label{con:model-comparison-constants}
\end{equation}
Since $D_0$ depends only on $\delta$, these choices have the
stated dependence.

\medskip
\noindent\textbf{Step 2. Verify the model and apply the comparison proposition.}
For $p>y\ge q$, we have $p\nmid q$, so $|\chi(p)p^{it}|=1$.
For $p\le y$, the prescribed value is $\widetilde g(p)=g(p)$.
Thus complete multiplicativity defines $\widetilde g$ with modulus
one on $\N$, and $g,\widetilde g$ agree at every prime at most $y$.
This includes primes dividing $q$, where the definition uses $g(p)$.

By \Cref{ar:weight-cost}, $W_0$ is nonnegative and periodic modulo
$D_0$ in each variable, with
\begin{equation*}
 \frac1{D_0^2}\sum_{a,b\bmod D_0}W_0(a,b)=1,\qquad
 W_0(ua,ub)=W_0(a,b)\quad\bigl(u\in(\mathbb Z/D_0\mathbb Z)^\times\bigr).
\end{equation*}
Since $C_w\ge P_w$, we have $X\ge C_wD_0^2\ge P_wD_0^2$.
Together with $\max(D_0,2)\le y\le Z$, this verifies the size
assumptions of \Cref{con:homogeneous-comparison}.
We apply that proposition with $D=D_0$ and $V=W_0$ to
$g,\widetilde g$.
For the prime sum, we have
\begin{equation*}
 \begin{gathered}
 g(p)\overline{\widetilde g(p)}
 =g(p)\overline{\chi(p)p^{it}}
 =g(p)\overline{\chi(p)}p^{-it}\qquad(p>y),\\
 \Delta:=\sum_{\substack{y<p\le Z\\p\ \mathrm{prime}}}
       \frac{1-\Re(g(p)\overline{\chi(p)}p^{-it})}{p}.
 \end{gathered}
\end{equation*}
The proposition therefore yields
\begin{equation}
 |K_X(g)-K_X(\widetilde g)|
 \le P_w\sqrt{\frac{D_0}{\varphi(D_0)}}\sqrt\Delta
 =b_\delta\sqrt\Delta.
 \label{con:model-proposition-application}
\end{equation}

\medskip
\noindent\textbf{Step 3. Estimate the totient factor.}
It remains to bound $b_\delta$.
By \eqref{ar:parameters},
$\displaystyle D_0=\prod_{\substack{p\le B\\p\ \mathrm{prime}}}p^{2T_0+5}$ with $B=40000\delta^{-2}$.
Euler's totient formula and \Cref{con:prime-estimates} give
\begin{equation}
 \frac{D_0}{\varphi(D_0)}
 =\prod_{\substack{p\le B\\p\ \mathrm{prime}}}\frac{p^{2T_0+5}}{p^{2T_0+4}(p-1)}
 =\prod_{\substack{p\le B\\p\ \mathrm{prime}}}(1-p^{-1})^{-1}
 \le C_E\log(2B).
 \label{con:model-totient-bound}
\end{equation}
Since $\delta<1/10$ and $20000<20^4$, we have
\begin{equation}
 \log(2B)=2\log(2/\delta)+\log20000
 \le2\log(2/\delta)+4\log20\le6\log(2/\delta).
 \label{con:model-logarithmic-bound}
\end{equation}
Combining \eqref{con:model-comparison-constants},
\eqref{con:model-totient-bound}, and \eqref{con:model-logarithmic-bound},
we obtain
\begin{equation*}
 b_\delta^2
 =P_w^2\frac{D_0}{\varphi(D_0)}
 \le6C_EP_w^2\log(2/\delta)
 \le C_w\log(2/\delta).
\end{equation*}
Together with \eqref{con:model-proposition-application}, this proves
\eqref{con:model-comparison-bound}.
\end{proof}

\section{Quadratic correlations}
\label{sec:scalar}

The main result is \Cref{thm:scalar_inverse}: a unit-modulus
completely multiplicative function with a large quadratic correlation
on an arithmetic progression has small prime distance from a
Dirichlet character times $n^{it}$. A prime-block decomposition
produces correlated prime pairs, whose difference phases yield
simultaneous rational approximations to the phase coefficients.
Freezing the residual phase gives a large mean on a residue class,
and a multiplicative mean-value theorem completes the argument.

\subsection{Prime-block decomposition}

The following decomposition is based on the argument of
Bourgain, Sarnak and Ziegler \cite[Section~2]{BSZ}. We give its
finite estimates, since their dependence on the correlation is used
later. For $0<\alpha<1/10$, define
\begin{equation}
 \begin{aligned}
 j_0&=\left\lceil\frac{3\log(1/\alpha)}{\log(1+\alpha)}\right\rceil,
 &j_1&=\lceil j_0/\alpha\rceil,\\
 x_j&=(1+\alpha)^j,
 &K&=\lceil x_{j_1+1}\rceil.
 \end{aligned}
 \label{sc:bsz-parameters}
\end{equation}
Let $P_j$ be the primes in $[x_j,x_{j+1})$ for
$j_0\le j\le j_1$.
We use the abbreviations
\begin{equation}
 s_j=|P_j|/x_j,\qquad S=\sum_{j=j_0}^{j_1}s_j,
 \qquad J=j_1-j_0+1.
 \label{sc:bsz-block-data}
\end{equation}
For a positive integer $N$, set
\begin{equation}
 Q_j=\left\{m\in\N:m\le N/x_{j+1},\ p\nmid m
             \text{ for every }p\in\bigcup_{i=j_0}^jP_i\right\}.
 \label{sc:bsz-Qj}
\end{equation}

\begin{lemma}\label{sc:decomposition}
Let $0<\alpha<1/10$ and $N\in\N$, and use
\eqref{sc:bsz-parameters}--\eqref{sc:bsz-Qj}.
The sets $P_jQ_j=\{pm:p\in P_j,m\in Q_j\}$ are pairwise disjoint
subsets of $[N]$. Every element of $P_jQ_j$ has a unique such
representation. There is an absolute constant $C\ge1$, independent
of $\alpha$, $N$, and the resulting parameters $j_0,j_1,x_j,K$, such that
\begin{align}
 \sum_j|P_j||Q_j|&\le N,\label{sc:bsz-disjoint-mass}\\
 \left|[N]\setminus\bigcup_jP_jQ_j\right|
 &\le C\alpha(1+\log(1/\alpha))N+2K,
 \label{sc:bsz-coverage}\\
 s_j&\le2\alpha,\qquad S\le C(1+\log(1/\alpha)), \notag
\\
 J&\le C\alpha^{-2}\log(1/\alpha),\qquad
 \log K\le C\alpha^{-1}\log(1/\alpha).
 \label{sc:bsz-cutoff-bound}
\end{align}
\end{lemma}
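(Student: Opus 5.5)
The plan is to verify the structural claims (disjointness, unique representation) first, and then the counting estimates. Each uses only Chebyshev's bound \eqref{sc:chebyshev}, \Cref{sc:prime-products} and \Cref{tr:rough-number}.

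\emph{Structure.} Suppose $n=pm$ with $p\in P_j$ and $m\in Q_j$. Then $n\le x_{j+1}\cdot N/x_{j+1}=N$, so $n\in[N]$. Moreover $m$ has no prime factor in $P_{j_0}\cup\cdots\cup P_j$. Hence $p$ is the smallest prime divisor of $n$ lying in $\bigcup_{i\ge j_0}P_i$, and it lies in block $P_j$. So $n$ determines both $j$ and $p$, and $m=n/p$ follows. This proves pairwise disjointness and uniqueness. Summing then gives \eqref{sc:bsz-disjoint-mass}.

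\emph{Parameter bounds.} For $s_j\le2\alpha$, I would count the integers in $[x_j,x_{j+1})$. There are at most $\alpha x_j+1$ of them, and $x_j\ge x_{j_0}\ge\alpha^{-3}$ makes the $+1$ negligible. For $S$, use $|P_j|/x_j\le(1+\alpha)\sum_{p\in P_j}1/p$. Summing over $j$ gives at most $2\sum_{x_{j_0}\le p<x_{j_1+1}}1/p$. By the dimension bound in \Cref{sc:prime-products} this is $\ll\log(\log x_{j_1+1}/\log x_{j_0})+O(1)=\log(j_1/j_0)+O(1)\ll1+\log(1/\alpha)$. The bounds on $J$ and $\log K$ are direct: $\log(1+\alpha)\asymp\alpha$ gives $j_0\asymp\alpha^{-1}\log(1/\alpha)$, $J\le j_1\le j_0/\alpha+1$, and $\log K\le(j_1+2)\log(1+\alpha)+1$.

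\emph{Coverage \eqref{sc:bsz-coverage}.} Take $n\in[N]$ not covered. Either (a) $n$ has no prime factor in $[x_{j_0},x_{j_1+1})$, or (b) its least such prime $p\in P_j$ has $n/p>N/x_{j+1}$. For (a), apply \Cref{tr:rough-number} with $A=x_{j_0}$ and $B=x_{j_1+1}$. This gives at most $C\frac{\log x_{j_0}}{\log x_{j_1+1}}N+K\ll\alpha N+K$, since $j_0/(j_1+1)\le\alpha$. For (b), the condition $n/p>N/x_{j+1}$ with $n\le N$ forces $p>x_{j+1}\cdot n/N$, which is only possible when $n>Np/x_{j+1}\ge N/(1+\alpha)$. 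The count of such $n$ with $p\mid n$ is at most $\alpha N/p+1$. Summing over $p<x_{j_1+1}$ gives $\alpha N\sum 1/p+K$. The reciprocal sum over $[x_{j_0},x_{j_1+1})$ is $\ll1+\log(1/\alpha)$, exactly as in the $S$ estimate. Adding the two cases gives \eqref{sc:bsz-coverage}.

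The main obstacle is the bookkeeping in case (b). One must check that an uncovered $n$ in that case is confined to the window $(N/(1+\alpha),N]$ with a factor $p$ dividing $n$. The per-prime count error $+1$ is absorbed into $K$ because each relevant $p$ is at most $K$. Care is also needed for the rounding at the block endpoints.
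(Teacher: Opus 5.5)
Your structural argument, the parameter bounds, and your treatments of cases (a) and (b) are correct. In (b) you count multiples of $p$ in the window $(N/(1+\alpha),N]$, whereas the paper counts cofactors $m\in(N/x_{j+1},N/x_j]$; the two counts are equivalent. One typo there: the intermediate inequality should read $p<x_{j+1}n/N$, not $p>x_{j+1}n/N$, though your next line is right.

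The gap is that your dichotomy for uncovered $n$ is not exhaustive. Let $p\in P_j$ be the least prime divisor of $n$ in $\bigcup_iP_i$, and suppose $m=n/p\le N/x_{j+1}$. Then $m$ has no prime factor in $P_{j_0},\dots,P_{j-1}$. It may still be divisible by $p$ itself, or by another prime $q\in P_j$. In that case $m\notin Q_j$, and by your own uniqueness argument no other representation exists. So $n$ is uncovered, yet it falls under neither (a) nor (b). A concrete example is $n=pq$ with $p<q$ both in $P_j$ and $N\ge qx_{j+1}$. In general these are the integers for which the first block meeting $n$ contributes at least two prime factors counted with multiplicity; this is the paper's set $\mathcal E_1$.

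Closing the gap needs a third estimate, and this is where the shortness of the blocks ($s_j\le2\alpha$) is really used. For each such $n$ there are $p,q\in P_j$ (possibly $p=q$, covering $p^2\mid n$) with $pq\mid n$. Hence their number is at most
\[
 \sum_j\sum_{p,q\in P_j}\frac N{pq}
 = N\sum_j\Bigl(\sum_{p\in P_j}\frac1p\Bigr)^2
 \le N\sum_j s_j^2\le 2\alpha NS .
\]
By your bound on $S$, this is $\ll\alpha(1+\log(1/\alpha))N$. Adding it to your bounds for (a) and (b) then gives \eqref{sc:bsz-coverage}.
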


\begin{proof}
We use \Cref{sc:prime-products,tr:rough-number}.
All sums and unions indexed by $j$ below range over
$j_0\le j\le j_1$. Put
\begin{equation*}
 L=\log(1/\alpha),\qquad A=x_{j_0},\qquad B=x_{j_1+1},
 \qquad K=\lceil B\rceil.
 \end{equation*}
Then $L>1$ and $2<A<B$. We divide the rest proof into six steps.

\par
\medskip
\noindent\textbf{Step~1: Bounds for $s_j$ and $S$.}
The definition of $j_0$ gives
$\log A=j_0\log(1+\alpha)\ge3L$, hence $A\ge\alpha^{-3}$.
An interval of length $h$ contains at most $h+1$ integers.
Since $x_{j+1}-x_j=\alpha x_j$ and $x_j\ge A$, we obtain
\begin{equation}
 |P_j|\le\alpha x_j+1,\qquad
 s_j\le\alpha+x_j^{-1}\le\alpha+\alpha^3\le2\alpha.
 \label{sc:decomposition-block-size}
\end{equation}

Write $\displaystyle E(t)=\prod_{\substack{p\le t\\p\ \mathrm{prime}}}(1-p^{-1})^{-1}$ and
$\displaystyle E(t^-)=\prod_{\substack{p<t\\p\ \mathrm{prime}}}(1-p^{-1})^{-1}$.
By \Cref{sc:prime-products}, there is an absolute $C_E\ge1$
such that $\log t\le E(t)\le C_E\log t$ for $t\ge2$.
If $t$ is prime, then $E(t^-)=(1-t^{-1})E(t)$;
otherwise $E(t^-)=E(t)$. Therefore
\begin{equation*}
 \tfrac12\log t\le\tfrac12 E(t)\le E(t^-)\le C_E\log t,
 \qquad
 \prod_{\substack{A\le p<B\\p\ \mathrm{prime}}}(1-p^{-1})^{-1}
 =\frac{E(B^-)}{E(A^-)}\le2C_E\frac{\log B}{\log A}.
 \end{equation*}
For $0<u<1$, the identity
$\displaystyle -\log(1-u)=\int_0^u(1-v)^{-1}\,dv$ gives
$u\le-\log(1-u)$. Taking logarithms of the product above yields
\begin{equation*}
 \sum_{\substack{A\le p<B\\p\ \mathrm{prime}}}\frac1p
 \le\log\prod_{\substack{A\le p<B\\p\ \mathrm{prime}}}(1-p^{-1})^{-1}
 \le\log(2C_E)+\log\frac{\log B}{\log A}.
 \end{equation*}
Since $j_0/\alpha\le j_1<j_0/\alpha+1$,
\begin{equation}
 \frac{\log A}{\log B}=\frac{j_0}{j_1+1}\le\alpha,
 \qquad
 \frac{\log B}{\log A}=\frac{j_1+1}{j_0}
 \le\frac1\alpha+\frac2{j_0}\le\frac2\alpha.
 \label{sc:decomposition-log-ratios}
\end{equation}
The last inequality uses $j_0\ge1$ and $2\alpha<1$.

To estimate $S$, write $|P_j|/x_j$ as a sum of $1/x_j$
over $p\in P_j$. Each such prime satisfies
$p<x_{j+1}=(1+\alpha)x_j$, so $1/x_j<(1+\alpha)/p$.
Moreover, the intervals $[x_j,x_{j+1})$ partition $[A,B)$.
Thus the sets $P_j$ are disjoint and contain each prime in
$[A,B)$ exactly once. Summing the preceding comparison gives
\begin{align}
 S&=\sum_j\frac{|P_j|}{x_j}
   =\sum_j\sum_{p\in P_j}\frac1{x_j}\notag\\
 &\le(1+\alpha)\sum_j\sum_{p\in P_j}\frac1p
   =(1+\alpha)\sum_{\substack{A\le p<B\\p\ \mathrm{prime}}}\frac1p\notag\\
 &\le2\log(2C_E)+2\log\frac{\log B}{\log A}
   \le C_S(1+L),\qquad C_S:=2\log(4C_E)+2.
 \label{sc:bsz-block-total}
\end{align}
The non-strict inequality includes empty sums. The last line uses
$1+\alpha<2$ and \eqref{sc:decomposition-log-ratios}.

\par
\medskip
\noindent\textbf{Step~2: Disjoint product sets.}
Let $p\in P_j$, $m\in Q_j$, and $n=pm$.
No prime in $\bigcup_{i=j_0}^jP_i$ divides $m$.
Hence $v_p(n)=1$, no other prime in $P_j$ divides $n$,
and no prime in $P_i$ divides $n$ when $j_0\le i<j$.
Also
\begin{equation*}
 n=pm<x_{j+1}m\le x_{j+1}\frac{N}{x_{j+1}}=N.
 \end{equation*}
Thus $n\in[N]$. The integer $n$ determines $j$ as the least
index $i$ with $j_0\le i\le j_1$ for which a prime in $P_i$
divides $n$. It then determines $p$ as its unique prime divisor
in $P_j$, and $m=n/p$. Hence the representations are unique,
the product sets are pairwise disjoint, and
\begin{equation*}
 \left|\bigcup_jP_jQ_j\right|
 =\sum_j|P_jQ_j|=\sum_j|P_j||Q_j|\le N.
 \end{equation*}
This proves \eqref{sc:bsz-disjoint-mass}.

\par
\medskip
\noindent\textbf{Step~3: The exceptional sets.}
For $n\in[N]$, define
\begin{equation*}
 T_j(n)=\sum_{p\in P_j}v_p(n).
 \end{equation*}
The sum counts prime factors with multiplicity; for example,
$p^2\mid n$ with $p\in P_j$ implies $T_j(n)\ge2$.
Let $\mathcal E_0$ consist of the $n\in[N]$ with $T_j(n)=0$
for every $j$. For $n\in[N]\setminus\mathcal E_0$, let $j(n)$
be the least index with $T_{j(n)}(n)>0$, and let
$\mathcal E_1$ consist of those $n$ with $T_{j(n)}(n)\ge2$.

For $n\in[N]\setminus(\mathcal E_0\cup\mathcal E_1)$,
there is a unique prime $p\in P_{j(n)}$ dividing $n$,
and $v_p(n)=1$. Set $m=n/p$.
No prime in $\bigcup_{i=j_0}^{j(n)}P_i$ divides $m$, so
$m\in Q_{j(n)}$ if and only if $m\le N/x_{j(n)+1}$.
Step~2 shows that any representation of $n$ in a product set
must use this same index and prime.
Let $\mathcal E_2$ consist of those
$n\in[N]\setminus(\mathcal E_0\cup\mathcal E_1)$ for which
$m=n/p>N/x_{j(n)+1}$. It follows that
$\mathcal E_0,\mathcal E_1,\mathcal E_2$ are disjoint and their
union is $[N]\setminus\bigcup_jP_jQ_j$.
In particular, if $N\notin\mathcal E_0$ and $T_{j(N)}(N)=1$, then
$m=N/p>N/x_{j(N)+1}$, so $N\in\mathcal E_2$.

\par
\medskip
\noindent\textbf{Step~4: The first two exceptional sets.}
The integers in $\mathcal E_0$ have no prime divisor in $[A,B)$.
Apply \Cref{tr:rough-number}: its hypotheses $2\le A<B$ and
$N\ge1$ hold, and its integer parameter is $\lceil B\rceil=K$.
Writing $C_R$ for its absolute constant and using
\eqref{sc:decomposition-log-ratios}, we obtain
\begin{equation*}
 |\mathcal E_0|\le C_RN\frac{\log A}{\log B}+K
 \le C_R\alpha N+K.
 \end{equation*}

For $n\in\mathcal E_1$, the inequality $T_{j(n)}(n)\ge2$
gives primes $p,q\in P_{j(n)}$ with $pq\mid n$.
Here $p=q$ is allowed and means $p^2\mid n$.
For fixed $p,q$, there are $\lfloor N/(pq)\rfloor$ such multiples.
Summing over all $j$ and all ordered pairs $p,q\in P_j$ therefore gives
\begin{equation*}
 \begin{aligned}
 |\mathcal E_1|
 \le\sum_j\sum_{p,q\in P_j}\left\lfloor\frac{N}{pq}\right\rfloor
 \le N\sum_j\left(\sum_{p\in P_j}\frac1p\right)^2
 \le N\sum_js_j^2\le2\alpha N\sum_js_j=2\alpha NS.
 \end{aligned}
 \end{equation*}
Repeated counting only increases this upper bound.
The last two inequalities use $p\ge x_j$ for $p\in P_j$
and $s_j\le2\alpha$ from \eqref{sc:decomposition-block-size}.

\par
\medskip
\noindent\textbf{Step~5: The remaining exceptional set.}
For $n\in\mathcal E_2$, put $j=j(n)$ and write $n=pm$
as in Step~3. Then $N/x_{j+1}<m\le N/p\le N/x_j$.
For each fixed $p\in P_j$, the number of possible integers $m$
is at most the number of all integers in $(N/x_{j+1},N/x_j]$.
The latter is exactly the floor difference below:
\begin{equation*}
 \begin{aligned}
 \left\lfloor\frac{N}{x_j}\right\rfloor
 -\left\lfloor\frac{N}{x_{j+1}}\right\rfloor
 \le\frac{N}{x_j}-\frac{N}{x_{j+1}}+1
 =\frac{\alpha N}{(1+\alpha)x_j}+1
 \le\frac{\alpha N}{x_j}+1.
 \end{aligned}
 \end{equation*}
We have dropped the divisibility conditions on $m$ and enlarged
the upper endpoint from $N/p$ to $N/x_j$.
The sets $P_j$ are disjoint and all their elements are less than
$B\le K$, so
\begin{equation*}
 \sum_j|P_j|=\#\{p\text{ prime}:A\le p<B\}\le K.
 \end{equation*}
Summing over $p$ and $j$ gives
\begin{equation*}
 |\mathcal E_2|
 \le\sum_j|P_j|\left(\frac{\alpha N}{x_j}+1\right)
 \le\alpha NS+K.
 \end{equation*}
Thus, by Step~3 and \eqref{sc:bsz-block-total},
\begin{equation*}
 \begin{aligned}
 \left|[N]\setminus\bigcup_jP_jQ_j\right|
=|\mathcal E_0|+|\mathcal E_1|+|\mathcal E_2|
 \le C_R\alpha N+3\alpha NS+2K
 \le(C_R+3C_S)\alpha(1+L)N+2K.
 \end{aligned}
 \end{equation*}
This proves \eqref{sc:bsz-coverage}.

\par
\medskip
\noindent\textbf{Step~6: The parameter bounds.}
The identity $\displaystyle \log(1+\alpha)=\int_0^\alpha(1+t)^{-1}\,dt$
gives $\alpha/2\le\log(1+\alpha)\le\alpha$.
Using $L>1$, $\alpha<1$, and the ceiling bounds, we obtain
\begin{equation*}
 j_0\le\frac{6L}{\alpha}+1\le\frac{7L}{\alpha},
 \qquad
 J\le j_1+1\le\frac{j_0}{\alpha}+2
 \le\frac{9L}{\alpha^2}.
 \end{equation*}
Finally, $K=\lceil B\rceil\le2B$ since $B>1$. Hence
\begin{equation*}
 \log K\le\log2+\log B
 =\log2+(j_1+1)\log(1+\alpha)
 \le\log2+\frac{9L}{\alpha}\le\frac{10L}{\alpha}.
 \end{equation*}
Taking $C\ge\max\{C_R+3C_S,C_S,10\}$ proves all the estimates.
\end{proof}

\subsection{Correlated prime pairs}
For $0<\rho<1/2$, set $\alpha=(\rho/C_*)^4$, where $C_*\ge10$
is an absolute constant fixed in the next proof. The symbols
$P_j,Q_j,x_j,J,S,K$ now have the values specified in
\eqref{sc:bsz-parameters}--\eqref{sc:bsz-Qj} for this choice of $\alpha$.

For $N\ge K$, a one-bounded sequence $a:[N]\to\mathbb C$,
and distinct primes $p,q\le K$, define
\begin{equation*}
 n_{p,q}=\lfloor N/\max(p,q)\rfloor,\qquad
 R_{p,q}=\left|n_{p,q}^{-1}\sum_{m\le n_{p,q}}
                 a(pm)\overline{a(qm)}\right|.
\end{equation*}
\begin{proposition}\label{prop:bsz_dense}
There is an absolute constant $C\ge1$ such that the following holds.
Let $0<\rho<1/2$, let $N\ge1$ be an integer,
let $a:[N]\to\mathbb C$ be one-bounded, and let
$\nu:\mathbb N\to\mathbb C$ be multiplicative with
$|\nu(n)|\le1$ for every $n\in\mathbb N$. If
\begin{equation}
 \left|N^{-1}\sum_{n\le N}\nu(n)a(n)\right|\ge\rho,
 \qquad N\ge\exp(C\rho^{-5}),
 \label{sc:dense-bsz-assumptions}
\end{equation}
then $S>0$ and some block $P_j$, with $x=x_j$, satisfies
\begin{equation}
 |P_j|\ge\frac{\rho^2x}{16J}
 \label{sc:dense-bsz-block-size}
\end{equation}
and the set of ordered distinct pairs
\begin{equation*}
 \mathcal E=\{(p,q)\in P_j^2:p\ne q,
                   R_{p,q}\ge\rho^2/(32S)\}
\end{equation*}
satisfies
\begin{equation}
 |\mathcal E|\ge\frac{\rho^2}{32S}|P_j|^2
 \ge\frac{\rho^6}{8192J^2S}x^2.
 \label{sc:dense-bsz-edge-count}
\end{equation}
The absolute constant $C$ is chosen sufficiently large so that the
lower bound in \eqref{sc:dense-bsz-assumptions} implies $N\ge16K/\rho$; consequently,
every $n_{p,q}$ defined above is positive.
In particular, there is an absolute constant $c>0$ such that
\begin{equation}
 \begin{aligned}
 J&\le C\rho^{-8}\log(2/\rho),&
 S&\le C\log(2/\rho),\\
 |\mathcal E|&\ge c\rho^{25}x^2,&
 R_{p,q}&\ge c\rho^3\quad((p,q)\in\mathcal E).
 \end{aligned}
 \label{sc:dense-bsz-polynomial}
\end{equation}
The number $x$ also satisfies $x\le K\le\exp(C\rho^{-5})$.
\end{proposition}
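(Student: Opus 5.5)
We run the Bourgain--Sarnak--Ziegler argument on the decomposition of \Cref{sc:decomposition}, with $\alpha=(\rho/C_*)^4$.

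\emph{Step 1: replace $[N]$ by the disjoint product sets.} By \eqref{sc:bsz-coverage}, the integers of $[N]$ outside $\bigcup_jP_jQ_j$ number at most $C\alpha(1+\log(1/\alpha))N+2K$. Since $\alpha=(\rho/C_*)^4$, the first term is at most $\rho N/8$ once $C_*$ is large. By \eqref{sc:bsz-cutoff-bound}, $\log K\ll\alpha^{-1}\log(1/\alpha)\ll\rho^{-4}\log(2/\rho)$, so $N\ge\exp(C\rho^{-5})$ gives $2K\le\rho N/8$ and also $N\ge16K/\rho$. Hence
\[
 \Bigl|\sum_j\sum_{p\in P_j}\sum_{m\in Q_j}\nu(pm)a(pm)\Bigr|\ge\tfrac34\rho N .
\]
In this sum $p\nmid m$, so $\nu(pm)=\nu(p)\nu(m)$. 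The sum therefore equals $\sum_j\sum_{m\in Q_j}\nu(m)\sum_{p\in P_j}\nu(p)a(pm)$.

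\emph{Step 2: Cauchy--Schwarz and pigeonholing a block.} Apply Cauchy--Schwarz over $(j,m)$ with weights $|P_j|$. This gives
\[
 \Bigl(\tfrac34\rho N\Bigr)^2\le\Bigl(\sum_j|P_j||Q_j|\Bigr)\sum_j\frac1{|P_j|}\sum_{m\le N/x_{j+1}}\Bigl|\sum_{p\in P_j}\nu(p)a(pm)\Bigr|^2 .
\]
The first factor is at most $N$ by \eqref{sc:bsz-disjoint-mass}. We have enlarged the $m$-range to all $m\le N/x_{j+1}$, which is harmless because the terms are nonnegative. Expand the square. The diagonal $p=q$ contributes at most $\sum_j N/x_{j+1}\le\sum_j N/x_j$. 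We must check this is small compared with $\rho^2N$. We would use $x_{j}\ge A\ge\alpha^{-3}$ and $J\ll\alpha^{-2}\log$, so $J/A\ll\alpha\log(1/\alpha)$, which is at most $\rho^2/16$ for large $C_*$. The off-diagonal part is bounded by $\sum_j|P_j|^{-1}\sum_{p\ne q\in P_j}(N/x_{j+1})\,R'_{p,q}$, where $R'_{p,q}$ is the correlation averaged over $m\le N/x_{j+1}$. Comparing the lengths $N/x_{j+1}$ and $n_{p,q}=\lfloor N/\max(p,q)\rfloor$ changes the average by a factor $1+O(\alpha)$ plus a boundary error of size $O(x_j/N)$. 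So $R'_{p,q}\le R_{p,q}+O(\alpha)$, and the $O(\alpha)$ term is absorbed in the same way as the diagonal. This yields
\[
 \tfrac{\rho^2}{4}\le\sum_j\frac{s_j}{|P_j|^2}\sum_{p\ne q\in P_j}R_{p,q}\quad\text{(after absorbing errors)} .
\]
Here we used $|P_j|\cdot(1/x_j)=s_j$ and $\sum_{p\ne q}1\le|P_j|^2$. This in particular forces $S>0$.

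\emph{Step 3: extract the block and the edge set.} Write $\beta_j=|P_j|^{-2}\sum_{p\ne q}R_{p,q}\in[0,1]$, so the inequality above reads $\sum_js_j\beta_j\ge\rho^2/4$. Blocks with $|P_j|<\rho^2x_j/(16J)$ have $s_j<\rho^2/(16J)$, so together they contribute at most $\rho^2/16$. The remaining blocks therefore satisfy $\sum' s_j\beta_j\ge3\rho^2/16$. Since $\sum's_j\le S$, some such block has $\beta_j\ge3\rho^2/(16S)$, which gives \eqref{sc:dense-bsz-block-size}. Markov's inequality applied to $R_{p,q}\le1$ then shows that the pairs with $R_{p,q}\ge\rho^2/(32S)$ number at least $(3/16-1/32)\rho^2|P_j|^2/S\ge\rho^2|P_j|^2/(32S)$. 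Inserting \eqref{sc:dense-bsz-block-size} gives \eqref{sc:dense-bsz-edge-count}.

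The polynomial bounds \eqref{sc:dense-bsz-polynomial} then follow by substituting $\alpha=(\rho/C_*)^4$ into \eqref{sc:bsz-cutoff-bound} and into $S\le C(1+\log(1/\alpha))$. The main obstacle is the error bookkeeping in Step 2: the diagonal term, the mismatch between the $m$-range $N/x_{j+1}$ and $n_{p,q}$, and the $2K$ term all have to fit under $\rho^2$ with the fixed power $\alpha=\rho^4/C_*^4$.
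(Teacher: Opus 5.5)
Your proposal is correct and follows the paper's argument: the same Bourgain--Sarnak--Ziegler decomposition with $\alpha=(\rho/C_*)^4$, then Cauchy--Schwarz (your weighted global version is equivalent to the paper's blockwise-then-over-$j$ version), the diagonal/off-diagonal split, discarding blocks with $s_j<\rho^2/(16J)$, and the same Markov step for $\mathcal E$. The one difference is that the paper takes the $m$-range $m\le\lfloor N/x_j\rfloor$, which is at least $n_{p,q}$, and extends $a$ by zero, so each off-diagonal sum equals $n_{p,q}R_{p,q}$ exactly; your shorter range $m\le N/x_{j+1}$ is valid, but it requires the extra $O(\alpha)+O(x_j/N)$ comparison you describe, which the paper's choice avoids.
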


\begin{proof}
All sums over $j$ below range from $j_0$ to $j_1$. We divide the rest proof into six steps.

\par
\medskip
\noindent\textbf{Step~1: Choice of constants.}
Let $C_0\ge1$ be an absolute constant valid in
\Cref{sc:decomposition}, and write $L=\log(1/\alpha)$.
We first fix $C_*\ge10$ so that $\alpha=(\rho/C_*)^4$ satisfies
\begin{equation*}
 C_0\alpha(1+L)\le\rho/8,
 \qquad \sqrt2\alpha\le\rho/4
 \qquad(0<\rho<1/2).
\end{equation*}
This choice is uniform in $\rho$. Indeed,
$\rho^3\le1/8$ and $\log(1/\rho)\le1/\rho$ give
\begin{equation*}
 \frac{C_0\alpha(1+L)}{\rho}
 \le\frac{C_0}{C_*^4}
       \left(\frac{1+4\log C_*}{8}+1\right),
 \qquad
 \frac{\sqrt2\alpha}{\rho}\le\frac{\sqrt2}{8C_*^4},
 \end{equation*}
and both right-hand sides tend to zero as $C_*\to\infty$.
With $C_*$ now fixed, \eqref{sc:bsz-cutoff-bound} yields
\begin{equation}
 \log\frac{16K}{\rho}
 \le4C_0C_*^4\rho^{-4}\log\frac{C_*}{\rho}
       +\log\frac{16}{\rho}
 \le C_1\rho^{-5}
 \label{sc:dense-bsz-size-threshold}
\end{equation}
for an absolute $C_1$, again using $\log(1/\rho)\le1/\rho$.
Choose the constant $C$ in \eqref{sc:dense-bsz-assumptions}
at least $C_1$. Then $N\ge16K/\rho$, so every $n_{p,q}$ is
positive. For the omitted set
$D=[N]\setminus\bigcup_jP_jQ_j$, \Cref{sc:decomposition} gives
\begin{equation}
 |D|\le C_0\alpha(1+L)N+2K
       \le\rho N/8+\rho N/8=\rho N/4.
 \label{sc:dense-bsz-coverage-error}
\end{equation}

\par
\medskip
\noindent\textbf{Step~2: Decomposition and Cauchy--Schwarz.}
Extend $a$ by zero outside $[N]$, and define
\begin{equation*}
 \begin{gathered}
 A_j=|P_j||Q_j|,\qquad
 B_j=\sum_{p\in P_j}\sum_{m\in Q_j}\nu(pm)a(pm),\\
 r_j=\begin{cases}
 |P_j|^{-2}\displaystyle\sum_{\substack{p,q\in P_j\\p\ne q}}R_{p,q},
       &P_j\ne\varnothing,\\
 0,    &P_j=\varnothing.
 \end{cases}
 \end{gathered}
 \end{equation*}
In particular $0\le r_j\le1$.
Uniqueness of the representations $pm$ and disjointness of the
sets $P_jQ_j$, together with $|\nu(n)a(n)|\le1$, imply
\begin{equation}
 \sum_jA_j\le N,
 \qquad
 \rho N\le\left|\sum_{n\le N}\nu(n)a(n)\right|
          \le |D|+\sum_j|B_j|.
 \label{sc:dense-bsz-decomposition-sum}
\end{equation}
For $p\in P_j$ and $m\in Q_j$, the definition of $Q_j$ gives
$p\nmid m$. Thus multiplicativity gives $\nu(pm)=\nu(p)\nu(m)$.
Put $M_j=\lfloor N/x_j\rfloor$; then $Q_j\subseteq[M_j]$.
Cauchy--Schwarz and $|\nu(m)|\le1$ give
\begin{equation}
 \begin{aligned}
 |B_j|^2
 \le |Q_j|\sum_{m\in Q_j}
       \left|\sum_{p\in P_j}\nu(p)a(pm)\right|^2
 \le |Q_j|\sum_{m\le M_j}
       \left|\sum_{p\in P_j}\nu(p)a(pm)\right|^2.
 \end{aligned}
 \label{sc:dense-bsz-local-cauchy}
\end{equation}
The last inequality only enlarges a sum of nonnegative terms.

\par
\medskip
\noindent\textbf{Step~3: Diagonal and off-diagonal terms.}
On expanding the last square, the diagonal sum is at most
$M_j|P_j|\le (N/x_j)|P_j|$.
For $p\ne q$ in $P_j$, we have $n_{p,q}\le M_j$ because
$\max(p,q)\ge x_j$. The zero extension makes the terms with
$m>n_{p,q}$ vanish. Since
$|\nu(p)\overline{\nu(q)}|\le1$, the off-diagonal sum is therefore
bounded in absolute value by
\begin{equation*}
 \sum_{\substack{p,q\in P_j\\p\ne q}}
 \left|\sum_{m\le M_j}a(pm)\overline{a(qm)}\right|
 =\sum_{\substack{p,q\in P_j\\p\ne q}}n_{p,q}R_{p,q}
 \le\frac{N}{x_j}|P_j|^2r_j.
 \end{equation*}
Using $s_j=|P_j|/x_j$ in
\eqref{sc:dense-bsz-local-cauchy}, and then
$\sqrt{u+v}\le\sqrt u+\sqrt v$ for $u,v\ge0$, we obtain
\begin{equation}
 |B_j|\le\sqrt{NA_j/x_j}+\sqrt{NA_js_jr_j}.
 \label{sc:dense-bsz-local-bound}
\end{equation}

\par
\medskip
\noindent\textbf{Step~4: Total correlation.}
The definition of $j_0$ gives $x_{j_0}\ge\alpha^{-3}$. Hence
\begin{equation*}
 \sum_jx_j^{-1}
 \le\sum_{j\ge j_0}(1+\alpha)^{-j}
 =\frac{1+\alpha}{\alpha x_{j_0}}
 \le2\alpha^2.
 \end{equation*}
Applying Cauchy--Schwarz and $\sum_jA_j\le N$ to the first
term in \eqref{sc:dense-bsz-local-bound} gives
\begin{equation*}
 \sum_j\sqrt{A_jN/x_j}
 \le\left(\sum_jA_j\right)^{1/2}
      \left(N\sum_jx_j^{-1}\right)^{1/2}
 \le\sqrt2\alpha N\le\rho N/4.
\end{equation*}
The second term satisfies
\begin{equation*}
 \sum_j\sqrt{NA_js_jr_j}
 \le\sqrt N\left(\sum_jA_j\right)^{1/2}
              \left(\sum_js_jr_j\right)^{1/2}
 \le N\left(\sum_js_jr_j\right)^{1/2}.
 \end{equation*}
Combining these bounds with
\eqref{sc:dense-bsz-coverage-error} and
\eqref{sc:dense-bsz-decomposition-sum}, we obtain
\begin{equation*}
 \rho N\le\rho N/4+\rho N/4
             +N\left(\sum_js_jr_j\right)^{1/2}.
 \end{equation*}
Consequently,
\begin{equation}
 \sum_js_jr_j\ge\rho^2/4.
 \label{sc:dense-bsz-energy}
\end{equation}
Since $r_j\le1$, this also gives $\displaystyle S=\sum_js_j\ge\rho^2/4>0$.

\par
\medskip
\noindent\textbf{Step~5: Selection of an interval and prime pairs.}
The indices with small $s_j$ or small $r_j$ satisfy
\begin{equation*}
 \sum_{s_j<\rho^2/(16J)}s_jr_j\le\frac{\rho^2}{16},
 \qquad
 \sum_{r_j<\rho^2/(16S)}s_jr_j
 \le\frac{\rho^2}{16S}\sum_js_j=\frac{\rho^2}{16}.
 \end{equation*}
Here the first estimate uses $r_j\le1$ and the fact that there
are $J$ indices. Removing both sets of indices from
\eqref{sc:dense-bsz-energy} leaves a sum at least $\rho^2/8>0$.
Thus some $j$ satisfies
\begin{equation*}
 s_j\ge\frac{\rho^2}{16J},\qquad
 r_j\ge\frac{\rho^2}{16S}.
 \end{equation*}
Set $x=x_j$ and $M=|P_j|$. The first inequality gives
\eqref{sc:dense-bsz-block-size}. Put $t=\rho^2/(32S)$.
Let $\mathcal E$ be the set of ordered distinct pairs in $P_j^2$
with $R_{p,q}\ge t$, and put $u=|\mathcal E|/M^2$.
Since $r_j\ge2t>0$, we have $M\ge2$.
Each summand defining $r_j$ is at most $1$, and it is less than
$t$ for ordered distinct pairs outside $\mathcal E$. Therefore
\begin{equation*}
 2t\le r_j
 \le\frac{|\mathcal E|+t\bigl(M(M-1)-|\mathcal E|\bigr)}{M^2}
 \le u+t.
 \end{equation*}
It follows that $u\ge t$, and hence
\begin{equation*}
 |\mathcal E|\ge\frac{\rho^2}{32S}M^2
 \ge\frac{\rho^2}{32S}
       \left(\frac{\rho^2x}{16J}\right)^2
 =\frac{\rho^6}{8192J^2S}x^2.
 \end{equation*}
This proves \eqref{sc:dense-bsz-edge-count}.

\par
\medskip
\noindent\textbf{Step~6: Bounds in terms of $\rho$.}
Write $\ell=\log(2/\rho)$. Since $C_*$ is fixed,
$L=4\log(C_*/\rho)\le C_2\ell$ for an absolute $C_2$.
The bounds for $J$ and $S$ in \Cref{sc:decomposition} therefore give
\begin{equation*}
 J\le C_0C_*^8\rho^{-8}L\le C_3\rho^{-8}\ell,
 \qquad S\le C_0(1+L)\le C_4\ell,
 \end{equation*}
where $C_3,C_4$ are absolute. Using $\ell\le2/\rho$ in
\eqref{sc:dense-bsz-edge-count} and the definition of
$\mathcal E$, we obtain
\begin{equation*}
 \begin{aligned}
 |\mathcal E|
 &\ge\frac{\rho^{22}}{8192C_3^2C_4\ell^3}x^2
  \ge\frac{\rho^{25}}{65536C_3^2C_4}x^2,\\
 R_{p,q}
 &\ge\frac{\rho^2}{32C_4\ell}
  \ge\frac{\rho^3}{64C_4}
  \qquad((p,q)\in\mathcal E).
 \end{aligned}
 \end{equation*}
Finally, $x=x_j\le K$ by definition, and
\eqref{sc:dense-bsz-size-threshold} gives $K\le\exp(C_1\rho^{-5})$.
Enlarging $C$ to at least $\max\{1,C_1,C_3,C_4\}$ and taking
$c=\min\{(65536C_3^2C_4)^{-1},(64C_4)^{-1}\}$ proves
\eqref{sc:dense-bsz-polynomial} and the asserted bound for $K$.
Both constants are absolute.
\end{proof}

For the progression case, let $D,N$ be positive integers, let
$b\in\mathbb Z$ and $\xi,\beta\in\mathbb R$, and let
$I\subseteq\mathbb R$ be an interval. Suppose
$\mathcal P=I\cap\{n\in\mathbb Z:n\equiv b\pmod D\}\subseteq[N]$
is nonempty, and set
$a(n)=\mathbf1_{\mathcal P}(n)e(\xi n+\beta n^2)$.
The difference phase is
\begin{equation*}
 Q_{p,q}(m)=(p-q)\xi m+(p^2-q^2)\beta m^2.
\end{equation*}

\begin{samepage}
\begin{lemma}\label{sc:cross-progression}
Let $0<\rho<1/2$, and let $\nu:\mathbb N\to\mathbb C$ be
a one-bounded multiplicative function. Suppose
\eqref{sc:dense-bsz-assumptions} holds for the sequence $a$ above.
Set $\alpha=(\rho/C_*)^4$ and
$\displaystyle S=\sum_{j=j_0}^{j_1}|P_j|/x_j$ as in
\eqref{sc:bsz-block-data}. Choose $x$ and $\mathcal E$ by
\Cref{prop:bsz_dense}. Then $D\le2/\rho$.
For each $(p,q)\in\mathcal E$, there exist $a_0\in\mathbb Z$
and positive integers $d,M$ such that
\begin{equation}
 \{m\in[n_{p,q}]:pm,qm\in\mathcal P\}
 =\{a_0+d\ell:1\le\ell\le M\},
 \label{sc:dense-bsz-cross-AP}
\end{equation}
with
\begin{equation*}
 d\le D^2,\qquad |a_0|\le N/x+D^2,\qquad
 M\ge\frac{\rho^2N}{128Sx}.
\end{equation*}
Moreover,
\begin{equation}
 \begin{gathered}
 \sum_{m\le n_{p,q}}a(pm)\overline{a(qm)}
 =\sum_{\ell=1}^{M}e\bigl(Q_{p,q}(a_0+d\ell)\bigr),\\
 \left|\frac1M\sum_{\ell=1}^{M}e\bigl(Q_{p,q}(a_0+d\ell)\bigr)\right|
 \ge\frac{\rho^2}{32S}.
 \end{gathered}
 \label{sc:dense-bsz-cross-average}
\end{equation}
\end{lemma}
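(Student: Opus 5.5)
The plan has three parts: bound $D$ by a density argument, identify the common support as an arithmetic progression using the Chinese remainder theorem, and then read off its length from the correlation lower bound.

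\emph{Bounding $D$.} The sequence $a$ is supported on $\mathcal P$, which is contained in a single residue class modulo $D$ inside $[N]$. Hence $|\mathcal P|\le N/D+1$. The hypothesis \eqref{sc:dense-bsz-assumptions} and $|\nu a|\le\mathbf 1_{\mathcal P}$ give
\[
\rho N\le|\mathcal P|\le N/D+1.
\]
Since $N\ge\exp(C\rho^{-5})$ is huge, we get $1\le\rho N/2$, so $N/D\ge\rho N/2$ and $D\le2/\rho$.

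\emph{The common support is a progression.} Fix $(p,q)\in\mathcal E$. Both primes lie in $P_j$, so $p,q\ge x\ge x_{j_0}\ge\alpha^{-3}$. This exceeds $2/\rho\ge D$, so $p,q\nmid D$.

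The condition $pm\in\mathcal P$ says two things: $m$ lies in the interval $p^{-1}I$, and $pm\equiv b\pmod D$. The congruence is equivalent to $m\equiv b\bar p\pmod D$, since $p$ is invertible modulo $D$. The same holds for $q$. Intersecting two identical residue-class conditions modulo $D$ gives either one class modulo $D$ or the empty set. Adding the interval conditions and $m\in[n_{p,q}]$ leaves an intersection of an interval with one class modulo $d=D\le D^2$. I allow $d\le D^2$ to have room, e.g.\ for $\operatorname{lcm}$-type bookkeeping.

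So the set is $\{a_0+d\ell:1\le\ell\le M\}$, where $a_0$ is the element just below the first one. It satisfies $|a_0|\le n_{p,q}+d\le N/x+D^2$, because $\max(p,q)\ge x$. Nonemptiness, and in fact $M\ge1$, will follow from the correlation bound below.

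\emph{The sum identity and the length bound.} On the set above, $a(pm)\overline{a(qm)}=e(\xi(p-q)m+\beta(p^2-q^2)m^2)=e(Q_{p,q}(m))$, and off it the product vanishes. This gives the first line of \eqref{sc:dense-bsz-cross-average}. Because $(p,q)\in\mathcal E$,
\[
|\textstyle\sum_{\ell\le M}e(\cdot)|=n_{p,q}R_{p,q}\ge n_{p,q}\rho^2/(32S).
\]
The trivial bound $|\sum|\le M$ then gives $M\ge n_{p,q}\rho^2/(32S)$. Dividing by $M$ instead gives the averaged inequality, since $n_{p,q}\ge M$; this holds because the progression lies in $[n_{p,q}]$.

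Finally, $\max(p,q)<x_{j+1}=(1+\alpha)x\le2x$, so $n_{p,q}\ge N/(2x)-1\ge N/(4x)$. This uses $N\ge16K/\rho\ge4x$, from \Cref{prop:bsz_dense}. Therefore $M\ge\rho^2N/(128Sx)$.

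The only delicate point is the CRT step, namely showing that $p,q$ are coprime to $D$. It is handled by the size comparison $x\ge\alpha^{-3}>2/\rho\ge D$.
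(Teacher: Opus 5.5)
Your proof is correct and follows the paper's argument. The bound $D\le 2/\rho$ comes from the same density count, and the length and averaged bounds come from $n_{p,q}R_{p,q}\ge n_{p,q}\rho^2/(32S)$ together with $M\le n_{p,q}$ and $n_{p,q}\ge N/(4x)$. The only variation is the progression step: you use $p,q\ge x\ge\alpha^{-3}>2/\rho\ge D$ to make $p,q$ invertible modulo $D$ and take $d=D$. The paper needs no coprimality; it takes $d=\operatorname{lcm}(D/\gcd(p,D),D/\gcd(q,D))$ and applies Euclid's lemma, which is why it only claims $d\le D^2$.
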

\end{samepage}

\begin{proof}
We divide the proof into four steps.	
	
\par
\medskip
\noindent\textbf{Step~1: Bound for the modulus.}
The set $\mathcal P$ contains at most $N/D+1$ integers, so the
correlation hypothesis in \eqref{sc:dense-bsz-assumptions} gives
\begin{equation*}
 \rho\le\frac1N\left|\sum_{n\le N}\nu(n)a(n)\right|
 \le\frac{|\mathcal P|}{N}\le\frac1D+\frac1N.
 \end{equation*}
Here $|\nu(n)a(n)|\le1$ and $a(n)=0$ outside $\mathcal P$,
which justifies the middle inequality. The lower bound for $N$
implies $N\ge2/\rho$, and hence
$1/N\le\rho/2$. Therefore $1/D\ge\rho/2$, which proves
$D\le2/\rho$.

\par
\medskip
\noindent\textbf{Step~2: The arithmetic progression.}
Fix $(p,q)\in\mathcal E$. Put $g_p=\gcd(p,D)$ and
$g_q=\gcd(q,D)$, and define
\begin{equation*}
 d=\operatorname{lcm}\left(\frac D{g_p},\frac D{g_q}\right)
 \le D^2.
 \end{equation*}
By \Cref{prop:bsz_dense}, $S>0$, $n_{p,q}\ge1$, and
\begin{equation}
 \left|\sum_{m\le n_{p,q}}a(pm)\overline{a(qm)}\right|
 =n_{p,q}R_{p,q}
 \ge\frac{\rho^2}{32S}n_{p,q}>0.
 \label{sc:cross-progression-nonempty}
\end{equation}
Hence some $m\in[n_{p,q}]$ satisfies $pm,qm\in\mathcal P$.
Let $m_1$ be the least such integer.

Since $pm_1,qm_1\equiv b\pmod D$, the congruences
$pm\equiv b\pmod D$ and $qm\equiv b\pmod D$ are equivalent to
$D\mid p(m-m_1)$ and $D\mid q(m-m_1)$.
For each $r\in\{p,q\}$, put $g_r=\gcd(r,D)$.
Then $\gcd(r/g_r,D/g_r)=1$, so dividing by $g_r$ and applying
Euclid's lemma gives
\begin{equation*}
 D\mid r(m-m_1)
 \quad\Longleftrightarrow\quad
 \frac D{g_r}\mid\frac r{g_r}(m-m_1)
 \quad\Longleftrightarrow\quad
 \frac D{g_r}\mid m-m_1.
 \end{equation*}
Thus the two congruences hold if and only if both $D/g_p$ and
$D/g_q$ divide $m-m_1$. By the definition of their least common
multiple $d$, this is equivalent to $d\mid m-m_1$, that is,
$m\equiv m_1\pmod d$.

Because $p,q>0$, the restrictions $pm,qm\in I$ and
$1\le m\le n_{p,q}$ define an interval.
The integers in this interval congruent to $m_1$ modulo $d$
are therefore $m_1,m_1+d,\ldots,m_1+(M-1)d$ for some $M\ge1$.
Setting $a_0=m_1-d$ proves \eqref{sc:dense-bsz-cross-AP}.

\par
\medskip
\noindent\textbf{Step~3: Bounds for $a_0$ and $M$.}
Since $1\le m_1\le n_{p,q}$,
\begin{equation*}
 |a_0|=|m_1-d|\le n_{p,q}+d
 \le \frac Nx+D^2,
 \end{equation*}
where $n_{p,q}\le N/x$ because $p,q\ge x$.
Recall that $K=\lceil x_{j_1+1}\rceil$ in
\eqref{sc:bsz-parameters}. Since $x=x_j\le K$,
\Cref{prop:bsz_dense} gives $N\ge16K/\rho\ge4x$.
Also $p,q\in P_j\subset[x,2x)$, so
$N/\max(p,q)>N/(2x)\ge2$.
Applying $\lfloor y\rfloor\ge y/2$ for $y\ge2$, we obtain
\begin{equation*}
 n_{p,q}=\left\lfloor\frac N{\max(p,q)}\right\rfloor
 \ge\frac N{2\max(p,q)}\ge\frac N{4x}.
 \end{equation*}
The product $a(pm)\overline{a(qm)}$ has absolute value $1$
for the $M$ integers in \eqref{sc:dense-bsz-cross-AP} and is
zero for all other $m\le n_{p,q}$. The triangle inequality and
\eqref{sc:cross-progression-nonempty} therefore give
\begin{equation*}
 M\ge\left|\sum_{m\le n_{p,q}}a(pm)\overline{a(qm)}\right|
 =n_{p,q}R_{p,q}
 \ge\frac{\rho^2}{32S}n_{p,q}
 \ge\frac{\rho^2N}{128Sx}.
 \end{equation*}

\par
\medskip
\noindent\textbf{Step~4: Phase identity and average.}
For every $m$ in the progression, $pm,qm\in\mathcal P$, and therefore
\begin{equation*}
 a(pm)\overline{a(qm)}=e\left(\xi(pm-qm)+\beta\bigl((pm)^2-(qm)^2\bigr)\right)
 =e\bigl(Q_{p,q}(m)\bigr).
 \end{equation*}
All other terms with $m\le n_{p,q}$ vanish. Consequently,
\eqref{sc:dense-bsz-cross-average} gives the exact sum identity.
Finally, the progression is contained in $[n_{p,q}]$, so $M\le n_{p,q}$.
Using \eqref{sc:cross-progression-nonempty} once more,
\begin{equation*}
 \left|\frac1M\sum_{\ell=1}^{M}
 e\bigl(Q_{p,q}(a_0+d\ell)\bigr)\right|
 =\frac{n_{p,q}R_{p,q}}{M}
 \ge R_{p,q}
 \ge\frac{\rho^2}{32S}.
 \end{equation*}
This proves the final assertion in \eqref{sc:dense-bsz-cross-average}.
\end{proof}

\subsection{Simultaneous rational approximation}

For a positive integer $R$, let $\tau(R)$ be the number of its
positive divisors. In this subsection, the letters $p$ and $q$
denote arbitrary integer variables; no primality assumption is made.
We abbreviate $\|x\|_{\R/\Z}$ to $\|x\|$ within this subsection.

\begin{lemma}\label{sc:congruence-counts}
For every real $X\ge1$ and integer $R\ge1$,
\begin{equation}
 \#\{(p,q)\in([X,2X]\cap\mathbb Z)^2:p\ne q,
                  \ R\mid p^2-q^2\}
 \le\frac{8X^2\tau(R)}R\le\frac{16X^2}{\sqrt R}.
 \label{sc:dense-square-count}
\end{equation}
Similarly,
\begin{equation}
 \#\{(p,q)\in([X,2X]\cap\mathbb Z)^2:p\ne q,
                  \ R\mid p-q\}\le\frac{4X^2}{R}.
 \label{sc:dense-linear-count}
\end{equation}
\end{lemma}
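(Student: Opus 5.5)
The plan is to handle the linear count \eqref{sc:dense-linear-count} first and then reduce the quadratic count \eqref{sc:dense-square-count} to it through factorizations of $R$.

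\textbf{Linear count.} Let $I=[X,2X]\cap\Z$, so $|I|\le X+1\le2X$. Fix $p\in I$. The $q\in I$ with $q\equiv p\pmod R$ lie in one residue class. An interval of length $X$ contains at most $X/R+1$ integers of a given class. Excluding $q=p$ leaves at most $X/R$ choices. Summing over $p$ gives at most $2X\cdot X/R=2X^2/R\le4X^2/R$. (If $R>X$ the count is zero, which is consistent.)

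\textbf{Quadratic count.} Here $R\mid(p-q)(p+q)$.

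1. Set $R_1=\gcd(R,p-q)$. Then $R/R_1$ divides $p+q$. Hence for some divisor $e=R_1$ of $R$ we have $e\mid p-q$ and $(R/e)\mid p+q$.

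2. Fix a divisor $e$ of $R$. The conditions $e\mid p-q$ and $f\mid p+q$, with $f=R/e$, force $p$ and $q$ into a coset of a lattice of index $\operatorname{lcm}$-type. The clean way to see this is to pass to coordinates $u=p-q$ and $v=p+q$. Then $u\in[-X,X]$ is nonzero with $e\mid u$, and $v\in[2X,4X]$ with $f\mid v$; the pair $(u,v)$ determines $(p,q)$.

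3. The number of admissible $u$ is at most $2X/e$, since $u\neq0$ and $|u|\le X$. The number of admissible $v$ is at most $2X/f+1$.

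4. This gives $(2X/e)(2X/f+1)=4X^2/R+2X/e$. The extra term $2X/e$ is the nuisance, and it is the main thing to control. If $f\le 2X$, then $2X/f\ge1$, so the product is at most $(2X/e)(4X/f)=8X^2/R$. If $f>2X$, note that $v$ lies in an interval of length $2X<f$ and must be a multiple of $f$, so there is at most one $v$. The bound $(2X/e)\cdot1=2Xf/R<\dots$ does not directly give $X^2/R$. So I would instead use the symmetric case split: either $e\le X$ or $f$ is small. When $f>2X$, the count is at most $2X/e=2Xf/R$, which is not $\ll X^2/R$ in general.

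5. To repair this, I would instead choose $R_1=\gcd(R,p+q)$ when that choice gives a smaller cofactor. Concretely, if both $e>X$ and $f>4X$ then $R=ef>4X^2$, and no pair exists, because $0<|p^2-q^2|\le3X^2<R$. So in every nonempty case at least one of $e\le X$ or $f\le 4X$ holds. Counting whichever variable has the small modulus at most $4X/(\text{modulus})$, and the other at most $4X/(\text{modulus})$ when that modulus is also at most the length, bounds each divisor's contribution by $8X^2/R$. The remaining mixed case, one modulus larger than its interval, gives at most one value of that variable and at most $4X/m$ of the other, with $m\cdot(\text{large modulus})=R\le 3X^2$-scale. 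I expect this bookkeeping to be the main obstacle, and I would tune constants until each divisor contributes at most $8X^2/R$.

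6. Summing over the $\tau(R)$ divisors $e$ gives $8X^2\tau(R)/R$.

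\textbf{Second inequality.} Finally, the bound $\tau(R)\le2\sqrt R$ follows by pairing each divisor $d$ with $R/d$, one of which is at most $\sqrt R$. This yields $8X^2\tau(R)/R\le16X^2/\sqrt R$.
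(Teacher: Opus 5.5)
Your overall route is the paper's. You use the coordinates $u=p-q$, $v=p+q$, assign to each pair the divisor $e=\gcd(R,p-q)$ with $(R/e)\mid v$, sum over the $\tau(R)$ divisors, and pair divisors to get $\tau(R)\le2\sqrt R$. Your linear count is also correct, and even gives the better constant $2X^2/R$. But the one step in the quadratic count that actually needs proof is never established: that each divisor $e$ contributes at most $8X^2/R$. In step 4 you assert that when $f=R/e>2X$ the contribution $2X/e=2Xf/R$ ``is not $\ll X^2/R$ in general.'' Step 5 then ends with ``I would tune constants until each divisor contributes at most $8X^2/R$.'' So \eqref{sc:dense-square-count} is left unproved.

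The missing observation is that the $+1$ in your count of $v$ comes only from keeping the lower endpoint $v\ge2X$. Since $v=p+q$ is a \emph{positive} integer at most $4X$, the admissible $v$ are among the positive multiples of $f$ in $(0,4X]$. There are $\lfloor4X/f\rfloor\le4X/f$ of these, with no additive error. Likewise, the nonzero multiples of $e$ in $[-X,X]$ number $2\lfloor X/e\rfloor\le2X/e$. Hence each divisor contributes at most $(2X/e)(4X/f)=8X^2/R$, and summing over $e\mid R$ gives $8X^2\tau(R)/R$. This is exactly the paper's argument.

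Equivalently, within your own case split: if any admissible $v$ exists then $f\le v\le4X$. So the case you were worried about gives $2X/e=2Xf/R\le8X^2/R$ after all, and the case $f>4X$ contributes nothing. Your claim in step 4 overlooked this.
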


\begin{proof}
We divide the proof into two steps.

\medskip
\noindent\textbf{Step 1. Count pairs with $R\mid p^2-q^2$.}
Put $I=[X,2X]\cap\mathbb Z$. We first count the pairs in
\eqref{sc:dense-square-count}. For $(p,q)\in I^2$ with $p\ne q$,
set
\[
 A=p-q,\qquad B=p+q.
\]
Then
\[
 0<|A|\le X,\qquad 2X\le B\le4X,
 \qquad p^2-q^2=AB.
\]
The identities
\[
 p=\frac{A+B}{2},\qquad q=\frac{B-A}{2}
\]
show that $(p,q)\mapsto(A,B)$ is injective.

Suppose that $R\mid AB$. Define
\[
 g=\gcd(|A|,R),\qquad A_0=\frac Ag,\qquad R_0=\frac Rg.
\]
Then $g\mid R$ and $\gcd(|A_0|,R_0)=1$. The divisibility
$R\mid AB$ implies $R_0\mid A_0B$. Euclid's lemma therefore gives
\[
 \frac Rg=R_0\mid B.
\]
Thus each admissible pair is assigned its unique exact-gcd value
$g=\gcd(|A|,R)\mid R$; for that value, $g\mid A$ and
$(R/g)\mid B$.

Fix $g\mid R$. If we discard the exact condition
$\gcd(|A|,R)=g$, the number of possible nonzero $A$ is at most
\[
 2\left\lfloor\frac Xg\right\rfloor\le\frac{2X}{g}.
\]
If we also discard the lower bound $B\ge2X$, then the number of
positive $B\le4X$ divisible by $R/g$ is at most
\[
 \left\lfloor\frac{4X}{R/g}\right\rfloor
 =\left\lfloor\frac{4Xg}{R}\right\rfloor
 \le\frac{4Xg}{R}.
\]
Dropping these restrictions, as well as the parity and interval
conditions, only enlarges the set being counted. Hence
\begin{align*}
 \#\{(p,q)\in I^2:p\ne q,\ R\mid p^2-q^2\}
 \le\sum_{g\mid R}
 2\left\lfloor\frac Xg\right\rfloor
 \left\lfloor\frac{4Xg}{R}\right\rfloor
 \le\sum_{g\mid R}\frac{2X}{g}\frac{4Xg}{R}
 =\frac{8X^2\tau(R)}{R}.
\end{align*}

Every positive divisor $d$ of $R$ pairs with $R/d$. At most one member
of each pair is smaller than $\sqrt R$; if $R$ is a square, the divisor
$\sqrt R$ is counted only once. Therefore
\[
 \tau(R)\le2\sqrt R,
\]
and so
\[
 \frac{8X^2\tau(R)}{R}\le\frac{16X^2}{\sqrt R}.
\]
This proves \eqref{sc:dense-square-count}.

\medskip
\noindent\textbf{Step 2. Count pairs with $R\mid p-q$.}
For \eqref{sc:dense-linear-count}, put $h=p-q$. The conditions
$p,q\in I$, $p\ne q$, and $R\mid h$ imply that $h$ is a nonzero
multiple of $R$ with $|h|\le X$. There are at most
\[
 2\left\lfloor\frac XR\right\rfloor
\]
such values of $h$. Once $h$ is fixed, choosing $q$ determines
$p=q+h$, so there are at most $|I|$ corresponding pairs. Since
\[
 |I|\le\lfloor X\rfloor+1\le2X\qquad(X\ge1),
\]
we obtain
\[
 \#\{(p,q)\in I^2:p\ne q,\ R\mid p-q\}
 \le2\left\lfloor\frac XR\right\rfloor |I|
 \le\frac{4X^2}{R}.
\]
This proves \eqref{sc:dense-linear-count}.
\end{proof}

For the next proposition, fix an integer $H\ge2$, a real $X\ge1$,
a positive integer $N$, and coefficients $\alpha,\beta\in\mathbb R$.
Consider a set of pairs $\mathcal E$\footnote{In applying \Cref{thm:scalar_inverse}, set
$X=x$, $\alpha=\xi$, and $a=a_0$; use the particular
correlation-selected set $\mathcal E$ supplied by
\Cref{prop:bsz_dense}, with $H$ chosen as in
\eqref{sc:scalar-H-choice}. \Cref{sc:cross-progression,sc:circle-lemma}
then verify the remaining hypotheses.}
satisfying
\begin{equation}
 \mathcal E\subseteq
 \{(p,q)\in([X,2X]\cap\mathbb Z)^2:p\ne q\},\qquad
 |\mathcal E|\ge X^2/H,
 \label{sc:dense-pair-hypothesis}
\end{equation}
with the following property. For each $(p,q)\in\mathcal E$, there are
integers $1\le r,d\le H$, an integer $a$, and a real $M>0$ such that
\begin{equation}
 |a|\le N/X+H,\qquad M\ge N/(HX),
 \label{sc:dense-progression-hypothesis}
\end{equation}
and
\begin{align}
 \|r d^2(p^2-q^2)\beta\|&\le H/M^2,
 \label{sc:dense-quadratic-hypothesis}\\
 \|r\{d(p-q)\alpha+2ad(p^2-q^2)\beta\}\|&\le H/M.
 \label{sc:dense-linear-hypothesis}
\end{align}
The choices of $r,d,a,M$ may depend on the pair $(p,q)$.

\begin{proposition}\label{sc:rational-approx}
Assume the pair and coefficient conditions
\eqref{sc:dense-pair-hypothesis}--\eqref{sc:dense-linear-hypothesis}.
If $X\ge8H$ and
\begin{equation*}
 N\ge10^9 H^{30}X^2,
\end{equation*}
then there are integers $u,v$ and positive integers $S,R$ such that
\begin{align*}
 R&\le256H^9,
 &\left|\beta-\frac vR\right|&\le\frac{4H^4}{N^2},
 \\
 S&\le1024H^{14},
 &\left|\alpha-\frac uS\right|&\le\frac{520H^7}{N}.
\end{align*}
Set $Q=RS\le2^{18}H^{23}$. Then both approximating fractions
can be written with denominator $Q$:
\[
 \frac uS=\frac{uR}{Q},\qquad \frac vR=\frac{vS}{Q}.
\]
\end{proposition}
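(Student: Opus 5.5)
We first obtain the approximation to $\beta$, then feed it into \eqref{sc:dense-linear-hypothesis} to approximate $\alpha$.

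\textbf{Quadratic coefficient.} For each $(p,q)\in\mathcal E$, pigeonhole over $(r,d)\in[H]^2$. Some pair $(r,d)$ is then shared by at least $X^2/H^3$ elements of $\mathcal E$. Set $k=rd^2\le H^3$. By \eqref{sc:dense-progression-hypothesis} we have $M\ge N/(HX)$, so \eqref{sc:dense-quadratic-hypothesis} gives
\[
 \|k(p^2-q^2)\beta\|\le H^3X^2/N^2
\]
for these pairs. Put $n=p^2-q^2$. Then $0<|n|\le3X^2$, and each value of $n$ is attained by at most $\tau$-many pairs. The set of integers $n$ with $\|kn\beta\|$ small therefore has density $\gg H^{-3}$ in $[-3X^2,3X^2]$, up to divisor losses that we control by \Cref{sc:congruence-counts}.

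To turn this into a rational approximation we use a Dirichlet approximation of $k\beta$ instead of a general recurrence lemma. Choose $|k\beta-v'/R'|\le 1/(R'Y)$ with $R'\le Y$, where $Y\approx N^2/(H^{c}X^2)$. If $R'$ is large, then $\|kn\beta\|$ small for $n$ with $|n|\le 3X^2\ll Y$ forces $R'\mid n$ up to error, that is, $R'\mid p^2-q^2$. In that case the count in \eqref{sc:dense-square-count} gives at most $16X^2/\sqrt{R'}$ pairs, which is fewer than $X^2/H^3$ once $R'>256H^6$ or so. Hence $R'\le 256H^6$.

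The error then satisfies $|n|\,|k\beta-v'/R'|\le\|kn\beta\|$ whenever $R'\nmid n$ fails to matter. Pick a good pair with $|n|\ge X^2/H^{O(1)}$; most pairs have this, since few pairs have $|p-q|$ small. This yields
\[
 |k\beta-v'/R'|\le H^{O(1)}/N^2 .
\]
Taking $R=kR'\le256H^9$ and $v=v'$ gives $|\beta-v/R|\le4H^4/N^2$.

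\textbf{Linear coefficient.} Substitute $\beta=v/R+\epsilon$ into \eqref{sc:dense-linear-hypothesis}. We have $2ad(p^2-q^2)\epsilon=O(H^{6}/N)$ because $|a|\ll N/X$, and the term $2adr(p^2-q^2)v/R$ has denominator dividing $R$. Multiplying through by $R$ gives
\[
 \|Rrd(p-q)\alpha\|\le H^{O(1)}/M+H^{O(1)}/N\le H^{O(1)}X/N .
\]
Repeat the pigeonholing to fix $k'=Rrd\le H^{11}$. Then apply the same Dirichlet argument with $n=p-q$, using \eqref{sc:dense-linear-count} in place of the square count; this bounds the new denominator $S'\le 4H^{3}$ or so. Setting $S=k'S'\le1024H^{14}$ gives the stated error $520H^7/N$.

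\textbf{Main obstacle.} The hardest step is making the Dirichlet/divisibility dichotomy exact: ensuring that smallness of $\|kn\beta\|$ genuinely forces $R'\mid n$. This relies on $|n|\,|k\beta-v'/R'|<1/(2R')$, which in turn needs $N\ge10^9H^{30}X^2$ so that $Y\gg X^2 R'$. The numerical constants $256$, $1024$ and $520$ must come out of careful bookkeeping in this step.
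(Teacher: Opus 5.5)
Your treatment of the denominators is sound. Pigeonholing $(r,d)$, taking a Dirichlet approximation $v'/R'$ to $k\beta$ with a parameter $6X^2<Y<N^2/(2H^3X^2)$, and forcing $R'\mid p^2-q^2$ for every pair of the class works for every $R'\le Y$, so no ``$R'$ large'' case split is needed. \eqref{sc:dense-square-count} then gives $R=kR'\le256H^9$, and the same scheme with \eqref{sc:dense-linear-count} gives $S\le1024H^{14}$.

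\textbf{The gap is in the error bounds.} You assert $4H^4/N^2$ and $520H^7/N$ but do not derive them. Your ``good pair'' must lie in the pigeonholed class, which is only guaranteed $X^2/H^3$ elements. The strip $0<|p-q|<X/(8H)$, however, contains roughly $X^2/(4H)$ pairs, which exceeds $X^2/H^3$ once $H\ge3$. So nothing prevents the class from lying inside that strip, and ``most pairs have large $|n|$'' is unjustified there. What you can guarantee inside the class is about $|p-q|\ge X/(4H^3)$, i.e.\ $|n|\ge X^2/(2H^3)$. This gives only $|\beta-v/R|\le H^3X^2/(N^2|n|)\le2H^6/N^2$, which is weaker than $4H^4/N^2$ for every $H\ge2$. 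It also gives a correspondingly weaker $H^{O(1)}/N$ bound for $\alpha$. So the stated constants are not proved.

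\textbf{The missing ingredient is a well-separated pair taken from all of $\mathcal E$.} Since $|\mathcal E|\ge X^2/H$ and $X\ge8H$, some $(p_0,q_0)\in\mathcal E$ has $|p_0-q_0|\ge X/(8H)$, hence $|p_0^2-q_0^2|\ge X^2/(4H)$. The paper builds the fraction from this pair \emph{before} pigeonholing.
\begin{enumerate}
\item $v/R$ is the reduced form of the nearest fraction $m_0/\Lambda_0$, with $\Lambda_0=r_0d_0^2(p_0^2-q_0^2)$. This gives $|\beta-v/R|\le4H^4/N^2$ at once, with the crude bound $R\le4H^3X^2$.
\item The hypothesis $N\ge10^9H^{30}X^2$ then forces $R\mid rd^2(p^2-q^2)$ for \emph{every} pair in $\mathcal E$.
\item Only afterwards does it pigeonhole $(r,d)$ and apply \Cref{sc:congruence-counts} to get $R\le256H^9$.
\end{enumerate}
The same pattern with $\Lambda_1=Rr_0d_0(p_0-q_0)$ gives $520H^7/N$. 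In your ordering you would need an extra integrality-forcing step to transfer the class-based fraction to the far pair, whose $(r_0,d_0)$ differs from that of the class.

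Two minor slips: $k'=Rrd\le256H^{11}$, not $H^{11}$; and $2ad(p^2-q^2)\epsilon$ is $O(H^5X/N)$, not $O(H^6/N)$. Your weaker constants would still suffice for the later use in \Cref{thm:scalar_inverse}, but they do not establish the proposition as stated.
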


\begin{proof}
For each $(p,q)\in\mathcal E$, fix parameters $r,d,a,M$ satisfying
\eqref{sc:dense-progression-hypothesis}--\eqref{sc:dense-linear-hypothesis}.
Put $I=[X,2X]\cap\mathbb Z$ and $\Delta_{p,q}=p^2-q^2$. We divide the rest proof into three steps.

\par
\medskip
\noindent\textbf{Step 1. Find a pair with large separation.}
Fix $q\in I$. The condition $|p-q|<X/(8H)$ restricts $p$ to an
interval of length $X/(4H)$. This interval contains at most
$X/(4H)+1$ integers. Since $|I|\le X+1$, the number of such pairs
in $I^2$ is at most
\[
 (X+1)\left(\frac X{4H}+1\right)
 \le \frac{X^2}{2H}+2X
 \le \frac{3X^2}{4H}.
\]
The last inequality uses $X\ge8H$. Since
$|\mathcal E|\ge X^2/H$, some $(p_0,q_0)\in\mathcal E$ satisfies
\begin{equation}
 |p_0-q_0|\ge\frac{X}{8H},
 \qquad
 |\Delta_{p_0,q_0}|
 =|p_0-q_0|(p_0+q_0)\ge\frac{X^2}{4H}.
 \label{sc:dense-far-pair}
\end{equation}
For every pair in $\mathcal E$ we also have
\[
 |p-q|\le X,
 \qquad
 |\Delta_{p,q}|\le4X^2.
\]

\par
\medskip
\noindent\textbf{Step 2. Approximate $\beta$.}
Denote the parameters chosen for $(p_0,q_0)$ by
$r_0,d_0,a_0,M_0$. By \eqref{sc:dense-quadratic-hypothesis} and
$M_0\ge N/(HX)$,
\[
 \|r_0d_0^2\Delta_{p_0,q_0}\beta\|
 \le \frac{H}{M_0^2}
 \le \frac{H^3X^2}{N^2}.
\]
Put
$\Lambda_0=r_0d_0^2\Delta_{p_0,q_0}$. Then
$|\Lambda_0|\ge X^2/(4H)$. Choosing an integer $m_0$ nearest to
$\Lambda_0\beta$ gives
\[
 \left|\beta-\frac{m_0}{\Lambda_0}\right|
 \le \frac{H^3X^2/N^2}{|\Lambda_0|}
 \le \frac{4H^4}{N^2}.
\]
Write $m_0/\Lambda_0=v/R$ in lowest terms, with $v\in\mathbb Z$,
$R\ge1$, and $\gcd(v,R)=1$. Since $r_0d_0^2\le H^3$ and
$|\Delta_{p_0,q_0}|\le4X^2$,
\begin{equation}
 R\le4H^3X^2,
 \qquad
 \beta=\frac vR+\beta_0,
 \qquad
 |\beta_0|\le\frac{4H^4}{N^2}.
 \label{sc:dense-initial-beta}
\end{equation}

For each $(p,q)\in\mathcal E$, combining
\eqref{sc:dense-quadratic-hypothesis} with
\eqref{sc:dense-initial-beta} gives
\[
 \left\|\frac{rd^2\Delta_{p,q}v}{R}\right\|
 \le \frac{H^3X^2}{N^2}
      +4H^3X^2\frac{4H^4}{N^2}
 \le \frac{17H^7X^2}{N^2}.
\]
Multiplication by the preliminary bound $R\le4H^3X^2$ yields
\[
 R\frac{17H^7X^2}{N^2}
 \le \frac{68H^{10}X^4}{N^2}
 \le \frac{68}{10^{18}H^{50}}<1,
\]
where we used $N\ge10^9H^{30}X^2$. The rational number inside the norm has
denominator dividing $R$. Any nonintegral rational with denominator
dividing $R$ has distance at least $1/R$ from the nearest integer.
Hence the strict inequality forces it to be an integer. Thus
$R\mid rd^2\Delta_{p,q}v$; since $(v,R)=1$, Euclid's lemma gives
\begin{equation*}
 R\mid rd^2\Delta_{p,q}
 \qquad ((p,q)\in\mathcal E).
\end{equation*}

Partition $\mathcal E$ according to the chosen values of $(r,d)$.
There are at most $H^2$ classes. Hence one class $\mathcal E_1$ satisfies
\[
 |\mathcal E_1|\ge\frac{|\mathcal E|}{H^2}\ge\frac{X^2}{H^3}.
\]
Let $(r_1,d_1)$ be its associated pair and put
\[
 R_1=\frac{R}{\gcd(R,r_1d_1^2)}.
\]
The divisibility just proved implies
$R_1\mid\Delta_{p,q}$ for every $(p,q)\in\mathcal E_1$. \Cref{sc:congruence-counts}
therefore gives
\[
 \frac{X^2}{H^3}
 \le |\mathcal E_1|
 \le \frac{16X^2}{\sqrt{R_1}},
\]
so $R_1\le256H^6$. Consequently,
\[
 R\le r_1d_1^2R_1\le256H^9.
\]
Together with \eqref{sc:dense-initial-beta}, this proves the two
assertions concerning $\beta$.

\par
\medskip
\noindent\textbf{Step 3. Approximate $\alpha$.}
By \eqref{sc:dense-linear-hypothesis} and $M\ge N/(HX)$,
for each $(p,q)\in\mathcal E$ we have
\[
 \left\|r\{d(p-q)\alpha+2ad\Delta_{p,q}\beta\}\right\|
 \le \frac{H^2X}{N}.
\]
Using $\|Rz\|\le R\|z\|$ and substituting $\beta=v/R+\beta_0$
gives
\[
 \|Rrd(p-q)\alpha\|
 \le \frac{RH^2X}{N}
 +2Rrd|a|\,|\Delta_{p,q}|\,|\beta_0|.
\]
The size assumption on $N$ implies $N\ge HX$, so
$|a|\le N/X+H\le2N/X$. Using
$rd\le H^2$, $|\Delta_{p,q}|\le4X^2$, and
\eqref{sc:dense-initial-beta}, we obtain
\begin{equation*}
 \|Rrd(p-q)\alpha\|
 \le \frac{RH^2X}{N}+\frac{64RH^6X}{N}
 \le \frac{65RH^6X}{N}.
\end{equation*}

Apply this inequality to $(p_0,q_0)$ and put
$\Lambda_1=Rr_0d_0(p_0-q_0)$. By \eqref{sc:dense-far-pair},
\[
 \frac{RX}{8H}\le|\Lambda_1|\le RH^2X.
\]
Choose an integer $m_1$ nearest to $\Lambda_1\alpha$. Then
\[
 \left|\alpha-\frac{m_1}{\Lambda_1}\right|
 =\frac{\|\Lambda_1\alpha\|}{|\Lambda_1|}
 \le\frac{65RH^6X/N}{RX/(8H)}
 =\frac{520H^7}{N}.
\]
Write $m_1/\Lambda_1=u/S$ in lowest terms, with $u\in\mathbb Z$,
$S\ge1$, and $\gcd(u,S)=1$. Since $S\le|\Lambda_1|$,
\begin{equation}
 S\le RH^2X\le256H^{11}X,
 \qquad
 \alpha=\frac uS+\alpha_0,
 \qquad
 |\alpha_0|\le\frac{520H^7}{N}.
 \label{sc:dense-initial-alpha}
\end{equation}

For each $(p,q)\in\mathcal E$, substitute
$\alpha=u/S+\alpha_0$ into the bound for $\|Rrd(p-q)\alpha\|$.
Using \eqref{sc:dense-initial-alpha} and $rd|p-q|\le H^2X$ gives
\[
 \left\|\frac{Rrd(p-q)u}{S}\right\|
 \le \frac{65RH^6X}{N}+\frac{520RH^9X}{N}
 \le \frac{585RH^9X}{N}.
\]
Using $R\le256H^9$ and $S\le256H^{11}X$,
\[
 S\frac{585RH^9X}{N}
 \le \frac{585\cdot256^2H^{29}X^2}{N}<1,
\]
by $N\ge10^9H^{30}X^2$. The rational number inside the norm has
denominator dividing $S$. Any nonintegral rational with denominator
dividing $S$ has distance at least $1/S$ from the nearest integer.
Hence the strict inequality forces it to be an integer. Thus
$S\mid Rrd(p-q)u$; since $(u,S)=1$, Euclid's lemma gives
\begin{equation*}
 S\mid Rrd(p-q)
 \qquad ((p,q)\in\mathcal E).
\end{equation*}

Again partition $\mathcal E$ according to the chosen values of $(r,d)$.
Choose a class $\mathcal E_2$ satisfying
$$|\mathcal E_2|\ge\frac{|\mathcal E|}{H^2}\ge\frac{X^2}{H^3}.$$
Let $(r_2,d_2)$ be its associated pair and set
\[
 S_1=\frac{S}{\gcd(S,Rr_2d_2)}.
\]
The divisibility just proved gives
$S_1\mid p-q$ throughout $\mathcal E_2$. \Cref{sc:congruence-counts} now yields
\[
 \frac{X^2}{H^3}
 \le |\mathcal E_2|
 \le \frac{4X^2}{S_1},
\]
so $S_1\le4H^3$. Therefore
\[
 S\le Rr_2d_2S_1
 \le (256H^9)(H^2)(4H^3)
 =1024H^{14}.
\]
Together with \eqref{sc:dense-initial-alpha}, this proves the two
assertions concerning $\alpha$.

Finally, with $Q=RS$,
\[
 Q\le(256H^9)(1024H^{14})=2^{18}H^{23},
\]
and
\[
 \frac uS=\frac{uR}{Q},
 \qquad
 \frac vR=\frac{vS}{Q}.
\]
Thus both rational approximations have the common denominator $Q$.
\end{proof}

\begin{lemma}\label{sc:primitive-congruence}
Let $R\ge1$ be an integer and let $a,b\in\mathbb Z$ satisfy
$\gcd(a,b,R)=1$. For real $X\ge1$,
\begin{equation*}
 \#\{(p,q)\in([X,2X]\cap\mathbb Z)^2:R\mid ap+bq\}
 \le\frac{X^2}{R}+2X+1.
\end{equation*}
\end{lemma}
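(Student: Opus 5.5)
We reduce to counting lattice points on a line modulo $R$ by fixing one coordinate. Put $I=[X,2X]\cap\Z$, so $|I|\le\lfloor X\rfloor+1$.

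Let $g=\gcd(b,R)$. Since $\gcd(a,b,R)=1$, we have $\gcd(a,g)=1$. The condition $R\mid ap+bq$ implies $g\mid ap$, and hence $g\mid p$. Conversely, fix $p\in I$ with $g\mid p$ and write $b=gb'$, $R=gR'$ with $\gcd(b',R')=1$. Then $R\mid ap+bq$ is equivalent to
\[
 b'q\equiv -ap/g\pmod{R'}.
\]
Since $b'$ is invertible modulo $R'$, this says that $q$ lies in a single residue class modulo $R'$.

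Next we count. The number of $p\in I$ divisible by $g$ is at most $X/g+1$. For each such $p$, the number of $q\in I$ in a fixed class modulo $R'$ is at most $X/R'+1$. So the total is at most
\[
 \Bigl(\frac Xg+1\Bigr)\Bigl(\frac{gX}{R}+1\Bigr)
 =\frac{X^2}{R}+\frac Xg+\frac{gX}{R}+1 .
\]
This is not yet of the required form, because both middle terms can be as large as $X$.

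We handle the middle terms by a symmetric choice and a split into cases.

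\textbf{Case $g=1$.} The bound reads $X^2/R+X+X/R+1\le X^2/R+2X+1$.

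\textbf{Case $g=R$.} Then $R\mid b$, so the condition is $R\mid ap$ with $\gcd(a,R)=1$, that is $R\mid p$. The count is at most
\[
 \Bigl(\frac XR+1\Bigr)|I|\le\Bigl(\frac XR+1\Bigr)(X+1)=\frac{X^2}{R}+X+\frac XR+1 ,
\]
which again fits the bound.

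\textbf{Intermediate $g$.} Here we do not use the crude bound $X/g+1$. For $p$ we use the sharper count $\lfloor X/g\rfloor+1$, and for $q$ we use $\lfloor X/R'\rfloor+1$. The cross terms become
\[
 \Bigl\lfloor\frac Xg\Bigr\rfloor+\Bigl\lfloor\frac X{R'}\Bigr\rfloor\le\frac Xg+\frac{X}{R'} .
\]
Since $g,R'\ge2$, each of these is at most $X/2$, so together they are at most $X\le 2X$. The product term $\lfloor X/g\rfloor\lfloor X/R'\rfloor$ is at most $X^2/(gR')=X^2/R$. This gives the claim.

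To make the endpoint count rigorous, we use the fact that $[X,2X]$ has length $X$. It therefore contains at most $\lfloor X/m\rfloor+1$ integers from any residue class modulo $m$. With this, the estimate is uniform in all cases: the total is
\[
 \Bigl(\Bigl\lfloor\frac Xg\Bigr\rfloor+1\Bigr)\Bigl(\Bigl\lfloor\frac X{R'}\Bigr\rfloor+1\Bigr)\le\frac{X^2}{R}+\frac Xg+\frac X{R'}+1\le\frac{X^2}R+2X+1,
\]
because $X/g+X/R'\le 2X$ always holds.

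So a single uniform bound works and no case split is needed. The only delicate point is the reduction to one residue class for $q$. This requires $\gcd(a,g)=1$, which comes from the primitivity hypothesis $\gcd(a,b,R)=1$. I expect that reduction to be the sole obstacle.
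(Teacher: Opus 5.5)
Your argument is correct and is the paper's proof with the roles of $(a,p)$ and $(b,q)$ interchanged: the paper takes $g=\gcd(a,R)$, deduces $g\mid q$, and places $p$ in a single class modulo $R/g$, obtaining the same product $(X/g+1)(Xg/R+1)$. Your remark that this product is ``not yet of the required form'' is mistaken, because $X/g\le X$ and $gX/R\le X$ already give $X^2/R+2X+1$ directly, so the case split is superfluous, as you yourself conclude at the end.
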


\begin{proof}
Put
\[
 I=[X,2X]\cap\mathbb Z,\qquad
 \mathcal S=\{(p,q)\in I^2:R\mid ap+bq\}.
\]
Set
\[
 g=\gcd(a,R).
\]
Then $g$ is a positive integer with $1\le g\le R$. Every common
divisor of $b$ and $g$ also divides $a$, $b$, and $R$. Hence
$\gcd(b,g)=1$.

Take $(p,q)\in\mathcal S$. Since $g\mid R$, the divisibility
$R\mid ap+bq$ implies $g\mid ap+bq$. Since $g\mid a$, we also have
$g\mid ap$, and therefore $g\mid bq$. As $\gcd(b,g)=1$, Euclid's
lemma gives $g\mid q$.

Thus only multiples of $g$ in $I$ can occur as the second coordinate.
The interval $[X,2X]$ has length $X$, so it contains at most
$X/g+1$ multiples of $g$.

Fix one such $q$ and write $q=gq'$. Dividing
$R\mid ap+bq$ by $g$ gives
\[
 \frac ag\,p+bq'\equiv0\pmod{R/g}.
\]
Since
\[
 \gcd\left(\frac ag,\frac Rg\right)=1,
\]
the congruence places $p$ in one residue class modulo $R/g$.

An interval of length $X$ contains at most $X/m+1$ integers in one
residue class modulo a positive integer $m$. With $m=R/g$, at most
$Xg/R+1$ values of $p$ are possible.
Consequently,
\begin{align*}
 |\mathcal S|
 \le\left(\frac Xg+1\right)
       \left(\frac{Xg}{R}+1\right)
 =\frac{X^2}{R}+\frac Xg+\frac{Xg}{R}+1
 \le\frac{X^2}{R}+2X+1,
\end{align*}
because $1\le g\le R$, so $X/g\le X$ and $Xg/R\le X$.

The argument includes $a=0$ and $R=1$. If $a=0$, then
$g=\gcd(0,R)=R$ and the reduced modulus $R/g$ is $1$. If $R=1$,
then $g=1$ and again the reduced modulus is $1$.
\end{proof}

\Needspace{13\baselineskip}
\begin{samepage}
\begin{lemma}\label{lem:linear_dense}
Let $H\ge2$ be an integer, let $X\ge32H$ be a real number, and
let $N$ be a positive integer satisfying
\begin{equation*}
 N\ge10^6H^3X^3.
\end{equation*}
Set $I=[X,2X]\cap\mathbb Z$.  Let $A,B\in\mathbb R$ and suppose that
$\mathcal E\subseteq I^2$ satisfies
\begin{equation*}
 |\mathcal E|\ge X^2/H,\qquad
 \|pA+qB\|_{\mathbb R/\mathbb Z}\le HX/N
       \quad((p,q)\in\mathcal E).
\end{equation*}
Then there exist integers $a,b$ and a positive integer $R$
satisfying $\gcd(a,b,R)=1$ and $R\le2H$ such that
\begin{equation}
 \left|A-\frac aR\right|\le\frac{64H^2}N,
 \qquad
 \left|B-\frac bR\right|\le\frac{64H^2}N.
 \label{sc:dense-linear-phase-output}
\end{equation}
\end{lemma}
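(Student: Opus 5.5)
We follow the pattern of \Cref{sc:rational-approx}: first approximate $A$ using pairs that differ only in the first coordinate, then approximate $B$ the same way, and finally bound the common denominator with \Cref{sc:primitive-congruence}. Put $\epsilon=HX/N$; the hypothesis gives $\epsilon\le10^{-6}H^{-2}X^{-2}$.

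\textbf{Step 1: approximate $A$.} Fix $q$ and take two pairs $(p,q),(p',q)\in\mathcal E$ with the same second coordinate. Subtracting gives $\|(p-p')A\|\le2\epsilon$. For each $q$, let $E_q=\{p:(p,q)\in\mathcal E\}$. Since $\sum_q|E_q|\ge X^2/H$ and $|I|\le X+1$, some $q$ has $|E_q|\ge X/(2H)$. Inside $E_q$ there are differences $h=p-p'$ with $1\le h\le X$ and $\|hA\|\le2\epsilon$. By pigeonholing on the elements of $E_q$, we also get many such differences; in fact every difference in $E_q-E_q$ qualifies.

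Take the smallest positive $h_0$ of this kind, and let $m_0$ be the integer nearest to $h_0A$. Then $|A-m_0/h_0|\le2\epsilon/h_0$.

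Now let $h$ be any other difference, with $h\le X$. The quantity $\|h m_0/h_0\|$ is at most $2\epsilon+2\epsilon h/h_0\le4X\epsilon<1/h_0$. This forces $h_0'\mid h$, where $h_0'$ is the reduced denominator of $m_0/h_0$. So all elements of $E_q$ lie in one residue class modulo $h_0'$. Hence $X/(2H)\le|E_q|\le X/h_0'+1$, which bounds $h_0'\le 4H$, or smaller after using $X\ge32H$. Write $A=a'/R_A+A_0$ in lowest terms, with $R_A\ll H$ and $|A_0|\le2\epsilon$. That is, $|A_0|\le 2HX/N$.

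This first bound on $|A_0|$ is too weak; we need $64H^2/N$, with no factor $X$. To improve it, combine any two pairs in the same class with $|p-p'|\ge X/(8H)$. Such pairs exist by the separation argument of \Cref{sc:rational-approx}, Step~1. For them $\|(p-p')A_0\|\le 2\epsilon+0$ once $R_A\mid p-p'$, and since $|(p-p')A_0|<1/2$ this gives $|A_0|\le 2\epsilon\cdot8H/X=16H^2/N$. The symmetric argument on first coordinates gives $B=b'/R_B+B_0$ with $R_B\ll H$ and $|B_0|\le16H^2/N$.

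\textbf{Step 2: combine the denominators.} Set $R_0=\operatorname{lcm}(R_A,R_B)$ and write $A=a''/R_0+A_0$, $B=b''/R_0+B_0$. For every pair in $\mathcal E$ we have $|pA_0+qB_0|\le 64H^2X/N\cdot$(small) and $\|pA+qB\|\le\epsilon$. Hence $\|(a''p+b''q)/R_0\|<1/R_0$, which forces $R_0\mid a''p+b''q$.

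Divide out $g=\gcd(a'',b'',R_0)$ to get $a,b,R=R_0/g$ with $\gcd(a,b,R)=1$. Then $R\mid ap+bq$ on all of $\mathcal E$, and \Cref{sc:primitive-congruence} gives $X^2/H\le X^2/R+2X+1$. Using $X\ge32H$, the error $2X+1$ is at most $X^2/(2H)$, so $R\le2H$. The error bounds on $A-a/R$ and $B-b/R$ are unchanged by this reduction, and $16H^2/N\le64H^2/N$.

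\textbf{Main obstacle.} The delicate part is Step~1: removing the factor $X$ from the error bound, and checking that the residue-class counting really yields $R_A,R_B=O(H)$. If the single-coordinate approach is awkward, the fallback is to run Step~2 directly. Pick a well-separated pair to get rough approximations with denominators up to about $HX$. Integrality, which is where $N\ge10^6H^3X^3$ is used, then gives the divisibility $R_0\mid a''p+b''q$ on all of $\mathcal E$, and \Cref{sc:primitive-congruence} bounds $R\le2H$ afterwards. Finally, re-derive the sharp error from one pair with $|p|$ large, using the separation argument.
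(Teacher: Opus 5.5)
Your argument is correct, and it takes a genuinely different route from the paper.

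\textbf{The paper's route.} The paper never separates the variables. It takes one pair $(p_1,q_1)\in\mathcal E$ and counts the strip $|p_1q-q_1p|<X^2/(16H)$ to find a second pair with determinant $|\Delta|\ge X^2/(16H)$. It then solves the two approximate equations by Cramer's rule. The large determinant gives the sharp error $64H^2/N$ at once, together with an exact rational point $(a/R,b/R)$ with $\gcd(a,b,R)=1$ and $R\le|\Delta|\le4X^2$. A single integrality step, $R\cdot257H^2X/N<1$, is where $N\ge10^6H^3X^3$ is used; it yields $R\mid ap+bq$ on all of $\mathcal E$. Finally \Cref{sc:primitive-congruence} reduces $R$ to at most $2H$.

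\textbf{Your route.} You reduce to two one-dimensional problems. A rich row $E_q$ gives $\|hA\|\le2\epsilon$ for every $h\in E_q-E_q$. The residue-class count then bounds the reduced denominator by $32H/15$, and one long difference inside the row sharpens the error to $16H^2/N$. The column argument does the same for $B$. The merged fraction over $\operatorname{lcm}(R_A,R_B)$ is then reduced with the same final lemma.

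\textbf{Comparison.} You obtain small denominators first and the sharp error in a second pass. The cost is needing both a rich row and a rich column, plus an extra merging step. The paper's proof is shorter, works with two generic pairs, and gets the smallness of the denominator in one stroke at the end. In exchange, your constant is slightly better, and your integrality steps use the size hypothesis only in the weaker forms $N\gg HX^3$ and $N\gg H^4X$.

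\textbf{Two small repairs.}
\begin{itemize}
\item In your Step~2 the bound should read $|pA_0+qB_0|\le64H^2X/N$; the stray factor ``(small)'' has no meaning. The divisibility $R_0\mid a''p+b''q$ then needs $R_0(HX/N+64H^2X/N)<1$. This holds because $R_0\le R_AR_B\le5H^2$ and $N\ge10^6H^3X^3$, but you should say so explicitly.
\item The long difference inside $E_q$ does not come from the separation argument of \Cref{sc:rational-approx}, which concerns $|p-q|$ within a pair. It follows directly from $\max E_q-\min E_q\ge|E_q|-1\ge X/(2H)-1\ge X/(8H)$, using $X\ge32H$.
\end{itemize}
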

\end{samepage}

\begin{proof}
We divide the proof into four steps.

\medskip
\noindent\textbf{Step 1. Select two pairs with a nonzero determinant.}
The lower bound on $|\mathcal E|$ makes $\mathcal E$ nonempty.
Choose $(p_1,q_1)\in\mathcal E$.  For an integer $p\in I$, the real numbers $q$ satisfying
\[
 |p_1q-q_1p|<\frac{X^2}{16H}
\]
form an interval of length
\[
 \frac{2X^2}{16Hp_1}=\frac{X^2}{8Hp_1}
 \le\frac{X}{8H},
\]
because $p_1\ge X$.  An interval of real length $L$ contains at most
$L+1$ integers, and $|I|\le X+1$.  Therefore the strip
\[
 \mathcal S=\left\{(p,q)\in I^2:
 |p_1q-q_1p|<\frac{X^2}{16H}\right\}
\]
satisfies
\begin{equation*}
 |\mathcal S|
 \le (X+1)\left(\frac X{8H}+1\right)
 \le\frac{X^2}{4H}+2X
 <\frac{X^2}{H}.
\end{equation*}
The first $\le$ follows from
$(X-1)(X/(8H)+1)\ge0$, and the final strict inequality from $X\ge32H$.
Since $|\mathcal E|\ge X^2/H$, choose
$(p_2,q_2)\in\mathcal E\setminus\mathcal S$.  Its determinant
\[
 \Delta=p_1q_2-p_2q_1
\]
then satisfies
\[
 \frac{X^2}{16H}\le|\Delta|\le4X^2.
\]
The upper bound follows from $p_i,q_i\in[X,2X]$ and
$|p_1q_2-p_2q_1|\le\max(p_1q_2,p_2q_1)\le4X^2$.

\medskip
\noindent\textbf{Step 2. Solve the two approximate linear equations.}
For each $i=1,2$, the hypothesis supplies an integer $k_i$ with
\[
 |p_iA+q_iB-k_i|\le\frac{HX}{N}.
\]
Put
\[
 \varepsilon_i=p_iA+q_iB-k_i.
\]
Let $(A_0,B_0)$ be the exact solution of
\[
 p_iA_0+q_iB_0=k_i\qquad(i=1,2).
\]
Since $\Delta\ne0$, Cramer's rule gives
\[
 A_0=\frac{q_2k_1-q_1k_2}{\Delta},
 \qquad
 B_0=\frac{-p_2k_1+p_1k_2}{\Delta}.
\]
It also gives
\[
 A-A_0=\frac{q_2\varepsilon_1-q_1\varepsilon_2}{\Delta},
 \qquad
 B-B_0=\frac{-p_2\varepsilon_1+p_1\varepsilon_2}{\Delta}.
\]
Since $p_i,q_i\le2X$ and $|\Delta|\ge X^2/(16H)$,
\begin{equation}
 |A-A_0|,\ |B-B_0|
 \le\frac{4X(HX/N)}{|\Delta|}
 \le\frac{64H^2}{N}.
 \label{sc:dense-linear-phase-initial-errors}
\end{equation}

\medskip
\noindent\textbf{Step 3. Put the exact solution in reduced rational form.}
To make the common denominator explicit, set
\[
 D=|\Delta|,\qquad
 \alpha=q_2k_1-q_1k_2,\qquad
 \beta=p_1k_2-p_2k_1,
 \qquad
 g=\gcd(|\alpha|,|\beta|,D).
\]
Let
\[
 R=\frac Dg,\qquad
 a=\operatorname{sgn}(\Delta)\frac{\alpha}{g},\qquad
 b=\operatorname{sgn}(\Delta)\frac{\beta}{g}.
\]
Then $R$ is a positive integer, $\gcd(a,b,R)=1$, and the Cramer formulas
give exactly
\[
 A_0=\frac aR,\qquad B_0=\frac bR.
\]
Moreover,
\[
 R\le D=|\Delta|\le4X^2.
\]

\medskip
\noindent\textbf{Step 4. Force divisibility and bound the denominator.}
For $(p,q)\in\mathcal E$, the triangle inequality on
$\mathbb R/\mathbb Z$ and \eqref{sc:dense-linear-phase-initial-errors}
give
\[
 \left\|\frac{pa+qb}{R}\right\|
 =\|pA_0+qB_0\|
 \le\frac{HX}{N}+(p+q)\frac{64H^2}{N}
 \le\frac{257H^2X}{N},
\]
because $p+q\le4X$ and $H\ge2$.  Put
$C=257H^2X/N$.  Using $R\le4X^2$ and the hypothesis on $N$,
\[
 RC\le\frac{1028H^2X^3}{N}
 \le\frac{1028}{10^6H}<1.
\]
Hence $C<1/R$.  For an integer $m$, the rational number $m/R$ has
distance either $0$ or at least $1/R$ from $\mathbb Z$: if $R\nmid m$,
then $m-Rj$ is a nonzero integer for every $j\in\mathbb Z$, so
$\|m/R\|=\min_j|m-Rj|/R\ge1/R$.  Applying this with $m=pa+qb$ forces
\[
 R\mid ap+bq\qquad((p,q)\in\mathcal E).
\]

\Cref{sc:primitive-congruence}, applied to the primitive triple $(a,b,R)$, now gives
\[
 \frac{X^2}{H}
 \le|\mathcal E|
 \le\frac{X^2}{R}+2X+1
 \le\frac{X^2}{R}+3X.
\]
Since $X\ge32H$,
$3X\le3X^2/(32H)$, and therefore
\[
 \frac1R\ge\frac1H-\frac3{32H}
 =\frac{29}{32H}.
\]
Thus
\[
 R\le\frac{32H}{29}<2H.
\]
Together with \eqref{sc:dense-linear-phase-initial-errors}, this proves
\eqref{sc:dense-linear-phase-output}.
\end{proof}

\subsection{Quadratic exponential sums}

We record a quantitative equidistribution theorem. For the related notions,
see \cite{LSS}.

\newcommand{\LSSTheoremCitation}{\cite[Theorem~5.2]{LSS}}
\begin{theorem}[Leng--Sah--Sawhney, \LSSTheoremCitation]\label{sc:LSS-5.2}
Fix integers $\ell,k,s\ge1$, $\delta\in(0,1/10)$, and positive integers
$M$ and $d$, together with $F:G/\Gamma\to\mathbb C$. Suppose that $G$ is a dimension-$d$, at most
$s$-step connected, simply connected nilpotent Lie group with a given
degree-$k$ filtration, and the nilmanifold $G/\Gamma$ is complexity at
most $M$ with respect to this filtration. Let $g$ be a polynomial
sequence on $G$ with respect to this filtration. Furthermore suppose
that $\|F\|_{\rm Lip}\le1$ and $F$ has $G^{(s)}$-vertical frequency $\xi$\footnote{Here $G^{(s)}$ denotes the $s$-th term of the lower central series: $G^{(1)}=G$ and $G^{(j+1)}=[G,G^{(j)}]$.}
such that the height of $\xi$ is bounded by $M/\delta$. Suppose that
\[
 N\ge (M/\delta)^{O_{k,\ell}(d^{O_{k,\ell}(1)})},
 \qquad
 \left|\mathbb E_{\widetilde n\in[N]^\ell}
 F(g(\widetilde n)\Gamma)\right|\ge\delta.
\]
There exists an integer $0\le r\le\dim(G/[G,G])$ such that:
\begin{itemize}
\item there are horizontal characters $\eta_1,\ldots,\eta_r:G\to\mathbb R$
whose heights are bounded by
$(M/\delta)^{O_{k,\ell}(d^{O_{k,\ell}(1)})}$;
\item for every $1\le i\le r$,
\[
 \|\eta_i\circ g\|_{C^\infty[N]}
 \le (M/\delta)^{O_{k,\ell}(d^{O_{k,\ell}(1)})};
\]
\item for any $w_1,\ldots,w_s\in G/[G,G]$ such that the $w_i$ are
orthogonal to all of $\eta_1,\ldots,\eta_r$, one has
\[
 \xi([[[w_1,w_2],w_3],\ldots,w_s])=0.
\]
\end{itemize}
\end{theorem}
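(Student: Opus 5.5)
This result is an external input, so I would not reprove it here. The plan is to import it verbatim from \cite[Theorem~5.2]{LSS} and only check that every hypothesis used in the applications matches the cited formulation. Concretely, I would verify the following points:
\begin{itemize}
\item The filtration is of degree $k$ and the group has step at most $s$.
\item Complexity is measured with respect to a Mal'cev basis adapted to that filtration.
\item $F$ has a $G^{(s)}$-vertical frequency $\xi$ of height at most $M/\delta$, with $\|F\|_{\rm Lip}\le1$.
\item The averaging is over the box $[N]^\ell$.
\end{itemize}
Any discrepancy in normalizations (Lipschitz norm, heights of horizontal characters, the smoothness norm $\|\cdot\|_{C^\infty[N]}$) would be absorbed into the exponent $O_{k,\ell}(d^{O_{k,\ell}(1)})$. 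Since that exponent is allowed to depend on $k$ and $\ell$, this absorption is harmless.

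If one wanted a self-contained argument, I would follow the structure of the Leng--Sah--Sawhney proof. The argument inducts on the step $s$, with the vertical frequency $\xi$ as the bookkeeping device.
\begin{itemize}
\item \textbf{Base case $s=1$.} Here $G$ is abelian and $F(g(\widetilde n)\Gamma)$ is a single polynomial phase. Weyl's inequality with polynomial losses gives one horizontal character $\eta$ with $\|\eta\circ g\|_{C^\infty[N]}$ bounded, and the conclusion is then immediate.
\item \textbf{Van der Corput step.} For $s\ge2$, apply van der Corput differencing in $\widetilde n$ to pass to the sequence $n\mapsto(g(n+h),g(n))$. This sequence lives on a relative product nilmanifold, on which the function $F\otimes\overline F$ has vertical frequency in a lower-step quotient (the commutator $[G,G^{(s-1)}]$ is killed). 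Its complexity and dimension are controlled polynomially in $M$ and $d$.
\item \textbf{Induction.} Apply the induction hypothesis for many $h$. This produces horizontal characters on the product group, which must then be aggregated into characters $\eta_i$ on $G$ itself.
\end{itemize}

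I expect the aggregation in the last step to be the main obstacle. The task is to show that the failure of the vanishing condition $\xi([[w_1,w_2],\ldots,w_s])=0$ for vectors orthogonal to the $\eta_i$ contradicts the correlation. Doing this with only polynomial (rather than tower-type) losses in $M/\delta$ is the essential contribution of \cite{LSS}. It requires their refined bracket-linear algebra and a careful choice of which characters to keep, so that the rank $r\le\dim(G/[G,G])$ is respected. For the present paper I would rely on the citation for exactly this step, and record only that the final dependence is $(M/\delta)^{O_{k,\ell}(d^{O_{k,\ell}(1)})}$. This dependence is what feeds the numerical exponent $431$ later in \Cref{inv:section}.
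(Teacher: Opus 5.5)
Your plan matches the paper: it also imports this result verbatim from \cite[Theorem~5.2]{LSS} as an external input, marked with a double dashed box in the dependency graph, so no proof is required, including your optional inductive sketch. One correction to your closing remark: the paper applies the theorem only once, in \Cref{sc:circle-lemma} with $(\ell,k,s,d)=(1,2,1,1)$, where every exponent is an absolute constant. So it does not feed the exponent $431$; that exponent comes from \Cref{inv:initial-inverse} together with the $(d+1)^8$ losses in \Cref{inv:vertical-kernel,inv:dense-constraints,inv:scalar-reduction}, which rest on Leng's bracket lemma (\Cref{inv:bracket}) rather than on this theorem.
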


Now, we use the above theorem to imply the following lemma.

\begin{lemma}\label{sc:circle-lemma}
There is an absolute constant $C_{\mathrm W}\ge1$ with the following
property. Let $0<\tau<1/10$, let $M\in\N$ satisfy
$M\ge\tau^{-C_{\mathrm W}}$, and let $A,B\in\R$.
If $Q(n)=An+Bn^2$ satisfies
\begin{equation*}
 \left|M^{-1}\sum_{n=1}^M e(Q(n))\right|\ge\tau,
\end{equation*}
then there is an integer $1\le r\le\tau^{-C_{\mathrm W}}$ with
\begin{equation}
 M\|r(A+B)\|_{\R/\Z}\le\tau^{-C_{\rm W}},\qquad
 M^2\|2rB\|_{\R/\Z}\le\tau^{-C_{\rm W}}.
\label{sc:circle-output}
\end{equation}
\end{lemma}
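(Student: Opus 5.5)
We will deduce \Cref{sc:circle-lemma} from \Cref{sc:LSS-5.2} applied to the simplest nilmanifold, the circle. Take $\ell=1$, $s=1$, $k=2$, $G=\R$ and $\Gamma=\Z$, so $d=1$. Give $G$ the degree-$2$ filtration $G_0=G_1=G_2=\R$ and $G_3=0$. Then $n\mapsto Q(n)=An+Bn^2$ is a polynomial sequence with respect to this filtration. Let $F(x)=c\,e(x)$ on $\R/\Z$, with an absolute $c>0$ chosen so that $\|F\|_{\rm Lip}\le1$; for instance $c=1/(1+2\pi)$. Then $F$ has $G^{(1)}$-vertical frequency $\xi=1$, whose height is at most the fixed absolute complexity bound $M_0$ of $\R/\Z$. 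Apply the theorem with correlation parameter $c\tau$, after renaming so that the theorem's length is our $M$ and its complexity is $M_0$. The hypothesis then reads $|\E_{n\in[M]}F(Q(n))|\ge c\tau$. Since $k,\ell,d$ are fixed, the losses $(M_0/(c\tau))^{O(1)}$ are $\le\tau^{-C}$ for an absolute $C$. The size hypothesis on $M$ is ensured by taking $C_{\rm W}$ at least this exponent.

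The third bullet forces $r\ge1$. With $s=1$ the bracket expression is just $\xi(w_1)$. If $r=0$, every $w_1\in G/[G,G]=\R$ would be admissible, and $\xi(w_1)=w_1$ is not identically zero. Hence there is a nontrivial horizontal character $\eta_1:\R\to\R$, trivial on $\Z$, that is, $\eta_1(x)=hx$ with $h\in\Z\setminus\{0\}$. Its height gives $|h|\le\tau^{-C}$. Replacing $h$ by $-h$ if necessary, we take $h\ge1$, which does not change the bounds below.

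Next we unpack the smoothness bound. The $C^\infty[N]$ norm of the polynomial $hQ(n)=hAn+hBn^2$ is computed in the binomial basis, $hQ(n)=h(A+B)\binom n1+2hB\binom n2$. Under the standard definition in \cite{LSS}, $\|p\|_{C^\infty[M]}=\max_j M^j\|\alpha_j\|_{\R/\Z}$ for $p(n)=\sum_j\alpha_j\binom nj$. So the theorem gives $M\|h(A+B)\|\le\tau^{-C}$ and $M^2\|2hB\|\le\tau^{-C}$. Setting $r=h$ and enlarging $C_{\rm W}\ge C$ yields \eqref{sc:circle-output}.

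The main obstacle is bookkeeping rather than mathematics. We must confirm that the conventions of \cite{LSS} match ours, in three places: the height of horizontal characters and vertical frequencies on $\R/\Z$; the $C^\infty$ norm using the binomial coefficient basis, which is where the combination $A+B$ and the factor $2B$ come from; and the Lipschitz normalization of $e(x)$ on the circle. If the $C^\infty$ norm there were instead defined in the monomial basis, we would convert, since $\binom n2=(n^2-n)/2$. The monomial coefficients are $h(A+B)$ and $hB$ adjusted by that change of basis. Multiplying $r$ by $2$ then absorbs any denominators, at the cost of an absolute factor.
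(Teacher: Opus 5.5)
Your proposal is correct and follows essentially the same route as the paper. Both apply \Cref{sc:LSS-5.2} on $\R/\Z$ with $(\ell,k,s,d)=(1,2,1,1)$, a normalized $F=c\,e(x)$ and vertical frequency $\xi(z)=z$. Both use the third conclusion to force a nonzero integer character $x\mapsto rx$, and then read off $r(A+B)$ and $2rB$ as the binomial-basis coefficients of $rQ$ in the $C^\infty[M]$ norm.

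One small step should be tightened. Your nontriviality argument must exclude not only an empty family of characters but also a family consisting only of the zero character. The same orthogonality reasoning covers this case, and the paper handles both at once.
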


\begin{proof}
We divide the proof into five steps.

\medskip
\noindent\textbf{Step 1. Realize the scalar phase on the circle nilmanifold.}
Set
\[
 G=\mathbb R,\qquad \Gamma=\mathbb Z,\qquad
 G_0=G_1=G_2=\mathbb R,\qquad G_3=\{0\}.
\]
Choose an absolute positive integer $\mathfrak M_0$ that bounds the
complexity of this filtered $\R/\Z$. We apply \Cref{sc:LSS-5.2} with
$\ell=1$, $k=2$, $s=1$, $d=1$, complexity bound $\mathfrak M_0$,
and $N=M$. The sequence $g(n)=Q(n)$ is polynomial for the displayed
degree-two filtration.

\medskip
\noindent\textbf{Step 2. Verify the vertical-frequency and correlation hypotheses.}
Choose $L_0\ge1$ so that
$\|x\mapsto e(x)\|_{\rm Lip}\le L_0$ on $\mathbb R/\mathbb Z$, and set
\[
 F(x+\Gamma)=\frac{e(x)}{L_0},\qquad
 \delta=\frac{\tau}{L_0},\qquad 0<\delta<1/10.
\]
Then $\|F\|_{\rm Lip}\le1$, and
\[
 \left|\mathbb E_{n\in[M]}F(g(n)+\Gamma)\right|
 =\frac1{L_0}\left|\mathbb E_{n\in[M]}e(Q(n))\right|
 \ge\delta.
\]
In this specialization, $G^{(1)}=G_2=\mathbb R$. Define
$\xi(z)=z$. It is integer-valued on $G\cap\Gamma=\mathbb Z$, has height
one, and is nonzero. Moreover,
\[
 F(z+x+\Gamma)=e(z)F(x+\Gamma)=e(\xi(z))F(x+\Gamma).
\]
Thus the vertical-frequency hypothesis of \Cref{sc:LSS-5.2} is satisfied,
and $1\le\mathfrak M_0/\delta$.

\medskip
\noindent\textbf{Step 3. Apply the quantitative inverse theorem.}
For the fixed values $(k,\ell,d)=(2,1,1)$, let $C_N$ be an absolute
exponent for the lower bound on $N$ in \Cref{sc:LSS-5.2}. Let
$C_\eta$ be an absolute exponent for the bounds on the heights and the
$C^\infty[M]$ norms of the horizontal characters in that theorem.
Choose $C_{\mathrm W}$ so large that, for every $0<\tau<1/10$,
\[
 \left(\frac{\mathfrak M_0L_0}{\tau}\right)^{\max\{C_N,C_\eta\}}
 \le \tau^{-C_{\mathrm W}}.
\]
Since $M\ge\tau^{-C_{\mathrm W}}$, the lower bound for $N$ required by
\Cref{sc:LSS-5.2} holds. \Cref{sc:LSS-5.2} then yields an integer $0\le q\le1$
and horizontal characters $\eta_1,\ldots,\eta_q$ whose heights and
the norms $\|\eta_j\circ g\|_{C^\infty[M]}$ are at most
$\tau^{-C_{\mathrm W}}$.
Because $s=1$, the final conclusion of \Cref{sc:LSS-5.2} says that $\xi$
vanishes on
\[
 W:=\bigcap_{j=1}^q\ker_{\mathbb R}\eta_j,
\]
with $W=G$ when $q=0$.

\medskip
\noindent\textbf{Step 4. Extract a nonzero integer frequency.}
If all of these horizontal characters were zero, including the case $q=0$,
then $W=G=\mathbb R$. This contradicts $\xi(1)=1$ and the preceding
vanishing conclusion. Therefore $q=1$ and $\eta_1$ is nonzero. Every
horizontal character on $G=\mathbb R$ is linear, and integer-valuedness on
$\Gamma=\mathbb Z$ gives $\eta_1(x)=rx$ for some
$r\in\mathbb Z\setminus\{0\}$. Replacing $r$ by $|r|$ preserves its
height and all norms, so
\[
 1\le r\le\tau^{-C_{\mathrm W}}.
\]

\medskip
\noindent\textbf{Step 5. Read off the coefficient bounds.}
Finally,
\[
 n^2=n+2\binom n2,
 \qquad
 Q(n)=(A+B)\binom n1+2B\binom n2,
\]
and hence
\[
 rQ(n)=r(A+B)\binom n1+2rB\binom n2.
\]
For the scalar polynomial
$P(n)=\alpha_0+\alpha_1\binom n1+\alpha_2\binom n2$, use the norm
\[
 \|P\|_{C^\infty[M]}
 =\max\left\{M\|\alpha_1\|_{\mathbb R/\mathbb Z},
                 M^2\|\alpha_2\|_{\mathbb R/\mathbb Z}\right\}.
\]
Applying this norm to $rQ$ and using the bound supplied by \Cref{sc:LSS-5.2}
gives
\[
 M\|r(A+B)\|_{\mathbb R/\mathbb Z}\le\tau^{-C_{\mathrm W}},\qquad
 M^2\|2rB\|_{\mathbb R/\mathbb Z}\le\tau^{-C_{\mathrm W}},
\]
which is exactly \eqref{sc:circle-output}.
\end{proof}

We shall also need a bound for the linear coefficient $A$ of
$Q(n)=An+Bn^2$. If the two quantities in \eqref{sc:circle-output}
are at most $L$, the triangle inequality on $\R/\Z$ gives
\begin{equation*}
 \|2rA\|_{\R/\Z}
 \le2\|r(A+B)\|_{\R/\Z}+\|2rB\|_{\R/\Z}
 \le\frac{2L}{M}+\frac{L}{M^2}
 \le\frac{3L}{M}.
\end{equation*}
Hence the same multiplier $2r$ controls both $A$ and $B$:
\begin{equation}
 M\|2rA\|_{\R/\Z}\le3L,\qquad
 M^2\|2rB\|_{\R/\Z}\le L.
 \label{sc:dense-bsz-coefficient-bounds}
\end{equation}

\enlargethispage{2pt}
\subsection{Means on residue classes}

For a multiplicative $u:\N\to\C$ with $|u|\le1$ and real
$V\ge3$, $T\ge1$, write
\begin{equation*}
 m(u,V,T)=\min_{|t|\le T}\sum_{\substack{p\le V\\p\ \mathrm{prime}}}
                       \frac{1-\Re(u(p)p^{-it})}{p}.
\end{equation*}
\newcommand{\HMTTheoremCitation}{\cite[p.~1192]{GranvilleSoundararajan}}
\begin{theorem}[Hal\'asz--Montgomery--Tenenbaum, \HMTTheoremCitation]\label{sc:halasz-external}
There is an absolute constant $C_H\ge1$ such that, for every $u,V,T$
just specified,\footnote{The cited theorem is unnumbered. With $f=u$ and
$x=V$, its quantity $M(V,S)$ is defined in
\cite[(1.2), p.~1192]{GranvilleSoundararajan} by minimizing over $|t|\le2S$.
For $T\ge2$, take $S=T/2$. Then $M(V,S)=m(u,V,T)$ and
$S^{-1/2}=\sqrt2\,T^{-1/2}$. For $1\le T<2$, the trivial estimate
$|V^{-1}\sum_{n\le V}u(n)|\le1\le\sqrt2\,T^{-1/2}$ suffices.
Both cases therefore satisfy the displayed bound for a suitable absolute $C_H$.}
\begin{equation*}
 \left|V^{-1}\sum_{n\le V}u(n)\right|
 \le C_H\bigl((1+m(u,V,T))e^{-m(u,V,T)}+T^{-1/2}\bigr).
\end{equation*}
\end{theorem}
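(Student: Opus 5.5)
This is an external input, so I plan to derive it directly from the unnumbered Halász–Montgomery–Tenenbaum estimate in \cite[p.~1192]{GranvilleSoundararajan} by matching parameters, not to reprove Halász's theorem. The cited statement takes a multiplicative $f$ with $|f(n)|\le1$, real $x\ge3$ and $S\ge1$. It bounds $|x^{-1}\sum_{n\le x}f(n)|$ by an absolute constant times $(1+M(x,S))e^{-M(x,S)}+S^{-1/2}$. Here $M(x,S)$ is defined in \cite[(1.2)]{GranvilleSoundararajan} as the minimum over $|t|\le2S$ of $\sum_{p\le x}(1-\Re(f(p)p^{-it}))/p$.

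First I will check that $u,V$ satisfy the hypotheses: $u$ is multiplicative with $|u|\le1$, and $V\ge3$. I then set $f=u$ and $x=V$.

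Second, I split by the size of $T$.
\begin{itemize}
\item If $T\ge2$, I take $S=T/2\ge1$. The minimization range $|t|\le2S$ becomes $|t|\le T$, so $M(V,S)=m(u,V,T)$ exactly. Also $S^{-1/2}=\sqrt2\,T^{-1/2}$. The cited bound therefore gives the claim with $C_H$ equal to $\sqrt2$ times the Granville–Soundararajan constant.
\item If $1\le T<2$, I use the trivial bound: $|V^{-1}\sum_{n\le V}u(n)|\le1\le\sqrt2\,T^{-1/2}$, since $T<2$. This fits the displayed form once $C_H\ge\sqrt2$.
\end{itemize}
Taking $C_H$ to be the larger of the two constants handles both ranges.

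The only real obstacle is bookkeeping of conventions. I need to confirm three things in the source:
\begin{itemize}
\item the sign of the twist, $p^{-it}$ versus $p^{it}$, which is harmless because the range of $t$ is symmetric;
\item that the prime sum runs over $p\le x$ rather than over prime powers or with a different weight;
\item that the range is $|t|\le2S$ rather than $|t|\le S$. A different constant would only rescale $S$ by a fixed factor and change $C_H$ by an absolute amount.
\end{itemize}
If the source uses a slightly different normalization, I will absorb the discrepancy into $C_H$ in the same way, since the function $(1+m)e^{-m}$ and the power $T^{-1/2}$ are stable under bounded changes of $T$.
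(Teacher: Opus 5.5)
Your proposal is correct and matches the paper's own justification, which is given in the footnote to the statement. You set $f=u$ and $x=V$, take $S=T/2$ when $T\ge2$ (so the range $|t|\le2S$ becomes $|t|\le T$ and $S^{-1/2}=\sqrt2\,T^{-1/2}$), and use the trivial bound $1\le\sqrt2\,T^{-1/2}$ when $1\le T<2$.
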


\begin{lemma}\label{sc:residue-mean}
There is an absolute constant $C\ge1$ with the following property.
Assume $0<\kappa<1/2$, $Q_0,N\in\mathbb N$, and
$N\ge(2Q_0/\kappa)^C$. Let
\[
 f:\mathbb N\longrightarrow\mathbb S^1
\]
be a completely multiplicative function. Suppose that $I\subseteq[N]$
is an interval and that $b_0$ is a residue class modulo $Q_0$ satisfying
\begin{equation*}
 \left|\sum_{n\in I,\ n\equiv b_0\pmod{Q_0}}f(n)\right|\ge\kappa N.
\end{equation*} Then there are a primitive character $\chi$ of conductor dividing
$Q_0$ and $t\in\mathbb R$ such that
\begin{equation*}
 |t|\le(Q_0/\kappa)^C,
 \qquad \sum_{\substack{p\le N\\p\ \mathrm{prime}}}\frac{1-\Re(f(p)\overline{\chi(p)}p^{-it})}{p}
 \le C\log(2Q_0/\kappa).
\end{equation*}
\end{lemma}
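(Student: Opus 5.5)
The plan is to detect the residue class $b_0$ with Dirichlet characters, remove the gcd with $Q_0$, and then apply \Cref{sc:halasz-external} to a single twisted function. First I will write $g=\gcd(b_0,Q_0)$. Every $n\equiv b_0\pmod{Q_0}$ has the form $n=gm$ with $m\equiv b_0/g\pmod{Q_0/g}$ and $\gcd(m,Q_0/g)=1$. Complete multiplicativity gives $f(gm)=f(g)f(m)$, so
\[
\Bigl|\sum_{m\in I/g,\ m\equiv b_1\ (Q_1)}f(m)\Bigr|\ge\kappa N,
\qquad Q_1=Q_0/g,\quad \gcd(b_1,Q_1)=1.
\]
Orthogonality of characters modulo $Q_1$ expresses this sum as $\varphi(Q_1)^{-1}\sum_{\psi}\overline{\psi(b_1)}\sum_{m\in I/g}f(m)\psi(m)$. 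Hence some character $\psi$ modulo $Q_1$ satisfies $|\sum_{m\in I'}f\psi(m)|\ge\kappa N$, where $I'=I/g\subseteq[N/g]$. Let $\chi$ be the primitive character inducing $\psi$; its conductor divides $Q_1$, and hence divides $Q_0$.

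Next I will pass from the interval $I'=(u,v]$ to initial segments. Writing the sum as a difference of two prefix sums, one of the prefixes $[V]$ with $V\le N/g$ must have $|\sum_{n\le V}f\psi(n)|\ge\kappa N/2$. Since the left side is at most $V$, this forces $V\ge\kappa N/2\ge3$. Then $|V^{-1}\sum_{n\le V}u(n)|\ge\kappa/2$ for the multiplicative function $u=f\psi$, and \Cref{sc:halasz-external} applies with $T=(8C_H/\kappa)^2$. The $T^{-1/2}$ term is then at most $\kappa/8$, so the remaining term gives $(1+m)e^{-m}\gg\kappa$. This yields $m(u,V,T)\le C\log(2/\kappa)$, and we may take $t$ with $|t|\le T\le(Q_0/\kappa)^C$.

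The remaining task is to convert this prime sum into the stated one, running over $p\le N$ with $\overline{\chi(p)}$ in place of $\psi(p)$. The two sums differ in two places. The primes $p\mid Q_0$ contribute at most $\sum_{p\mid Q_0}2/p\ll\log\log(3Q_0)$. The range $V<p\le N$ contributes at most $2\sum_{V<p\le N}1/p\le 2\log(\log N/\log V)+O(1)$, which is $O(1)$ because $V\ge\kappa N/2$ and $N\ge(2/\kappa)^C$. We also have $u(p)=f(p)\psi(p)$, so the quantity controlled is $1-\Re(f(p)\psi(p)p^{-it})$. I therefore take the output character to be $\overline\chi$; its conductor is the same, so the sign convention is harmless. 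Collecting the error terms gives the total bound $C\log(2Q_0/\kappa)$.

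The main obstacle is this last bookkeeping, namely keeping $V$ comparable to $N$ and the loss at primes dividing $Q_0$ under control. The hypothesis $N\ge(2Q_0/\kappa)^C$ is exactly what keeps $\log N/\log V$ bounded. A smaller point is that $\varphi(Q_1)$ could make the character pigeonhole lossy, but it does not: averaging over $\varphi(Q_1)$ characters with the $1/\varphi(Q_1)$ normalization loses nothing.
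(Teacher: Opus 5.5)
Your argument is correct, and it takes a different and more economical route than the paper's proof.

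\textbf{The paper's route.} The paper first passes to a prefix sum. It then detects the residue class with additive characters $e(bn/Q_0)$, which discards the multiplicative structure of the class. It must therefore factor $n=dm$, with $d$ supported on primes dividing $Q_0$ and $(m,Q_0)=1$. It truncates to $d\le D$ by a Rankin-type bound involving $\prod_{p\mid Q_0}(1-p^{-1/2})^{-1}$. It then re-expands $\mathbf 1_{(m,Q_0)=1}e(bdm/Q_0)$ in Dirichlet characters whose coefficients are only bounded by $1$. As a result, \Cref{sc:halasz-external} is fed a correlation of size only $\kappa/(4D\varphi(Q_0))$.

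\textbf{Your route.} You observe that the residue class already fixes $\gcd(n,Q_0)=g$. You pull out the unimodular factor $f(g)$ and land in a reduced class modulo $Q_0/g$. There the multiplicative orthogonality relation, with coefficients $\overline{\psi(b_1)}/\varphi(Q_1)$, costs nothing, and Hal\'asz sees a correlation of $\kappa/2$.

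\textbf{What each buys.} Your route gives stronger output: $|t|\ll\kappa^{-2}$, independent of $Q_0$, and a prime-sum bound of $O(\log(2/\kappa)+\log\log(3Q_0))$. These are well inside what the lemma asks for. The paper's additive-Fourier route is the one that would still apply if the hypothesis were a sum twisted by $e(bn/Q_0)$ rather than a residue-class sum; its later steps handle exactly that situation. For the lemma as stated, though, your reduction is simpler and quantitatively better.

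The bookkeeping you describe also checks out: the change at primes dividing $Q_0$ is at most $\sum_{p\mid Q_0}2/p$; the range $V<p\le N$ is controlled since $V\ge\kappa N/2$; and passing from $\psi$ to the primitive $\overline\chi$ preserves the conductor.
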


\begin{proof}
We divide the proof into seven steps.	
	
\par
\medskip
\noindent\textbf{Step 1. Reduce to a large prefix sum.}
For an integer $0\le Y\le N$, write
\[
 S(Y):=\sum_{\substack{n\le Y\\ n\equiv b_0\pmod{Q_0}}}f(n).
\]
Write the interval as $I=\{L+1,\ldots,U_1\}$, where
$0\le L<U_1\le N$. The assumed sum is $S(U_1)-S(L)$.
Hence one of $S(U_1)$ and $S(L)$ has modulus at least
$\kappa N/2$. Denote its endpoint by $U$. Since $|f(n)|=1$,
$|S(U)|\le U$, and therefore
\[
 U\ge\frac{\kappa N}{2}.
\]
Thus $U\le N$ and
\begin{equation*}
 |S(U)|\ge\frac{\kappa N}{2}\ge\frac{\kappa U}{2}.
\end{equation*}

\par
\medskip
\noindent\textbf{Step 2. Select an additive Fourier frequency.}
By additive-character orthogonality modulo $Q_0$,
\begin{equation*}
 S(U)=\frac1{Q_0}\sum_{b=0}^{Q_0-1}e(-bb_0/Q_0)
       \sum_{n\le U}f(n)e(bn/Q_0).
\end{equation*}
The modulus of this average is at most the largest modulus of its
inner sums. We may therefore fix $b\pmod{Q_0}$ such that
\begin{equation}
 \left|\sum_{n\le U}f(n)e(bn/Q_0)\right|
 \ge\frac{\kappa N}{2}\ge\frac{\kappa U}{2}.
 \label{sc:inverse-rational-Fourier}
\end{equation}

\par
\medskip
\noindent\textbf{Step 3. Remove the contribution of large $Q_0$-smooth factors.}
Every positive integer $n$ has a unique factorization $n=dm$ in
which every prime divisor of $d$ divides $Q_0$ and $(m,Q_0)=1$.
Using complete multiplicativity of $f$, the sum in
\eqref{sc:inverse-rational-Fourier} is
\begin{equation*}
 \sum_{\substack{d\le U\\p\ \mathrm{prime},\ p\mid d\Rightarrow p\mid Q_0}}
     f(d)\sum_{\substack{m\le U/d\\(m,Q_0)=1}}
                      f(m)e(bdm/Q_0).
\end{equation*}
Put
\[
 C_{Q_0}:=\prod_{\substack{p\mid Q_0\\p\ \mathrm{prime}}}(1-p^{-1/2})^{-1},
 \qquad
 D:=\left\lceil\left(\frac{8C_{Q_0}}{\kappa}\right)^2\right\rceil.
\]
For $d>D$ the inner sum has modulus at most $U/d$. Since
$d^{-1}\le D^{-1/2}d^{-1/2}$,
\begin{equation*}
 U\sum_{\substack{d>D\\p\ \mathrm{prime},\ p\mid d\Rightarrow p\mid Q_0}}\frac1d
 \le UD^{-1/2}
 \sum_{\substack{d\ge1\\p\ \mathrm{prime},\ p\mid d\Rightarrow p\mid Q_0}}d^{-1/2}
 =UD^{-1/2}C_{Q_0}
 \le\frac{\kappa U}{8}.
\end{equation*}
Subtracting the contribution of $d>D$ from
\eqref{sc:inverse-rational-Fourier}, the terms with $d\le D$ have
combined modulus at least $3\kappa U/8$.
There are at most $D$ such values of $d$, so for one of them,
which we now fix,
\begin{equation*}
 \left|\sum_{\substack{m\le U/d\\(m,Q_0)=1}}
                      f(m)e(bdm/Q_0)\right|
 \ge\frac{\kappa U}{4D}.
\end{equation*}
Set $V=\lfloor U/d\rfloor$. Since $V\le U$, division by $V$ gives
\begin{equation*}
 \left|V^{-1}\sum_{\substack{m\le V\\(m,Q_0)=1}}
                      f(m)e(bdm/Q_0)\right|
 \ge\frac{\kappa}{4D}.
 \end{equation*}

\par
\medskip
\noindent\textbf{Step 4. Choose a long coprime average.}
The Euler product satisfies $C_{Q_0}\le Q_0^{C_0}$ for an absolute
constant $C_0$, and hence
$D\le(2Q_0/\kappa)^{C_1}$ for an absolute $C_1$.
Choose the absolute exponent $C$ in the lemma large enough that
$N\ge12D/\kappa$ follows from
$N\ge(2Q_0/\kappa)^C$. Since $U\ge\kappa N/2$ and $d\le D$,
\begin{equation*}
 \frac Ud\ge\frac{\kappa N}{2D}\ge6,
 \qquad
 V=\left\lfloor\frac Ud\right\rfloor
 \ge\frac{U}{2d}
 \ge\frac{\kappa N}{4D}\ge3.
\end{equation*}
Thus $V$ lies in the range required by \Cref{sc:halasz-external}.

\par
\medskip
\noindent\textbf{Step 5. Expand the coprime phase in characters.}
Let $\mathcal X(Q_0)$ be the set of Dirichlet characters modulo $Q_0$.
Then $|\mathcal X(Q_0)|=\varphi(Q_0)$, where $\varphi$ denotes
Euler's totient function (and $\varphi(1)=1$). For $\psi\in\mathcal X(Q_0)$,
define
\[
 c_\psi=\frac1{\varphi(Q_0)}
    \sum_{a\in(\Z/Q_0\Z)^\times}e(bda/Q_0)\overline{\psi(a)}.
\]
Character orthogonality gives
\begin{equation*}
 \mathbf1_{(m,Q_0)=1}e(bdm/Q_0)
   =\sum_{\psi\in\mathcal X(Q_0)}c_\psi\psi(m),
 \qquad |c_\psi|\le1.
\end{equation*}
Since $|\mathcal X(Q_0)|=\varphi(Q_0)$, the triangle inequality
applied to the preceding average gives one $\psi\in\mathcal X(Q_0)$ for which
\begin{equation}
 \left|V^{-1}\sum_{m\le V}f(m)\psi(m)\right|
 \ge r:=\frac{\kappa}{4D\varphi(Q_0)}
 \ge(2Q_0/\kappa)^{-C_2}
 \label{sc:inverse-character-mean}
\end{equation}
for an absolute $C_2$.

\par
\medskip
\noindent\textbf{Step 6. Apply the Hal\'asz--Montgomery--Tenenbaum estimate.}
The function $u=f\psi$ is multiplicative and satisfies $|u(n)|\le1$;
its possible zero values occur only when $(n,Q_0)>1$. Apply
\Cref{sc:halasz-external} with
\[
 T:=\left(\frac{4C_H}{r}\right)^2.
\]
Let
\[
 m_*:=\min_{|t|\le T}\sum_{\substack{p\le V\\p\ \mathrm{prime}}}
 \frac{1-\Re(f(p)\psi(p)p^{-it})}{p}.
\]
The minimum exists because the sum is continuous in $t$ and
$[-T,T]$ is compact. Also $m_*\ge0$. By
\eqref{sc:inverse-character-mean} and \Cref{sc:halasz-external},
\[
 r\le C_H\bigl((1+m_*)e^{-m_*}+T^{-1/2}\bigr)
 =C_H(1+m_*)e^{-m_*}+\frac r4.
\]
Therefore
\[
 \frac{3r}{4}\le C_H(1+m_*)e^{-m_*}
 \le2C_He^{-m_*/2},
\]
where $(1+x)e^{-x}\le2e^{-x/2}$ for $x\ge0$.
Taking logarithms gives
\[
 m_*\le2\log\left(\frac{8C_H}{3r}\right)
 \le2\log\left(\frac{4C_H}{r}\right).
\]
Let $t$ be a minimizing point. Then $|t|\le T$, and
\eqref{sc:inverse-character-mean} implies, after enlarging the
absolute exponent if necessary,
\begin{equation*}
 |t|\le\left(\frac{Q_0}{\kappa}\right)^C.
\end{equation*}
Moreover,
\begin{equation*}
 \sum_{\substack{p\le V\\p\ \mathrm{prime}}}\frac{1-\Re(f(p)\psi(p)p^{-it})}{p}
 \le2\log\left(\frac{4C_H}{r}\right).
\end{equation*}

\par
\medskip
\noindent\textbf{Step 7. Pass to a primitive character and extend the prime range.}
Let $\chi$ be the unique primitive character of conductor $q_\chi\mid Q_0$
that induces $\overline\psi$; thus
\[
 \psi(n)=\overline{\chi(n)}\qquad\text{whenever }(n,Q_0)=1.
\]
In particular, for $p\nmid Q_0$ the prime summands above agree.
For $p\mid Q_0$, the summand involving $\psi(p)=0$ equals $1/p$,
whereas the summand involving $\overline{\chi(p)}$ lies between
$0$ and $2/p$. Replacing $\psi$ by $\overline\chi$ therefore increases
the sum by at most
\[
 \sum_{\substack{p\mid Q_0\\p\ \mathrm{prime}}}\frac1p\le\log(2Q_0).
\]
It remains to extend the sum from $p\le V$ to $p\le N$. Each added
summand is at most $2/p$, so
\[
 2\sum_{\substack{V<p\le N\\p\ \mathrm{prime}}}\frac1p
 \le2\sum_{V<n\le N}\frac1n
 \le2\log(N/V)+\frac2V.
\]
Since $V\ge\kappa N/(4D)$ and $V\ge3$,
\[
 \log(N/V)\le\log(4D/\kappa)
 \ll\log(2Q_0/\kappa).
\]
The lower bound for $r$ in \eqref{sc:inverse-character-mean} also gives
\[
 \log(4C_H/r)\ll\log(2Q_0/\kappa).
\]
Adding this estimate, the prime-divisor correction, and the contribution
of primes $V<p\le N$ gives
\[
 \sum_{\substack{p\le N\\p\ \mathrm{prime}}}\frac{1-\Re(f(p)\overline{\chi(p)}p^{-it})}{p}
 \le C\log(2Q_0/\kappa).
\]
This is the required conclusion.
\end{proof}

\subsection{Scalar inverse theorem}

We now state the following theorem. Fix $0<\rho<1/2$ and $N\in\N$.
Let $f:\N\to\mathbb S^1$ be completely multiplicative,
let $\xi,\beta\in\mathbb R$, and let
$\mathcal P\subseteq[N]$ be an arithmetic progression.
Throughout this subsection, $\|z\|=\|z\|_{\mathbb R/\mathbb Z}$
for real $z$.

\begin{samepage}
\begin{theorem}\label{thm:scalar_inverse}
There is an absolute constant $C\ge1$, independent of
$\rho,N,f,\xi,\beta$, and $\mathcal P$, such that, if
\begin{equation*}
 N\ge\exp(C\rho^{-5}),\qquad
 \left|N^{-1}\sum_{n\in\mathcal P}
          f(n)e(\xi n+\beta n^2)\right|\ge\rho,
\end{equation*}
then there are positive integers $Q,Q_0\le\rho^{-C}$,
integers $u,v$, an interval $I\subseteq[N]$, and an integer $b_0$ satisfying
\begin{gather}
 |\xi-u/Q|\le\rho^{-C}/N,\qquad
 |\beta-v/Q|\le\rho^{-C}/N^2,
 \label{sc:dense-bsz-scalar-major-arcs}\\
 \left|\sum_{\substack{n\in I\\n\equiv b_0\pmod{Q_0}}}f(n)\right|
 \ge\rho^C N.
 \label{sc:dense-bsz-scalar-periodic-mean}
\end{gather}
There are also a primitive Dirichlet character $\chi$ and a real $t$ with
\begin{equation}
 q_\chi,\ |t|\le\rho^{-C},\qquad
 \sum_{\substack{p\le N\\p\ \mathrm{prime}}}\frac{1-\Re(f(p)\overline{\chi(p)}p^{-it})}{p}
 \le C\log(2/\rho).
 \label{sc:dense-bsz-scalar-pretentious}
\end{equation}
\end{theorem}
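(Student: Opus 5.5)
We will chain the prime-block machinery of this section. Put $a(n)=\mathbf 1_{\mathcal P}(n)e(\xi n+\beta n^2)$ and $\nu=f$, so the hypothesis is exactly \eqref{sc:dense-bsz-assumptions}. By taking $C$ at least the constant of \Cref{prop:bsz_dense}, that proposition supplies a block $P_j$ with $x=x_j\le\exp(C\rho^{-5})$ and a pair set $\mathcal E\subseteq P_j^2$. These satisfy $|\mathcal E|\ge c\rho^{25}x^2$ and $R_{p,q}\ge c\rho^3$ on $\mathcal E$.

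\Cref{sc:cross-progression} then gives, for each $(p,q)\in\mathcal E$, a progression $a_0+d\ell$ ($\ell\le M$) with $d\le D^2\le4\rho^{-2}$, $|a_0|\le N/x+D^2$ and $M\ge\rho^2N/(128Sx)$. On this progression the average of $e(Q_{p,q}(a_0+d\ell))$ is at least $\rho^2/(32S)\gg\rho^3$. Expanding in $\ell$, the quadratic coefficient is $d^2(p^2-q^2)\beta$, and the linear coefficient is $d(p-q)\xi+2a_0d(p^2-q^2)\beta$. Now apply \Cref{sc:circle-lemma} with $\tau\asymp\rho^3$, which is allowed because $M\gg\rho^{2}N/x$ is huge, followed by \eqref{sc:dense-bsz-coefficient-bounds}. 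The multiplier $2r\le\rho^{-O(1)}$ then yields \eqref{sc:dense-quadratic-hypothesis}--\eqref{sc:dense-linear-hypothesis} with $H=\rho^{-C'}$. This $H$ is chosen to dominate $2r$, $d$, $D^2$, $128S/\rho^2$, the counts $x^2/|\mathcal E|$, and the Weyl constants. Since $P_j\subset[x,2x)$, \eqref{sc:dense-pair-hypothesis} holds with $X=x$.

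To apply \Cref{sc:rational-approx} we need $X\ge8H$ and $N\ge10^9H^{30}X^2$. The second follows from $x\le\exp(C\rho^{-5})$ once $N\ge\exp(C\rho^{-5})$ with a larger $C$. For the first, we use that $x\ge x_{j_0}\ge\alpha^{-3}=(C_*/\rho)^{12}$. We also use that $|P_j|\ge\rho^2x/(16J)$ forces $x$ to be large; if needed, we adjust the exponent of $H$ against $\alpha$. The proposition then returns $Q=RS\le\rho^{-C}$ and integers $u,v$ with \eqref{sc:dense-bsz-scalar-major-arcs}, after rewriting $uR/Q$ and $vS/Q$ as fractions with denominator $Q$.

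Next we freeze the phase. Partition $\mathcal P$ into pieces $\mathcal P'$ that are progressions of common difference $\operatorname{lcm}(D,Q)$ (or $DQ$), each lying in an interval of length $\rho^{C}N$. The number of pieces is $\rho^{-O(1)}$. On each piece, $e(un/Q+vn^2/Q)$ is constant because it depends only on $n\bmod Q$. The residual $e((\xi-u/Q)n+(\beta-v/Q)n^2)$ has derivative $\ll\rho^{-C}/N$ in $n$, so it varies by at most $\rho^{C''}$ across a piece. By pigeonhole, some piece carries $\gg\rho^{O(1)}N$ of the sum. This gives \eqref{sc:dense-bsz-scalar-periodic-mean} with $Q_0=\operatorname{lcm}(D,Q)\le\rho^{-C}$ and $I$ equal to the piece's interval.

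Finally, \Cref{sc:residue-mean} with $\kappa=\rho^C$ gives $\chi$ with $q_\chi\mid Q_0$, together with $t$, satisfying \eqref{sc:dense-bsz-scalar-pretentious}; its hypothesis $N\ge(2Q_0/\kappa)^C$ is implied by $N\ge\exp(C\rho^{-5})$.

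The main obstacle is the parameter bookkeeping in the application of \Cref{sc:rational-approx}. It requires a single polynomial $H$ that simultaneously absorbs every loss, while also keeping $x\ge8H$ and $N\gg H^{30}x^2$. The lower bound $x\ge\alpha^{-3}$ must beat $8H$ even though $H$ is a large power of $\rho^{-1}$. If it does not, I would first try increasing $C_*$ in $\alpha$, or shifting $j_0$; alternatively, I would rerun \Cref{prop:bsz_dense} with a smaller effective $\rho$ so that $x\ge\alpha^{-3}$ dominates the power of $\rho^{-1}$ in $H$.
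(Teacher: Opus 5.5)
Your outline is the paper's route: prime pairs from \Cref{prop:bsz_dense}, cross progressions from \Cref{sc:cross-progression}, then \Cref{sc:circle-lemma} and \Cref{sc:rational-approx}, freezing the phase on progressions modulo $\operatorname{lcm}(D,Q)$, and finally \Cref{sc:residue-mean}. However, the step you flag as the main obstacle is a genuine gap, and none of your proposed remedies closes it as stated.

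\Cref{sc:rational-approx} needs $x\ge 8H$. The block selected by \Cref{prop:bsz_dense} is only known to satisfy $x\ge x_{j_0}\approx\alpha^{-3}$. Your $H$ must dominate $x^2/|\mathcal E|$, and the only available bound for that is $8192J^2S/\rho^6$ with $J\asymp\alpha^{-2}\log(1/\alpha)$, so $H\gg\alpha^{-4}$. It must also dominate $3\tau^{-C_{\mathrm W}}$, where $C_{\mathrm W}$ is an unspecified absolute exponent. Nothing prevents the selected index $j$ from lying near $j_0$, so the case $x<8H$ really occurs.

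Your proposed escapes do not work as stated:
\begin{itemize}
\item The bound $|P_j|\ge\rho^2x/(16J)$ gives no lower bound on $x$ beyond $x\ge x_{j_0}$, since $\rho^2/(16J)$ is far smaller than $\alpha$ and the bound is consistent with $|P_j|\le\alpha x+1$ for every such $x$.
\item Enlarging $C_*$, or running \Cref{prop:bsz_dense} with a smaller $\rho'$, rescales $x_{j_0}$ and $J^2$ together, since both are fixed powers of $\rho/C_*$ (respectively $\rho'/C_*$). The comparison of the exponent $12$ with the exponent $16$ is therefore unchanged.
\item Replacing the factor $3$ in the definition of $j_0$ by a large absolute constant $A$ would work: then $x_{j_0}\ge\alpha^{-A}$ while $J$ grows only linearly in $A$. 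But this requires reproving \Cref{sc:decomposition} and \Cref{prop:bsz_dense}, which you do not do.
\end{itemize}

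The missing idea, which is how the paper proceeds, is that the small-$x$ case needs no pigeonholing over pairs at all. If $x<8H$, fix a single pair $(p,q)\in\mathcal E$. Then $|p-q|<8H$ and $|p^2-q^2|<256H^2$, so $R_0=rd^2|p^2-q^2|\le256H^5$ is already a polynomially bounded denominator. The estimate $\|R_0\beta\|\le H/M^2\le H^3x^2/N^2\le64H^5/N^2$ gives $\beta=v_0/R_0+\beta_0$ with $|\beta_0|\le64H^5/N^2$.

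Now multiply the linear relation by $R_0$. The term $2ra_0d(p^2-q^2)v_0$ is an integer, and using $|a_0|\le2N/x$ you get $\|S_0\xi\|\le8200R_0H^8/N$ with $S_0=R_0rd|p-q|\le2048H^8$. This yields the major-arc approximations with common denominator $R_0S_0\le2^{19}H^{13}$.

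So the proof needs a case split: apply \Cref{sc:rational-approx} when $x\ge8H$, and use the single-pair argument when $x<8H$. With that in place, the rest of your argument goes through unchanged.
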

\end{samepage}

\begin{proof}
The constant $C$ in the statement is fixed at the end of the proof.
We record below the finitely many absolute lower bounds it must satisfy. We divide the rest proof into six steps.

\par
\medskip
\noindent\textbf{Step 1. Find many correlated prime pairs.}
Let $D$ be the positive common difference of $\mathcal P$,
with $D=1$ if $\mathcal P$ is a singleton. Let $b_*=\min\mathcal P$,
and let $I_*=[\min\mathcal P,\max\mathcal P]\cap\mathbb Z$
be the smallest integer interval containing $\mathcal P$. Then
\begin{equation*}
 \mathcal P=\{n\in I_*:n\equiv b_*\pmod D\},\qquad
 a(n)=\mathbf1_{\mathcal P}(n)e(\xi n+\beta n^2)
 \quad(n\in[N]).
\end{equation*}
The hypothesis ensures that $\mathcal P$ is nonempty.
Also $|a(n)|\le1$, and $f$ is multiplicative with $|f(n)|=1$.

We recall the parameters of \Cref{prop:bsz_dense}. Let $C_*\ge10$
be the fixed absolute constant used there, and define
\begin{equation*}
 \begin{gathered}
 \alpha=(\rho/C_*)^4,\qquad
 j_0=\left\lceil\frac{3\log(1/\alpha)}{\log(1+\alpha)}\right\rceil,
 \qquad j_1=\lceil j_0/\alpha\rceil,\\
 x_i=(1+\alpha)^i,\qquad
 P_i=\{p\text{ prime}:x_i\le p<x_{i+1}\}
 \quad(j_0\le i\le j_1),\\
 J=j_1-j_0+1,\qquad
 S=\sum_{i=j_0}^{j_1}\frac{|P_i|}{x_i},\qquad
 K=\lceil x_{j_1+1}\rceil.
 \end{gathered}
\end{equation*}
Here $j_0,j_1,J,K$ are positive integers. The integer $J$ counts
the prime blocks, $S$ is the displayed sum of their sizes divided
by their left endpoints, and $K$ bounds every prime in these blocks.
All these quantities depend only on $\rho$.

Let $C_{\mathrm{BSZ}}\ge1$ be an absolute constant valid for both
the size hypothesis and the parameter bounds in \Cref{prop:bsz_dense}.
Require $C\ge C_{\mathrm{BSZ}}$. Applying that proposition with
$\nu=f$ gives $S>0$ and an index $j$ with $j_0\le j\le j_1$.
Put $x=x_j$; then $P_j\subseteq[x,2x]$ and $x\le K$.
For distinct $p,q\in P_j$, define
\begin{equation*}
 \begin{gathered}
 n_{p,q}=\left\lfloor\frac{N}{\max(p,q)}\right\rfloor,\qquad
 R_{p,q}=\frac1{n_{p,q}}
       \left|\sum_{m=1}^{n_{p,q}}a(pm)\overline{a(qm)}\right|,\\
 \mathcal E=\{(p,q)\in P_j^2:p\ne q,\ R_{p,q}\ge\rho^2/(32S)\}.
 \end{gathered}
\end{equation*}
The integer $n_{p,q}\ge1$ is the largest $m$ for which both
$pm$ and $qm$ lie in $[N]$; $R_{p,q}$ measures the correlation
of the two sequences on this interval. The proposition gives
\begin{equation}
 |\mathcal E|\ge\frac{\rho^6x^2}{8192J^2S},
 \qquad R_{p,q}\ge\frac{\rho^2}{32S}
 \quad((p,q)\in\mathcal E).
 \label{sc:scalar-pair-data}
\end{equation}
In particular, $\mathcal E$ is nonempty.

By \Cref{sc:cross-progression}, $D\le2/\rho$.
Let $C_{\mathrm W}$ be the absolute constant in \Cref{sc:circle-lemma}.
Choose a real $\tau$ and a positive integer $H$ by
\begin{equation}
 \begin{aligned}
 \tau&=\min\{\rho^2/(32S),1/100\},\\
 H&=\left\lceil\max\left\{
 2,\frac4{\rho^2},\frac{8192J^2S}{\rho^6},
 \frac{128S}{\rho^2},3\tau^{-C_{\mathrm W}}
 \right\}\right\rceil.
 \end{aligned}
 \label{sc:scalar-H-choice}
\end{equation}
The number $\tau$ is a positive lower bound for the correlations
in \eqref{sc:scalar-pair-data}, with $\tau<1/10$ as required by
\Cref{sc:circle-lemma}. The integer $H$ combines the bounds for
$D^2$, $x^2/|\mathcal E|$, $128S/\rho^2$, and
$3\tau^{-C_{\mathrm W}}$ that will be used below.
The estimates
\[
 J\le C_{\mathrm{BSZ}}\rho^{-8}\log(2/\rho),\qquad
 S\le C_{\mathrm{BSZ}}\log(2/\rho),\qquad
 \tau^{-1}\le100+32S/\rho^2
\]
give fixed absolute constants $A_*,C_2\ge1$ such that
\begin{equation}
 H\le\rho^{-A_*},\qquad K\le\exp(C_2\rho^{-5}).
 \label{sc:scalar-H-growth}
\end{equation}
In particular, $D^2\le H$ and $|\mathcal E|\ge x^2/H$.

Fix $(p,q)\in\mathcal E$. By \Cref{sc:cross-progression},
the integers $m\in[n_{p,q}]$ with $pm,qm\in\mathcal P$
form an arithmetic progression. Write $M$ for its number of
terms, $d$ for its positive common difference, and $a_0+d$
for its first term. Here $a_0$ is an integer and $d,M$
are positive integers; they may depend on $(p,q)$. We have
\begin{equation}
 \begin{gathered}
 \{m\in[n_{p,q}]:pm,qm\in\mathcal P\}
       =\{a_0+d\ell:1\le\ell\le M\},\\
 d\le D^2\le H,\qquad |a_0|\le N/x+H,\qquad
 M\ge\frac{\rho^2N}{128Sx}\ge\frac{N}{Hx}.
 \end{gathered}
 \label{sc:scalar-cross-progression-data}
\end{equation}
Define the real quadratic polynomial
$Q_{p,q}(y)=(p-q)\xi y+(p^2-q^2)\beta y^2$.
The sum identity in \Cref{sc:cross-progression} gives
\begin{equation}
 \left|\frac1M\sum_{\ell=1}^M e(Q_{p,q}(a_0+d\ell))\right|
 =\frac{n_{p,q}}{M}R_{p,q}
 \ge R_{p,q}\ge\frac{\rho^2}{32S}\ge\tau,
 \label{sc:scalar-quadratic-average}
\end{equation}
where $M\le n_{p,q}$ because the progression lies in $[n_{p,q}]$.

\par
\medskip
\noindent\textbf{Step 2. Bound the polynomial coefficients.}
Require $C\ge C_3$, where
$C_3=\log(10^9)+30A_*+2C_2$. Then
\begin{equation}
 N\ge10^9H^{30}x^2.
 \label{sc:scalar-size-for-coefficients}
\end{equation}
Indeed, \eqref{sc:scalar-H-growth} and $x\le K$ give
\begin{equation*}
 \log(10^9H^{30}x^2)
 \le\log(10^9)+30A_*\log(1/\rho)+2C_2\rho^{-5}
 \le C_3\rho^{-5}
\end{equation*}
using $\log(1/\rho)\le\rho^{-5}$. Consequently
$M\ge N/(Hx)\ge H\ge\tau^{-C_{\mathrm W}}$.

Fix any $(p,q)\in\mathcal E$, and set $A_1=p-q$ and $A_2=p^2-q^2$.
Expanding $Q_{p,q}$ gives
\begin{equation*}
 Q_{p,q}(a_0+d\ell)=Q_{p,q}(a_0)+A\ell+B\ell^2,
 \quad A=dA_1\xi+2a_0dA_2\beta,\quad B=d^2A_2\beta.
\end{equation*}
The constant term does not change the modulus in
\eqref{sc:scalar-quadratic-average}. Since $0<\tau\le1/100$ and
$M\ge\tau^{-C_{\mathrm W}}$, \Cref{sc:circle-lemma} applies.
It gives an integer $1\le r'\le\tau^{-C_{\mathrm W}}$.
Set $r=2r'$. By \eqref{sc:dense-bsz-coefficient-bounds} and
$H\ge3\tau^{-C_{\mathrm W}}$, we have $r\le H$ and
\begin{align}
 \|rd^2(p^2-q^2)\beta\|&\le H/M^2,
 \\
 \|r\{d(p-q)\xi+2a_0d(p^2-q^2)\beta\}\|&\le H/M.
 \label{sc:dense-bsz-scalar-linear-input}
\end{align}
These inequalities hold for every pair in $\mathcal E$; the integers
$r,d,a_0,M$ may depend on the pair.

\par
\medskip
\noindent\textbf{Step 3. Obtain a common rational denominator.}
First suppose $x\ge8H$. The set $\mathcal E$, the bounds in
\eqref{sc:scalar-cross-progression-data}, and
\eqref{sc:scalar-size-for-coefficients} verify the hypotheses of
\Cref{sc:rational-approx}, with $X=x$ and $\alpha=\xi$.
Hence there are integers $u_1,v_1$ and positive integers $S_1,R_1$
such that
\begin{equation*}
 R_1\le256H^9,\qquad \left|\beta-\frac{v_1}{R_1}\right|\le\frac{4H^4}{N^2},\qquad S_1\le1024H^{14},\qquad \left|\xi-\frac{u_1}{S_1}\right|\le\frac{520H^7}{N}.
\end{equation*}
Both fractions therefore have a common denominator
$R_1S_1\le2^{18}H^{23}$.

Now suppose $x<8H$, and choose one pair in $\mathcal E$.
Since $p\ne q$ are positive, $A_1,A_2\ne0$. Put
$R_0=rd^2|A_2|$. We have $|A_1|\le x<8H$,
$|A_2|\le4x^2<256H^2$, and
\begin{equation*}
 \|R_0\beta\|\le\frac{H}{M^2}
 \le\frac{H^3x^2}{N^2}\le\frac{64H^5}{N^2}.
\end{equation*}
Choose $v_0\in\mathbb Z$ so that $|R_0\beta-v_0|=\|R_0\beta\|$. Then
\begin{equation}
 R_0\le256H^5,\qquad
 \beta=v_0/R_0+\beta_0,\qquad
 |\beta_0|=\frac{\|R_0\beta\|}{R_0}\le\frac{64H^5}{N^2}.
 \label{sc:dense-bsz-small-block-beta}
\end{equation}
The size bound \eqref{sc:scalar-size-for-coefficients} implies
$N\ge Hx$, so $|a_0|\le2N/x$.
Multiply \eqref{sc:dense-bsz-scalar-linear-input} by $R_0$ and
substitute \eqref{sc:dense-bsz-small-block-beta}.
The term $2ra_0dA_2v_0$ is an integer. Thus
\begin{align*}
 \|R_0rdA_1\xi\|
 &\le \frac{R_0H}{M}+2R_0rd|a_0|\,|A_2|\,|\beta_0|\notag\\
 &\le \frac{R_0H^2x}{N}
       +2R_0H^2\frac{2N}{x}(4x^2)\frac{64H^5}{N^2}\notag\\
 &\le\frac{1025R_0H^7x}{N}
 \le\frac{8200R_0H^8}{N}.
\end{align*}
Put $S_0=R_0rd|A_1|$. Then
$R_0\le S_0\le2048H^8$. Choosing an integer $u_0$ nearest to
$S_0\xi$ gives
\begin{equation*}
 \left|\xi-\frac{u_0}{S_0}\right|
 =\frac{\|S_0\xi\|}{S_0}
 \le\frac{8200H^8}{N},\qquad
 R_0S_0\le2^{19}H^{13}.
\end{equation*}

In either case, set $B_\rho=2^{20}H^{23}$ and use the product of
the two denominators. After rescaling the numerators, we obtain
integers $u,v$ and a positive integer $Q\le B_\rho$ such that
\begin{equation*}
 \xi=u/Q+\xi_0,\qquad \beta=v/Q+\beta_0,\qquad
 |\xi_0|\le B_\rho/N,\quad |\beta_0|\le B_\rho/N^2.
\end{equation*}
Set $C_4=20+23A_*$. Since $2^{20}\le\rho^{-20}$,
\eqref{sc:scalar-H-growth} gives $B_\rho\le\rho^{-C_4}$.
Thus \eqref{sc:dense-bsz-scalar-major-arcs} holds whenever $C\ge C_4$.

\par
\medskip
\noindent\textbf{Step 4. Control the remaining exponential factor.}
For real $y\in[1,N]$, put $P_0(y)=\xi_0y+\beta_0y^2$. Then
\begin{equation*}
 |P_0'(y)|\le|\xi_0|+2N|\beta_0|\le3B_\rho/N.
\end{equation*}
Require $C\ge\log2000+C_4+1$. Then
\[
 \log(2000B_\rho/\rho)
 \le\log2000+(C_4+1)\log(1/\rho)\le C\rho^{-5},
\]
so $N\ge2000B_\rho/\rho$. Set the positive integer
$L=\lfloor\rho N/(1000B_\rho)\rfloor$.
Divide $[N]$ into consecutive intervals $I_1,\ldots,I_{m_0}$
of length $L$, except possibly the last. Then
\begin{equation*}
 L\ge\frac{\rho N}{2000B_\rho}\ge1,\qquad m_0=\lceil N/L\rceil\le N/L+1\le\frac{2000B_\rho}{\rho}+1\le\frac{2001B_\rho}{\rho}.
\end{equation*}
The ceiling accounts for the last interval when its length is less than $L$.
Let $n_j$ be the first point of $I_j$. The mean value theorem and
$|e(s)-e(t)|\le2\pi|s-t|$ give
\begin{equation}
 |e(P_0(n))-e(P_0(n_j))|
 \le6\pi B_\rho L/N<\rho/10
 \qquad(n\in I_j).
 \label{sc:scalar-phase-freezing-error}
\end{equation}

\par
\medskip
\noindent\textbf{Step 5. Find a large sum on one residue class.}
The function $n\mapsto e((un+vn^2)/Q)$ is $Q$-periodic. Put
\begin{equation*}
 Q_0=\operatorname{lcm}(D,Q)\le DQ\le2B_\rho/\rho.
\end{equation*}
Split each $\mathcal P\cap I_j$ into residue classes modulo $Q_0$.
For $1\le j\le m_0$ and $0\le b<Q_0$, write
\begin{equation*}
 \mathcal C_{j,b}=\{n\in\mathcal P\cap I_j:n\equiv b\pmod{Q_0}\}.
\end{equation*}
Discard the empty sets. The remaining sets partition $\mathcal P$,
and there are at most $m_0Q_0\le4002B_\rho^2/\rho^2$ of them.
All sums over $(j,b)$ below run over these nonempty sets.
Since $Q\mid Q_0$, the function $e((un+vn^2)/Q)$ is constant
on each $\mathcal C_{j,b}$. Define
$z_{j,b}=e((ub+vb^2)/Q)e(P_0(n_j))$, so $|z_{j,b}|=1$.
Using \eqref{sc:scalar-phase-freezing-error} and $|f(n)|=1$, we get
\begin{equation*}
 \left|\sum_{n\in\mathcal P}f(n)e(\xi n+\beta n^2)
       -\sum_{j,b}z_{j,b}\sum_{n\in\mathcal C_{j,b}}f(n)\right|
 \le\frac{\rho|\mathcal P|}{10}\le\frac{\rho N}{10}.
\end{equation*}
The assumed correlation and the triangle inequality therefore give
\begin{equation*}
 \sum_{j,b}\left|\sum_{n\in\mathcal C_{j,b}}f(n)\right|
 \ge\frac{9\rho N}{10}.
\end{equation*}
At least one pair $(j,b)$ satisfies
\begin{equation}
 \left|\sum_{n\in\mathcal C_{j,b}}f(n)\right|
 \ge\frac{9\rho^3N}{40020B_\rho^2}
 \ge\kappa_\rho N,\qquad
 \kappa_\rho:=\frac{\rho^3}{5000B_\rho^2}.
 \label{sc:scalar-residue-sum}
\end{equation}
For this pair, take $I=I_j\cap I_*$ and $b_0=b$.
The set $\mathcal C_{j,b}$ is nonempty, so $b\equiv b_*\pmod D$.
As $D\mid Q_0$, it follows that
\[
 \mathcal C_{j,b}=\{n\in I:n\equiv b_0\pmod{Q_0}\}.
\]
The estimate $B_\rho\le\rho^{-C_4}$ gives
\begin{equation}
 Q_0\le2\rho^{-(C_4+1)}\le\rho^{-(C_4+2)},\qquad
 \kappa_\rho^{-1}\le5000\rho^{-(2C_4+3)}
       \le\rho^{-(2C_4+16)}.
 \label{sc:scalar-residue-power-bounds}
\end{equation}
Here $2\le\rho^{-1}$ and $5000\le\rho^{-13}$.
Thus $C\ge2C_4+16$ ensures both $Q_0\le\rho^{-C}$ and
$\kappa_\rho\ge\rho^C$, proving
\eqref{sc:dense-bsz-scalar-periodic-mean}.

\par
\medskip
\noindent\textbf{Step 6. Apply the residue-class lemma.}
We have $0<\kappa_\rho<1/2$. Let $C_{\mathrm{res}}\ge1$ be the
absolute constant in \Cref{sc:residue-mean}.
Put $E=3C_4+19$. By \eqref{sc:scalar-residue-power-bounds},
\begin{equation}
 \frac{2Q_0}{\kappa_\rho}\le\rho^{-E},\qquad
 \log\left((2Q_0/\kappa_\rho)^{C_{\mathrm{res}}}\right)
 \le C_{\mathrm{res}}E\log(1/\rho)
 \le C_{\mathrm{res}}E\rho^{-5}.
 \label{sc:scalar-residue-threshold}
\end{equation}
Requiring $C\ge C_{\mathrm{res}}E$ therefore gives
$N\ge(2Q_0/\kappa_\rho)^{C_{\mathrm{res}}}$, as needed.

Applying \Cref{sc:residue-mean} to \eqref{sc:scalar-residue-sum}
gives a primitive Dirichlet character $\chi$ and a real $t$ with
\begin{equation*}
 \begin{gathered}
 q_\chi\mid Q_0,\qquad
 |t|\le(Q_0/\kappa_\rho)^{C_{\mathrm{res}}},\\
 \sum_{\substack{p\le N\\p\ \mathrm{prime}}}\frac{1-\Re(f(p)\overline{\chi(p)}p^{-it})}{p}
 \le C_{\mathrm{res}}\log(2Q_0/\kappa_\rho).
 \end{gathered}
\end{equation*}
The estimates \eqref{sc:scalar-residue-power-bounds} and
\eqref{sc:scalar-residue-threshold} now give
\begin{equation*}
 \begin{gathered}
 q_\chi\le Q_0\le\rho^{-(C_4+2)},\qquad
 |t|\le\rho^{-C_{\mathrm{res}}E},\\
 \sum_{\substack{p\le N\\p\ \mathrm{prime}}}\frac{1-\Re(f(p)\overline{\chi(p)}p^{-it})}{p}
 \le C_{\mathrm{res}}E\log(1/\rho).
 \end{gathered}
\end{equation*}
We may therefore choose the single constant in the theorem to be
\begin{equation*}
 C=\max\{C_{\mathrm{BSZ}},\,C_3,\,\log2000+C_4+1,\,
            2C_4+16,\,C_{\mathrm{res}}(3C_4+19)\}.
\end{equation*}
Every entry in this maximum is an absolute constant.
This choice satisfies all size requirements above and gives
\eqref{sc:dense-bsz-scalar-major-arcs}--\eqref{sc:dense-bsz-scalar-pretentious}.
\end{proof}

\section{A multiplicative inverse theorem}
\label{inv:section}

This section proves the multiplicative inverse theorem, \Cref{thm:mult_inverse}.
The proof first extracts a structured quadratic sequence from a large
correlation and then converts that structure into a character description
of the multiplicative function. Throughout this section, $C,c>0$ denote
absolute constants, which may change from line to line. For a real or
integer vector $v$, $|v|$ denotes its maximum norm, and $\|v\|_2$ its
Euclidean norm. The height of an integer matrix is its maximum entry norm.

\subsection{Bohr sets and polynomial sequences} In this subsection, we collect some notation used later.

\medskip
\noindent\textbf{Finite-group notation.}
Let $A$ be a finite abelian group and write
$\mathbb T=\mathbb R/\mathbb Z$. Its dual group $\widehat A$ consists
of the homomorphisms $A\to\mathbb T$. We use normalized averages and
\begin{equation*}
 \widehat u(\xi)=\mathbb E_{x\in A}u(x)e(-\xi(x)),\qquad
 (u*v)(x)=\mathbb E_{y\in A}u(y)v(x-y),\qquad
 e(t)=e^{2\pi i t}.
\end{equation*}

For a nonempty finite set $S\subseteq\widehat A$ and $0<\rho<1/2$,
define
\begin{equation*}
 B(S,\rho)=
 \{x\in A:\max_{\xi\in S}\|\xi(x)\|_{\mathbb R/\mathbb Z}<\rho\}.
\end{equation*}
Its rank is $|S|$. We call it regular when
\begin{equation}
 \left|
 \frac{|B(S,(1+\kappa)\rho)|}{|B(S,\rho)|}-1
 \right|
 \le100|S||\kappa|
 \qquad (|\kappa|\le(100|S|)^{-1}).
 \label{inv:regular-definition}
\end{equation}
We use the standard estimates
\begin{equation*}
 |B(S,\rho)|\ge(c\rho)^{|S|}|A|,
 \qquad
 |B(S,2\rho)|\le4^{|S|}|B(S,\rho)|.
\end{equation*}
There is also a $\rho'\in[\rho/2,\rho]$ for which $B(S,\rho')$
is regular in the sense of \eqref{inv:regular-definition}.
\footnote{The size estimates and the regular-radius assertion are given
in \cite[Section 2, p.~7, immediately before Lemma 2.1]{LengQuadratic}.
Our regularity convention agrees with \cite[Definition 1.4(ii)]{JamneshanTao}.}

\medskip
\noindent\textbf{Local quadratic functions and progressions.}
Let $D\subseteq A$. A function $\phi:D\to\mathbb T$ is locally
quadratic if
\begin{equation*}
 \sum_{\omega\in\{0,1\}^3}(-1)^{|\omega|}
 \phi(x+\omega\cdot h)=0
\end{equation*}
whenever all eight arguments belong to $D$, where
$h=(h_1,h_2,h_3)$ and
$\omega\cdot h=\omega_1h_1+\omega_2h_2+\omega_3h_3$.
Local linearity is defined by the analogous vanishing of second
differences.

A symmetric coset progression of rank $r$ is
\begin{equation*}
 P=H+\left\{\sum_{j=1}^r n_jv_j:|n_j|\le L_j\right\},
 \qquad H\le A,\quad v_j\in A,
\end{equation*}
where $L_1,\ldots,L_r$ are nonnegative integers. Let
$B_L=\{(n_1,\ldots,n_r)\in\mathbb Z^r:|n_j|\le L_j\}$. We say that
$P$ is proper when the map
$$B_L\to A/H, (n_1,\ldots,n_r)\mapsto H+\sum_j n_jv_j,$$
is injective. Its density is $|P|/|A|$. For a set
$E\subseteq A$ and an integer $k\ge1$, write
\[
 kE=\{e_1+\cdots+e_k:e_1,\ldots,e_k\in E\}.
\]

\medskip
\noindent\textbf{Degree-two filtrations and polynomial sequences.}
Let $G$ be a connected, simply connected nilpotent Lie group. A
degree-two filtration on $G$ is a sequence
\[
 G=G_0=G_1\supseteq G_2\supseteq G_3=\{1\},
\]
where $G_2$ is closed, connected, and central and
$[G,G]\subseteq G_2$.  We call $G_2$ the central level of the
filtration: every element of $G_2$ commutes with every element of $G$.
The quotient map
$\pi:G\to G/G_2$ records the horizontal coordinate, while a central
coordinate is a coordinate along $G_2$.  In the Heisenberg coordinates
introduced below, $x=(\theta,\lambda,c)$ has horizontal coordinate
$(\theta,\lambda)$ and central coordinate $c$; in a product of factors,
these coordinates are read component by component.  Thus the phrase
``degree-two central filtration'' refers precisely to the chain
$G=G_1\supseteq G_2\supseteq G_3=\{1\}$ with $G_2\subseteq Z(G)$.
Fix a lattice $\Gamma$ in $G$; that is,
$\Gamma$ is a discrete subgroup and $G/\Gamma$ is compact.
Assume that $G_2/(\Gamma\cap G_2)$ is compact. The image of $\Gamma$
in $G/G_2$ is required to be discrete and cocompact.
Write
$G^{(1)}=G$ and $G^{(j+1)}=[G,G^{(j)}]$ for the lower central series.

A polynomial sequence adapted to this filtration has the form
\begin{equation}
 g(n)=g_0u^nv^{\binom n2},
 \qquad g_0,u\in G,\quad v\in G_2.
 \label{inv:polynomial-definition}
\end{equation}
With $\partial_hg(n)=g(n+h)g(n)^{-1}$, one computes
\begin{equation*}
 \partial_hg(n)=g_0u^hg_0^{-1}v^{hn+\binom h2},\qquad
 \partial_k\partial_hg(n)=v^{hk},\qquad
 \partial_l\partial_k\partial_hg(n)=1.
\end{equation*}
Thus the successive differences take values in $G_1,G_2,G_3$.
Conversely, if the second differences take values in $G_2$ and the
third differences are trivial, then
$\partial_1\partial_1g(n)=v\in G_2$ is independent of $n$.
Hence $\partial_1g(n)=av^n$ for some $a\in G$, and induction gives
$g(n)=a^nv^{\binom n2}g(0)$. Writing
$a=g(0)ug(0)^{-1}$ recovers \eqref{inv:polynomial-definition}.

\medskip
\noindent\textbf{The explicit Heisenberg models.}
We use the two-step groups
\begin{equation}
 \mathcal H(r)=\mathbb R^r\times\mathbb R^r\times\mathbb R,
 \quad
 (\theta,\lambda,c)(\theta',\lambda',c')
 =(\theta+\theta',\lambda+\lambda',c+c'+\lambda\cdot\theta'),
 \label{inv:group-definition}
\end{equation}
with lattice
$\Gamma(r)=\mathbb Z^r\times\mathbb Z^r\times\mathbb Z$ and
filtration subgroup
$\mathcal H(r)_2=\{(0,0,c):c\in\mathbb R\}$. We also use products
of at most two such groups, with the product lattice and filtration.
The dimension of $\mathcal H(r)$ is $2r+1$. The Lie bracket\footnote{The Lie bracket is the commutator bracket of the left-invariant
vector fields generated by the corresponding tangent vectors. In the
coordinates above, it is the displayed vector in the $c$-coordinate
direction.} is
\begin{equation*}
 [(\theta,\lambda,c),(\theta',\lambda',c')]_{\mathrm{Lie}}
 =(0,0,\lambda\cdot\theta'-\lambda'\cdot\theta),
 \qquad
 \exp(\theta,\lambda,c)
 =(\theta,\lambda,c+\tfrac12\lambda\cdot\theta).
\end{equation*}
In these coordinates, every coefficient in the group law, the Lie
bracket, and the exponential map has absolute value at most one.

\medskip
\noindent\textbf{Metrics, smooth sequences, and Lipschitz norms.}
For the rest of this subsection, let $G$ be a Heisenberg group
$\mathcal H(r)$ or a product of at most two such groups, and let
$\Gamma$ be its integer lattice. Use the Euclidean inner product
in the displayed Lie algebra coordinates and extend it by right
translations. Denote the resulting Riemannian distance by $d_G$.

Let $Q\ge1$ be real and $N\ge1$ an integer. A sequence
$\varepsilon:\{0,\ldots,N\}\to G$ is $(Q,N)$-smooth if
\[
 d_G(\varepsilon(n),1)\le Q\quad(0\le n\le N),\qquad
 d_G(\varepsilon(n+1),\varepsilon(n))\le Q/N
 \quad(0\le n<N).
\]
Thus $Q$ bounds the distance from the identity, and $Q/N$ bounds
the distance between consecutive terms.

The quotient metric on $G/\Gamma$ is
\[
 d_{G/\Gamma}(x\Gamma,y\Gamma)
   =\inf_{\gamma\in\Gamma}d_G(x,y\gamma).
\]
For $F:G/\Gamma\to\mathbb C$, define
\[
 \|F\|_{\mathrm{Lip}}
 =\|F\|_\infty+
 \sup_{x\Gamma\ne y\Gamma}
 \frac{|F(x\Gamma)-F(y\Gamma)|}
      {d_{G/\Gamma}(x\Gamma,y\Gamma)}.
\]

\medskip
\noindent\textbf{Characters and vertical frequencies.}
For the same group $G$ and lattice $\Gamma$, a horizontal character
is a continuous homomorphism
$\eta:G\to\mathbb R$ with $\eta(\Gamma)\subseteq\mathbb Z$.
It annihilates $[G,G]$ and therefore descends to a homomorphism on
$G/[G,G]$. For $\mathcal H(r)$,
\[
 \eta(\theta,\lambda,c)=a\cdot\theta+b\cdot\lambda,
 \qquad a,b\in\mathbb Z^r,
\]
and its height is $\max(|a|,|b|)$; on a product, use the maximum over
the factors. If $w\in G/[G,G]$, we say that $w$ is orthogonal to
$\eta$ when the descended homomorphism satisfies $\eta(w)=0$.
For $G=\mathbb R$ and $\Gamma=\mathbb Z$, horizontal characters
are $x\mapsto mx$ with $m\in\mathbb Z$.

For a positive integer $s$, a vertical character of level $s$ is a
continuous homomorphism $\xi:G^{(s)}\to\mathbb R$ with
$\xi(G^{(s)}\cap\Gamma)\subseteq\mathbb Z$. A function
$F:G/\Gamma\to\mathbb C$ has vertical frequency $\xi$ at level $s$
when
\begin{equation*}
 F(zx\Gamma)=e(\xi(z))F(x\Gamma)
 \qquad(z\in G^{(s)},\ x\in G).
\end{equation*}

\medskip
\noindent\textbf{Polynomial norms.}
Let $N\ge1$ be an integer. For an $\mathbb R/\mathbb Z$-valued
polynomial in binomial coordinates
\[
 P(n)=a_0+a_1\binom n1+a_2\binom n2,
 \qquad a_0,a_1,a_2\in\mathbb R/\mathbb Z,
\]
define
\[
 \|P\|_{C^\infty[N]}
 =\max\{N\|a_1\|_{\mathbb R/\mathbb Z},
        N^2\|a_2\|_{\mathbb R/\mathbb Z}\}.
\]
If $g$ is a degree-two polynomial sequence and $\eta$ is horizontal,
then $\eta\circ g$ is reduced modulo $\mathbb Z$ and is measured by
this norm.

\subsection{Initial quadratic correlation}

We first prove the progression-to-Bohr estimate needed for
\Cref{inv:bohr-rank}.

\begin{lemma}[Progression-to-Bohr estimate]
\label{inv:progression-bohr}
Let $A$ be a finite abelian group. Let $P\subseteq A$ be a symmetric
coset progression of rank $r\ge1$ and density
\[
 \alpha:=\frac{|P|}{|A|}>0.
\]
Set
\[
 S:=r+\log(2/\alpha).
\]
Then there is a nonempty finite set $\Psi\subseteq\widehat A$ such that
\begin{equation*}
 B\!\left(\Psi,\frac{1}{4|\Psi|}\right)\subseteq P,
 \qquad
 |\Psi|\ll S\log(2S).
\end{equation*}
\end{lemma}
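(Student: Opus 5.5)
We will realise $P$ as the image of a box and find a small set of characters that cuts out a Bohr set sitting inside $P$, using a "Bohr set inside a progression" argument driven by Fourier analysis. Write $P=H+P_0$ with $P_0=\{\sum n_jv_j:|n_j|\le L_j\}$. The plan is to work with a smoothed indicator. Put $P_1=H+\{\sum n_jv_j:|n_j|\le L_j/2\}$ (rounding down) and $\mu=\mathbf 1_{P_1}/|P_1|$, and consider $f=\mu*\mu*\mu*\mu$ or a similar iterated convolution. Then $f$ is supported in $4P_1\subseteq 2P$. Its Fourier transform is $|\widehat\mu|^4$ (up to normalisation), which is nonnegative. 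So to get containment in $P$ itself we should instead work with $P_2$ at scale $L_j/8$, so that $4P_2\subseteq P$. Standard doubling for coset progressions gives $|P_2|\ge 8^{-r}|P|$, hence density $\alpha_2\ge 8^{-r}\alpha$, and $\log(1/\alpha_2)\ll S$.

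\textbf{Step 1 (large spectrum).} Let $\Psi_0=\{\xi:|\widehat{\mathbf 1_{P_2}}(\xi)|\ge \epsilon\alpha_2\}$ with a small absolute $\epsilon$. We apply Chang's lemma to bound the dimension of $\Psi_0$: there is a set $\Psi$ with $|\Psi|\ll \epsilon^{-2}\log(2/\alpha_2)\ll S$ such that every $\xi\in\Psi_0$ is a $\{-1,0,1\}$-combination of elements of $\Psi$. Allowing an extra $\log(2S)$ loss here, via the version with an additional regular-Bohr refinement, is where the bound $S\log(2S)$ comes from.

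\textbf{Step 2 (containment).} For $x\in B(\Psi,\rho)$ we need $f(x)>0$, where $f=\mathbf 1_{P_2}*\mathbf 1_{P_2}*\mathbf 1_{-P_2}*\mathbf 1_{-P_2}$ normalised. By Fourier inversion, $f(x)=\sum_\xi|\widehat{\mathbf 1_{P_2}}(\xi)|^4e(\xi(x))$. We compare this with $f(0)$ and use $|1-e(\xi(x))|\le 2\pi\|\xi(x)\|$:

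\begin{itemize}
\item For $\xi\in\Psi_0$, we have $\|\xi(x)\|\le|\Psi|\rho$ by the span property, and choosing $\rho=1/(4|\Psi|)$ up to constants controls these terms.
\item For $\xi\notin\Psi_0$, Parseval gives total contribution at most $\epsilon^2\alpha_2^2\cdot\alpha_2$, which is small compared with $f(0)=\sum|\widehat{\mathbf 1_{P_2}}|^4\ge\alpha_2^4/\alpha_2\cdot$(energy of $P_2$).
\end{itemize}

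The additive energy of a progression is $\gg 8^{-r}|P_2|^3$. Comparing the two sides then forces $\epsilon$ to depend on $r$, which is exactly why the rank enters $S$.

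\textbf{Main obstacle.} The main obstacle is the mismatch between the radius $1/(4|\Psi|)$ demanded in the statement and the loss coming from spanning combinations. A combination of up to $|\Psi|$ characters with $\|\cdot\|<1/(4|\Psi|)$ only gives $\|\xi(x)\|<1/4$, which is not small. To fix this, we first try the approach of Sanders, "Bohr sets in $2A-2A$": replace $\{-1,0,1\}$-spans with a direct Chang-type bound on the large spectrum of the energy-increment, and use the fact that the real parts satisfy $\Re e(\theta)\ge 0$ when $\|\theta\|\le 1/4$. Then positivity of $|\widehat{\mathbf 1_{P_2}}|^4\Re e(\xi(x))$ over $\Psi_0$ suffices, and no smallness is needed. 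Only the complement needs the Parseval bound.

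With this positivity trick, the argument is:
\begin{enumerate}
\item choose $\epsilon$ with $\epsilon^2\ll 8^{-r}$;
\item get $|\Psi|\ll\epsilon^{-2}\log(2/\alpha_2)$.
\end{enumerate}

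This would give $|\Psi|$ exponential in $r$, which is too weak. So instead we apply the argument in the quotient $A/H$ and use properness of the progression to model it as a box in $\mathbb Z^r$. There the good frequencies are explicit, and $r$ characters dual to the $v_j$ plus $O(\log(2/\alpha))$ Chang characters handle the box. Accepting a $\log(2S)$ factor from the Chang iteration, this yields $|\Psi|\ll S\log(2S)$.
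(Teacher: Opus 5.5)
There is a genuine gap. The part of your argument that you actually carry out uses a fixed fourfold convolution, and with the positivity trick it gives a much worse bound than you record. Once you only know $\Re e(\xi(x))\ge0$ for $\xi\in\Psi_0$, the nonzero frequencies in $\Psi_0$ contribute nothing to the lower bound. The surviving main term is therefore the zero frequency $\alpha_2^4$, not $f(0)$ or the additive energy of $P_2$. The tail is at most $(\epsilon\alpha_2)^2\alpha_2$, so you need $\epsilon^2<\alpha_2$ rather than $\epsilon^2\ll 8^{-r}$. Chang's lemma then gives $|\Psi|\ll\alpha_2^{-1}\log(2/\alpha_2)$, which is exponential in all of $S$, not just in $r$.

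The closing paragraph meant to recover $S\log(2S)$ is not an argument. First, the lemma does not assume $P$ is proper. Second, even for proper $P$ there are in general no characters ``dual to the $v_j$''. Properness only excludes relations $\sum_j n_jv_j\in H$ with $|n_j|\le 2L_j$, and longer relations can hold: for example $v_2=Mv_1$ in $\mathbb Z/N\mathbb Z$ with $M>2L_1$. So a character's values on $v_1,\ldots,v_r$ cannot be prescribed independently. You also give no reason why such characters would cut out a Bohr set of radius $1/(4|\Psi|)$ inside $P$. Finally, the $\log(2S)$ factor does not come from a regular-Bohr refinement or a Chang iteration.

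The missing idea is to let the convolution power grow with $S$. Take an even $k\asymp S\log(2S)$ and shrink by $k$ instead of $8$: set $q_j=\lfloor L_j/k\rfloor$ and $Q=H+\{\sum_j n_jv_j:|n_j|\le q_j\}$, so that $kQ\subseteq P$. Covering each $[-L_j,L_j]$ by at most $2k$ translates of $[-q_j,q_j]$ (no properness needed) gives $\delta:=|Q|/|A|\ge\alpha(2k)^{-r}$. Hence $\log(2/\delta)\le\log(2/\alpha)+r\log(2k)\ll S\log(2S)$, and this density loss is the true source of the $\log(2S)$.

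Now put the large-spectrum threshold at $a=\tfrac12\delta^{1+1/(k-2)}$. Parseval gives $\sum_{\gamma\notin\mathcal L}|\widehat{\mathbf 1_Q}(\gamma)|^k\le a^{k-2}\delta=2^{-(k-2)}\delta^k<\delta^k$. Since $\widehat{\mathbf 1_Q}$ is real ($Q=-Q$) and $k$ is even, your positivity trick yields $B(\mathcal L,1/4)\subseteq kQ\subseteq P$.

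Because $\log(2/\delta)\ll k$, the relative threshold $a/\delta=\tfrac12\delta^{1/(k-2)}$ is bounded below by an absolute constant. Chang's lemma therefore places $\mathcal L$ in the $\{-1,0,1\}$-span of a set $\Psi$ with $|\Psi|\ll\log(2/\delta)\ll S\log(2S)$. For $x\in B(\Psi,1/(4|\Psi|))$ the triangle inequality gives $\|\gamma(x)\|_{\mathbb R/\mathbb Z}<1/4$ for every $\gamma\in\mathcal L$, which is exactly what positivity requires. So the radius $1/(4|\Psi|)$, which you identified as the main obstacle, is harmless once the threshold is a constant fraction of $\delta$. The real obstruction in your version is the fixed exponent $4$.
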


\begin{proof}
Write the progression in the form
\[
 P=H+\left\{\sum_{j=1}^r n_jv_j: |n_j|\le L_j\right\},
 \qquad H\le A.
\]
We divide the rest proof into five steps.

\par
\medskip
\noindent\textbf{Step 1. A smaller progression.}
Choose an even integer $k\ge4$, put
\[
 q_j:=\left\lfloor\frac{L_j}{k}\right\rfloor,
 \qquad
 Q:=H+\left\{\sum_{j=1}^r n_jv_j: |n_j|\le q_j\right\}.
\]
Since $kq_j\le L_j$ and the sum of $k$ elements of the subgroup $H$
again lies in $H$, we have
\begin{equation*}
 kQ\subseteq P.
\end{equation*}
Since $L_j<k(q_j+1)$, we have
$2L_j+1\le2k(2q_j+1)$. Divide $[-L_j,L_j]\cap\mathbb Z$ into
at most $2k$ consecutive blocks, each with at most $2q_j+1$ terms.
Each block lies in a translate of $[-q_j,q_j]\cap\mathbb Z$.
Thus there is a set $T_j\subseteq\mathbb Z$ with $|T_j|\le2k$ and
\begin{equation*}
 [-L_j,L_j]\cap\mathbb Z
 \subseteq\bigcup_{t\in T_j}\bigl(t+([-q_j,q_j]\cap\mathbb Z)\bigr).
\end{equation*}
Put $\displaystyle E=\left\{\sum_{j=1}^r t_jv_j:t_j\in T_j\right\}\subseteq A$. Then
\begin{equation*}
 P\subseteq\bigcup_{t\in E}(t+Q),\qquad
 |P|\le |E|\,|Q|\le(2k)^r|Q|.
\end{equation*}
Consequently, with
\[
 \delta:=\frac{|Q|}{|A|},
\]
we have
\begin{equation}
 \delta\ge \alpha(2k)^{-r}.
 \label{inv:eq:progression-density}
\end{equation}

\par
\medskip
\noindent\textbf{Step 2. The large spectrum of $Q$.}
Let
\begin{equation*}
 a:=\frac12\,\delta^{\,1+1/(k-2)},
 \qquad
 \mathcal L:=\{\gamma\in\widehat A:
          |\widehat{\mathbf 1_Q}(\gamma)|\ge a\}.
\end{equation*}
Parseval gives
\[
 \sum_{\gamma\in\widehat A}
       |\widehat{\mathbf 1_Q}(\gamma)|^2
   =\mathbb E_{x\in A}\mathbf 1_Q(x)^2
   =\delta.
\]
Therefore
\begin{equation}
 \sum_{\gamma\notin\mathcal L}
       |\widehat{\mathbf 1_Q}(\gamma)|^k
 \le a^{\,k-2}
       \sum_{\gamma\in\widehat A}
       |\widehat{\mathbf 1_Q}(\gamma)|^2
 =2^{-(k-2)}\delta^k.
 \label{inv:eq:progression-spectrum-tail}
\end{equation}
Because $Q=-Q$, every Fourier coefficient
$\widehat{\mathbf 1_Q}(\gamma)$ is real. Since $k$ is even,
$\widehat{\mathbf 1_Q}(\gamma)^k\ge0$.

\par
\medskip
\noindent\textbf{Step 3. Positivity on a Bohr set.}
Write $\mathbf1_Q^{*k}$ for the normalized $k$-fold convolution of
$\mathbf1_Q$. Explicitly,
\begin{equation*}
 \mathbf1_Q^{*k}(x)
 =\frac{1}{|A|^{k-1}}
   \#\{(y_1,\ldots,y_k)\in Q^k:y_1+\cdots+y_k=x\}.
\end{equation*}
In particular, $\mathbf1_Q^{*k}(x)>0$ if and only if $x\in kQ$.
Fourier inversion gives
\begin{equation*}
 \mathbf 1_Q^{*k}(x)
 =\sum_{\gamma\in\widehat A}
       \widehat{\mathbf 1_Q}(\gamma)^k e(\gamma(x)).
\end{equation*}
If $x\in B(\mathcal L,1/4)$, then
$\Re e(\gamma(x))\ge0$ for every $\gamma\in\mathcal L$. The zero
frequency contributes $\delta^k$. The retained nonzero frequencies have
nonnegative real part, while the total absolute value of the discarded
frequencies is bounded by
\eqref{inv:eq:progression-spectrum-tail}. Hence
\begin{equation*}
 \Re\bigl(\mathbf 1_Q^{*k}(x)\bigr)
 \ge \delta^k-2^{-(k-2)}\delta^k>0.
\end{equation*}
Thus $\mathbf 1_Q^{*k}(x)>0$, so $x$ is a sum of $k$ elements of $Q$.
Therefore
\begin{equation*}
 B(\mathcal L,1/4)\subseteq kQ\subseteq P.
\end{equation*}

\par
\medskip
\noindent\textbf{Step 4. Compressing the large spectrum.}
We use the spectral form of Chang's lemma appearing in the proof of
\cite[Proposition~28]{Milicevic}: for $0<b\le1$, the set
\[
 \{\gamma\in\widehat A:
     |\widehat{\mathbf 1_Q}(\gamma)|\ge b\delta\}
\]
is contained in the $\{-1,0,1\}$-span of a set of at most
\begin{equation*}
 C b^{-2}\log(2/\delta)
\end{equation*}
characters, where $C$ is absolute. Apply this with
\begin{equation*}
 b:=\frac a\delta=\frac12\,\delta^{1/(k-2)}.
\end{equation*}
By Chang's lemma, after including the trivial character (which changes
the bound by at most one, absorbed because
$b^{-2}\log(2/\delta)\ge4\log2>1$), choose a set
$\Psi\subseteq\widehat A$ such that every element of
$\mathcal L$ is a sum
$\displaystyle\sum_{\psi\in\Psi}\varepsilon_\psi\psi$ with
$\varepsilon_\psi\in\{-1,0,1\}$ and
\begin{equation}
 |\Psi|\ll
 \delta^{-2/(k-2)}\log(2/\delta).
 \label{inv:eq:progression-chang-bound}
\end{equation}
If $x\in B(\Psi,1/(4|\Psi|))$, then
\begin{equation*}
 \|\gamma(x)\|_{\mathbb R/\mathbb Z}
 \le\sum_{\psi\in\Psi}\|\psi(x)\|_{\mathbb R/\mathbb Z}
 <\frac14
 \qquad(\gamma\in\mathcal L).
\end{equation*}
Thus
\begin{equation}
 B\!\left(\Psi,\frac1{4|\Psi|}\right)
 \subseteq B(\mathcal L,1/4)\subseteq P.
 \label{inv:eq:progression-bohr-containment}
\end{equation}

\par
\medskip
\noindent\textbf{Step 5. Choosing $k$.}
Set $T=\log(2S)$. Since $S\ge1$, we have $T\ge\log2$.
Choose
\begin{equation}
 k=2\lceil8ST\rceil,\qquad
 16ST\le k\le20ST,\qquad k-2\ge8ST.
 \label{inv:eq:progression-k-choice}
\end{equation}
These bounds follow from $ST\ge\log2$ and
$16ST\le k<16ST+2$. Also,
$\log S\le T$, $\log T\le T$, and $\log40\le6T$, so
$\log(2k)\le\log40+\log S+\log T\le8T$.
Since $r\le S$ and $\log(2/\alpha)\le S$,
\eqref{inv:eq:progression-density} gives
\begin{equation}
 \log(2/\delta)
 \le\log(2/\alpha)+r\log(2k)
 \le S+8ST\le10ST.
 \label{inv:eq:progression-log-delta}
\end{equation}
Together with \eqref{inv:eq:progression-k-choice}, this yields
\begin{equation*}
 \frac{\log(2/\delta)}{k-2}\le\frac{10ST}{8ST}=\frac54.
\end{equation*}
Consequently,
\begin{equation}
 \delta^{-2/(k-2)}
 =\exp\!\left(\frac{2\log(1/\delta)}{k-2}\right)
 \le e^{5/2}.
 \label{inv:eq:progression-exponent}
\end{equation}
All constants in these three estimates are numerical; in particular,
they are independent of $A$, $r$, and $\alpha$.
Substituting \eqref{inv:eq:progression-log-delta} and
\eqref{inv:eq:progression-exponent} into
\eqref{inv:eq:progression-chang-bound} yields
\begin{equation*}
 |\Psi|\ll S\log(2S).
\end{equation*}
Together with \eqref{inv:eq:progression-bohr-containment}, this proves
the lemma.
\end{proof}

We now use \Cref{inv:progression-bohr} to establish the local
quadratic inverse estimate.

\begin{proposition}
\label{inv:bohr-rank}
Let $L$ be an odd prime and put
\[
 G:=\mathbb Z/L\mathbb Z.
\]
Let $0<\epsilon<1/2$ and put
$\ell:=2+\log(2/\epsilon)$. Suppose that $u:G\to\mathbb C$ satisfies
\[
 |u(x)|\le1\quad(x\in G),
 \qquad
 \|u\|_{U^3(G)}\ge\epsilon.
\]
Then there exist an absolute constant $C\ge1$, a nonempty set
$S\subseteq\widehat G$ with $r:=|S|\ge1$, a regular Bohr set
\[
 B_G(S,\rho):=
 \{x\in G:\max_{\xi\in S}\|\xi(x)\|_{\mathbb R/\mathbb Z}<\rho\},
 \qquad 0<\rho<1/2000,
\]
a point $x_0\in G$, a locally quadratic function
$\phi:B_G(S,1000\rho)\to\mathbb T$, and a real number $\tau>0$
such that
\begin{gather*}
 \left|\mathbb E_{h\in B_G(S,\rho)}
             u(x_0+h)e(-\phi(h))\right|\ge\tau,
 \\
 r\le C\ell^{43},
 \\
 \log(2/\rho),\ \log(2/\tau)\le C\ell.
\end{gather*}
\end{proposition}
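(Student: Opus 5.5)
The plan is to import the quasipolynomial $U^3$ inverse theorem of Leng \cite{LengQuadratic} (see also the Bohr-set formulation of Jamneshan--Tao \cite{JamneshanTao}) in its ``local'' form. That form produces a symmetric coset progression rather than a Bohr set, and \Cref{inv:progression-bohr} then converts the progression into a Bohr set of controlled rank.

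\textbf{Step 1: the local inverse theorem.} From $\|u\|_{U^3(G)}\ge\epsilon$ on $G=\mathbb Z/L\mathbb Z$, with $L$ an odd prime, Leng's argument yields the following objects:
\begin{itemize}
\item a proper symmetric coset progression $P$ of rank $r_0\ll\ell^{O(1)}$ and density $\alpha$ with $\log(2/\alpha)\ll\ell^{O(1)}$;
\item a point $x_1$;
\item a locally quadratic phase $\phi_0$ on a bounded dilate $kP$;
\item a correlation $|\mathbb E_{h\in P}u(x_1+h)e(-\phi_0(h))|\ge\tau_0$, with $\log(2/\tau_0)\ll\ell$.
\end{itemize}
The exponent $43$ will be obtained by tracking the explicit polynomial losses in that paper. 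I expect $r_0\ll\ell^{c_1}$ and $\log(2/\alpha)\ll\ell^{c_2}$ with $c_1,c_2$ around $42$, so that $S=r_0+\log(2/\alpha)$ gives $S\log(2S)\ll\ell^{43}$.

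\textbf{Step 2: from progression to Bohr set.} Apply \Cref{inv:progression-bohr} to a suitable dilate, say $P'=kP/k'$ chosen so that $1000$ times the resulting Bohr set still lies inside the domain of $\phi_0$. This gives $\Psi$ with $|\Psi|\ll S\log(2S)\le C\ell^{43}$ and $B(\Psi,1/(4|\Psi|))\subseteq P'$. Set $S:=\Psi$ and pick a regular radius $\rho\in[\rho_1/2,\rho_1]$ with $\rho_1=1/(4000|\Psi|)$, using the regular-radius fact recalled above. Then $B_G(S,1000\rho)\subseteq P'$, and we may take $\phi=\phi_0$ restricted there; restriction preserves local quadraticity. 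Since $\log(2/\rho)\ll\log\ell$, the bound $\log(2/\rho)\le C\ell$ holds with room to spare.

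\textbf{Step 3: transferring the correlation.} This is the main obstacle. The correlation lives on $P$, but we need it on the much smaller $B=B_G(S,\rho)$. My approach is to average. Because $B\subseteq P'$ and $|B|\ge(c\rho)^{r}|G|$, we can write $\mathbf 1_P/|P|$ approximately as an average over translates $y+B$ with $y\in P$, with error controlled by the regularity of $B$ and of $P$ under small translates. On each translate $y+B$, the phase $\phi_0(y+h)$ equals $\phi_0(h)$ plus a locally linear function of $h$ plus a constant. The linear part can be absorbed by further pigeonholing into a Bohr set on which it is nearly constant, or by enlarging $S$ by one character; this keeps the rank at $O(\ell^{43})$. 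Pigeonholing over $y$ then produces $x_0=x_1+y$ with correlation at least roughly $\tau_0|B|/|P|$. The density ratio $|B|/|P|\ge(c\rho)^r\alpha$ appears to cost $\exp(-O(r\log(1/\rho)))$, which would be too large. To avoid this, I would run the averaging via the translation-invariance (regularity) argument instead of by brute density: $\mathbb E_{h\in P}F(h)=\mathbb E_{y\in P}\mathbb E_{h\in B}F(y+h)+O(\text{boundary})$. This loses only the boundary term $\ll r\rho'/\rho$, which keeps $\log(2/\tau)\ll\ell$ after shrinking radii by $\ell^{-O(1)}$.

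Checking that this boundary term for the progression $P$ is small is the delicate point. I would handle it by taking $P$ itself to be a dilate of a larger progression, exactly as in Step 1 of \Cref{inv:progression-bohr}. This final locally linear correction is where I would be most careful.
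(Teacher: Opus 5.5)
\textbf{There is a genuine gap: the quantitative content of the proposition is never established.} What the statement asserts is a specific shape: rank $r\le C\ell^{43}$, together with $\log(2/\rho),\log(2/\tau)\ll\ell$. Your Step~1 imports a local inverse theorem with unspecified exponents and then says you ``expect $c_1,c_2$ around $42$''. That is the proposition itself, with the exponent left undetermined, so nothing is proved about $43$.

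The paper rebuilds the local theorem from Leng's lemmas precisely so that it can track this exponent:
\begin{itemize}
\item graph extraction gives $|H'|\ge\epsilon^{C}|G|$;
\item Bogolyubov--Ruzsa gives a progression $P_1\subseteq 2H'-2H'$ of rank $\ll\ell^6$;
\item \Cref{inv:progression-bohr} then gives a Bohr set of rank $d_1\ll\ell^7$;
\item linearization produces $E_1$ of relative density $\epsilon^{O(1)}$ in $B_2$, so the doubling constant $K_2=4^{d_1}\delta_3^{-1}$ satisfies $\log K_2\ll\ell^7$;
\item a second Bogolyubov--Ruzsa step gives $P_3\subseteq 2E_1-2E_1$ of rank $\ll(\log K_2)^6\ll\ell^{42}$, with $\log(2/\alpha_3)\ll\ell^{42}\log(2\ell)$;
\item a second application of \Cref{inv:progression-bohr} gives rank $\ll\ell^{43}$.
\end{itemize}
Symmetrization of the bilinear form and localization are then carried out entirely on Bohr sets.

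\textbf{The one step you do argue, the transfer from $P$ to $B$, conflicts with your own rank bookkeeping.} For $\mathbb E_{h\in P}F(h)=\mathbb E_{y\in P}\mathbb E_{b\in B}F(y+b)+O(\cdot)$ to lose less than $\tau_0/2$, you need $B$ inside a dilate $P/k'$. For $b\in P/k'$, the proportion of $x\in P$ with $x+b\notin P$ is about $r_0/k'$, so you need $k'\gtrsim r_0/\tau_0$. Thus $\log k'\asymp\ell$; the shrinkage factor is $\exp(-O(\ell))$, not $\ell^{-O(1)}$ as you suggest. Applying \Cref{inv:progression-bohr} to $P/k'$ replaces $\log(2/\alpha)$ by $\log(2/\alpha)+r_0\log(2k')\approx r_0\ell$. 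The output rank is therefore about $r_0\ell\log(r_0\ell)$, which for $r_0\asymp\ell^{42}$ is $\ell^{43}\log\ell$ rather than $\ell^{43}$. Your count $S=r_0+\log(2/\alpha)$ omits this cost entirely.

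The paper never moves a correlation off a progression, so it never pays this. Progressions serve only to locate Bohr sets inside $2H'-2H'$ and $2E_1-2E_1$, which is a containment requirement, not a correlation requirement. All later shrinking (regular radii, localization with $|B_4+B_5|\le2|B_4|$) changes Bohr radii, which costs nothing in rank and only enters the separately bounded $\log(1/\rho)$.

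\textbf{A smaller point: your ``locally linear correction'' is not needed.} The map $h\mapsto\phi_0(y+h)$ is already locally quadratic on the translated domain, and the statement only asks for some locally quadratic $\phi$. Your alternative of absorbing a locally linear function by adding one character to $S$ would not be justified anyway, since such a function need not come from a global character.
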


\begin{proof}
We divide the proof into seven steps.	
	
\par
\medskip
\noindent\textbf{Step 1. Extract a graph.}
The pointwise bound on $u$ gives
\[
 \|u\|_{L^{1024}(G)}\le1.
 \footnote{Here $\|v\|_{L^{1024}(G)}
 :=(\mathbb E_{x\in G}|v(x)|^{1024})^{1/1024}$ is the normalized
 counting norm.}
\]
By $\|u\|_{U^3(G)}\ge\epsilon$ and the above $L^{1024}$ bound,
the Cauchy--Schwarz, Balog--Szemer\'edi--Gowers, and graph-selection
steps in the finite-abelian-group form of
\cite[Lemmas~A.6--A.7]{LengQuadratic} supply a set
$H'\subseteq G$ and a map $\omega:H'\to\widehat G$ with
\[
 |H'|\ge\epsilon^{C_0}|G|
\]
for an absolute $C_0$. Put
\[
 \Gamma':=\{(h,\omega(h)):h\in H'\}.
\]
For each $h\in G$, the set
$\{\eta\in\widehat G:(h,\eta)\in4\Gamma'-4\Gamma'\}$ has at most one element.
Put $\delta_1:=|H'|/|G|$. Then, with $C_1:=C_0$,
\begin{equation*}
 \delta_1\ge\epsilon^{C_1}.
\end{equation*}
\par
\medskip
\noindent\textbf{Step 2. Obtain the first progression.}
We use \cite[Lemma~A.1]{LengQuadratic}. Its parameter $K\ge1$
measures the additive enlargement of the smaller summand: if
$|A_1+A_2|\le K\min(|A_1|,|A_2|)$, then
$(A_1-A_1)+(A_2-A_2)$ contains a proper symmetric coset progression
of rank at most $d(K)$ and size at least
$\exp(-h(K))|A_1+A_2|$, with
\begin{equation*}
 d(K)\ll\log^6(2K),\qquad
 h(K)\ll\log^6(2K)\log\log(4K).
\end{equation*}
Apply this with $A_1=A_2=H'$. Since
\[
 |H'+H'|\le|G|=\delta_1^{-1}|H'|,
\]
the admissible expansion parameter is
\[
 K_1:=\delta_1^{-1}.
\]
The cited sumset theorem gives a proper symmetric coset progression
$P_1\subseteq2H'-2H'$ with
\[
 |P_1|\ge\exp(-h(K_1))|H'+H'|.
\]
Moreover,
\[
 \log(2K_1)=\log(2/\delta_1)
 \le C_1\log(2/\epsilon)\ll\ell.
\]
Substitution into the bounds for $d$ and $h$, together with
$\alpha_1:=|P_1|/|G|$, gives
\begin{equation}
 r_1:=\operatorname{rank}(P_1)\ll\ell^6,\qquad
 \log(2/\alpha_1)\ll\ell^6\log(2\ell).
 \label{inv:eq:first-progression-data}
\end{equation}

\par
\medskip
\noindent\textbf{Step 3. Apply \Cref{inv:progression-bohr} once.}
If $r_1=0$, add one zero generator of length zero; this leaves $P_1$
and $\alpha_1$ unchanged. Apply \Cref{inv:progression-bohr} in
the ambient group $G$. Put
\[
 T_1:=r_1+\log(2/\alpha_1).
\]
The lemma gives a Bohr set
\[
 B_1=B_G(S_1,\rho_1)\subseteq P_1
\]
with a frequency set $S_1$ of rank
\begin{equation*}
 d_1:=|S_1|
 \ll T_1\log(2T_1)
 \ll\ell^7.
\end{equation*}
The last inequality follows from \eqref{inv:eq:first-progression-data};
the same lemma gives $\rho_1^{-1}\ll\ell^7$.

\par
\medskip
\noindent\textbf{Step 4. Linearize on a smaller Bohr set.}
The graph property from Step~1 is the hypothesis for the linearization
argument in \cite[Appendix~A.2]{LengQuadratic}. It produces a regular Bohr
set
\[
 B_2=B_G(S_2,\rho_2)\subseteq B_1,\qquad |S_2|\le d_1,
\]
a locally linear map
\[
 M:B_G(S_2,2000\rho_2)\to\widehat G,
\]
and a translate $x_1\in G$. Put
\[
 E_0:=\{h\in B_2:x_1+h\in H'\},\qquad
 \delta_0:=\frac{|E_0|}{|B_2|}.
\]
The quantitative part of the linearization argument gives
\[
 \rho_2\ge\epsilon^{C_2},\qquad \delta_0\ge\epsilon^{C_2}.
\]
The linearization estimate \cite[Lemma~A.5]{LengQuadratic} gives
functions $a:B_2\to\mathbb C$ and $b:G\to\mathbb C$, with
$|a|,|b|\le1$, such that
\begin{equation}
 \left|\mathbb E_{h\in B_2}\mathbb E_{x\in G}
 a(h)b(x)u(x+h)e\!\left(-2\langle M(h),x\rangle\right)\right|
 \ge\epsilon^{C_2}.
 \label{inv:eq:linearized-correlation}
\end{equation}
Applying the Cauchy--Schwarz and pigeonhole steps in
\cite[Appendix~A.3]{LengQuadratic} to this correlation gives a
set $E_1\subseteq B_2$ and an absolute $C_3$ such that
\begin{equation}
 \delta_3:=\frac{|E_1|}{|B_2|}\ge\epsilon^{C_3},\qquad
 \|2\{x,z\}\|_{\mathbb R/\mathbb Z}
 \le\epsilon^{-C_3}\max_{\xi\in S_2}\|\xi(x)\|_{\mathbb R/\mathbb Z}
 \quad(x\in B_2, z\in2E_1-2E_1),
 \label{inv:eq:alternating-input}
\end{equation}
where
\[
 \{x,z\}:=\langle M(x),z\rangle-\langle M(z),x\rangle.
\]
This is the only property of $E_1$ used in the next step.

\par
\medskip
\noindent\textbf{Step 5. Obtain the second progression.}
Since
$E_1+E_1\subseteq B_G(S_2,2\rho_2)$, the Bohr-volume estimate gives
\[
 |E_1+E_1|\le4^{d_1}|B_2|=4^{d_1}\delta_3^{-1}|E_1|.
\]
Set
\[
 K_2:=4^{d_1}\delta_3^{-1}.
\]
Thus $|E_1+E_1|\le K_2|E_1|$, so
\cite[Lemma~A.1]{LengQuadratic} supplies a proper symmetric coset progression
$P_3\subseteq2E_1-2E_1$ with
\[
 |P_3|\ge e^{-h(K_2)}|E_1+E_1|
 \ge e^{-h(K_2)}\delta_3|B_2|.
\]
Moreover,
\begin{equation*}
 \log(2K_2)
 \le\log2+d_1\log4+\log(1/\delta_3)
 \ll\ell^7,
\end{equation*}
\begin{equation*}
 \operatorname{rank}(P_3)\ll(\log(2K_2))^6\ll\ell^{42}.
\end{equation*}
The Bohr-volume estimate also gives
\begin{equation*}
 \log\frac{|G|}{|B_2|}\ll d_1\log(2/\rho_2)\ll\ell^8.
\end{equation*}
Consequently, with $\alpha_3:=|P_3|/|G|$,
\begin{equation*}
 \log(2/\alpha_3)
 \le h(K_2)+\log(2/\delta_3)+\log(|G|/|B_2|)+O(1)
 \ll\ell^{42}\log(2\ell),
\end{equation*}
so $\alpha_3\ge\exp(-C\ell^{42}\log(2\ell))$.

\par
\medskip
\noindent\textbf{Step 6. Convert the second progression into a symmetric
frequency form.}
Apply \Cref{inv:progression-bohr} to $P_3$ in $G$. If
$r_3:=\operatorname{rank}(P_3)$, we obtain a regular Bohr set
\[
 B_3=B_G(S_3,\rho_3)\subseteq P_3
\]
with
\begin{equation*}
 |S_3|\ll(r_3+\log(2/\alpha_3))
 \log(2r_3+2\log(2/\alpha_3))\ll\ell^{43},
 \qquad \rho_3^{-1}\ll\ell^{43}.
\end{equation*}
Since $\log\ell\ll\log(2/\epsilon)$, the second bound also gives
$\rho_3\ge\epsilon^C$ after changing the absolute constant.
For $\xi\in\widehat G$, let $\xi/2$ denote the unique character
whose double is $\xi$; it exists because $L$ is odd. Set
\[
 S_4:=S_2\cup S_3\cup\{\xi/2:\xi\in S_2\}.
\]
Thus $|S_4|\le2d_1+|S_3|\ll\ell^{43}$. Choose a regular radius
$\overline\rho_4\in[\min(\rho_2,\rho_3)/2,\min(\rho_2,\rho_3)]$
and put $\overline B_4:=B_G(S_4,\overline\rho_4)$.
For $x,z\in\overline B_4$, we have $x/2\in B_2$ and
$z\in B_3\subseteq2E_1-2E_1$. Local linearity gives
$\{x,z\}=2\{x/2,z\}$. Hence \eqref{inv:eq:alternating-input} gives
\begin{equation}
 \begin{gathered}
 \overline\rho_4\ge\epsilon^{a_4},\\
 \|\{x,z\}\|_{\mathbb R/\mathbb Z}
 \le\epsilon^{-b_4}\max_{\xi\in S_2}
       \|(\xi/2)(x)\|_{\mathbb R/\mathbb Z}
 \le\epsilon^{-b_4}\max_{\eta\in S_4}
       \|\eta(x)\|_{\mathbb R/\mathbb Z},
 \end{gathered}
 \label{inv:eq:alternating-form-control}
\end{equation}
where $a_4>0$ is absolute and $b_4=C_3$. This is the division by $2$
used in \cite[Appendix~A.3]{LengQuadratic}.
Write $c_*:=\epsilon^{C_*}$, where $C_*>0$ is chosen so
that $2\pi c_*\le\tfrac12\epsilon^{C_2}$. Put
\[
 t:=\min\{\overline\rho_4,c_*\epsilon^{b_4}\}.
\]
Choose a regular radius $\rho_4\in[t/2,t]$ and set
$B_4:=B_G(S_4,\rho_4)$. If
$A_4:=\max\{a_4,C_*+b_4\}$, then
$t\ge\epsilon^{A_4}$ and
$\rho_4\ge\tfrac12\epsilon^{A_4}\ge\epsilon^{A_4+1}$ because
$0<\epsilon<1/2$. Thus, with $C_4:=A_4+1$,
\begin{equation}
 \epsilon^{C_4}\le\rho_4\le c_*\epsilon^{b_4}.
 \label{inv:eq:alternating-form-radius}
\end{equation}
The defect estimate in \eqref{inv:eq:alternating-form-control} now gives
\[
 \|\{x,z\}\|_{\mathbb R/\mathbb Z}\le c_*
 \qquad(x,z\in B_4).
\]
Define
\[
 \mathcal B(x,z):=\langle M(x),z\rangle+\langle M(z),x\rangle,
 \qquad \Phi(x):=\langle M(x),x\rangle.
\]
The local linearity of $M$ gives, for $h,x\in B_4$,
\begin{equation}
 \mathcal B(h,x)=\Phi(x+h)-\Phi(x)-\Phi(h).
 \label{inv:eq:symmetric-phase}
\end{equation}

\par
\medskip
\noindent\textbf{Step 7. Localize the correlation and select one phase.}
We spell out the two operations that are implicit in the localization
argument of \cite[Appendix~A.4]{LengQuadratic}.

\smallskip
\noindent\emph{7a. Symmetrization of the bilinear phase.}
For $h,x\in B_4$, the definitions in Step~6 give
\[
 2\langle M(h),x\rangle
   =\mathcal B(h,x)+\{h,x\}.
\]
Consequently, by \eqref{inv:eq:alternating-form-radius},
\begin{equation}
 \left|e\!\left(-2\langle M(h),x\rangle\right)
       -e\!\left(-\mathcal B(h,x)\right)\right|
 \le 2\pi\|\{h,x\}\|_{\mathbb R/\mathbb Z}
 \le 2\pi c_*.
 \label{inv:eq:step7-phase-error}
\end{equation}
Choose $C_*$ in Step~6 so that
$2\pi c_*\le\tfrac12\epsilon^{C_2}$. Thus replacing the original
phase in \eqref{inv:eq:linearized-correlation} by the symmetric phase
loses at most one half of its lower bound. Moreover,
\eqref{inv:eq:symmetric-phase} gives
\begin{equation}
 e\!\left(-\mathcal B(h,x)\right)
 =e\!\left(-\Phi(x+h)\right)e\!\left(\Phi(x)\right)
  e\!\left(\Phi(h)\right).
 \label{inv:eq:step7-factorization}
\end{equation}
The first and third factors in \eqref{inv:eq:step7-factorization}
depend on $x+h$ and $h$, respectively, and can therefore be absorbed
into the two one-bounded coefficients produced by the localization
argument. The only phase that remains attached to the variable on
which $u$ is tested is $e(\Phi(x))$.

\smallskip
\noindent\emph{7b. Quantitative localization.}
Apply the quantitative conclusion of
\cite[Appendix~A.4]{LengQuadratic} to the correlation after the
replacement in \eqref{inv:eq:step7-phase-error}. It gives a regular Bohr
set
\[
 B_5=B_G(S,\rho),\qquad
 B_G(S,1000\rho)\subseteq B_4,\qquad
 |S|\ll\ell^{43},\qquad \epsilon^{C_5}\le\rho<1/2000.
\]
and, for each $y\in G$, functions $a_y,b_y:G\to\mathbb C$ with
$|a_y|,|b_y|\le1$, such that
\begin{equation}
 \mathbb E_{y\in G}\left|
 \mathbb E_{h\in B_4}\mathbb E_{x\in B_5}
 a_y(h)b_y(h+x)u(y+x)e\!\left(\Phi(x)\right)
 \right|
 \ge\epsilon^{C_6}.
 \label{inv:eq:localized-bilinear-correlation}
\end{equation}
The same localization chooses $B_5$ so that
\begin{equation}
 |B_4+B_5|\le2|B_4|.
 \label{inv:eq:step7-doubling}
\end{equation}
It also chooses $\rho$ small enough that
$B_4+B_5\subseteq B_G(S_2,2000\rho_2)$, the domain of $M$ from
Step~4. Thus $\Phi(h+x)$ and the coefficient $b_y(h+x)$ below are
defined wherever they are used. The displayed inclusion ensures that all phases
in \eqref{inv:eq:step7-factorization} are evaluated inside the domain
where $M$ is locally linear.

\smallskip
\noindent\emph{7c. Fourier extraction of a single linear character.}
Fix $y\in G$ and set
\[
 v_y(x):=u(y+x)e(\Phi(x)),\quad
 f_y:=a_y\mathbf1_{B_4},\quad
 q_y:=b_y\mathbf1_{B_4+B_5},\quad
 w_y:=v_y\mathbf1_{B_5}.
\]
All three functions are regarded as functions on $G$, extended by zero
outside their displayed supports. With the normalized Fourier transform,
Fourier inversion gives
\begin{align}
 &\mathbb E_{h\in B_4}\mathbb E_{x\in B_5}
       a_y(h)b_y(h+x)v_y(x) \notag\\
 &=\frac{|G|^2}{|B_4||B_5|}
   \sum_{\xi\in\widehat G}
      \widehat f_y(-\xi)\widehat q_y(\xi)\widehat w_y(-\xi).
 \label{inv:eq:step7-fourier-identity}
\end{align}
By Cauchy--Schwarz and Parseval,
\begin{align}
 \left|\mathbb E_{h\in B_4}\mathbb E_{x\in B_5}
       a_y(h)b_y(h+x)v_y(x)\right|
 &\le
 \left(\frac{|B_4+B_5|}{|B_4|}\right)^{1/2}
 \sup_{\xi\in\widehat G}
 \left|\mathbb E_{x\in B_5}v_y(x)e(\xi(x))\right|.
 \label{inv:eq:step7-fourier-bound}
\end{align}
Indeed,
$\|f_y\|_{L^2(G)}^2\le|B_4|/|G|$,
$\|q_y\|_{L^2(G)}^2\le|B_4+B_5|/|G|$, and
\[
 \widehat w_y(\xi)=\frac{|B_5|}{|G|}
       \mathbb E_{x\in B_5}v_y(x)e(-\xi(x));
\]
substitution into \eqref{inv:eq:step7-fourier-identity} yields
\eqref{inv:eq:step7-fourier-bound}. By
\eqref{inv:eq:step7-doubling}, the prefactor is at most $\sqrt2$.
Averaging over $y$ in
\eqref{inv:eq:localized-bilinear-correlation} therefore gives, after
increasing the absolute exponent,
\begin{equation}
 \mathbb E_{y\in G}\sup_{\xi\in\widehat G}
 \left|\mathbb E_{h\in B_5}u(y+h)
 e\!\left(\Phi(h)+\xi(h)\right)\right|
 \ge\epsilon^{C_7}.
 \label{inv:eq:localized-quadratic-correlation}
\end{equation}

\smallskip
\noindent\emph{7d. Pigeonhole and conclusion.}
Choose $y_0\in G$ and $\xi_0\in\widehat G$ for which the quantity in
\eqref{inv:eq:localized-quadratic-correlation} is at least
$\epsilon^{C_7}$. Put
\[
 x_0:=y_0,\qquad
 \phi(h):=-\Phi(h)-\xi_0(h),\qquad
 \tau:=\epsilon^{C_7}.
\]
The function $\xi_0$ is linear. The identity
\eqref{inv:eq:symmetric-phase}, together with the local linearity of $M$,
shows that $\Phi$ has vanishing third differences on
$B_G(S,1000\rho)$; hence $\phi$ is locally quadratic there. Therefore
\[
 \left|\mathbb E_{h\in B_G(S,\rho)}
       u(x_0+h)e(-\phi(h))\right|\ge\tau.
\]
Finally, the displayed Bohr-set bounds give
$r=|S|\ll\ell^{43}$ and
$\log(2/\rho)\ll\ell$, while
$\tau=\epsilon^{C_7}$ gives $\log(2/\tau)\ll\ell$.
Increasing the absolute constant $C$ once proves all the asserted
bounds and completes the proof.
\end{proof}

\Cref{inv:bohr-rank} gives a correlation with a locally
quadratic phase on a Bohr set in
\[
 A:=\mathbb Z/L\mathbb Z,
\]
where $L$ is an odd prime. For $S\subseteq\widehat A$ write
\[
 B_A(S,t):=\{a\in A:\max_{\xi\in S}\|\xi(a)\|_{\mathbb R/\mathbb Z}<t\}.
\]
We write $[n]_L$ for the residue class of $n$ in $A$.
Let $u:A\to\mathbb C$ satisfy $|u(a)|\le1$ for all $a\in A$.
The next lemma converts this local correlation into a correlation with
a degree-two nilsequence.

Under these assumptions, decrease $\tau$ if necessary so that
$0<\tau<1/2$. We have $0<\rho<1/2000$, $S\subseteq\widehat A$ is
nonempty with $r:=|S|$, $B_A(S,\rho)$ is regular, $x_0\in A$, and
$\phi:B_A(S,1000\rho)\to\mathbb T$ is locally quadratic with
\[
 \left|\mathbb E_{h\in B_A(S,\rho)}
 u(x_0+h)e(-\phi(h))\right|\ge\tau.
\]
Put
\begin{equation*}
 \Lambda:=r\log(2r)+\log(2/\rho)+\log(2/\tau).
\end{equation*}
If $S$ contains a nontrivial character, let
\[
 (G,\Gamma):=(\mathcal H(r),\Gamma(r)),
\]
with the standard metric and degree-two filtration from
\eqref{inv:group-definition}. If every member of $S$ is trivial, let
$(G,\Gamma)=(\mathbb R,\mathbb Z)$ with its degree-two filtration.
In both cases $\dim G\le2r+1$.

\begin{lemma}
\label{inv:realization}
Under the above assumptions, there exist a degree-two polynomial sequence $g:\mathbb Z\to G$,
a function $F:G/\Gamma\to\mathbb C$, and a real number $M\ge2$ such that
\[
 |F|\le1,\qquad \|F\|_{\rm Lip}\le M,
\]
and $F$ has vertical frequency one at level $2$, namely
\[
 F((0,0,t)x\Gamma)=e(t)F(x\Gamma)
 \quad(t\in\mathbb R,\ x\in G)
\]
in the Heisenberg case (with the analogous formula
$F(t+x+\mathbb Z)=e(t)F(x+\mathbb Z)$ in the circle case), and
\begin{gather}
 \left|\frac1L\sum_{n=1}^L
 u([n]_L)F(g(n)+\Gamma)\right|
 \ge\exp(-Cr\Lambda),
 \label{inv:eq:realization-correlation}\\
 \log(2M)\le C\Lambda.
 \label{inv:eq:realization-complexity}
\end{gather}
\end{lemma}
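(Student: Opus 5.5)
Write $S=\{\xi_1,\ldots,\xi_r\}$ with $\xi_j(a)=k_ja/L$, $k_j\in\mathbb Z$. The plan is the standard Bohr-set-to-nilmanifold realization, done explicitly in $\mathcal H(r)$. The degenerate case where every $\xi_j$ is trivial is handled first. Then $B_A(S,\rho)=A$, and $\phi$ is a genuine quadratic phase on $A=\mathbb Z/L\mathbb Z$ (locally quadratic on all of $A$). So $\phi(h)=(\alpha h+\beta h^2)/L$ up to a constant, with $\alpha,\beta$ integers. Take $g(n)=\phi([n]_L-x_0)$ as a real polynomial and $F(x+\mathbb Z)=e(x)$. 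The correlation \eqref{inv:eq:realization-correlation} then follows by reindexing $n\mapsto n+x_0$, and $M$ is absolute.

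In the Heisenberg case I would proceed in three steps.

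\textbf{(i) Coordinates.} Put $\theta(h)=(\{k_jh/L\})_j\in(-1/2,1/2]^r$, the centred fractional parts. On $B_A(S,1000\rho)$ the map $h\mapsto\theta(h)$ is additive on sums staying inside the set, with $|\theta|<1000\rho$.

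\textbf{(ii) Phase identification.} A locally quadratic $\phi$ on a regular Bohr set of rank $r$ agrees, up to a locally linear error, with a bracket quadratic. Concretely, the bilinear form $\mathcal B(h,h')=\phi(h+h')-\phi(h)-\phi(h')+\phi(0)$ is locally bilinear. I would express it via the Bohr coordinates as $\mathcal B(h,h')=\theta(h)^TW\theta(h')+(\text{linear in }h')\cdot h/L$ for a real matrix $W$. To avoid guessing a closed form, I would instead Fourier-expand on the torus. This gives $\phi(h)=\beta h^2/L+\lambda(h)\cdot\theta(h)+\gamma h/L$ modulo small error, where $\lambda(h)$ is a real vector linear in $\theta(h)$. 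The error on the smaller set $B_A(S,\rho)$ is controlled by $\rho$.

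\textbf{(iii) Realization.} Choose $g(n)=(n\,\theta_0,\,n\,\lambda_0,\,\beta n^2/L+\gamma n/L)$ with $\theta_0=(k_j/L)_j$. The lower-left product in \eqref{inv:group-definition} then reproduces $n^2\lambda_0\cdot\theta_0$ in the central coordinate, which is a degree-two sequence. The central coordinate modulo the lattice encodes $e(\phi)$ once the horizontal coordinates are reduced to the fundamental domain $(-1/2,1/2]^{2r}$.

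Set $F(x\Gamma)=e(c-\lambda\cdot\lfloor\theta\rceil\ldots)\cdot\Psi(\theta,\lambda)$, where $\Psi$ is a Lipschitz cutoff. The cutoff is supported where the reduced $\theta$ lies in a box of side $\rho$ and the reduced $\lambda$ is near the prescribed value. This makes $F$ well defined on $G/\Gamma$ with vertical frequency one. Its Lipschitz constant is at most $(C/\rho)^{O(1)}$, which gives \eqref{inv:eq:realization-complexity}, since the coordinate coefficients are bounded by one as noted after \eqref{inv:group-definition}.

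\textbf{Main obstacle.} The cutoff replaces the sharp indicator of $B_A(S,\rho)$ by a smooth bump. Here regularity \eqref{inv:regular-definition} gives that the boundary layer has relative measure $O(r\kappa)$. I would take a Lipschitz bump $\Psi$ equal to $1$ on the $\rho(1-\kappa)$-box and supported on the $\rho$-box, with $\kappa\approx\tau/(100r)$, so the loss is at most $\tau/2$. Normalizing from $\mathbb E_{h\in B}$ to $\frac1L\sum_n$ costs the density $|B_A(S,\rho)|/L\ge(c\rho)^r$. Together these give the bound $\tau(c\rho)^r/2\ge\exp(-Cr\Lambda)$.

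The hardest step is (ii): showing that $\phi$ really equals the bracket form on all of $B_A(S,\rho)$ with only a locally linear error. I would first try the symmetric-form construction. Extend $\mathcal B$ from a basis of a proper progression inside the Bohr set, reuse the $\Phi(x)=\langle M(x),x\rangle$ structure produced in \Cref{inv:bohr-rank}, and absorb the remaining locally linear part into a character times a linear phase in $\theta$. That absorbed part is realized by the $\lambda_0$ and $\gamma$ components.
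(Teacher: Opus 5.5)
Your overall architecture is the paper's: dispose of the all-trivial case, pass to Bohr coordinates, write the phase as a real quadratic in those coordinates, realize it by a periodized Heisenberg function with a Lipschitz cutoff controlled by regularity, and renormalize by Bohr-set density. However, the step that carries the whole content of the lemma, your (ii), is not proved, and neither suggested route works. What is needed is an \emph{exact} identity $\phi(h_0+h)\equiv Q(\theta(h))\pmod 1$, with $Q$ a real quadratic polynomial on $\mathbb R^r$, valid for all $h$ in some Bohr set $B_A(S,\sigma)$ with $\log(2/\sigma)\ll\Lambda$.

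A Fourier expansion ``on the torus'' cannot give this. The function $\phi$ is defined only on part of the finite subgroup $A\subset\mathbb T^r$, so there is no torus function to expand. Moreover, an identity holding only up to an error ``controlled by $\rho$'' is useless unless that error is $\ll\tau$ in $\mathbb T$. Reusing $\Phi(x)=\langle M(x),x\rangle$ from \Cref{inv:bohr-rank} is not available either, because the lemma assumes only that $\phi$ is locally quadratic on $B_A(S,1000\rho)$.

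The paper supplies the missing idea in three moves. First, it lifts the Bohr set to the lattice $\mathcal L=\{\theta\in\mathbb R^r:\theta\equiv(\xi(x))_{\xi\in S}\bmod\mathbb Z^r\text{ for some }x\in A\}$, which has index $L$ over $\mathbb Z^r$. Second, it applies the discrete John theorem to get a basis $w_1,\ldots,w_r$ of $\mathcal L$ whose boxes $\{\sum_i n_iw_i:|n_i|<tN_i\}$ lie between $\mathcal L\cap[-t\kappa^{-1},t\kappa^{-1}]^r$ and $\mathcal L\cap[-t,t]^r$, where $\kappa=(Cr)^{3r/2}$. Third, it uses Newton interpolation in the integer coordinates $n_i$ to extend $\phi$ to a global quadratic on $\mathcal L$. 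This extension agrees with $\phi$ only on a box of radius $\sigma_*\approx\rho\kappa^{-1}$, not on all of $B_A(S,\rho)$ as you assert. The paper therefore needs an extra step that has no counterpart in your plan: regularity and averaging produce a translate $h_0$ at which the correlation survives on $B_A(S,\sigma)$ with $\sigma\le\sigma_*$. The resulting density bound $|B_A(S,\sigma)|/L\ge\exp(-Cr^2\log(2r))\sigma^r$ is exactly what the term $r\log(2r)$ in $\Lambda$ pays for.

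Your step (iii) also leaves the Lipschitz bound unjustified. The coefficients of the quadratic form in the $\theta$-coordinates (your matrix $W$) are not a priori bounded; they can be as large as a power of $L$. Placing that form in $F$ would destroy $\log(2M)\le C\Lambda$.

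The paper instead writes $Q=Q_0+Q_{\mathbb Z}$, with $Q_{\mathbb Z}$ integer-valued on $\mathbb Z^r$ and $Q_0$ having coefficients in $[-1,1]$. Only $Q_0$ enters $F_0(\theta,\lambda,c)=\sum_{m\in\mathbb Z^r}\varphi(\theta+m)e(Q_0(\theta+m)+c+\lambda\cdot m)$. The part $Q_{\mathbb Z}$ is absorbed into the orbit $h(n)=(na,\lambda(n),c(n))$ via $Q_{\mathbb Z}(z+na)-Q_{\mathbb Z}(z)=\lambda(n)\cdot z+c(n)$. Since $Q_{\mathbb Z}(m)\in\mathbb Z$, the two pieces recombine to $e(Q(na+m))$.

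Note also that $F_0$ has no cutoff in $\lambda$. Along the orbit the $\lambda$-coordinate is in general not localized modulo $1$, so the $\lambda$-cutoff you propose is unjustified. The variable $\lambda$ should enter only through the phase $e(\lambda\cdot m)$, which is what makes $F_0$ right $\Gamma$-invariant. Finally, in the trivial case you need $g(n)=-\phi([n]_L-x_0)$ rather than $+\phi$, to match $e(-\phi)$ with $F(x+\mathbb Z)=e(x)$.
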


\begin{proof}
Write
\[
 B_t:=B_A(S,t)=\{x\in A:\max_{\xi\in S}\|\xi(x)\|_{\mathbb R/\mathbb Z}<t\}.
\]
We divide the rest proof into eight steps.

\par
\medskip
\noindent\textbf{Step 1. Dispose of the all-trivial-frequency case.}
If every character in $S$ is trivial, then $B_\rho=A$.  The vanishing
third differences of the locally quadratic function $\phi$ on $A$ give, for $n=0,\ldots,L-1$,
\begin{equation*}
 \phi([n]_L)=c+an+b\binom n2\pmod1.
\end{equation*}
Let $n_0$ represent $x_0$ and set
\[
 G=\mathbb R,\qquad \Gamma=\mathbb Z,\qquad
 F(t+\Gamma)=e(t),
\]
\[
 g(n):=-c-a(n-n_0)-b\binom{n-n_0}{2}.
\]
Then $g$ is a degree-two polynomial sequence and
$F(g(n)+\Gamma)=e(-\phi([n-n_0]_L))$.  Consequently the assumed local
correlation becomes
\[
 \left|\frac1L\sum_{n=1}^L u([n]_L)F(g(n)+\Gamma)\right|\ge\tau.
\]
The function $F$ has vertical frequency one, $|F|=1$, and its Lipschitz
norm is absolute.  Thus the lemma follows in this case.

From now on assume that $S$ contains a non-trivial
character.  First we prove the lattice extension fact needed in Step~3.

\par
\medskip
\noindent\textbf{Step 2. Lift the Bohr set to a lattice.}
Assume that $S$ contains a non-trivial character.  Identify
\[
 \mathcal L:=\{\theta\in\mathbb R^r:\theta_\xi=\xi(x)\pmod 1
       \text{ for some }x\in A\}\subset\mathbb R^r,
 \qquad r=|S|.
\]
The projection from $\mathcal L$ to $A$ is well-defined and injective modulo
$\mathbb Z^r$: indeed a non-trivial character of $A$ is injective because
$L$ is prime.  Hence $\mathcal L/\mathbb Z^r\simeq A$, so $\mathcal L$ is a
rank-$r$ lattice containing $\mathbb Z^r$ and
\begin{equation}
 [\mathcal L:\mathbb Z^r]=L,\qquad \det\mathcal L=L^{-1}.                                      \label{inv:eq:lem73-lattice}
\end{equation}
Let $\pi:\mathcal L\to A$ denote the induced projection. Since
$\rho<1/2000$, every element of $B_t$ with $t\le2\rho$ has a unique
representative $\theta\in\mathcal L$ with $|\theta|_\infty<t$; conversely,
such a representative projects to an element of $B_t$.
For each $t>0$, the discrete John theorem of Tao and Vu \cite[Theorem 1.6]{TaoVuJohn} gives fixed real numbers $N_i>0$ and the lattice basis 
$w_1,\ldots,w_r$ such that,
with $\kappa=(Cr)^{3r/2}$,
\begin{equation}
 [-t\kappa^{-1},t\kappa^{-1}]^r\cap\mathcal L
 \subseteq\Big\{\sum_i n_iw_i:n_i\in \Z,|n_i|<tN_i\Big\}
 \subseteq[-t,t]^r\cap\mathcal L .                                  \label{inv:eq:lem73-john}
\end{equation}
For later use, put
\[
 \mathcal C_t:=\Big\{\sum_{i} n_iw_i:
                    n_i\in\mathbb Z,\ |n_i|<tN_i\Big\}.
\]
The lattice-point asymptotic for a full-rank lattice gives
\[
 \frac{|\mathcal L\cap[-t\kappa^{-1},t\kappa^{-1}]^r|}{(2t)^r}
 \longrightarrow \frac{\kappa^{-r}}{\det\mathcal L}
 =L\kappa^{-r},
 \qquad
 \frac{|\mathcal C_t|}{(2t)^r}\longrightarrow\prod_iN_i
 \quad (t\to\infty).
\]
Taking cardinalities in the first inclusion of \eqref{inv:eq:lem73-john}
and passing to the limit, and using \eqref{inv:eq:lem73-lattice}, yields
\begin{equation}
 \prod_iN_i\ge L\kappa^{-r}=L(Cr)^{-3r^2/2}.                       \label{inv:eq:lem73-volume}
\end{equation}
We will use only the schematic consequence
$$\prod_iN_i\ge L\exp(-Cr^2\log(2r)).$$ 

\par
\medskip
\noindent\textbf{Step 3. Extend the local quadratic phase.}
Let $y\in\mathcal L$ and let
\[
 \psi:\mathcal L\cap\bigl(y+[-2\rho,2\rho]^r\bigr)\longrightarrow\mathbb T
\]
be locally quadratic.  Put
\[
 \sigma_*:=\rho\kappa^{-1}/1000=\rho(Cr)^{-3r/2}/1000.
\]
Define
\[
 D:=\Bigl\{n\in\mathbb Z^r:|n_i|<\rho N_i/8\ \text{for every }i\Bigr\},
 \qquad
 \Psi:D\to \mathbb T,n\mapsto\psi\!\left(y+\sum_{i=1}^rn_iw_i\right).
\]
The second inclusion in \eqref{inv:eq:lem73-john}, used with $t=\rho/8$,
shows that all points used in $\Psi$ lie in
$y+[-\rho/8,\rho/8]^r$ and hence in the domain of $\psi$.  Write
$\Delta_i\Psi(n):=\Psi(n+e_i)-\Psi(n)$ whenever both terms are
defined.  Local quadraticity says that every admissible third difference
$\Delta_i\Delta_j\Delta_k\Psi$ vanishes.  Therefore each second
difference $\Delta_i\Delta_j\Psi$ is constant on the coordinate-connected
interior of $D$.

Let $J:=\{i:\rho N_i>32\}$.  For $i\in J$, the points $0,e_i,2e_i$
lie in $D$; for distinct $i,j\in J$, the points
$0,e_i,e_j,e_i+e_j$ lie in $D$.  Define, in $\mathbb T$,
\[
 a_i:=\Delta_i\Psi(0),\qquad b_i:=\Delta_i^2\Psi(0),\qquad
 c_{ij}:=\Delta_i\Delta_j\Psi(0)\quad(i<j, i,j\in J).
\]
On the subbox
$D_J:=\{n\in D:n_i=0\text{ for }i\notin J\}$, the Newton identity is
\begin{equation}
 \Psi(n)=\psi(y)+\sum_{i\in J}a_in_i
       +\sum_{i\in J}b_i\binom{n_i}{2}
       +\sum_{i<j,\ i,j\in J}c_{ij}n_in_j.                         \label{inv:eq:lem73-newton}
\end{equation}

We now extend this formula to the whole lattice.  Every
$\theta\in\mathcal L$ has a unique representation
$\theta=y+\sum_i n_iw_i$.  Define
\[
 \widetilde\psi\!\left(y+\sum_i n_iw_i\right):=
 \psi(y)+\sum_{i\in J}a_in_i
       +\sum_{i\in J}b_i\binom{n_i}{2}
       +\sum_{i<j,\ i,j\in J}c_{ij}n_in_j
       \quad\pmod 1.
\]
This is a global quadratic map on $\mathcal L$.  If
$\theta-y\in[-\sigma_*,\sigma_*]^r\cap\mathcal L$, the first inclusion
in \eqref{inv:eq:lem73-john}, with $t=\rho/1000$, gives unique integers
$n_i$ satisfying $|n_i|<\rho N_i/1000$.  For $i\notin J$ this is
$\le 32/1000<1$, so $n_i=0$; for $i\in J$ it is $<\rho N_i/8$, so
$n\in D_J$.  Hence \eqref{inv:eq:lem73-newton} gives
\[
 \widetilde\psi(\theta)=\psi(\theta)\qquad
 \bigl(\theta\in\mathcal L\cap(y+[-\sigma_*,\sigma_*]^r)\bigr).
\]
The construction is translation-invariant in $y$.

\par
\medskip
\noindent\textbf{Step 4. Smooth the correlation and choose a good translate.}
We now treat the non-trivial-frequency case.  Let
\[
 C_0:=\left|\mathbb E_{h\in B_\rho}u(x_0+h)e(-\phi(h))\right|\ge\tau .
\]
Choose a regular radius
\begin{equation}
 0<\sigma\le\min\{\sigma_*,\tau\rho/(1000r)\},\qquad
 \sigma\ge\tfrac12\min\{\sigma_*,\tau\rho/(1000r)\}.              \label{inv:eq:lem73-sigma}
\end{equation}
For $k\in B_\sigma$, regularity gives
\[
 B_{\rho-\sigma}\subseteq B_\rho\cap(B_\rho-k),\qquad
 B_\rho\cup(B_\rho-k)\subseteq B_{\rho+\sigma},
\]
and consequently
\begin{equation*}
 \left|\mathbb E_{h\in B_\rho}f(h)-
 \mathbb E_{h\in B_\rho}\mathbb E_{k\in B_\sigma}f(h+k)\right|
 \le C r\sigma/\rho\le\tau/4,                                    
\end{equation*}
where $f(h)=u(x_0+h)e(-\phi(h))$ on $B_{1000\rho}$ (and $f=0$ outside
that set).  Thus there is $h_0\in B_\rho$ with
\begin{equation}
 \left|\mathbb E_{k\in B_\sigma}u(x_0+h_0+k)e(-\phi(h_0+k))\right|
 \ge\tau/2.                                                        \label{inv:eq:lem73-good-translate}
\end{equation}
\par
\medskip
\noindent\textbf{Step 5. Recenter the phase and build the Heisenberg orbit.}
The translated phase $k\mapsto\phi(h_0+k)$ is locally quadratic on
$B_{2\rho}$. Choose a lift $y\in\mathcal L$ with $|y|_\infty<\rho$ whose
image in $A$ is $h_0$. Apply the construction in Step 3 to
$\psi(\vartheta):=\phi(\pi(\vartheta))$ on the lattice points near $y$.
It supplies a global quadratic
$\widetilde q:\mathcal L\to\mathbb R/\mathbb Z$ agreeing with
$\phi(\pi(\vartheta))$ for $\vartheta\in y+[-\sigma,\sigma]^r$.
Define the recentered phase
$q_0(\theta):=\widetilde q(y+\theta)$; then
$q_0(\theta)=\phi(h_0+\pi(\theta))$ for every $\theta$ projecting to
$B_\sigma$. The extension scale in \eqref{inv:eq:lem73-sigma} ensures this agreement.

Choose a real quadratic polynomial $Q$ on $\mathbb R^r$ with
\[
 Q(\theta)\equiv-q_0(\theta)\pmod1\qquad(\theta\in\mathcal L).
\]
In the integer-valued basis
$1,z_i,\binom{z_i}{2},z_iz_j$, subtract an integer-valued polynomial
$Q_{\mathbb Z}$ so that
\begin{equation}
 Q=Q_0+Q_{\mathbb Z},
 \quad\text{all coefficients of $Q_0$ belonging to }[-1,1].       \label{inv:eq:lem73-quadratic-split}
\end{equation}
Choose $a\in\mathcal L$ whose image in $A$ is $[1]_L$; then $na\in\mathcal L$ has image $[n]_L$ for every $n\in\mathbb Z$.  Write
\[
 Q_{\mathbb Z}(z+na)-Q_{\mathbb Z}(z)=\lambda(n)\cdot z+c(n).
\]
Because $Q_{\mathbb Z}$ is quadratic, $\lambda(n)=n\lambda_1$ is linear
and $c(n)=n c_1+\binom n2c_2$ is quadratic.  Define, in the coordinates
of $\mathcal H(r)$,
\begin{equation*}
 h(n):=(na,\lambda(n),c(n)).                                    
\end{equation*}
If $u_1=(a,\lambda_1,c_1)$ and
$v_1=(0,0,c_2-\lambda_1\cdot a)$, then the Heisenberg product gives
\begin{equation*}
 h(n)=u_1^n v_1^{\binom n2};                                      
\end{equation*}
thus $h$ is a degree-two polynomial sequence.  

\par
\medskip
\noindent\textbf{Step 6. Construct the nilsequence function.}
We now turn the real quadratic polynomial into a Lipschitz function on the Heisenberg nilmanifold.
Let $w=c\tau/r$, with $c>0$ an absolute sufficiently small constant.  Let
$\varphi:\mathbb R^r\to[0,1]$ be $1$ on
$[-(1-w)\sigma,(1-w)\sigma]^r$, supported on $[-\sigma,\sigma]^r$, and
linear in the maximum norm on the intervening annulus.  Then
\begin{equation*}
 \|\varphi\|_{\rm Lip}\le (w\sigma)^{-1}.                         
\end{equation*}
Regularity at $\sigma$ gives
\begin{equation}
 \frac{|B_\sigma\setminus B_{(1-w)\sigma}|}{|B_\sigma|}\le C r w\le\tau/4. 
                                                                            \label{inv:eq:lem73-cutoff-boundary}
\end{equation}
Define on $\mathcal H(r)$
\begin{equation}
 F_0(\theta,\lambda,c')
 :=\sum_{m\in\mathbb Z^r}\varphi(\theta+m)
 e\big(Q_0(\theta+m)+c'+\lambda\cdot m\big).                     \label{inv:eq:lem73-periodization}
\end{equation}
The support has radius $<1/4$, so at most one summand is non-zero and
$|F_0|\le1$. Let $\gamma=(m_0,b,k)\in\Gamma(r)$, where
$m_0,b\in\mathbb Z^r$ and $k\in\mathbb Z$. The group law gives
\[
\begin{aligned}
 F_0((\theta,\lambda,c')\gamma)
 &=\sum_m \varphi(\theta+m_0+m)
 e\bigl(Q_0(\theta+m_0+m)+c'+k+\lambda\cdot m_0
                 +(\lambda+b)\cdot m\bigr)\\
 &=\sum_j \varphi(\theta+j)
 e\bigl(Q_0(\theta+j)+c'+\lambda\cdot j
                 +k+b\cdot(j-m_0)\bigr)
 =F_0(\theta,\lambda,c'),
\end{aligned}
\]
because $k+b\cdot(j-m_0)\in\mathbb Z$. Hence $F_0$ descends to
$\mathcal H(r)/\Gamma(r)$.  Denote the descended function by $F$.
Moreover,
\begin{equation*}
 F_0((0,0,t)x\Gamma)=e(t)F_0(x\Gamma).                            
\end{equation*}
\par
\medskip
\noindent\textbf{Step 7. Transfer the Bohr correlation to an average on $A$.}
Let $k\in\mathbb Z$ satisfy $[k]_L\in B_\sigma$.
Since $\pi(a)=[1]_L$ and $\ker\pi=\mathbb Z^r$, every lift of
$[k]_L$ to $\mathcal L$ has the form $ka+m$ with $m\in\mathbb Z^r$.
The condition $[k]_L\in B_\sigma$ guarantees the existence of
$m=m(k)\in\mathbb Z^r$ such that
\[
\theta_k:=ka+m\in(-\sigma,\sigma)^r.
\]
This lift is unique in $[-\sigma,\sigma]^r$: two such lifts
would differ by an integer vector of maximum norm at most
$2\sigma<1$, so their difference must be zero.
Since $\varphi$ is supported in $[-\sigma,\sigma]^r$, only this
term can contribute to the sum defining $F_0(h(k))$. Consequently,
\begin{equation}
 F_0(h(k))=\varphi(ka+m)e\big(Q(ka+m)\big).                        \label{inv:eq:lem73-orbit-evaluation}
\end{equation}
The equality is modulo one: the exponent in \eqref{inv:eq:lem73-periodization} is
\[
 Q_0(ka+m)+Q_{\mathbb Z}(ka+m)-Q_{\mathbb Z}(m)
 \equiv Q(ka+m)\pmod1,
\]
because $Q_{\mathbb Z}(m)\in\mathbb Z$. On $B_{(1-w)\sigma}$, \eqref{inv:eq:lem73-orbit-evaluation} equals
$e(-q(k))$.  Combining \eqref{inv:eq:lem73-good-translate}, \eqref{inv:eq:lem73-cutoff-boundary}, and $|u|\le1$ gives
\begin{equation}
 \left|\mathbb E_{k\in B_\sigma}u(x_0+h_0+k)F_0(h(k))\right|\ge\tau/4. 
                                                                            \label{inv:eq:lem73-bohr-correlation}
\end{equation}

\par
\medskip
\noindent\textbf{Step 8. Normalize the correlation and bound the Lipschitz norm.}
For completeness, the lattice estimate needed to normalize \eqref{inv:eq:lem73-bohr-correlation} is
\begin{equation*}
 \frac{|B_\sigma|}{L}
 \ge \exp(-Cr^2\log(2r))\sigma^r.                                 
\end{equation*}
Indeed, the second inclusion in \eqref{inv:eq:lem73-john} gives at least
$\prod_i(2\lceil\sigma N_i\rceil-1)$ points in $B_\sigma$.  Since
$2\lceil x\rceil-1\ge x$ for $x\ge0$, this is bounded below by
$\sigma^r\prod_iN_i$.  Using \eqref{inv:eq:lem73-volume} therefore gives
the displayed bound, after enlarging $C$.
Therefore \eqref{inv:eq:lem73-bohr-correlation} implies
\begin{equation}
 \left|\mathbb E_{k\in A}u(x_0+h_0+k)F_0(h(k))\right|
 \ge \exp(-Cr\Lambda).                                            \label{inv:eq:lem73-cyclic-correlation}
\end{equation}
Choose $n_0\in\{0,\ldots,L-1\}$ representing $x_0+h_0$ and put
$g(n):=h(n-n_0)$.  Translation of a degree-two polynomial sequence is
again degree two, and the change of variable $k=[n-n_0]_L$ converts
\eqref{inv:eq:lem73-cyclic-correlation} into the required correlation with $u([n]_L)$.

Finally, use the right-invariant coordinate fields
$\partial_{\theta_i}$,
$\partial_{\lambda_i}+\theta_i\partial_{c'}$, and $\partial_{c'}$.
On every non-zero summand of \eqref{inv:eq:lem73-periodization}, $|\theta_i+m_i|\le\sigma$; hence the
middle field differentiates its phase by $m_i+\theta_i$ of size at most
$\sigma$.  The first fields differentiate $\varphi$ and $Q_0$; the
coefficients of $Q_0$ in \eqref{inv:eq:lem73-quadratic-split} give $\|\nabla Q_0\|\le Cr^2$ on the support.
Consequently
\begin{equation*}
 \|F_0\|_{\rm Lip}\le Cr^2\bigl(1+(w\sigma)^{-1}\bigr).            
\end{equation*}
By \eqref{inv:eq:lem73-sigma}, $w=c\tau/r$, and the definition of $\sigma_*$,
\begin{equation*}
 \log(2\sigma^{-1})\le C\Lambda,\qquad
 \log(2\|F_0\|_{\rm Lip})\le C\Lambda.                            
\end{equation*}
Set $M=\max\{2,\|F_0\|_{\rm Lip}\}$.  With the descended function
$F$ and this value of $M$, the bounds in the lemma are exactly
\eqref{inv:eq:realization-correlation} and \eqref{inv:eq:realization-complexity}.

This completes the non-trivial-frequency case.  Together with Step~1,
this proves the lemma.
\end{proof}

\begin{corollary}
\label{inv:initial-inverse}
Let $N\ge 1$ be an integer.  Let
$f:[N]\to\mathbb C$ satisfy $|f(n)|\le1$ for $n\in[N]$ and
\[
 \|f\|_{U^3[N]}\ge\epsilon,
 \qquad 0<\epsilon<1/2.
\]
Put $\ell:=2+\log(2/\epsilon)$.  Then there exist an integer
$d\ge1$, real $M\ge2$ and $0<\eta<1/2$, a filtered nilmanifold
$G/\Gamma$ of dimension $d$, a degree-two polynomial sequence
$g:\mathbb Z\to G$, and a function $F:G/\Gamma\to\mathbb C$ such that
\[
 |F(x\Gamma)|\le1,\qquad \|F\|_{\rm Lip}\le M,
\]
where $(G,\Gamma)$ is either $(\mathbb R,\mathbb Z)$ or
$(\mathcal H(r),\Gamma(r))$ for an integer $r\ge1$.  The function $F$ has
central frequency one in the following sense: in the Heisenberg
case,
\[
 F((0,0,t)x\Gamma)=e(t)F(x\Gamma)
 \qquad(t\in\mathbb R,\ x\in G),
\]
while in the circle case,
\[
 F(t+x+\mathbb Z)=e(t)F(x+\mathbb Z)
 \qquad(t,x\in\mathbb R).
\]
Moreover,
\begin{equation*}
 \left|\frac1N\sum_{n=1}^N f(n)F(g(n)\Gamma)\right|\ge\eta,
\end{equation*}
and, moreover,
\begin{equation*}
 d\ll\ell^{43},
 \qquad \log(2M/\eta)\ll\ell^{87}.
\end{equation*}
\end{corollary}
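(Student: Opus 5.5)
The plan is to embed $[N]$ into a cyclic group of odd prime order, apply \Cref{inv:bohr-rank}, convert with \Cref{inv:realization}, and pull the resulting correlation back to $[N]$.

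\textbf{Step 1: embedding.} Choose a prime $L$ with $8N<L\le16N$ (Bertrand), which is odd. Let $f_N$ be the zero extension of $f$ to $A=\Z/L\Z$, identifying $n\in[N]$ with $[n]_L$. \Cref{tr:interval-embedding} gives
\[
 \|f_N\|_{U^3(A)}=\|f\|_{U^3[N]}\,\|\mathbf1_{[N]}\|_{U^3(A)} .
\]
Constant cubes give $\|\mathbf1_{[N]}\|_{U^3(A)}^8\ge N_N/L^4$, which is not enough; $|\mathcal Q_N|$ must be bounded below more carefully. The box $x\in[1,N/2]$, $h_i\in[0,N/6]$ shows $|\mathcal Q_N|\gg N^4$, so $\|\mathbf1_{[N]}\|_{U^3(A)}\ge c$ for an absolute $c>0$. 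Hence $\|f_N\|_{U^3(A)}\ge c\epsilon$, and $\ell$ changes only by an additive constant.

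\textbf{Step 2: local and nilsequence structure.} Apply \Cref{inv:bohr-rank} to $u=f_N$. This gives $r=|S|\ll\ell^{43}$ and $\log(2/\rho),\log(2/\tau)\ll\ell$. Therefore
\[
 \Lambda=r\log(2r)+\log(2/\rho)+\log(2/\tau)\ll\ell^{43}\log\ell\ll\ell^{44}.
\]
Next, \Cref{inv:realization} produces $(G,\Gamma)$ of dimension $d\le2r+1\ll\ell^{43}$, together with $g$, $F$ and $M$ satisfying $\log(2M)\ll\Lambda$ and
\[
 \Bigl|\tfrac1L\sum_{n=1}^L f_N([n]_L)F(g(n)\Gamma)\Bigr|\ge\exp(-Cr\Lambda).
\]
Since $f_N$ vanishes off $[N]$, the sum runs only over $n\le N$. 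Multiplying by $L/N\le16$ gives the desired average over $[N]$ with $\eta\ge\exp(-Cr\Lambda)$. We may shrink $\eta$ below $1/2$ if needed. The central-frequency property is inherited directly from \Cref{inv:realization}.

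\textbf{Step 3: bookkeeping and the main obstacle.} The bound $\log(2M/\eta)\ll\Lambda+r\Lambda\ll\ell^{43}\cdot\ell^{44}=\ell^{87}$ follows once one checks that $r\log(2r)\ll\ell^{44}$ rather than $\ell^{43}$. The stated exponent $87$ allows exactly this $\log$ loss, by absorbing $\log\ell\le\ell$. The main obstacle is the quantitative lower bound for $\|\mathbf1_{[N]}\|_{U^3(A)}$, which needs the explicit cube count above. A second subtlety is that $L>8N$ ensures no wraparound, so that the cyclic correlation genuinely restricts to $[N]$. Both are routine once written out.
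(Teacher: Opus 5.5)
Your proposal is correct and follows the paper's proof essentially step for step: a Bertrand prime $8N<L<16N$, the interval-embedding identity with an explicit $\gg N^4$ cube count for $\|\mathbf1_{[N]}\|_{U^3(A)}$, then \Cref{inv:bohr-rank} and \Cref{inv:realization}, with the same bookkeeping $\Lambda\ll\ell^{44}$ and $r\Lambda\ll\ell^{87}$. One small point: your box $x\in[1,N/2]$ is empty when $N=1$, so there you need the constant cube, which gives $|\mathcal Q_N|\ge1=N^4$ (the paper uses constant cubes for all $N<64$).
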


\begin{proof}
We divide the proof into three steps.

\medskip
\noindent\textbf{Step 1. Embed the interval into a cyclic group.}
By Bertrand's postulate choose a prime $L$ with $8N<L<16N$.  Let
$A:=\mathbb Z/L\mathbb Z$ and define the zero extension $f_0:A\to\mathbb C$ by
\[
 f_0([n]_L):=
 \begin{cases}
 f(n),&n\in[N],\\
 0,&n\in\{0,\ldots,L-1\}\setminus[N].
 \end{cases}
\]
The residue classes $[1]_L,\ldots,[N]_L$ are distinct.  By the
interval-embedding identity \eqref{tr:embedding-identity},
\[
 \|f_0\|_{U^3(A)}
 =\|f\|_{U^3[N]}
  \|\mathbf1_{[N]}\|_{U^3(A)}.
\]
There is an absolute $c_0>0$ with
$\|\mathbf1_{[N]}\|_{U^3(A)}\ge c_0$.  Indeed, if $N\ge64$, take
\[
 x\in[3N/8,5N/8]\cap\mathbb Z,
 \qquad h_1,h_2,h_3\in[-\lfloor N/32\rfloor,\lfloor N/32\rfloor]\cap\mathbb Z.
\]
Every one of the eight vertices $x+\omega\cdot h$ lies in $[N]$, so there
are $\gg N^4$ supported cubes.  Since $L<16N$, their proportion among
all $L^4$ cyclic cubes is bounded below by an absolute constant.  If
$N<64$, the $N$ constant cubes already give the same conclusion after
reducing $c_0$.  Hence
\[
 \|f_0\|_{U^3(A)}\ge c_0\epsilon.
\]
Write $\epsilon_0:=c_0\epsilon$ and
$\ell_0:=2+\log(2/\epsilon_0)\ll\ell$.

\medskip
\noindent\textbf{Step 2. Extract a locally quadratic phase.}
Apply \Cref{inv:bohr-rank} to $u=f_0$ with parameter $\epsilon_0$.
It gives a nonempty $S\subseteq\widehat A$, with
$r:=|S|\ll\ell_0^{43}$, a regular radius $0<\rho<1/2000$, a point
$x_0\in A$, a locally quadratic function
$\phi:B_A(S,1000\rho)\to\mathbb T$, and $\tau_0>0$ such that
\[
 \left|\mathbb E_{h\in B_A(S,\rho)}
 f_0(x_0+h)e(-\phi(h))\right|\ge\tau_0,
\]
with $\log(2/\rho),\log(2/\tau_0)\ll\ell_0$.  Replace
$\tau_0$ by $\tau:=\min\{\tau_0,1/4\}$; the correlation remains true and
$\log(2/\tau)\ll\ell$.

\medskip
\noindent\textbf{Step 3. Realize the phase and transfer the parameters.}
Apply \Cref{inv:realization} to $u=f_0$ and the above data.  It gives
$g:\mathbb Z\to G$, $F:G/\Gamma\to\mathbb C$, and $M\ge2$ with the
properties in the statement, together with
\[
 \left|\frac1L\sum_{n=1}^L
 f_0([n]_L)F(g(n)\Gamma)\right|\ge\eta_0,
 \qquad \eta_0\ge\exp(-Cr\Lambda),
\]
where
\[
 \Lambda:=r\log(2r)+\log(2/\rho)+\log(2/\tau),
 \qquad \log(2M)\le C\Lambda.
\]
In the circle case $d=1$, and in the Heisenberg case $d=2r+1$; hence
$d\ll\ell^{43}$.  Because $f_0([n]_L)=0$ for $N<n<L$, the cyclic average is
exactly
\[
 \left|\frac1L\sum_{n=1}^N f(n)F(g(n)\Gamma)\right|\ge\eta_0.
\]
Since $L/N>1$, replacing $L^{-1}$ by $N^{-1}$ only increases its magnitude.
Set $\eta:=\min\{\eta_0,1/4\}$ to ensure $0<\eta<1/2$.
Finally,
\[
 r\log(2r)\ll\ell^{44},\qquad \Lambda\ll\ell^{44},
 \qquad r\Lambda\ll\ell^{87},
\]
so
\[
 \log(2M/\eta)\ll\Lambda+r\Lambda\ll\ell^{87}.
\]
This proves the corollary.
\end{proof}
\subsection{Fourier and bracket estimates}

We first approximate Lipschitz functions on a torus by finite Fourier sums,
with explicit dependence on the dimension.

\begin{lemma}
\label{inv:fejer}
Let $m\ge 1$ be an integer, let $L_{\rm Lip}\in[1,\infty)$ be a real
number, and let $\tau\in\mathbb R$ satisfy $0<\tau<1/2$.  Give
$\mathbb T^m:=\mathbb R^m/\mathbb Z^m$ its Euclidean quotient metric.
If $F:\mathbb T^m\to\mathbb C$ satisfies $|F(x)|\le1$ and
$\|F\|_{\rm Lip}\le L_{\rm Lip}$, then there are an integer $K\ge2$ and
coefficients $c_k$ such that
\begin{gather*}
 F_K(x):=\sum_{k\in[-K,K]^m\cap\mathbb Z^m}c_ke(k\cdot x),
 \qquad |c_k|\le1,\\
 \|F-F_K\|_\infty\le\tau,
 \qquad K\le C\left(\frac{mL_{\rm Lip}}{\tau}\right)^4.
\end{gather*}
In particular,
\[
 \sum_{k\in[-K,K]^m\cap\mathbb Z^m}|c_k|\le(2K+1)^m.
\]
\end{lemma}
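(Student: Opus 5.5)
We convolve $F$ with a product Fejér kernel. For an integer $K\ge1$ let
\[
 \mathcal F_K(t)=\sum_{|j|\le K}\Bigl(1-\frac{|j|}{K+1}\Bigr)e(jt)
 =\frac1{K+1}\Bigl(\frac{\sin(\pi(K+1)t)}{\sin(\pi t)}\Bigr)^2
\]
be the one-dimensional Fejér kernel on $\mathbb T$, and put $\Phi_K(x)=\prod_{i=1}^m\mathcal F_K(x_i)$ on $\mathbb T^m$. Set $F_K=F*\Phi_K$. Then $F_K(x)=\sum_{k\in[-K,K]^m}c_ke(k\cdot x)$ with
\[
 c_k=\widehat F(k)\prod_i\bigl(1-|k_i|/(K+1)\bigr).
\]
Since $|\widehat F(k)|\le\|F\|_\infty\le1$ and each weight lies in $[0,1]$, we get $|c_k|\le1$. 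The final bound $\sum|c_k|\le(2K+1)^m$ then follows by counting the frequencies.

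For the approximation error, $\Phi_K\ge0$ has integral one, so
\[
 |F(x)-F_K(x)|\le\int_{\mathbb T^m}\Phi_K(y)\,|F(x)-F(x-y)|\,dy
 \le L_{\rm Lip}\int\Phi_K(y)\,\|y\|_2\,dy,
\]
where $\|y\|_2$ is the quotient Euclidean distance from $y$ to $0$. By Cauchy--Schwarz and the product structure,
\[
 \int\Phi_K\|y\|_2\le\Bigl(\sum_i\int\mathcal F_K(y_i)\|y_i\|_{\mathbb R/\mathbb Z}^2\,dy_i\Bigr)^{1/2}
 =\sqrt m\,\Bigl(\int_{\mathbb T}\mathcal F_K(t)\|t\|^2\,dt\Bigr)^{1/2}.
\]
The one-dimensional second moment is the main computation. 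Use $\mathcal F_K(t)\le\min\{K+1,\,(K+1)^{-1}(2|t|)^{-2}\}$ for $|t|\le1/2$ and split at $|t|=1/(K+1)$. The inner region contributes $O(K^{-2})$ and the outer region contributes $O(K^{-1})$, so the second moment is $\ll1/K$. Hence
\[
 \|F-F_K\|_\infty\le C L_{\rm Lip}\sqrt{m/K}.
\]
This is the only step needing care: we must use the crude $\ll1/K$ moment and not expect $1/K^2$. The Fejér kernel has only first-moment decay like $\log K/K$, and we avoid that by working through the second moment.

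Choose $K=\max\{2,\lceil C^2L_{\rm Lip}^2m/\tau^2\rceil\}$, which gives error at most $\tau$. Since $m,L_{\rm Lip}\ge1$ and $\tau<1/2$, this satisfies $K\le C'(mL_{\rm Lip}/\tau)^2\le C'(mL_{\rm Lip}/\tau)^4$, which is the stated bound with room to spare. The estimates remain valid when the Lipschitz constant is measured in the quotient metric, because $|F(x)-F(x-y)|\le L_{\rm Lip}\,d(x,x-y)\le L_{\rm Lip}\|y\|_2$ with $y$ taken in the fundamental domain $[-1/2,1/2)^m$.
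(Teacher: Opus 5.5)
Your proof is correct and is essentially the paper's argument. It uses the same product Fej\'er kernel, the same coefficients $c_k=\widehat F(k)\prod_i(1-|k_i|/(K+1))$, and the same convolution error estimate. The one difference is that you bound $\int\Phi_K(y)\,d(y,0)\,dy$ by Cauchy--Schwarz and the second moment, giving error $CL_{\rm Lip}\sqrt{m/K}$, whereas the paper uses $d(y,0)\le\sum_i\|y_i\|_{\mathbb R/\mathbb Z}$ and the first moment $\ll\log(2K)/K$, giving $CmL_{\rm Lip}\log(2K)/K$; both fit comfortably inside $K\le C(mL_{\rm Lip}/\tau)^4$.

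One correction to your commentary: the first moment does not need avoiding, since $\log K/K$ decays faster than $K^{-1/2}$. Your second-moment route gains a better dependence on $m$ and removes the logarithm, at the cost of a worse dependence on $L_{\rm Lip}/\tau$.
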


\begin{proof}
We divide the proof into four steps.

\medskip
\noindent\textbf{Step 1. Establish the one-dimensional Fej'er-kernel bounds.}
For an integer $K\ge2$, define
\[
 \mathcal F_K(t):=\frac1{K+1}\left|\sum_{j=0}^K e(jt)\right|^2,
 \qquad t\in\mathbb T.
\]
Orthogonality gives $\int_{\mathbb T}\mathcal F_K(t)\,dt=1$.  The
geometric-sum formula and $|\sin(\pi t)|\gg\|t\|_{\mathbb R/\mathbb Z}$ give
\[
 0\le\mathcal F_K(t)\le K+1,
 \qquad
 \mathcal F_K(t)\le
 \frac{C}{(K+1)\|t\|_{\mathbb R/\mathbb Z}^2}.
\]
Writing $d(t):=\|t\|_{\mathbb R/\mathbb Z}$ and splitting at
$d(t)=1/(K+1)$ gives
\begin{align*}
 \mu_K:=\int_{\mathbb T}d(t)\mathcal F_K(t)\,dt
 \le2(K+1)\int_0^{1/(K+1)}t\,dt
   +\frac{C}{K+1}\int_{1/(K+1)}^{1/2}\frac{dt}{t}
 \le C\frac{\log(2K)}{K+1}.
\end{align*}

\medskip
\noindent\textbf{Step 2. Smooth on the torus and estimate the approximation error.}
Let
\[
 \mathcal K_K(y):=\prod_{j=1}^m\mathcal F_K(y_j),
 \qquad y=(y_1,\ldots,y_m)\in\mathbb T^m,
\]
and define
\[
 F_K(x):=\int_{\mathbb T^m}F(x-y)\mathcal K_K(y)\,dy.
\]
The quotient metric is
\[
 d_{\mathbb T^m}(y,0)
 =\left(\sum_{j=1}^m\|y_j\|_{\mathbb R/\mathbb Z}^2\right)^{1/2}
 \le\sum_{j=1}^m\|y_j\|_{\mathbb R/\mathbb Z}.
\]
Since $\mathcal K_K$ is a product probability kernel,
\begin{align*}
 |F_K(x)-F(x)|
 \le L_{\rm Lip}\int d_{\mathbb T^m}(y,0)\mathcal K_K(y)\,dy
 \le L_{\rm Lip}\,m\mu_K
 \le C\frac{mL_{\rm Lip}\log(2K)}{K+1}.
\end{align*}

\medskip
\noindent\textbf{Step 3. Choose the truncation scale.}
Put
\[
 A:=\frac{mL_{\rm Lip}}{\tau}\ge2,
 \qquad K:=\left\lceil C_0A^2\log(2A)\right\rceil,
\]
where $C_0$ is a sufficiently large absolute constant.  Then
$K\ge2$, $\log(2K)\ll\log(2A)$, and the preceding estimate gives
$\|F-F_K\|_\infty\le\tau$.  Since $A\ge2$ and $\log(2A)\le A$,
\[
 K\le C A^4=C\left(\frac{mL_{\rm Lip}}{\tau}\right)^4.
\]
\medskip
\noindent\textbf{Step 4. Identify the Fourier coefficients and finish.}
The one-dimensional Fourier expansion
\[
 \mathcal F_K(t)=\sum_{|j|\le K}
 \left(1-\frac{|j|}{K+1}\right)e(jt)
\]
shows that
\[
 c_k=\widehat F(k)\prod_{j=1}^m
 \left(1-\frac{|k_j|}{K+1}\right),
 \qquad
 \widehat F(k):=\int_{\mathbb T^m}F(x)e(-k\cdot x)\,dx.
\]
Thus $|c_k|\le|\widehat F(k)|\le1$, and there are $(2K+1)^m$
possible frequencies.  Therefore
\[
 \sum_k|c_k|\le(2K+1)^m.
\]
This proves the lemma.
\end{proof}
The next lemma is a one-parameter bracket estimate.  It converts the
assumption that a bracket polynomial is close to an integer on a positive
density of $h$ into two complementary systems of integer relations.  The
notation must distinguish centered representatives from distance to the
nearest integer: for $t\in\mathbb R$ put
\[
 \|t\|_{\mathbb R/\mathbb Z}:=\min_{q\in\mathbb Z}|t-q|,
 \qquad \{t\}\in[-1/2,1/2)\ \text{with }t-\{t\}\in\mathbb Z,
\]
and for $z=(z_1,\ldots,z_m)\in\mathbb R^m$ put
$\{z\}:=(\{z_1\},\ldots,\{z_m\})$.  Thus
$a\cdot\{h\alpha\}$ is an ordinary real number.

\begin{lemma}
\label{inv:bracket}
Let $m,N\in\mathbb Z_{\ge1}$ with $N\ge2$, let
$0<\rho<1/10$ and $M\ge2$ be real numbers, and let $K\ge1$ be real.
Let $a,\alpha\in\mathbb R^m$ and $\beta,\gamma\in\mathbb R$, with
$|a|\le M$.  Suppose that
\begin{equation*}
 \left\|\beta+\gamma h+a\cdot\{h\alpha\}\right\|_{\mathbb R/\mathbb Z}
       \le K/N
\end{equation*}
for at least $\rho N$ integers $h\in[N]$.  If
\begin{equation*}
 N\ge\left(\frac{mM}{\rho}\right)^{C(m+1)^2}K,
\end{equation*}
then there are an integer $t$ with $0\le t\le m$ and two families
\[
 y_1,\ldots,y_t\in\mathbb Z^m,\qquad
 x_1,\ldots,x_{m-t}\in\mathbb Z^m.
\]
The vectors $y_1,\ldots,y_t$ are linearly independent over $\mathbb R$.
The vectors $x_1,\ldots,x_{m-t}$ are also linearly independent over
$\mathbb R$. They satisfy
\begin{align}
 \operatorname{span}_{\mathbb R}(y_1,\ldots,y_t)
   &=\bigcap_{j=1}^{m-t}x_j^\perp,
 \\
 |y_i|,|x_j|&\le\left(\frac{mM}{\rho}\right)^{C(m+1)^2},
 \label{inv:eq:bracket-heights}\\
 \|y_i\cdot a\|_{\mathbb R/\mathbb Z},\ \|x_j\cdot\alpha\|_{\mathbb R/\mathbb Z}
 &\le\left(\frac{mM}{\rho}\right)^{C(m+1)^2}K/N.
 \label{inv:eq:bracket-output}
\end{align}
Here the span of an empty list is $\{0\}$ and the intersection of an
empty family of hyperplanes is $\mathbb R^m$.
\end{lemma}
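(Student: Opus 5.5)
I would argue by induction on the dimension $m$, using a geometry-of-numbers dichotomy on the vector $\alpha$ at each stage. Write $E\subseteq[N]$ for the set of at least $\rho N$ good $h$, and put $\Omega:=(mM/\rho)^{C(m+1)^2}$ for the target height. The plan is to decide whether the orbit $h\mapsto h\alpha\bmod\mathbb Z^m$, $h\in E$, is equidistributed at scale $\Omega^{-1}$, and to treat the two outcomes separately.

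\emph{Case 1: $\alpha$ satisfies a small integer relation.} Apply a quantitative Weyl/Kronecker argument, namely the circle case $(G,\Gamma)=(\mathbb R^m,\mathbb Z^m)$ of \Cref{sc:LSS-5.2}, or directly \Cref{inv:fejer} together with a geometric series bound. This should produce a nonzero $x\in\mathbb Z^m$ with $|x|\le\Omega_m$ and $\|x\cdot\alpha\|\le\Omega_m/N$, whenever the indicator of the good set fails to be equidistributed along $\{h\alpha\}$.

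Given such an $x$, complete it to a unimodular-type basis of bounded height, which is possible by Siegel's lemma. Then $\{h\alpha\}$ lies, up to errors $\ll \Omega h/N$ on subprogressions of length $\asymp N/\Omega$, in a rational translate of the hyperplane $x^\perp$. Rewrite $a\cdot\{h\alpha\}$ as $a'\cdot\{h\alpha'\}+(\text{linear in }h)+(\text{piecewise constant})$, with $\alpha',a'\in\mathbb R^{m-1}$ of height $\ll\Omega|a|$. Pigeonhole onto one piece, absorb the linear part into $\gamma$ and the constant into $\beta$, and apply the induction hypothesis in dimension $m-1$. The relations returned there are pulled back through the basis change: the $x_j$ obtained from $\alpha'$ together with $x$ form the $x$-family, and the $y_i$ lift into $x^\perp$. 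The spans then match by construction, and heights multiply, which gives the exponent $C(m+1)^2$ since each of the at most $m$ steps costs a factor $\Omega_m^{O(m)}$.

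\emph{Case 2: $\{h\alpha\}$ is equidistributed on the good set.} Then for every nonzero frequency $k$ with $|k|\le\Omega$ the twisted sums $\sum_{h\in E}e(k\cdot h\alpha)$ are small. In this case the target is $t=m$: take the $y_i$ to be a basis of $\mathbb Z^m$ with $\|y_i\cdot a\|\le\Omega K/N$, i.e.\ show that $a$ itself is close to $\mathbb Z^m/N$-scale integers.

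To get this, note that the condition $\|\beta+\gamma h+a\cdot\{h\alpha\}\|\le K/N$ on $\ge\rho N$ values of $h$ has positive density. Multiply by a Lipschitz bump of $\|\cdot\|$ and expand $e(j\,a\cdot v)$ in $v\in[-1/2,1/2)^m$. Equidistribution lets us replace the $h$-average by the average over $v$ uniform in the cube, up to a small error. That average factorises into one-dimensional integrals of size $\ll\prod_i\min(1,1/\|j a_i\|)$, so positive density forces $\|j a_i\|\ll\Omega K/N$ for some small $j$. An additional Dirichlet step then reduces to the standard basis vectors multiplied by $j$.

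\emph{Main obstacle.} I expect the hardest part to be the Case 1 reduction. The fractional part $\{h\alpha\}$ is discontinuous, so restricting to the hyperplane creates boundary terms, and the induced bracket polynomial on the $(m-1)$-dimensional slice must be shown to have the same form with controlled $|a'|$. I would handle the discontinuities by first shrinking to $h$ for which $\{h\alpha\}$ stays at distance $\ge\Omega^{-1}$ from the cube's faces, which costs only an $O(m\Omega^{-1})$ density loss by equidistribution or counting. On this set the bracket identities $\{u+v\}=\{u\}+\{v\}$ hold exactly, and the reduction becomes purely linear-algebraic.
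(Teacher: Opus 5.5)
There is a genuine gap, located in the dichotomy itself and in Case~2.

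First, non-equidistribution of $\{h\alpha\}$ along the good set $E$ does not yield a relation $\|x\cdot\alpha\|_{\mathbb R/\mathbb Z}\le\Omega/N$. Weyl/Kronecker arguments extract relations from large exponential sums over intervals or progressions, not over an arbitrary subset, and here $E$ is itself cut out by $\{h\alpha\}$. For example, take $m=1$, $\beta=\gamma=0$, $\alpha$ badly approximable, $N$ large and $a=K/(\rho N)$. The good set is $\{h:|\{h\alpha\}|\le\rho\}$, of size about $2\rho N$, and $\{h\alpha\}$ is confined to $[-\rho,\rho]$ on it. Your Case~1 is triggered, but $\alpha$ has no small relation; the correct output is $t=1$, $y_1=1$. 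So the split must be made on $\alpha$ (in fact on $(\alpha,\gamma)$), not on $E$.

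Second, replacing the $h$-average by a cube average in Case~2 is not justified, for two reasons. The integrand depends on $h$ also through $\gamma h$. With $\gamma=\alpha$ badly approximable, $a=-1$, $\beta=0$, one has $\gamma h+a\{h\alpha\}=h\alpha-\{h\alpha\}\in\mathbb Z$ for every $h$, and $\{h\alpha\}$ is equidistributed on $E=[N]$, yet $|a|=1$. The true conclusion $\|a\|_{\mathbb R/\mathbb Z}=0$ comes from the relation $(1,-1)\cdot(\alpha,\gamma)=0$, which equidistribution of $h\alpha$ cannot see. And even when $\gamma=0$, control of Weyl sums with frequencies up to $\Omega$ only handles test functions with Lipschitz constant $\ll\Omega$. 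Your bump has width $K/N$, and $v\mapsto e(ja\cdot v)$ is not continuous on $\mathbb T^m$, so nothing is available at the scale $K/N$ where the conclusion lives. Also, $|\int_{-1/2}^{1/2}e(\theta v)\,dv|\le\min(1,1/(\pi|\theta|))$ is governed by $|\theta|$, not by $\|\theta\|_{\mathbb R/\mathbb Z}$.

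Two ingredients are missing. The first is to treat the linear term as one more bracket coordinate, $\gamma h\equiv1\cdot\{h\gamma\}\pmod 1$, and work with $\widetilde a=(a,1)$, $\widetilde\alpha=(\alpha,\gamma)$. The second is an additive rather than Fourier argument in the irrational case. Take many pairs of good $h,h'$ with $\{h\widetilde\alpha\},\{h'\widetilde\alpha\}$ in a common small box. Bracket-linearity then gives $|\widetilde a\cdot\{d\widetilde\alpha\}|\le2K/N$ in $\mathbb R$ for $d=h-h'$. Geometry of numbers on the resulting Bohr set produces real relations $|w\cdot\widetilde a|\ll\Omega K/N$, together with the $\eta$-relations for $\widetilde\alpha$.

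This is essentially Leng's refined bracket-polynomial lemma, which the paper quotes as a black box. The paper's own proof consists only of the augmentation, the projection $w_i\mapsto w_i'$, and an elimination step. The projection works because the last coordinate of $w_i$ is an integer, so $|w_i\cdot\widetilde a|$ small gives $\|w_i'\cdot a\|_{\mathbb R/\mathbb Z}$ small. The elimination removes the last coordinate among the $\eta_j$ to get an integer basis of $Y^\perp$.

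Even granting both cases, your induction as sketched does not give the exponent $C(m+1)^2$. Each step replaces $(M,\rho)$ by roughly $(\Omega M,\rho/\Omega)$, with $\Omega$ a power of the \emph{current} parameters. So $\log(mM/\rho)$ grows by a fixed factor $>1$ at each step, and the final exponent is exponential in $m$. This matters, since the $(m+1)^2$ feeds into the later exponents $(d+1)^4,\dots,431$.

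Case~1 also leaves two points unaddressed. Keeping $\{h\alpha\}$ away from the faces of the cube cannot be done by equidistribution when $\alpha$ has a relation (e.g.\ $\alpha_1=1/2$ puts half the points on a face). And restricting to a subprogression $h=h_0+qh'$ produces $\{h_0\alpha+h'q\alpha\}$, which is not of the form assumed in the induction hypothesis.
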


\begin{proof}
We divide the proof into four steps.	
	
\par
\medskip
\noindent\textbf{Step 1. Apply the external lemma with one fixed bound.}
In this proof, the constant $C$ in the statement is chosen once and
then kept fixed. We use Leng's one-parameter refined
bracket-polynomial lemma~\cite[Lemma~3.1]{LengEquid}.
Its quantitative form is as follows. Suppose that $D,N\ge1$ are
integers with $N\ge2$, that $0<\delta<1/10$, $M'\ge2$, $K'\ge2$,
and that $b,\theta\in\mathbb R^D$, $\beta_0\in\mathbb R$, with
$|b|\le M'$. If
\begin{equation}
 \|\beta_0+b\cdot\{h\theta\}\|_{\mathbb R/\mathbb Z}<K'/N
 \quad\text{for at least $\delta N$ integers $h\in[N]$},
 \label{inv:eq:lem76-external-input}
\end{equation}
then either
\[
 N\ll(D^DM'/\delta)^{O(1)}
 \quad\text{or}\quad K'/N>1/100,
\]
or there are linearly independent integer families
$w_1,\ldots,w_s$ and $\eta_1,\ldots,\eta_{D-s}$ such that
\begin{align*}
 \operatorname{span}_{\mathbb R}(w_1,\ldots,w_s)
   &=\bigcap_{j=1}^{D-s}\eta_j^\perp,
 \\
 |w_i|,\ |\eta_j|&\le(D^DM'/\delta)^{O(1)},
 \\
 |w_i\cdot b|,\ \|\eta_j\cdot\theta\|_{\mathbb R/\mathbb Z}
   &\le (M'/\delta)^{O(1)}D^{O(D)}K'/N.
\end{align*}

Set
\begin{equation*}
 D:=m+1,\qquad \widetilde a:=(a,1),\qquad
 \widetilde\alpha:=(\alpha,\gamma),\qquad K_*:=2K.
\end{equation*}
Then $|\widetilde a|\le2M$, and
\begin{equation*}
 \widetilde a\cdot\{h\widetilde\alpha\}
 =a\cdot\{h\alpha\}+\{h\gamma\}
 \equiv a\cdot\{h\alpha\}+\gamma h\pmod1.
\end{equation*}
The hypothesis therefore gives \eqref{inv:eq:lem76-external-input}
with $b=\widetilde a$, $\theta=\widetilde\alpha$,
$\beta_0=\beta$, $\delta=\rho$, $M'=2M$, and $K'=K_*$.

Define the single auxiliary bound
\begin{equation*}
 R:=\frac{mM}{\rho},\qquad T:=R^{C(m+1)^2/4}.
\end{equation*}
Choose $C\ge1$ large enough that $T$ dominates the exceptional
threshold and every height or error factor in the Leng's lemma.
This choice is uniform in all the parameters. Indeed, $R>20m$,
$D=m+1\le2m<R/10$, and
\begin{equation*}
 \frac{2D^DM}{\rho}
 =\frac{2R}{m}D^D
 \le 2R(R/10)^D\le R^{D+1}.
\end{equation*}
Consequently all these upper bounds are $R^{O(D)}$ and are
at most $R^{CD^2/4}=T$ for one sufficiently large absolute $C$.
Since $T\ge R>20$, our hypothesis now reads
\begin{equation*}
 N\ge T^4K>T,\qquad \frac{K_*}{N}\le\frac2{T^4}<\frac1{100}.
\end{equation*}
Neither exceptional alternative can occur. We obtain the above
orthogonal families in $\mathbb Z^D$ with
\begin{equation}
 |w_i|,\ |\eta_j|\le T,\qquad
 |w_i\cdot\widetilde a|,\
 \|\eta_j\cdot\widetilde\alpha\|_{\mathbb R/\mathbb Z}
 \le TK_*/N.
 \label{inv:eq:lem76-leng}
\end{equation}

\par
\medskip
\noindent\textbf{Step 2. Project the $w$-relations.}
Write $w_i=(w_i',q_i)$ with $w_i'\in\mathbb Z^m$ and $q_i\in\mathbb Z$.
Let $Y:=\operatorname{span}_{\mathbb R}\{w_1',\ldots,w_s'\}$ and
$t:=\dim Y$.  Choose a maximal linearly independent sublist of the
$w_i'$ and call it $y_1,\ldots,y_t$.  Then $|y_i|\le T$ and the $y_i$
span $Y$.  If $y_i=w_{\nu(i)}'$, then the last coordinate $q_{\nu(i)}$
is an integer, so
\begin{equation*}
 \|y_i\cdot a\|_{\mathbb R/\mathbb Z}
 =\|w_{\nu(i)}\cdot\widetilde a-q_{\nu(i)}\|_{\mathbb R/\mathbb Z}
 \le |w_{\nu(i)}\cdot\widetilde a|\le TK_*/N.
\end{equation*}

\par
\medskip
\noindent\textbf{Step 3. Construct an integer basis of $Y^\perp$.}
Put $p:=D-s$ and
\begin{equation*}
 W:=\operatorname{span}_{\mathbb R}(w_1,\ldots,w_s),\qquad
 E:=\operatorname{span}_{\mathbb R}(\eta_1,\ldots,\eta_p)=W^\perp,
 \qquad U:=Y^\perp.
\end{equation*}
Here a basis of $U$ means a basis over $\mathbb R$ whose vectors have
integer coordinates. We construct it directly from the $\eta_j$.

\smallskip
\noindent\emph{3a. Identify the vectors whose last coordinate is zero.}
For $u\in\mathbb R^m$, the identity
$(u,0)\cdot w_i=u\cdot w_i'$ gives
\begin{equation}
 (u,0)\in E\iff u\cdot w_i'=0\quad(1\le i\le s)
 \iff u\in Y^\perp=U.
 \label{inv:eq:lem76-horizontal-intersection}
\end{equation}
Thus deleting the last coordinate identifies
$E\cap(\mathbb R^m\times\{0\})$ with $U$.
Write
\begin{equation*}
 \eta_j=(v_j,b_j),\qquad v_j\in\mathbb Z^m,\quad b_j\in\mathbb Z
 \quad(1\le j\le p).
\end{equation*}

\smallskip
\noindent\emph{3b. Choose a basis of that intersection.}
If $b_j=0$ for every $j$, all the $\eta_j$ already lie in
$\mathbb R^m\times\{0\}$. Therefore $v_1,\ldots,v_p$ form a basis of
$U$. In this case $p=\dim U=m-t$, and we set
\begin{equation}
 x_j:=v_j,\qquad (x_j,0)=\eta_j\quad(1\le j\le p).
 \label{inv:eq:lem76-zero-last-coordinate}
\end{equation}
This also covers $p=0$, when $U=\{0\}$ and the $x$-family is empty.

Otherwise, reorder the $\eta_j$ so that $b_1\ne0$. For $2\le j\le p$,
set
\begin{equation}
 x_{j-1}:=b_1v_j-b_jv_1\in\mathbb Z^m,\qquad
 (x_{j-1},0)=b_1\eta_j-b_j\eta_1\in E.
 \label{inv:eq:lem76-coordinate-elimination}
\end{equation}
By \eqref{inv:eq:lem76-horizontal-intersection}, every $x_{j-1}$ lies
in $U$. To prove independence, suppose that
$\sum_{j=2}^p d_jx_{j-1}=0$ with $d_j\in\mathbb R$. Then
\begin{equation*}
 \sum_{j=2}^p b_1d_j\eta_j
 -\left(\sum_{j=2}^p b_jd_j\right)\eta_1=0.
\end{equation*}
The $\eta_j$ are linearly independent and $b_1\ne0$, so every $d_j=0$.

To prove spanning, take $u\in U$. By
\eqref{inv:eq:lem76-horizontal-intersection}, there are real numbers
$d_1,\ldots,d_p$ such that
\begin{equation*}
 (u,0)=\sum_{j=1}^p d_j\eta_j,\qquad
 \sum_{j=1}^p b_jd_j=0,\qquad
 d_1=-\frac1{b_1}\sum_{j=2}^p b_jd_j.
\end{equation*}
Substitution yields
\begin{equation*}
 (u,0)=\sum_{j=2}^p\frac{d_j}{b_1}
                  (b_1\eta_j-b_j\eta_1),\qquad
 u=\sum_{j=2}^p\frac{d_j}{b_1}x_{j-1}.
\end{equation*}
Hence these $p-1$ vectors form a basis of $U$, and $p-1=m-t$.
If $p=1$, the sum is empty and the same argument gives $U=\{0\}$.

\par
\medskip
\noindent\textbf{Step 4. Verify orthogonality and the quantitative bounds.}
In both cases, $x_1,\ldots,x_{m-t}$ form a basis of $U=Y^\perp$.
Consequently,
\begin{equation*}
 \bigcap_{j=1}^{m-t}x_j^\perp
 =U^\perp=Y=\operatorname{span}_{\mathbb R}(y_1,\ldots,y_t).
\end{equation*}
This identity also holds when $t=0$ or $t=m$, with the stated empty-family
conventions.

By \eqref{inv:eq:lem76-leng}, $|v_j|,|b_j|\le T$ and $T\ge1$.
In the first case of Step~3b, \eqref{inv:eq:lem76-zero-last-coordinate}
gives
\begin{equation*}
 |x_j|\le T,\qquad
 \|x_j\cdot\alpha\|_{\mathbb R/\mathbb Z}
 =\|\eta_j\cdot\widetilde\alpha\|_{\mathbb R/\mathbb Z}
 \le TK_*/N.
\end{equation*}
In the second case, \eqref{inv:eq:lem76-coordinate-elimination} gives
\begin{equation*}
 |x_{j-1}|\le |b_1||v_j|+|b_j||v_1|\le2T^2.
\end{equation*}
Since $b_1,b_j$ are integers, the triangle inequality on
$\mathbb R/\mathbb Z$ gives
\begin{equation*}
 \begin{aligned}
 \|x_{j-1}\cdot\alpha\|_{\mathbb R/\mathbb Z}
 =\|b_1\eta_j\cdot\widetilde\alpha
       -b_j\eta_1\cdot\widetilde\alpha\|_{\mathbb R/\mathbb Z}
 \le |b_1|\|\eta_j\cdot\widetilde\alpha\|_{\mathbb R/\mathbb Z}
       +|b_j|\|\eta_1\cdot\widetilde\alpha\|_{\mathbb R/\mathbb Z}
 \le2T^2K_*/N.
 \end{aligned}
\end{equation*}
Together with Step~2 and $K_*=2K$, these estimates imply, for all
applicable indices,
\begin{equation}
 |y_i|,|x_j|\le2T^2,\qquad
 \|y_i\cdot a\|_{\mathbb R/\mathbb Z},\quad
 \|x_j\cdot\alpha\|_{\mathbb R/\mathbb Z}\le4T^2K/N.
 \label{inv:eq:lem76-combined-bounds}
\end{equation}
Since $T\ge2$, the same bound $T$ chosen in Step~1 gives
\begin{equation*}
 2T^2\le T^4,\qquad 4T^2\le T^4
   =R^{C(m+1)^2}.
\end{equation*}
Thus \eqref{inv:eq:lem76-combined-bounds} gives exactly
\eqref{inv:eq:bracket-heights} and \eqref{inv:eq:bracket-output}.
\end{proof}

\subsection{Horizontal relations}

For the next theorem, fix $q\in\{1,2\}$ and integers $r_\nu\ge1$, and let
\[
 G:=\prod_{\nu=1}^{q}\mathcal H(r_\nu)
\]
with the explicit lattice
\[
 \Gamma:=\prod_{\nu=1}^{q}
 \bigl(\mathbb Z^{r_\nu}\times\mathbb Z^{r_\nu}\times\mathbb Z\bigr).
\]
Set $G_1:=G$, $G_2:=[G,G]$, and $G_3:=\{1\}$. In the present
coordinates,
\[
 G_2=\{z(c):c\in\mathbb R^q\},
 \qquad [G,G_2]=\{1\};
\]
these are the only subgroup levels used below. Put $d:=\dim G$. Write
\begin{equation*}
 z(c):=((0,0,c_1),\ldots,(0,0,c_q)),\qquad
 c=(c_1,\ldots,c_q)\in\mathbb R^q
\end{equation*}
for a central element. Fix $M\ge2$ and an integer vector
$\ell=(\ell_1,\ldots,\ell_q)\in\mathbb Z^q$ with
$\max_\nu|\ell_\nu|\le M$, and define
\begin{equation*}
 \xi:G_2\longrightarrow\mathbb R,\qquad
 \xi(z(c)):=\ell\cdot c.
\end{equation*}
Thus $\xi$ is integer-valued on $\Gamma\cap G_2$. Let
$F:G/\Gamma\to\mathbb C$ satisfy $|F|\le1$ and
$\|F\|_{\rm Lip}\le M$, and assume that
\begin{equation}
 F(z(c)x\Gamma)=e(\ell\cdot c)F(x\Gamma)
 \qquad(c\in\mathbb R^q,\ x\in G).
\label{inv:eq:vertical-frequency-assumption}
\end{equation}
The displayed identity is the exact assumption used below: $z(c)\in G_2$
acts on $F$ by the scalar $e(\xi(z(c)))$.  The coordinate height of
$\xi$ is $\max_\nu|\ell_\nu|$.  Since $G$ is two-step nilpotent, a
polynomial sequence for $G_1\supseteq G_2\supseteq G_3$ is exactly
the representation
\[
 g(n)=g_0u^nv^{\binom n2},
 \qquad g_0,u\in G,\quad v\in G_2.
\]

\begin{theorem}
\label{inv:vertical-kernel}
With this setup, let $N\ge2$ be an integer and suppose
\begin{equation*}
 \left|N^{-1}\sum_{n=1}^N F(g(n)\Gamma)\right|\ge\delta,
 \qquad 0<\delta<1/10,\qquad Z:=10Md/\delta.
\end{equation*}
If $N\ge Z^{C(d+1)^4}$, there are an integer $s$ with
$0\le s\le2d$ and horizontal characters
$\lambda_1,\ldots,\lambda_s:G\to\mathbb R$ which are integer-valued
on $\Gamma$ and vanish on $G_2$. Put
\[
 W:=\bigcap_{i=1}^{s}\ker\lambda_i,
 \qquad \ker\lambda_i:=\{w\in G:\lambda_i(w)=0\text{ in }\mathbb R\}.
\]
Then
\begin{equation*}
 |\lambda_i|,\ \|\lambda_i\circ g\|_{C^\infty[N]}
 \le Z^{C(d+1)^4},
 \qquad
 \xi([W,W])=0.
\end{equation*}
Here $[W,W]$ denotes the subgroup generated by all commutators
$[w_1,w_2]=w_1w_2w_1^{-1}w_2^{-1}$ with $w_1,w_2\in W$.

\end{theorem}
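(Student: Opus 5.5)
Theorem~\ref{inv:vertical-kernel} is a two-step equidistribution statement with explicit dependence on the dimension. The plan follows the structure of the Leng--Sah--Sawhney argument, specialised to the explicit product of Heisenberg groups.

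\textbf{Step 1: Fourier-expand the horizontal dependence of $F$.} Write $g(n)=g_0u^nv^{\binom n2}$ in coordinates $(\theta,\lambda,c)$ per factor. Using \eqref{inv:eq:vertical-frequency-assumption}, express $F$ as $e(\ell\cdot c)$ times a function of $(\theta,\lambda)$ that is periodic in $\theta$ and twisted in $\lambda$. Apply \Cref{inv:fejer} to the fibre function on $\mathbb T^{2\sum r_\nu}$ with accuracy $\delta/2$. This replaces $F(g(n)\Gamma)$ by a sum of at most $(2K+1)^{d}$ bracket phases of the shape
\[
 e\bigl(\beta(n)+\ell\cdot(\text{bracket quadratic in }n)\bigr),
\]
where the bracket terms are $\sum_\nu \ell_\nu\,\lambda_\nu(n)\cdot\{\theta_\nu(n)\}$. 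Pigeonholing one frequency $k$ then gives a single bracket-quadratic phase with correlation at least $\delta(2K+1)^{-d}/2$, and $\log$ of this is $\ll d\log Z$.

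\textbf{Step 2: Remove the quadratic bracket by van der Corput.} Apply one van der Corput differencing in $n$. For a positive proportion of shifts $h$, the differenced phase is a linear phase in $n$ with large average. Hence, for many $h\in[N]$, the slope satisfies
\[
 \Bigl\|\beta'+\gamma h+a\cdot\{h\alpha\}\Bigr\|_{\mathbb R/\mathbb Z}\le K'/N .
\]
Here $\alpha$ collects the horizontal $\theta$-velocities of $u$, and $a$ collects $\ell_\nu$ times the $\lambda$-velocities; the $\lambda\cdot\theta$ cross terms are what produce this bracket. Then apply \Cref{inv:bracket} with $m\le d$. This yields integer vectors $y_i,x_j$ of height $Z^{O((d+1)^2)}$ with
\[
 \|y_i\cdot a\|_{\mathbb R/\mathbb Z}\ \text{small},\qquad \|x_j\cdot\alpha\|_{\mathbb R/\mathbb Z}\ \text{small},\qquad \operatorname{span}(y_i)=\bigcap_j x_j^\perp .
\]

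\textbf{Step 3: Convert the relations into horizontal characters.} Define the $\lambda_i$ as the characters $\theta\mapsto x_j\cdot\theta$ and $\lambda\mapsto y_i\cdot\lambda$, suitably rescaled by $\ell$ so that they are integer-valued on $\Gamma$. This gives at most $2d$ characters. The smallness in Step~2 controls the linear coefficient of $\lambda_i\circ g$. The quadratic coefficient vanishes automatically, since the characters kill $G_2$ and $v\in G_2$. Hence $\|\lambda_i\circ g\|_{C^\infty[N]}$ is bounded by the product of the height and $NK'/N$.

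\textbf{Step 4: Verify the kernel condition $\xi([W,W])=0$.} The commutator of $(\theta,\lambda,\cdot)$ and $(\theta',\lambda',\cdot)$ has central part $\lambda\cdot\theta'-\lambda'\cdot\theta$ in each factor, and $\xi$ pairs these with $\ell$. On $W$, the $\theta$-components lie in $\bigcap x_j^\perp=\operatorname{span}(y_i)$ and the $\lambda$-components are orthogonal to the $y_i$. Therefore both cross terms vanish. The dependence on $\ell$ is handled by building $\ell$ into $a$.

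\textbf{Bookkeeping of exponents.} The threshold of \Cref{inv:bracket} is $(mM/\rho)^{C(m+1)^2}$ with $\log(1/\rho)\ll d\log Z$. This gives $Z^{C(d+1)^3}$, which is comfortably within $Z^{C(d+1)^4}$.

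\textbf{Expected main obstacle.} The hardest part is making the bracket identity in Steps~1--2 exact. One must track the fractional parts $\{\theta\}$ coming from the lattice reduction and the periodised representation, so that the van der Corput output is precisely of the form required by \Cref{inv:bracket}. One must also ensure that the characters from Step~3 are genuinely integral on $\Gamma$. The first thing to try is to choose the representative of $g(n)\Gamma$ with $\theta\in[-1/2,1/2)^r$ explicitly, and to compute its $\lambda$ coordinate as $n\lambda_u-\lfloor\cdot\rfloor$ corrections.
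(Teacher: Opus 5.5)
There is a genuine gap, and it sits exactly at the step you flagged as the main obstacle.

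\textbf{Step 1 fails.} There is no function on $\mathbb T^{m}$, $m=2\sum_\nu r_\nu$, to which \Cref{inv:fejer} can be applied. Write $F(\theta,\lambda,c)=e(\ell\cdot c)\Psi(\theta,\lambda)$. Right multiplication by $(p,q,k)\in\Gamma$ and the group law \eqref{inv:group-definition} give
\[
\Psi(\theta+p,\lambda+q)=e\bigl(-\textstyle\sum_\nu\ell_\nu\lambda_\nu\cdot p_\nu\bigr)\Psi(\theta,\lambda).
\]
So $\Psi$ is periodic in $\lambda$ but only quasi-periodic in $\theta$ (the reverse of what you wrote). Restricting it to a fundamental box and periodizing produces a function that jumps across the faces $\theta_{\nu,i}=\pm1/2$. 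Hence \Cref{inv:fejer} does not apply, and no uniform trigonometric approximation exists at all, since uniform limits of trigonometric polynomials are continuous.

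\textbf{Step 2 fails as stated.} Differencing a bracket quadratic does not give a linear phase. In one factor, write $\{\theta(n+h)\}=\{\theta(n)\}+\{h\alpha_\theta\}-\epsilon_{n,h}$ with $\epsilon_{n,h}\in\mathbb Z^r$. Besides the linear part with slope $\ell\alpha_\lambda\cdot\{h\alpha_\theta\}$, the differenced phase still contains $h\ell\alpha_\lambda\cdot\{\theta(n)\}$ and the carry term $-\ell\epsilon_{n,h}\cdot\lambda(n+h)$. Both are discontinuous functions of $n\alpha\bmod\mathbb Z^m$. They become continuous only in combination with the twist of $\Psi$, and that cancellation is lost once $\Psi$ has been replaced by a Fourier series.

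\textbf{The fix is to reverse your Steps~1 and~2.}
\begin{itemize}
\item First normalize $g(n)=u^nv^{\binom n2}$ with $u$ in the centered box. The required left translation of $F$ costs a factor $1+\sqrt d$ in the Lipschitz norm.
\item Apply van der Corput to $A_n=F(g(n)\Gamma)$ itself. With $u^h=\zeta_h\gamma_h$, the function $y\mapsto F(\zeta_hy\Gamma)\overline{F(y\Gamma)}$ is $G_2$-invariant, because the factors $e(\pm\ell\cdot c)$ from \eqref{inv:eq:vertical-frequency-assumption} cancel. It therefore descends to a genuinely Lipschitz function $\Phi_h$ on $\mathbb T^m$, with constant $O(M(d+1)^3)$.
\item The central displacement $[u,\zeta_h]^nv^{hn+\binom h2}$ gives the exact identity
\[
 A_{n+h}\overline{A_n}=e\bigl(\tbinom h2\omega\bigr)\,\Phi_h(n\alpha)\,e\bigl(n(a\cdot\{h\alpha\}+h\omega)\bigr).
\]
Here $\alpha=\pi(u)\in\mathbb R^m$ is the full horizontal coordinate, $\omega=\xi(v)$, and $a\cdot t=B_\ell(\alpha,t)$. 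The form $B_\ell(s,t)=\sum_\nu\ell_\nu(s_{\lambda,\nu}\cdot t_{\theta,\nu}-t_{\lambda,\nu}\cdot s_{\theta,\nu})$ satisfies $\xi([x,y])=B_\ell(\pi(x),\pi(y))$.
\item Only now apply \Cref{inv:fejer} to $\Phi_h$. Pigeonholing a frequency $k$ common to many shifts $h$ gives exactly the hypothesis of \Cref{inv:bracket}, with $\beta=k\cdot\alpha$ and $\gamma=\omega$.
\item The characters are then $x_j\cdot\pi(z)$ and $B_\ell(y_i,\pi(z))$. Your Step~4 becomes the bilinearity identity $B_\ell(\pi(w_1),\pi(w_2))=\sum_i s_iB_\ell(y_i,\pi(w_2))=0$.
\end{itemize}
With this reordering, your exponent bookkeeping goes through essentially unchanged.
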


\begin{proof}
We use the convention $[x,y]=xyx^{-1}y^{-1}$ throughout the proof.
Put $$m:=\dim(G/G_2)=2\sum_{\nu=1}^{q}r_\nu$$ and identify
$G/G_2$ with $\mathbb R^m$ by the horizontal coordinates. Let
$\pi:G\to\mathbb R^m$ be the quotient homomorphism. We divide the rest proof into seven steps.

\par
\medskip
\noindent\textbf{Step 1. Choose bounded representatives and normalize the polynomial sequence.}
We first make the bounded representatives used in the proof explicit.
For one Heisenberg factor, write $x=(\theta,\lambda,c)$ with
$\theta,\lambda\in\mathbb R^r$ and $c\in\mathbb R$. Choose
$p,q\in\mathbb Z^r$ so that
$\theta_0:=\theta-p$ and $\lambda_0:=\lambda-q$ belong to
$[-1/2,1/2)^r$, and then choose $k\in\mathbb Z$ so that
$c_0:=c-\lambda_0\cdot p-k$ belongs to $[-1/2,1/2)$. The group law
then gives
\begin{equation*}
 x=(\theta_0,\lambda_0,c_0)(p,q,k).
\end{equation*}
For the product $G$, let $\mathcal B$ be the product of these centered half-open boxes and apply this construction in every factor. Thus
every $x\in G$ can be written as $x=b\gamma$ with $\gamma\in\Gamma$
and with $b\in\mathcal B$. In particular, if $u^h=\zeta_h\gamma_h$, then
$\pi(\zeta_h)=\{h\pi(u)\}$, where $\{\,\cdot\,\}$ denotes the
representative in $[-1/2,1/2)^m$.

Write $g_0=b\gamma_0$ by this construction. Since $v$ is central,
\begin{equation}
 g_0u^nv^{\binom n2}
 =b(\gamma_0u\gamma_0^{-1})^nv^{\binom n2}\gamma_0.
\label{inv:constant-normalization}
\end{equation}
Define $\widetilde F(x\Gamma):=F(bx\Gamma)$ and replace $u$ by
$\gamma_0u\gamma_0^{-1}$. The final factor $\gamma_0\in\Gamma$ is
invisible on $G/\Gamma$, so this replacement leaves the nilsequence
unchanged. It remains to check that the translation by $b$ preserves a
controlled Lipschitz norm.

We use on $G$ the right-invariant metric associated with the Euclidean
norm in the displayed coordinates, and write $d_{G/\Gamma}$ for the
induced quotient metric. If
$x(t)=((\theta_\nu(t),\lambda_\nu(t),c_\nu(t)))_\nu$ is a differentiable
path, write $\operatorname{speed}(x(t))$ for its metric speed.\footnote{For
a differentiable path $x:I\to G$, the metric speed at $t$ is
$\displaystyle\lim_{s\to0}d_G(x(t+s),x(t))/|s|$, when this limit exists;
the length of the path is the integral of this speed.}
Differentiating $x(t+s)x(t)^{-1}$ at $s=0$ gives
\begin{equation}
 \operatorname{speed}(x(t))^2
 =\sum_{\nu=1}^{q}\left(
 \|\dot\theta_\nu\|_2^2+\|\dot\lambda_\nu\|_2^2+
 |\dot c_\nu-\dot\lambda_\nu\cdot\theta_\nu|^2\right).
\label{inv:eq:speed-formula}
\end{equation}
If $b_\nu=(p_\nu,q_\nu,r_\nu)$, applying the same formula to the
path $b x(t)$ changes the last term in factor $\nu$ to
\[
 \dot c_\nu-\dot\lambda_\nu\cdot\theta_\nu
 +q_\nu\cdot\dot\theta_\nu-p_\nu\cdot\dot\lambda_\nu.
\]
Put $E_\nu:=q_\nu\cdot\dot\theta_\nu-p_\nu\cdot\dot\lambda_\nu$. Since each coordinate of $p_\nu$ and $q_\nu$ has absolute value at most $1/2$, Cauchy--Schwarz gives
\begin{equation*}
 \left(\sum_{\nu=1}^{q}|E_\nu|^2\right)^{1/2}
 \le \sqrt d\left(\sum_{\nu=1}^{q}
 (\|\dot\theta_\nu\|_2^2+\|\dot\lambda_\nu\|_2^2)\right)^{1/2}
 \le \sqrt d\,\operatorname{speed}(x(t)).
\end{equation*}
The Euclidean triangle inequality applied to the central coordinates now gives
\begin{equation}
 \operatorname{speed}(b x(t))
 \le(1+\sqrt d)\operatorname{speed}(x(t)).
\label{inv:eq:left-translation-speed}
\end{equation}
The quotient distance is the infimum of the lengths of lifted paths,
with lattice translates allowed at the endpoints. Applying
\eqref{inv:eq:left-translation-speed} to every such path and then taking
the two infima gives
\begin{equation}
 d_{G/\Gamma}(bx\Gamma,by\Gamma)
 \le(1+\sqrt d)d_{G/\Gamma}(x\Gamma,y\Gamma),
 \qquad
 \|\widetilde F\|_{\rm Lip}\le M(1+\sqrt d).
\label{inv:eq:translated-lipschitz}
\end{equation}
We now rename $\widetilde F$ as $F$.

Next normalize the linear coefficient. Write $u=u_0\gamma$ with
$\gamma\in\Gamma$ and with $u_0\in\mathcal B$.
Since commutators are central,
\begin{equation}
 (u_0\gamma)^n
 =u_0^n\gamma^n[\gamma,u_0]^{\binom n2}.
\label{inv:eq:normalize-linear}
\end{equation}
The commutator factor is central, so it can be moved past $\gamma^n$;
the factor $\gamma^n$ then disappears on the quotient, and the remaining
central factor is absorbed into $v$. We may therefore replace $u$ by
$u_0$ and $v$ by $v[\gamma,u_0]$, obtaining
\begin{equation*}
 g(n)=u^nv^{\binom n2},\qquad
 \alpha:=\pi(u)\in[-1/2,1/2)^m,\qquad
 \omega:=\xi(v).
\end{equation*}
The real number $\omega$ is not subject to a quantitative bound.
For a horizontal character $\lambda$, let $P_{\rm old}$ and
$P_{\rm new}$ denote the scalar polynomials before and after the two
normalizations, viewed in $\mathbb R/\mathbb Z$. The calculation above
gives the exact identity
\begin{equation*}
 P_{\rm old}(n)-P_{\rm new}(n)
 =\lambda(g_0)+n\lambda(\gamma)\pmod 1,
 \qquad \lambda(\gamma)\in\mathbb Z.
\end{equation*}
Thus the second term is zero in $\mathbb R/\mathbb Z$, and only the
constant coefficient changes. Since the $C^\infty[N]$ norm ignores the
constant coefficient and measures the remaining coefficients modulo
$\mathbb Z$, $\|\lambda\circ g\|_{C^\infty[N]}$ is unchanged.

\par
\medskip
\noindent\textbf{Step 2. Find many large derivative sums.}
Set
\[
 A_n:=F(g(n)\Gamma),\qquad
 S_h:=\sum_{n=1}^{N-h}A_{n+h}\overline{A_n}
 \quad(1\le h<N).
\]
The hypothesis and $|A_n|\le1$ imply
\begin{equation*}
 \delta^2N^2
 \le\left|\sum_{n=1}^NA_n\right|^2
 \le N+2\sum_{h=1}^{N-1}|S_h|.
\end{equation*}
Since the theorem's lower bound on $N$ implies $N\ge4\delta^{-2}$,
the sum on the right gives
$$\sum_h|S_h|\ge3\delta^2N^2/8.$$ Shifts with
$|S_h|<\delta^2N/8$ contribute less than $\delta^2N^2/8$.
Consequently, for
\[
 \mathcal H:=\{h\in[N-1]: |S_h|\ge\delta^2N/8\},
\]
the remaining shifts contribute at least $\delta^2N^2/4$. Since
$|S_h|\le N-h\le N$, we obtain
\begin{equation*}
 |\mathcal H|\ge\delta^2N/4,\qquad
 |S_h|\ge\delta^2N/8,\qquad
 N-h\ge\delta^2N/8\quad(h\in\mathcal H).
\end{equation*}

\par
\medskip
\noindent\textbf{Step 3. Rewrite each derivative on a horizontal torus.}

\noindent\emph{3a. The torus function.}
For $h\in\mathcal H$, write
$u^h=\zeta_h\gamma_h$ as in Step 1 and define
\begin{equation*}
 H_h(y):=F(\zeta_hy\Gamma)\overline{F(y\Gamma)}.
\end{equation*}
Because $\Gamma$ is the quotient lattice,
$F(x\gamma\Gamma)=F(x\Gamma)$ for $\gamma\in\Gamma$; hence
$H_h$ is right-$\Gamma$-invariant. For $z\in G_2$,
\eqref{inv:eq:vertical-frequency-assumption} gives
\begin{equation}
 H_h(zy)
 =e(\xi(z))F(\zeta_hy\Gamma)\,
  e(-\xi(z))\overline{F(y\Gamma)}
 =H_h(y).
\label{inv:eq:equ-vertical-cancellation}
\end{equation}
If $\pi(y')-\pi(y)\in\mathbb Z^m$, choose
$\gamma\in\Gamma$ with this horizontal coordinate. Then
$y'=zy\gamma$ for some $z\in G_2$, so
\eqref{inv:eq:equ-vertical-cancellation} and lattice invariance
show that $H_h(y)$ depends only on $\pi(y)\bmod\mathbb Z^m$. Thus
there is a well-defined $\Phi_h:\mathbb T^m\to\mathbb C$ with
\begin{equation}
 \Phi_h(\pi(y)\bmod\mathbb Z^m)=H_h(y),\qquad |\Phi_h|\le1.
\label{inv:eq:equ-torus-function}
\end{equation}

\noindent\emph{3b. Lipschitz bound.}
Use the horizontal section
\[
 \sigma(t):=((\theta_\nu,\lambda_\nu,0))_{\nu=1}^{q},
 \qquad t=((\theta_\nu,\lambda_\nu))_\nu\in\mathbb R^m.
\]
Choose lifts $t,t'\in[-1,1]^m$ of two torus points for which
$\|t-t'\|_2$ equals their Euclidean quotient distance. Along
$t_s=t+s(t'-t)$, \eqref{inv:eq:speed-formula} gives
\begin{equation*}
 \operatorname{speed}(\sigma(t_s))^2
 \le(1+d)\|t'-t\|_2^2.
\end{equation*}
Hence \eqref{inv:eq:translated-lipschitz} and
\eqref{inv:eq:left-translation-speed} imply
\begin{equation*}
 \begin{aligned}
 d_{G/\Gamma}(\sigma(t)\Gamma,\sigma(t')\Gamma)
   &\le(1+\sqrt d)\|t-t'\|_2,\\
 d_{G/\Gamma}(\zeta_h\sigma(t)\Gamma,
              \zeta_h\sigma(t')\Gamma)
   &\le(1+\sqrt d)^2\|t-t'\|_2.
 \end{aligned}
\end{equation*}
The elementary inequality
$|z_1\overline z_2-w_1\overline w_2|
 \le|z_1-w_1|+|z_2-w_2|$ for $|z_i|,|w_i|\le1$, together with
\eqref{inv:eq:translated-lipschitz}, therefore gives
\begin{equation}
 \|\Phi_h\|_{\rm Lip}
 \le1+M(1+\sqrt d)^2(2+\sqrt d)
 \le L,\qquad L:=100M(d+1)^3.
\label{inv:eq:equ-torus-lipschitz}
\end{equation}
The periodicity in \eqref{inv:eq:equ-torus-function} permits the same segment argument across
the boundaries of the coordinate cube.

\noindent\emph{3c. The bilinear phase.}
For horizontal vectors $s,t$, define
\begin{equation*}
 B_\ell(s,t):=
 \sum_{\nu=1}^{q}\ell_\nu
 \bigl(s_{\lambda,\nu}\cdot t_{\theta,\nu}
       -t_{\lambda,\nu}\cdot s_{\theta,\nu}\bigr).
\end{equation*}
The group law gives
\begin{equation}
 \xi([x,y])=B_\ell(\pi(x),\pi(y)).
\label{inv:eq:commutator-form}
\end{equation}
Define $a\in\mathbb R^m$ by $a\cdot t=B_\ell(\alpha,t)$. Its
coordinates are
$(\ell_\nu\alpha_{\lambda,\nu},-\ell_\nu\alpha_{\theta,\nu})_\nu$,
so, in the maximum norm used for character heights,
$|a|\le M/2\le L$.

For $h\in\mathcal H$, the identity
$$\binom{n+h}{2}-\binom n2=hn+\binom h2$$ and \eqref{inv:eq:normalize-linear} give
\begin{equation*}
 \begin{aligned}
 \Delta_{h,n}
 :=\bigl(\zeta_h^{-1}g(n+h)\gamma_h^{-1}\bigr)g(n)^{-1}
 =[\gamma_h,u]^n v^{hn+\binom h2}
  =[u,\zeta_h]^n v^{hn+\binom h2}\in G_2.
 \end{aligned}
\end{equation*}
Equivalently,
\[
 g(n+h)\Gamma=\Delta_{h,n}\,\zeta_hg(n)\Gamma.
\]
Applying \eqref{inv:eq:vertical-frequency-assumption}, then multiplying by $\overline{F(g(n)\Gamma)}$,
gives
\begin{equation}
 A_{n+h}\overline{A_n}
 =e(\xi(\Delta_{h,n}))\,H_h(g(n)).
\label{inv:eq:central-displacement}
\end{equation}
By \eqref{inv:eq:commutator-form}--\eqref{inv:eq:central-displacement},
 \begin{equation*}
 \xi(\Delta_{h,n})
 =n\,a\cdot\{h\alpha\}
   +\left(hn+\binom h2\right)\omega.
\end{equation*}
Since $\pi(g(n))=n\alpha$, \eqref{inv:eq:equ-torus-function} now yields the exact formula
\begin{equation}
 A_{n+h}\overline{A_n}
 =e\left(\binom h2\omega\right)\Phi_h(n\alpha)
  e\left(n\bigl(a\cdot\{h\alpha\}+h\omega\bigr)\right).
\label{inv:eq:equ-derivative-phase}
\end{equation}

\par
\medskip
\noindent\textbf{Step 4. Obtain one bracket relation for many shifts.}
Apply \Cref{inv:fejer} with
$\tau:=\delta^2/32$ and the common Lipschitz bound $L$ in \eqref{inv:eq:equ-torus-lipschitz}.
The resulting Fourier frequency bound $K_0$ is an integer independent of $h$. For each
$h\in\mathcal H$,
\begin{equation*}
 P_h(t):=\sum_{k\in[-K_0,K_0]^m\cap\mathbb Z^m}
 c_{h,k}e(k\cdot t),\qquad
 |c_{h,k}|\le1,\qquad
 \|\Phi_h-P_h\|_\infty\le\tau,
\end{equation*}
where $K_0\ll(mL/\tau)^4$. Put $J:=(2K_0+1)^m$.
Replacing $\Phi_h$ in \eqref{inv:eq:equ-derivative-phase} changes the sum by at most
$\tau(N-h)\le\delta^2N/32$. Therefore
\begin{equation*}
 \left|\sum_k c_{h,k}\sum_{n=1}^{N-h}
 e\bigl(n\vartheta_{h,k}\bigr)\right|
 \ge\frac{3\delta^2N}{32},
 \qquad
 \vartheta_{h,k}:=k\cdot\alpha+a\cdot\{h\alpha\}+h\omega.
\end{equation*}
By the triangle inequality,
 \begin{equation*}
 \frac{3\delta^2N}{32}
 \le J\max_k\left|\sum_{n=1}^{N-h}
 e(n\vartheta_{h,k})\right|.
\end{equation*}
Thus, for some $k_h\in[-K_0,K_0]^m\cap\mathbb Z^m$,
\begin{equation*}
 \left|\sum_{n=1}^{N-h}e(n\vartheta_{h,k_h})\right|
 \ge\frac{3\delta^2N}{32J}.
\end{equation*}
The geometric-sum bound
 \begin{equation*}
 \left|\sum_{n=1}^{H}e(n\theta)\right|
 \le\frac1{2\|\theta\|_{\mathbb R/\mathbb Z}}
\end{equation*}
implies
 \begin{equation*}
 \|\vartheta_{h,k_h}\|_{\mathbb R/\mathbb Z}
 \le\frac{16J}{3\delta^2N}.
\end{equation*}
Set
\begin{equation*}
 \rho:=\frac{\delta^2}{8J},\qquad
 K_{\rm b}:=\rho^{-1}=\frac{8J}{\delta^2}.
\end{equation*}
Then
\begin{equation}
 \|k_h\cdot\alpha+a\cdot\{h\alpha\}+h\omega\|_{\mathbb R/\mathbb Z}
 \le\frac{K_{\rm b}}N.
\label{inv:eq:many-brackets}
\end{equation}
Partition $\mathcal H$ according to the value of $k_h$. There are at
most $J$ classes, so one class contains at least
$|\mathcal H|/J\ge2\rho N$ shifts. Fix its frequency $k$. Then
\eqref{inv:eq:many-brackets} holds with this same $k$ for at least $\rho N$ integers
$h\in[N]$.

\par
\medskip
\noindent\textbf{Step 5. Apply Lemma 7.6 and check its parameters.}
We apply \Cref{inv:bracket} with
\begin{equation*}
 \beta:=k\cdot\alpha,\qquad
 \gamma:=\omega,\qquad
 M_{\rm a}:=L,\qquad
 K:=K_{\rm b}=\rho^{-1}.
\end{equation*}
Let $C_0$ be a fixed absolute constant large enough for the constants
in Lemmas~\Cref{inv:fejer} and~\Cref{inv:bracket}. Since
$m\le Z$, $L\le Z^3$, and $\tau^{-1}=32\delta^{-2}\le Z^2$,
\Cref{inv:fejer} gives $K_0\ll Z^{24}$. Increasing $C_0$ once,
we may assume
\begin{equation*}
 L\le Z^{C_0},\qquad 2K_0+1\le Z^{C_0}.
\end{equation*}
Consequently,
\begin{equation*}
 J\le Z^{C_0d},\qquad
 \rho^{-1}=8J\delta^{-2}\le Z^{C_0(d+1)}.
\end{equation*}
Since $mL\le Z^4$,
\[
 \frac{mL}{\rho}\le Z^{(C_0+4)(d+1)}.
\]
Define
\begin{equation*}
 B:=\left(\frac{mL}{\rho}\right)^{C_0(m+1)^2}.
\end{equation*}
Then
\begin{equation*}
 B\le Z^{C_0(C_0+4)(d+1)^3},
 \qquad
 \frac{MB}{\rho}
 \le Z^{(C_0(C_0+4)+C_0+1)(d+1)^4}
\end{equation*}
Choose the theorem's absolute constant
$C\ge\max\{C_0(C_0+4)+C_0+1,3\}$ once and for all. Then the hypothesis
$N\ge Z^{C(d+1)^4}$ implies both $N\ge4\delta^{-2}$ and the sample-size inequality
$N\ge B/\rho$ required by \Cref{inv:bracket}.

\Cref{inv:bracket} now gives integer vectors
$y_1,\ldots,y_t$ and $x_1,\ldots,x_{m-t}$ such that
\begin{equation}
 \begin{gathered}
 Y:=\operatorname{span}_{\mathbb R}(y_1,\ldots,y_t)
   =\bigcap_{j=1}^{m-t}x_j^\perp,\qquad
 |x_j|,\ |y_i|\le B,\\
 \|x_j\cdot\alpha\|_{\mathbb R/\mathbb Z},\
 \|y_i\cdot a\|_{\mathbb R/\mathbb Z}
 \le \frac{B}{\rho N}.
 \end{gathered}
\label{inv:eq:horizontal-relations}
\end{equation}
Each family is linearly independent over $\mathbb R$.

\par
\medskip
\noindent\textbf{Step 6. Construct the horizontal characters.}
Define
\begin{equation*}
 \chi_j(z):=x_j\cdot\pi(z),\qquad
 \mu_i(z):=B_\ell(y_i,\pi(z)).
\end{equation*}
Both maps are homomorphisms because $\pi$ is a homomorphism.
They vanish on $G_2$ and are integer-valued on $\Gamma$ because
$x_j,y_i,\ell$ have integer coordinates. More explicitly,
\begin{equation*}
 \mu_i(z)
 =\sum_{\nu=1}^{q}\ell_\nu
 \bigl(y_{i,\lambda,\nu}\cdot\theta_\nu(z)
       -y_{i,\theta,\nu}\cdot\lambda_\nu(z)\bigr).
\end{equation*}
Hence
\begin{equation*}
 |\chi_j|\le B,\qquad |\mu_i|\le MB,\qquad
 \chi_j(u)=x_j\cdot\alpha,\qquad
 \mu_i(u)=-y_i\cdot a.
\end{equation*}
Since $v\in G_2$, $\lambda(g(n))=n\lambda(u)$ for every one of these
characters $\lambda$. Therefore
\begin{equation*}
 \|\lambda\circ g\|_{C^\infty[N]}
 =N\|\lambda(u)\|_{\mathbb R/\mathbb Z}
 \le \frac{B}{\rho}.
\end{equation*}
The normalization in Step 1 shows that the same estimate holds for
the original sequence. There are $m\le d\le2d$ characters in total.

\par
\medskip
\noindent\textbf{Step 7. Prove that the kernel is isotropic for $\xi$.}
Let $W$ be the common real kernel of all the characters in Step 6.
The equations $\chi_j(w)=0$ imply
$\pi(w)\in Y$ by \eqref{inv:eq:horizontal-relations}, while $\mu_i(w)=0$ for every $w\in W$.
For $w_1,w_2\in W$, write
$\pi(w_1)=\sum_{i=1}^{t}s_i y_i$ with $s_i\in\mathbb R$. Then
\begin{equation*}
 \begin{aligned}
 \xi([w_1,w_2])
 =B_\ell(\pi(w_1),\pi(w_2))
 =\sum_{i=1}^{t}s_i B_\ell(y_i,\pi(w_2))
 =\sum_{i=1}^{t}s_i\mu_i(w_2)=0.
 \end{aligned}
\end{equation*}
This is an equality in $\mathbb R$, because the kernels defining $W$
are real kernels. Since $[W,W]\subseteq G_2$ and $\xi$ is additive on
$G_2$, it vanishes on the subgroup generated by these commutators.
Thus $\xi([W,W])=0$.
\end{proof}

\subsection{Reduction to quadratic phases}

We record the quantitative forms of the two results used next.
\Cref{prop:bsz_dense} applies to $|a(n)|\le1$ and
a multiplicative function $f$ with $|f|\le1$. If
\begin{equation*}
 \left|N^{-1}\sum_{n\le N}f(n)a(n)\right|\ge\eta,
 \qquad N\ge\exp(C\eta^{-5}),\qquad 0<\eta<1/2,
\end{equation*}
then there are $2\le x\le\exp(C\eta^{-5})$
and at least $c\eta^{25}x^2$ ordered distinct prime pairs
$p,q\in[x,2x]$ for which
\begin{equation}
 \left|\frac1{N_{p,q}}\sum_{n\le N_{p,q}}
             a(pn)\overline{a(qn)}\right|\ge c\eta^3,
 \qquad N_{p,q}=\left\lfloor\frac N{\max(p,q)}\right\rfloor.
 \label{inv:dense-interface}
\end{equation}
\Cref{lem:linear_dense} says that if $H\ge2$ is an integer,
$x\ge32H$ is real, and $N$ is a positive integer satisfying
$N\ge10^6H^3x^3$, then any set of at least $x^2/H$ pairs in
$([x,2x]\cap\mathbb Z)^2$ satisfying
$\|pA+qB\|_{\mathbb R/\mathbb Z}\le Hx/N$ obeys
\begin{equation}
 \left|A-\frac aR\right|,
 \left|B-\frac bR\right|\le\frac{64H^2}{N},
 \qquad a,b\in\mathbb Z,\quad R\text{ a positive integer},\quad R\le2H.
 \label{inv:dense-linear-interface}
\end{equation}
We apply this lemma separately to each of a fixed finite list
of characters; the denominator may depend on the character.

The next two results use the following fixed model. Let $r\ge1$,
let $G=\mathcal H(r)$ with
$\Gamma=\Gamma(r)=\mathbb Z^r\times\mathbb Z^r\times\mathbb Z$, and
write
\[
 G_1:=G,\qquad G_2:=[G,G]=\{z(c):c\in\mathbb R\},\qquad
 G_3:=\{1\},\qquad d:=\dim G=2r+1,
\]
where $z(c)=(0,0,c)$. Thus $G$ is step two and $G_2$ is its one-dimensional
center. Let $\sigma\in\{-1,1\}$ and define
$\xi:G_2\to\mathbb R$ by $\xi(z(c)):=\sigma c$. Assume that
\[
 F(z(c)x\Gamma)=e(\sigma c)F(x\Gamma)
 \qquad(c\in\mathbb R,\ x\in G).
\]
This is the precise vertical-frequency condition used below. Let
$f:\mathbb N\to\mathbb S^1$ be completely multiplicative. For an integer
$N\ge1$, a degree-two polynomial sequence $g:\mathbb Z\to G$, and
$0<\eta<1/2$, assume
\begin{equation}
 \left|N^{-1}\sum_{n\le N}f(n)F(g(n)\Gamma)\right|\ge\eta,
 \qquad Z:=10dM/\eta .
\label{inv:eq:Heisenberg-input}
\end{equation}
Let
\[
 \mathcal B_r:=\{(\theta,\lambda,c):
   \theta,\lambda\in[-1/2,1/2)^r,\ c\in[-1/2,1/2)\}
\]
be the centered coordinate box, and assume below that $u\in\mathcal B_r$.

\begin{lemma}
\label{inv:dense-constraints}
Assume \eqref{inv:eq:Heisenberg-input}, with
\[
 g(n)=u^nv^{\binom n2},\qquad u\in\mathcal B_r,\qquad v\in G_2.
\]
If $N\ge\exp(Z^{C(d+1)^6})$, then there exist an integer $s$ with
$0\le s\le2r$ and homomorphisms
$\lambda_1,\ldots,\lambda_s:G\to\mathbb R$ such that each $\lambda_i$
vanishes on $G_2$, is integer-valued on $\Gamma$, and the family is
linearly independent over $\mathbb R$. Equivalently,
$\lambda_i(\theta,\lambda,c)=a_i\cdot\theta+b_i\cdot\lambda$ with
$a_i,b_i\in\mathbb Z^r$, and the vectors $(a_i,b_i)$ are linearly
independent over $\mathbb R$. Put
\[
 H:=\bigcap_{i=1}^{s}\ker_{\mathbb R}\lambda_i,\qquad
 \ker_{\mathbb R}\lambda_i:=\{w\in G:\lambda_i(w)=0\text{ in }\mathbb R\}.
\]
For $s=0$ we use the convention $H=G$.
There is an integer $J$ with
\begin{equation}
 Z\le J\le Z^{C(d+1)^6},\qquad
 |\lambda_i|\le J,\qquad
 \|\lambda_i(u)\|_{\mathbb R/\mathbb Z}\le J/N,\qquad
 H\supseteq G_2,\qquad
 \xi([H,H])=0 .
\label{inv:dense-constraints-output}
\end{equation}
Here $|\lambda_i|$ denotes the maximum absolute value of the integer
coefficients of $\lambda_i$ in the horizontal coordinates. The subgroup
$H$ is connected, normal, and abelian.
\end{lemma}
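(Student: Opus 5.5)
The plan is to combine the prime-pair decomposition of \Cref{prop:bsz_dense}, in the form \eqref{inv:dense-interface}, with the nilsequence equidistribution result \Cref{inv:vertical-kernel}, and then to use \Cref{lem:linear_dense} to turn the many per-pair relations into relations for $u$ itself. First I apply \eqref{inv:dense-interface} with $a(n)=F(g(n)\Gamma)$ and correlation $\eta$. This gives a scale $x\le\exp(C\eta^{-5})$ and at least $c\eta^{25}x^2$ ordered prime pairs $(p,q)$ in $[x,2x]$ for which the averages of $F(g(pn)\Gamma)\overline{F(g(qn)\Gamma)}$ over $n\le N_{p,q}$ are at least $c\eta^3$.

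For each such pair I view the product as a nilsequence on $G\times G$. The sequence is $n\mapsto(g(pn),g(qn))$, which is again degree two. The function is $F\otimes\overline F$ on the product $\mathcal H(r)\times\mathcal H(r)$, whose vertical frequency is $\ell=(\sigma,-\sigma)$. This is exactly the setting of \Cref{inv:vertical-kernel} with $q=2$, with dimension $2d$, Lipschitz bound $O(M)$, correlation $c\eta^3$ and length $N_{p,q}\ge N/(4x)$. The hypothesis $N\ge\exp(Z^{C(d+1)^6})$ should cover both the threshold $Z^{C(2d+1)^4}$ and the loss from $x$.

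\Cref{inv:vertical-kernel} then gives horizontal characters $\lambda$ on $G\times G$ of height at most $Z^{O(d^4)}$ with $N_{p,q}\|\lambda(u^p,u^q)\|\le Z^{O(d^4)}$, and with a kernel $W_{p,q}$ isotropic for $\xi\oplus(-\xi)$. Writing $\lambda=(\lambda',\lambda'')$, each such relation reads $\|p\lambda'(u)+q\lambda''(u)\|\le Z^{O(d^4)}x/N$. There are at most $(2Z^{O(d^4)})^{O(d^2)}$ possible tuples of characters, so by pigeonholing I fix one tuple shared by at least $x^2/H$ pairs, where $\log H\ll Z^{O(d^6)}$ accounts for the $d^2$ characters of height $Z^{O(d^4)}$. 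For each character in that tuple I apply \Cref{lem:linear_dense} with $A=\lambda'(u)$ and $B=\lambda''(u)$. This yields a denominator $R\le2H$ with $\|R\lambda'(u)\|,\|R\lambda''(u)\|\le 128H^3/N$. The conditions $x\ge32H$ and $N\ge10^6H^3x^3$ are arranged by the size hypothesis; if $x<32H$ I can enlarge $x$ artificially, or treat the pairs directly, since then $x$ is bounded by $H$.

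Next I take the characters $R\lambda'$ and $R\lambda''$ on $G$, extract a maximal linearly independent subfamily $\lambda_1,\dots,\lambda_s$, and let $J$ be the resulting height and error bound, of size $Z^{C(d+1)^6}$. The inclusion $H\supseteq G_2$ is automatic, because horizontal characters vanish on $G_2$. Since $H$ contains the commutator subgroup, it is normal and connected, being the preimage of a linear subspace.

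The main obstacle is isotropy: showing $\xi([H,H])=0$ from the kernels $W_{p,q}$, which live in $G\times G$. My first attempt is to note that $H\times H$ lies in the common kernel of all the pulled-back characters. For $w_1,w_2\in H$, the element $(w_1,1)$ lies in $W$, as does $(w_2,1)$, so isotropy of $W$ for $\xi\oplus(-\xi)$ gives $\xi([w_1,w_2])=0$. This needs the characters defining $W$ to be exactly the pairs $(\lambda',\lambda'')$ with both components among the $\lambda_i$; that holds because each $\lambda'$ and $\lambda''$ is itself in the span. Finally, $\xi([H,H])=0$ implies that $H$ is abelian: $G_2$ is one-dimensional, so $\xi$ is injective on $G_2$, and hence $[H,H]=1$.
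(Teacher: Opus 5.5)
Your treatment of the case $x\ge 32H$ is correct and matches the paper's argument. You take prime pairs from \eqref{inv:dense-interface}, apply \Cref{inv:vertical-kernel} on $G\times G$ with frequency $(\sigma,-\sigma)$, pigeonhole one tuple of product characters, apply \Cref{lem:linear_dense} to each character, and get isotropy from $(w_j,1)\in W_{p,q}$.

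The gap is the complementary case, which you dismiss in one clause. The condition $x\ge 32H$ is \emph{not} supplied by the size hypothesis on $N$, which only gives $N\ge 10^6H^3x^3$. The scale $x$ is whatever block \Cref{prop:bsz_dense} selects, and its only lower bound is $x\ge (C_*/\eta)^{12}$. Your $H$, on the other hand, must exceed the number of possible character tuples, which is a power of $Z=10dM/\eta$ growing with $d$; so $x<32H$ genuinely occurs. Nor can you ``enlarge $x$ artificially'': \Cref{lem:linear_dense} needs the pairs inside $[X,2X]$ with $X\ge 32H$, and yours are fixed in $[x,2x]$. Worse, when $x^2/H<2$ the pigeonholed class may contain a single pair $(p,q)$. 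Then smallness of $\|p\lambda'(u)+q\lambda''(u)\|_{\mathbb R/\mathbb Z}$ says nothing about $\lambda'(u)$ and $\lambda''(u)$ separately, so relations of the form $\|R\lambda'(u)\|_{\mathbb R/\mathbb Z}\ll H^3/N$ are simply not available.

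So for $x<32H$ you must fix one pair and use the characters $p\lambda'+q\lambda''$ on $G$. Their heights are at most $4xL_0\le 128HL_0$ and their errors at most $Hx/N\le 32H^2/N$. Here your isotropy argument fails: $w\in\ker(p\lambda'+q\lambda'')$ does not force $\lambda'(w)=0$, so $(w,1)$ need not lie in $W_{p,q}$.

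The missing idea is to embed $w$ as $\widetilde w=(w^p,w^q)$ instead. Every product character $\Lambda=\lambda'\oplus\lambda''$ satisfies $\Lambda(\widetilde w)=p\lambda'(w)+q\lambda''(w)=0$, so $\widetilde w_1,\widetilde w_2\in W_{p,q}$. Bilinearity of the step-two commutator then gives $0=\Xi([\widetilde w_1,\widetilde w_2])=(p^2-q^2)\,\xi([w_1,w_2])$, and $p\ne q$ yields $\xi([w_1,w_2])=0$. This is exactly how the paper closes the case $x<32H_0$.

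A minor point: you need $H\le Z^{O(d^6)}$ itself, not merely $\log H\ll Z^{O(d^6)}$. Otherwise $J\asymp H^3$ would not obey $J\le Z^{C(d+1)^6}$.
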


\begin{proof}
We divide the proof into six steps.	
	
\par
\medskip
\noindent\textbf{Step 1. Produce many correlated prime pairs.}
Set $a(n):=F(g(n)\Gamma)$. Applying
\eqref{inv:dense-interface} to $a$ gives a real number $x$ with
$2\le x\le\exp(Z^C)$ and a set $\mathcal P$ of at least
$c\eta^{25}x^2$ ordered distinct prime pairs $(p,q)\in[x,2x]^2$ such that
\begin{equation}
 \left|\frac1{N_{p,q}}\sum_{n\le N_{p,q}}
 F(g(pn)\Gamma)\overline{F(g(qn)\Gamma)}\right|
 \ge c\eta^3,\qquad
 N_{p,q}:=\left\lfloor\frac{N}{\max(p,q)}\right\rfloor .
\label{inv:eq:lem78-prime-pairs}
\end{equation}
Because $p,q\le2x$ and the assumed lower bound for $N$ is enormous,
$N_{p,q}\ge N/(4x)$ for all $(p,q)\in\mathcal P$.

\par
\medskip
\noindent\textbf{Step 2. Apply \Cref{inv:vertical-kernel} to each pair.}
Fix $(p,q)\in\mathcal P$. Work on
$G^\times:=G\times G$ with lattice
$\Gamma^\times:=\Gamma\times\Gamma$ and center
$G_2^\times:=G_2\times G_2$. Define
\[
 \mathcal F(x_1,x_2):=F(x_1\Gamma)\overline{F(x_2\Gamma)},
 \qquad
 g_{p,q}(n):=(g(pn),g(qn)).
\]
The identity
$$\binom{pn}{2}=p^2\binom n2+n\binom p2$$ gives
$$g(pn)=(u^pv^{\binom p2})^n(v^{p^2})^{\binom n2},$$
and the analogous formula holds for $g(qn)$. Thus $g_{p,q}$ is again
a degree-two polynomial sequence adapted to the step-two filtration.
Then the left side of \eqref{inv:eq:lem78-prime-pairs} is the average
of $\mathcal F(g_{p,q}(n))$. Moreover,
\[
 \mathcal F(z(c_1)x_1,z(c_2)x_2)
 =e\bigl(\sigma(c_1-c_2)\bigr)\mathcal F(x_1,x_2),
\]
so the vertical frequency on the product is
$\Xi(c_1,c_2)=\sigma(c_1-c_2)=(\xi,-\xi)$.
The product has dimension $2d$, and the Lipschitz norm of $\mathcal F$
is at most a fixed multiple of $M$. After increasing the absolute constant in the threshold if necessary, \Cref{inv:vertical-kernel} applies with length $N_{p,q}$.
It yields at most $4d$ linearly independent real characters
$\Lambda_{p,q,j}$ on $G^\times$ that are integer-valued on
$\Gamma^\times$, vanish on $G_2^\times$, and satisfy
\begin{equation}
 |\Lambda_{p,q,j}|,\ 
 \|\Lambda_{p,q,j}\circ g_{p,q}\|_{C^\infty[N_{p,q}]}
 \le L_0,\qquad L_0:=Z^{C(d+1)^4}.
\label{inv:eq:lem78-product-bounds}
\end{equation}
The exponent $C(d+1)^4$ is enlarged once to absorb the product
dimension $2d$.

Every such character has the form
\[
 \Lambda_{p,q,j}(x_1,x_2)
 =\alpha_{p,q,j}(x_1)+\beta_{p,q,j}(x_2),
\]
where $\alpha_{p,q,j}$ and $\beta_{p,q,j}$ are horizontal integer
characters on $G$.

\par
\medskip
\noindent\textbf{Step 3. Fix one tuple of characters.}
Write $m:=\dim(G/G_2)=2r<d$. Order each family and append zero pairs until exactly $4d$ positions are present. Each pair
$(\alpha_{p,q,j},\beta_{p,q,j})$ has at most $2m\le2d$ integer
coefficients, each of size at most $L_0$. Consequently the number of
resulting $4d$-tuples is at most
\begin{equation*}
 Q:=(2\lfloor L_0\rfloor+1)^{8d^2}.
\end{equation*}
Choose an integer $H_0$ satisfying
\[
 H_0\ge\max\{2,4L_0,Q/(c\eta^{25})\}.
\]
By increasing the absolute constant $C$, we may also assume
$H_0\le Z^{C(d+1)^6}$. Pigeonholing the pairs in $\mathcal P$, one resulting $4d$-tuple
\[
 (\alpha_1,\beta_1),\ldots,(\alpha_{4d},\beta_{4d})
\]
occurs for at least $x^2/H_0$ pairs. Fix this tuple and one corresponding
pair $(p,q)$ whenever a single pair is needed below.

Because the characters vanish on $G_2^\times$,
\[
 \Lambda_i(g_{p,q}(n))
 =n\bigl(p\alpha_i(u)+q\beta_i(u)\bigr).
\]
Thus \eqref{inv:eq:lem78-product-bounds} and
$N_{p,q}\ge N/(4x)$ imply, for every retained pair and every $i$,
\begin{equation}
 \|p\alpha_i(u)+q\beta_i(u)\|_{\mathbb R/\mathbb Z}
 \le\frac{L_0}{N_{p,q}}
 \le\frac{4L_0x}{N}
 \le\frac{H_0x}{N}.
\label{inv:eq:lem78-linear-relations}
\end{equation}

\par
\medskip
\noindent\textbf{Step 4. The case $x\ge32H_0$.}
Apply \eqref{inv:dense-linear-interface} with $H=H_0$ to each of the
$4d$ fixed linear relations in \eqref{inv:eq:lem78-linear-relations}.
The required conditions $x\ge32H_0$ and
$N\ge10^6H_0^3x^3$ follow from the present case and the assumed
threshold, after increasing $C$. We obtain integers
$1\le R_i\le2H_0$ for which
\begin{equation*}
 \|R_i\alpha_i(u)\|_{\mathbb R/\mathbb Z},\
 \|R_i\beta_i(u)\|_{\mathbb R/\mathbb Z}
 \le\frac{128H_0^3}{N}.
\end{equation*}
Take all nonzero characters $R_i\alpha_i$ and $R_i\beta_i$, and then
retain a maximal linearly independent subfamily. Its real common kernel
is unchanged. If $w_1,w_2$ lie in this kernel, then
$(w_j,1)$ lies in the real common kernel of the product characters.
Therefore \Cref{inv:vertical-kernel} gives
\[
 0=\Xi([(w_1,1),(w_2,1)])=\xi([w_1,w_2]).
\]

\par
\medskip
\noindent\textbf{Step 5. The case $x<32H_0$.}
Fix one retained pair $(p,q)$; it has $p\ne q$. Define
\[
 \lambda_i:=p\alpha_i+q\beta_i
 \qquad(1\le i\le4d),
\]
and retain a maximal linearly independent subfamily. These are
integer-valued horizontal characters. Their heights and values at $u$
satisfy
\begin{equation*}
 |\lambda_i|\le4xL_0\le128H_0L_0,\qquad
 \|\lambda_i(u)\|_{\mathbb R/\mathbb Z}
 \le\frac{H_0x}{N}\le\frac{32H_0^2}{N}.
\end{equation*}
Let $w_1,w_2$ lie in their common real kernel and set
\[
 \widetilde w_j:=(w_j^p,w_j^q)\in G\times G.
\]
Every discarded character is an $\mathbb R$-linear combination of the
retained ones, so it also vanishes on $w_j$. Hence all product characters
vanish on $\widetilde w_j$, because
$p\alpha_i(w_j)+q\beta_i(w_j)=\lambda_i(w_j)=0$.
\Cref{inv:vertical-kernel}, together with bilinearity of the step-two commutator, gives
\begin{equation*}
 0=\Xi\!\left(
 [\widetilde w_1,\widetilde w_2]\right)
 =\Xi\bigl([w_1,w_2]^{p^2},[w_1,w_2]^{q^2}\bigr)
 =(p^2-q^2)\xi([w_1,w_2]).
\end{equation*}
Since $p\ne q$, we conclude $\xi([w_1,w_2])=0$.

\par
\medskip
\noindent\textbf{Step 6. Finish the quantitative and structural conclusions.}
In both cases the resulting characters vanish on $G_2$ and are
integer-valued on $\Gamma$. After retaining a maximal linearly
independent subfamily, their number is at most $2r$, and their
heights and $N$-scaled errors are bounded by a fixed power of $H_0L_0$.
Choose $J$ to be an integer satisfying
\[
 J\ge Z,\qquad
 J\ge128H_0L_0,\qquad
 J\ge128H_0^3,\qquad
 J\le Z^{C(d+1)^6},
\]
which is possible after enlarging $C$ once more. Then the bounds in
\eqref{inv:dense-constraints-output} hold.

Let $V\subseteq\mathbb R^{2r}$ be the intersection of the real kernels
of the horizontal coefficient vectors of the retained characters.
Since every character vanishes on $G_2$, the common kernel is
$H=\pi^{-1}(V)$, where $\pi:G\to G/G_2\simeq\mathbb R^{2r}$.
Thus $H$ is connected and contains $G_2$. It is normal because
$G/G_2$ is abelian and $[G,H]\subseteq G_2\subseteq H$. The two case
arguments prove $\xi([H,H])=0$. Since $\xi(z(c))=\sigma c$ is injective
on the one-dimensional center, $[H,H]=\{1\}$; hence $H$ is abelian.
\end{proof}

\begin{theorem}
\label{inv:scalar-reduction}
Assume \eqref{inv:eq:Heisenberg-input} for an arbitrary
degree-two polynomial $g$. There is an absolute $C$ such that, if
\begin{equation*}
 N\ge\exp\bigl(Z^{C(d+1)^8}\bigr),
\end{equation*}
there exist real numbers $a,b$ and a nonempty arithmetic progression
$\mathcal P\subseteq[N]$ (with positive integer common difference) such that
\begin{equation*}
 \left|N^{-1}\sum_{n\in\mathcal P}
       f(n)e(an+bn^2)\right|\ge Z^{-C(d+1)^8}.
\end{equation*}
The common difference of $\mathcal P$ is at most $Z^{C(d+1)^7}$.
\end{theorem}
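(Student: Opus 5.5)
First normalize $g$. Write $g(n)=g_0u^nv^{\binom n2}$ and use the bounded-representative construction from Step~1 of \Cref{inv:vertical-kernel}. As in \eqref{inv:constant-normalization}--\eqref{inv:eq:normalize-linear}, we absorb $g_0$ into a left translate of $F$ and put $u\in\mathcal B_r$, modifying $v$ by a central commutator. Left translation by $b\in\mathcal B_r$ costs a factor $1+\sqrt d$ in the Lipschitz norm, by \eqref{inv:eq:translated-lipschitz}. It also preserves the central frequency $\sigma$, since the center commutes with $b$. Hence the hypothesis of \Cref{inv:dense-constraints} holds with $Z$ replaced by $O(\sqrt d)Z$. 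That lemma then gives independent integer horizontal characters $\lambda_1,\dots,\lambda_s$ with heights at most $J$ and $\|\lambda_i(u)\|\le J/N$. Their common kernel $H\supseteq G_2$ is abelian, and $J\le Z^{C(d+1)^6}$.

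Next, factor $u=\epsilon\,u'\,\gamma$ in the usual way: $\epsilon$ is smooth, $u'\in H$ up to a rational element, and $\gamma\in\Gamma$. Since $\|\lambda_i(u)\|\le J/N$, write $\lambda_i(u)=k_i+\theta_i$ with $|\theta_i|\le J/N$ and $k_i\in\Z$. By Cramer's rule on a nonsingular $s\times s$ minor of the coefficient matrix, which has determinant $q\le (sJ)^{s}$, find $\gamma'\in \tfrac1q\Gamma$ and a small horizontal $\epsilon$ with $\lambda_i(\epsilon\gamma')=\lambda_i(u)$. Then $u''=\gamma'^{-1}\epsilon^{-1}u\in H$ after central corrections. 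Restrict $n$ to residue classes $n\equiv n_0\pmod{q'}$, where $q'\mid q^2$ is at most $Z^{C(d+1)^7}$, so that $\gamma'^n$ becomes periodic modulo $\Gamma$ up to a fixed lattice element. Also restrict $n$ to intervals of length $\asymp N Z^{-C(d+1)^7}$, on which $\epsilon^n$ is essentially constant. Pigeonhole one progression $\mathcal P$ and one frozen value of the smooth and rational parts. On $\mathcal P$, write $n=n_0+q'm$; then $g(n)\Gamma=x_0\,h(m)\Gamma$ up to a $O(Z^{-C})$ error in the metric, where $h$ is a degree-two polynomial sequence in the abelian group $H$ (its $v$-part is central and lies in $H$).

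Now $H$ is abelian and contains $G_2$, so $H\cap\Gamma'$ (for the rational lattice $\Gamma'$) is a lattice in the torus $H/(H\cap\Gamma')$. The function $y\mapsto F(x_0y\Gamma)$ on $H$ is Lipschitz and periodic with respect to a commensurable sublattice of controlled index. Apply \Cref{inv:fejer} on this torus of dimension at most $d$, with accuracy half the correlation. This expands $F(x_0h(m)\Gamma)$ as a sum of at most $Z^{C(d+1)^2}$ characters $e(\kappa\cdot h(m))$, each a quadratic phase $e(a'm+b'm^2)$ in $m$, and hence in $n$ on $\mathcal P$. Pigeonhole one character. Since $n\mapsto m$ is affine, the phase is $e(an+bn^2)$ on $\mathcal P$. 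Tracking losses gives the correlation bound $Z^{-C(d+1)^8}$: about $Z^{-C(d+1)^7}$ from the progression and interval pigeonhole, and $Z^{-O(d^2\cdot d^4)}$ from the Fourier count. The threshold on $N$ ensures all the earlier lemmas apply and that the intervals are long.

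The main obstacle is the factorization step. It requires keeping the rational part genuinely periodic, handling the noncommutativity when splitting $u^n$ into $\epsilon^n\gamma'^n u''^n$ (which is fine in step two, since commutator terms are central quadratic polynomials in $n$ and can be absorbed into $v$), and verifying that the torus $H/(H\cap\Gamma')$ has complexity bounded by $J^{O(d)}$, so that the Lipschitz constant entering \Cref{inv:fejer} stays at most $Z^{C(d+1)^7}$. I would handle this with an explicit integer basis of $\ker\lambda$ obtained from Siegel's lemma, with heights $J^{O(d)}$.
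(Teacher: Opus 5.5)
Your proposal is correct and is essentially the paper's argument: normalize $g$, apply \Cref{inv:dense-constraints} to get the abelian normal subgroup $H\supseteq G_2$, and use Cramer's rule on a nonsingular minor to write $g(n)$ as a slowly varying factor times an $H$-valued degree-two sequence times a rational periodic factor; then pigeonhole one interval-times-residue-class cell and expand on the torus $H/\Lambda$ with \Cref{inv:fejer}. The paper differs only in details: it builds $\Lambda$ from the doubled adjugate basis of $\ker A$ rather than Siegel's lemma, and defines the $H$-part directly as $\varepsilon(n)^{-1}u^n\gamma(n)^{-1}$ instead of splitting $u$ into three factors. One bookkeeping slip: with Lipschitz constant $Z^{C(d+1)^7}$ on a torus of dimension up to $d$, \Cref{inv:fejer} produces $Z^{O(d(d+1)^7)}$ characters, not $Z^{C(d+1)^2}$, and this count is exactly where the final exponent $(d+1)^8$ comes from.
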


\begin{proof}
We divide the proof into six steps.	
	
\par
\medskip
\noindent\textbf{Step 1. Normalize the polynomial sequence.}
The constant and linear normalizations in
\eqref{inv:constant-normalization} and
\eqref{inv:eq:normalize-linear} replace the original data by data with
exactly the same correlation and with controlled changes in the Lipschitz
norm.  In particular, after enlarging the absolute constant in the
hypothesis, we may assume
\begin{equation}
 g(n)=u^n v^{\binom n2},\qquad
 u\in\mathcal B_r,\qquad v\in G_2,
 \label{inv:eq:thm79-normalized}
\end{equation}
where $G=\mathcal H(r)$ and $d=2r+1$.  The equivariance relation
$F(z(c)x\Gamma)=e(\sigma c)F(x\Gamma)$ is preserved by these
normalizations.  The bound $|F|\le1$ is preserved, and its Lipschitz norm
is at most a fixed power of $dM$.  The linear normalization changes the
central coefficient to $v[\gamma,u_0]$; this new central factor is retained
exactly in \eqref{inv:eq:thm79-normalized}.  Thus the parameter
$Z=10dM/\eta$ changes by at most a fixed power, which is absorbed into
$Z^{C(d+1)^8}$ and $Z^{C(d+1)^7}$ below.

\par
\medskip
\noindent\textbf{Step 2. Use \Cref{inv:dense-constraints} to obtain an
abelian subgroup.}
Apply \Cref{inv:dense-constraints} to the normalized data in
\eqref{inv:eq:thm79-normalized}.  We obtain independent horizontal
characters $\lambda_1,\ldots,\lambda_k$ and
\[
 H_1:=\bigcap_{i=1}^{k}\ker_{\mathbb R}\lambda_i,
 \qquad H_1\supseteq G_2,
\]
where $H_1$ is connected, normal, and abelian.  Let
$m:=2r=\dim(G/G_2)$, let $A$ be the $k\times m$ integer matrix whose
rows are the horizontal coefficient vectors of the $\lambda_i$, and let
$\alpha\in[-1/2,1/2)^m$ be the horizontal coordinate of $u$.  The
lemma gives an integer $J$ such that
\begin{equation*}
 Z\le J\le Z^{C(d+1)^6},
 \qquad \|A\alpha\|_{\mathbb R^k/\mathbb Z^k}\le J/N,
 \qquad \text{every entry of }A\text{ has absolute value at most }J.
\end{equation*}
Here the norm on the left is the maximum of the coordinate distances to
$\mathbb Z$.  Since $G$ is non-abelian but $H_1$ is abelian, $k\ge1$.

After permuting the horizontal coordinates, choose a nonsingular $k\times k$
column minor $B$ of $A$ and write $A=[B\ C_1]$.  Put
$\mathfrak q:=\det B\ne0$, and choose $b_0\in\mathbb Z^k$ with
$\|A\alpha-b_0\|_\infty\le J/N$.  Define vectors supported on the pivot
columns by
\begin{equation*}
 e_B:=B^{-1}(A\alpha-b_0),\qquad
 c_B:=B^{-1}b_0,\qquad e_{\rm free}=c_{\rm free}:=0,\qquad
 w:=\alpha-e-c.
\end{equation*}
Then
\begin{equation*}
\begin{aligned}
 Aw=0,\quad Ae=A\alpha-b_0,\quad Ac=b_0,
 \quad |e|\le \frac{(dJ)^{C(d+1)}}{N},\quad
 |c|\le(dJ)^{C(d+1)},\quad \mathfrak q c\in\mathbb Z^m.
\end{aligned}
\end{equation*}
For the bounds, write
$B^{-1}=\mathfrak q^{-1}\operatorname{adj}(B)$, where
$\operatorname{adj}(B)$ is the integer adjugate matrix characterized by
$B\operatorname{adj}(B)=\mathfrak q I_k$; each entry of this matrix is a
signed determinant of a $(k-1)\times(k-1)$ submatrix of $B$.  Since every
entry of $B$ is bounded by $J$, these determinants are bounded by a fixed
power of $J$.  Cramer's rule then gives the displayed bound for $e$ and
$c$ because each coordinate of $b_0$ is $O(dJ)$, while
$|\mathfrak q|\le(dJ)^{C(d+1)}$.

Embed the vectors $e,c\in\mathbb R^m$ into the Lie algebra by giving
them zero entries in the $c$-coordinate direction of every Heisenberg
factor.  Define
\begin{equation*}
 \varepsilon(n):=\exp(ne),\qquad
 \gamma(n):=\exp(nc),\qquad
 h(n):=\varepsilon(n)^{-1}u^n\gamma(n)^{-1}.
\end{equation*}
The horizontal coordinate of $h(n)$ is $nw\in\ker A$, so $h(n)\in H_1$.
The Heisenberg group law adds a bilinear commutator term; therefore the
$c$-coordinate of $h(n)$ is a polynomial of degree at most two.  Since
$v\in G_2$ is central,
\begin{equation*}
 g(n)=\varepsilon(n)h_1(n)\gamma(n),\qquad
 h_1(n):=h(n)v^{\binom n2}\in H_1.
\end{equation*}
All quadratic commutator terms are contained in $h_1(n)$; no error term is
being discarded.

\par
\medskip
\noindent\textbf{Step 3. Control the slowly varying and periodic factors.}
Choose a fixed power bound $P_0=(dJ)^{C(d+1)}$ large enough that
$P_0\ge d,M,\eta^{-1},2|\mathfrak q|$ and that all estimates below hold.
The path $t\mapsto\exp(te)$ has constant speed $\|e\|_2$, so
\begin{equation}
 d_G(\varepsilon(n),1)\le n\|e\|_2\le P_0,
 \qquad
 d_G(\varepsilon(n+1),\varepsilon(n))\le\|e\|_2\le P_0/N.
 \label{inv:eq:thm79-epsilon-bounds}
\end{equation}
Thus $\varepsilon(n)$ varies by at most $P_0/N$ per step.  Moreover,
$\mathfrak q c\in\mathbb Z^m$ and
\[
 \exp(\theta,\lambda,0)
   =(\theta,\lambda,\tfrac12\lambda\cdot\theta)
\]
show that $\exp(2\mathfrak q c)\in\Gamma$.  Define
$D_\gamma$ to be the least positive integer $D$ for which
$\gamma(n+D)\Gamma=\gamma(n)\Gamma$ for every integer $n$; equivalently,
$D_\gamma$ is the least positive $D$ with $\exp(Dc)\in\Gamma$.
The preceding calculation supplies such a period, so
\begin{equation}
 D_\gamma\le2|\mathfrak q|\le P_0,
 \qquad P_0\le Z^{C(d+1)^7}.
 \label{inv:eq:thm79-gamma-period}
\end{equation}

\par
\medskip
\noindent\textbf{Step 4. Select one arithmetic cell.}
Put
\[
 L_{\rm cell}:=\left\lfloor\frac{\eta N}{20MP_0}\right\rfloor.
\]
The theorem's lower bound on $N$ implies
$L_{\rm cell}\ge\eta N/(40MP_0)\ge1$.  Partition $[N]$ into consecutive
intervals of length $L_{\rm cell}$, and then split each interval according
to the residue class of $n$ modulo $D_\gamma$.  Each resulting nonempty
cell is an arithmetic progression, and the number of cells is at most
\begin{equation*}
 D_\gamma\left(1+\frac{N}{L_{\rm cell}}\right)
 \le\frac{41P_0^3}{\eta}.
\end{equation*}
For a cell $\mathcal C$ choose $n_\mathcal C\in\mathcal C$ and replace
$\varepsilon(n)$ by $\varepsilon(n_\mathcal C)$.  By
\eqref{inv:eq:thm79-epsilon-bounds}, the pointwise error in $F$ is at most
$MP_0L_{\rm cell}/N\le\eta/20$.  Summing over $n$ and using $|f(n)|=1$,
we obtain
\begin{equation*}
 \sum_{\mathcal C}
 \left|\frac1N\sum_{n\in\mathcal C}
 f(n)F(\varepsilon(n_\mathcal C)h_1(n)\gamma(n)\Gamma)\right|
 \ge\frac{19\eta}{20}.
\end{equation*}
Consequently one cell, denoted by $\mathcal P$, satisfies
\begin{equation*}
 \left|\frac1N\sum_{n\in\mathcal P}
 f(n)F(\varepsilon(n_0)h_1(n)\gamma(n)\Gamma)\right|
 \ge R,
 \qquad R:=\frac{\eta^2}{50P_0^3},
\end{equation*}
where $n_0\in\mathcal P$ is fixed.  The progression $\mathcal P$ has step
$D_\gamma$.

\par
\medskip
\noindent\textbf{Step 5. Put the selected cell on a torus.}
On $\mathcal P$, the coset $\gamma(n)\Gamma$ is constant.  Let
$\alpha_0\in G$ be its representative in the centered fundamental box
$\mathcal B_r$.  Define
\[
 F_{\mathcal P}(z):=F(\varepsilon(n_0)\alpha_0z).
\]
Then, for $n\in\mathcal P$,
\begin{equation*}
 F(\varepsilon(n_0)h_1(n)\gamma(n)\Gamma)
 =F_{\mathcal P}(\alpha_0^{-1}h_1(n)\alpha_0\Gamma).
\end{equation*}
Normality of $H_1$ implies that
$\widetilde h(n):=\alpha_0^{-1}h_1(n)\alpha_0$ remains in $H_1$.

Recall that $\operatorname{adj}(B)$ is the integer matrix made from the
signed $(k-1)\times(k-1)$ minors of $B$, so that
$B\operatorname{adj}(B)=\mathfrak q I_k$.  Let $U$ be the integer matrix
\begin{equation*}
 U:=\begin{pmatrix}-\operatorname{adj}(B)C_1\\ \mathfrak q I\end{pmatrix}.
\end{equation*}
Its columns span $\ker A$ over $\mathbb R$.  The vectors $2U_j$, together
with the lattice vector whose horizontal coordinates are zero and whose
$c$-coordinate is $1$, generate a lattice $\Lambda\subseteq
H_1\cap\Gamma$.  Indeed, the horizontal coordinates of $2U_j$ are even integers,
and the Heisenberg $c$-coordinate
$\frac12\lambda\cdot\theta$ is then an integer.  Thus
$H_1/\Lambda$ is a torus of dimension
$s_H=m-k+1\le d$.  The sequence $\widetilde h(n)\Lambda$ is a degree-two
polynomial sequence on this abelian torus.

Let $\widetilde F_{\mathcal P}$ denote the pullback of $F_{\mathcal P}$
to $H_1/\Lambda$.  It has supremum at most $1$.
Its Lipschitz norm is at most $P_0^C$.  The vectors $2U_j$ have size at
most $P_0$.  To account for the left translation by
$a_0:=\varepsilon(n_0)\alpha_0$, let $X_0$ be the Lie-algebra coordinate
vector with $a_0=\exp(X_0)$, and let $V_j$ be the tangent vector associated
with $2U_j$ (and likewise for the central lattice direction).  In a
step-two group, differentiating
$\exp(X_0)\exp(tV_j)\exp(-X_0)$ at $t=0$ gives
$V_j+[X_0,V_j]$.  Here $[X_0,V_j]$ is the Lie bracket, which is central
and bilinear in the horizontal coordinates.  The bounds on $X_0$ and
$V_j$ therefore imply the claimed fixed power of $P_0$.

\par
\medskip
\noindent\textbf{Step 6. Extract one scalar quadratic phase.}
Apply \Cref{inv:fejer} on the $s_H$-dimensional torus with error
parameter $R/4$.  We obtain
\[
 T(x)=\sum_{k\in[-K,K]^{s_H}\cap\mathbb Z^{s_H}}c_k e(k\cdot x),
 \qquad \|\widetilde F_{\mathcal P}-T\|_\infty\le R/4,
\]
where
\begin{equation*}
 K\le C\left(\frac{s_HP_0^C}{R}\right)^4\le P_0^C,
 \qquad
 \sum_k|c_k|\le(2K+1)^{s_H}.
\end{equation*}
The error in the normalized correlation is at most $R/4$, so
\begin{equation}
 \left|\frac1N\sum_{n\in\mathcal P}f(n)T(\widetilde h(n)\Lambda)\right|
 \ge\frac{3R}{4}.
 \label{inv:eq:thm79-trig-correlation}
\end{equation}
Write the torus orbit in additive coordinates as
\[
 \widetilde h(n)\Lambda=t_0+n t_1+\binom n2t_2
 \pmod{\mathbb Z^{s_H}},
 \qquad t_0,t_1,t_2\in\mathbb R^{s_H}.
\]
For each $k$, this gives real numbers $a_k,b_k,c_k'$ such that
\[
 e(k\cdot\widetilde h(n))=e(c_k'+a_kn+b_kn^2).
\]
The constant factor $e(c_k')$ has absolute value one and can be absorbed
into the Fourier coefficient.  By the triangle inequality applied to
\eqref{inv:eq:thm79-trig-correlation}, some $k$ satisfies
\begin{equation*}
 \left|\frac1N\sum_{n\in\mathcal P}f(n)e(a_kn+b_kn^2)\right|
 \ge\frac{3R}{4(2K+1)^{s_H}}.
\end{equation*}
Finally, $R^{-1}$ and $P_0$ are bounded by fixed powers of
$Z^{C(d+1)^7}$, while $s_H\le d$.  Therefore
\[
 \frac{3R}{4(2K+1)^{s_H}}
 \ge Z^{-C(d+1)^8}.
\]
By \eqref{inv:eq:thm79-gamma-period}, the step of $\mathcal P$ is at most
$Z^{C(d+1)^7}$.  This proves the scalar correlation and the progression
bound in the theorem.
\end{proof}

\subsection{Proof of the inverse theorem}

We now combine the preceding reductions with the scalar inverse theorem.

\begin{theorem}
\label{thm:mult_inverse}
There is an absolute constant $C\ge1$, independent of $\epsilon$, $N$, and $f$,
such that, for $0<\epsilon<1/2$, on writing
$\ell=2+\log(2/\epsilon)$, every completely multiplicative
function $f:\mathbb N\to\mathbb C$ with $|f(n)|=1$ satisfying
\begin{equation*}
 N\ge\exp\exp(C\ell^{431}),\qquad
 \|f\|_{U^3[N]}\ge\epsilon
\end{equation*}
has a primitive Dirichlet character $\chi$ and a real $t$
such that
\begin{equation}
 q_\chi,\ |t|\le\exp(C\ell^{431}),\qquad
 \sum_{\substack{p\le N\\p\ \mathrm{prime}}}\frac{1-\Re(f(p)\overline{\chi(p)}p^{-it})}{p}
       \le C\ell^{431}.
 \label{inv:eq:numerical-inverse-output}
\end{equation}
\end{theorem}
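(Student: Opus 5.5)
The plan is to chain \Cref{inv:initial-inverse}, \Cref{inv:scalar-reduction} and \Cref{thm:scalar_inverse}, tracking the exponents so that they combine to $431$.

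\textbf{Step 1: a nilsequence correlation.} Apply \Cref{inv:initial-inverse} to $f$ restricted to $[N]$. It yields $(G,\Gamma)$, which is either the circle $(\mathbb R,\mathbb Z)$ or $(\mathcal H(r),\Gamma(r))$, of dimension $d\ll\ell^{43}$. It also yields $F$ with $\|F\|_{\rm Lip}\le M$ and central frequency one, a degree-two sequence $g$, and a correlation $\eta$ with $\log(2M/\eta)\ll\ell^{87}$. Put $Z=10dM/\eta$, so that $\log Z\ll\ell^{87}$.

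\textbf{Step 2: reduce to a scalar quadratic phase.} In the circle case, $F(g(n)\Gamma)=F(0)e(g(n))$ with $g(n)$ a real quadratic, so $f$ already correlates with $e(an+bn^2)$ on $\mathcal P=[N]$ with $\rho\ge\eta$. In the Heisenberg case, apply \Cref{inv:scalar-reduction} with $\sigma=1$. Its hypothesis $N\ge\exp(Z^{C(d+1)^8})$ holds because
\[
\log\bigl(Z^{C(d+1)^8}\bigr)\ll \ell^{8\cdot43}\,\ell^{87}=\ell^{431}
\]
and $N\ge\exp\exp(C\ell^{431})$. The theorem returns $a,b$ and a progression $\mathcal P\subseteq[N]$ with
\[
\Bigl|N^{-1}\sum_{n\in\mathcal P}f(n)e(an+bn^2)\Bigr|\ge\rho:=Z^{-C(d+1)^8},\qquad \log(2/\rho)\ll\ell^{431}.
\]

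\textbf{Step 3: the scalar inverse theorem.} Apply \Cref{thm:scalar_inverse} with this $\rho$. It requires $N\ge\exp(C\rho^{-5})$, that is $\log N\ge C\exp\bigl(5\log(1/\rho)\bigr)$. Since $\log(1/\rho)\ll\ell^{431}$, this holds once the constant in the hypothesis $N\ge\exp\exp(C\ell^{431})$ is enlarged. The output \eqref{sc:dense-bsz-scalar-pretentious} gives $q_\chi,|t|\le\rho^{-C}\le\exp(C\ell^{431})$ and a prime sum at most $C\log(2/\rho)\ll\ell^{431}$. This is exactly \eqref{inv:eq:numerical-inverse-output}. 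One should check that the theorem's assumption $|f|=1$ is the one used in \Cref{thm:scalar_inverse}; it is.

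\textbf{Main obstacle.} The delicate point is bookkeeping in Step 2. The filtration and normalizations in \Cref{inv:scalar-reduction} must apply to the group produced by \Cref{inv:initial-inverse}. That theorem is stated for a single $\mathcal H(r)$ with $F$ of vertical frequency $\pm1$, which matches what the corollary delivers. We also need that the polynomial $g$ is arbitrary, which is allowed since Step 1 of its proof normalizes $g$. The other point to confirm is that the absolute constants can be chosen uniformly. Every threshold in Steps 2 and 3 is at most $\exp\exp(O(\ell^{431}))$, using $(d+1)^8\log Z\ll\ell^{344+87}$, so a single $C$ covers all of them.
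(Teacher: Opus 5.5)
Your proposal is correct and is essentially the paper's proof. You apply \Cref{inv:initial-inverse}, treat the circle case directly with $\rho=\eta$, reduce the Heisenberg case (with $\sigma=1$) via \Cref{inv:scalar-reduction} using $(d+1)^8\log Z\ll\ell^{8\cdot 43+87}=\ell^{431}$, and finish with \Cref{thm:scalar_inverse}; the only cosmetic difference is that the paper caps $\eta$ and $\rho$ at $1/4$, which in your version is unnecessary since $\eta<1/2$ and $Z>40$ already force both below $1/2$.
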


\begin{proof}
We divide the proof into four steps.	
	
\par
\medskip
\noindent\textbf{Step 1. Obtain the structured nilsequence.}
Apply \Cref{inv:initial-inverse} to the restriction of the given
completely multiplicative function $f$ to $[N]$.  Its hypotheses hold because
$|f(n)|=1$ and $\|f\|_{U^3[N]}\ge\epsilon$.  We obtain an integer
$d\ge1$, a real number $M\ge2$, a number $0<\eta<1/2$, and either the
circle nilmanifold $\mathbb R/\mathbb Z$ or a Heisenberg nilmanifold
$G/\Gamma$, together with a degree-two polynomial sequence $g$ and a
function $F$ satisfying
\[
 |F|\le1,\qquad \|F\|_{\rm Lip}\le M,\qquad
 \left|\frac1N\sum_{n=1}^{N}f(n)F(g(n)\Gamma)\right|\ge\eta.
\]
The same corollary gives
\begin{equation*}
 d\ll\ell^{43},\qquad \log(2M/\eta)\ll\ell^{87},
 \qquad \ell:=2+\log(2/\epsilon).
\end{equation*}
Set
\[
 \eta_0:=\min\{\eta,1/4\},
 \qquad Z:=\frac{10dM}{\eta_0}.
\]
Because $0<\eta<1/2$, we have $\eta_0\ge\eta/2$.  The correlation is
still at least $\eta_0$.  Moreover,
\begin{equation}
 \log Z\le \log(20d)+\log(2M/\eta)\ll\ell^{87},
 \qquad \log(\eta_0^{-1})\ll\ell^{87}.
 \label{inv:eq:thm710-parameter-bounds}
\end{equation}
Here and below all implicit constants are absolute and may be enlarged.

\par
\medskip
\noindent\textbf{Step 2. Dispose of the circle case.}
Suppose first that $(G,\Gamma)=(\mathbb R,\mathbb Z)$.  In additive
notation write
\[
 g(n)=g_0+nu+\binom n2v,
 \qquad g_0,u,v\in\mathbb R.
\]
The frequency-one identity from \Cref{inv:initial-inverse} gives
$F(x+\mathbb Z)=F(0+\mathbb Z)e(x)$ for every real $x$.  Hence, with
$F_0:=F(0+\mathbb Z)$,
\[
 F(g(n)+\mathbb Z)=F_0e(g_0)e\!\left((u-v/2)n+(v/2)n^2\right).
\]
The correlation bound forces $F_0\ne0$.  Taking absolute values and
using $|F_0|\le1$, it implies
\begin{equation}
 \left|\frac1N\sum_{n=1}^{N}f(n)e(\xi n+\beta n^2)\right|
 \ge\eta_0,
 \qquad \xi:=u-v/2,\quad \beta:=v/2.
 \label{inv:eq:thm710-circle-correlation}
\end{equation}
Apply \Cref{thm:scalar_inverse} with
$\rho:=\eta_0$ and $\mathcal P=[N]$.  Its correlation hypothesis is
\eqref{inv:eq:thm710-circle-correlation}.  Its length hypothesis also holds:
by \eqref{inv:eq:thm710-parameter-bounds},
$\eta_0^{-5}\le\exp(C\ell^{87})$, so the assumed bound
$N\ge\exp\exp(C\ell^{431})$ dominates
$N\ge\exp(C_s\eta_0^{-5})$, where $C_s$ is the absolute constant in
\Cref{thm:scalar_inverse}.  That theorem therefore supplies a
primitive Dirichlet character $\chi$ and a real number $t$ with
\[
 q_\chi,\ |t|\le\eta_0^{-C_s},
 \qquad
 \sum_{\substack{p\le N\\p\ \mathrm{prime}}}\frac{1-\Re(f(p)\overline{\chi(p)}p^{-it})}{p}
 \le C_s\log(2/\eta_0).
\]
The bounds in \eqref{inv:eq:thm710-parameter-bounds} imply that these
quantities satisfy the conclusion of \Cref{thm:mult_inverse} after
increasing its absolute constant.

\par
\medskip
\noindent\textbf{Step 3. Reduce the Heisenberg case to a scalar phase.}
Now suppose that $(G,\Gamma)=(\mathcal H(r),\Gamma(r))$.  The data obtained
in Step~1 satisfy the input condition
\eqref{inv:eq:Heisenberg-input} with the present value of $\eta_0$ and with
$Z=10dM/\eta_0$.  The dimension and parameter bounds give
\begin{equation}
 (d+1)^8\log Z\ll\ell^{8\cdot43+87}=\ell^{431}.
 \label{inv:eq:thm710-exponent}
\end{equation}
Consequently, after enlarging the absolute constant in the hypothesis on
$N$, we have
\[
 N\ge\exp\!\left(Z^{C_9(d+1)^8}\right),
\]
where $C_9$ is the constant in \Cref{inv:scalar-reduction}.
We may therefore apply that theorem.  It gives real numbers $a,b$ and a
nonempty arithmetic progression $\mathcal P\subseteq[N]$ such that
\begin{equation}
 \left|\frac1N\sum_{n\in\mathcal P}
 f(n)e(an+bn^2)\right|\ge\rho_0,
 \qquad \rho_0:=Z^{-C_9(d+1)^8}.
 \label{inv:eq:thm710-scalar-correlation}
\end{equation}
The progression has positive integer common difference, but no further
property of that difference is needed here.  Put
\[
 \rho:=\min\{\rho_0,1/4\}.
\]
Then $0<\rho<1/2$ and the left-hand side of
\eqref{inv:eq:thm710-scalar-correlation} is at least $\rho$.
Furthermore, by \eqref{inv:eq:thm710-exponent},
\begin{equation}
 \rho^{-1}\le 4+Z^{C_9(d+1)^8}
 \le\exp(C'\ell^{431})
 \label{inv:eq:thm710-rho-bound}
\end{equation}
for an absolute $C'$.  Hence
$N\ge\exp(C_s\rho^{-5})$ after another enlargement of the absolute
constant in the assumed bound for $N$.

\par
\medskip
\noindent\textbf{Step 4. Apply the scalar inverse theorem and finish.}
Apply \Cref{thm:scalar_inverse} to the original completely
multiplicative function $f$, the same ambient interval $[N]$, the
progression $\mathcal P$, and the phase $an+bn^2$.  This is a direct
application of its stated hypothesis: the theorem normalizes the sum by
$N$, so no reparametrization of $\mathcal P$ is required.  By
\eqref{inv:eq:thm710-scalar-correlation} and the definition of $\rho$,
the correlation hypothesis holds, and the preceding paragraph verifies its
length hypothesis.  We obtain a primitive Dirichlet character $\chi$ and a
real number $t$ satisfying
\begin{equation}
 q_\chi,\ |t|\le\rho^{-C_s},
 \qquad
 \sum_{\substack{p\le N\\p\ \mathrm{prime}}}\frac{1-\Re(f(p)\overline{\chi(p)}p^{-it})}{p}
 \le C_s\log(2/\rho).
 \label{inv:eq:thm710-final-scalar-output}
\end{equation}
By \eqref{inv:eq:thm710-rho-bound},
$\rho^{-C_s}\le\exp(C\ell^{431})$ and
$\log(2/\rho)\ll\ell^{431}$.  Thus \eqref{inv:eq:thm710-final-scalar-output}
is exactly the conclusion
\eqref{inv:eq:numerical-inverse-output}, after choosing the absolute
constant in \Cref{thm:mult_inverse} large enough.
\end{proof}

\section{Proof of the main theorem}
\label{main:section}

We combine the preceding results with the numerical exponent $431$
of \Cref{thm:mult_inverse}. All parameters in this section
are chosen independently of the multiplicative function.

\subsection{The uniformity estimate}

Throughout this subsection, let
$w:\mathbb R^2\to[0,\infty)$ be the fixed $C_c^2$ probability density
from \eqref{ar:real-profile}, so that $\displaystyle \int_{\mathbb R^2}w=1$, and let
$C_0\in\mathbb N$ be the support parameter from
\eqref{ar:profile-bound}.  Once a real $\delta\in(0,1/10)$ is fixed, let
$D_0=D_0(\delta)$ and let
$W_0=W_{0,\delta}:\mathbb Z^2\to[0,\infty)$ be the period and periodic
weight constructed in
\eqref{ar:parameters}--\eqref{ar:congruence-weight}.  Thus
\[
 \begin{aligned}
 \operatorname{supp}w&\subseteq\{(s,t):s>t>0,\ s+t<C_0-1\},\\
 W_0&\text{ is $D_0$-periodic in each variable and }
 D_0^{-2}\sum_{a,b\bmod D_0}W_0(a,b)=1.
 \end{aligned}
\]
All constants denoted by $C_w$ below depend only on the fixed weight $w$
and the fixed integer $C_0$; in particular, they are independent of
$\delta$, $D_0$, $W_0$, $X$, and $g$.
For each integer $X\ge1$ and each $g\in\mathcal M$, $Z_X$ and $K_X(g)$
are the normalization and normalized kernel defined in \eqref{ar:kernel}
whenever $Z_X>0$; explicitly, the kernel averages $g(R(m,n))$ with
$R(m,n)=(m^2-n^2)/(2mn)$.

\Needspace{14\baselineskip}
\begin{samepage}
\begin{proposition}\label{tr:uniform-estimate}
For every real $\delta$ with $0<\delta<1/10$, with
$D_0=D_0(\delta)$ and $W_0=W_{0,\delta}$ chosen as above, there are
real numbers $L_\delta\ge1$ and $X_\delta\ge1$, depending only on
$\delta$, the fixed data $(w,C_0)$, and the prescribed construction of
$(D_0,W_0)$, such that
\begin{equation}
 \log(2L_\delta)+\log(2X_\delta)\le C_w\delta^{-3},
 \label{tr:uniform-constants}
\end{equation}
and, for every integer $X\ge X_\delta$ and every $g\in\mathcal M$,
\begin{equation}
 |K_X(g)|\le L_\delta\|g\|_{U^3[C_0X]}+\delta/100.
 \label{tr:uniform-bound}
\end{equation}
Consequently, with
\begin{equation*}
 \epsilon_\delta=\frac{\delta}{100L_\delta},
 \qquad \mathcal L_\delta=2+\log(2/\epsilon_\delta),
\end{equation*}
we have $\mathcal L_\delta\le C_w\delta^{-3}$, and
$\|g\|_{U^3[C_0X]}<\epsilon_\delta$ implies
$|K_X(g)|\le\delta/50$.
\end{proposition}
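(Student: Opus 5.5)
We reduce the weighted kernel to the four-form inequality \Cref{tr:four-forms} after removing the two non-linear weights: the smooth density $w(m/X,n/X)$ and the periodic weight $W_0(m,n)$. Since $Z_X\ge1/2$ once $X\ge C_wD_0$ (by \eqref{ar:normalization} and \Cref{ar:weight-cost}), it suffices to bound $|I_X(g)|$.

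\emph{Removing the periodic weight.} Expand $W_0$ in a finite Fourier series on $(\Z/D_0\Z)^2$:
\[
 W_0(m,n)=\sum_{\xi,\zeta\bmod D_0}\widehat W_0(\xi,\zeta)\,e\bigl((\xi m+\zeta n)/D_0\bigr),
 \qquad |\widehat W_0(\xi,\zeta)|\le\|W_0\|_\infty .
\]
This introduces at most $D_0^2$ linear phases with total coefficient mass at most $D_0^2\|W_0\|_\infty$. By \eqref{ar:weight-size} this mass is $\exp(O(\delta^{-3}))$. Each linear phase $e(\xi m/D_0)$ is absorbed into $c(m)$, and likewise $e(\zeta n/D_0)$ into $d(n)$.

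\emph{Removing the smooth density.} Partition the support of $w$ into squares of side $1/K$, with $K$ of size $C_w/\delta$. On each square replace $w(m/X,n/X)$ by its value at the center. Since $\|\nabla w\|_\infty\ll_w1$, the error on each cell is $O_w(1/K)$, so the total error is at most $\delta/200$ after normalization (the $X^{-2}$ factor times $O(X^2)$ points). Each cell $[\alpha X,(\alpha+1/K)X)\times[\beta X,(\beta+1/K)X)$ is a product of interval indicators, which are absorbed into $c(m)=\mathbf1_{I}(m)$ and $d(n)=\mathbf1_{I'}(n)$. There are $O_w(K^2)$ cells. The sums over $m>n>0$ are automatic on the support.

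\emph{The four-form estimate.} Embed $[C_0X]$ into $\Z/P\Z$ with $P$ an odd prime, $8C_0X<P\le16C_0X$. On the support of $w$ we have $m+n<C_0X$, so no wraparound occurs and each form takes values in $[C_0X]$. Extend $g$ by zero outside $[C_0X]$. Factor using \eqref{fw:four-forms}: $g(R)=\overline{g(2)}\,g(m-n)g(m+n)\overline{g(m)}\,\overline{g(n)}$. Set $a=g\mathbf1_{[C_0X]}$ and $b=g\mathbf1_{[C_0X]}$, and put the remaining factors into $c,d$. Then \Cref{tr:four-forms} bounds each cell-and-frequency sum, normalized by $P^2$, by $\|a\|_{U^3(\Z/P\Z)}\le\|g\|_{U^3[C_0X]}$, using \Cref{tr:interval-embedding}. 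Converting from the $P^{-2}$ normalization back to $X^{-2}$ costs a factor $(16C_0)^2$.

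\emph{Assembly.} Summing the bounds gives
\[
 |K_X(g)|\le 2(16C_0)^2\|w\|_\infty\,O_w(K^2)\,D_0^2\|W_0\|_\infty\,\|g\|_{U^3[C_0X]}+\delta/100 .
\]
Take $L_\delta$ to be this coefficient, so that $\log L_\delta\le C_w\delta^{-3}$. Take $X_\delta=C_wD_0$, which satisfies the same logarithmic bound. The consequence follows by substituting $\epsilon_\delta=\delta/(100L_\delta)$, which gives $|K_X(g)|\le\delta/100+\delta/100$.

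The main obstacle is the bookkeeping in the cell-replacement error, specifically making the cell approximation uniform without a $\|W_0\|_\infty$ loss in the additive $\delta/100$ term. To control this, do the cell replacement before the Fourier expansion. The replacement error is then bounded by $\sum W_0\cdot O(1/K)$, which by \eqref{ar:normalization} is $O(X^2/K)$ with no dependence on $\|W_0\|_\infty$.
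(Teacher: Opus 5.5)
Your proposal is correct and follows essentially the paper's proof. You approximate $w$ by a step function on a $1/K$-grid, Fourier-expand $W_0$ modulo $D_0$, absorb the interval indicators and linear phases into $c,d$, and apply \Cref{tr:four-forms} together with \Cref{tr:interval-embedding} on an odd cyclic group of size $\asymp C_0X$, which is what the paper does.

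The remaining differences are cosmetic. The paper takes $P=32C_0X+1$, and it lets the mesh grow like $(1+\|W_0\|_\infty)/\delta$. That only multiplies $L_\delta$ by a power of $1+\|W_0\|_\infty$, which is harmless since $\log\|W_0\|_\infty\ll\delta^{-3}$. So the obstacle you flag is not a real one. Your mean-one refinement of the cell-replacement error is valid but unnecessary: it follows from covering the scaled cells by $D_0\times D_0$ period boxes, rather than from \eqref{ar:normalization} literally, since that estimate is stated only for $w$.
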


\end{samepage}
\begin{proof}
We divide the proof into six steps.	
	
\par
\medskip
\noindent\textbf{Step 1. Approximate the fixed spatial weight.}
Fix $\delta$ and put
$W:=\|W_0\|_\infty$.  Choose an integer
\begin{equation}
 J:=\left\lceil C_w(1+W)/\delta\right\rceil,
 \label{tr:mesh-choice}
\end{equation}
where $C_w$ depends only on the fixed weight $w$ and the fixed integer
$C_0$, and is large enough for
all estimates below.  Partition $\mathbb R^2$ into half-open squares of
side length $1/J$, and retain the squares meeting $\operatorname{supp}w$.
Since the support is compact, the number of retained squares is at most
$C_wJ^2$.  The support lies a positive distance from the boundary of
$s>t>0$ and from the line $s+t=C_0-1$; after increasing $C_w$ if needed,
every retained square is contained in
$s>t>0$ and $s+t<C_0$.

Let $w_J$ equal the value of $w$ at the centre of each retained square and
be zero outside their union.  The mean-value theorem gives
\begin{equation*}
 \|w-w_J\|_\infty\le C_w/J,
 \qquad
 w_J=\sum_{\mathcal R}c_{\mathcal R}\mathbf 1_{\mathcal R},
 \quad 0\le c_{\mathcal R}\le\|w\|_\infty,
 \quad \#\{\mathcal R\}\le C_wJ^2.
\end{equation*}
Here and below $\mathcal R$ ranges over the retained squares.

\par
\medskip
\noindent\textbf{Step 2. Bound the error caused by replacing $w$ by $w_J$.}
Both $w$ and $w_J$ vanish outside a fixed bounded set.  Hence, for every
integer $X\ge1$, that set contains at most $C_wX^2$ lattice points after
scaling by $X$.  By \eqref{ar:kernel}, $I_X(g)$ is the unnormalized sum
and $Z_X=I_X(1)$ is its value for the constant function $1$.  Since
$|W_0(m,n)|\le W$, replacing $w$ by $w_J$ in $I_X(g)$ changes it by at most
\[
 X^{-2}\,(C_wX^2)\,(C_w/J)\,W\le C_wW/J.
\]
Let $\widetilde I_X(g)$ denote this modified unnormalized sum, and define
$\widetilde K_X(g):=\widetilde I_X(g)/Z_X$ using the original normalization
$Z_X=I_X(1)$.  By the normalization estimate \eqref{ar:normalization},
$Z_X\ge1/2$ whenever $X\ge C_wD_0$.  Consequently,
\[
 |K_X(g)-\widetilde K_X(g)|\le 2C_wW/J\le\delta/100
\]
by \eqref{tr:mesh-choice}.

\par
\medskip
\noindent\textbf{Step 3. Expand the periodic weight.}
The $D_0$-periodicity of $W_0$ gives the finite Fourier expansion
\begin{align*}
 W_0(m,n)&=\sum_{\alpha,\beta\bmod D_0}
       \widehat W_0(\alpha,\beta)
       e((\alpha m+\beta n)/D_0),\notag\\
 \widehat W_0(\alpha,\beta)&:=D_0^{-2}\sum_{a,b\bmod D_0}
       W_0(a,b)e(-\alpha a/D_0-\beta b/D_0),\notag\\
 \sum_{\alpha,\beta\bmod D_0}|\widehat W_0(\alpha,\beta)|
       &\le D_0^2W.
\end{align*}
The first identity is Fourier inversion on $(\mathbb Z/D_0\mathbb Z)^2$;
it follows from the orthogonality relation
$$D_0^{-1}\sum_{j\bmod D_0}e(jr/D_0)=\mathbf 1_{D_0\mid r}.$$
The last inequality follows from the triangle inequality applied to the
finite definition of each coefficient.

\par
\medskip
\noindent\textbf{Step 4. Rewrite each summand as a four-linear form.}
For $m>n>0$, complete multiplicativity and $|g(q)|=1$ give
\begin{equation*}
 g\!\left(\frac{m^2-n^2}{2mn}\right)
 =\overline{g(2)}\,g(m-n)g(m+n)\overline{g(m)}\,\overline{g(n)}.
\end{equation*}
Indeed, $(m^2-n^2)/(2mn)=(m-n)(m+n)/(2mn)$, and
$g(1/q)=\overline{g(q)}$ for $q>0$.
Set $Z:=C_0X$ and $P:=32Z+1$.  For a retained rectangle
$\mathcal R=I\times J'$ and a Fourier pair $(\alpha,\beta)$, define
functions on $\mathbb Z/P\mathbb Z$ by zero extension from $[Z]$:
Here $I$ and $J'$ are intervals in the scaled variables $m/X$ and $n/X$.
We use the notation
\[
 XI:=\{m\in\mathbb Z:m/X\in I\},
 \qquad
 XJ':=\{n\in\mathbb Z:n/X\in J'\};
\]
thus $XI$ and $XJ'$ are sets of integers, not real intervals.
\begin{align*}
 a_Z(u)&:=g(u)\mathbf 1_{[Z]}(u),
 &b_Z(u)&:=g(u)\mathbf 1_{[Z]}(u),\\
 c_Z(u)&:=\overline{g(u)}\mathbf 1_{XI}(u)e(\alpha u/D_0),
 &d_Z(u)&:=\overline{g(u)}\mathbf 1_{XJ'}(u)e(\beta u/D_0).
\end{align*}
The indicators in the last two definitions select integer points.  All four
functions have modulus at most one.  If $c_Z(m)d_Z(n)\ne0$, then
$m>n>0$ and $m+n<Z$ because every retained square is contained in
$s>t>0$ and $s+t<C_0$.  Therefore
$m-n,m+n,m,n$ all lie in $[Z]$, and the four-linear expression has no
wrap-around in $\mathbb Z/P\mathbb Z$.  The phases in $c_Z$ and $d_Z$ are
evaluated only at these selected integer representatives; no periodic
extension of the phases is required, because the functions are defined by
zero extension.

\par
\medskip
\noindent\textbf{Step 5. Apply the $U^3$ four-forms estimate.}
Let $G=\mathbb Z/P\mathbb Z$ and write
\[
 \Lambda:=\mathbb E_{m,n\in G}
       a_Z(m-n)b_Z(m+n)c_Z(m)d_Z(n).
\]
Because $P=32Z+1$ is odd and $P>8Z$, the hypotheses of
\Cref{tr:four-forms} and \Cref{tr:interval-embedding} hold.  The
four-forms estimate gives
\[
 |\Lambda|\le\|a_Z\|_{U^3(G)}.
\]
The support observation in Step~4 shows that the cyclic sum defining
$\Lambda$ is exactly the corresponding integer sum: every nonzero term
has $m>n>0$ and $m+n<Z$, so reduction modulo $P$ introduces no wrap-around.
The factor $\overline{g(2)}$ in the multiplicative identity has modulus one
and therefore disappears after taking absolute values.  Since a normalized
average on $G^2$ is $P^{-2}$ times an unnormalized sum, the contribution to
$I_X(g)$ from this rectangle and Fourier mode is bounded by
\begin{equation}
 \frac{P^2}{X^2}|\Lambda|
 \le (P/X)^2\|a_Z\|_{U^3(\mathbb Z/P\mathbb Z)}
 \le(32C_0+1)^2\|g\|_{U^3[Z]}.
 \label{tr:mesh-contribution}
\end{equation}
Here the last inequality is the interval-embedding estimate applied to the
zero extension $a_Z$ of $g|_{[Z]}$, together with
$P/X=32C_0+1/X\le32C_0+1$.

\par
\medskip
\noindent\textbf{Step 6. Sum the finitely many contributions.}
There are at most $C_wJ^2$ retained rectangles, and the Fourier coefficients
have total absolute mass at most $D_0^2W$.  Summing
\eqref{tr:mesh-contribution}, dividing by $Z_X\ge1/2$, and absorbing the
fixed factors into $C_w$ gives
\begin{equation}
 1\le L_\delta\le C_wJ^2D_0^2(1+W),
 \qquad X_\delta:=\lceil C_wD_0\rceil.
 \label{tr:explicit-uniform-constant}
\end{equation}
Thus $|\widetilde K_X(g)|\le L_\delta\|g\|_{U^3[Z]}$.  Combining this
with the estimate for $|K_X(g)-\widetilde K_X(g)|$ from Step~2 proves
\eqref{tr:uniform-bound} for every integer $X\ge X_\delta$.

Finally, the weight-construction estimate \Cref{ar:weight-cost} gives
$\log(2D_0),\log(2W)\le C_w\delta^{-3}$.  Taking logarithms in
\eqref{tr:mesh-choice} and \eqref{tr:explicit-uniform-constant} yields
\eqref{tr:uniform-constants}.  From
$\epsilon_\delta=\delta/(100L_\delta)$ we obtain
$\mathcal L_\delta\le C_w\delta^{-3}$.  If
$\|g\|_{U^3[C_0X]}<\epsilon_\delta$, then
\eqref{tr:uniform-bound} gives
\[
 |K_X(g)|<L_\delta\epsilon_\delta+\delta/100
 =\delta/50.
\]
This proves the proposition.
\end{proof}
\subsection{Monotone prime sums}

To compare $g$ with twisted Dirichlet characters at different prime
cutoffs, we use one fixed set of admissible parameters.  At each cutoff
we take the smallest prime-distance among those parameters.
\Cref{main:minimized-energy} records the two facts needed later: this
minimum is nondecreasing with the cutoff, and a minimizing pair controls
the contribution of the primes between two cutoffs.

Fix an integer $Q\ge1$ and a real $T_1\ge0$.
For $g\in\mathcal M$ and real $z\ge1$, set
\begin{equation}
 E_g(z)=\min_{\substack{\chi\ \mathrm{primitive},\ q_\chi\le Q\\
                             |t|\le T_1}}
       \sum_{\substack{p\le z\\p\ \mathrm{prime}}}
       \frac{1-\Re(g(p)\overline{\chi(p)}p^{-it})}{p}.
 \label{main:energy-definition}
\end{equation}
For a pair $(\chi,t)$ in these ranges and a cutoff $y\ge1$, let
$\widetilde g$ be the completely multiplicative function specified by
\begin{equation}
 \widetilde g(p)=
 \begin{cases}
 g(p),&p\le y,\\
 \chi(p)p^{it},&p>y
 \end{cases}.
 \label{main:truncated-model}
\end{equation}

\begin{lemma}\label{main:minimized-energy}
For every $g\in\mathcal M$, the minimum in
\eqref{main:energy-definition} exists and is nonnegative and
nondecreasing in $z$. A minimizing pair at $z$ satisfies
\begin{equation}
 \sum_{\substack{y<p\le z\\p\ \mathrm{prime}}}
       \frac{1-\Re(g(p)\overline{\chi(p)}p^{-it})}{p}
 \le E_g(z)-E_g(y)\qquad(1\le y\le z).
 \label{main:tail-minimum}
\end{equation}
If $y\ge Q$, its truncation \eqref{main:truncated-model} belongs to
$\mathcal M$.
\end{lemma}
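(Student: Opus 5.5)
The minimum exists by compactness, applied one character at a time. Since $Q$ is finite, there are finitely many primitive characters with $q_\chi\le Q$. For each such $\chi$, the map
\[
t\mapsto\sum_{p\le z}\frac{1-\Re(g(p)\overline{\chi(p)}p^{-it})}{p}
\]
is a finite sum of continuous functions of $t$. It therefore attains its minimum on the compact interval $[-T_1,T_1]$, and the minimum over the finite union is attained.

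Nonnegativity holds termwise. Each summand satisfies $1-\Re(\cdot)\ge 0$, because $|g(p)\overline{\chi(p)}p^{-it}|\le1$; this covers the case $\chi(p)=0$, where the summand equals $1/p$.

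For monotonicity and \eqref{main:tail-minimum}, write $S_{\chi,t}(z)$ for the sum in \eqref{main:energy-definition}. For $y\le z$ we have
\[
S_{\chi,t}(z)=S_{\chi,t}(y)+\sum_{y<p\le z}\frac{1-\Re(g(p)\overline{\chi(p)}p^{-it})}{p},
\]
and the tail sum is nonnegative. Hence $S_{\chi,t}(z)\ge S_{\chi,t}(y)\ge E_g(y)$ for every admissible pair. Minimizing over pairs gives $E_g(z)\ge E_g(y)$. If $(\chi,t)$ is a minimizing pair at $z$, the same identity reads
\[
\text{tail}=E_g(z)-S_{\chi,t}(y)\le E_g(z)-E_g(y),
\]
which is \eqref{main:tail-minimum}.

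For the last assertion, let $y\ge Q$. Every prime $p>y$ satisfies $p>Q\ge q_\chi$, so $p\nmid q_\chi$ and $|\chi(p)p^{it}|=1$. For $p\le y$ we have $|\widetilde g(p)|=|g(p)|=1$. Prescribing unit-modulus values at every prime and extending by complete multiplicativity gives a function with $|\widetilde g(n)|=1$, so $\widetilde g\in\mathcal M$.

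No step is a genuine obstacle. The only points needing care are the zero values of $\chi$ at primes dividing $q_\chi$ (handled termwise above) and the use of the strict inequality $p>y\ge Q$ in the last step.
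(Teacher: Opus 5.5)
Your proof is correct and follows essentially the same route as the paper: you obtain existence from finitely many characters together with compactness of $[-T_1,T_1]$, and monotonicity from adding nonnegative terms. The tail bound comes from $S_{\chi,t}(z)=E_g(z)$ and $S_{\chi,t}(y)\ge E_g(y)$, and the truncation lies in $\mathcal M$ because $p>y\ge Q\ge q_\chi$ forces $|\chi(p)|=1$. The one point the paper makes explicit that you leave implicit is that the admissible character set is nonempty, since $Q\ge1$ admits the principal character of conductor one.
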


\begin{proof}
We divide the proof into four steps.	
	
\par
\medskip
\noindent\textbf{Step 1. Existence and nonnegativity of the minimum.}
There are finitely many primitive Dirichlet characters with conductor at
most $Q$; the set is nonempty because it contains the principal character
of conductor one.  Fix one such character $\chi$.  For real $t$ in
$[-T_1,T_1]$, write
\[
 D_{\chi,t}(z):=\sum_{\substack{p\le z\\p\ \mathrm{prime}}}
 \frac{1-\Re(g(p)\overline{\chi(p)}p^{-it})}{p}.
\]
This is a finite sum of continuous functions of $t$, since
$p^{-it}=e^{-it\log p}$.  It therefore attains its minimum on the compact
interval $[-T_1,T_1]$.  Taking the minimum over the finite set of
characters also attains a minimum.  Every summand is nonnegative because
$|g(p)|=1$ and $|\chi(p)|\le1$, so $E_g(z)\ge0$.

\par
\medskip
\noindent\textbf{Step 2. Monotonicity in the cutoff.}
If $z_2\ge z_1$, then for every admissible pair $(\chi,t)$,
$D_{\chi,t}(z_2)\ge D_{\chi,t}(z_1)$, because the second sum contains all
terms of the first and the added terms are nonnegative.  Taking the minimum
over the same admissible set gives
\[
 E_g(z_2)\ge E_g(z_1).
\]

\par
\medskip
\noindent\textbf{Step 3. A minimizing pair controls the tail.}
Let $(\chi,t)$ minimize $E_g(z)$, and let $D(u):=D_{\chi,t}(u)$.  Then
$D(z)=E_g(z)$.  Since $E_g(y)$ is the minimum over all admissible pairs,
$D(y)\ge E_g(y)$.  Subtracting gives
\[
 \sum_{\substack{y<p\le z\\p\ \mathrm{prime}}}
 \frac{1-\Re(g(p)\overline{\chi(p)}p^{-it})}{p}
 =D(z)-D(y)
 \le E_g(z)-E_g(y),
\]
which is \eqref{main:tail-minimum}.

\par
\medskip
\noindent\textbf{Step 4. The truncation remains completely multiplicative.}
Assume $y\ge Q$.  Every prime divisor of the conductor $q_\chi$ is at most
$Q$, so $p>y$ implies $p\nmid q_\chi$ and hence $|\chi(p)|=1$.  Therefore
both values prescribed in \eqref{main:truncated-model} have modulus one:
$|g(p)|=1$ for $p\le y$, and $|\chi(p)p^{it}|=1$ for $p>y$.  Extending
these prime values multiplicatively defines an element of $\mathcal M$.
\end{proof}
The next lemma explains how to average the errors from different
cutoffs. Truncation at $C_1$ permits a uniform estimate even when
the original sequence exceeds $C_1$.

\begin{lemma}\label{main:capped-average}
Let $k\ge1$ be an integer, and let $C_1,a,b\in\mathbb R$ satisfy
$C_1>0$ and $a,b\ge0$. Suppose
$E_0\le E_1\le\cdots\le E_k$ are nonnegative real numbers and
$z_1,\ldots,z_k\in\mathbb C$ satisfy
\begin{equation}
 \begin{cases}
 \Re z_j\ge-a,&E_j>C_1,\\
 \Re z_j\ge-a-b\sqrt{E_j-E_{j-1}},&E_j\le C_1.
 \end{cases}
 \label{main:capped-hypotheses}
\end{equation}
Then
\begin{equation*}
 \Re\left(\frac1k\sum_{j=1}^kz_j\right)
 \ge-a-b\sqrt{C_1/k}.
\end{equation*}
\end{lemma}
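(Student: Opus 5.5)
The plan is to introduce the capped sequence $F_j=\min(C_1,E_j)$ for $0\le j\le k$ and to show that every $z_j$ satisfies the single uniform bound
\[
 \Re z_j\ge-a-b\sqrt{F_j-F_{j-1}}\qquad(1\le j\le k).
\]
Since $t\mapsto\min(C_1,t)$ is nondecreasing, the $F_j$ are nondecreasing and lie in $[0,C_1]$, so each difference $F_j-F_{j-1}$ is nonnegative and the square root is defined.

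To verify the uniform bound, treat the two cases of \eqref{main:capped-hypotheses} separately. If $E_j>C_1$, then $\Re z_j\ge-a$, and because $b\ge0$ and $\sqrt{F_j-F_{j-1}}\ge0$, the right-hand side $-a-b\sqrt{F_j-F_{j-1}}$ is at most $-a$, so the bound holds. If $E_j\le C_1$, then monotonicity gives $E_{j-1}\le E_j\le C_1$, so $F_j=E_j$ and $F_{j-1}=E_{j-1}$. The second hypothesis then reads exactly as the desired inequality. This second case is the only place where any care is needed: one must observe that $E_j\le C_1$ also forces $E_{j-1}\le C_1$, so that the cap does not alter $E_{j-1}$ either.

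Averaging the uniform bound over $j$ gives
\[
 \Re\Bigl(\frac1k\sum_{j=1}^k z_j\Bigr)\ge-a-\frac bk\sum_{j=1}^k\sqrt{F_j-F_{j-1}}.
\]
Cauchy--Schwarz bounds the remaining average of square roots by the square root of the average, and the sum telescopes:
\[
 \frac1k\sum_{j=1}^k\sqrt{F_j-F_{j-1}}\le\Bigl(\frac{F_k-F_0}{k}\Bigr)^{1/2}\le\sqrt{C_1/k},
\]
where the last step uses $0\le F_0\le F_k\le C_1$. Since $b\ge0$, substituting this estimate into the averaged bound yields the stated inequality.
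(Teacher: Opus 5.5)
Your proposal is correct and follows the paper's proof essentially step for step. You use the same capped sequence $F_j=\min(C_1,E_j)$, the same uniform pointwise bound $\Re z_j\ge -a-b\sqrt{F_j-F_{j-1}}$ obtained by splitting into the two cases, and the same Cauchy--Schwarz plus telescoping estimate $\sqrt{(F_k-F_0)/k}\le\sqrt{C_1/k}$.
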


\begin{proof}
Put $F_j=\min(C_1,E_j)$. If $E_j\le C_1$, monotonicity gives
$F_j-F_{j-1}=E_j-E_{j-1}$. If $E_j>C_1$, then
$F_j-F_{j-1}\ge0$, so the first alternative in
\eqref{main:capped-hypotheses} also implies
\begin{equation}
 \Re z_j\ge-a-b\sqrt{F_j-F_{j-1}}.
 \label{main:capped-pointwise}
\end{equation}
By Cauchy--Schwarz and telescoping,
\begin{align*}
 \frac1k\sum_{j=1}^k\sqrt{F_j-F_{j-1}}
 &\le\left(\frac1k\sum_{j=1}^k(F_j-F_{j-1})\right)^{1/2}\\
 &=\sqrt{(F_k-F_0)/k}\le\sqrt{C_1/k}.
\end{align*}
Averaging \eqref{main:capped-pointwise} gives the assertion.
\end{proof}
\subsection{Construction of the measure}

The kernels chosen below depend only on $\delta$.  In the proof, the
minimizing pair $(\chi,t)$ may depend on $g$ and on the index $j$, but it
is used only to estimate the already-fixed kernel $K_{X_j}$; it does not
enter the definition of $\mu_\delta$.

\begin{proposition}\label{main:finite-measure}
For every $0<\delta<1/10$, there is a finitely supported probability
measure $\mu_\delta$ on Pythagorean ratios of height at most $H_\delta$
such that
\begin{equation}
 \Re\sum_r\mu_\delta(r)g(r)\ge-\delta
                  \qquad(g\in\mathcal M).
 \label{main:universal-lower-bound}
\end{equation}
and
\begin{equation*}
 H_\delta\le\operatorname{Exp}_2
          \left(C\delta^{-1295}\log^2(2/\delta)\right).
\end{equation*}
Here $C\ge1$ is absolute.
\end{proposition}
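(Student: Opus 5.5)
The measure will be the average $\mu_\delta=k^{-1}\sum_{j=1}^k\nu_{X_j}$, where $\nu_X$ is the probability measure whose transform is $K_X$. Explicitly, $\nu_X$ gives the ratio $R(m,n)$ the weight $X^{-2}w(m/X,n/X)W_0(m,n)/Z_X$. Every $R(m,n)$ with $m>n>0$ is a Pythagorean ratio by Euclid's identity. The plan has four steps: fix the parameters depending only on $\delta$, build the scale sequence, prove the two-case lower bound at each scale, and then average and bound the height.

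\textbf{Parameters.} Take $\epsilon_\delta$ and $X_\delta$ from \Cref{tr:uniform-estimate}, so that $\mathcal L_\delta\ll\delta^{-3}$. By \Cref{thm:mult_inverse} at $\epsilon=\epsilon_\delta$ there are constants
\[
Q=T_1=\exp(C\mathcal L_\delta^{431})\le\exp(C\delta^{-1293}),\qquad C_1=C\mathcal L_\delta^{431}\ll\delta^{-1293},
\]
together with the threshold $\operatorname{Exp}_2(C\mathcal L_\delta^{431})$. Choose an integer $M_1$ that dominates $Q$, $T_1$, $D_0$, $X_\delta$, $C_wD_0^2$, $\exp(C_*\delta^{-3})$ and this inverse threshold. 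This can be done with $\log\log M_1\ll\delta^{-1293}$. Let $U\ll\delta^{-3}$ be given by \Cref{ar:model-positivity}, and let $b_\delta$ be given by \Cref{con:comparison}. Define the scales by \eqref{fw:scales}:
\[
Z_0=M_1^{2},\qquad X_j=\lceil(M_1Z_{j-1})^U\rceil,\qquad Z_j=C_0X_j.
\]
These choices make every threshold hold for all $j\ge1$: $X_j\ge X_\delta$, $X_j\ge C_wD_0^2$, $Z_j$ exceeds the inverse threshold, and $\max(D_0,Q,2)\le Z_{j-1}\le Z_j$. The last condition holds because $Z_0\ge M_1$.

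\textbf{Two cases at scale $j$.} Fix $g\in\mathcal M$ and set $E_j=E_g(Z_j)$ as in \eqref{main:energy-definition}.

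If $E_j>C_1$, then $g$ has no pretentious pair within the ranges that \Cref{thm:mult_inverse} would produce at $N=Z_j$. By the contrapositive of that theorem, $\|g\|_{U^3[Z_j]}<\epsilon_\delta$, and \Cref{tr:uniform-estimate} then gives $\Re K_{X_j}(g)\ge-\delta/50$.

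If $E_j\le C_1$, take a minimizing pair $(\chi,t)$ at $Z_j$, set $y=Z_{j-1}$, and let $\widetilde g$ be the truncation \eqref{main:truncated-model}. By \Cref{main:minimized-energy}, $\widetilde g\in\mathcal M$. Since $q_\chi,|t|\le M_1$, $y\ge M_1$ and $X_j\ge(M_1y)^U$, \Cref{ar:model-positivity} applies and gives $\Re K_{X_j}(\widetilde g)\ge-\delta/50$. Combining \Cref{con:comparison} with \eqref{main:tail-minimum} yields
\[
\Re K_{X_j}(g)\ge-\delta/50-b_\delta\sqrt{E_j-E_{j-1}}.
\]

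\textbf{Averaging.} Since $E_0\le E_1\le\cdots$, \Cref{main:capped-average} applies with $a=\delta/50$ and $b=b_\delta$. It gives
\[
\Re\sum_r\mu_\delta(r)g(r)\ge-\delta/50-b_\delta\sqrt{C_1/k}.
\]
Choose $k=\lceil 4b_\delta^2C_1\delta^{-2}\rceil\ll\delta^{-1295}\log(2/\delta)$; this makes the right side at least $-\delta$, which is \eqref{main:universal-lower-bound}.

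\textbf{Height.} Every support point satisfies $\operatorname{ht}(R(m,n))\le 2mn\le Z_k^2$. Taking logarithms in the recurrence gives
\[
\log Z_j\le U(\log M_1+\log Z_{j-1})+O(U),
\]
and iterating,
\[
\log Z_k\le (2U)^{k}\,O(\log M_1).
\]
Hence
\[
\log\log H_\delta\le k\log(2U)+\log\log M_1+O(1)\ll\delta^{-1295}\log^2(2/\delta)+\delta^{-1293},
\]
which is the stated bound on $H_\delta$.

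The main obstacle is the first case: it requires the inverse theorem to output exactly a pair within the fixed ranges $Q,T_1$ at every scale $Z_j$, uniformly in $g$. It also requires checking that all thresholds are compatible with $\log\log M_1\ll\delta^{-1293}$, so that the power $1293$ and the resulting $1295$ are not lost.
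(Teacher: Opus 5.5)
Your proposal is correct and follows the paper's proof essentially step for step. It uses the same $g$-independent scales $Z_0=M_1^2$, $X_j=\lceil(M_1Z_{j-1})^U\rceil$, and the same energy dichotomy: the contrapositive of \Cref{thm:mult_inverse} in one case, and \Cref{ar:model-positivity} combined with \Cref{con:comparison} and the tail bound in the other, followed by the capped averaging lemma and the same recurrence for $\log Z_k$. One small slip: the height of $R(m,n)$ is bounded by $\max(m^2-n^2,2mn)$, not by $2mn$, since on the support of $w$ the ratio $m/n$ can exceed $1+\sqrt2$; but both quantities are below $(m+n)^2<Z_k^2$, so the bound $H_\delta\le Z_k^2$ still holds.
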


\begin{proof}
We divide the proof into six steps.	
	
\par
\medskip
\noindent\textbf{Step 1. Fix the inverse-theorem parameters.}
Fix a real $\delta\in(0,1/10)$ and put $\ell=\log(2/\delta)$.  The
fixed density $w$, fixed integer $C_0$, and prescribed
$\delta$-dependent parameters $D_0(\delta),W_{0,\delta}$ determine the
kernels $K_X$.  These objects are fixed once $\delta$ is fixed and do not
depend on $g$.  Choose $L_\delta,X_\delta,\epsilon_\delta$, and
$\mathcal L_\delta$ as in \Cref{tr:uniform-estimate}.  In particular,
\begin{equation}
 \mathcal L_\delta\le C\delta^{-3},\qquad
 \log(2L_\delta)+\log(2X_\delta)\le C\delta^{-3}.
 \label{main:uniform-parameters}
\end{equation}
Set the integer $Q$, the real parameter $T_1$, the real threshold $C_1$,
and the integer threshold $N_1$ by
\begin{align*}
 Q&:=\left\lceil\exp(A\delta^{-1293})\right\rceil,
 &T_1&:=Q,
 &C_1&:=A\delta^{-1293},
 \notag\\
 N_1&:=\left\lceil\operatorname{Exp}_2(A\delta^{-1293})\right\rceil.
\end{align*}
Here $A$ is an absolute constant chosen sufficiently large.  Since
$1293=3\cdot431$, \Cref{thm:mult_inverse}, together with
\eqref{main:uniform-parameters}, gives the following contrapositive for
 every completely multiplicative $g\in\mathcal M$ and every integer
$z\ge N_1$:
\begin{equation}
 E_g(z)>C_1\quad\Longrightarrow\quad
 \|g\|_{U^3[z]}<\epsilon_\delta.
 \label{main:inverse-contrapositive}
\end{equation}
Indeed, if the norm were at least $\epsilon_\delta$,
\Cref{thm:mult_inverse} would produce an admissible primitive
character and real parameter in the
ranges $q_\chi\le Q$ and $|t|\le T_1$ whose prime distance is at most
$C_1$, contradicting $E_g(z)>C_1$.

\par
\medskip
\noindent\textbf{Step 2. Choose all arithmetic scales independently of $g$.}
Choose an absolute constant $A'$ sufficiently large and put
\begin{equation}
 M_1:=\left\lceil\operatorname{Exp}_2(A'\delta^{-1293})\right\rceil.
 \label{main:initial-parameter}
\end{equation}
Increase $A'$ so that the integer $M_1$ satisfies
\begin{equation*}
 M_1\ge\max\{2,Q,T_1,N_1,X_\delta,D_0,
 C_wD_0^2,\exp(C_*\delta^{-3})\}.
\end{equation*}
Here $C_*$ is the lower-bound constant in
\Cref{ar:model-positivity}, and $C_wD_0^2$ is the lower bound required
by \Cref{con:comparison}.  Such an $A'$ exists because
\eqref{ar:weight-cost} gives
$\log(2D_0)+\log(2\|W_0\|_\infty)\ll\delta^{-3}$, whereas
$\log\log M_1=A'\delta^{-1293}+O(1)$.  \Cref{ar:model-positivity}
therefore supplies an integer $U$ depending only on $\delta$ such that
\begin{equation*}
 2\le U\le C\delta^{-3},
\end{equation*}
and its conclusion applies whenever
\begin{equation*}
 y\ge M_1,\qquad X\ge(M_1y)^U,
\end{equation*}
with $q_\chi\le M_1$, $|t|\le M_1$.

Let $b_\delta\ge1$ be the constant in \Cref{con:comparison}.  Its
bound and $B=(200/\delta)^2$ give
\begin{equation*}
 b_\delta^2\le C\log(2B)\le C\ell.
\end{equation*}
Choose
\begin{equation*}
 k:=\left\lceil10^4b_\delta^2C_1\delta^{-2}\right\rceil
\end{equation*}
so that
\begin{equation}
 k\le C\delta^{-1295}\ell.
 \label{main:number-of-scales-bound}
\end{equation}
Fix $a_0=2$ and define integer scales by
\begin{equation}
 Z_0:=M_1^{a_0},\qquad
 X_j:=\left\lceil(M_1Z_{j-1})^U\right\rceil,\qquad
 Z_j:=C_0X_j\quad(1\le j\le k).
 \label{main:scale-sequence}
\end{equation}
These scales depend only on $\delta$ and the fixed, $g$-independent parameters $D_0(\delta),W_{0,\delta},w,C_0$.  If
$y_j:=Z_{j-1}$, then $y_j\ge M_1$, $X_j\ge(M_1y_j)^U$, and
$X_j\ge M_1\ge X_\delta,C_wD_0^2$.  Since $M_1\ge1$ and $U\ge2$,
$X_j\ge y_j$, so $y_j\le Z_j=C_0X_j$.  Thus the hypotheses of both
\Cref{ar:model-positivity} and \Cref{con:comparison} will hold at every
scale once the character chosen below satisfies $q_\chi\le Q\le M_1$.

\par
\medskip
\noindent\textbf{Step 3. Estimate one scale by an energy dichotomy.}
Fix $g\in\mathcal M$ and $1\le j\le k$.  If
$E_g(Z_j)>C_1$, then $Z_j\ge N_1$ and
\eqref{main:inverse-contrapositive} gives
$\|g\|_{U^3[Z_j]}<\epsilon_\delta$.  Since $X_j\ge X_\delta$,
\Cref{tr:uniform-estimate} yields
\begin{equation*}
 |K_{X_j}(g)|\le\delta/50.
\end{equation*}

Suppose instead that $E_g(Z_j)\le C_1$.  Choose an admissible pair
$(\chi,t)$ minimizing $E_g(Z_j)$ and put $y:=Z_{j-1}$.  Define the
truncated completely multiplicative function
\begin{equation*}
 \widetilde g(p)=
 \begin{cases}
 g(p),&p\le y,\\
 \chi(p)p^{it},&p>y.
 \end{cases}
\end{equation*}
Since $y\ge M_1\ge Q$, \Cref{main:minimized-energy} gives
$\widetilde g\in\mathcal M$.  Its hypotheses for
\Cref{ar:model-positivity} are explicit: $q_\chi\le Q\le M_1$,
$|t|\le T_1\le M_1$, $y\ge M_1$, and
$X_j\ge(M_1y)^U$.  Hence
\begin{equation}
 \Re K_{X_j}(\widetilde g)\ge-\delta/50.
 \label{main:finite-model-lower-bound}
\end{equation}

\par
\medskip
\noindent\textbf{Step 4. Compare $g$ with the truncated model.}
Let $h:=g/\widetilde g$.  Then $h(p)=1$ for every prime $p\le y$,
including the primes dividing $D_0$, while for $p>y$,
$h(p)=g(p)\overline{\chi(p)}p^{-it}$.  \Cref{con:comparison} applies
because $y\ge M_1\ge D_0,q_\chi,2$, $X_j\ge C_wD_0^2$, and
$y\le Z_j=C_0X_j$.  On the support of the kernel, all four positive
linear forms $m-n,m+n,m,n$ are at most $Z_j$.
\Cref{con:comparison} approximates $h$ on these four forms by the same
unit complex number $\zeta$.  Their product is
$\zeta^2\overline{\zeta}^{\,2}=1$, so no extra phase remains in the
four-linear kernel.  Therefore
\begin{align*}
 |K_{X_j}(g)-K_{X_j}(\widetilde g)|
 &\le b_\delta
   \left(\sum_{\substack{y<p\le Z_j\\p\ \mathrm{prime}}}
       \frac{1-\Re(g(p)\overline{\chi(p)}p^{-it})}{p}\right)^{1/2}
       \notag\\
 &\le b_\delta\sqrt{E_g(Z_j)-E_g(Z_{j-1})}.
\end{align*}
The final inequality is exactly \eqref{main:tail-minimum}: it bounds the prime sum over $(Z_{j-1},Z_j]$ by $E_g(Z_j)-E_g(Z_{j-1})$ for the pair minimizing at $Z_j$.  Combining this estimate with
\eqref{main:finite-model-lower-bound} gives
\[
 \Re K_{X_j}(g)\ge-\delta/50
 -b_\delta\sqrt{E_g(Z_j)-E_g(Z_{j-1})}.
\]

\par
\medskip
\noindent\textbf{Step 5. Average over the fixed scales.}
For $0\le j\le k$ set $E_j:=E_g(Z_j)$, and for $1\le j\le k$ set
$z_j:=K_{X_j}(g)$.  \Cref{main:minimized-energy} gives
$E_0\le E_1\le\cdots\le E_k$.  The large-energy estimate and the last
inequality in Step~4 verify the two alternatives of
\Cref{main:capped-average} with $a=\delta/50$ and $b=b_\delta$.
Since $k\ge10^4b_\delta^2C_1\delta^{-2}$,
\[
 b_\delta\sqrt{C_1/k}\le\delta/100.
\]
Therefore
\begin{equation}
 \Re\left(\frac1k\sum_{j=1}^kK_{X_j}(g)\right)
 \ge-\frac{\delta}{50}-\frac{\delta}{100}
 =-\frac{3\delta}{100}\ge-\delta.
 \label{main:averaged-lower-bound}
\end{equation}
The scales used here are the same for every $g$.

\par
\medskip
\noindent\textbf{Step 6. Define the common measure and bound its height.}
Define
\begin{equation*}
\mu_\delta(r):=\frac1k\sum_{j=1}^k
   \frac{X_j^{-2}}{Z_{X_j}}
   \sum_{\substack{m>n>0\\R(m,n)=r}}
      w(m/X_j,n/X_j)W_0(m,n).
\end{equation*}
Each inner sum is finite and nonnegative.  Moreover,
\[
 \sum_r\mu_\delta(r)=\frac1k\sum_{j=1}^kK_{X_j}(1)=1,
\]
so $\mu_\delta$ is a finitely supported probability measure independent
of $g$.  \eqref{main:averaged-lower-bound} is precisely
\eqref{main:universal-lower-bound} for this measure.

For every support pair $(m,n)$,
\[
 (m^2-n^2)^2+(2mn)^2=(m^2+n^2)^2.
\]
Thus $R(m,n)$ is a Pythagorean ratio.  It is not equal to $1$, since
$R(m,n)=1$ would imply $(m-n)^2=2n^2$, impossible for positive integers
$m,n$.  Since $m+n<Z_j\le Z_k$ on the support, both unreduced legs
$m^2-n^2$ and $2mn$ are at most $Z_k^2$.  Reducing the two legs to lowest
terms can only decrease them, so we may take
\[
 H_\delta:=Z_k^2.
\]

It remains to estimate $H_\delta$.  Write
$z_j:=\log Z_j$ and $m_1:=\log M_1$.  Since
$\lceil u\rceil\le2u$ for $u\ge1$, \eqref{main:scale-sequence} gives
\begin{equation*}
 z_j\le Uz_{j-1}+Um_1+\log(2C_0).
\end{equation*}
As $U\ge2$, summing this recurrence yields
\begin{align}
 z_k
 &\le U^kz_0+(Um_1+\log(2C_0))\frac{U^k-1}{U-1}\notag\\
 &\le U^k\{z_0+2m_1+2\log(2C_0)\}.
 \label{main:solved-recurrence}
\end{align}
Here $z_0=2m_1$.  From \eqref{main:initial-parameter},
\begin{equation}
 \log m_1\le A'\delta^{-1293}+C,
 \qquad \log U\le C\ell.
 \label{main:initial-logarithms}
\end{equation}
Taking logarithms in \eqref{main:solved-recurrence}, then using
\eqref{main:number-of-scales-bound} and \eqref{main:initial-logarithms},
we obtain
\[
 \log\log Z_k=\log z_k
 \le k\log U+O(\log m_1)
 \le C\delta^{-1295}\ell^2.
\]
Finally,
$\log\log H_\delta=\log(2z_k)=\log2+\log\log Z_k$, so increasing the
absolute constant gives
\[
 H_\delta\le\operatorname{Exp}_2
 \left(C\delta^{-1295}\log^2(2/\delta)\right).
\]
This proves the proposition.
\end{proof}
\subsection{The density bound}

We first combine \Cref{main:finite-measure} with
\Cref{tr:finite-transfer} to obtain a threshold implication.
We then choose $\delta$ as a function of $N$ and invert that implication.

\begin{proof}[Proof of \Cref{thm:main}]
We divide the proof into three steps.	
	
\par
\medskip
\noindent\textbf{Step 1. Prove the threshold implication.}
For $0<\delta<1/10$, set
\begin{equation*}
 F_\delta=\delta^{-1295}\log^2(2/\delta).
\end{equation*}
\Cref{main:finite-measure} provides a measure satisfying
the hypotheses of \Cref{tr:finite-transfer}, with
$H_\delta\le\operatorname{Exp}_2(C_HF_\delta)$ for an absolute
$C_H\ge1$.  Choose an absolute $C_N>C_H+2$ so large that, for every
$F\ge1$,
\[
 \frac2{\log2}\exp((C_H+1)F)\le\operatorname{Exp}_2(C_NF),
\]
and set $C_{\rm main}:=C_N$.
Then
\begin{equation}
 N\ge\operatorname{Exp}_2(C_NF_\delta)
 \label{main:final-threshold}
\end{equation}
ensures both size conditions in \eqref{tr:cube-size}.  We verify them
separately.  The first one follows from
$H_\delta\le\operatorname{Exp}_2(C_HF_\delta)\le
\operatorname{Exp}_2(C_NF_\delta)\le N$.  For the second one, note that
on the stated range of $\delta$, $F_\delta\ge1$ and
$\delta^{-2}\le e^{F_\delta}$.  Therefore
\begin{equation*}
 \frac{2\delta^{-2}\log H_\delta}{\log2}
 \le\frac2{\log2}\exp((C_H+1)F_\delta)
 \le\operatorname{Exp}_2(C_NF_\delta),
\end{equation*}
Thus
\Cref{tr:finite-transfer} applies and gives
$f(N)\le2\delta N^N$.  Since $C_{\rm main}=C_N$, this proves
\eqref{eq:main-threshold}.

\par
\medskip
\noindent\textbf{Step 2. Choose $\delta$ and retain the logarithmic correction.}
Put $t=\log_2N$ and $s=1295$.  Choose an absolute constant
$A_0\ge1$ sufficiently large, and then choose $t_0$ sufficiently large
so that for every $t\ge t_0$ the choice
\begin{equation*}
 \delta=A_0\left(\frac{(\log t)^2}{t}\right)^{1/s}
\end{equation*}
lies in $(0,1/10)$ and satisfies
\begin{align*}
 \log(2/\delta)
 &=\log(2/A_0)+\frac{\log t-2\log\log t}{s}
 \le\frac{2\log t}{s},\\
 C_N\delta^{-s}\log^2(2/\delta)
 &\le\frac{4C_N}{s^2A_0^s}\,t\le t.
\end{align*}
where the final inequality follows from fixing
$A_0^s\ge4C_N/s^2$.  Since $N=\operatorname{Exp}_2(t)$, the inequality
$C_N\delta^{-s}\log^2(2/\delta)\le t$ is exactly the hypothesis in
\eqref{main:final-threshold}.  Thus
\begin{equation}
 \frac{f(N)}{N^N}
 \le 2A_0\left(\frac{(\log t)^2}{t}\right)^{1/s}
 =2A_0\left(\frac{(\log_3N)^2}{\log_2N}\right)^{1/1295}.
 \label{main:logarithmic-rate}
\end{equation}
Choose an integer $N_0$ so large that $\log_2N\ge t_0$ and all
logarithms in \eqref{main:logarithmic-rate} are defined for $N\ge N_0$.
After enlarging an absolute constant if necessary, set $C_1:=2A_0$.
Then the first inequality in \eqref{eq:main-rate} follows.

\par
\medskip
\noindent\textbf{Step 3. Deduce the pure-power bounds.}
Let $0<b<1/s$ and put $\kappa=1/s-b>0$.
For $x=\log t>0$, differentiation shows that the maximum of
$x^{2/s}e^{-\kappa x}$ is attained at $x=2/(s\kappa)$.
Consequently,
\begin{equation*}
 t^{-1/s}(\log t)^{2/s}
 \le t^{-b}\left(\frac{2}{e s\kappa}\right)^{2/s}.
\end{equation*}
Combining this inequality with \eqref{main:logarithmic-rate}, and taking
$b_*=1/(s+1)=1/1296$, gives
\begin{equation*}
 \frac{f(N)}{N^N}\le C_2(\log_2N)^{-1/1296}
 \qquad(N\ge N_0),
\end{equation*}
after setting
$C_2:=C_1\bigl(2/(e s(1/s-b_*))\bigr)^{2/s}$ and enlarging it if needed.
This proves the second inequality in \eqref{eq:main-rate}.  For a general
fixed $0<b<1/1295$, set
\[
 C_b:=C_1\left(\frac{2}{e s(1/s-b)}\right)^{2/s},
 \qquad N_b:=N_0.
\]
If necessary enlarge $N_b$ so that the choice of $\delta$ in Step~2 lies
in $(0,1/10)$ for every $N\ge N_b$.  The same calculation then yields
\eqref{eq:main-other-powers}.
\end{proof}

\section*{AI-use disclosure}
The author used OpenAI Codex in GPT-6 Astra Ultra Fast mode during
the preparation of this manuscript. After obtaining
the bound
\begin{equation*}
 \frac{f(N)}{N^N}
 \ll\left(\frac{(\log\log\log N)^2}{\log\log N}\right)^{1/s},
\end{equation*}
the author used Codex to
calculate an admissible value of $s$, to reorganize the proofs
around the value $s=1295$, and to polish the proofs. Codex also assisted with mathematical
auditing and English-language editing. The author has checked all
mathematical statements and takes responsibility for the final manuscript.
\providecommand{\bysame}{\leavevmode\hbox to3em{\hrulefill}\thinspace}
\providecommand{\noopsort}[1]{}
\providecommand{\mr}[1]{\href{http://www.ams.org/mathscinet-getitem?mr=#1}{MR~#1}}
\providecommand{\zbl}[1]{\href{http://www.zentralblatt-math.org/zmath/en/search/?q=an:#1}{Zbl~#1}}
\providecommand{\jfm}[1]{\href{http://www.emis.de/cgi-bin/JFM-item?#1}{JFM~#1}}
\providecommand{\arxiv}[1]{\href{http://www.arxiv.org/abs/#1}{arXiv~#1}}
\providecommand{\doi}[1]{\url{https://doi.org/#1}}
\providecommand{\MR}{\relax\ifhmode\unskip\space\fi MR }
\providecommand{\MRhref}[2]{%
  \href{http://www.ams.org/mathscinet-getitem?mr=#1}{#2}
}
\providecommand{\href}[2]{#2}

\end{document}